\numberwithin{equation}{section}
\DeclareMathOperator{\Real}{Re}
\DeclareMathOperator{\Imaginary}{Im}
\DeclareMathOperator{\TV}{TV}
\DeclareMathOperator{\Adm}{Adm}
\DeclareMathOperator{\diag}{diag}
\DeclareMathOperator{\dist}{dist}
\DeclareMathOperator{\fin}{fin}
\DeclareMathOperator{\supp}{supp}
\DeclareMathOperator{\PV}{PV}
\DeclareMathOperator{\semci}{semci}
\DeclareMathOperator{\Tr}{Tr}
\DeclareMathOperator{\emp}{emp}
\DeclareMathOperator{\dL}{L}
\DeclareMathOperator{\inv}{inv}
\newtheorem{thm}{Theorem}[section]
\newtheorem{prop}[thm]{Proposition}
\newtheorem{lem}[thm]{Lemma}
\newtheorem{cor}[thm]{Corollary}
\theoremstyle{remark}
\newtheorem{rem}[thm]{Remark}
\theoremstyle{definition}
\newtheorem{definition}[thm]{Definition}
\newtheorem{example}[thm]{Example}
\newtheorem{assumption}[thm]{Assumption}
\begin{document}
	
\title[Local Statistics and Concentration for Non-intersecting Brownian Bridges]{Local Statistics and Concentration for Non-intersecting Brownian Bridges With Smooth Boundary Data}

\author{Amol Aggarwal and Jiaoyang Huang}

	\begin{abstract} 
		
		In this paper we consider non-intersecting Brownian bridges, under fairly general upper and lower boundaries, and starting and ending data. Under the assumption that these boundary data induce a smooth limit shape (without empty facets), we establish two results. The first is a nearly optimal concentration bound for the Brownian bridges in this model. The second is that the bulk local statistics of these bridges along any fixed time converge to the sine process. 
	\end{abstract} 
	
	\maketitle 
	
	\tableofcontents

\section{Introduction} 

\label{Introduction} 

\subsection{Preface} 

\label{Model1}

Over the past several decades, a substantial amount of effort has been devoted to the study of non-intersecting Brownian bridges. Since the works of Dyson  \cite{MERM} in 1962 and de Gennes \cite{SMFSSC} in 1968, this model has been found to be ubiquitous across probability theory and mathematical physics, with applications including to the spectra of random matrices \cite{MERM} and their universality (see the book \cite{DRM} of Erd\H{o}s--Yau); directed fibrous structures \cite{SMFSSC}; random surfaces (see the works of Fisher \cite{WWWM} and Huse--Fisher \cite{CMDWD}); stochastic growth models (see the surveys of Spohn \cite{EDLE}, Johansson \cite{RMDP}, and Ferrari--Spohn \cite{RGM}); and certain quantum many-body systems (see the survey of Dean--Le Doussal--Majumdar--Schehr \cite{NFTRM}).

The general form of this model can be described as follows. Let $n \ge 1$ be an integer, and fix  two $n$-tuples of \emph{starting data} $\bm{u} = (u_1, u_2, \ldots , u_n)$ and \emph{ending data} $\bm{v} = (v_1, v_2, \ldots , v_n)$; a \emph{time interval} $[a, b]$; a \emph{lower boundary} $f : [a, b] \rightarrow \mathbb{R} \cup \{ -\infty\}$; and an \emph{upper boundary} $g : [a, b] \rightarrow \mathbb{R} \cup \{ \infty \}$. Then sample a family $\bm{x} = (x_1, x_2, \ldots , x_n)$ of $n$ Brownian bridges on $[a, b]$, starting at $\bm{u}$ and ending at $\bm{v}$ (so that $x_j (a) = u_j$ and $x_j (b) = v_j$ for each $j$), that are conditioned to not intersect, stay above $f$, and stay below $g$ (so that $f(t) < x_n (t) < x_{n-1} (t) < \cdots < x_1 (t) < g(t)$ for each $t \in (a, b)$); see \Cref{f:bridge1}. One is then interested in the asymptotic behavior of this model as $n$ becomes large.

This question is most extensively understood in the absence of an upper and lower boundary, that is, when $f = -\infty$ and $g = \infty$. The first choice of starting and ending data to have been analyzed is the Brownian watermelon, corresponding to when all entries of $\bm{u}$ and $\bm{v}$ are equal to $0$. In this case, the joint law of the Brownian paths at any fixed time coincides with that of the spectrum of a Gaussian Unitary Ensemble (GUE) random matrix \cite{MERM}. The latter has long been known to be exactly solvable through the framework determinatal point processes and orthogonal polynomials; its bulk local statistics converge to the sine processes, due to the works of Gaudin and Mehta \cite{DERM,SE}. The bulk local statistics, jointly over multiple times, of the Brownian watermelon was shown by Prah\"{o}fer--Spohn \cite{SIDP} and Johansson \cite{DPGDP} to converge to the extended sine process.

The Brownian bridge model has since been proven to be amenable to the framework of determinantal point processes (though often requiring a considerably more involved analysis) for other special choices of starting and ending data $(\bm{u}; \bm{v})$, when both consist of a bounded number (often at most two) distinct points. In many of these situations, different limiting statistics were found appear at the edge, such as Airy processes \cite{SIDP,DPGDP} with wanderers by Adler--Ferrari--van Moerbeke \cite{PWNUC}; Pearcey processes by Br\'{e}zin--Hikami \cite{USCG}, Tracy--Widom \cite{TP}, and Bleher--Kuijlaars \cite{LRMES}; and tacnode processes by Delvaux--Kuijlaars--Zhang \cite{CBNMT}, Ferrari--Vet\H{o} \cite{NBATP}, and Johansson \cite{NMETP}. 

Under general starting and ending data (but still with no upper and lower boundary), the physics work of Matytsin \cite{LLI} predicted a limit shape for non-intersecting Brownian bridges, in the form of a complex Burgers equation. This was proven in the papers of Guionnet--Zeitouni \cite{LDASI} and Guionnet \cite{FOAMI} through a large deviations principle for spherical integrals, obtained by combining a stochastic analysis of Dyson Brownian motion with ideas from free probability. More recent work \cite{EUNB} of Huang proved (under additional, mild restrictions on the starting and ending data) that the edge statistics of this model converge to the Airy point process.

The behavior of non-intersecting Brownian bridges is significantly less understood in the presence of (finite) upper and lower boundaries, as in this situation the exact connection to random matrices, Dyson Brownian motion, and free probability appears to be lost. The determinantal structure remains, but is in general quite intricate. Only for special choices of the upper and lower boundary $(f; g)$ (and starting and ending data $(\bm{u}; \bm{v})$) has this structure proven to be tractable for asymptotics.

Three papers in this direction are those of Tracy--Widom \cite{NE}, Ferrari--Vet\H{o} \cite{HTPM}, and Leichty--Wang \cite{NBRAW}, which observed different statistics arising in the presence of upper and lower boundaries. The first \cite{NE} considered non-intersecting Brownian bridges conditioned to be positive, corresponding to the choice $f = 0$ and $g = \infty$, that start and end at $0$; it then showed that the local limit of the bottom curve converges to a Bessel process with parameter $\frac{1}{2}$. The second \cite{HTPM} considered the same model, but where the paths started and ended at the same nonzero point; it then showed that the local limit of the bottom curve is given by the hard edge tacnode process. The third \cite{NBRAW} considered non-intersecting Brownian bridges, conditioned to stay above $f = 0$ and below $g = 1$, that start and end at $0$; it then showed that the limiting statistics at the point where these bridges first meet the upper boundary $g = 1$ are given by certain variants of the Pearcey and tacnode processes. 

In this paper we study non-intersecting Brownian bridges under a fairly wide class of boundary conditions (subject to a regularity constraint to be described below), with the intent of proving various universality phenomena about their statistical behavior. Our results are two-fold; the first is a concentration bound on the Brownian paths, and the second analyzes their bulk local statistics.

To be more specific, we fix as above a time interval, normalized to be $[0, L^{-1}]$; upper and lower boundaries $(f; g)$ on $[0, L^{-1}]$;  and starting and ending data $(\bm{u}; \bm{v})$. Let $G : [0, L^{-1}] \times [0, 1] \rightarrow \mathbb{R}$ denote the solution to the equation
\begin{flalign}
	\label{gequation0} 
	\partial_t^2 G(t, x) + \pi^2(\partial_y G)^{-4} \cdot \partial_y^2 G (t ,x) = 0,
\end{flalign} 

\noindent with boundary data along the south, north, west, and east sides of the rectangle $[0, L^{-1}] \times [0, 1]$ prescribed by $f$, $g$, $\bm{u}$, and $\bm{v}$, respectively. This function $G$ will serve as the limit shape for the non-intersecting Brownian bridges, with $t$ and $y$ serving as the time coordinate and normalized index of the Brownian bridges (so that $G(t, y) \approx x_{\lfloor yn \rfloor} (t)$). While we have not seen \eqref{gequation0} previously appear exactly as written for this model, we explain in \Cref{Limit1} below that it is equivalent (after a change of variables) to equations prescribing the limit shape for non-intersecting Brownian bridges derived in the above-mentioned works \cite{LLI,LDASI,FOAMI}, in the case of no upper and lower boundary.

Given an integer $m \ge 1$, the regularity constraint (see \Cref{fgr} below) we will impose on the boundary data $(f; g; \bm{u}; \bm{v})$ stipulates that all of the first $m+1$ derivatives of $G$ are bounded from above, and that $|\partial_y G|$ is bounded from below. The latter ensures that the equation \eqref{gequation0} is uniformly elliptic; it also precludes the existence of macroscopic ``empty'' facets, through which no Brownian paths likely pass (such regions are, in a sense, ``compressed out'' by the lower and upper boundaries $f$ and $g$). As such, these boundary data are qualitatively different from many of those considered in the previous works. 

Under this setup, our first result (see \Cref{gh} below) states that $x_j (t) = G(t, jn^{-1}) + \mathcal{O} (n^{2/m - 1 + o(1)})$. In particular, if $G$ is smooth (enabling $m$ to be arbitrarily large), then this implies that the Brownian paths $x_j$ concentrate around $G$ with the nearly optimal error $\mathcal{O} (n^{o(1) - 1})$. The only concentration bound for non-intersecting Brownian bridges, applicable to general upper and lower boundary, we knew of prior to this work is one for its normalized height function, which has error $\mathcal{O} (n^{o(1) - 1/2})$ (and does not provide any characterization for its expectation).\footnote{Starting with the work of Cohn--Elkies--Propp \cite[Proposition 22]{LSRT}, such concentration bounds are well-documented in the context of discrete random surfaces, such as dimers. While we have not seen the counterpart for Brownian bridges appear in the literature before, the proof is very similar. Although we will not need it in this paper, we provide the short proof of this estimate in \Cref{ProofCurves0} below, for completeness.}

Our second result (see \Cref{xkernel} below) states that, for a given $(t_0, y_0) \in (0, 1) \times (0, L^{-1})$, the point process $\big\{ n \cdot \big(x_j (t_0) - G(t_0, y_0) \big) \big\}$ converges to the sine process.\footnote{Our method should also apply to give convergence to the extended sine process for the joint correlation functions of this point process across multiple times, but we will not address this here.} The condition that $(t_0, y_0) \in (0, 1) \times (0, L^{-1})$ is in the strict interior of the rectangle $[0, 1] \times [0, L^{-1}]$ corresponds to probing the ``bulk'' statistics of the Brownian bridges $x_j$. For $\beta=2$ Dyson Brownian motion (which is equivalent after changing coordinates to the special case of the Brownian bridge model without upper and lower boundary, with arbitrary starting data $\bm{u}$ and all entries of the ending data $\bm{v}$ equal to $0$), such bulk universality results have appeared in several forms over the past twenty-five years; see the works of Br\'{e}zin--Hikami \cite{LSRMES}, Johansson \cite{ULSDM}, Erd\H{o}s--P\'{e}ch\'{e}--Ram\'{i}rez--Schlein--Yau \cite{BUFM}, Landon--Yau \cite{CLSM}, Erd\H{o}s--Schnelli \cite{URMFD}, Landon--Sosoe--Yau \cite{FEUM}, and Claeys--Neuschel--Venker \cite{BUPM}. However, we are unaware of any previous such universality statements for non-intersecting Brownian bridges in the presence of upper and lower boundaries (and with general ending data).

Probing the ``edge'' statistics of the model instead corresponds to taking $t \in \{ 0, 1 \}$, which amounts to analyzing the Brownian bridges around the upper or lower boundaries. If $f$ and $g$ are sufficiently regular (which is implied by our assumption), then locally they are approximately linear, and so we would expect the $x_j$ to look (up to an affine shift) locally like non-intersecting Brownian excursions. In view of the results in \cite{NE}, we then predict that the edge scaling limit of the extreme path is a Bessel process with parameter $\frac{1}{2}$. We will not pursue this question here.

Next let us briefly describe the proofs underlying our results. Given the concentration bound, we establish universality for the local statistics of the non-intersecting Brownian bridges $\bm{x}$ by sandwiching them between two Dyson Brownian motion models $\bm{w}^-$ and $\bm{w}^+$ with slightly different drifts (and almost the same initial data). The concentration bound not only enables the existence of this sandwiching, but it can also be used to show that the initial data for $\bm{w}^-$ and $\bm{w}^+$ are reasonably regular. The latter point, together with the results of \cite{CLSM,FEUM} proving fast local equilibration of Dyson Brownian motion under regular initial data, indicates that the local statistics for $\bm{w}^-$ and $\bm{w}^+$ models quickly converge to (the same) sine process; by the sandwiching, this means that those for $\bm{x}$ do as well. This comparison is analogous to (and, in a sense, simpler than) similar ones recently used to establish universality results in the context of random tilings and non-intersecting Brownian bridges \cite{ULTS,EUNB}. So, to us, the main novelty in this paper is in the proof of the concentration bound.

To do this, instead of directly studying the trajectories $x_j (t)$ of the Brownian paths, it will be convenient to analyze the associated \emph{height function} $\mathsf{H}^{\bm{x}} (t, x) = \# \big\{ 1 \le j \le n : x_j (t) > x \big\}$, counting how many bridges are above $x$ at time $t$; this enables us to interpret the Brownian bridge model $\bm{x}$ as a random surface. The concentration bound on the $x_j (t)$ then becomes equivalent to one for this height function, namely, $n^{-1} \cdot  \mathsf{H}^{\bm{x}} (t, x) = H(t, x) + \mathcal{O} (n^{2/m-1+o(1)})$, where $H(t, x)$ is a continuum height function (which, for fixed $t$, is essentially the functional inverse of $G(t, y)$). 

To that end, we will define two families of ``barrier functions'' $\varphi_i^-$ and $\varphi_i^+$ to  bound $n^{-1} \cdot \mathsf{H}^{\bm{x}}$ from below and above.\footnote{In the actual proof, these barrier functions will instead be indexed by a pair of integers $(j; k)$, but we will ignore this point in the introductory exposition here.} These functions will be chosen so that they satisfy several properties. First, at $i = 0$, $\varphi_0^-$ and $\varphi_0^+$ will be small and large, respectively, guaranteeing $\varphi_0^- \le \mathsf{H}^{\bm{x}} (t, x) \le \varphi_0^+$ to hold deterministically. Second, the $\varphi_i^+ (t, x)$ will decrease to $H(t, x) + n^{2/m-1+o(1)}$, and the $\varphi_i^-$ will increase to $H(t, x) - n^{2/m-1+o(1)}$, as $i$ increases from $0$ to some integer $I \ge 1$. We will then show that $n^{-1} \cdot \mathsf{H}^{\bm{x}} \le \varphi_{i-1}^+$ likely implies $n^{-1} \cdot \mathsf{H}^{\bm{x}} \le \varphi_i^+$ (and $n^{-1} \cdot \mathsf{H}^{\bm{x}} \ge \varphi_{i-1}^-$ likely implies $n^{-1} \cdot \mathsf{H}^{\bm{x}} \ge \varphi_i^-$), for each $1 \le i \le I$. Inductively applying this estimate will then confirm the concentration bound, as then $H(t, x) + n^{2/m-1+o(1)} = \varphi_I^- \le n^{-1} \cdot \mathsf{H}^{\bm{x}} \le \varphi_I^+ = H(t, x) + n^{2/m-1+o(1)}$ would likely hold.

The proof of the inductive implication stated above makes use of the following third fundamental property satisfied by the $\varphi_i^+$ and $\varphi_i^-$. For any point $(t_0, x_0)$, the function $\varphi_i^+$ is more concave at $(t_0, x_0)$ than is any continuum height function $H_0$, whose gradient at $(t_0, x_0)$ matches that of $\varphi_i^+$ (any such $H_0$ will have a constrained Hessian, since it satisfies an elliptic partial differential equation similar to the one \eqref{gequation0} satisfied by $G$; see \Cref{hequation} below); an analogous statement holds for $\varphi_i^-$. Now suppose that $n^{-1} \cdot \mathsf{H}^{\bm{x}}$ is (at most) equal to $\varphi_{i-1}^+$. Then condition on the paths of $\bm{x}$ outside of some neighborhood $\mathfrak{U}$ of a fixed point $(t_0, x_0)$, and resample the paths of $\bm{x}$ inside $\mathfrak{U}$ as non-intersecting Brownian bridges with boundary data induced by the conditioning. Assuming a sufficiently strong concentration bound for the Brownian bridge height function $\mathsf{H}^{\bm{x}}$ on $\mathfrak{U}$ around its continuum limit, the above concavity property will imply that $\mathsf{H}^{\bm{x}} (t_0, x_0)$ likely ``deflates'' upon this resampling, decreasing below $\varphi_i^+ (t_0, x_0)$, thus establishing the inductive step. 

Verifying this last assumption will amount to finding a ``large'' class of boundary data, for which the associated non-intersecting Brownian bridges satisfy a nearly optimal $\mathcal{O} (n^{o(1) - 1})$ concentration bound around their continuum height function; if this holds, we refer to the continuum height function as a \emph{concentrating function}. In practice, ``large'' means that any possible $m$-th order Taylor expansion at a fixed point of some continuum height function, which we refer to as a \emph{local profile}, must be encompassed by at least one concentrating function in our family.

The above methodology was initially developed Laslier--Toninelli \cite{LTGDMLS,MTTD}, in the seemingly unrelated context of analyzing Glauber dynamics on random tilings. Their sequences $\{ \varphi_i^- \}$ and $\{ \varphi_i^+ \}$ were meant to serve as discrete-time, and more tractable, proxies for the mean curvature flow that is widely predicted (dating back to the physics paper \cite{LODT} of Lifshitz) to govern the macroscopic behavior of random surfaces under reversible stochastic dynamics. These ideas were used to show concentration estimates for random tilings in the paper \cite{ULTS} of Aggarwal, but the bounds there were quite weak, involving an error of $\mathcal{O} (n^{-\alpha})$ for some small constant $\alpha > 0$. The reason is that work took $m=2$, that is, it only produced concentrating functions (coming from limit shapes of random tilings of hexagons, as in \cite{LTGDMLS}) matching the first two derivatives of any continuum height function. 

In this paper, we must take $m$ to be arbitrary and introduce concentrating functions encompassing any local profile of order $m$. We find them as the continuum height functions for $\beta = 2$ Dyson Brownian motion with arbitrary initial data; nearly optimal concentration bounds for the latter were proven by Huang--Landon in \cite{MCTMGP}. By the generality of the initial data, this provides an infinite-dimensional family of concentrating functions. We show as \Cref{hconcentrateq2} (see also \Cref{p:constructrho}) below that they encompass any local profile of order $m$, thus enabling us to implement the above framework to prove the $\mathcal{O} (n^{2/m-1+o(1)})$ concentration bound for the non-intersecting Brownian bridges $\bm{x}$.

\subsection{Non-Intersecting Brownian Bridges and the Sine Kernel}

\label{EnsemblesCurve} 

In this section we discuss non-intersecting Brownian bridges and the sine kernel. In what follows, for any integer $d \ge 1$ and subset $I \subseteq \mathbb{R}^d$, we let $\mathcal{C}(I)$ denote the space of real-valued, continuous functions $f : I \rightarrow \mathbb{R}$. Moreover, for any subset $I_0 \subseteq I$ and measurable function $f : I \rightarrow \mathbb{C}$, let $f |_{I_0}$ denote the restriction of $f$ to $I_0$. Let $\mathcal{S} \subseteq \mathbb{Z}_{\ge 1}$ and $I \subseteq \mathbb{R}$ denote intervals. We may identify the set $\mathcal{S} \times \mathcal{C} (I)$ with families of continuous functions $\bm{x} = (x_j)_{j \in \mathcal{S}}$, where $x_j : I \rightarrow \mathbb{R}$ is a continuous function for each $j \in \mathcal{S}$; we then also set $\bm{x} (t) = \big( x_j (t) \big)_{j \in \mathcal{S}}$ for each $t \in I$. We will frequently use this notation for families of (non-intersecting) Brownian bridges.

We next provide notation for the probability measure of $n$ non-intersecting Brownian bridges with given starting and ending points, and for given upper and lower boundaries. In what follows, we set $\overline{\mathbb{R}} = \mathbb{R} \cup \{ -\infty, \infty \}$; we also let $\mathbb{H} = \{ z \in \mathbb{C} : \Imaginary z > 0\}$ denote the upper half-plane and $\overline{\mathbb{H}}$ denote its closure. For any real numbers $a, b \in \mathbb{R}$ with $a < b$, we set $\llbracket a, b \rrbracket = [a, b] \cap \mathbb{Z}$. For any integer $k \ge 1$, we denote the entries of any $k$-tuple $\bm{y} \in \mathbb{R}^k$ by $\bm{y} = (y_1 ,y_2, \ldots , y_k)$, unless stated otherwise. We also let $\mathbb{W}_k = \{ \bm{y} \in \mathbb{R}^k : y_1 > y_2 > \cdots >  y_k \}$ and let $\overline{\mathbb{W}}_k$ denote its closure. For any integer $k \ge 1$ and $k$-tuples $\bm{x} = (x_1, x_2, \ldots , x_k) \in \mathbb{R}^k$ and $\bm{y} = (y_1, y_2, \ldots , y_k) \in \mathbb{R}^k$, we write $\bm{x} < \bm{y}$ (equivalently, $\bm{y} > \bm{x}$) if $x_j < y_j$ for each $j \in \llbracket 1, k \rrbracket$; we similarly write $\bm{x} \le \bm{y}$ (equivalently, $\bm{y} \ge \bm{x}$) if $x_j \le y_j$ for each $j \in \llbracket 1, k \rrbracket$. For any subset $I \subseteq \mathbb{R}$ and measurable functions $f, g : I \rightarrow \overline{\mathbb{R}}$ we write $f < g$ (equivalently, $g > f$) if $f(t) < g(t)$ for each $t \in I$; we similarly write $f \le g$ (equivalently, $g \ge f$) if $f(t) \le g(t)$ for each $t \in I$. 

\begin{definition}
	
	\label{qxyfg}
	
	Fix an integer $n \ge 1$; a real number $\sigma > 0$; two $n$-tuples $\bm{u}, \bm{v} \in \overline{\mathbb{W}}_n$; an interval $[a, b] \subset \mathbb{R}$; and measurable functions $f, g : [a, b] \rightarrow \overline{\mathbb{R}}$ such that $f < g$, $f < \infty$, and $g > -\infty$. Let $\mathfrak{Q}_{f; g}^{\bm{u}; \bm{v}} (\sigma)$ denote the law of $\bm{x} = (x_1, x_2, \ldots , x_n) \in \llbracket 1, n \rrbracket \times \mathcal{C} \big( [a, b] \big)$, given by $n$ independent Brownian motions of variances $\sigma$ on the time interval $t \in [a, b]$, conditioned on satisfying the following three properties.
	\begin{enumerate} 
		\item The $x_j$ do not intersect, that is, $\bm{x} (t) \in \mathbb{W}_n$ for each $t \in (a, b)$.
		\item The $x_j$ start at $u_j$ and end at $v_j$, that is, $x_j (a) = u_j$ and $x_j (b) = v_j$ for each $j \in \llbracket 1, n \rrbracket$.
		\item The $x_j$ are bounded below by $f$ and above by $g$, that is, $f < x_j < g$ for each $j \in \llbracket 1, n \rrbracket$. 
	\end{enumerate} 
		
	\begin{figure}
	\center
\includegraphics[width=0.5\textwidth]{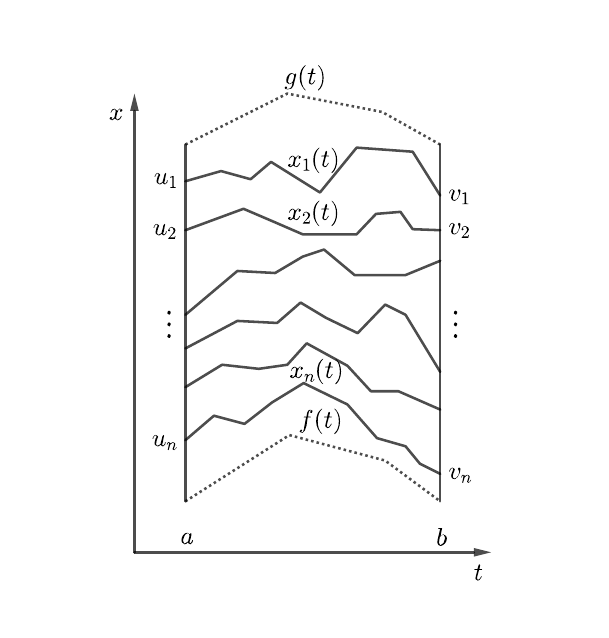}

\caption{An example of non-intersecting Brownian bridges sampled from $\mathfrak{Q}_{f; g}^{\bm{u}; \bm{v}} (\sigma)$ is shown above.}
\label{f:bridge1}
	\end{figure}

	We assume $f(a) \le u_n \le u_1 \le g(a)$ and $f(b) \le v_n \le v_1 \le g(b)$, even when not stated explicitly; see \Cref{f:bridge1} for a depiction. We refer to $\bm{u}$ as \emph{starting data} for $\bm{x}$, and to $\bm{v}$ as its \emph{ending data}. We also refer to $f$ as the \emph{lower boundary} for $\bm{x}$, and to $g$ as its \emph{upper boundary}. If $g = \infty$, then we abbreviate $\mathfrak{Q}_f^{\bm{u}; \bm{v}} (\sigma) = \mathfrak{Q}_{f; \infty}^{\bm{u}; \bm{v}} (\sigma)$; if additionally $f = -\infty$, we further abbreviate $\mathfrak{Q}^{\bm{u}; \bm{v}} (\sigma) = \mathfrak{Q}_{-\infty}^{\bm{u}; \bm{v}} (\sigma) = \mathfrak{Q}_{-\infty; \infty}^{\bm{u}; \bm{v}} (\sigma)$. If $\sigma = n^{-1}$, then we omit the parameter $\sigma$ from the notation, writing $\mathfrak{Q}_{f; g}^{\bm{u}; \bm{v}} = \mathfrak{Q}_{f; g}^{\bm{u}; \bm{v}} (n^{-1})$, $\mathfrak{Q}_f^{\bm{u}; \bm{v}} = \mathfrak{Q}_f^{\bm{u}; \bm{v}} (n^{-1})$, and $\mathfrak{Q}^{\bm{u}; \bm{v}} = \mathfrak{Q}^{\bm{u}; \bm{v}} (n^{-1})$.  
	
\end{definition}

We will show in this paper that (under certain conditions on $\bm{u}$, $\bm{v}$, $f$, and $g$) the local statistics of $\bm{x} (t)$ will converge to a point process described by the sine kernel, defined as follows.

\begin{definition}[{\cite[Equation (3.6.1)]{IRM}}]
	
	\label{kernelcorrelation} 
	
We define the \emph{sine kernel} $\mathcal{K}_{\sin} : \mathbb{R}^2 \rightarrow \mathbb{R}$ by setting
\begin{flalign*}
	\mathcal{K}_{\sin} (x, y) = \displaystyle\frac{\sin \big( \pi(x-y) \big)}{\pi (x-y)}, \qquad \text{for any $x, y \in \mathbb{R}$}.
\end{flalign*}

\noindent For any integer $m \ge 1$ and $m$-tuple $\bm{x} = (x_1, x_2, \ldots , x_m) \in \mathbb{R}^m$, define the $m \times m$ matrix $\bm{\mathcal{K}}_{\sin} (\bm{x})$ whose $(i, j)$ entry is $\mathcal{K}_{\sin} (x_i, x_j)$. Also define the \emph{$m$-point correlation function} $p_{\sin}^{(m)} : \mathbb{R}^m \rightarrow \mathbb{R}$ by 
\begin{flalign*}
	p_{\sin}^{(m)} (\bm{x}) = \det \bm{\mathcal{K}}_{\sin} (\bm{x}), \qquad \text{for any $\bm{x} \in \mathbb{R}^m$}.
\end{flalign*}

\end{definition}

\subsection{Results} 

\label{ConcentrationSmooth0}

	In this section we state two results on the large $n$ behavior of families of non-intersecting Brownian bridges sampled from the measure $\mathfrak{Q}$ of \Cref{qxyfg}; the first is a concentration bound, and the second analyzes convergence of its local statistics. We begin by specifying the assumptions to which we will subject our boundary data, namely, the starting and ending data ($\bm{u}$ and $\bm{v}$) and the lower and upper boundaries ($f$ and $g$). To that end, we require some notation.
	
	 For any set $\mathcal{S} \in \mathbb{C}^d$ and point $z \in \mathbb{C}^d$, we set $\dist (z, \mathcal{S}) = \inf_{w \in \mathcal{S}} |z-w|$. We also let $\partial \mathfrak{S}$ denote the boundary of $\mathfrak{S}$. We further denote the derivative $\partial_{\gamma} = \prod_{i = 1}^d (\partial_{x_i})^{\gamma_i}$ for any $d$-tuple $(\gamma_1, \gamma_2, \ldots , \gamma_d) \in \mathbb{Z}_{\ge 0}^d$; we also set $|\gamma| = \sum_{j=1}^d \gamma_j$. For any open subset $\mathfrak{R} \subseteq \mathbb{R}^d$, let $\mathcal{C}^k (\mathfrak{R})$ denote the set of $f \in \mathcal{C} (\mathfrak{R})$ such that $\partial_{\gamma} f \in \mathcal{C} (\mathfrak{R})$, for each $\gamma \in \mathbb{Z}_{\ge 0}^d$ with $|\gamma| \le k$. Further let $\mathcal{C}^k (\overline{\mathfrak{R}})$ denote the set of functions $f \in \mathcal{C}^k (\mathfrak{R})$ such that $\partial_{\gamma} f \in \mathcal{C} (\mathfrak{R})$ extends continuously to $\overline{\mathfrak{R}}$, for each $\gamma \in \mathbb{Z}_{\ge 0}^d$ with $|\gamma| \le k$. For any function $f \in \mathcal{C} (\mathfrak{R})$ and integer $k \in \mathbb{Z}_{\ge 0}$, we further define the (semi)norms $\| f \|_0 = \| f \|_{0; \mathfrak{R}}$, $[f] = [f]_{k; 0; \mathfrak{R}}$, and $\| f \|_{\mathcal{C}^k (\mathfrak{R})} = \| f \|_{\mathcal{C}^k (\overline{\mathfrak{R}})} = \| f \|_k = \| f \|_{k; 0; \mathfrak{R}}$ on these spaces by
	\begin{flalign*}
		\| f \|_0 = \displaystyle\sup_{z \in \mathfrak{R}} \big| f(z) \big|; \qquad [f]_k = \displaystyle\max_{\substack{\gamma \in \mathbb{Z}_{\ge 0}^d \\ |\gamma| = k}} \big\| \partial_{\gamma} f \|_0; \qquad \| f \|_{\mathcal{C}^k (\mathfrak{R})} = \displaystyle\sum_{j=0}^k [f]_j.
	\end{flalign*}

	For any bounded, open set $\mathfrak{R} \subset \mathbb{R}^2$, let $\Adm (\mathfrak{R})$ be the set of locally Lipschitz functions $F: \overline{\mathfrak{R}} \rightarrow \mathbb{R}$ such that $\partial_y F(z) < 0$ for almost all (with respect to Lebesgue measure)  $z \in \mathfrak{R}$. For any real number $\varepsilon > 0$, further let $\Adm_{\varepsilon} (\mathfrak{R}) \subseteq \Adm (\mathfrak{R})$ denote the set of $F \in \Adm (\mathfrak{R})$ such that $-\varepsilon^{-1} < \partial_y F(z) < -\varepsilon$, for almost all $z \in \mathfrak{R}$ (with respect to Lebesgue measure).

	We can now state the following assumption.

	\begin{assumption}
	
	\label{fgr}
	
	Fix an integer $m \ge 4$ and three real numbers $\varepsilon \in \big( 0, \frac{1}{2} \big)$, $\delta \in \big( 0, \frac{1}{5m^2} \big)$, and $B_0 > 1$. Let $n \ge 1$ be an integer and $L \in (B_0^{-1}, n^{\delta})$ be a real number; define the open rectangle $\mathfrak{R} = (0, L^{-1}) \times (0, 1) \subset \mathbb{R}^2$. Let $G \in \Adm_{\varepsilon} (\mathfrak{R}) \cap \mathcal{C}^{m+1} (\overline{\mathfrak{R}})$ be such that $\big\| G - G(0, 0) \big\|_{\mathcal{C}^{m+1} (\mathfrak{R})} \le B_0$, and such that it solves the equation
	\begin{flalign}
		\label{equationxtd}
		\displaystyle\sum_{j, k \in \{ t, y \}} \mathfrak{d}_{jk} \big( \nabla G(t, y) \big) \partial_j \partial_k G(t, y) = 0, \qquad \text{for each $(t, y) \in \mathfrak{R}$},
	\end{flalign}

	\noindent where the $\mathfrak{d}_{jk} : \mathbb{R}^2 \rightarrow \mathbb{R}$ are defined by
	\begin{flalign}
		\label{uvd} 
		\mathfrak{d}_{tt} (u, v) = 1; \qquad \mathfrak{d}_{ty} (u, v) = 0 = \mathfrak{d}_{yt} (u, v); \qquad \mathfrak{d}_{yy} (u, v) = \pi^2 v^{-4}.
	\end{flalign}

	Define $f, g: [0, L^{-1}] \rightarrow \mathbb{R}$ by setting $f(s) = G(s, 1)$ and $g(s) = G(s, 0)$, for each $s \in [0, L^{-1}]$. Further let $\varkappa > 0$ be a real number, and let $\bm{u}, \bm{v} \in \overline{\mathbb{W}}_n$ be $n$-tuples with  
	\begin{flalign}
		\label{uvg}
		\displaystyle\max_{j \in \llbracket 1, n \rrbracket} \big| u_j - G(0,jn^{-1}) \big| \le \varkappa; \qquad \displaystyle\max_{j \in \llbracket 1, n \rrbracket} \big| v_j - G (L^{-1}, jn^{-1}) \big| \le \varkappa.
	\end{flalign}
	
	\noindent Sample non-intersecting Brownian bridges $\bm{x} = (x_1, x_2, \ldots , x_n) \in \llbracket 1, n \rrbracket \times \mathcal{C} \big( [0,L^{-1}] \big)$ from the measure $\mathfrak{Q}_{f;g}^{\bm{u}; \bm{v}}$. 
\end{assumption}

Let us briefly explain \Cref{fgr}. The function $G$ will eventually be the ``limit shape'' for the family $\bm{x}$ of non-intersecting Brownian bridges, in the sense that we will have $x_j (t) \approx G (t, jn^{-1})$; see \Cref{gh} below. The conditions that $f(s) = G(s, 1)$ and $g(s) = G(s,0)$ ensure that this holds for the upper and lower boundaries of the model (formally, when $j \in \{ 0, n + 1 \}$), and \eqref{uvg} ensures that this holds (up to an error of $\varkappa$) when $t \in \{ 0, L^{-1} \}$. Similarly to how limit shapes for random tilings satisfy a nonlinear partial differential equation \cite[Theorem 12.1]{VPT}, the function $G$ here does as well, given by \eqref{equationxtd}. The constraint that $G \in \Adm_{\varepsilon} (\mathfrak{R})$ ensures that $G$ has no ``frozen facets'' (macroscopic regions containing no curves), and the constraint that $\big\| G - G(0, 0) \big\|_{\mathcal{C}^{m+1} (\mathfrak{R})} \le B_0$ ensures that $G$ has some regularity.

The below result is a concentration bound stating that, with high probability, $x_j (t) = G (t, jn^{-1}) + \mathcal{O} (n^{\delta+2/m-1} + \varkappa)$ (so that the error can be made  smaller by increasing the parameter $m$ that accounts for the regularity of the boundary data); it will be established in \Cref{Prooffgb2} below.

\begin{thm} 
	
	\label{gh} 
	
	Adopt \Cref{fgr}. There exists a constant $c = c (\varepsilon, \delta, B_0, m) > 0$ such that  
	\begin{flalign*} 
		\mathbb{P} \Bigg[ \displaystyle\sup_{s \in [0, L^{-1}]} \bigg( \displaystyle\max_{j \in \llbracket 1, n \rrbracket} \big| x_j (s) - G(s, jn^{-1}) \big| \bigg) > \varkappa + c^{-1} n^{2/m + \delta - 1} \Bigg] \le c^{-1} e^{-c(\log n)^2}.
	\end{flalign*} 
	
\end{thm}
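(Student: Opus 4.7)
The plan is to recast the theorem as a concentration bound for the \emph{height function} $\mathsf{H}^{\bm{x}}(t,x) = \#\{1 \le j \le n : x_j(t) > x\}$ around the continuum height function $H(t,x)$ obtained by inverting $y \mapsto G(t,y)$ (well-defined since $G \in \Adm_\varepsilon(\mathfrak{R})$). Because $|\partial_x H|$ is bounded between $\varepsilon$ and $\varepsilon^{-1}$, and $H$ inherits a uniformly elliptic PDE from \eqref{equationxtd} after this change of variables, the statement to prove is equivalent (up to constants) to
\begin{equation*}
\sup_{(t,x)} \bigl| n^{-1}\mathsf{H}^{\bm{x}}(t,x) - H(t,x) \bigr| \le \varkappa + c n^{2/m + \delta - 1}
\end{equation*}
holding with the required probability. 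I would construct two monotone families of \emph{barrier functions} $\varphi^\pm_i \in \mathcal{C}(\overline{\mathfrak{R}})$ for $0 \le i \le I = \mathcal{O}(\log n)$ with three properties: first, $\varphi^-_0 \le n^{-1}\mathsf{H}^{\bm{x}} \le \varphi^+_0$ holds deterministically (using $0 \le \mathsf{H}^{\bm{x}} \le n$); second, $\varphi^\pm_I = H \pm (\varkappa + c n^{2/m+\delta-1})$; and third, at every $(t_0, x_0) \in \mathfrak{R}$ the function $\varphi^+_i$ is a strict subsolution of the PDE for $H$, in the sense that $\sum_{j,k} \mathfrak{d}_{jk}(\nabla \varphi^+_i) \partial_j\partial_k \varphi^+_i(t_0,x_0) \le -c n^{2/m+\delta-1}$, with the opposite (supersolution) inequality for $\varphi^-_i$.

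The induction step passes from $\{n^{-1}\mathsf{H}^{\bm{x}} \le \varphi^+_{i-1}\}$ to $\{n^{-1}\mathsf{H}^{\bm{x}} \le \varphi^+_i\}$. For a fixed $(t_0, x_0)$, freeze $\bm{x}$ outside a mesoscopic rectangle $\mathfrak{U}$ around $(t_0, x_0)$ of side length slightly above $n^{-1/m}$, and resample inside using the Gibbs property of $\mathfrak{Q}^{\bm{u};\bm{v}}_{f;g}$; the induced boundary data on $\partial\mathfrak{U}$ are controlled by $\varphi^+_{i-1}$. Now \Cref{p:constructrho} supplies a \emph{concentrating function} $\rho$, arising as the continuum height function of some $\beta=2$ Dyson Brownian motion, whose first $m$ derivatives at $(t_0, x_0)$ match those of $\varphi^+_{i-1}$, and the Huang--Landon bound \cite{MCTMGP}, recorded as \Cref{hconcentrateq2}, supplies a $\mathcal{O}(n^{o(1) - 1})$ concentration for the resampled height function around $\rho$ on $\mathfrak{U}$. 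The subsolution property then forces $\rho(t_0, x_0) \le \varphi^+_i(t_0, x_0) - c n^{2/m + \delta - 1}$ by a maximum-principle comparison on $\mathfrak{U}$, so combining these estimates yields $n^{-1}\mathsf{H}^{\bm{x}}(t_0, x_0) \le \varphi^+_i(t_0, x_0)$ with probability $1 - e^{-c(\log n)^2}$. A union bound over a polynomial mesh of points $(t_0, x_0)$, combined with H\"older control on $\varphi^\pm_i$ and a crude continuity estimate for $\mathsf{H}^{\bm{x}}$, promotes this pointwise estimate to the required uniform bound.

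The main obstacle is the explicit construction of $\{\varphi^\pm_i\}$ enforcing all three properties simultaneously: they must monotonically decrease (respectively increase) toward $H$, maintain a strict subsolution (respectively supersolution) margin globally, and have Taylor polynomials of degree $m$ matchable by the concentrating functions of \Cref{p:constructrho}. A secondary technical burden is handling the boundary strip near $\partial\mathfrak{R}$, where the resampling rectangle cannot be placed symmetrically and the control on $\mathsf{H}^{\bm{x}}$ must be transferred from the given starting and ending data $\bm{u}, \bm{v}$ and boundaries $f, g$ (which is where the $\varkappa$ term enters). The overall mechanism is the monotone Gibbs-resampling scheme of Laslier--Toninelli \cite{LTGDMLS, MTTD} refined by Aggarwal \cite{ULTS}; the innovation making the exponent $2/m$ arbitrarily small is the use of the infinite-dimensional family of $\beta=2$ Dyson Brownian motion continuum height functions as concentrating profiles.
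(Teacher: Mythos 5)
Your proposal follows the paper's own route: \Cref{gh} is reduced to the height-function estimate \Cref{gh0}, which is proved by the Laslier--Toninelli/Aggarwal barrier scheme whose inductive step (\Cref{gh1}) compares the Gibbs-resampled bridges, via height monotonicity (\Cref{monotoneheight}), with auxiliary non-intersecting bridges concentrating around a Dyson-Brownian-motion limit shape supplied by \Cref{hconcentrateq2} and \Cref{p:constructrho}. One step, as you state it, would fail and needs the paper's adjustment.

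You require the concentrating function to match \emph{all} of the first $m$ derivatives of $\varphi^+_{i-1}$ at $(t_0,x_0)$ while also requiring $\varphi^+_{i-1}$ to be a strict subsolution with margin $c n^{2/m+\delta-1}$. These two demands are incompatible: a concentrating function satisfies \eqref{equationxtb} exactly, so its local profile is consistent in the sense of \Cref{admissibleconsistent}, whereas the profile of a strict subsolution is not; no exact solution can simultaneously match its gradient and its full Hessian. The paper's fix, in \eqref{q0q1k}, is to match the gradient and the slots $\big( \partial_x^k \varphi, \partial_t \partial_x^{k-1} \varphi \big)$ only, take the consistent extension of these data (\Cref{q01j}), and shift the second-order vector along $\bm{q}_{\psi}^{(2)}$ by the amount dictated by the second part of \Cref{pqg}; the resulting concavity gap between the barrier and the genuine solution is exactly what produces the deflation $c \omega |z-w|^2$ in \Cref{hwhz}, after which your monotonicity-plus-concentration step goes through as described. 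A minor quantitative remark: with window size $\rho \approx n^{\delta - 1/m}$ and the stated subsolution margin, the deflation gained per resampling at the centre is only of order $n^{3\delta - 1}$, so the barrier family must contain polynomially many members (the paper's $(j,k)$-indexed family has roughly $n^{1-2\delta}$ steps) rather than $\mathcal{O}(\log n)$; this is harmless, since each step fails with probability $\mathcal{O}(e^{-c(\log n)^2})$ and the union bound still closes, but the geometric-decrease bookkeeping implicit in $I = \mathcal{O}(\log n)$ cannot be realized by this mechanism.
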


The next result\footnote{Let us clarify that, in \Cref{xkernel}, we view $m$, $\varepsilon$, $\delta$, and $B_0$ as fixed with $n$, while the other parameters $L$, $G$, $\bm{u}$, $\bm{v}$, $\varkappa$, $t_0$, and $y_0$ can vary arbitrarily as $n$ tends to $\infty$ (subject to the conditions of \Cref{fgr} and \Cref{xkernel}).} states that the local statistics of $\bm{x}(t_0)$ at some $t_0 \in (0, L^{-1})$ are prescribed by the sine kernel; it will be established in \Cref{Comparex} below. In what follows, for any integers $n \ge 1$ and $k \in \llbracket 1, n \rrbracket$, and random $n$-tuple $\bm{y} \in \overline{\mathbb{W}}_n$ admitting a density with respect to Lebesgue measure on $\mathbb{R}^n$, define its \emph{$k$-point correlation function} $p^{(k)} = p_{\bm{y}}^{(k)} : \mathbb{R}^k \rightarrow \mathbb{R}$ as follows. Letting $p^{(n)} = p_{\bm{y}}^{(n)} : \mathbb{R}^n \rightarrow \mathbb{R}$ denote the symmetrized density of $\bm{y}$, for any $(x_1, x_2, \ldots , x_k) \in \mathbb{R}^k$, set 
\begin{flalign}
	\label{kp} 
	p^{(k)} (x_1, x_2, \ldots , x_k) = \displaystyle\int_{\mathbb{R}^{n-k}} p^{(n)} (x_1, x_2, \ldots , x_n) \displaystyle\prod_{j=k+1}^n dx_j.
\end{flalign}

In what follows, for any integer $k \ge 1$, complex numbers $a, b \in \mathbb{R}$, vector $\bm{x} = (x_1, x_2, \ldots , x_k) \in \mathbb{C}^k$, and set $\mathcal{S} \subseteq \mathbb{C}$, define $a \cdot \bm{x} + b = (ax_1 + b, ax_2 + b, \ldots , ax_n + b)$ and $a \cdot \mathcal{S} + b = \{ as + b \}_{s \in \mathcal{S}}$. For any additional subset $\mathcal{S}' \subseteq \mathbb{C}$, we denote $\mathcal{S} + \mathcal{S}' = \{ s + s' : s \in \mathcal{S}, s' \in \mathcal{S}' \}$.

\begin{thm}
	
	\label{xkernel}
	
	Adopt \Cref{fgr}; further suppose that $m \ge 2^{25}$ and that $\varkappa \le n^{\delta-1}$. Fix an integer $k \ge 1$; a smooth compactly supported function $F : \mathbb{R}^k \rightarrow \mathbb{R}$; and a point $(t_0, y_0) \in \mathfrak{R}$, such that $\dist \big( (t_0, y_0), \partial \mathfrak{R} \big) \ge n^{-\delta}$. Denote $\theta = -\partial_y G(t_0, y_0)$, and set $\bm{z} = (z_1, z_2, \ldots, z_n) \in \overline{\mathbb{W}}_n$ by $z_j = \theta n \cdot \big( x_j (t_0) - G(t_0, y_0) \big)$ for each $j \in \llbracket 1, n \rrbracket$. Then,
	\begin{flalign*}
		\displaystyle\lim_{n \rightarrow \infty} \displaystyle\int_{\mathbb{R}^k} F(\bm{a}) p_{\bm{z}}^{(k)} (\bm{a}) d\bm{a} = \displaystyle\int_{\mathbb{R}^k} F(\bm{a}) p_{\sin}^{(k)} (\bm{a}) d \bm{a}.
	\end{flalign*}

\end{thm}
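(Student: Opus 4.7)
The plan is to deduce sine-kernel convergence from Theorem \ref{gh} by sandwiching $\bm{x}(t_0)$ between two short-time Dyson Brownian motion (DBM) trajectories whose local statistics can be read off from the known DBM universality of \cite{CLSM,FEUM}. Following the strategy pioneered in \cite{ULTS,EUNB}, I would fix a mesoscopic time $\eta = n^{-1+\tau}$ for some small $\tau > 0$ (admissible because $m$ is very large and $\delta$ is very small) and construct two auxiliary non-intersecting Brownian motions $\bm{w}^-$ and $\bm{w}^+$ on the time interval $[t_0-\eta, t_0+\eta]$, each with variance $n^{-1}$ and no upper or lower barrier, with initial data $\bm{u}^{\pm}$ obtained from $\bm{x}(t_0-\eta)$ by a deterministic downward/upward shift of size $\asymp n^{-1+2\tau}$. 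The goal is to arrange, via a monotone coupling of non-intersecting Brownian ensembles, that $w^-_j(t_0) \le x_j(t_0) \le w^+_j(t_0)$ for all $j$ with probability $1-\mathcal{O}(e^{-c(\log n)^2})$.

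The first step is to invoke Theorem \ref{gh}, which already gives $|x_j(t_0-\eta) - G(t_0-\eta, jn^{-1})| \le c^{-1}n^{2/m+\delta-1} + \varkappa$ uniformly in $j$ with overwhelming probability. Since $m \ge 2^{25}$ and $\varkappa \le n^{\delta-1}$, this error is of order $n^{-1+o(1)}$, much smaller than the shift $n^{-1+2\tau}$ used to define $\bm{u}^{\pm}$; this guarantees that with overwhelming probability the shift dominates the endpoint-bridge drift (of order $\eta$ times the local slope of $G$) and the error from the upper/lower barriers $f, g$ near the boundary of $\mathfrak{R}$ (which is excluded from the analysis by the hypothesis $\dist((t_0,y_0), \partial\mathfrak{R}) \ge n^{-\delta}$). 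Consequently the sandwich $w^-(t_0) \le x(t_0) \le w^+(t_0)$ can be realized via a monotone coupling of non-intersecting Brownian ensembles on $[t_0-\eta,t_0+\eta]$; the sandwich forces correlation functions of $\bm{z}$ to be squeezed between those of the rescaled $\bm{w}^{\pm}(t_0)$, and it remains only to show that both of the latter converge to $p_{\sin}^{(k)}$.

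For the second step, I would verify that the empirical measures of $\bm{u}^{\pm}$ are regular enough on mesoscopic scales to fall under the hypotheses of \cite{CLSM,FEUM}. This is again a direct consequence of Theorem \ref{gh}: the pointwise control $|x_j(t_0-\eta) - G(t_0-\eta, jn^{-1})| = n^{-1+o(1)}$ transfers into regularity of the classical locations of $\bm{u}^\pm$ relative to the classical locations of the equilibrium measure whose density at $y_0$ equals $\theta = -\partial_y G(t_0, y_0)$. The local laws and gap universality theorems of \cite{CLSM,FEUM} then apply after running DBM for time $\eta \gg n^{-1}$, producing sine-kernel convergence of the correlation functions of $\theta n \cdot (\bm{w}^\pm(t_0) - G(t_0, y_0))$ against any smooth compactly supported test function. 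The precise choice of rescaling $\theta$ is the one that normalizes the limiting density at $y_0$ to $1$, exactly matching the statement of \Cref{xkernel}.

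The main obstacle will be the second step: propagating the concentration bound of Theorem \ref{gh} into the hypotheses demanded by \cite{CLSM,FEUM}. Those results require bounds on the Stieltjes transform and on particle rigidity at mesoscopic scales, which are stronger than a simple $L^\infty$ control on locations. In practice this means reworking the rigidity conclusion of Theorem \ref{gh} to get not only pointwise concentration of the positions but also smoothness of the induced empirical density on all scales above $n^{-1+\tau'}$ for some $\tau' < \tau$; fortunately the margin in the concentration rate provided by taking $m \ge 2^{25}$ is exactly what allows this regularity upgrade. Once the regularity of $\bm{u}^\pm$ is verified and the sine-kernel limit for both $\bm{w}^\pm(t_0)$ is established, the sandwich together with the fact that the shifts in $\bm{u}^{\pm}$ are $o(n^{-1})$ after rescaling forces the two limits to coincide, and the theorem follows by a standard test-function argument.
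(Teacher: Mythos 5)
Your overall strategy (sandwich $\bm{x}$ between two DBM-type ensembles via monotone coupling, then invoke the fixed-energy universality of \cite{CLSM,FEUM}) is the same as the paper's, but the way you build the comparison ensembles has a fatal quantitative inconsistency. You define $\bm{u}^{\pm}$ by shifting \emph{all} of $\bm{x}(t_0-\eta)$ up/down by an amount $\asymp n^{-1+2\tau}$, chosen large enough to dominate the concentration error from \Cref{gh}, which is of size $n^{2/m+\delta-1}+\varkappa \gg n^{-1}$. But non-intersecting Brownian motion is translation-equivariant, so under the natural coupling $\bm{w}^+(t_0)-\bm{w}^-(t_0)$ is of the same order as the shift, i.e. $\gg n^{-1}$; after rescaling by $\theta n$ the sandwich window has width $\asymp n^{2\tau}\to\infty$ and therefore says nothing about correlation functions at the scale of the interparticle spacing. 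Your closing claim that ``the shifts in $\bm{u}^{\pm}$ are $o(n^{-1})$ after rescaling'' contradicts your own requirement that the shift dominate the \Cref{gh} error: there is no shift size that is simultaneously $\gg n^{2/m+\delta-1}$ and $o(n^{-1})$, no matter how large $m$ is. (A secondary issue: you drop the barriers $f,g$ entirely, but \Cref{monotoneheight} does not order a two-barrier bridge ensemble against a barrier-free one in either direction; some localization by conditioning on neighboring paths is needed.)

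The paper's proof resolves exactly this tension, and the resolution is the ingredient your outline is missing. One conditions on all but the middle $\sim n^{1/4}$ paths in a window of $\sim n^{1/2}$ paths around label $y_0 n$, and defines the comparison initial data $\breve{\bm{u}}^{\pm}$ to agree \emph{exactly} with $\breve{\bm{x}}(0)$ on the central block of labels, applying the shift $\pm n^{\omega/2-1}$ only to labels at distance $\gtrsim n^{1/4}$ from the center (where the \Cref{gh} error makes the ordering hold); the conditioned neighboring paths serve as effective barriers, and a drift $\mathfrak{a}s=\partial_t G(t_0,y_0)\,s$ (handled by affine invariance, not absorbed into the shift) makes the sandwich of \Cref{eventa0} work. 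The point is then that the discrepancy between $\breve{\bm{w}}^-$ and $\breve{\bm{w}}^+$ lives only far from the observation window, and the short-range/homogenization analysis of \cite{CLSM} (\Cref{eventa1}, via the regularized and range-truncated dynamics) shows its effect on the central particles after time $s_0$ is $o(n^{-1})$; a further coupling-independence argument through expectations (\Cref{xy00} plus a Markov bound) transfers this closeness back to the monotone coupling actually used. Your ``main obstacle'' paragraph identifies a real but routine task (upgrading \Cref{gh} to the Stieltjes-transform regularity needed for \cite{CLSM,FEUM}, done in \Cref{regularrho}); the genuinely hard step is the one above, and without it the sandwich argument as you set it up cannot close.
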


\subsection{Outline}

The remainder of this paper is organized as follows. In \Cref{Estimates} we collect miscellaneous (known) facts about non-intersecting Brownian bridges and Dyson Brownian motion, including invariances, H\"{o}lder regularity bounds, and concentration estimates. In \Cref{Limit1} we explain how to recover the equation \eqref{equationxtd}, in the absence of upper and lower boundaries, from the results in \cite{LDASI,FOAMI}. In \Cref{BoundaryGH} we reduce the concentration bound \Cref{gh} to the inductive step, \Cref{gh1}, explained in \Cref{Model1}. In \Cref{HeightLocal} we discuss local profiles and state \Cref{hconcentrateq2}, providing a large class of concentrating functions. Using this, we establish \Cref{gh1} in \Cref{Proof1}. We next prove \Cref{hconcentrateq2} in \Cref{ProofConcentration} and \Cref{xkernel} in \Cref{StatisticsKernel}. We conclude by discussing various regularity properties of the equation \eqref{equationxtd} in \Cref{DerivativesEquation}. While these regularity results are not directly used in the proofs of \Cref{gh} and \Cref{xkernel}, some (such as \Cref{perturbationbdk} and \Cref{f1f2b}) attest to the robustness of \Cref{fgr} under perturbations (and others will also be used in the forthcoming work \cite{U}). The Appendix proves various statements from previous parts of the paper, and it largely constitutes of known (or mild variants of known) results.

\subsection*{Acknowledgements} 

Amol Aggarwal was partially supported by a Packard Fellowship for Science and Engineering, a Clay Research Fellowship, by NSF grant DMS-1926686, and by the IAS School of Mathematics. The research of Jiaoyang Huang is supported by NSF grant DMS-2054835 and DMS-2331096.  The authors also thank Fabio Toninelli and Horng-Tzer Yau for helpful interactions on non-intersecting Brownian bridges, and Camillo De Lellis, L\'{a}szl\'{o} Sz\'{e}kelyhidi Jr., and Vladim\'{i}r Sver\'{a}k for useful comments on elliptic partial differential equations. They wish to acknowledge the NSF grant DMS-1928930, which supported their participation in the Fall 2021 semester program at MSRI in Berkeley, California titled, “Universality and Integrability in Random Matrix Theory and Interacting Particle Systems.

\section{Non-Intersecting Bridges and Dyson Brownian Motion} 

\label{Estimates}

In this section we collect several miscellaneous facts about non-intersecting Brownian bridges and Dyson Brownian motion.

\subsection{Height Monotonicity, H\"{o}lder Estimates, and Invariances}

\label{Coupling2}

In this section we state certain invariances, monotone couplings, and H\"{o}lder estimates for non-intersecting Brownian bridges. We first review two transformations that leave non-intersecting Brownian bridge measures invariant; the first constitutes affine transformations, and the second constitutes diffusive scalings.

\begin{rem}
	
	\label{linear}
	
	Nonintersecting Brownian bridges satisfy the following invariance property under affine transformations. Adopt the notation of \Cref{qxyfg}, and fix real numbers $\alpha, \beta \in \mathbb{R}$. Define the $n$-tuples $\widetilde{\bm{u}}, \widetilde{\bm{v}} \in \mathbb{W}_n$ and functions $\widetilde{f}, \widetilde{g} : [a, b] \rightarrow \overline{\mathbb{R}}$ by setting 
	\begin{flalign*} 
		& \widetilde{u}_j = u_j + \alpha, \quad \text{and} \quad \widetilde{v}_j = v_j + (b-a) \beta + \alpha, \qquad \qquad \qquad \qquad \qquad \text{for each $j \in \llbracket 0, n \rrbracket$}; \\
		& \widetilde{f}(t) = f(t) + (t-a) \beta + \alpha, \quad \text{and} \quad \widetilde{g}(t) = g(t) + (t-a) \beta + \alpha, \qquad \text{for each $t \in [a, b]$}.
	\end{flalign*} 
	
	\noindent Sampling $\widetilde{\bm{x}} = (\widetilde{x}_1, \widetilde{x}_2, \ldots , \widetilde{x}_n )$ under $\mathfrak{Q}_{\tilde{f}, \tilde{g}}^{\tilde{\bm{u}}; \tilde{\bm{v}}}$, there is a coupling between $\widetilde{\bm{x}}$ and $\bm{x}$ such that $\widetilde{x}_j (t) = x_j (t) + (t-a) \beta + \alpha$ for each $(j, t) \in \llbracket 1, n \rrbracket \times [a, b]$. 
	
	Indeed, this follows from the analogous affine invariance of a single Brownian random bridge, together with the fact that affine transformations do not affect the non-intersecting property. More specifically, if $x (t)$, for $t \in [a, b]$, is a Brownian bridge from some $u \in \mathbb{R}$ to some $v \in \mathbb{R}$ then $x(t) + (t-a) \beta + \alpha$ is a Brownian bridge from $u + \alpha$ to $v + (b-a) \beta + \alpha$, and any $\bm{y} (t) \in \mathbb{W}_n$ (namely, is non-intersecting) if and only if $\bm{y}(t) + (t-a) \beta + \alpha \in \mathbb{W}_n$.
	
\end{rem}

\begin{rem}
	
	\label{scale}
	
	Nonintersecting Brownian bridges also satisfy the following invariance property under diffusive scaling. Again adopt the notation of \Cref{qxyfg}; assume that $(a, b) = (0, T)$, for some real number $T > 0$. Further fix a real number $\sigma > 0$, and set $\widetilde{T} = \sigma T$. Define the $n$-tuples $\widetilde{\bm{u}}, \widetilde{\bm{v}} \in \mathbb{W}_n$ and functions $\widetilde{f}, \widetilde{g} : [0, \widetilde{T}] \rightarrow \overline{\mathbb{R}}$ by setting 
	\begin{flalign*} 
		& \widetilde{u}_j = \sigma^{1/2} u_j, \quad \text{and} \quad \widetilde{v}_j = \sigma^{1/2} v_j, \qquad \qquad \qquad \qquad \text{for each $j \in \llbracket 0, n \rrbracket$}; \\
		& \widetilde{f} (t) = \sigma^{1/2} f(\sigma^{-1} t), \quad \text{and} \quad \widetilde{g} (t) = \sigma^{1/2} g(\sigma^{-1} t), \qquad \text{for each $t \in [0, \widetilde{T}]$}.
	\end{flalign*} 
	
	\noindent Sampling $\widetilde{\bm{x}} = ( \widetilde{x}_1, \widetilde{x}_2, \ldots , \widetilde{x}_n)$ under $\mathfrak{Q}_{\tilde{f}, \tilde{g}}^{\tilde{\bm{u}}; \tilde{\bm{v}}}$, there is a coupling between $\widetilde{\bm{x}}$ and $\bm{x}$ such that $\widetilde{x}_j (t) = \sigma^{1/2} x_j (\sigma^{-1} t)$ for each $(j, t) \in \llbracket 1, n \rrbracket \times [0, \widetilde{T}]$. Similarly to \Cref{linear}, this follows from the analogous scaling invariance of a single Brownian bridge.
	
\end{rem}

The next lemma recalls a monotone coupling for non-intersecting Brownian bridges that was shown in \cite{PLE}; we refer to it as \emph{height monotonicity}. 

\begin{lem}[{\cite[Lemmas 2.6 and 2.7]{PLE}}]
	
	\label{monotoneheight}

	Fix an integer $n \ge 1$; four $n$-tuples $\bm{u}, \widetilde{\bm{u}}, \bm{v}, \widetilde{\bm{v}} \in \mathbb{W}_n$; an interval $[a, b] \in \mathbb{R}$; and measurable functions $f, \widetilde{f}, g, \widetilde{g}: [a, b] \rightarrow \overline{\mathbb{R}}$ with $f \le g$ and $\widetilde{f} \le \widetilde{g}$. Sample two families of non-intersecting Brownian bridges $\bm{x} = (x_1, x_2, \ldots , x_n) \in \llbracket 1, n \rrbracket \times \mathcal{C} \big( [a, b] \big)$ and $\widetilde{\bm{x}} = (\widetilde{x}_1, \widetilde{x}_2, \ldots , \widetilde{x}_n) \in \llbracket 1, n \rrbracket \times \mathcal{C} \big( [a, b] \big)$ from the measures $\mathfrak{Q}_{f; g}^{\bm{u}; \bm{v}}$ and $\mathfrak{Q}_{\tilde{f}; \tilde{g}}^{\tilde{\bm{u}}; \tilde{\bm{v}}}$, respectively. If 
	\begin{flalign}
		\label{fgxmonotone} 
		f \le \widetilde{f} ; \quad  g \le \widetilde{g}; \quad \bm{u} \le \widetilde{\bm{u}} ; \quad \bm{v} \le \widetilde{\bm{v}},
	\end{flalign}
	
	\noindent  then there exists a coupling between $\bm{x}$ and $\widetilde{\bm{x}}$ so that $x_j (t) \le \widetilde{x}_j (t)$, for each $(j, t) \in \llbracket 1, n \rrbracket \times [a, b]$.
	
\end{lem}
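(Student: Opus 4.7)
My plan is to establish the monotone coupling by a discrete approximation combined with a synchronously coupled Glauber dynamics. In broad strokes, I would (i) approximate both non-intersecting Brownian bridge ensembles by their analogs on a time-discretized lattice, with the discretized boundary data preserving the pointwise ordering; (ii) construct a heat-bath Markov chain on each discrete state space and couple two copies of it synchronously so that the pointwise height ordering is preserved by every update; (iii) run both chains to stationarity; and (iv) pass to the continuum limit $N \to \infty$ to obtain the desired coupling of $\bm{x}$ and $\widetilde{\bm{x}}$.

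For the discretization, fix a large integer $N$, partition $[a,b]$ into $N(b-a)$ subintervals of length $N^{-1}$, and approximate each Brownian bridge by a random walk with Gaussian increments of variance $N^{-1}$. Let $f_N \le \widetilde f_N$ and $g_N \le \widetilde g_N$ be piecewise-constant approximations to the boundary functions, and let $\bm{u}_N \le \widetilde{\bm{u}}_N$, $\bm{v}_N \le \widetilde{\bm{v}}_N$ be corresponding roundings. Sample $\bm{x}^{(N)}$ as $n$ such random walks from $\bm{u}_N$ to $\bm{v}_N$, conditioned to be non-intersecting and to lie in the strip bounded by $f_N$ and $g_N$, and sample $\widetilde{\bm{x}}^{(N)}$ analogously. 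Donsker's invariance principle (combined with the Karlin--McGregor formula to handle the non-intersection conditioning) implies that $\bm{x}^{(N)} \Rightarrow \bm{x}$ and $\widetilde{\bm{x}}^{(N)} \Rightarrow \widetilde{\bm{x}}$ weakly as $N \to \infty$.

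The core of the argument is a synchronous coupling of heat-bath Glauber dynamics. On each discrete state space, define a Markov chain that at every step picks a coordinate $(j,k) \in \llbracket 1,n \rrbracket \times \llbracket 1, N(b-a)-1 \rrbracket$ uniformly at random and resamples $x_j^{(N)}(k/N)$ from its conditional law given all other heights. This conditional law is a Gaussian density centered at $\tfrac{1}{2}\bigl( x_j^{(N)}((k-1)/N) + x_j^{(N)}((k+1)/N) \bigr)$ with variance $(2N)^{-1}$, truncated to the interval $\bigl( \max(f_N(k/N), x_{j+1}^{(N)}(k/N)),\, \min(g_N(k/N), x_{j-1}^{(N)}(k/N)) \bigr)$. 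Each of the two discrete measures is stationary for its associated chain, and the chain is irreducible, so it converges to its stationary distribution from any starting configuration. I would couple the two chains by selecting the same coordinate at each step and resampling via inverse-CDF against a common uniform random variable, then argue inductively that the ordering $x_i^{(N)} \le \widetilde x_i^{(N)}$ is preserved. In the inductive step, the Gaussian center and both endpoints of the truncation interval for $x_j^{(N)}(k/N)$ are each $\le$ the corresponding quantities for $\widetilde x_j^{(N)}(k/N)$ (using $f_N \le \widetilde f_N$, $g_N \le \widetilde g_N$, and $x_{j \pm 1}^{(N)} \le \widetilde x_{j \pm 1}^{(N)}$), so an elementary stochastic-domination lemma for truncated Gaussians with ordered mean and ordered endpoints, proved by direct CDF comparison, shows that the common-quantile resampling preserves the ordering.

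Running both chains to equilibrium and extracting a joint weak subsequential limit of the coupled pair yields a coupling of $\bm{x}$ and $\widetilde{\bm{x}}$ in which the pointwise ordering persists; tightness follows from the standard modulus-of-continuity estimates for Brownian bridges, and the ordering is a closed condition preserved under weak limits (via a Skorokhod representation). The main obstacle is verifying that the weak limits of the discrete conditioned measures coincide with the continuum measures $\mathfrak{Q}_{f;g}^{\bm{u};\bm{v}}$ and $\mathfrak{Q}_{\widetilde f; \widetilde g}^{\widetilde{\bm{u}}; \widetilde{\bm{v}}}$. This is delicate because the conditioning events have vanishing probability and can be sensitive to the particular discretization of the boundary curves; it can be resolved by showing, uniformly in $N$, that the discrete paths remain separated from one another and from $f_N, g_N$ on a positive-measure portion of $[a,b]$, via reflection-coupling estimates.
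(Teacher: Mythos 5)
Your proposal is correct in outline and follows essentially the same route as the paper: the lemma is imported from \cite{PLE}, whose proof (and the paper's own treatment of the closely related \Cref{uvv} and \Cref{monotoneheightdiscrete} in \Cref{DiscreteLinear}) proceeds exactly as you describe, by time-discretizing to Gaussian random walks, coupling the heat-bath/Glauber updates through common randomness so that the ordered truncated-Gaussian resamplings preserve the ordering, running to stationarity, and passing to the Brownian-bridge limit. The one step you state too loosely is ``the chain is irreducible, so it converges'': on a continuous state space this requires a Harris-type drift-plus-minorization argument rather than bare irreducibility, which is precisely how the paper closes the analogous step in \Cref{converge0}.
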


We next state the following variant of the above coupling, whose second part provides a linear bound on the difference between two families of non-intersecting Brownian bridges, which have the same starting data but different ending data. Its proof will be given in \Cref{DiscreteLinear} below. 

\begin{lem}
	
	\label{uvv}
	
	Fix an integer $n \ge 1$; a real number $B \ge 0$; a finite interval $[a, b] \subset \mathbb{R}$; four $n$-tuples $\bm{u}, \widetilde{\bm{u}}, \bm{v}, \widetilde{\bm{v}} \in \mathbb{W}_n$; and four measurable functions $f, \widetilde{f}, g, \widetilde{g} : [a, b] \rightarrow \overline{\mathbb{R}}$. Assume for each $j \in \llbracket 1, n \rrbracket$ and $t \in [a, b]$ that 
	\begin{flalign}
		\label{buvfg}
		u_j \le \widetilde{u}_j \le u_j + B; \quad v_j \le \widetilde{v}_j \le v_j + B; \quad f(t) \le \widetilde{f} (t) \le f(t) + B; \quad g(t) \le \widetilde{g} (t) \le g(t) + B.
	\end{flalign}
	
	\noindent Sample two families of non-intersecting Brownian bridges $\bm{x} = (x_1, x_2, \ldots , x_n) \in \llbracket 1, n \rrbracket \times \mathcal{C} \big( [a, b] \big)$ and $\widetilde{\bm{x}} = (\widetilde{x}_1, \widetilde{x}_2, \ldots , \widetilde{x}_n) \in \llbracket 1, n \rrbracket \times \mathcal{C} \big( [a, b] \big)$ from the measures $\mathfrak{Q}_{f; g}^{\bm{u}; \bm{v}}$ and $\mathfrak{Q}_{\tilde{f}; \tilde{g}}^{\tilde{\bm{u}}; \tilde{\bm{v}}}$, respectively. 
	
	\begin{enumerate} 
		
		\item  There is a coupling between $\bm{x}$ and $\widetilde{\bm{x}}$ so that $x_j (t) \le \widetilde{x}_j (t) \le x_j (t) + B$ for each $(j, t) \in \llbracket 1, n \rrbracket \times [a, b]$. 
		
		\item Further assume that $\bm{u} = \widetilde{\bm{u}}$ and for each $t \in [a, b]$ that
		\begin{flalign}
			\label{fgb} 
			 f(t) \le \widetilde{f} (t) \le f(t) + \frac{t-a}{b-a} \cdot B; \qquad  g(t) \le \widetilde{g}(t)  \le g(t) + \frac{t-a}{b-a}\cdot B.
		\end{flalign} 
		
		\noindent Then, it is possible to couple $\bm{x}$ and $\widetilde{\bm{x}}$ such that 
		\begin{flalign*} 
			 x_j (t) \le \widetilde{x}_j (t)  \le x_j (t) + \frac{t-a}{b-a} \cdot B, \qquad \text{for each $(j, t) \in \llbracket 1, n \rrbracket \times [a, b]$}.
		\end{flalign*} 
		
	\end{enumerate} 
\end{lem}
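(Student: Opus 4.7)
\textbf{Proof plan for \Cref{uvv}.} My strategy is to reduce both parts to a combination of height monotonicity (\Cref{monotoneheight}) and the affine invariance of \Cref{linear}, using suitably shifted auxiliary non-intersecting bridge ensembles as comparison processes.

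For part (1), the plan is to introduce the upward-shifted process $\bm{x}^+ = (x_1 + B, x_2 + B, \ldots, x_n + B)$. By the affine invariance of \Cref{linear} (applied with $\alpha = B$ and $\beta = 0$), $\bm{x}^+$ has the law $\mathfrak{Q}_{f+B;\, g+B}^{\bm{u}+B;\, \bm{v}+B}$. The hypothesis \eqref{buvfg} then provides both sandwiching chains
\begin{flalign*}
	\bm{u} \le \widetilde{\bm{u}} \le \bm{u} + B, \qquad \bm{v} \le \widetilde{\bm{v}} \le \bm{v}+ B, \qquad f \le \widetilde{f} \le f + B, \qquad g \le \widetilde{g} \le g + B,
\end{flalign*}
so \Cref{monotoneheight} yields pairwise couplings with $x_j \le \widetilde{x}_j$ and with $\widetilde{x}_j \le x_j + B$. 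To obtain both inequalities in a single simultaneous coupling, I would realize the monotone coupling in \Cref{monotoneheight} as the limit of a heat-bath Markov chain: iteratively pick a short time window and resample the paths inside, conditional on values outside and on the non-intersecting and boundary constraints. This step is monotone in the boundary data (as in the proof of \Cref{monotoneheight}), so running the chain on the three systems $\bm{x}$, $\widetilde{\bm{x}}$, $\bm{x}^+$ with shared randomness preserves the partial order $x_j \le \widetilde{x}_j \le x_j^+ = x_j + B$ at every step, and hence in the stationary limit.

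For part (2), the additional hypotheses $\bm{u} = \widetilde{\bm{u}}$ and \eqref{fgb} suggest using the \emph{tilted} comparison process $\bm{x}^\sharp_j(t) := x_j(t) + \tfrac{t-a}{b-a}\,B$. Applying \Cref{linear} with $\alpha = 0$ and $\beta = B/(b-a)$, the process $\bm{x}^\sharp$ is distributed as non-intersecting Brownian bridges with starting data $\bm{u}$, ending data $\bm{v} + B$, lower boundary $f(t) + \tfrac{t-a}{b-a} B$, and upper boundary $g(t) + \tfrac{t-a}{b-a} B$. Under \eqref{fgb} together with \eqref{buvfg}, these boundary parameters dominate those of $\widetilde{\bm{x}}$ pointwise, so \Cref{monotoneheight} couples $\widetilde{x}_j(t) \le x^\sharp_j(t) = x_j(t) + \tfrac{t-a}{b-a} B$. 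Fusing this with the coupling $x_j \le \widetilde{x}_j$ from part (1), via the same heat-bath Markov chain argument applied now to the three systems $\bm{x}$, $\widetilde{\bm{x}}$, $\bm{x}^\sharp$ (all sharing the starting data $\bm{u}$, which is preserved by the dynamics), produces the desired simultaneous bound.

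The main obstacle is the fusion step: \Cref{monotoneheight} as stated produces only pairwise couplings, while the statement of \Cref{uvv} requires a joint coupling in which $\bm{x}$ and $\widetilde{\bm{x}}$ satisfy a two-sided inequality. The cleanest way past this is the monotone heat-bath construction above; a more quantitative alternative would be to write both ensembles as functionals of a common family of Brownian increments via a Skorokhod-type reflection description of the non-intersecting constraint, although this is heavier and not needed here. Aside from this point, the argument is essentially a bookkeeping exercise combining \Cref{linear} and \Cref{monotoneheight}, with the choice of comparison process ($\bm{x}^+$ for part (1), $\bm{x}^\sharp$ for part (2)) dictated precisely by the linear growth structure built into \eqref{fgb}.
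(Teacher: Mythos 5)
Your proposal is correct in outline and is close in spirit to the paper's proof: the paper also constructs the joint two-sided coupling by running a heat-bath (Glauber) dynamics on ordered systems with shared randomness, except that it first discretizes to non-intersecting Gaussian walks (\Cref{gxyfg}--\Cref{fgk}), proves convergence of the dynamics to the bridge measure via a Harris-chain ergodic theorem (\Cref{converge0}), and then passes to the Brownian limit. Where you genuinely differ is in part (2): the paper does not use a tilted comparison process but instead proves a refined single-update estimate (its base case \eqref{x1aa}) showing that if the two endpoint gaps are $A$ and $A+B$ and the barrier gaps are $A+\tfrac{B}{2}$, the resampled middle values can be coupled within $A+\tfrac{B}{2}$; the linear profile $\tfrac{t-a}{b-a}B$ is then preserved because it is affine in $t$. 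Your device of comparing $\widetilde{\bm{x}}$ against $\bm{x}^{\sharp}_j(t)=x_j(t)+\tfrac{t-a}{b-a}B$, which by \Cref{linear} is again a non-intersecting bridge ensemble with tilted data dominating those of $\widetilde{\bm{x}}$, packages that averaging globally: since the tilt is affine in time, every local resampling of $\bm{x}^{\sharp}$ is exactly the tilt of the corresponding resampling of $\bm{x}$, so only ordinary stochastic monotonicity of the elementary update is needed. This is a clean simplification of the inductive bookkeeping.

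Two caveats on the fusion step, which is where the real work lies. First, the simultaneous triple coupling $x\le\widetilde{x}\le x+B$ (or $\le x^{\sharp}$) at each update is only automatic when the elementary update has a totally ordered state space — a single value of a single path at a single time — since then stochastic domination plus a shared uniform (inverse CDF) preserves any chain of inequalities. If you resample whole path segments of all $n$ curves in a time window, producing a two-sided coupling for that block update is essentially the original lemma on a smaller window, so the argument would be circular; you should make explicit that the updates are single-site, as in the paper's \Cref{dynamic2}. Second, "hence in the stationary limit" needs justification: one must know the dynamics converges to the conditioned bridge measure, which for the continuum model is not free. The paper sidesteps this by working with Gaussian walks, proving ergodicity via \Cref{converge0} (a Harris-chain argument requiring finite barriers, handled by a truncation/limiting step), and only then taking the diffusive limit to Brownian bridges; your write-up should either follow that discretization or supply an ergodicity argument for the continuous-time resampling chain.
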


\begin{rem} 
	
	Let us mention two quick consequences of \Cref{uvv} that we will often use without comment. First, suppose instead of \eqref{buvfg} that for each $j \in \llbracket 1, n \rrbracket$ and $t \in [a, b]$ we have
	\begin{flalign}
		\label{buvfg2}
		|u_j - \widetilde{u}_j| \le B; \qquad |v_j - \widetilde{v}_j| \le B; \qquad \big| f(t) - \widetilde{f} (t) \big| \le B; \qquad \big| g(t) - \widetilde{g}(t) \big| \le B.
	\end{flalign}

	\noindent Then, it is possible to couple $\bm{x}$ and $\widetilde{\bm{x}}$ such that $\big| x_j (t) - \widetilde{x}_j (t) \big| \le B$ for each $(j, t) \in \llbracket 1, n \rrbracket \times [a, b]$. 
	
	Indeed, define the $n$-tuples $\widehat{\bm{u}}, \widehat{\bm{v}} \in \overline{\mathbb{W}}_n$ by setting $\widehat{u}_j = \widetilde{u}_j + B$ and $\widehat{v}_j = \widetilde{v}_j + B$, for each $j \in \llbracket 1, n \rrbracket$; also define the functions $\widehat{f}, \widehat{g} : [a, b] \rightarrow \mathbb{R}$ by setting $\widehat{f}(t) = \widetilde{f}(t) + B$ and $\widehat{g}(t) = \widetilde{g} (t) + B$, for each $t \in [a, b]$. Then, sample $n$ Brownian bridges $\widehat{\bm{x}} = (\widehat{x}_1, \widehat{x}_2, \ldots , \widehat{x}_n) \in \llbracket 1, n \rrbracket \times \mathcal{C} \big( [a, b] \big)$ from the measure $\mathfrak{Q}_{\hat{f}; \hat{g}}^{\hat{\bm{u}}; \hat{\bm{v}}}$. Observe that we may couple $\widehat{\bm{x}}$ with $\widetilde{\bm{x}}$ such that $\widehat{x}_j (t) = \widetilde{x}_j (t) + B$, for each $(j, t)$. 
	
	Applying the first part of \Cref{uvv} with the $(\bm{x}; \widetilde{\bm{x}}; B)$ there given by $(\bm{x}; \widehat{\bm{x}}; 2B)$ here (using \eqref{buvfg2} to verify \eqref{buvfg}) yields a coupling between $\bm{x}$ and $\widehat{\bm{x}}$ such that for each $(j, t)$ we have $x_j (t) \le \widehat{x}_j (t) \le x_j (t) + 2B$. Since $\widehat{x}_j (t) = \widetilde{x}_j (t) + B$, it follows that $\big| x_j (t) - \widetilde{x}_j (t) \big| \le B$ for each $(j, t)$.

	Second, assume that \eqref{buvfg2} holds; that we have $\bm{u} = \widetilde{\bm{u}}$; and that, instead of \eqref{fgb}, we have  
	\begin{flalign*}
		\displaystyle\sup_{t \in [a, b]} \big| f(t) - \widetilde{f}(t) \big| \le \displaystyle\frac{t-a}{b-a} \cdot B; \qquad \displaystyle\sup_{t \in [a, b]} \big| f(t) - \widetilde{f} (t) \big| \le \displaystyle\frac{t-a}{b-a} \cdot B.
	\end{flalign*}

	\noindent Then, by similar reasoning as used above, it is possible to couple $\bm{x}$ and $\widetilde{\bm{x}}$ such that
	\begin{flalign*}
		\big| x_j (t) - \widetilde{x}_j (t) \big| \le \displaystyle\frac{t-a}{b-a} \cdot B, \qquad \text{for each $(j, t) \in \llbracket 1, n \rrbracket \times [a, b]$}.
	\end{flalign*}  

\end{rem}

We next recall the following H\"{o}lder estimate for non-intersecting Brownian bridges from \cite{BPLE} (scaled by a factor of $n^{-1/2}$, as the variances of the bridges there were $1$ while here they are $n^{-1}$).

\begin{lem}[{\cite[Proposition 3.5]{BPLE}}]
	
	\label{estimatexj2}

	There exist constants $c > 0$ and $C > 1$ such that the following holds. Let $n \ge 1$ be an integer; $B \ge 1$ be a real number; $[a, b] \subset \mathbb{R}$ be an interval; and $\bm{u}, \bm{v} \in \mathbb{W}_n$ be two $n$-tuples. Sampling $\bm{x} = (x_1, x_2, \ldots , x_n) \in \llbracket 1, n \rrbracket \times \mathcal{C} \big( [a, b] \big)$ under $\mathfrak{Q}^{\bm{u}; \bm{v}}$, we have  
	\begin{flalign*}
		\mathbb{P} \Bigg[ \bigcup_{j=1}^n \bigcup_{a \le t < t+s \le b} \bigg\{ \Big| x_j (t+s) - x_j (t) - s \cdot \displaystyle\frac{v_j - u_j}{b-a} \Big| \ge B s^{1/2} \log \big| 2s^{-1} (b-a) \big| \bigg\} \Bigg] \le C e^{Cn - cB^2 n}.
	\end{flalign*} 
\end{lem}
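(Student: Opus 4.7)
The plan is to establish the Hölder estimate via a one-scale increment bound for a single curve, combined with dyadic chaining in time and a union bound over particle indices. First, using the diffusive scaling invariance of \Cref{scale}, I would rescale so that $b - a = 1$; the event is preserved under this rescaling since both $|x_j(t+s) - x_j(t) - s(v_j-u_j)/(b-a)|$ and $s^{1/2}\log(2(b-a)/s)$ transform covariantly. So it suffices to prove the claim with $[a,b] = [0,1]$ and Brownian variances $n^{-1}$.

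The heart of the proof is the \emph{one-scale estimate}: for fixed $j \in \llbracket 1, n \rrbracket$ and fixed $0 \le t < t+s \le 1$, show
\begin{flalign*}
\mathbb{P}\Big[ \big| x_j(t+s) - x_j(t) - s(v_j - u_j) \big| \ge B s^{1/2} \Big] \le C e^{Cn - cB^2 n}.
\end{flalign*}
The strategy is to exploit the Brownian Gibbs property of $\mathfrak{Q}^{\bm{u};\bm{v}}$ (from its Karlin--McGregor determinantal structure): conditional on $\{x_k\}_{k \neq j}$ together with $x_j(0), x_j(1)$, the curve $x_j$ is a Brownian bridge from $u_j$ to $v_j$ of variance $n^{-1}$, further conditioned to stay strictly between $x_{j+1}$ and $x_{j-1}$. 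For the free (unconditioned) bridge $\widetilde{x}_j$, the standard Gaussian tail for increments yields $\mathbb{P}[|\widetilde{x}_j(t+s) - \widetilde{x}_j(t) - s(v_j - u_j)| \ge Bs^{1/2}] \le 2 e^{-c B^2 n}$. The cost of the corridor conditioning is at most the reciprocal of its probability, which a monotone coupling (\Cref{monotoneheight}) against a well-separated reference system, together with an explicit lower bound on the non-intersection probability, shows to be at most $e^{Cn}$.

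With the one-scale estimate in hand, I would chain dyadically. For scales $s_k = 2^{-k}$ with $1 \le k \le K \sim \log n$ and dyadic times $t_\ell = \ell s_k$, apply the one-scale bound with $B$ replaced by $B k \asymp B \log(2/s_k)$, obtaining $C\exp(Cn - c k^2 B^2 n)$ per point. Summing over the $\sim 2^k$ dyadic times at scale $k$, over all scales $k \ge 1$, and over $j \in \llbracket 1, n \rrbracket$ gives
\begin{flalign*}
\sum_{k \ge 1} n \cdot 2^k \cdot C e^{Cn - c k^2 B^2 n} \le C' e^{C'n - c'B^2 n},
\end{flalign*}
where $B \ge 1$ is used to absorb the polynomial factors into the exponential. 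Finally, a standard Kolmogorov--Chentsov argument at the finest scale (using one more application of the Gaussian increment bound) lets one pass from the dyadic grid to arbitrary $(t,s)$ with $t, t+s \in [0,1]$.

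The main obstacle is the second step, specifically quantifying the cost of the non-intersection conditioning. One must establish a uniform lower bound of the form $e^{-Cn}$ on the probability that $n$ independent Brownian bridges from $\bm{u}$ to $\bm{v}$ are non-intersecting, which is delicate when $\bm{u}$ or $\bm{v}$ have nearly-coincident entries. The cleanest route is to use the monotone coupling of \Cref{monotoneheight} to compare $\bm{x}$ with a system whose endpoints are well-separated by $\Omega(n^{-1/2})$ (so the free bridges avoid each other with probability $\Omega(1)$), then translate this back via a further application of \Cref{uvv} to control the discrepancy between $\bm{x}$ and the comparison process. Everything else (Gaussian tail for single bridges, the chaining, and the union bound) is routine.
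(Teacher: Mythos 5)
First, note that the paper does not prove \Cref{estimatexj2} at all: it is imported verbatim (after the diffusive rescaling of \Cref{scale}) from \cite[Proposition 3.5]{BPLE}, so there is no in-paper argument to compare against; the question is whether your proposed proof is sound on its own. It is not, and the gap is exactly at the step you flag as the ``main obstacle.'' Your one-scale bound rests on controlling the cost of the non-intersection conditioning by the reciprocal of its probability, and hence on a uniform lower bound of the form $e^{-Cn}$ for the probability that $n$ \emph{independent} bridges from $\bm{u}$ to $\bm{v}$ do not intersect. No such bound exists: the lemma is claimed uniformly over $\bm{u}, \bm{v} \in \mathbb{W}_n$, including data whose entries are arbitrarily close (and, by continuity, the watermelon-type closure points, for which the conditioning event has probability zero). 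By Karlin--McGregor the non-intersection probability degenerates like a product of Vandermonde-type factors in the gaps, so it admits no lower bound depending on $n$ alone; the same objection applies to the per-curve version of the argument, since the random corridor between $x_{j-1}$ and $x_{j+1}$ can be arbitrarily narrow. Thus the bound $\mathbb{P}[A\mid \mathrm{NC}] \le \mathbb{P}[A]/\mathbb{P}[\mathrm{NC}]$, while correct, cannot deliver a factor $e^{Cn}$ uniformly in the boundary data.

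The proposed repair does not close this gap. To make consecutive endpoints $\Omega(n^{-1/2})$-separated when the original entries nearly coincide, you must displace the extreme endpoints by as much as order $n^{1/2}$, and the couplings of \Cref{monotoneheight} and \Cref{uvv} only control the pointwise discrepancy between $\bm{x}$ and the comparison ensemble by the size of that displacement. This is useless here for two reasons: the discrepancy ($\sim n^{1/2}$) dwarfs the increment scale $Bs^{1/2}$ you are trying to control for small $s$, and, more fundamentally, a monotone or sandwich coupling transfers bounds on \emph{values}, not on \emph{increments}---to deduce $|x_j(t+s)-x_j(t)|\lesssim Bs^{1/2}$ from the comparison process you would need a two-sided coupling accurate to $o(Bs^{1/2})$ uniformly in $s$, which is precisely what the necessary endpoint perturbation forbids. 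This is why the proof in \cite{BPLE} does not proceed by a Radon--Nikodym comparison with independent bridges but through their comparison framework for non-intersecting bridges (the route mimicked in \Cref{ProofContinuous} of this paper for the variant \Cref{estimatexj3}, where boundary effects are handled by conditioning on sub-collections and comparing with ensembles for which the estimate is already known). Your rescaling step, the Gaussian tail for a single free bridge, and the dyadic chaining with the logarithmic factor are all fine; the missing ingredient is a one-scale increment bound that is uniform in degenerate endpoint data, and neither the conditioning-cost bound nor the monotone-coupling transfer supplies it.
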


We will also require the following variant of \Cref{estimatexj2} that allows for lower and upper boundaries $f$ and $g$. Its second part provides a H\"{o}lder bound assuming these two boundaries are differentiable. Without at least some continuity constraints on $f$ and $g$, such a bound cannot hold.\footnote{For example, if $x_n (a) = f(a)$ and $f(a^+) > f(a)$, then $x_n$ will be discontinuous at $a$.} However, when $g = \infty$ (in the absence of a continuity condition on $f$), the first part of the below lemma provides an ``interior'' H\"{o}lder bound on these bridges, that is, one that holds away from the boundaries $t \in \{ a, b \}$ of the time interval. The proof of the following result is similar to that of \Cref{estimatexj2} and will be established in \Cref{ProofContinuous} below.

\begin{lem}
	
	\label{estimatexj3}
	
	Let $n$, $B$, $a$, $b$, $\bm{u}$, and $\bm{v}$ be as in \Cref{estimatexj2}. Further let $A \ge 1$ be a real number and $f, g: [a, b] \rightarrow \overline{\mathbb{R}}$ be measurable functions. Sampling $\bm{x} = (x_1, x_2, \ldots , x_n) \in \llbracket 1, n \rrbracket \times \mathcal{C} \big( [a, b] \big)$ under $\mathfrak{Q}_{f; g}^{\bm{u}; \bm{v}}$, the following two statements hold. In what follows, we set $T = b-a$.
	
	\begin{enumerate} 
		\item \label{afg} Assume that $g = \infty$; that $u_n - f(r) \ge -A T^{1/2}$; and that $v_n - f(r) \ge -A T^{1/2}$, for each $r \in [a, b]$. For any real number $0 < \kappa < \min \big\{ \frac{T}{2}, 1 \big\}$, we have  
		\begin{flalign*}
			\mathbb{P} \Bigg[ \bigcup_{j=1}^n \bigcup_{a + \kappa \le t < t+s \le b - \kappa} \bigg\{ \big| x_j (& t+s) - x_j (t) - sT^{-1} (v_j - u_j) \big| \\
			& \ge s^{1/2}  \big( B \log |2s^{-1} T| + \kappa^{-1} (A+B) \big)^2 \bigg\} \Bigg] \le C e^{Cn - cB^2 n}.
		\end{flalign*} 
		
		\item \label{afg2} If instead $f$ and $g$ are differentiable and $\big| \partial_t f(r) \big| + \big| \partial_t g(r) \big| \le A$, for each $r \in [a, b]$, then 
		\begin{flalign*}
			\mathbb{P} \Bigg[ \bigcup_{j=1}^n \bigcup_{a \le t < t+s \le b} \bigg\{ \big| x_j (& t+s) - x_j (t) - s T^{-1} (v_j - u_j) \big| \\
			& \ge s^{1/2}  \big( B\log | 9 s^{-1} T| + 2(A + B) \big)^2 \bigg\} \Bigg]  \le C e^{Cn - cB^2 n}.
		\end{flalign*} 
	\end{enumerate} 
\end{lem}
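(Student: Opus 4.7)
The plan is to reduce to \Cref{estimatexj2} (the boundary-free H\"older estimate) using the affine invariance of \Cref{linear} and the monotone couplings of \Cref{monotoneheight,uvv}.

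For part (2), the differentiability of $f$ and $g$ with $|\partial_t f|, |\partial_t g| \le A$ lets me apply \Cref{linear} and subtract a linear interpolant, after which $\| f \|_0, \| g \|_0 \le O(AT)$. With bounded boundaries, $\mathfrak{Q}_{f;g}^{\bm{u};\bm{v}}$ is the restriction of the boundary-free law $\mathfrak{Q}^{\bm{u};\bm{v}}$ to the event $\{ f < x_j < g \text{ for all } j \}$, so
\begin{flalign*}
\mathfrak{Q}_{f;g}^{\bm{u};\bm{v}}[\mathcal{E}] \le \frac{\mathfrak{Q}^{\bm{u};\bm{v}}[\mathcal{E}]}{\mathfrak{Q}^{\bm{u};\bm{v}}[f < x_j < g \text{ for all } j]}
\end{flalign*}
for any event $\mathcal{E}$. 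A standard Gaussian computation, using that consecutive paths under $\mathfrak{Q}^{\bm{u};\bm{v}}$ are typically separated by $\Theta(n^{-1/2})$, lower bounds the denominator by $e^{-Cn}$ whenever $u_j, v_j$ and the boundaries are mutually separated on the scale $O(AT)$. Applying \Cref{estimatexj2} to the numerator and choosing $B$ large so that $cB^2 n$ absorbs $Cn$ and the $A$-dependence then yields the claim.

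For part (1), the lack of regularity on $f$ rules out an affine normalization, so I would instead work in the interior via the Brownian Gibbs property. First, by \Cref{monotoneheight} and \Cref{estimatexj2} applied to the boundary-free ensemble $\mathfrak{Q}^{\bm{u};\bm{v}}$, the hypothesis $u_n - f(r), v_n - f(r) \ge -AT^{1/2}$ implies that with probability at least $1 - C e^{Cn - cB^2 n}$, one has $x_n(a+\kappa/2), x_n(b-\kappa/2) \ge \sup_{r \in [a,b]} f(r) - O(\kappa^{1/2}(A+B))$. Second, conditioning on $\bm{x}(a + \kappa/2)$ and $\bm{x}(b - \kappa/2)$ via Brownian Gibbs produces on $[a+\kappa/2, b-\kappa/2]$ a lower-boundary-only ensemble whose starting and ending data lie sufficiently far above $f$ that, by the same Radon--Nikodym comparison as in part (2), the constrained H\"older bound reduces to an unconstrained one. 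Shifting by a linear interpolant on this sub-interval produces a drift of order $\kappa^{-1/2}(A+B)$, which upon squaring yields the $\kappa^{-1}(A+B)$ contribution in the stated bound.

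The main obstacle will be tracking constants in part (1) to recover the precise form $s^{1/2}(B\log|2s^{-1}T| + \kappa^{-1}(A+B))^2$ rather than a cruder expression. This requires carefully combining the logarithmic H\"older modulus of the unconstrained bridge (which supplies the $B\log|2s^{-1}T|$ term) with the deterministic drift produced by the affine normalization on the sub-interval (which supplies the $\kappa^{-1}(A+B)$ term), ensuring their sum appears under a square in the final bound; the squaring reflects the coupling between these two independent sources of fluctuation, each of which scales like the square root of the relevant modulus.
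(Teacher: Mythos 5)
There is a genuine gap, and it sits at the heart of both parts: the Radon--Nikodym step. You bound $\mathfrak{Q}_{f;g}^{\bm{u};\bm{v}}[\mathcal{E}]$ by $\mathfrak{Q}^{\bm{u};\bm{v}}[\mathcal{E}]$ divided by the probability that the free ensemble respects the boundaries, and then assert this denominator is at least $e^{-Cn}$. No such lower bound holds under the hypotheses of the lemma: nothing prevents the starting and ending data from touching the boundary (the definition explicitly allows $f(a)=u_n$, as for Brownian excursions), in which case the denominator is not bounded below uniformly and can even vanish; and in part (1) the hypothesis only says $u_n,v_n\ge \sup_r f(r)-AT^{1/2}$, i.e.\ the data may lie \emph{below} the supremum of the floor, so after your Gibbs step the resampled bridges are not ``sufficiently far above $f$'' and the constraint is genuinely binding. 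The claim that typical $\Theta(n^{-1/2})$ spacings rescue the denominator has no basis for general $\bm{u},\bm{v}$, and the proposed fix of ``choosing $B$ large so that $cB^2n$ absorbs $Cn$ and the $A$-dependence'' is not available: $A$ and $B$ are given parameters of the statement, $A$ may be far larger than $B$, and the constants $c,C$ must be uniform in both (in the correct statement $A$ enters only the deterministic threshold, never the probability). A secondary slip: after subtracting a linear interpolant in part (2) you only control $\|f\|_0$, not $\|g\|_0$, since no bound on $g-f$ is assumed.

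The paper avoids conditioning-probability estimates entirely. For the a priori control it shifts the comparison ensemble \emph{up} by $A+\tfrac{B}{2}$ (\Cref{uvv}, \Cref{monotoneheight}), so that the corresponding \emph{free} ensemble stays above $f$ (resp.\ below a linear barrier $\widetilde g\le g$ built from $|\partial_t g|\le A$, via \Cref{xjflinear}) with probability $1-Ce^{Cn-cB^2n}$; on that event the constrained and free ensembles can be \emph{coupled to coincide}, which transfers \Cref{estimatexj2} at no cost in the exponent and yields $\sup_r x_j(r)\le A+B$ w.h.p. The increment bound then comes from a Gibbs resampling in which one conditions on the lower paths and on the configuration outside $[0,t+s]$ (or $[t,1]$), compares monotonically with a boundary-free (or linear-barrier) bridge ensemble, and applies \Cref{estimatexj2}; the term $\kappa^{-1}(A+B)$ is simply the drift $|v_j''-u_j''|/(t+s)\le (A+B)/\kappa$ of that resampled ensemble, available because $t\ge\kappa$ and the endpoints obey the a priori sup bound — not a fluctuation matched to the H\"older modulus, and not the source of the outer square (which merely leaves room for the chaining step of \cite{BPLE} that upgrades the single-$(j,s,t)$ estimate of \Cref{2estimatex} to the uniform statement). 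To repair your argument you would need to replace the ratio bound by exactly this kind of shift-and-couple comparison, at which point you are essentially reconstructing the paper's proof.
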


\subsection{Stieltjes Transforms and Free Convolutions}

\label{TransformConvolution}

In this section we recall various results concerning Stieltjes transforms and free convolution with the semicircle distribution. We first require some notation on measures. In what follows, we let $\mathscr{P}_{\fin} = \mathscr{P}_{\fin} (\mathbb{R})$ denote the set of nonnegative measures $\mu$ on $\mathbb{R}$ with finite total mass, namely, $\mu (\mathbb{R}) < \infty$. Further let $\mathscr{P} = \mathscr{P} (\mathbb{R}) \subset \mathscr{P}_{\fin}$ denote the set of probability measures on $\mathbb{R}$, and let $\mathscr{P}_0 = \mathscr{P}_0 (\mathbb{R}) \subset \mathscr{P}$ denote the set of probability measures that are compactly supported; the support of any measure $\nu \in \mathscr{P}$ is denoted by $\supp \nu$. We say that a probability measure $\mu \in \mathscr{P}$ has density $\varrho$ (with respect to Lebesgue measure) if $\varrho : \mathbb{R} \rightarrow \mathbb{R}$ is a measurable function satisfying $\mu (dx) = \varrho(x) dx$. For any real number $x \in \mathbb{R}$, we let $\delta_x \in \mathscr{P}_0$ denote the delta function at $x$. 

Fix a measure $\mu \in \mathscr{P}_{\fin}$. Its \emph{Stieltjes transform} is the function $m = m_{\mu} : \mathbb{H} \rightarrow \mathbb{H}$ defined by, for any complex number $z \in \mathbb{H}$, setting   
\begin{flalign}
	\label{mz0} 
	m(z) = \displaystyle\int_{-\infty}^{\infty} \displaystyle\frac{\mu (dx)}{x-z}.
\end{flalign}

\noindent If $\mu$ has a density $\varrho \in L^1 (\mathbb{R})$, then $\varrho$ can be recovered from its Stieltjes transform by the identity \cite[Equation (8.14)]{FPRM}, 
\begin{flalign}
	\label{mrho} 
	\pi^{-1} \displaystyle\lim_{y \rightarrow 0} \Imaginary m(x + \mathrm{i} y) = \varrho (x); \qquad \pi^{-1} \displaystyle\lim_{y \rightarrow 0} \Real m(x + \mathrm{i} y) = H \varrho (x),
\end{flalign}

\noindent for any $x \in \mathbb{R}$. In the latter, $Hf$ denotes the Hilbert transform of any function $f \in L^1 (\mathbb{R})$, given by 
\begin{flalign}
	\label{transform2}
	Hf (x) = \pi^{-1} \cdot \PV \displaystyle\int_{-\infty}^{\infty} \displaystyle\frac{f(w) dw}{w-x},
\end{flalign}

\noindent where $\PV$ denotes the Cauchy principal value (assuming the integral exists as a principal value).

The \emph{semicircle distribution} is a measure $\mu_{\semci} \in \mathscr{P} (\mathbb{R})$ whose density $\varrho_{\semci} : \mathbb{R} \rightarrow \mathbb{R}_{\ge 0}$ is  
\begin{flalign}
	\label{rho1} 
	\varrho_{\semci} (x) = \displaystyle\frac{(4-x^2)^{1/2}}{2\pi} \cdot \textbf{1}_{x \in [-2, 2]}, \qquad \text{for all $x \in \mathbb{R}$}. 
\end{flalign}

\noindent For any real $t > 0$, we denote the rescaled semicircle density $\varrho_{\semci}^{(t)}$ and distribution $\mu_{\semci}^{(t)} \in \mathscr{P}$ by 
\begin{flalign}
	\label{rhosct}
	\varrho_{\semci}^{(t)} (x) = t^{-1/2} \varrho_{\semci} (t^{-1/2} x); \qquad \mu_{\semci}^{(t)} (dx) = \varrho_{\semci}^{(t)} (x) dx.
\end{flalign}

 We next discuss the free convolution of a probability measure $\mu \in \mathscr{P}$ with the (rescaled) semicircle distribution $\mu_{\semci}^{(t)}$. First, for any real number $t > 0$, define the function $M = M_{t; \mu} : \mathbb{H} \rightarrow \mathbb{C}$ and the set $\Lambda_t = \Lambda_{t; \mu} \subseteq \mathbb{H}$ by
\begin{flalign}
	\label{mtlambdat} 
	M(z) = z - t m(z); \quad \Lambda_t = \Big\{ z \in \mathbb{H} : \Imaginary \big( z - tm (z) \big) > 0 \Big\} = \Bigg\{ z \in \mathbb{H} : \displaystyle\int_{-\infty}^{\infty} \displaystyle\frac{\mu(dx)}{|z-x|^2} < \displaystyle\frac{1}{t} \Bigg\}.
\end{flalign}


\begin{lem}[{\cite[Lemma 4]{FCSD}}] 
	
	\label{mz} 
	
	The function $M$ is a homeomorphism from $\overline{\Lambda}_t$ to $\overline{\mathbb{H}}$. Moreover, it is a holomorphic map from $\Lambda_t$ to $\mathbb{H}$ and a bijection from $\partial \Lambda_t$ to $\mathbb{R}$. 
	
\end{lem}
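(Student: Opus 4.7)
The plan is to prove the four claims of the lemma in sequence: holomorphy with $M(\Lambda_t) \subseteq \mathbb{H}$; injectivity on $\Lambda_t$; surjectivity onto $\mathbb{H}$; and a bijective continuous extension to the boundary. The first is routine: differentiation under the integral in \eqref{mz0}, justified by dominated convergence, shows that $m$ (and hence $M$) is holomorphic on $\mathbb{H}$, with $m'(z) = \int (x-z)^{-2}\mu(dx)$. The inclusion $M(\Lambda_t) \subseteq \mathbb{H}$ is immediate from the first description of $\Lambda_t$ in \eqref{mtlambdat}. A useful by-product is $|m'(z)| \leq \int |x-z|^{-2}\mu(dx) < 1/t$ on $\Lambda_t$, so $M'(z) \neq 0$ there, giving local invertibility of $M$ on $\Lambda_t$.

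For global injectivity on $\Lambda_t$, the key identity is the resolvent relation $m(z_1) - m(z_2) = (z_1 - z_2) \int (x-z_1)^{-1}(x-z_2)^{-1}\mu(dx)$. If $M(z_1) = M(z_2)$ with $z_1 \neq z_2$ in $\Lambda_t$, this reduces to $1 = t \int (x-z_1)^{-1}(x-z_2)^{-1}\mu(dx)$; taking absolute values and applying Cauchy--Schwarz then gives
\begin{flalign*}
1 \leq t \Bigg( \int \frac{\mu(dx)}{|x-z_1|^2} \Bigg)^{1/2} \Bigg( \int \frac{\mu(dx)}{|x-z_2|^2} \Bigg)^{1/2} < t \cdot t^{-1/2} \cdot t^{-1/2} = 1,
\end{flalign*}
a contradiction. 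For surjectivity onto $\mathbb{H}$, the plan is to combine the open mapping theorem with a properness argument. The image $M(\Lambda_t)$ is open in $\mathbb{H}$ by openness of non-constant holomorphic maps. To see it is also closed in $\mathbb{H}$: if $M(z_n) \to w \in \mathbb{H}$, then $(z_n)$ must be bounded, because $m(z) \to 0$ as $|z| \to \infty$ (so that $M(z) \sim z$), and hence a subsequence converges to some $z \in \overline{\Lambda}_t$ with $M(z) = w$; this limit $z$ cannot lie on $\partial \Lambda_t$, where $M$ is real by definition, so $z \in \Lambda_t$ and $w \in M(\Lambda_t)$. Non-emptyness follows from $\{z \in \mathbb{H} : \Imaginary z > \sqrt{t}\} \subseteq \Lambda_t$, so connectedness of $\mathbb{H}$ yields $M(\Lambda_t) = \mathbb{H}$.

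The final step---boundary bijection and global homeomorphism---is where I expect the main obstacle. Surjectivity onto $\mathbb{R}$ follows from the same properness argument: given $r \in \mathbb{R}$, the unique preimages $z_k \in \Lambda_t$ of any sequence $w_k \to r$ in $\mathbb{H}$ remain bounded and subsequentially converge to some $z \in \overline{\Lambda}_t$ with $M(z) = r$, necessarily lying on $\partial \Lambda_t$. Injectivity on $\partial \Lambda_t$ is the most delicate part, because the Cauchy--Schwarz bound above degenerates from strict to equality on $\partial \Lambda_t$. Distinct boundary preimages would force equality in Cauchy--Schwarz, giving $|x-z_1| = |x-z_2|$ for $\mu$-a.e.\ $x$; for distinct $z_1, z_2 \in \overline{\mathbb{H}}$ this constrains $\supp \mu$ to a line (the perpendicular bisector of the segment from $z_1$ to $z_2$), which intersects $\mathbb{R}$ in at most a single point, so the problem reduces to a delta-mass case easily handled by direct calculation from $m(z) = (a-z)^{-1}$. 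Continuity of $M^{-1}$ then follows automatically, since $M$ is a continuous proper bijection between closed subsets of $\mathbb{C}$.
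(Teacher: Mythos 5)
The paper itself offers no proof of this lemma: it is quoted from \cite[Lemma 4]{FCSD}, so the comparison is with the standard (Biane) argument, and your proposal is essentially that argument — the resolvent identity plus strict Cauchy--Schwarz for injectivity on $\Lambda_t$, open-plus-closed for surjectivity onto $\mathbb{H}$, and the equality case of Cauchy--Schwarz for injectivity on $\partial \Lambda_t$. The substance is correct, including the delicate boundary step; note only that equality in Cauchy--Schwarz by itself gives $|x - z_1| = \lambda |x - z_2|$ for $\mu$-a.e.\ $x$ with some constant $\lambda > 0$, and to conclude $\lambda = 1$ (hence a perpendicular bisector line, rather than an Apollonius circle meeting $\mathbb{R}$ in two points) you should record that the full chain of equalities also forces $\int |x - z_i|^{-2} \mu(dx) = 1/t$ for both $i$ — at real boundary points one only knows $\le 1/t$ a priori — which your argument uses implicitly.

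Two smaller points should be made explicit, since the statement concerns $\overline{\Lambda}_t$. First, $\overline{\Lambda}_t$ may contain real points at which $m$ is not a priori defined, and your properness, surjectivity-onto-$\mathbb{R}$, and homeomorphism steps all treat $M$ as defined and continuous on the closure. This is fixable by the computation you already use: for $z, z' \in \Lambda_t$, the resolvent identity and Cauchy--Schwarz give $|m(z) - m(z')| \le |z - z'| \big( \int |x-z|^{-2} \mu(dx) \big)^{1/2} \big( \int |x-z'|^{-2} \mu(dx) \big)^{1/2} \le t^{-1} |z - z'|$, so $m$ (hence $M$) is Lipschitz on $\Lambda_t$ and extends continuously to $\overline{\Lambda}_t$; by Fatou, $\int |x-z|^{-2} \mu(dx) \le 1/t$ on the closure, so at real boundary points the extension agrees with the absolutely convergent integral, and $\Imaginary M = 0$ there (e.g.\ from $\Imaginary M(z) \le \Imaginary z$ on $\Lambda_t$). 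Second, the claim that $m(z) \to 0$ as $|z| \to \infty$ is neither needed nor obvious along $\Lambda_t$ for general $\mu$; what the argument requires is only the uniform bound $|M(z) - z| = t |m(z)| \le \big( t \mu(\mathbb{R}) \big)^{1/2}$ on $\Lambda_t$ (Cauchy--Schwarz again), which gives boundedness of preimage sequences and hence properness. With these two sentences added, the proof is complete and matches the cited source's route.
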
 

For any real number $t \ge 0$, define $m_t = m_{t; \mu} : \mathbb{H} \rightarrow \mathbb{H}$ as follows. First set $m_0 (z) = m(z)$; for any real number $t > 0$, define $m_t$ so that	
\begin{flalign}
	\label{mt} 
	m_t \big( z - t m_0 (z) \big) = m_0 (z), \qquad \text{for any $z \in \Lambda_t$}.
\end{flalign}

\noindent Since by \Cref{mz} the function $M(z) = z - tm_0 (z)$ is a bijection from $\Lambda_t$ to $\mathbb{H}$, \eqref{mt} defines $m_t$ on $\mathbb{H}$. By \cite[Proposition 2]{FCSD}, $m_t$ is the Stieltjes transform of a probability measure $\mu_t \in \mathscr{P} (\mathbb{R})$. This measure is called the \emph{free convolution} between $\mu$ and $\mu_{\semci}^{(t)}$, and we often write $\mu_t = \mu \boxplus \mu_{\semci}^{(t)}$. By \cite[Corollary 2]{FCSD}, $\mu_t$ has a density $\varrho_t = \varrho_t^{\mu} : \mathbb{R} \rightarrow \mathbb{R}_{\ge 0}$ for any $t > 0$.

The following lemma provides an expression of the $t$-derivative of $\varrho_t$; it is essentially known \cite{ACNRV,FCSD} but we provide the proof of its second part for convenience. 

\begin{lem}[\cite{ACNRV,FCSD}]
	
	\label{yconvolution}
	
	For any real number $t > 0$, both $\varrho_t(y)$ and its Hilbert transform $H \varrho_t (y)$ are smooth in $y \in \big\{ y' \in \mathbb{R} : \varrho_t (y') > 0 \big\}$. Moreover, we have 
	 \begin{flalign}
	 	\label{trhoty}
	 	\partial_t \varrho_t(y)= \pi \cdot \partial_y \big(\varrho_t(y) H \varrho_t (y) \big), \qquad \text{for $y \in \big\{ y' \in \mathbb{R} : \varrho_t (y') > 0 \big\}$}.
	\end{flalign}

\end{lem}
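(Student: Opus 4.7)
The plan is to derive the complex Burgers equation for the family $m_t$ from the defining relation \eqref{mt}, establish that $m_t$ extends smoothly from $\mathbb{H}$ to the positivity set of $\varrho_t$, and then obtain \eqref{trhoty} by taking imaginary parts.

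For the first step, I differentiate the defining identity $m_t\bigl(z - tm_0(z)\bigr) = m_0(z)$ in $t$ with $z \in \Lambda_t$ held fixed. The right-hand side is independent of $t$, so the chain rule yields $\partial_t m_t(w) = m_0(z)\,\partial_w m_t(w) = m_t(w)\,\partial_w m_t(w) = \tfrac{1}{2}\partial_w\bigl(m_t(w)^2\bigr)$, where $w = z - tm_0(z)$. By \Cref{mz}, every $w \in \mathbb{H}$ arises in this form for a unique $z \in \Lambda_t$, so this complex Burgers equation holds throughout $\mathbb{R}_{>0} \times \mathbb{H}$.

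For the second step, let $\mathcal{O}_t := \{y \in \mathbb{R} : \varrho_t(y) > 0\}$. By \Cref{mz}, $M$ maps $\partial\Lambda_t$ bijectively onto $\mathbb{R}$; points of $\partial\Lambda_t$ that lie on the real axis correspond (via $m_0$ being real there) to zeros of $\varrho_t$, so the preimage of $\mathcal{O}_t$ is $\partial\Lambda_t \cap \mathbb{H}$. Since $M$ is holomorphic and injective near any point of this curve, its derivative $M'$ cannot vanish there (otherwise $M$ would be locally $k$-to-$1$ for some $k \ge 2$, contradicting the bijection onto $\mathbb{R}$). The holomorphic inverse theorem then produces an analytic parametrization $y \mapsto z_t(y) \in \partial\Lambda_t \cap \mathbb{H}$ of $\mathcal{O}_t$, smooth jointly in $(t,y)$ via the implicit function theorem applied to $z - tm_0(z) = y$. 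Consequently $m_t(y) := m_0\bigl(z_t(y)\bigr)$ is smooth on $\mathcal{O}_t$, and by \eqref{mrho} its imaginary and real parts equal $\pi\varrho_t(y)$ and $\pi H\varrho_t(y)$ respectively, establishing the first claim.

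With smoothness up to $\mathcal{O}_t$ secured, \eqref{trhoty} follows by taking the imaginary part of the Burgers equation at $w = y + i0^+$ for $y \in \mathcal{O}_t$. Writing $m_t(y) = \pi H\varrho_t(y) + i\pi\varrho_t(y)$ gives $\Imaginary\bigl(m_t(y)^2\bigr) = 2\pi^2 \varrho_t(y)\, H\varrho_t(y)$, and so $\pi\,\partial_t \varrho_t(y) = \Imaginary\bigl(\partial_t m_t(y)\bigr) = \tfrac{1}{2}\partial_y \Imaginary\bigl(m_t(y)^2\bigr) = \pi^2\,\partial_y\bigl(\varrho_t(y) H\varrho_t(y)\bigr)$, which rearranges to \eqref{trhoty}. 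The main obstacle is the boundary regularity in the second step — verifying that $m_t$ genuinely extends smoothly across $\mathcal{O}_t$ rather than only in some weak sense — but this reduces to the non-vanishing of $M'$ along $\partial\Lambda_t \cap \mathbb{H}$, which in turn follows immediately from the bijectivity statement in \Cref{mz}.
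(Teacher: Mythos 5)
Your derivation of the evolution equation itself is exactly the paper's: differentiate \eqref{mt} in $t$ at fixed $z \in \Lambda_t$ to get $\partial_t m_t(w) = m_t(w)\,\partial_w m_t(w)$ with $w = z - tm_0(z)$, propagate this to all of $\mathbb{H}$ via \Cref{mz}, and pass to the real axis and take imaginary parts using \eqref{mrho} to obtain \eqref{trhoty}. Where you diverge is the first assertion of the lemma (smoothness of $\varrho_t$ and $H\varrho_t$ on the positivity set, i.e.\ smooth extension of $m_t$ up to that set): the paper simply quotes this from \cite{FCSD} (Lemma 2 and Corollary 3 there), whereas you try to prove it, and your proof of that part has a genuine gap.

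The gap is the parenthetical claim that $M' = 1 - t m_0'$ cannot vanish at a point $z_0 \in \partial\Lambda_t \cap \mathbb{H}$ because $M$ is injective near $z_0$. \Cref{mz} only gives injectivity of $M$ on the closed set $\overline{\Lambda}_t$, and injectivity of a holomorphic map on a closed region does not rule out a critical point on its boundary: $z \mapsto z^2$ is a homeomorphism of the closed first quadrant onto $\overline{\mathbb{H}}$, holomorphic on the interior, with vanishing derivative at the boundary point $0$. The local picture created by a boundary critical point (a sector of opening $\pi/k$ mapped bijectively onto a half-disk) is compatible with everything you have extracted from \Cref{mz}, so the non-vanishing of $M'$ on $\partial\Lambda_t \cap \mathbb{H}$ really requires an argument; the standard one is quantitative rather than topological: for $z \in \partial\Lambda_t \cap \mathbb{H}$ one has $t\big| m_0'(z) \big| \le t \int |z-x|^{-2}\, \mu(dx) = 1$, and equality in the first bound forces $\arg(z-x)$ to be $\mu$-a.e.\ constant, i.e.\ $\mu$ a single atom, a case where one checks directly that $t m_0'(z) = 1$ only at two real points. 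This is precisely the content of the results in \cite{FCSD} that the paper cites; with that input (or by citing it, as the paper does), the rest of your argument --- the analytic parametrization of $\partial\Lambda_t \cap \mathbb{H}$ via the implicit function theorem and the passage to the boundary in the Burgers equation --- goes through.
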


\begin{proof}
	
	The first part follows from \cite[Corollary 3]{FCSD} (together with \cite[Lemma 2]{FCSD}). For the second part, by taking the derivative on both sides of \eqref{mt} with respect to $t$, we get for any $z \in \Lambda_t$ that
	\begin{flalign*} 
		0 = \partial_t m_0 (z) = \partial_t \Big( m_t \big( z - tm_0 (z) \big) \Big) & = \partial_t m_t \big( z - t m_0 (z) \big) - \partial_z m_t \big( z - t m_0 (z) \big) \cdot m_0 (z) \\ 
		& = \partial_t m_t \big( z - tm_0 (z) \big) - \partial_z m_t \big( z - tm_0 (z) \big) \cdot m_t \big( z - tm_0 (z) \big).
	\end{flalign*} 

	\noindent Setting $w = z - tm_0 (z)$ and applying \Cref{mz}, it follows for any $w = y + \mathrm{i} \eta \in \mathbb{H}$ that 
	\begin{flalign*}
		\partial_t m_t (y + \mathrm{i} \eta) = \displaystyle\frac{1}{2} \cdot \partial_z \big( m_t (y + \mathrm{i} \eta) \big)^2.
	\end{flalign*}  
	
	\noindent Sending $\eta$ to zero and applying \eqref{mrho} (with the first part of the lemma), we find
	\begin{align}\label{e:limitzero}
		\pi \cdot \partial_t \big(H \varrho_t (y)+\mathrm{i} \varrho_t(y) \big)=\frac{\pi^2}{2} \cdot \partial_y \Big( \big(H \varrho_t (y)+\mathrm{i} \varrho_t(y) \big)^2 \Big).
	\end{align}

	\noindent Then \eqref{trhoty} follows from comparing the imaginary parts on both sides of \eqref{e:limitzero}.
\end{proof}

The following lemmas indicate that $\varrho_t$ is smooth on its support and bounds its derivatives there; we provide their proofs (which follow similar ones in \cite{FCSD}) in \Cref{ProofRho} below. 

\begin{lem}
	
	\label{rhotestimatek} 
	
	For any real number $t > 0$, we have $\varrho_t^{\mu} (x) \le \pi^{-1} t^{-1/2}$. Moreover, $\varrho_t$ is smooth on its support; in particular, for any integer $k \in \mathbb{Z}_{\ge 1}$ there exists a constant $C = C(k) > 1$ such that the following holds. For any real numbers $t, \delta \in (0, 1)$ and $x_0 \in \mathbb{R}$, if $\varrho_t (x_0) \ge \delta$ then $\big| \partial_x^k \varrho_t (x_0) \big| \le C (t^4 \delta^6)^{-k}$.
	
\end{lem}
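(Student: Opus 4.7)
The first assertion $\varrho_t(x) \le \pi^{-1} t^{-1/2}$ follows from an application of Cauchy--Schwarz to the Stieltjes transform evaluated on $\partial \Lambda_t$. Let $z_0 = x_0 + \mathrm{i} b_0 \in \partial \Lambda_t$ be the preimage of $x$ under $M$, so that by \eqref{mrho} and \eqref{mt} we have $\pi \varrho_t(x) = \Imaginary m_t(x) = \Imaginary m_0(z_0)$. On the boundary $\partial \Lambda_t$ one has $\int |z_0 - s|^{-2} d\mu(s) = 1/t$, and consequently $\Imaginary m_0(z_0) = b_0 / t$. On the other hand, Cauchy--Schwarz applied to the probability measure $\mu$ gives $|m_0(z_0)|^2 \le \int |z_0 - s|^{-2} d\mu(s) = 1/t$. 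Combining this with $\Imaginary m_0(z_0) \le |m_0(z_0)|$ yields $b_0^2/t^2 \le 1/t$, so $b_0 \le t^{1/2}$ and $\varrho_t(x) = b_0/(\pi t) \le \pi^{-1} t^{-1/2}$.

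For smoothness, the idea is that $m_t$ extends analytically across every real point $x_0$ with $\varrho_t(x_0) > 0$. By \eqref{mt} we have $m_t = m_0 \circ M^{-1}$, and $M(z) = z - t m_0(z)$ is holomorphic on $\mathbb{C} \setminus \supp \mu$, in particular near $z_0 = M^{-1}(x_0)$, which satisfies $\Imaginary z_0 = \pi t \varrho_t(x_0) > 0$. The bijectivity of $M : \overline{\Lambda}_t \to \overline{\mathbb{H}}$ from \Cref{mz} forces $M'(z_0) \ne 0$, so the inverse function theorem provides a holomorphic local inverse $\phi$ of $M$ near $x_0 \in \mathbb{C}$. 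Then $m_t = m_0 \circ \phi$ extends holomorphically to a complex neighborhood of $x_0$, and $\varrho_t = \pi^{-1} \Imaginary m_t |_{\mathbb{R}}$ is real-analytic there.

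For the quantitative derivative bounds assuming $\varrho_t(x_0) \ge \delta$, I would first establish a lower bound $|M'(z_0)| \gtrsim t \delta^2$. Expanding $M'(z) = 1 - t \int (s-z)^{-2} d\mu(s)$ and taking the real part at $z_0$ gives $\Real M'(z_0) = 2 t b_0^2 J$, where $J = \int |s - z_0|^{-4} d\mu(s)$; the Cauchy--Schwarz inequality ensures $J \ge I(z_0)^2 = t^{-2}$, so $|M'(z_0)| \ge \Real M'(z_0) \ge 2 b_0^2/t \gtrsim t \delta^2$ since $b_0 \ge \pi t \delta$. Next, on the disk $D(z_0, b_0/2)$ I use $\Imaginary z \ge b_0/2$ and $|s - z| \ge |s - z_0|/2$ to derive the uniform bounds $|M''(z)| \lesssim b_0^{-1}$ and $|m_0(z)| \lesssim t^{-1/2}$. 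Comparing the linear approximation of $M$ at $z_0$ with its Taylor remainder then shows that $M$ is univalent on a disk $D(z_0, r)$ with $r \gtrsim t^2 \delta^3$, and that $M\bigl(D(z_0, r)\bigr)$ contains a disk $D(x_0, \rho)$ with $\rho \gtrsim t^3 \delta^5$. Consequently $m_t$ is holomorphic on $D(x_0, \rho)$ with $|m_t| \lesssim t^{-1/2}$ there, and Cauchy's formula yields $|\partial_x^k m_t(x_0)| \le C(k) \, t^{-1/2} \rho^{-k}$; taking imaginary parts and absorbing constants into $C(k)$ gives the claimed bound $|\partial_x^k \varrho_t(x_0)| \le C (t^4 \delta^6)^{-k}$.

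The main obstacle is the quantitative bookkeeping in the third step: one has to extract a lower bound on $|M'(z_0)|$ from the boundary constraint $I(z_0) = 1/t$ (which, on its own, only gives $|m_0'(z_0)| \le 1/t$ with possible equality) and simultaneously track $|M''|$ and $|m_0|$ uniformly on a complex neighborhood of $z_0$, so that the quantitative inverse function theorem delivers a disk of analyticity of the prescribed polynomial size around $x_0 \in \mathbb{R}$.
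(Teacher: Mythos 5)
Your proposal is correct, but it reaches the derivative bounds by a genuinely different route than the paper. The paper does not analytically continue $m_t$ across the real axis at all: it parametrizes $\varrho_t(x) = \pi^{-1}\Imaginary m_0\bigl(z(x)\bigr)$ along $\partial\Lambda_t$ and computes $\partial_x^k\varrho_t(x_0)$ by repeatedly applying the chain rule $\partial_x = (1-tm_0')^{-1}\partial_z$, which reduces everything to two estimates at the single point $z_0$: an upper bound $|\partial_z^{k}m_0(z_0)| \le k!\,t^{-1}(\pi t\delta)^{1-k}$ and a Jacobian lower bound $|1-tm_0'(z_0)| \ge 2t(\pi\delta)^2$. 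Your argument uses exactly the same two ingredients (your $\Real M'(z_0) = 2tb_0^2 J \ge 2b_0^2/t$ with $J \ge t^{-2}$ by Cauchy--Schwarz is literally the paper's bound \eqref{e:b3}, and your bounds on $|m_0|$, $|M''|$ on $D(z_0,b_0/2)$ play the role of \eqref{e:b1}), but you then package them through a quantitative inverse function theorem to get a disk of holomorphy $D(x_0,\rho)$ with $\rho\gtrsim t^3\delta^5$ and conclude by Cauchy's formula, whereas the paper's chain-rule bookkeeping stays on the boundary curve. Your version buys real-analyticity and a slightly sharper bound $t^{-1/2}(t^3\delta^5)^{-k}$, which indeed implies the stated $(t^4\delta^6)^{-k}$ since $t,\delta<1$ and $k\ge1$; the paper's version avoids the univalence/covering estimates and the (routine but necessary) verification that the local holomorphic inverse of $M$ agrees with the global inverse from \Cref{mz} defining $m_t$, so that $m_0\circ M^{-1}$ really is the continuation of $m_t$. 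For the first claim $\varrho_t \le \pi^{-1}t^{-1/2}$, the paper simply cites \cite[Lemma 3]{FCSD}; your Cauchy--Schwarz derivation is that lemma's proof done inline, so it is the same content. One small caveat: the assertion that bijectivity of $M:\overline{\Lambda}_t\to\overline{\mathbb{H}}$ ``forces'' $M'(z_0)\ne0$ is not a valid inference on its own (a map can be injective on a closed region yet have vanishing derivative at a boundary point, e.g. $z\mapsto z^2$ on the closed first quadrant); this does not damage your proof because your quantitative bound $\Real M'(z_0)\ge 2b_0^2/t>0$ establishes nonvanishing independently, but the qualitative smoothness paragraph should lean on that estimate rather than on bijectivity.
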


\begin{lem} 
	
	\label{derivativetm}	
	
	For any integer $k \ge 2$; real numbers $0 < \delta \le \mathfrak{c} < 1$ and $B > 1$, there exist constants $\mathfrak{t}_0 = \mathfrak{t}_0 (k, \delta, \mathfrak{c}, B) > 0$ and $C = C(k, \delta, \mathfrak{c}, B) > 1$ such that the following holds. Let $\mu_0 \in \mathscr{P}_0$ denote a compactly supported probability measure with a bounded density $\varrho_0 : \mathbb{R} \rightarrow \mathbb{R}_{\ge 0}$ with respect to Lebesgue measure. Assume that $\inf_{|x| \le \mathfrak{c}} \varrho_0(x)\ge \delta$ and $\sup_{|x| \le \mathfrak{c}} \big| \partial_x^m \varrho_0 (x) \big| \le B^{m+1} \delta^{-m}$ for any $m \in \llbracket 0, k \rrbracket$. Then, 
	\begin{flalign}
		\label{rhotxtx}
		\displaystyle\inf_{|x| \le \mathfrak{c}/4} \displaystyle\inf_{t \in [0, \mathfrak{t}_0]} \varrho_t (x) \ge \displaystyle\frac{\delta}{2}; \qquad \displaystyle\max_{k' \in \llbracket 0, k-2 \rrbracket} \displaystyle\sup_{|x| \le \mathfrak{c}/4} \displaystyle\sup_{t \in [0, \mathfrak{t}_0]} \big| \partial_t^{k-k'-2} \partial_x^{k'} \varrho_t (x) \big| \le C.
	\end{flalign}

\end{lem}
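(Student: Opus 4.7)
The plan is to analyze the Stieltjes transform $m_t$ of $\mu_t = \mu_0 \boxplus \mu_{\semci}^{(t)}$ through the characteristic equation \eqref{mt}, which together with \Cref{mz} expresses $m_t(w) = m_0\bigl(z(w,t)\bigr)$ whenever $z(w,t) \in \overline{\Lambda}_t$ solves $z - t m_0(z) = w$. For small $t$ I would solve this implicit relation via the implicit function theorem, obtaining $z(w,t)$ jointly $C^k$ in $(w,t)$; combined with the inversion formula \eqref{mrho}, this produces
\begin{flalign*}
\varrho_t(x) = \pi^{-1}\Imaginary m_0\bigl(z(x,t)\bigr), \qquad H\varrho_t(x) = \pi^{-1}\Real m_0\bigl(z(x,t)\bigr),
\end{flalign*}
from which the lower bound and derivative estimates in \eqref{rhotxtx} are immediate.

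The key preliminary step is to produce a $C^k$-extension of $m_0$ to a complex neighborhood of $(-\mathfrak{c},\mathfrak{c})$. I would fix a cutoff $\chi \in C_c^{\infty}\bigl((-3\mathfrak{c}/4, 3\mathfrak{c}/4)\bigr)$ with $\chi \equiv 1$ on $[-\mathfrak{c}/2,\mathfrak{c}/2]$ and split $m_0 = m_0^{\mathrm{near}} + m_0^{\mathrm{far}}$ as the Stieltjes transforms of $\chi(x)\varrho_0(x)\,dx$ and $\mu_0 - \chi(x)\varrho_0(x)\,dx$, respectively. The far piece $m_0^{\mathrm{far}}$ is analytic on a disk of radius $\mathfrak{c}/4$ about each point of $[-\mathfrak{c}/4, \mathfrak{c}/4]$, with derivatives bounded by Cauchy's estimates since $\|\mu_0^{\mathrm{far}}\| \le 1$. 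The near piece has $C_c^k$ density $\chi\varrho_0$ of $\mathcal{C}^k$-norm at most $C(k,\mathfrak{c}) B^{k+1}\delta^{-k}$; by the Plemelj--Sokhotski identity and standard $\mathcal{C}^k$-boundedness of the Hilbert transform on compactly-supported smooth functions, $m_0^{\mathrm{near}}$ extends from $\mathbb{H}$ across $\mathbb{R}$ as a $C^k$ function with boundary values $\pi H(\chi\varrho_0)(x) + \mathrm{i}\pi\chi(x)\varrho_0(x)$ and $\mathcal{C}^k$-norm bounded in $(k,\mathfrak{c},\delta,B)$. Combining, $m_0$ admits a $C^k$ extension $\widetilde m_0$ to a fixed complex strip around $[-\mathfrak{c}/2, \mathfrak{c}/2]$ with $\|\widetilde m_0\|_{\mathcal{C}^k} \le C_1(k,\mathfrak{c},\delta,B)$.

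Applying the implicit function theorem to $F(z,w,t) = z - t\widetilde m_0(z) - w$, with $\partial_z F|_{t=0} = 1$, then produces $\mathfrak{t}_0 = \mathfrak{t}_0(k,\mathfrak{c},\delta,B) > 0$ (chosen so that $\mathfrak{t}_0\|\partial_z\widetilde m_0\|_{C^0} \le 1/2$) and a $C^k$ function $z(w,t)$ satisfying $z(w,0) = w$ on a complex neighborhood of $[-\mathfrak{c}/2, \mathfrak{c}/2] \times [0, \mathfrak{t}_0]$, with $\mathcal{C}^k$-norm controlled by $C_2(k,\mathfrak{c},\delta,B)$. Specializing to $w = x \in [-\mathfrak{c}/4, \mathfrak{c}/4]$ gives $\varrho_t(x) = \pi^{-1}\Imaginary \widetilde m_0\bigl(z(x,t)\bigr)$, which equals $\varrho_0(x) \ge \delta$ at $t=0$, so shrinking $\mathfrak{t}_0$ yields $\varrho_t \ge \delta/2$, the first inequality of \eqref{rhotxtx}. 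The derivative bound follows by the chain rule, since each $\partial_t^a \partial_x^b \varrho_t(x)$ is a polynomial expression in derivatives of $\widetilde m_0$ and $z$, all bounded by constants depending only on $(k,\delta,\mathfrak{c},B)$. The main obstacle lies in the extension step: uniformly controlling $\|\widetilde m_0\|_{\mathcal{C}^k}$ by combining the Cauchy estimate for $m_0^{\mathrm{far}}$ (using the distance $\mathfrak{c}/4$ to the support of $\mu_0^{\mathrm{far}}$) with the Hilbert transform bounds for $m_0^{\mathrm{near}}$ (using the $\mathcal{C}^k$ bounds on $\varrho_0$) in a way that depends only on $(k,\mathfrak{c},\delta,B)$. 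Once this bookkeeping is complete, the implicit function theorem and chain rule routinely deliver the full conclusion.
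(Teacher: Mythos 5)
Your proposal is correct in substance but takes a genuinely different route from the paper on a key point. Both arguments share the same core: the characteristic relation $z - t m_0(z) = x$ from \eqref{mt} and \Cref{mz}, the identity $\varrho_t(x) = \pi^{-1} \Imaginary m_0\big(z(x,t)\big)$ obtained through the boundary limit \eqref{mrho}, and derivative bounds on $m_0$ near $\big[ -\frac{\mathfrak{c}}{2}, \frac{\mathfrak{c}}{2} \big]$ extracted from the $\mathcal{C}^k$ bounds on $\varrho_0$ by integration by parts (your near/far cutoff splitting is essentially the same computation as the paper's estimate \eqref{zkm0z}). The divergence is in how the $t$-derivatives are produced: you build a $C^k$-type extension of $m_0$ to a full two-dimensional strip across the real axis and apply the implicit function theorem to get $z(x,t)$ jointly smooth in $(x,t)$, then differentiate the composition; the paper never extends $m_0$ off $\overline{\mathbb{H}}$ and never differentiates the characteristic relation in $t$ — it only bounds $x$-derivatives of $\varrho_t$ via the chain rule $\partial_x = (1 - t m_0')^{-1} \partial_z$ (its \eqref{derivativekrho}), and then converts $t$-derivatives into $x$-derivatives using the transport equation $\partial_t \varrho_t = \pi \partial_x ( \varrho_t H \varrho_t )$ of \Cref{yconvolution} together with the Hilbert-transform estimate \eqref{e:Hderi}. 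Likewise, your invertibility condition $\mathfrak{t}_0 \| \partial_z \widetilde{m}_0 \|_{\mathcal{C}^0} \le \frac{1}{2}$ is cruder but legitimate here since $t \le \mathfrak{t}_0$ is small, whereas the paper reuses the finer lower bound on $|1 - t m_0'|$ designed for \Cref{rhotestimatek}, where $t$ need not be small. Your route buys a one-shot treatment of all mixed derivatives; the paper's avoids any extension of $m_0$ below the axis and recycles machinery it has already set up.

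Two points deserve care in a full write-up. First, the Hilbert transform does not map $\mathcal{C}^k_c$ to $\mathcal{C}^k$ at integer smoothness; you should work in $\mathcal{C}^{k-1,\alpha}$ (or concede one derivative), which is harmless because \eqref{rhotxtx} only requires mixed derivatives of total order $k-2$. Second, the identification of your implicit-function branch with the genuine inverse of $M$ should be made explicit: since $\Imaginary z(x,t) = t \Imaginary \widetilde{m}_0 \big( z(x,t) \big)$ and $\Imaginary \widetilde{m}_0 \approx \pi \varrho_0 \ge \pi\delta$ nearby, the branch stays in $\overline{\mathbb{H}}$, where $\widetilde{m}_0 = m_0$; it therefore lies on $\partial \Lambda_t$ and coincides with $M^{-1}(x)$ by the bijectivity in \Cref{mz}, which is precisely what justifies $\varrho_t(x) = \pi^{-1} \Imaginary m_0 \big( z(x,t) \big)$.
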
 

\begin{rem} 
	
	\label{mtscale} 

While free convolutions are typically defined between probability measures, the relation \eqref{mt} also defines the free convolution of any measure $\mu\in \mathscr{P}_{\fin}$, satisfying $A = \mu(\mathbb{R}) < \infty$, with the rescaled semicircle distribution $\mu_{\semci}^{(t)}$. Indeed, define the probability measure $\widetilde{\mu} \in \mathscr{P}$ from $\mu$ by setting $\widetilde{\mu} (I)= A^{-1} \cdot \mu (A^{1/2} I)$, for any interval $I\subseteq \mathbb{R}$. Furthermore, for any real number $s \ge 0$, define the probability measure $\widetilde{\mu}_s = \widetilde{\mu} \boxplus \mu_{\semci}^{(s)}$, and denote its Stieltjes transform by $\widetilde{m}_s$. Then, define the free convolution $\mu_t = \mu \boxplus \mu_{\semci}^{(t)}$ and its Stieltjes transform $m_t = m_{t; \mu}$ by setting
	\begin{flalign*} 
		\mu_t (I) = A \cdot \widetilde{\mu}_t (A^{-1/2} I), \qquad \text{for any interval $I \subseteq \mathbb{R}$, so that} \qquad	m_t (z) = A^{1/2} \cdot \widetilde m_t ( A^{-1/2} z),
	\end{flalign*} 

	\noindent where the second equality follows from the first by \eqref{mz0}. Then, 
\begin{align*}
		m_{t} \big( z-t m_{0}(z) \big) = A^{1/2} \cdot \widetilde{m}_t \Big( A^{-1/2} \big(z- t m_{0}(z) \big) \Big) & = A^{1/2} \cdot \widetilde{m}_t \big( A^{-1/2} z- t \widetilde{m}_{0}(A^{-1/2} z) \big) \\
		& = A^{1/2} \cdot \widetilde{m}_{0}(A^{-1/2} z)= m_{0}(z),
\end{align*}

\noindent so that \eqref{mt} continues to hold for $m_t$. In particular, \Cref{mz} and \Cref{yconvolution} (the latter by its proof) also hold for $\mu$.

\end{rem} 

\begin{rem}
	
	\label{mtscalebeta}
	
	Let us describe a scaling invariance for Stieltjes transforms. Fix a measure $\mu \in \mathscr{P}_{\fin}$ with $\mu (\mathbb{R}) < \infty$, and let $m_s$ denote the Stieltjes transform of $\mu \boxplus \mu_{\semci}^{(s)}$, for any real number $s \ge 0$. Fix a real number $\beta > 0$, and define the measure $\widetilde{\mu} \in \mathscr{P}_{\fin}$ by setting $\widetilde{\mu} (I) = \mu (\beta^{1/2} \cdot I)$, for any interval $I \subseteq \mathbb{R}$. Denote the Stieltjes transform of $\widetilde{\mu}$ by $\widetilde{m} = m_{\tilde{\mu}}$, and let $\widetilde{m}_s$ denote the Stieltjes transform of $\widetilde{\mu} \boxplus \mu_{\semci}^{(s)}$ for any $s > 0$. Then, observe for any real number $t \ge 0$ and complex number $z \in \mathbb{H}$ that 
	\begin{flalign} 
		\label{mtbeta} 
		\widetilde{m}_t (z) = \beta^{1/2} \cdot m_{\beta t} (\beta^{1/2} z).
	\end{flalign} 

	\noindent Indeed, this holds at $t = 0$ by \eqref{mz0}; for $t > 0$, we have
	\begin{flalign*}
		\widetilde{m}_t \big( z - t \widetilde{m}_0 (z) \big) = \widetilde{m}_0 (z) = \beta^{1/2} \cdot m_0 (\beta^{1/2} z) & = \beta^{1/2} \cdot m_{\beta t} \big( \beta^{1/2} z - \beta t m_0 (\beta^{1/2} z) \big) \\
		& = \beta^{1/2} \cdot m_{\beta t} \Big( \beta^{1/2} \big( z - t \widetilde{m}_0 (z) \big) \Big),
	\end{flalign*}

	\noindent which by \Cref{mz} (and \Cref{mtscale}, if $\mu$ is not a probability measure) implies \eqref{mtbeta}. 
\end{rem}

\subsection{Dyson Brownian Motion} 

\label{LambdaEquation}

In this section we recall properties about Dyson Brownian motion. Fix an integer $n \ge 1$ and a sequence $\bm{\lambda} (0) = \big( \lambda_1 (0), \lambda_2 (0), \ldots , \lambda_n (0) \big) \in \overline{\mathbb{W}}_n$. Define $\bm{\lambda} (t) = \big(\lambda_1 (t), \lambda_2 (t), \ldots , \lambda_n (t) \big) \in \overline{\mathbb{W}}_n$, for $t \ge 0$, to be the unique strong solution (see \cite[Proposition 4.3.5]{IRM}) to the stochastic differential equations 
\begin{flalign}
	\label{lambdaequation}
	d\lambda_i (t) = n^{-1} \displaystyle\sum_{\substack{1 \le j \le n \\ j \ne i}}	\displaystyle\frac{dt}{\lambda_i (t) - \lambda_j (t)} + n^{-1/2} dB_i (t).
\end{flalign}

\noindent The system \eqref{lambdaequation} is called \emph{Dyson Brownian motion}, run for time $t$, with \emph{initial data} $\bm{\lambda} (0)$; the $\lambda_i$ are sometimes referred to as \emph{particles}. 

\begin{rem}
	
	\label{scalemotion}
	
	As in \Cref{scale}, Dyson Brownian motion admits the following invariance under diffusive scaling for any real number $\sigma > 0$. If $\bm{\lambda} (t) = \big( \lambda_1 (t), \lambda_2 (t), \ldots , \lambda_n (t) \big) \in \overline{\mathbb{W}}_n$ solves \eqref{lambdaequation} then, denoting $\widetilde{\lambda}_j (t) = \sigma^{-1} \lambda_j (\sigma^2 t)$ for each $(j, t) \in \llbracket 1, n \rrbracket \times \mathbb{R}_{\ge 0}$, the process $\widetilde{\bm{\lambda}} (t) = \big( \widetilde{\lambda}_1 (t), \widetilde{\lambda}_2 (t), \ldots , \widetilde{\lambda}_n (t) \big) \in \overline{\mathbb{W}}_n$ also solves \eqref{lambdaequation}. This again follows from the invariance of the Brownian motions $B_i$ under the same scaling.
	
\end{rem}

We next describe the relation between Dyson Brownian motion, random matrices, and non-intersecting Brownian bridges, to which end we require some additional terminology. A \emph{random matrix} is a matrix whose entries are random variables. The \emph{Gaussian Unitary Ensemble} is an $n \times n$ random Hermitian matrix $\bm{G} = \bm{G}_n$ with random complex entries $\{ w_{ij} \}$ (for $i, j \in \llbracket 1, n \rrbracket$) defined as follows. Its diagonal entries $\{ w_{jj} \}$ are real Gaussian random variables of variance $n^{-1}$; its upper-triangular entries $\{ w_{ij} \}_{i < j}$ are complex random variables, whose real and imaginary parts are independent Gaussian random variables, each of variance $(2n)^{-1}$. These entries are mutually independent, and the lower triangular entries $\{ w_{ij} \}_{i > j}$ are determined from the upper triangular ones by the Hermitian symmetry relation $w_{ij} = \overline{w_{ji}}$.

The \emph{Hermitian Brownian motion} $\bm{G} (t) = \bm{G}_n (t)$ is a stochastic process (over $t \ge 0$) on $n\times n$ random matrices, whose entries $\big\{ w_{ij} (t) \big\}$ are defined as follows. Its diagonal entries $\big\{ w_{jj} (t) \big\}$ are Brownian motions of variance $n^{-1}$, and its upper triangular entries $\big\{ w_{ij} (t) \big\}_{i < j}$ are complex Brownian motions whose real and imaginary parts are independent Brownian motions, each of variance $(2n)^{-1}$. These entries are again mutually independent, and the lower triangular entries $\big\{ w_{ij} (t) \big\}_{i > j}$ are determined from its upper triangular ones by symmetry, $w_{ij} (t) = \overline{w_{ji}} (t)$. Observe that $\bm{G} (1)$ has the same law as a GUE matrix $\bm{G}$.

The following lemma from \cite{MERM} (stated as below in \cite{MCNP}) interprets Dyson Brownian motion in terms of sums of random matrices, and also in terms of non-intersecting Brownian motions conditioned to never intersect (for the definition of the latter in terms of Doob $h$-transforms, see \cite[Section 6.2]{MCNP}).

\begin{lem}[{\cite[Theorems 3 and 4]{MCNP}}]
	
	\label{lambdat}
	
	Fix an integer $n \ge 1$ and a sequence $\bm{\lambda} (0) \in \overline{\mathbb{W}}_n$. For any real number $s > 0$, let $\bm{\lambda} (s) \in \overline{\mathbb{W}}_n$ denote Dyson Brownian motion, run for time $s$, with initial data $\bm{\lambda} (0)$. Further let $\bm{A}$ denote an $n \times n$ diagonal matrix whose eigenvalues are given by $\bm{\lambda} (0)$, and let $\bm{G} (s) = \bm{G}_n (s)$ denote an $n \times n$ Hermitian Brownian motion. 
	
	\begin{enumerate}
		\item The law of the eigenvalues of $\bm{A} + \bm{G} (s)$ coincides with that of $\bm{\lambda} (s)$, jointly over $s \ge 0$. 
		\item Consider $n$ Brownian motions $\bm{x} = (x_1, x_2, \ldots , x_n) \in \llbracket 1, n \rrbracket \times \mathcal{C} (\mathbb{R}_{\ge 0})$, with variances $n^{-1}$ and starting data $\bm{\lambda} (0)$, conditioned to never intersect. Then, $\big( \bm{x}(s) \big)_{s \ge 0}$ has the same law as $\big( \bm{\lambda} (s) \big)_{s \ge 0}$. 
	\end{enumerate}
\end{lem}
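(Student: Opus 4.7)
The plan is to verify each part separately by classical techniques: It\^o's formula combined with eigenvalue perturbation theory for part (1), and a Doob $h$-transform with the Vandermonde determinant for part (2).

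For part (1), I would first assume $\bm{\lambda}(0) \in \mathbb{W}_n$ so that the eigenvalues of $\bm{A} + \bm{G}(s)$ are almost surely distinct for all $s \ge 0$, deferring the degenerate case to a continuity argument. Writing $\bm{A} + \bm{G}(s) = \bm{U}(s) \diag\bigl(\lambda_1(s), \ldots, \lambda_n(s)\bigr) \bm{U}(s)^*$, first-order eigenvalue perturbation theory gives $\partial \lambda_i / \partial w_{ab} = \overline{u_{ai}}\, u_{bi}$, where $\bm{u}_i = (u_{1i}, \ldots, u_{ni})$ is the $i$-th orthonormal eigenvector. Combining this with the covariance structure $d[w_{ab}, w_{cd}]_s = n^{-1} \delta_{ad} \delta_{bc}\, ds$ of the Hermitian Brownian motion and the normalization $\sum_a |u_{ai}|^2 = 1$, It\^o's formula shows the martingale part of $d\lambda_i$ has quadratic variation $n^{-1}\, ds$ with no cross-variation between distinct indices, so it can be represented as $n^{-1/2}\, dB_i$ for mutually independent standard Brownian motions $B_1, \ldots, B_n$. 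The combined second-order perturbation term and It\^o correction then collapse to the drift $n^{-1} \sum_{j \ne i}(\lambda_i - \lambda_j)^{-1}\, ds$, reproducing \eqref{lambdaequation} exactly; strong uniqueness from \cite[Proposition 4.3.5]{IRM} identifies the two laws.

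For part (2), I would use the Doob $h$-transform construction of the conditioned process. Let $\mathcal{L} = (2n)^{-1} \sum_i \partial_{x_i}^2$ denote the generator of $n$ independent variance-$n^{-1}$ Brownian motions and set $h(\bm{x}) = \prod_{i<j}(x_i - x_j)$. A direct computation yields $\partial_{x_k} \log h = \sum_{j \ne k}(x_k - x_j)^{-1}$, and the identity
\begin{flalign*}
\frac{1}{(a-b)(a-c)} + \frac{1}{(b-a)(b-c)} + \frac{1}{(c-a)(c-b)} = 0
\end{flalign*}
applied to each triple of distinct indices implies $\mathcal{L} h = 0$ on $\mathbb{W}_n$, so $h$ is a positive harmonic function vanishing on $\partial \mathbb{W}_n$ for the killed process. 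The resulting Doob transform, which is by definition the process ``conditioned to never intersect,'' has generator
\begin{flalign*}
\mathcal{L}^h f \;=\; h^{-1} \mathcal{L}(hf) \;=\; \mathcal{L} f + n^{-1} \sum_k \sum_{j \ne k} \frac{1}{x_k - x_j}\, \partial_{x_k} f,
\end{flalign*}
which is exactly the Dyson generator associated with \eqref{lambdaequation}; uniqueness again identifies the two laws.

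The main obstacle is handling the degenerate initial data $\bm{\lambda}(0) \in \overline{\mathbb{W}}_n \setminus \mathbb{W}_n$: when $\bm{A}$ has repeated eigenvalues, the perturbation-theoretic derivation in part (1) fails at $s = 0$, and the $h$-transform in part (2) must be started from $\partial \mathbb{W}_n$. The standard remedy is to observe that for every $s > 0$ the matrix $\bm{A} + \bm{G}(s)$ has a smooth density on Hermitian matrices (since $\bm{G}(s)$ is a scaled GUE) and hence almost surely simple eigenvalues, so the SDE derivation applies on $s > 0$; path continuity of both processes together with well-posedness of \eqref{lambdaequation} from arbitrary initial data in $\overline{\mathbb{W}}_n$ (proved by approximating $\bm{\lambda}(0)$ by sequences in $\mathbb{W}_n$ and invoking monotonicity of the Dyson system in its starting configuration) extends each identification to $s \ge 0$.
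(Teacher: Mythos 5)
Your proposal is correct in outline, but note that the paper does not prove this lemma at all: it is imported wholesale as a citation to \cite[Theorems 3 and 4]{MCNP} (with the meaning of ``conditioned to never intersect'' fixed by the Doob $h$-transform definition in \cite[Section 6.2]{MCNP}). What you have written is essentially the classical argument underlying those cited results: part (1) is Dyson's original It\^o/second-order-perturbation computation, identifying the martingale part as $n^{-1/2}\,dB_i$ via the covariance $d[w_{ab},w_{cd}]_s = n^{-1}\delta_{ad}\delta_{bc}\,ds$ and the drift as $n^{-1}\sum_{j\ne i}(\lambda_i-\lambda_j)^{-1}\,ds$, and part (2) is Grabiner's $h$-transform with the Vandermonde, where your generator computation and the harmonicity check via the three-term partial-fraction identity are both right; since the paper's convention already defines the conditioned process as this $h$-transform, there is no circularity in your part (2). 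Two points deserve slightly more care than your sketch gives them, though neither changes the approach: in part (1), applying It\^o's formula to the eigenvalue map requires simple spectrum simultaneously for all $s>0$ almost surely (not just at each fixed $s$), which is the standard non-collision property of the $\beta=2$ system and is what licenses the SDE identification on $(0,\infty)$; and in part (2), starting the $h$-transform from $\bm{\lambda}(0)\in\partial\mathbb{W}_n$ requires an entrance-law or approximation argument of the kind you gesture at, which is carried out in \cite{MCNP}. With those caveats your reconstruction is a faithful proof of the statement the paper chose to cite rather than prove.
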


We next discuss concentration results from \cite{MCTMGP} for Dyson Brownian motion, to which end we require the notion of a classical location with respect to a measure.

\begin{definition}
	
	\label{gammarho} 
	
	Let $\mu \in \mathscr{P}$ denote a probabilty measure. For any integers $n \ge 1$ and $j \in \mathbb{Z}$, we define the \emph{classical location} (with respect to $\mu$) $\gamma_{j; n} = \gamma_{j; n}^{\mu} \in \mathbb{R}$ by setting
	\begin{flalign*}
		\gamma_{j;n} = \displaystyle\sup \Bigg\{ \gamma \in \mathbb{R} : \displaystyle\int_{\gamma}^{\infty} \mu (dx) \ge  \displaystyle\frac{2j-1}{2n} \Bigg\}, \qquad \text{if $j \in \llbracket 1, n \rrbracket$},
	\end{flalign*}  
	
	\noindent and also setting $\gamma_j = \infty$ if $j \le 0$ and $\gamma_j = -\infty$ if $j > n$.
	
\end{definition}

The following lemma due to\footnote{In \cite{MCTMGP}, the probability on the right side of \eqref{lambdatprobability} was written to be $1 - C n^{-D}$ for any $D > 1$, but it can be seen from the proof (see that of \cite[Proposition 3.8]{MCTMGP}, where $\delta$ there is $\frac{5}{4}$ here) that it can be taken to be $1 - C e^{ - (\log n)^2}$ instead.} \cite{MCTMGP} (together with the scale invariance \Cref{scalemotion}) provides a concentration, or \emph{rigidity}, estimate for the locations of bulk (namely, those sufficiently distant from the first and last) particles under Dyson Brownian motion around the classical locations of a free convolution measure (recall the latter from \Cref{TransformConvolution}).

\begin{lem}[{\cite[Corollary 3.2]{MCTMGP}}]
	
	\label{concentrationequation}
	
	For any real number $A > 1$, there exists a constant $C = C (A) > 1$ such that the following holds. Fix an integer $n \ge 1$ and sequence $\bm{\lambda} (0) \in \overline{\mathbb{W}}_n$ with $-n^A \le \min \bm{\lambda} (0) \le \max \bm{\lambda} (0) \le n^A$. Denote the measure $\mu = n^{-1} \sum_{j=1}^n \delta_{\lambda_j (0)} \in \mathscr{P}$, and set $\mu_t = \mu \boxplus \mu_{\semci}^{(t)}$; also denote the classical locations $\gamma_j (t) = \gamma_{j; n}^{\mu_t} \in \mathbb{R}$. Letting $\bm{\lambda} (s) = \big( \lambda_1 (s), \lambda_2 (s), \ldots , \lambda_n (s) \big) \in \overline{\mathbb{W}}_n$ denote Dyson Brownian motion with initial data $\bm{\lambda} (0)$, we have 
	\begin{flalign}
		\label{lambdatprobability} 
		\mathbb{P} \Bigg[ \bigcap_{j = 1}^n \bigcap_{t \in [0, n^A]} \big\{ \gamma_{j+\lfloor (\log n)^5 \rfloor}(t)-n^{-A} \le \lambda_j (t) \le \gamma_{j- \lfloor (\log n)^5 \rfloor}(t)+n^{-A} \big\} \Bigg] \ge 1 - C e^{ - (\log n)^2}. 
	\end{flalign}
\end{lem}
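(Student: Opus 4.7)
The plan is to derive this bound directly from \cite[Corollary 3.2]{MCTMGP}, which already establishes precisely this kind of rigidity statement for Dyson Brownian motion around the classical locations of its free-convolution profile. My task reduces to matching the conventions of the two formulations and to strengthening the probability bound, as indicated in the footnote.

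First I would align the normalization of Dyson Brownian motion used here (variance $n^{-1}$, as in \eqref{lambdaequation}) with the normalization used in \cite{MCTMGP}. If they differ by a global factor, the diffusive scale-invariance recorded in \Cref{scalemotion}, together with the compatible transformation law for Stieltjes transforms (and hence free convolutions) given by \Cref{mtscalebeta}, converts one to the other. Under this rescaling the empirical measure $\mu = n^{-1} \sum_{j=1}^n \delta_{\lambda_j(0)}$, its free convolution $\mu \boxplus \mu_{\semci}^{(t)}$, and the classical locations $\gamma_{j;n}^{\mu_t}$ all transform in the expected compatible manner, so the conclusion of the present lemma is simply the image of the conclusion of \cite{MCTMGP} under the change of variables; in particular the $n^{-A}$ buffer and the time horizon $[0, n^A]$ translate directly, at the cost of allowing the constant $C$ to depend on $A$.

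Second, to upgrade the probability from the polynomial form $1 - C n^{-D}$ stated in \cite[Corollary 3.2]{MCTMGP} to the stretched-exponential form $1 - C e^{-(\log n)^2}$ claimed here, I would trace the proof of \cite[Proposition 3.8]{MCTMGP} with their free exponent parameter set to $\delta = \tfrac{5}{4}$, which matches our index shift by $\lfloor (\log n)^5 \rfloor = n^{o(1)}$. The short-time local-law input and the stochastic-calculus arguments supplied there already produce a tail of the stretched-exponential type claimed here; the weaker polynomial bound in the statement of that corollary is only how its conclusion was phrased and not a genuine limitation of the underlying estimate.

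The main obstacle, modest in scope, is bookkeeping: verifying that the parameters (initial data bounded by $n^A$, the time horizon $[0,n^A]$, the buffer $n^{-A}$, and the index shift $\lfloor (\log n)^5 \rfloor$) match across the two formulations, and that no regularity on the initial measure $\mu$ beyond the boundedness of its support is implicitly required by \cite{MCTMGP}. No new probabilistic input is needed beyond what is already contained in that work.
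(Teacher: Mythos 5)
Your proposal matches the paper's treatment: the lemma is taken directly from \cite[Corollary 3.2]{MCTMGP}, with the normalization reconciled via the diffusive scale invariance of \Cref{scalemotion}, and the probability upgraded from $1 - Cn^{-D}$ to $1 - Ce^{-(\log n)^2}$ by inspecting the proof of \cite[Proposition 3.8]{MCTMGP} with the parameter $\delta$ there equal to $\tfrac{5}{4}$ here — exactly as the paper's footnote indicates. No further argument is given or needed in the paper, so your route is essentially identical.
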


\subsection{Brownian Watermelon Estimates}

\label{PathsUV0}

In this section we provide estimates for the locations of paths in an ensemble of non-intersecting Brownian bridges conditioned to start and end at $0$. Such ensembles are sometimes referred to as \emph{Brownian watermelons}; see \Cref{f:watermelon}. In what follows, for any integers $n \ge 1$ and $j \in \mathbb{Z}$ we let $\gamma_{\semci; n} (j)$ be the classical location (recall \Cref{gammarho}) with respect to the semicircle law, given by
\begin{flalign}
	\label{gammaj} 
	\gamma_{\semci; n} (j) = \gamma_{j; n}^{\mu_{\semci}}, \quad \text{which satisfies} \quad \displaystyle\frac{1}{2 \pi} \displaystyle\int_{\gamma_{\semci; n} (j)}^2 (4-x^2)^{1/2} dx = \displaystyle\frac{2j-1}{n}, 
	\end{flalign}
whenever $j\in \llbracket 1,n\rrbracket$.

The following lemma then provides a concentration bound for the paths in a Brownian watermelon; its proof is given in \Cref{Proof00} below.

\begin{lem}
	
	\label{estimatexj} 
	
	For any real number $D > 1$, there exist constants $c > 0$ and $C = C (D) > 1$ such that the following holds. Adopt the notation of \Cref{estimatexj2}; assume that $b-a \le n^D$; fix real numbers $u, v \in \mathbb{R}$; and assume that $\bm{u} = (u, u, \ldots , u) \in \overline{\mathbb{W}}_n$ and $\bm{v} = (v, v, \ldots , v) \in \overline{\mathbb{W}}_n$ (where $u$ and $v$ appear with multiplicity $n$).

	\begin{enumerate} 
		
		\item With probability at least $1 - C e^{-c (\log n)^5}$, we have
		\begin{flalign*}
			\displaystyle\max_{j \in \llbracket 1, n \rrbracket}  \displaystyle\sup_{t \in [a, b]} \Bigg|  x_j (t) - \bigg( \displaystyle\frac{(b-t)(t-a)}{(b-a)} & \bigg)^{1/2} \cdot \gamma_{\semci; n} (j) - \displaystyle\frac{b - t}{b-a} \cdot u  - \displaystyle\frac{t-a}{b-a} \cdot v  \Bigg| \\ 
			& \le (\log n)^9 \cdot n^{-2/3} (b-a)^{1/2} \cdot \min \{  j, n-j+1 \}^{-1/3}.
		\end{flalign*} 
		
		\item With probability at least $1 - C e^{-c (\log n)^5}$, we have
		\begin{flalign*}
			& \displaystyle\max_{j \in \llbracket 1, n \rrbracket} \displaystyle\sup_{t \in [a, b]} \Bigg( \bigg| x_j (t) - \displaystyle\frac{b - t}{b-a} \cdot u - \displaystyle\frac{t-a}{b-a} \cdot v \bigg| -   8^{1/2} \bigg( \displaystyle\frac{(b-t)(t-a)}{b-a} \bigg)^{1/2} \Bigg) \le  n^{-D}; \\ 
			&\displaystyle\min_{j \in \llbracket 1, n \rrbracket} \displaystyle\inf_{t \in [a, b]} \Bigg( \bigg| x_j (t) - \displaystyle\frac{b - t}{b-a} \cdot u - \displaystyle\frac{t-a}{b-a} \cdot v \bigg| + 8^{1/2} \bigg( \displaystyle\frac{(b-t)(t-a)}{b-a} \bigg)^{1/2} \Bigg) \ge -n^{-D}.
		\end{flalign*}
	\end{enumerate} 
\end{lem}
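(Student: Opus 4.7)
The plan is to reduce to a normalized Brownian watermelon on $[0,1]$ via the invariances of \Cref{linear,scale}, exploit the classical identification of its fixed-time marginal with a rescaled GUE to obtain rigidity at each $t$, and then patch across $t$ using a net argument and the H\"older estimate \Cref{estimatexj2}.

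\textbf{Reductions.} Applying \Cref{linear} (with $\alpha=-u$ and $\beta=(u-v)/(b-a)$) we subtract the linear interpolation $\tfrac{b-t}{b-a}u + \tfrac{t-a}{b-a}v$ from each $x_j$, reducing to $\bm{u}=\bm{v}=\bm{0}$. \Cref{scale} with $\sigma=(b-a)^{-1}$ then rescales to the time interval $[0,1]$, while the $(b-a)^{1/2}$ factor in the target error of part (1) corresponds exactly to this spatial rescaling. In these coordinates (with $\bm{x}$ redenoting the rescaled paths), part (1) becomes $\sup_{t\in[0,1]}\max_j|x_j(t)-\sigma(t)\gamma_{\semci;n}(j)|\le(\log n)^9 n^{-2/3}\min\{j,n-j+1\}^{-1/3}$ where $\sigma(t):=\sqrt{t(1-t)}$, and the nontrivial content of part (2) becomes the upper bound $|x_j(t)|\le\sqrt{8}\,\sigma(t)+n^{-D}$ for all $j,t$.

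\textbf{Fixed-$t$ rigidity via a GUE identification.} By the Karlin--McGregor formula (applied in the confluent limit as the starting and ending points coalesce), the marginal density of $\bm{x}(t)$ at any $t\in(0,1)$ is proportional to $\prod_{i<j}(y_i-y_j)^2\exp\!\bigl(-\tfrac{n}{2\sigma(t)^2}\sum_i y_i^2\bigr)$, i.e.\ it is exactly the distribution of $\sigma(t)\,\bm{\lambda}^{\mathrm{GUE}}$ where $\bm{\lambda}^{\mathrm{GUE}}$ are the ordered eigenvalues of a Gaussian Unitary Ensemble matrix of variance $n^{-1}$. Equivalently, $\bm{x}(t)\stackrel{d}{=}\bm{\lambda}(\sigma(t)^2)$ where $\bm{\lambda}$ is the Dyson Brownian motion of \eqref{lambdaequation} started from $\bm{\lambda}(0)=\bm{0}$. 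Since $\delta_0\boxplus\mu_{\semci}^{(s)}=\mu_{\semci}^{(s)}$ has classical locations $\sqrt{s}\,\gamma_{\semci;n}(j)$, \Cref{concentrationequation} together with the elementary estimate $|\gamma_{\semci;n}(j\pm(\log n)^5)-\gamma_{\semci;n}(j)|\lesssim(\log n)^5 n^{-2/3}\min\{j,n-j+1\}^{-1/3}$ yields the desired rigidity for all bulk indices $(\log n)^5\le j\le n-(\log n)^5$; the remaining edge particles are controlled by standard GUE edge rigidity, $|\lambda_j^{\mathrm{GUE}}-\gamma_{\semci;n}(j)|\le(\log n)^9 n^{-2/3}$ with probability $\ge 1-Ce^{-c(\log n)^5}$. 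The same edge bound gives $\lambda_1^{\mathrm{GUE}}\le 2+(\log n)^C n^{-2/3}$ (and symmetrically for $\lambda_n^{\mathrm{GUE}}$), so at any fixed $t$ with $\sigma(t)\ge n^{-D}$ we obtain $|x_j(t)|\le 2\sigma(t)+(\log n)^C n^{-2/3}<\sqrt{8}\,\sigma(t)$.

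\textbf{Uniformity in $t$ and the boundary regime.} Introduce a net $\mathcal{T}\subset[0,1]$ of mesh $\kappa=n^{-4D}$ and union-bound the fixed-$t$ estimates from the previous paragraph; as $|\mathcal{T}|$ is polynomial in $n$, this preserves the probability $\ge 1-Ce^{-c(\log n)^5}$. The H\"older estimate \Cref{estimatexj2} with $B=(\log n)^3$ shows $\sup_j\sup_{|t-t'|\le\kappa}|x_j(t)-x_j(t')|\le(\log n)^4\kappa^{1/2}$ with probability $\ge 1-Ce^{-c(\log n)^6 n}$, well below either target error. Since $|\sigma'(t)|\lesssim\sigma(t)^{-1}$, the reference profile $\sigma(t)\gamma_{\semci;n}(j)$ also varies by $O(\kappa/\sigma(t))$ across adjacent net points, which is negligible whenever $\sigma(t)\ge n^{-D}$. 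In the complementary window $\{\sigma(t)<n^{-D}\}\subset[0,n^{-2D}]\cup[1-n^{-2D},1]$, we use $x_j(0)=x_j(1)=0$ and apply \Cref{estimatexj2} from $s=0$ (resp.\ $s=1$) to obtain $|x_j(t)|\le(\log n)^3 t^{1/2}\log(2/t)\le n^{-D+o(1)}$, which absorbs into the $n^{-D}$ additive error of part (2) and, for part (1), is dominated by the target since $\min\{j,n-j+1\}^{-1/3}\ge n^{-1/3}$ and $|\sigma(t)\gamma_{\semci;n}(j)|=O(\sigma(t))=O(n^{-D})$ there. The main obstacle is managing this final boundary regime, where the GUE-based rigidity identity breaks down because $\sigma(t)$ vanishes and one must rely directly on the bridge constraint to control the paths.
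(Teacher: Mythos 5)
Your proposal is correct and follows essentially the same route as the paper's proof: reduce to a centered watermelon via \Cref{linear} and \Cref{scale}, identify the fixed-time marginal with a rescaled GUE spectrum (the paper derives this identity from the time-change representation of the bridge together with \Cref{lambdat}, and quotes GUE rigidity for all indices at once as \Cref{estimatexj0} rather than combining \Cref{concentrationequation} with a separate edge-rigidity input), and then patch over $t$ with a polynomially fine mesh plus the H\"older estimate \Cref{estimatexj2}. The only repairs needed are quantitative: since $x_j(t)\stackrel{d}{=}\sigma(t)\lambda_j$ with your $\sigma(t)=\sqrt{t(1-t)}$, the rigidity error should be kept in its multiplicative form $\sigma(t)(\log n)^{C}n^{-2/3}$ — your additive version makes the displayed inequality $2\sigma(t)+(\log n)^{C}n^{-2/3}<\sqrt{8}\,\sigma(t)$ fail when $\sigma(t)\lesssim(\log n)^{C}n^{-2/3}$, whereas the multiplicative form gives it for all $t$ — and the boundary window should be widened (say to $\sigma(t)\le n^{-2D}$, with the additive error in part (2) tracked through the rescaling) so that the $(\log n)^{4}n^{-D}$-type bound there genuinely falls below the allowed $n^{-D}$ slack.
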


\section{Limit Shapes for Non-intersecting Brownian Bridges}

\label{Limit1} 

In this section we consider limit shapes for non-intesection Brownian bridges, in the case without upper and lower boundary, and explain how they solve the partial differential equation \eqref{equationxtd}.

\subsection{Limit Shapes} 

\label{LimitBridges}

In this section we recall results from \cite{LDASI,FOAMI,LDSI} concerning the limiting macroscopic behavior of non-intersecting Brownian bridges under given starting and ending data, with no upper and lower boundaries. Throughout, we use coordinates $(t, x)$, or sometimes $(t, y)$, for $\mathbb{R}^2$. 

For any sequence $\bm{a} = (a_1, a_2, \ldots , a_n) \in \overline{\mathbb{W}}_n$, we denote its \emph{empirical measure} $\emp (\bm{a}) \in \mathscr{P}$ by 
\begin{flalign}
	\label{aemp} 
	\emp (\bm{a}) = \displaystyle\frac{1}{n} \displaystyle\sum_{j=1}^n \delta_{a_j}.
\end{flalign}

\noindent Fix an interval $I \subseteq \mathbb{R}$. A \emph{measure-valued process} (on the time interval $I$) is a family $\bm{\mu} = (\mu_t)_{t \in I}$ of measures $\mu_t \in \mathscr{P}_{\fin}$ for each $t \in I$. Given a real number $A > 0$, we say that $\bm{\mu}$ has \emph{constant total mass $A$} if $\mu_t (\mathbb{R}) = A$, for each $t \in I$. If $\bm{\mu}$ has constant total mass $1$ (so each $\mu_t \in \mathscr{P}$), we call $\bm{\mu}$ a \emph{probability measure-valued process}. Measure-valued processes can be interpreted as elements of $I \times \mathscr{P}_{\fin}$ and probability measure-valued processes as ones of $I \times \mathscr{P}$. We let $\mathcal{C} ( I; \mathscr{P}_{\fin})$ and $\mathcal{C} ( I; \mathscr{P})$ denote the sets of measure-valued processes and probability measure-valued processes that are continuous in $t \in I$, under the topology of weak convergence on $\mathscr{P}_{\fin}$ and $\mathscr{P}$, respectively. 

Given two measures $\mu, \nu \in \mathscr{P}_{\fin}$ of finite total mass, the L\'{e}vy distance between them is 
\begin{flalign}
	\label{munudistance1}
	d_{\dL} (\mu, \nu) = \inf \Bigg\{ a > 0 : \displaystyle\int_{-\infty}^{y-a} \mu (dx) - a \le \displaystyle\int_{-\infty}^y \nu (dx) \le \displaystyle\int_{-\infty}^{y+a} \mu (dx) + a, \quad \text{for all $y \in \mathbb{R}$} \Bigg\}.
\end{flalign}

\noindent Given an interval $I \subseteq \mathbb{R}$ and two measure-valued processes $\bm{\mu} = (\mu_t)_{t \in I} \in I \times \mathscr{P}_{\fin}$ and $\bm{\nu} = (\nu_t)_{t \in I} \in I \times \mathscr{P}_{\fin}$ on the time interval $I$, the L\'{e}vy distance between them is defined to be 
\begin{flalign*} 
	d_{\dL} (\bm{\mu}, \bm{\nu}) = \sup_{t \in I} d_{\dL} (\mu_t, \nu_t).
\end{flalign*}

The following lemma from \cite{LDSI} (based on results from \cite{FOAMI,LDASI}) states that, as $n$ tends to $\infty$, the empirical measure for $n$ non-intersecting Brownian bridges (whose starting and ending data converge in a certain way) has a limit; see \Cref{f:watermelon} in the case of the Brownian watermelon, whose limit shape is shown on the left side of 
\Cref{f:density}. The following lemma was stated in \cite{LDSI,LDASI} in the case when $[a, b] = [0, 1]$ and $A = 1$ but, by the scaling invariance (\Cref{scale}) for non-intersecting Brownian bridges, it also holds for any interval $[a, b]$ and real number $A > 0$, as below. 

\begin{figure}
\centering

  \includegraphics[width=0.6\linewidth]{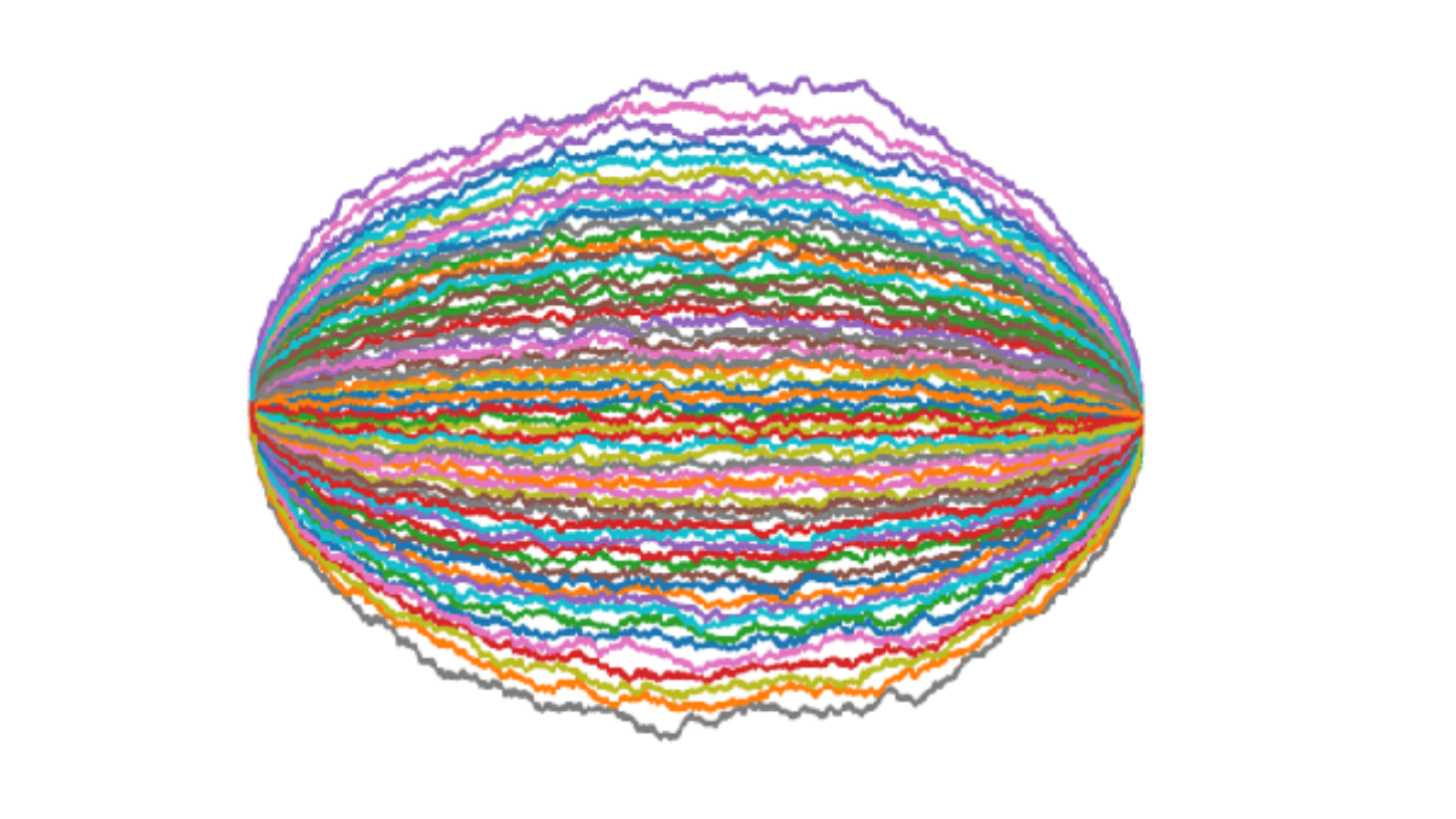}

\caption{Shown above is a Brownian watermelon, consisting of non-intersecting Brownian bridges starting and ending at $0$.}
\label{f:watermelon}
\end{figure}

\begin{lem}[{\cite[Claim 2.13]{LDSI}}]
	
	\label{rhot} 
	
	Fix real numbers $a < b$ and compactly supported measures $\mu_a, \mu_b \in \mathscr{P}_{\fin}$, both of total mass $\mu_a (\mathbb{R}) = A = \mu_b (\mathbb{R})$ for some real number $A > 0$. There is a measure-valued process $\bm{\mu}^{\star} = (\mu_t^{\star})_{t \in [a, b]} \in \mathcal{C} \big( [a, b]; \mathscr{P}_{\fin} \big)$ on $[a, b]$ of constant total mass $A$, which is continuous in the pair $(\mu_a, \mu_b) \in \mathscr{P}^2$ under the L\'{e}vy metric, such that the following holds. For each integer $n \ge 1$, let $\bm{u} = \bm{u}^n \in \overline{\mathbb{W}}_n$ and $\bm{v} = \bm{v}^n \in \overline{\mathbb{W}}_n$ denote sequences such that $A \cdot \emp(\bm{u}^n)$ and $A \cdot \emp(\bm{v}^n)$ converge to $\mu_a$ and $\mu_b$ under the L\'{e}vy metric as $n$ tends to $\infty$, respectively. Sample $n$ non-intersecting Brownian bridges $\bm{x}^n = (x_1^n, x_2^n, \ldots , x_n^n) \in \llbracket 1, n \rrbracket \times \mathcal{C} \big( [a, b] \big)$ from $\mathfrak{Q}^{\bm{u}; \bm{v}} (An^{-1})$; for any $t \in [a, b]$, denote $\nu_t^n = A \cdot \emp \big(\bm{x}^n (t) \big) \in \mathscr{P}$. For any real number $\varepsilon > 0$, we have $\lim_{n \rightarrow \infty} \mathbb{P} \big[ d_{\dL} (\bm{\nu}^n, \bm{\mu}^{\star}) > \varepsilon \big] = 0$.

\end{lem}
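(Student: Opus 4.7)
The plan is to derive the general statement from the normalized case $[a,b]=[0,1]$ and $A=1$ established in \cite{LDASI,FOAMI,LDSI}, by a time-space scaling argument that generalizes \Cref{scale}. Set $c=\bigl(A(b-a)\bigr)^{1/2}$, and for each $n$ define the rescaled family $\widetilde{\bm{x}}^n=(\widetilde{x}_1^n,\ldots,\widetilde{x}_n^n)$ on $[0,1]$ by
\begin{flalign*}
\widetilde{x}_j^n(s) \;=\; c^{-1}\,x_j^n\bigl(a+s(b-a)\bigr), \qquad s\in[0,1].
\end{flalign*}
A direct variance computation, $\Var\bigl(\widetilde{x}_j^n(s)\bigr)=c^{-2}\cdot An^{-1}(b-a)\,s(1-s)=n^{-1}s(1-s)$, together with the fact that multiplication by a positive constant preserves both the Brownian bridge structure and the non-intersecting property, shows that $\widetilde{\bm{x}}^n$ is distributed as $\mathfrak{Q}^{\widetilde{\bm{u}}^n;\widetilde{\bm{v}}^n}$ on $[0,1]$ with variance $n^{-1}$, where $\widetilde{\bm{u}}^n=c^{-1}\bm{u}^n$ and $\widetilde{\bm{v}}^n=c^{-1}\bm{v}^n$.

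Next I would propagate the boundary convergence. From $A\cdot\emp(\bm{u}^n)\to\mu_a$ in $d_{\dL}$, the behavior of the L\'evy metric under affine pushforward gives $\emp(\widetilde{\bm{u}}^n)\to \widetilde{\mu}_a$, where $\widetilde{\mu}_a$ denotes the pushforward of $A^{-1}\mu_a$ under $x\mapsto c^{-1}x$, and similarly $\emp(\widetilde{\bm{v}}^n)\to\widetilde{\mu}_b$. Both $\widetilde{\mu}_a$ and $\widetilde{\mu}_b$ are compactly supported probability measures of total mass $1$, so the normalized result of \cite{LDSI,LDASI} applies and yields a probability measure-valued process $\widetilde{\bm{\mu}}^\star=(\widetilde{\mu}_s^\star)_{s\in[0,1]}\in\mathcal{C}\bigl([0,1];\mathscr{P}\bigr)$, continuous in $(\widetilde{\mu}_a,\widetilde{\mu}_b)$ under $d_{\dL}$, such that $\emp\bigl(\widetilde{\bm{x}}^n(s)\bigr)\to \widetilde{\mu}_s^\star$ in probability, uniformly in $s\in[0,1]$.

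To conclude, I would define
\begin{flalign*}
\mu_t^\star \;:=\; A\cdot (c\,\cdot)_{*}\,\widetilde{\mu}^\star_{(t-a)/(b-a)}, \qquad t\in[a,b],
\end{flalign*}
where $(c\,\cdot)_{*}$ is the pushforward under multiplication by $c$. This is a measure-valued process of constant total mass $A$, continuous in $t$, and the identity $A\cdot \emp\bigl(\bm{x}^n(t)\bigr)=A\cdot (c\,\cdot)_{*}\emp\bigl(\widetilde{\bm{x}}^n(s)\bigr)$ with $s=(t-a)/(b-a)$, combined with the fact that both $(c\,\cdot)_{*}$ and the operation of rescaling total mass by $A$ are Lipschitz on $(\mathscr{P}_{\fin},d_{\dL})$ (with Lipschitz constants depending only on $A,b-a$), transfers the desired convergence $\mathbb{P}\bigl[d_{\dL}(\bm{\nu}^n,\bm{\mu}^\star)>\varepsilon\bigr]\to 0$ from $\widetilde{\bm{x}}^n$ to $\bm{x}^n$. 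Continuity of $\bm{\mu}^\star$ in $(\mu_a,\mu_b)$ under $d_{\dL}$ likewise transfers from continuity of $\widetilde{\bm{\mu}}^\star$ in $(\widetilde{\mu}_a,\widetilde{\mu}_b)$, since $(\mu_a,\mu_b)\mapsto(\widetilde{\mu}_a,\widetilde{\mu}_b)$ and the final pushforward-rescaling are each continuous.

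The only substantive step is the variance computation that identifies the rescaled law, after which the argument is pure bookkeeping on how $d_{\dL}$, total mass, and affine pushforwards compose. The main obstacle, such as it is, is ensuring every normalization (time, space, mass, variance) is tracked consistently so that the invocation of the standard $[0,1],A=1$ case is legitimate; no new probabilistic ingredient beyond \cite{LDSI,LDASI} and \Cref{scale} is required.
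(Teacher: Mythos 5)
Your proposal is correct and follows essentially the same route as the paper, which cites the $[0,1]$, $A=1$ case from \cite{LDSI,LDASI} and extends it to general $[a,b]$ and $A>0$ precisely via the diffusive/affine scaling of \Cref{scale} (the same rescaling the paper later formalizes in \Cref{aab}, with the space factor $A^{1/2}(b-a)^{1/2}$ matching your $c$). Your variance computation and the L\'evy-metric bookkeeping for the pushforwards are the content the paper leaves implicit, and they check out.
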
 

Terminology for the limit shape provided by \Cref{rhot} is given through the following definition.

\begin{definition}
	
	\label{mutmu0mu1}
	
	Adopting the notation of \Cref{rhot}, the measure-valued process $\bm{\mu}^{\star} = (\mu_t^{\star})_{t \in [a, b]}$ is called the \emph{bridge-limiting measure process} (on the interval $[a, b]$) with boundary data $(\mu_a; \mu_b)$.
	
\end{definition} 

The following lemma indicates how bridge-limiting measure processes restrict to others. Its proof is given in \Cref{ProofContinuous1} below.

\begin{lem}
	
	\label{gtabab}
	
	Adopt the notation and assumptions of \Cref{rhot}, and let $\widetilde{a}, \widetilde{b} \in \mathbb{R}$ be real numbers such that $a \le \widetilde{a} < \widetilde{b} \le b$. Then, the bridge-limiting measure process on the interval $[\widetilde{a}, \widetilde{b}]$ with boundary data $(\mu_{\widetilde{a}}^{\star}; \mu_{\tilde{b}}^{\star})$ is given by $(\mu_t^{\star})_{t \in [\tilde{a}, \tilde{b}]}$.
\end{lem}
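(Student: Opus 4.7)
The plan is to use the Markov property for non-intersecting Brownian bridges, combined with the continuity of the bridge-limiting measure process in its boundary data provided by \Cref{rhot}.

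First, I would fix sequences $\bm{u}^n, \bm{v}^n \in \overline{\mathbb{W}}_n$ such that $A \cdot \emp(\bm{u}^n) \to \mu_a$ and $A \cdot \emp(\bm{v}^n) \to \mu_b$ in the L\'evy metric, and sample the non-intersecting Brownian bridges $\bm{x}^n$ on $[a,b]$ from $\mathfrak{Q}^{\bm{u}^n;\bm{v}^n}(An^{-1})$. By \Cref{rhot}, the measure-valued processes $\bm{\nu}^n = \big(A \cdot \emp(\bm{x}^n(t))\big)_{t \in [a,b]}$ converge in probability (under $d_{\dL}$) to $\bm{\mu}^{\star}$. In particular, for each fixed $\varepsilon > 0$, with probability tending to $1$ we have $d_{\dL}\big(A \cdot \emp(\bm{x}^n(\widetilde{a})), \mu_{\widetilde{a}}^{\star}\big) \le \varepsilon$ and similarly at $\widetilde{b}$.

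Next, I would invoke the Markov property: conditional on the values $\bm{x}^n(\widetilde{a}) = \widetilde{\bm{u}}^n$ and $\bm{x}^n(\widetilde{b}) = \widetilde{\bm{v}}^n$, the restricted family $\bm{x}^n|_{[\widetilde{a}, \widetilde{b}]}$ is distributed as $\mathfrak{Q}^{\widetilde{\bm{u}}^n; \widetilde{\bm{v}}^n}(An^{-1})$. This is a standard consequence of viewing $\bm{x}^n$ as a Doob $h$-transformed Brownian motion conditioned not to intersect (cf.\ \Cref{lambdat}); the intersection event and the path law on $[\widetilde{a},\widetilde{b}]$ factor conditional on the values at the two endpoints. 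Letting $\widetilde{\bm{\mu}}^{\star} = (\widetilde{\mu}_t^{\star})_{t \in [\widetilde{a}, \widetilde{b}]}$ denote the bridge-limiting measure process on $[\widetilde{a}, \widetilde{b}]$ with boundary data $(\mu_{\widetilde{a}}^{\star}; \mu_{\widetilde{b}}^{\star})$, I would apply \Cref{rhot} to this conditional ensemble along a (random but typical) subsequence where $A \cdot \emp(\widetilde{\bm{u}}^n) \to \mu_{\widetilde{a}}^{\star}$ and $A \cdot \emp(\widetilde{\bm{v}}^n) \to \mu_{\widetilde{b}}^{\star}$. The continuity statement in \Cref{rhot} (that $\bm{\mu}^{\star}$ depends continuously on the boundary data in the L\'evy metric) ensures that, regardless of the precise converging sequence, the limiting process of $\bm{\nu}^n|_{[\widetilde{a}, \widetilde{b}]}$ is $\widetilde{\bm{\mu}}^{\star}$.

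Finally, I would combine the two convergence statements. The unconditional convergence $\bm{\nu}^n \to \bm{\mu}^{\star}$ on $[a,b]$ gives $\bm{\nu}^n|_{[\widetilde{a}, \widetilde{b}]} \to (\mu_t^{\star})_{t \in [\widetilde{a}, \widetilde{b}]}$ in probability; the conditional argument above gives $\bm{\nu}^n|_{[\widetilde{a}, \widetilde{b}]} \to \widetilde{\bm{\mu}}^{\star}$ in probability. Since limits in probability are unique, $\widetilde{\mu}_t^{\star} = \mu_t^{\star}$ for every $t \in [\widetilde{a}, \widetilde{b}]$, as desired. The only step requiring real care is the Markov/conditioning step: one needs to justify that, on the high-probability event where $A \cdot \emp(\widetilde{\bm{u}}^n)$ and $A \cdot \emp(\widetilde{\bm{v}}^n)$ are close to $\mu_{\widetilde{a}}^{\star}$ and $\mu_{\widetilde{b}}^{\star}$, one may apply \Cref{rhot} deterministically. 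This can be handled either by a subsequence argument or by invoking \Cref{rhot} quantitatively together with the continuity of $\bm{\mu}^{\star}$ in its boundary data; this is the main technical point, but it is routine given the continuity already asserted in \Cref{rhot}.
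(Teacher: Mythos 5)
Your proposal is correct and follows essentially the same route as the paper: condition on the paths outside $[\widetilde{a},\widetilde{b}]$ so that the restriction is distributed as $\mathfrak{Q}^{\bm{x}^n(\widetilde{a});\,\bm{x}^n(\widetilde{b})}(An^{-1})$, apply \Cref{rhot} to this conditional ensemble (using the continuity of the bridge-limiting process in its boundary data to handle the random endpoints), and identify the two limits of the restricted empirical measures. The paper's proof is the same argument, with the technical point about random boundary data handled just as implicitly via the continuity statement in \Cref{rhot}.
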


We will in many cases make use of a height function and inverted height functions associated with a measure-valued process, defined below.

\begin{definition}
	
	\label{hrhot} 
	
	Fix an interval $I = [a, b] \subseteq \mathbb{R}$ and a measure-valued process $\bm{\mu} = (\mu_t)_{t \in I}$ of constant total mass $A > 0$. The \emph{height function associated with $\bm{\mu}$} is defined to be the function $H = H^{\bm{\mu}} : I \times \mathbb{R} \rightarrow [0, A]$ obtained by setting
	\begin{flalign}
		\label{htxintegral}
		H (t, x) = \displaystyle\int_x^{\infty} \mu_t (dw), \qquad \text{for each $(t, x) \in I \times \mathbb{R}$}.
	\end{flalign}
	
	\noindent The \emph{inverted height function} associated with $\bm{\mu}$ is $G = G^{\bm{\mu}}: I \times [0, A] \rightarrow \mathbb{R}$, defined by setting $G(t,0) = \inf \big\{ x : H(t,x) = 0 \big\}$ and 
	\begin{flalign}
		\label{gty}
		G(t, y) = x, \quad \text{where} \quad x = x(t, y) = \sup \big\{ x \in \mathbb{R} : H(t, x) \ge y \big\}, \qquad \text{for $y \in (0, A]$}.
	\end{flalign}

	If $\mu_t = \varrho_t (x) dx$ has a density with respect to Lebesgue measure for each $t \in (a, b)$, then we sometimes associate $H$ (or $G$) with $\varrho = (\varrho_t)$. Moreover, if $\bm{\mu}$ is the bridge-limiting measure process with boundary data $(\mu_a; \mu_b)$ we say that $H$ (or $G$) is associated with boundary data $(\mu_a; \mu_b)$.
\end{definition}

To simplify notation in what follows, it will often be useful to set $(a, b) = (0, 1)$ and $A = 1$. The following lemma indicates a transformation that implements this specialization. Its proof follows from a quick consequence of \Cref{scale} and is provided in \Cref{ProofContinuous1} below.

\begin{lem} 
	
	\label{aab} 
	
	Adopt the notation and assumptions of \Cref{rhot}; let $H^{\star} : [a, b] \times \mathbb{R} \rightarrow \mathbb{R}$ and $G^{\star} : [a, b] \times [0, A] \rightarrow \mathbb{R}$ denote the height function and inverted height function associated with $\bm{\mu}^{\star}$, respectively. Define the probability measures $\widetilde{\mu}_0, \widetilde{\mu}_1$ by for any interval $I \subseteq \mathbb{R}$ setting
	\begin{flalign*}
		\widetilde{\mu}_0 (I) = A^{-1} \cdot \mu_a  \big( A^{1/2} (b-a)^{1/2} \cdot I \big); \qquad \widetilde{\mu}_1 (I) = A^{-1} \cdot \mu_b \big( A^{1/2} (b-a)^{1/2} \cdot I \big)
	\end{flalign*} 
	
	\noindent Further define the probability measure-valued process $\widetilde{\bm{\mu}}^{\star} = (\widetilde{\mu}_t^{\star})_{t \in [0, 1]} \in \mathcal{C} \big( [0, 1]; \mathscr{P} \big)$ and functions $\widetilde{H}^{\star} : [0, 1] \times \mathbb{R} \rightarrow \mathbb{R}$ and $\widetilde{G}^{\star} : [0, 1] \times [0, 1] \rightarrow \mathbb{R}$ by setting
	\begin{flalign}
		\label{muhgaab}
		\begin{aligned}
			& \widetilde{\mu}_t^{\star} (I) = A^{-1} \cdot \mu_{a + t(b-a)}^{\star} \big( A^{1/2} (b-a)^{1/2} \cdot I \big), \qquad \qquad \quad \text{for any interval $I \subseteq \mathbb{R}$}; \\
			& \widetilde{H}^{\star} (t, x) = A^{-1} \cdot H^{\star} \big( a + t(b-a), A^{1/2} (b-a)^{1/2} x \big), \qquad \text{for any $(t, x) \in [0, 1] \times \mathbb{R}$}; \\
			& \widetilde{G}^{\star} (t, y) = A^{-1/2} (b-a)^{-1/2} \cdot G^{\star} \big( a + t(b-a), Ay \big), \qquad \text{for any $(t, y) \in [0, 1] \times [0, 1]$}.
		\end{aligned}	
	\end{flalign}
	
	\noindent Then, $\widetilde{\bm{\mu}}^{\star}$ is the bridge-limiting measure process on $[0, 1]$ with boundary data $(\widetilde{\mu}_0; \widetilde{\mu}_1)$. Moreover, its associated height function and inverted height function are $\widetilde{H}^{\star}$ and $\widetilde{G}^{\star}$, respectively. 
	
\end{lem}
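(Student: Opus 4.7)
The plan is to reduce the statement to a direct application of \Cref{rhot} for the non-intersecting bridges on the rescaled interval $[0,1]$, by diffusively rescaling the non-intersecting Brownian bridges underlying $\bm{\mu}^{\star}$ using \Cref{scale} and then invoking uniqueness of bridge-limiting measure processes.

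First, I would start from sequences $\bm{u}^n, \bm{v}^n \in \overline{\mathbb{W}}_n$ with $A \cdot \emp(\bm{u}^n) \to \mu_a$ and $A \cdot \emp(\bm{v}^n) \to \mu_b$ under the L\'evy metric, and sample $\bm{x}^n = (x_1^n,\dots,x_n^n)$ from $\mathfrak{Q}^{\bm{u}^n;\bm{v}^n}(An^{-1})$ on $[a,b]$. By \Cref{rhot}, for any $\varepsilon > 0$, $d_{\dL}(A\cdot\emp(\bm{x}^n(\cdot)),\bm{\mu}^{\star}) < \varepsilon$ with probability tending to one. Next I would define the rescaled paths
\begin{flalign*}
\widetilde{x}_j^n(t) = A^{-1/2}(b-a)^{-1/2}\cdot x_j^n\bigl(a+(b-a)t\bigr), \qquad (j,t) \in \llbracket 1,n\rrbracket \times [0,1],
\end{flalign*}
which, by combining the spatial scaling $x \mapsto A^{-1/2}x$ (taking variance $An^{-1}$ to $n^{-1}$) with the diffusive scaling of \Cref{scale} (with $T = b-a$ and $\sigma = (b-a)^{-1}$, so $\widetilde{T} = 1$), yields a family distributed as $\mathfrak{Q}^{\widetilde{\bm{u}}^n;\widetilde{\bm{v}}^n}$ on $[0,1]$ with variance $n^{-1}$, where $\widetilde{u}_j^n = A^{-1/2}(b-a)^{-1/2}u_j^n$ and likewise for $\widetilde{v}_j^n$.

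Second, I would track the effect of the rescaling on empirical measures. For any interval $I \subseteq \mathbb{R}$,
\begin{flalign*}
\emp(\widetilde{\bm{u}}^n)(I) = \emp(\bm{u}^n)\bigl(A^{1/2}(b-a)^{1/2} \cdot I\bigr) \xrightarrow{n\to\infty} A^{-1}\mu_a\bigl(A^{1/2}(b-a)^{1/2} \cdot I\bigr) = \widetilde{\mu}_0(I),
\end{flalign*}
and similarly $\emp(\widetilde{\bm{v}}^n) \to \widetilde{\mu}_1$. An identical change of variables shows that, if $\widehat{\bm{\mu}} = (\widehat{\mu}_t)_{t \in [0,1]}$ denotes the bridge-limiting measure process on $[0,1]$ with boundary data $(\widetilde{\mu}_0;\widetilde{\mu}_1)$ furnished by \Cref{rhot} applied to $\widetilde{\bm{x}}^n$, then for each $t \in [0,1]$ one has $\emp(\widetilde{\bm{x}}^n(t))(I) = A^{-1} \cdot A\emp(\bm{x}^n(a+(b-a)t))(A^{1/2}(b-a)^{1/2}I)$. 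Passing to the limit using both \Cref{rhot} applications and comparing to \eqref{muhgaab} gives $\widehat{\mu}_t(I) = \widetilde{\mu}_t^{\star}(I)$ for every $t$ and $I$, so $\widehat{\bm{\mu}} = \widetilde{\bm{\mu}}^{\star}$. This is exactly the assertion that $\widetilde{\bm{\mu}}^{\star}$ is the bridge-limiting measure process on $[0,1]$ with boundary data $(\widetilde{\mu}_0;\widetilde{\mu}_1)$.

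Finally, for the statements about $\widetilde{H}^{\star}$ and $\widetilde{G}^{\star}$, I would simply unwind \Cref{hrhot}. Using the change-of-variable $w = A^{1/2}(b-a)^{1/2}w'$ in \eqref{htxintegral},
\begin{flalign*}
\int_x^{\infty}\widetilde{\mu}_t^{\star}(dw') = A^{-1}\int_{A^{1/2}(b-a)^{1/2}x}^{\infty} \mu_{a+(b-a)t}^{\star}(dw) = A^{-1}\cdot H^{\star}\bigl(a+(b-a)t,\,A^{1/2}(b-a)^{1/2}x\bigr),
\end{flalign*}
which matches the prescribed expression for $\widetilde{H}^{\star}$; inverting in $x$ then yields the stated formula for $\widetilde{G}^{\star}$. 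I expect the only real care needed is in keeping the scalings $A$ and $b-a$ straight simultaneously (one affects variance, the other affects time scale, and both affect the spatial change of variable in the density), but there is no conceptual obstacle beyond this bookkeeping.
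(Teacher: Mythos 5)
Your proposal is correct and follows essentially the same route as the paper: rescale the bridges diffusively via \Cref{scale} so that $\widetilde{\bm{x}}^n$ falls under \Cref{rhot} with parameters $(0,1;1;\widetilde{\mu}_0,\widetilde{\mu}_1)$, compare the rescaled empirical measures in the limit to identify $\widetilde{\bm{\mu}}^{\star}$ as the bridge-limiting process, and then obtain the formulas for $\widetilde{H}^{\star}$ and $\widetilde{G}^{\star}$ by a change of variables in \eqref{htxintegral} and \eqref{gty}. The extra bookkeeping you flag (tracking $A$ and $b-a$ simultaneously) is exactly the content of the paper's short argument, so no changes are needed.
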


The following corollary reformulates \Cref{rhot} in terms of the associated (inverted) height functions; in particular, it explains that the trajectory of the $\lfloor yn \rfloor$-th path is approximated by the inverted height function $G^{\star}$ associated with the bridge-limiting process. In what follows, given an integer $n \ge 1$, real numbers $a < b$, and a family of non-intersecting curves $\bm{x} = (x_1, x_2, \ldots , x_n) \in \llbracket 1, n \rrbracket \times \mathcal{C} \big( [a, b] \big)$, the associated (discrete) height function $\mathsf{H} = \mathsf{H}^{\bm{x}} : [a, b] \times \mathbb{R} \rightarrow \mathbb{R}$ is defined by 
\begin{flalign}
	\label{htx}
	\mathsf{H} (t, x) = \mathsf{H}^{\bm{x}} (t, x) = \# \big\{ j \in \llbracket 1, n \rrbracket : x_j (t) > x \big\}.
\end{flalign}

\begin{cor}
	
	\label{xghestimate}
	
	Adopt the notation and assumptions of \Cref{rhot}, and fix a real number $\varepsilon > 0$. Let $H^{\star} : [a, b] \times \mathbb{R} \rightarrow [0, A]$ and $G^{\star} : [a, b] \times [0, A] \rightarrow \mathbb{R}$ denote the height and inverted height functions associated with $\bm{\mu}^{\star}$, respectively. We have
	\begin{align}
		\label{munudistancex2}
		\begin{aligned}
			& \displaystyle\lim_{n \rightarrow \infty} \mathbb{P} \Bigg[ \bigcap_{(t, w) \in [a,b] \times \mathbb{R}} \big\{ A^{-1} H^{\star} (t, w + \varepsilon) - \varepsilon \le n^{-1} \mathsf{H}^{\bm{x}} (t, w) \le A^{-1} H^{\star} (t, w - \varepsilon) + \varepsilon \big\} \Bigg] = 1;  \\
			&  \displaystyle\lim_{n \rightarrow \infty} \mathbb{P} \Bigg[ \bigcap_{(t, y) \in [a,b] \times [0, 1]} \big\{ G^{\star} (t, Ay + \varepsilon) - \varepsilon \le x_{\lfloor yn \rfloor} (t) \le G^{\star} (t, Ay - \varepsilon) + \varepsilon \big\} \Bigg] = 1.
		\end{aligned}	
	\end{align}
\end{cor}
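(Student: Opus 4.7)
The strategy is to derive both inclusions in \eqref{munudistancex2} from the L\'{e}vy-metric convergence provided by \Cref{rhot}, through the identities
\begin{flalign*}
	A \cdot n^{-1} \mathsf{H}^{\bm{x}}(t, w) = \nu_t^n \big( (w,\infty) \big), \qquad H^{\star}(t, w) = \mu_t^{\star} \big( (w, \infty) \big),
\end{flalign*}
which express the (discrete and continuum) height functions as tail integrals of the rescaled empirical and limit measures, respectively. Under these identifications, the first display in \eqref{munudistancex2} is essentially a direct restatement of the L\'{e}vy-distance bound applied to complementary half-lines, while the second display then reduces to the first using the definition of $G^{\star}$ in \eqref{gty} as (essentially) the right-continuous inverse of $H^{\star}$ in its second argument.

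For the first display, fix $\varepsilon > 0$ and set $\varepsilon' = \varepsilon \min(1, A)$. By \Cref{rhot}, the event $\mathcal{E} = \{ d_{\dL}(\bm{\nu}^n, \bm{\mu}^{\star}) < \varepsilon' \}$ has probability tending to $1$ as $n \to \infty$. On $\mathcal{E}$, unfolding the inequalities in \eqref{munudistance1} for the complementary tails (via $\mu\big((w, \infty)\big) = \mu(\mathbb{R}) - \int_{-\infty}^w \mu(dx)$) yields, uniformly in $(t, w) \in [a, b] \times \mathbb{R}$,
\begin{flalign*}
	\mu_t^{\star} \big( (w+\varepsilon', \infty) \big) - \varepsilon' \le \nu_t^n \big( (w, \infty) \big) \le \mu_t^{\star} \big( (w - \varepsilon', \infty) \big) + \varepsilon'.
\end{flalign*}
Dividing through by $A$ and using $\varepsilon'/A \le \varepsilon$ and $\varepsilon' \le \varepsilon$ recovers the first display.

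For the second display, fix $\varepsilon > 0$ and apply the first display with an auxiliary tolerance $\varepsilon_0$ to be chosen much smaller than $\varepsilon/A$. To establish the upper bound $x_{\lfloor yn \rfloor}(t) \le G^{\star}(t, Ay - \varepsilon) + \varepsilon$, set $w = G^{\star}(t, Ay - \varepsilon) + \varepsilon$. Since $w - \varepsilon_0 > G^{\star}(t, Ay - \varepsilon)$, and since (by \eqref{gty} together with the monotonicity of $H^{\star}$ in its second argument) any $x$ strictly exceeding $G^{\star}(t, y_0)$ satisfies $H^{\star}(t, x) < y_0$, we deduce $H^{\star}(t, w - \varepsilon_0) \le Ay - \varepsilon$. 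The first display (with tolerance $\varepsilon_0$) then gives
\begin{flalign*}
	n^{-1} \mathsf{H}^{\bm{x}}(t, w) \le A^{-1} H^{\star}(t, w - \varepsilon_0) + \varepsilon_0 \le y - \varepsilon/A + \varepsilon_0 < y - n^{-1},
\end{flalign*}
for all $n$ sufficiently large (depending on $\varepsilon$ and $A$). As $\mathsf{H}^{\bm{x}}(t, w)$ is a nonnegative integer, this forces $\mathsf{H}^{\bm{x}}(t, w) \le \lfloor yn \rfloor - 1$; the definition \eqref{htx} --- equivalently, the fact that $x_j(t) > w$ iff $\mathsf{H}^{\bm{x}}(t, w) \ge j$ for the ordered curves --- then gives $x_{\lfloor yn \rfloor}(t) \le w$, as required. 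The matching lower bound $x_{\lfloor yn \rfloor}(t) \ge G^{\star}(t, Ay + \varepsilon) - \varepsilon$ follows by the symmetric argument with $w' = G^{\star}(t, Ay + \varepsilon) - \varepsilon$ in place of $w$.

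The only genuine subtlety is the interaction between the $n^{-1}$-discretization of $n^{-1} \mathsf{H}^{\bm{x}}$ and potential discontinuities of $G^{\star}$ in its $y$-argument (which would arise if $\mu_t^{\star}$ possessed atoms or if $H^{\star}(t, \cdot)$ had flat regions at level $Ay$). Both effects are absorbed by fixing $\varepsilon > 0$ independently of $n$ and then taking $\varepsilon_0$ strictly smaller than $\varepsilon/A$, so no tools beyond the standard CDF--quantile duality and monotonicity of $H^{\star}$ are needed.
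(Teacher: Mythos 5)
Your proposal is correct and follows essentially the same route as the paper: the first display is deduced from the L\'{e}vy-metric convergence of \Cref{rhot} applied to tail masses (using \eqref{munudistance1} and \eqref{htxintegral}, \eqref{htx}), and the second is then obtained from the first via the quantile-type definition \eqref{gty} of $G^{\star}$ and the monotonicity of $H^{\star}$. You merely spell out the CDF--tail bookkeeping and the integer-rounding step that the paper's one-line argument leaves implicit.
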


\begin{proof}
	By \Cref{rhot}, $d_{\dL}( \bm{\nu}^n, \bm{\mu}^{\star})$ tends to $0$ in probability, as $n$ tends to $\infty$. Combined with the definition \eqref{munudistance1} of L{\'e}vy metric, this yields the first statement of \eqref{munudistancex2}. This, with the definition \eqref{gty} of $G^{\star}$ in terms of $H^{\star}$ and that \eqref{htx} of $\mathsf{H}^{\bm{x}}$ in terms of $\bm{x}$, gives the second statement of \eqref{munudistancex2}.
\end{proof}

The following lemma essentially due to \cite{FOAMI} indicates that the measures $\mu_t^{\star}$ have a density, and it also discusses properties of this density. Its proof is given in \Cref{ProofContinuous1} below. In what follows, we recall the free convolution and semicircle law $\mu_{\semci}^{(t)} \in \mathscr{P}_0$ from \Cref{TransformConvolution}. We also restrict to the case $(a, b) = (0, 1)$ and $A = 1$ to simplify the statement, though an analog of the below lemma can be derived under arbitrary such parameters by scaling (see \Cref{aab1} below).

\begin{lem} 
	
	\label{mutrhot}

	Adopt the notation and assumptions of \Cref{rhot}, and assume $(a, b) = (0, 1)$ and $A = 1$. The following statements hold for each real number $t \in (0, 1)$. 
	
	\begin{enumerate} 
		\item There  exists a measurable function $\varrho_t^{\star} : \mathbb{R} \rightarrow \mathbb{R}_{\ge 0}$ such that $\mu_t^{\star} (dx) = \varrho_t^{\star} (x) dx$. 
		\item There exists some compactly supported probability measure $\nu_t \in \mathscr{P}_0$, dependent on $\mu_0$ and $\varrho_1$ with $\supp \nu_t \subseteq \supp \mu_0 + \supp \mu_1$, such that $\varrho_t^{\star}=\nu_t\boxplus \mu_{\semci}^{(t-t^2)}$.
		\item We have $\varrho_t^{\star} (x) \le (t-t^2)^{-1/2}$, for any $x \in \mathbb{R}$. Moreover, for any integer $k \ge 1$, there exists a constant $C = C (k) > 1$ such that, for any real number $\delta > 0$ and point $(t, x_0) \in \Omega$ with $\varrho_t^{\star} (x_0) \ge \delta$, we have $\big| \partial_x^k \varrho_t^{\star} (x_0) \big| \le C \delta^{-6k} (t-t^2)^{-4k}$.
		\item The function $\varrho_t (x)$ is continuous on $(0, 1) \times \mathbb{R}$.
	\end{enumerate}
	
\end{lem}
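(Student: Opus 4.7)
My plan is to realize $\bm{x}^n(t)$ as the spectrum of a Hermitian matrix of the form $\bm{C}_t^n + \bm{\widetilde{G}}^n$, where $\bm{\widetilde{G}}^n$ is an independent GUE matrix of variance $t(1-t)/n$, and where $\emp(\bm{C}_t^n)$ converges to the measure $\nu_t$ claimed in part (2); the free convolution representation then follows from \Cref{concentrationequation}. By \Cref{lambdat}(1), the law of $s \mapsto \bm{x}^n(s)$ agrees with that of the eigenvalues of $\bm{A}^n + \bm{G}^n(s)$, conditioned on $\bm{A}^n + \bm{G}^n(1)$ having eigenvalues $\bm{v}^n$, where $\bm{A}^n := \diag(\bm{u}^n)$ and $\bm{G}^n(\cdot)$ is a Hermitian Brownian motion. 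The entrywise Brownian bridge decomposition of $\bm{G}^n$ yields $\bm{G}^n(t) = t\bm{G}^n(1) + \bm{\widetilde{G}}^n$, with $\bm{\widetilde{G}}^n$ a centered Hermitian Gaussian matrix of variance $t(1-t)/n$ (i.e., a GUE in the paper's normalization) that is independent of $\bm{G}^n(1)$. Setting $\bm{B}^n := \bm{A}^n + \bm{G}^n(1)$ and $\bm{C}_t^n := (1-t)\bm{A}^n + t\bm{B}^n$, this rearranges to $\bm{A}^n + \bm{G}^n(t) = \bm{C}_t^n + \bm{\widetilde{G}}^n$, and the independence of $\bm{\widetilde{G}}^n$ from $\bm{B}^n$ is preserved upon conditioning on the eigenvalues of $\bm{B}^n$ being $\bm{v}^n$.

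Because $\emp(\bm{u}^n), \emp(\bm{v}^n)$ weakly approach the compactly supported $\mu_0, \mu_1$, Weyl's inequality forces $\emp(\bm{C}_t^n)(\mathbb{R} \setminus [-M, M]) \to 0$ in probability for any $M$ with $\supp \mu_0 \cup \supp \mu_1 \subseteq [-M, M]$, so $\{\emp(\bm{C}_t^n)\}$ is tight. Pass to a subsequence along which $\emp(\bm{C}_t^n) \to \nu_t$ weakly in probability, for some random $\nu_t \in \mathscr{P}_0$. Conditionally on the eigenvalues of $\bm{C}_t^n$, \Cref{lambdat}(1) identifies the eigenvalues of $\bm{C}_t^n + \bm{\widetilde{G}}^n$ with Dyson Brownian motion run for time $t(1-t)$ from the eigenvalues of $\bm{C}_t^n$, and \Cref{concentrationequation} then yields
\begin{flalign*}
	\emp(\bm{C}_t^n + \bm{\widetilde{G}}^n) \longrightarrow \nu_t \boxplus \mu_{\semci}^{(t(1-t))} \qquad \text{in probability, along the subsequence},
\end{flalign*}
where we also use the weak continuity of $\nu \mapsto \nu \boxplus \mu_{\semci}^{(t(1-t))}$ on $\mathscr{P}_0$. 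On the other hand, $\emp(\bm{x}^n(t)) \to \mu_t^{\star}$ in probability by \Cref{rhot}, so $\mu_t^{\star} = \nu_t \boxplus \mu_{\semci}^{(t(1-t))}$. Determinism of the left side forces $\nu_t$ to be deterministic, and free convolution with the semicircle is injective on $\mathscr{P}_0$ via the subordination identity behind \Cref{mz}, so $\nu_t$ is uniquely determined by $\mu_t^{\star}$ and is independent of the subsequence. This proves parts (1) and (2).

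Part (3) then follows immediately from \Cref{rhotestimatek} applied with ``$t$'' there equal to $t(1-t)$ here. For part (4), combine the weak continuity of $t \mapsto \mu_t^{\star}$ from \Cref{rhot} with the locally Lipschitz $x$-regularity provided by part (3): for any fixed $\epsilon > 0$ the function $(t, x) \mapsto (\mu_t^{\star} * \phi_\epsilon)(x)$ is jointly continuous (by the weak convergence), and as $\epsilon \to 0$ it converges to $\varrho_t^{\star}(x)$ uniformly on compact subsets of the interior of the support of $\mu_t^{\star}$ by the derivative bound in part (3); outside the closure of the support $\varrho_t^{\star}$ vanishes identically, and the square-root vanishing characteristic of free convolutions with a semicircle gives continuity across the boundary. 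The main obstacle I anticipate is justifying the conditional application of \Cref{concentrationequation} to the random initial data given by the eigenvalues of $\bm{C}_t^n$: the rigidity estimate there is stated for deterministic initial conditions, but since the bound is uniform over initial configurations inside any fixed compact interval, and the tightness step shows that the spectrum of $\bm{C}_t^n$ lies in such an interval with probability tending to one, the conditional-to-unconditional passage is routine.
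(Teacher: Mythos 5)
For parts (1) and (2) your route is genuinely different from the paper's: there, these two statements are simply imported from \cite{FOAMI} (Corollary 2.8(b) and Theorem 2.7), whereas you rederive the free-convolution representation directly from the matrix model via the bridge decomposition $\bm{G}^n(t)=t\,\bm{G}^n(1)+\widetilde{\bm{G}}^n$, so that conditionally on the eigenvalues of $\bm{B}^n=\bm{A}^n+\bm{G}^n(1)$ the time-$t$ spectrum is Dyson Brownian motion run for time $t-t^2$ from $\emp(\bm{C}^n_t)$, and then combine \Cref{lambdat}, \Cref{concentrationequation}, \Cref{rhot}, and injectivity of $\nu\mapsto\nu\boxplus\mu_{\semci}^{(t-t^2)}$. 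This is a sound, self-contained argument; the measure-zero conditioning and the passage from deterministic to random initial data in \Cref{concentrationequation} are routine and at the level of rigor the paper itself adopts (compare the footnote to \Cref{t:LDP}), and what the approach buys is independence from \cite{FOAMI}. Two caveats: your Weyl/tightness step gives compact support of $\nu_t$ but does not address the stated containment $\supp \nu_t \subseteq \supp\mu_0+\supp\mu_1$; and the hypothesis $\max_j |\lambda_j(0)|\le n^A$ in \Cref{concentrationequation} does not follow from L\'{e}vy convergence of $\emp(\bm{u}^n)$, $\emp(\bm{v}^n)$ alone, so a truncation remark is needed. Part (3) is proved exactly as in the paper, by applying \Cref{rhotestimatek} to $\nu_t$ with time parameter $t-t^2$.

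Part (4) is where there is a genuine gap. The locally uniform convergence of the mollifications $(\mu_t^{\star}*\phi_{\epsilon})(x)$ is justified by the derivative bound of part (3), but that bound is only available at points where $\varrho_t\ge\delta$; to use it in a full neighborhood of $(t_0,x_0)$ you need $\varrho_t(x)$ to remain bounded below for $(t,x)$ near $(t_0,x_0)$, which is essentially the continuity (or lower semicontinuity) you are trying to prove, so the argument as written is circular. Moreover, continuity at edge points, jointly in $(t,x)$, is asserted via a ``square-root vanishing'' property of free convolutions that is neither proved nor uniform in $t$ (the edge location moves with $t$, and nothing in your argument controls this motion). The paper's proof avoids both issues: it first bootstraps the derivative bound of part (3) into a global-in-$x$ H\"{o}lder-$\frac{1}{7}$ estimate $\big|\varrho_t(x)-\varrho_t(x')\big|\le C\,|x-x'|^{1/7}\,(t-t^2)^{-1}$, valid at \emph{all} $x$, by a contradiction argument on a maximal interval where the density exceeds the level $(t-t^2)^{-1}|x-x'|^{1/7}$, and then combines this uniform modulus with the weak continuity of $t\mapsto\mu_t^{\star}$ by averaging $\varrho$ over a small window $[x_0-\delta,x_0+\delta]$; that argument works at every point, including where the density vanishes. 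Repairing your part (4) essentially requires such a uniform-in-$x$ modulus of continuity (or an appeal to the continuity of densities of free convolutions with the semicircle law), rather than the mollification-plus-edge-heuristic as stated.
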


Next, fix an interval $(a, b) \subseteq \mathbb{R}$ and a family of measures $\bm{\mu} = (\mu_t)_{t \in (a, b)} \in (a, b) \times \mathscr{P}_{\fin}$ of constant total mass $A > 0$. Assume for each $t \in (a, b)$ that each $\mu_t$ has a density $\varrho_t (x)$,  for some function $\varrho_t : \mathbb{R} \rightarrow \mathbb{R}_{\ge 0}$ that is continuous in $(t, x)$. We define the associated \emph{liquid region} $\Omega \subset (a, b) \times \mathbb{R}$ and \emph{inverted liquid region} $ \Omega^{\inv} \subseteq (a, b) \times (0, 1)$ by
\begin{flalign}
	\label{omega12}
	\Omega= \big\{(t, x)\in (a, b) \times \mathbb{R}: \varrho_t (x)>0 \big\}; \quad \Omega^{\inv} = \big\{ (t, y) \in (a, b) \times [0, A] : y = H^{\bm{\mu}} (t,x), (t, x) \in \Omega \big\}.
\end{flalign} 

\noindent Observe that the map $(t, x) \mapsto \big( t, H(t, x) \big)$ is a bijection to from $\Omega$ to $\Omega^{\inv}$. Moreover, the continuity of $\varrho_t (x)$ implies that the set $\Omega$ is open, which implies that $\Omega^{\inv}$ is also open. See \Cref{f:liquidregion}.

\begin{figure}
\center
\includegraphics[width=0.9\textwidth]{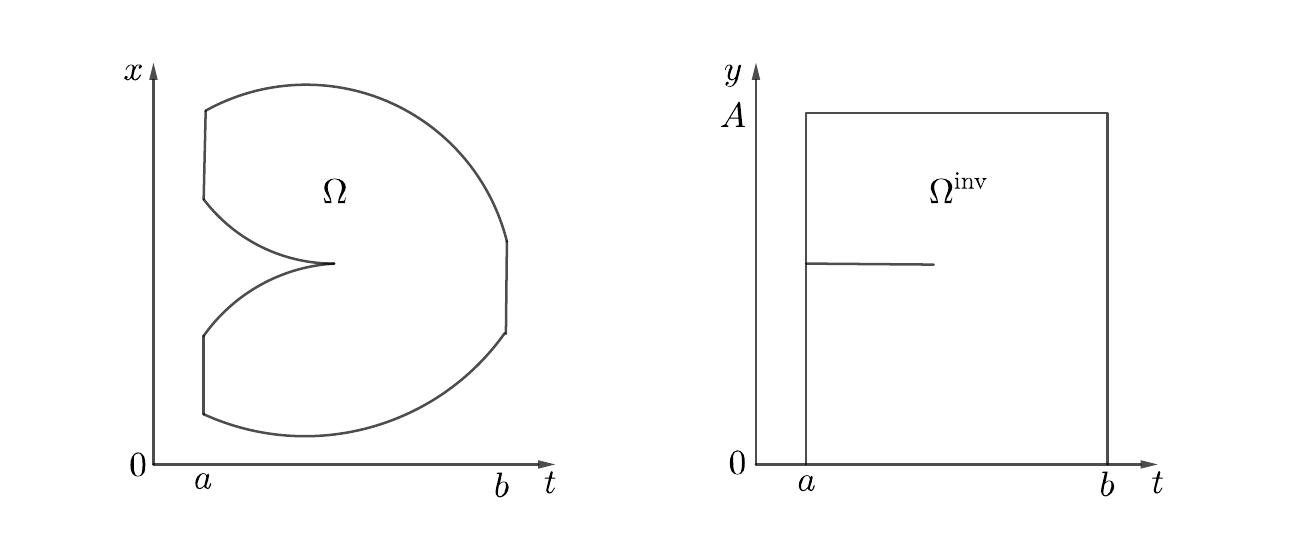}
\caption{The left panel is the liquid region $\Omega$ and the right panel is the inverted liquid region $\Omega^{\inv}$.}
\label{f:liquidregion}
\end{figure}

\begin{cor}
	
	\label{convergepathomega}

	Under the assumptions of \Cref{xghestimate}, for any $y \in (0, 1)$ such that $(t, Ay)\in \Omega^{\inv}$ for each $t \in (a, b)$, we have that $G^{\star} (t, Ay)$ is continuous in $t \in [a, b]$, and
	\begin{align}
		\label{taybg}
		& \displaystyle\lim_{n \rightarrow \infty} \mathbb{P} \Bigg[ \bigcap_{t \in (a, b)} \big\{ G^{\star} (t, Ay) - \varepsilon \le x_{\lfloor yn \rfloor}^n (t) \le G^{\star} (t, Ay) + \varepsilon \big\} \Bigg] = 1.  
	\end{align}
\end{cor}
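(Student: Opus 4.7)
The plan is to derive the uniform bound in \Cref{convergepathomega} from \Cref{xghestimate} by controlling the local regularity of $G^{\star}$ on the inverted liquid region.

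I would first establish continuity of $t \mapsto G^{\star}(t, Ay)$ on $[a, b]$. For $t_0 \in (a, b)$, the hypothesis $(t_0, Ay) \in \Omega^{\inv}$ places the point $(t_0, G^{\star}(t_0, Ay))$ in the open set $\Omega$, where \Cref{mutrhot} ensures $\varrho_t^{\star}(x)$ is jointly continuous and strictly positive. Hence there is a product neighborhood on which $\varrho^{\star} \ge c > 0$; there, $H^{\star}(s, \cdot)$ is strictly decreasing with slope bounded away from zero, and the implicit equation $H^{\star}(s, x) = Ay$ has a locally Lipschitz solution $x = G^{\star}(s, Ay)$, giving continuity at $t_0$. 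Continuity at the endpoints $t = a, b$ follows from the weak continuity of $\bm{\mu}^{\star}$ (implicit in $\bm{\mu}^{\star} \in \mathcal{C}([a,b]; \mathscr{P}_{\fin})$) together with the standard fact that quantile functions of weakly convergent measures converge at continuity points of the limiting CDF.

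Next, I would use this continuity to pass from \Cref{xghestimate} to the desired bound. For any closed sub-interval $[a', b'] \subset (a, b)$, the arc $\{(t, G^{\star}(t, Ay)) : t \in [a', b']\}$ is compact in $\Omega$, so joint continuity of $\varrho^{\star}$ yields a constant $c_0 = c_0(a', b') > 0$ and a tubular neighborhood of the arc inside $\Omega$ on which $\varrho^{\star} \ge c_0$. This gives
\begin{flalign*}
\big| G^{\star}(t, Ay \pm \delta) - G^{\star}(t, Ay) \big| \le c_0^{-1} \delta, \qquad t \in [a', b'], \ 0 < \delta \le \delta_0,
\end{flalign*}
for some $\delta_0 > 0$. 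Given $\varepsilon > 0$, I would invoke \Cref{xghestimate} with its $\varepsilon$ replaced by $\delta := \min(\delta_0, \varepsilon/2, c_0 \varepsilon/2)$, producing on an event of probability tending to one the sandwich $G^{\star}(t, Ay + \delta) - \delta \le x^n_{\lfloor yn \rfloor}(t) \le G^{\star}(t, Ay - \delta) + \delta$ for all $t \in [a, b]$; combined with the Lipschitz bound, this gives $|x^n_{\lfloor yn \rfloor}(t) - G^{\star}(t, Ay)| \le \varepsilon$ uniformly in $t \in [a', b']$.

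The main obstacle is extending this bound from compact subintervals to all of $(a, b)$. Since both $x^n_{\lfloor yn \rfloor}$ and $G^{\star}(\cdot, Ay)$ are continuous on $[a, b]$, $\sup_{t \in (a, b)}$ coincides with $\sup_{t \in [a, b]}$, so one must in fact control the bound up to and including the endpoints, where $\varrho^{\star}$ may degenerate and $c_0^{-1}$ may blow up. I would handle this by treating the endpoints separately: the boundary values $x^n_{\lfloor yn \rfloor}(a) = u^n_{\lfloor yn \rfloor}$ and $x^n_{\lfloor yn \rfloor}(b) = v^n_{\lfloor yn \rfloor}$ are deterministic, and the weak convergence of $A \cdot \emp(\bm{u}^n)$ and $A \cdot \emp(\bm{v}^n)$ yields their convergence to $G^{\star}(a, Ay)$ and $G^{\star}(b, Ay)$ respectively. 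The sup over the thin slabs $[a, a'] \cup [b', b]$ is then controlled by these endpoint values together with the H\"{o}lder regularity of the Brownian paths (from \Cref{estimatexj3}) and the uniform continuity of $G^{\star}(\cdot, Ay)$ on $[a, b]$, the latter supplied by the first step; sending $|a - a'|, |b - b'| \to 0$ along with $n \to \infty$ at an appropriate rate then extracts the claimed result.
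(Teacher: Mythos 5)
Your interior argument (local Lipschitz/uniform continuity of $G^{\star}$ on compact subsets of $\Omega^{\inv}$, then \Cref{xghestimate} run at a smaller parameter) is essentially the paper's. The genuine gap is in your treatment of the endpoints. You assert that continuity of $G^{\star}(\cdot,Ay)$ at $t=a,b$, and the convergence $u^n_{\lfloor yn\rfloor}\to G^{\star}(a,Ay)$, follow from weak continuity of $\bm{\mu}^{\star}$ together with ``quantile convergence at continuity points of the limiting CDF.'' Correctly stated, that fact gives convergence of quantiles only at continuity points of the \emph{limiting quantile function}, and nothing in the hypotheses guarantees that $Ay$ is such a point for $\mu_a$ or $\mu_b$: in \Cref{rhot} the boundary measures are merely compactly supported (they may have atoms and gaps in their support), and the assumption $(t,Ay)\in\Omega^{\inv}$ is made only for $t\in(a,b)$, so it says nothing directly about the boundary measures. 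If the quantile of $\mu_a$ jumps at level $Ay$, both of your endpoint claims are unproven (one would have to argue, nontrivially, that such a jump is incompatible with $(t,Ay)\in\Omega^{\inv}$ for all $t$ near $a$), and since your slab estimate on $[a,a']\cup[b',b]$ and your endpoint continuity both feed on these claims, the argument does not close there.

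The paper avoids this entirely by reversing the logic at the endpoints: it anchors the slab estimate at the interior time $t=\delta$, where \eqref{ftxwnt2} (i.e.\ \Cref{xghestimate}) already gives $x_{\lfloor yn\rfloor}(\delta)\approx G^{\star}(\delta,Ay)$, and then uses the H\"{o}lder bound of \Cref{estimatexj2} (the wall-free version suffices here and holds up to the endpoints; \Cref{estimatexj3}(1) is an interior estimate and does not help on $[a,a+\delta]$) to show that the single path oscillates by at most $C_1\delta^{1/3}$ on $[a,a+\delta]$, the drift term being harmless since $u_{\lfloor yn\rfloor}$ and $v_{\lfloor yn\rfloor}$ stay in a fixed compact for fixed $y\in(0,1)$. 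This controls the supremum over the slab with no knowledge of where the path starts relative to $G^{\star}(a,Ay)$, and letting $n\to\infty$ in these probabilistic bounds then yields the $\delta^{1/3}$-modulus of continuity of $G^{\star}(\cdot,Ay)$ near $a$ and $b$ as a consequence. In short, endpoint continuity is an output of the H\"{o}lder estimate combined with the interior convergence, not an input supplied by weak convergence of the boundary data; rewriting your final step in that order repairs the proof.
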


\begin{proof}
	
	Observe that $G^{\star}$ is continuous on $\Omega^{\inv}$, by \eqref{gty} and the fact that $H(t, x)$ is continuous and strictly increasing in $x$ whenever $(t, x) \in \Omega$. In particular, $G^{\star}$ is uniformly continuous on compact subsets of $\Omega^{\inv}$; this, together with the second statement of \eqref{munudistancex2}, yields for any real number $\delta \in \big( 0, \frac{1}{2} \big)$ that, with probabiliy $1 - o(1)$ (that is, tending to $1$ as $n$ tends to $\infty$), we have 
	\begin{flalign}
		\label{ftxwnt2}
		 \displaystyle\sup_{t \in [\delta, 1-\delta]} \big| x_{\lfloor yn \rfloor} (t) - G^{\star} (t, Ay) \big| < \delta.
	\end{flalign}
	
	Next observe that there is a constant $C_1 = C_1 (y, \mu_0, \mu_1) > 1$ such that, with probability $1 - o(1)$, we have
	\begin{flalign}
		\label{xwnxwn2} 
		& \displaystyle\sup_{t \in [0, \delta]} \big| x_{\lfloor yn \rfloor} (t) - G^{\star} (\delta, Ay) \big| \le C_1 \delta^{1/3}; \qquad \displaystyle\sup_{t \in [1-\delta, 1]} \big| x_{\lfloor yn \rfloor} (b) - G^{\star} (1-\delta, Ay) \big| \le C_1 \delta^{1/3},
	\end{flalign}
	
	\noindent where we used \eqref{ftxwnt2} and \Cref{estimatexj2}. Letting $n$ tend to $\infty$ in \eqref{xwnxwn2} and applying \eqref{ftxwnt2}, we obtain 	
	\begin{flalign*}
		& \displaystyle\sup_{t, t' \in [0, \delta]} \big| G^{\star} (t, Ay) - G^{\star} (t', Ay) \big| \le 2 C_1 \delta^{1/3}; \qquad \displaystyle\sup_{t, t' \in [1-\delta, 1]} \big| G^{\star} (t, Ay) - G^{\star} (t', Ay) \big| \le 2 C_1 \delta^{1/3},
	\end{flalign*}
	
	\noindent This, with the continuity of $G^{\star}$ on compact subsets of $\Omega^{\inv}$, implies that $G^{\star} (t, Ay)$ is continuous in $t \in [a, b]$. Moreover, with \eqref{xwnxwn2} and \eqref{ftxwnt2}, it also yields \eqref{taybg} by taking $\delta$ sufficiently small with respect to $\varepsilon$. 	
\end{proof}

Before proceeding, let us provide two examples of the above notions. The first concerns the case when $\mu_0$ and $\mu_1$ are delta measures, in which the associated non-intersecting Brownian bridges form a Brownian watermelon (recall \Cref{PathsUV0}); see the left side of \Cref{f:density}.

\begin{example}
	
	\label{xuv0} 
	
	Fix real numbers $a < b$; $u, v \in \mathbb{R}$; and $A > 0$. Assume that $(\mu_a, \mu_b) = (A\cdot \delta_u, A \cdot \delta_v)$, where $\delta_x \in \mathscr{P}_0$ denotes the delta measure at $x \in \mathbb{R}$. Then, it follows from \Cref{estimatexj} (multiplying its results by $A^{1/2}$ to account for the fact that the Brownian motions have variance $An^{-1}$ here) and the second statement of \eqref{munudistancex2} (and also the continuity of $\gamma_{\semci} (y)$ below in $y$) that the inverted height function $G^{\star} : [0, 1] \times [0, A] \rightarrow \mathbb{R}$ associated with boundary data $(\mu_0; \mu_1)$ is given by
	\begin{flalign*}
		G^{\star} (t, y) = \bigg( \displaystyle\frac{A (b-t) (t-a)}{b-a} \bigg)^{1/2} \cdot \gamma_{\semci} \Big( \displaystyle\frac{y}{A} \Big) + \displaystyle\frac{b-t}{b-a} \cdot u + \displaystyle\frac{t-a}{b-a} \cdot v, 
	\end{flalign*} 
	
	\noindent  where $\gamma_{\semci} (y)$ is the classical location of the semicircle law, defined to be the 
	\begin{flalign} 
		\label{gammascy} 
		\text{unique} \quad \gamma \in [-2, 2] \quad \text{solving the equation} \quad (2\pi)^{-1} \int_{\gamma}^2 (4-x^2)^{1/2} dx = y.
	\end{flalign} 
	
	\noindent Together with \eqref{htxintegral} and \eqref{gty}, it follows that the associated density process $(\varrho_t^{\star})$ and the height function $H^{\star} : [0, 1] \times \mathbb{R}$ are given by
	\begin{flalign*}
		& \varrho_t^{\star} (x) = A \cdot \varrho_{\semci}^{(\frac{A (b-t)(t-a)}{(b-a)})}  \bigg(x - \displaystyle\frac{b-t}{b-a} \cdot u - \displaystyle\frac{t-a}{b-a} \cdot v \bigg),
	\end{flalign*} 
	
	\noindent and $H^{\star} (t, x) = \int_x^{\infty} \varrho_t^{\star} (y) dy$, where we recall the rescaled semcircle density $\varrho_{\semci}^{(t)}$ from \eqref{rhosct}.
	
\end{example}

The second example concerns the case when $\mu_0$ and $\mu_1$ are rescaled semicircle laws (recall \eqref{rhosct}), which is obtained by restricting a larger watermelon to a smaller interval. See the right side of \Cref{f:density}.

\begin{figure}
\centering
\begin{subfigure}{.5\textwidth}
  \centering
  \includegraphics[width=1\linewidth]{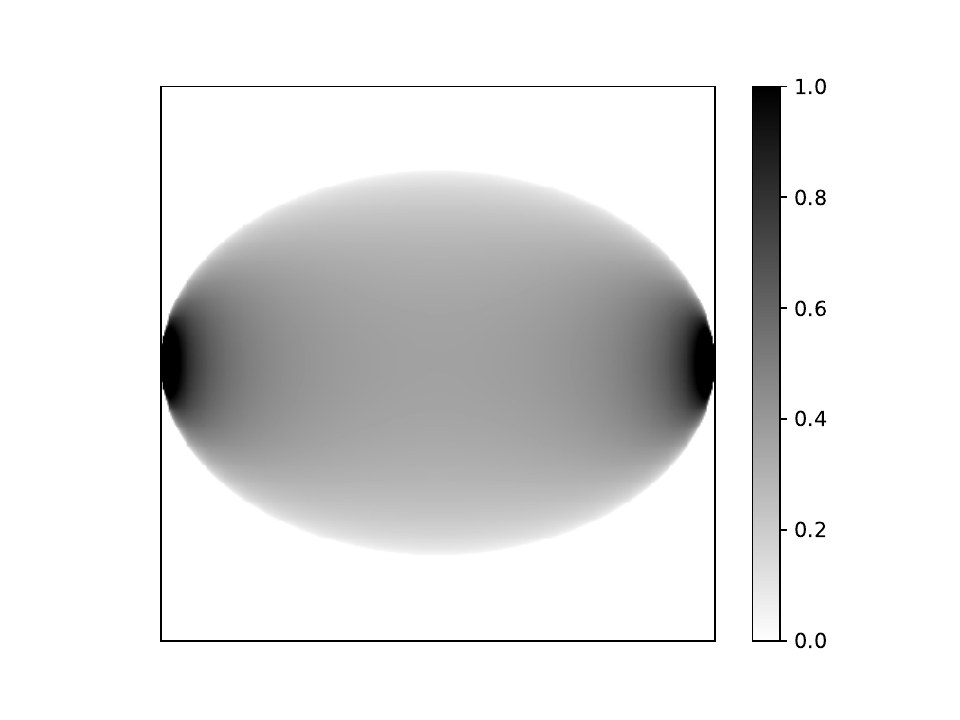}
\end{subfigure}%
\begin{subfigure}{.5\textwidth}
  \centering
  \includegraphics[width=1\linewidth]{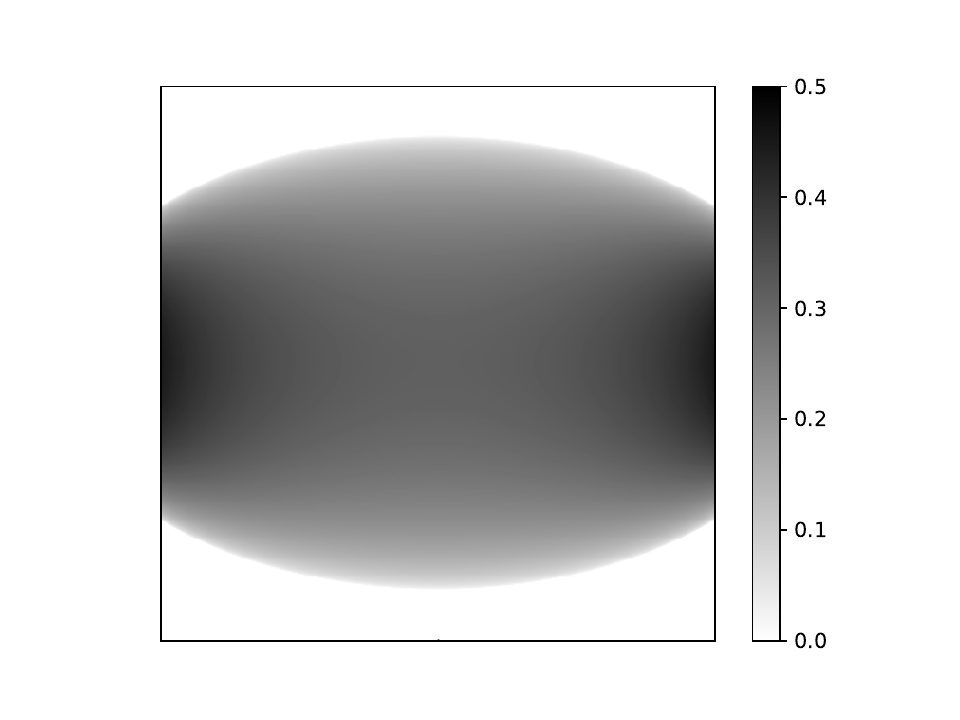}
\end{subfigure}
\caption{The left panel is the density from \Cref{xuv0} with $(a,b,u,v,A)=(0,6,0,0,2)$, and the whole shaded region is the liquid region $\Omega$.  The right panel is the density from \Cref{rhoasct} with $(a,b,d, A)=(0,10,2, 2)$, and the whole shaded region is the liquid region $\Omega$.}
\label{f:density}
\end{figure}

\begin{cor}
	
	\label{rhoasct}
	
	Fix real numbers $a < b$, $d > 0$, and $A > 0$; assume that $\mu_a = A \cdot \mu_{\semci}^{(d)} = \mu_b$. Then, the inverted height function $G^{\star} : [a, b] \times [0,A] \rightarrow \mathbb{R}$ and density process $(\varrho_t^{\star})$ associated with boundary data $(\mu_a; \mu_b)$ are given by 
	\begin{flalign}
		\label{rhoa} 
		\varrho_t^{\star} (x) = A \cdot \varrho_{\semci}^{(d + \frac{A(b-t)(t-a)}{b-a+2\kappa})} (x); \qquad G^{\star} (t, y) = \bigg( d + \displaystyle\frac{A(b-t)(t-a)}{b-a + 2\kappa} \bigg)^{1/2} \cdot \gamma_{\semci} \Big( \displaystyle\frac{y}{A} \Big),
	\end{flalign} 
	
	\noindent where $\kappa = \kappa (a, b, d) > 0$ is defined by
	\begin{flalign}
		\label{kappa} 
		\kappa = \displaystyle\frac{d}{A} + \displaystyle\frac{a-b}{2} + \bigg( \Big( \displaystyle\frac{b-a}{2} \Big)^2 + \Big( \displaystyle\frac{d}{A} \Big)^2 \bigg)^{1/2}.
	\end{flalign}
\end{cor}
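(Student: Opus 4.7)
The plan is to deduce \Cref{rhoasct} from \Cref{xuv0} (the Brownian watermelon density) together with the restriction property \Cref{gtabab}. The key observation is that a semicircle density of variance $d$ is itself the time-$s$ slice of a Brownian watermelon started at a delta measure at time $s_0$, so by extending the interval $[a,b]$ to a larger interval $[a-\kappa, b+\kappa]$ (for a suitable $\kappa$), the boundary data $A \mu_{\semci}^{(d)}$ will match the watermelon slice at times $a$ and $b$. The parameter $\kappa$ in \eqref{kappa} is precisely the quantity needed to align things.

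First I would compute: for which $\kappa > 0$ does the watermelon on $[a-\kappa, b+\kappa]$ (starting and ending at $0$, with total mass $A$) have density equal to $A \varrho_{\semci}^{(d)}$ at times $t = a$ and $t = b$? By \Cref{xuv0} (applied with interval $[a-\kappa, b+\kappa]$, starting and ending data both $0$), the limiting density at time $t$ is $A \cdot \varrho_{\semci}^{(s(t))}$ with $s(t) = A(b+\kappa-t)(t-a+\kappa)/(b-a+2\kappa)$. Setting $s(a) = d$ gives the equation $A\kappa(b-a+\kappa) = d(b-a+2\kappa)$, which is the quadratic $A\kappa^2 + \bigl(A(b-a) - 2d\bigr)\kappa - d(b-a) = 0$. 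The positive root is exactly
\begin{flalign*}
\kappa = \frac{d}{A} + \frac{a-b}{2} + \sqrt{\Bigl(\frac{b-a}{2}\Bigr)^2 + \Bigl(\frac{d}{A}\Bigr)^2},
\end{flalign*}
matching \eqref{kappa}. By the symmetry of $s(t)$ under $t \mapsto a+b-t$, we automatically also have $s(b) = d$.

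Next, let $\bm{\mu}^{\diamond} = (\mu_t^{\diamond})_{t \in [a-\kappa, b+\kappa]}$ denote the bridge-limiting measure process on $[a-\kappa, b+\kappa]$ with boundary data $(A\delta_0, A\delta_0)$. By \Cref{xuv0}, its density at time $t$ is $A \cdot \varrho_{\semci}^{(s(t))}$. In particular $\mu_a^{\diamond} = A\mu_{\semci}^{(d)} = \mu_b^{\diamond}$ by the choice of $\kappa$. By \Cref{gtabab}, the restriction $(\mu_t^{\diamond})_{t \in [a,b]}$ is the bridge-limiting measure process on $[a,b]$ with boundary data $(A\mu_{\semci}^{(d)}, A\mu_{\semci}^{(d)})$, which is exactly $\bm{\mu}^{\star}$ of the statement.

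It remains to rewrite $s(t)$ for $t \in [a,b]$ in the form claimed in \eqref{rhoa}. Expanding $(b+\kappa-t)(t-a+\kappa) = (b-t)(t-a) + \kappa(b-a+\kappa)$ and using $A\kappa(b-a+\kappa) = d(b-a+2\kappa)$ gives
\begin{flalign*}
s(t) = \frac{A(b-t)(t-a)}{b-a+2\kappa} + d,
\end{flalign*}
so $\varrho_t^{\star}(x) = A \cdot \varrho_{\semci}^{(d + A(b-t)(t-a)/(b-a+2\kappa))}(x)$, as asserted. The formula for the inverted height function $G^{\star}(t,y)$ follows from the analogous formula in \Cref{xuv0} for the inverted height function of a watermelon on $[a-\kappa, b+\kappa]$ (with $u = v = 0$), evaluated at $t \in [a,b]$ and simplified using the same algebraic identity. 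No genuine obstacle arises; the only point requiring verification is the quadratic computation for $\kappa$, and it is routine.
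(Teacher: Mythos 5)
Your proposal is correct and follows essentially the same route as the paper: extend the interval to $[a-\kappa, b+\kappa]$, identify the watermelon density there via \Cref{xuv0}, check that the choice \eqref{kappa} of $\kappa$ makes the slices at times $a$ and $b$ equal to $A\mu_{\semci}^{(d)}$, and then restrict using \Cref{gtabab}. Your explicit derivation of the quadratic for $\kappa$ and the identity $(b+\kappa-t)(t-a+\kappa) = (b-t)(t-a) + \kappa(b-a+\kappa)$ fills in exactly the algebra the paper leaves to the reader, and is correct.
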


\begin{proof}
	
	This will follow from using \Cref{gtabab} to restrict the limit shape from \Cref{xuv0} to a smaller interval. To implement this, let $(\widetilde{\mu}_t^{\star})_{t \in [a-\kappa, b+\kappa]}$ denote the bridge-limiting measure process on $[a-\kappa, b+\kappa]$ with boundary data $(\delta_0, \delta_0)$. Then, for any $t \in [a-\kappa, b+\kappa]$, we have
	\begin{flalign} 
		\label{mutkappa} 
		\widetilde{\mu}_t^{\star} = A \cdot \mu_{\semci}^{(\frac{A (b+\kappa-t)(t-a+\kappa)}{b-a+2\kappa})}; \qquad \displaystyle\frac{A (b+\kappa -t)(t-a+\kappa)}{b-a+2\kappa} = d + \displaystyle\frac{A (b-t)(t-a)}{b-a+2\kappa},
	\end{flalign}
	
	\noindent where the first statement follows from \Cref{xuv0} and the second from the choice \eqref{kappa} of $\kappa$. It follows that $\widetilde{\mu}_a^{\star} = \mu_{\semci}^{(d)} = \widetilde{\mu}_b^{\star}$, and so from \Cref{gtabab} we find $\mu_t^{\star} = \widetilde{\mu}_t^{\star}$ for each $t \in [a, b]$. Applied with \eqref{mutkappa}, this yields the first statement of \eqref{rhoa}, which (together with \eqref{htxintegral}, \eqref{gty}, and \eqref{gammascy}) implies its second statement.
\end{proof}

\subsection{The Functions $(u^{\star}, \varrho^{\star})$}

\label{EquationLimitrhou}

In this section we recall results from \cite{FOAMI,LDASI} stating that, under conditions on $\mu_a$ and $\mu_b$, the associated bridge-limiting density process $\mu_t$ gives rise to a pair of functions $(u_t^{\star}, \varrho_t^{\star})$ that satisfy certain equations (which will later be interpreted in terms of elliptic partial differential equations for the associated height and inverted height functions in \Cref{EquationHG} below). For notational convenience, we at first restrict to the case of \Cref{rhot} when $(a, b) = (0, 1)$ and $A = 1$ (explaining how the more general case follows from it through scaling in \Cref{aab1} below).

The following result appears as stated below\footnote{Let us mention that, as stated, those results apply to Dyson Brownian motion \eqref{lambdaequation} started at some $\bm{u} \in \overline{\mathbb{W}}_n$, conditioned to be $\bm{v} \in \overline{\mathbb{W}}_n$ at time $1$. By \Cref{lambdat}, this is equivalent to non-intersecting Brownian bridges of variance $n^{-1}$, starting at $\bm{u}$ and ending at $\bm{v}$.} in \cite{LDSI} and was originally established in \cite{FOAMI,LDASI} (see\footnote{An alternative proof for the smoothness of $u^{\star}$ and $\varrho^{\star}$ (the second part of \Cref{t:LDP}) would proceed by first using \Cref{hequation} (see also \Cref{rhouderivative}) below to show that the associated height function $H^{\star}$ weakly solves the divergence-form elliptic partial differential equation \eqref{tth}. The bounds of \cite[Corollary 2.8(c)]{FOAMI} verify that this equation is elliptic (using \Cref{eigenvalues2} below), uniformly on compact subsets of $\Omega$. Together with regularity estimates for weak solutions to such elliptic differential equations given by \cite[Lemma 6.27, Theorem 8.20, Equation (13.41)]{EDSO} and \cite[Chapter 3.6]{EDE}, this yields the smoothness of $H^{\star}$, implying that of $u^{\star}$ and $\varrho^{\star}$ by \Cref{hurho} below.} \cite[Theorem 2.1]{FOAMI}). Here, we recall from \Cref{mutrhot} that, at a given time $t \in (0, 1)$, any bridge-limiting measure process $\mu^{\star}$ has a density $\varrho_t^{\star} : \mathbb{R} \rightarrow \mathbb{R}_{\ge 0}$; we also recall from \eqref{omega12} the liquid region $\Omega$ associated with any $\varrho \in (0, 1) \times \mathbb{R} \rightarrow \mathbb{R}_{\ge 0}$.

\begin{lem}[{\cite[Theorems 2.3 and 2.11]{LDSI}}]
	
	\label{t:LDP}
	
	Fix probability measures $\mu_0, \mu_1 \in \mathscr{P}$ with densities $\varrho_0, \varrho_1$, respectively, such that 
	\begin{flalign} 
		\label{rho1rho2} 
		\text{$\varrho_0$ and $\varrho_1$ are compactly supported}; \qquad \sup_{x \in \mathbb{R}} \big| \varrho_0(x) \big| < \infty; \qquad \sup_{x \in \mathbb{R}} \big| \varrho_1 (x) \big| < \infty.
	\end{flalign} 
	
	\noindent Let $(\mu_t^{\star})_{t \in [0, 1]} \in \mathcal{C} \big( [0, 1]; \mathscr{P} \big)$ denote the bridge-limiting measure process on $[0, 1]$ with boundary data $(\mu_0; \mu_1)$. For each $t \in [0, 1]$, denote the density of $\mu_t^{\star} \in \mathscr{P}$ by $\varrho_t^{\star} : \mathbb{R} \rightarrow \mathbb{R}_{\ge 0}$. There exists a measurable function $u^{\star} : [0, 1] \times \mathbb{R} \rightarrow \mathbb{R}$, denoted by $u_t^{\star} (x) = u^{\star} (t, x)$, satisfying the following three properties.
	
	\begin{enumerate} 
		\item We have $\partial_t \varrho_t^{\star}(x) + \partial_x \big( \varrho_t^{\star}(x) u_t^{\star} (x) \big) = 0$ weakly (in the sense of distributions) on $(0, 1) \times \mathbb{R}$. More specifically, for any smooth, compactly supported function $\varphi : (0, 1) \times \mathbb{R} \rightarrow \mathbb{R}$, we have
		\begin{flalign*} 
			\displaystyle\int_0^1 \displaystyle\int_{-\infty}^{\infty} \varrho_t^{\star}(x) \partial_t \varphi (t, x) dx dt + \displaystyle\int_0^1 \displaystyle\int_{-\infty}^{\infty} \varrho_t^{\star}(x) u_t^{\star} (x) \partial_x \varphi (t, x) dx dt = 0.
		\end{flalign*} 
		\item The functions $\varrho^{\star}$ and $u^{\star}$ are smooth in $(t, x)$ on the liquid region $\Omega$ associated with $\varrho^{\star}$.
		\item For each $(t, x) \in \Omega$, the pair $(u^{\star}, \varrho^{\star})$ satisfies 
		\begin{flalign}
			\label{equationrhou}
			\partial_t \varrho_t^{\star}(x) + \partial_x \big( u_t^{\star} (x) \varrho_t^{\star}(x) \big) = 0; \qquad \partial_t \big( u_t^{\star} (x) \varrho_t^{\star}(x) \big) + \partial_x \Big( u_t^{\star} (x)^2 \varrho_t^{\star}(x) - \displaystyle\frac{\pi^2}{3} \varrho_t^{\star}(x)^3 \Big) = 0.
		\end{flalign} 
	\end{enumerate}
\end{lem}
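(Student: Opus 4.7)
The plan is to follow the large-deviations approach of Guionnet \cite{FOAMI, LDASI}, which characterizes the bridge-limiting process $(\mu_t^\star)$ as the unique minimizer of an explicit action functional on measure-valued paths. First I would realize $\bm{x}^n$ as the eigenvalue process of a Hermitian matrix Brownian bridge between two Hermitian matrices with spectral empirical distributions converging to $\mu_0$ and $\mu_1$ (using \Cref{lambdat}). A Schilder-type large deviations principle for matrix Brownian bridges, combined with the contraction principle, would give an LDP at speed $n^2$ for the measure-valued process $\nu_t^n = \emp(\bm{x}^n(t))$ on $\mathcal{C}([0,1]; \mathscr{P})$, with rate function equal to the Matytsin action
\begin{flalign*}
\mathcal{A}(\varrho, u) = \int_0^1 \int_{\mathbb{R}} \Big( \tfrac{1}{2} u_t(x)^2 - \tfrac{\pi^2}{6} \varrho_t(x)^2 \Big) \varrho_t(x) \, dx\, dt
\end{flalign*}
plus boundary Voiculescu-entropy terms depending only on $\mu_0, \mu_1$, restricted to pairs $(\varrho, u)$ satisfying $\partial_t \varrho_t + \partial_x(u_t \varrho_t) = 0$ weakly.

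Next I would prove that this rate function admits a unique minimizer and identify it with $(\mu_t^\star)$ from \Cref{rhot}. Uniqueness relies on strict convexity of $\mathcal{A}$ under the Benamou--Brenier change of variable $(\varrho_t, \varrho_t u_t)$, together with concentration of $\nu_t^n$ at the minimizer that comes from the LDP upper bound. This immediately produces the measurable function $u^\star$ of part (1) and the weak continuity equation it satisfies. For part (3), I would derive the momentum equation as the Euler--Lagrange equation for $\mathcal{A}$ subject to the continuity constraint: introducing a Lagrange multiplier $\lambda_t(x)$ yields $u_t = \partial_x \lambda_t$ and a Bernoulli-type relation $\partial_t \lambda_t + \tfrac{1}{2}(\partial_x \lambda_t)^2 = \tfrac{\pi^2}{2} \varrho_t^2$ on $\Omega$; differentiating in $x$ and combining with continuity reproduces exactly $\partial_t(u\varrho) + \partial_x(u^2 \varrho - \pi^2 \varrho^3/3) = 0$.

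For the smoothness claim (part (2)) and to justify the formal Euler--Lagrange computation on $\Omega$, I would recast the system in complex-analytic form. Setting $f_t(z) = u_t(x) - i\pi \varrho_t(x)$ on $\Omega$, or equivalently working with the Stieltjes transform $m_t(z) = \int \varrho_t^\star(x)(x-z)^{-1} dx$ from \eqref{mz0}, the system \eqref{equationrhou} becomes the complex Burgers equation $\partial_t m_t + m_t \partial_z m_t = 0$ in $\mathbb{H}$. Solving this by the method of characteristics produces $m_t$ as an analytic function of $(t, z)$ on a domain reaching the boundary stratum $\Omega \subset (0,1) \times \mathbb{R}$; Stieltjes inversion \eqref{mrho} then transfers this analyticity to smoothness of $\varrho_t^\star$ and $u_t^\star$ throughout $\Omega$, and confirms both equations of \eqref{equationrhou} classically in the interior.

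The main obstacle is a pair of intertwined issues that required substantial effort in \cite{FOAMI, LDASI}: establishing the sharp LDP (in particular the lower bound, where near-minimizing paths with prescribed non-intersecting endpoints must be constructed explicitly, which is the source of the $\pi^2 \varrho^3/3$ term), and solving the complex Burgers equation with two-sided boundary data $(\mu_0, \mu_1)$ rather than a standard Cauchy initial-value problem. Once the complex-analytic framework is in place, smoothness on $\Omega$ and the explicit form of the system \eqref{equationrhou} follow from standard regularity of analytic functions and from the characteristic representation of $m_t$.
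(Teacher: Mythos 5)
First, a point of orientation: the paper does not prove \Cref{t:LDP} at all; it is imported verbatim from \cite[Theorems 2.3 and 2.11]{LDSI}, with the underlying analysis carried out in \cite{FOAMI,LDASI}. Your proposal is essentially an outline of the proof in those references (matrix Brownian bridge realization via \Cref{lambdat}, an LDP at speed $n^2$ with a Matytsin-type rate function, uniqueness of the minimizer, Euler--Lagrange equations, and the complex Burgers reformulation), so at the level of strategy it reproduces what the paper cites rather than offering a different route. That said, two points in your sketch deserve correction. The displayed action has the wrong sign on the cubic term: after the cross term $\pi\int u\varrho\, H\varrho$ is absorbed into endpoint free-entropy contributions, the rate function is $\int_0^1\int\big(\tfrac12 u_t(x)^2\varrho_t(x)+\tfrac{\pi^2}{6}\varrho_t(x)^3\big)\,dx\,dt$ plus boundary terms (compare the functional $\mathscr{E}$ in the unnumbered remark of \Cref{EquationHG}, which equals $\int\int(u^2\varrho+\tfrac{\pi^2}{3}\varrho^3)$ after substituting $\partial_x H=-\varrho$, $\partial_t H=u\varrho$). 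With your minus sign the functional is not convex in the Benamou--Brenier variables $(\varrho,\varrho u)$, so the uniqueness argument you invoke would fail, and the Euler--Lagrange computation would give $\partial_t\lambda+\tfrac12(\partial_x\lambda)^2=-\tfrac{\pi^2}{2}\varrho^2$, i.e.\ the hyperbolic system with pressure $+\tfrac{\pi^2}{3}\varrho^3$, not \eqref{equationrhou}. Your Bernoulli relation and the recovery of \eqref{equationrhou} correspond to the correct (plus) sign, so the inconsistency is presumably a typo, but convexity and the ellipticity of the limiting equation hinge on it.

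Second, part (2) is the step your sketch treats most lightly, and it is not a routine consequence of your complex-analytic framework: the complex Burgers equation here is a two-point boundary-value problem in time, so the method of characteristics does not directly produce $m_t$ as an analytic function on a domain reaching $\Omega$; this regularity is precisely where substantial work is done in \cite{FOAMI}. You flag the boundary-value difficulty as an obstacle, which is fair, but then the claimed derivation of smoothness ``from standard regularity of analytic functions'' is circular, since the analyticity itself is what needs to be established. A cleaner route, indicated in the paper's footnote to \Cref{t:LDP}, is to use \Cref{hequation} (see \Cref{rhouderivative}) to show that $H^{\star}$ weakly solves the divergence-form equation \eqref{tth}, verify uniform ellipticity on compact subsets of $\Omega$ from the a priori bounds of \cite[Corollary 2.8(c)]{FOAMI}, and then invoke elliptic regularity for weak solutions; smoothness of $u^{\star}$ and $\varrho^{\star}$ on $\Omega$ follows via \Cref{hurho}, bypassing the characteristics issue entirely.
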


Although we will not make substantial use of it, the next lemma from \cite{LDSI} (we recall its quick proof in \Cref{ProofDerivativeH} below) reformulates \eqref{equationrhou} as an equation for the associated \emph{complex slope} $f = f^{(\mu_0; \mu_1)} : (0, 1) \times \mathbb{R} \rightarrow \overline{\mathbb{H}}$, defined by
\begin{flalign}
	\label{frhou} 
	f(t, x) = u_t^{\star} (x) + \pi \mathrm{i} \varrho_t^{\star} (x).
\end{flalign}

\begin{lem}[{\cite[Equation (2.16)]{LDSI}}]
	
	\label{p:solution}

	Adopting the notation and assumptions of \Cref{t:LDP}, the associated complex slope $f$ defined by \eqref{frhou} satisfies the complex Burgers equation, 
	\begin{equation}
		\label{ftfx}
		\partial_t f(t, x)+f(t, x) \cdot \partial_x f (t ,x)=0,\qquad \text{for all $(t, x) \in \Omega$}.
	\end{equation} 
	
\end{lem}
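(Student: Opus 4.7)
The plan is to verify the complex Burgers equation \eqref{ftfx} by writing it out in real and imaginary parts and showing that each part reduces to a consequence of the system \eqref{equationrhou}. First I would substitute $f = u^{\star}_t(x) + \pi \mathrm{i} \varrho_t^{\star}(x)$ into $\partial_t f + f \cdot \partial_x f$ and separate real and imaginary components. The imaginary part reads
\begin{flalign*}
\pi \partial_t \varrho_t^{\star} + \pi u_t^{\star} \cdot \partial_x \varrho_t^{\star} + \pi \varrho_t^{\star} \cdot \partial_x u_t^{\star} = \pi \bigl(\partial_t \varrho_t^{\star} + \partial_x(u_t^{\star} \varrho_t^{\star})\bigr),
\end{flalign*}
which vanishes by the first identity in \eqref{equationrhou}. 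The real part reads $\partial_t u_t^{\star} + u_t^{\star} \cdot \partial_x u_t^{\star} - \pi^2 \varrho_t^{\star} \cdot \partial_x \varrho_t^{\star} = 0$, and this is what remains to be proved pointwise on $\Omega$.

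To obtain the real part, I would expand the second identity of \eqref{equationrhou} using the Leibniz rule:
\begin{flalign*}
\varrho_t^{\star} \partial_t u_t^{\star} + u_t^{\star} \partial_t \varrho_t^{\star} + 2 u_t^{\star} \varrho_t^{\star} \partial_x u_t^{\star} + (u_t^{\star})^2 \partial_x \varrho_t^{\star} - \pi^2 (\varrho_t^{\star})^2 \partial_x \varrho_t^{\star} = 0.
\end{flalign*}
Then I would use the first identity in \eqref{equationrhou} to substitute $\partial_t \varrho_t^{\star} = -u_t^{\star} \partial_x \varrho_t^{\star} - \varrho_t^{\star} \partial_x u_t^{\star}$. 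After cancellations, the remaining expression factors as
\begin{flalign*}
\varrho_t^{\star} \bigl(\partial_t u_t^{\star} + u_t^{\star} \partial_x u_t^{\star} - \pi^2 \varrho_t^{\star} \partial_x \varrho_t^{\star}\bigr) = 0.
\end{flalign*}
Since $\varrho_t^{\star}(x) > 0$ for $(t,x) \in \Omega$ by the definition \eqref{omega12} of the liquid region, I can divide by $\varrho_t^{\star}$ to conclude the real-part identity on $\Omega$. Combining with the imaginary part already established yields \eqref{ftfx}.

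The smoothness needed to interpret these manipulations pointwise rather than weakly is supplied by part (2) of \Cref{t:LDP}, which asserts that $u^{\star}$ and $\varrho^{\star}$ are smooth on $\Omega$, so the chain and Leibniz rules apply classically. I do not anticipate any real obstacle here: the whole derivation is an algebraic reorganization of \eqref{equationrhou}, and the only analytic input is smoothness on $\Omega$ together with the strict positivity of $\varrho_t^{\star}$ there, both of which are already in hand.
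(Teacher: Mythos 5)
Your proposal is correct and follows essentially the same route as the paper: both expand \eqref{equationrhou}, identify the imaginary part of \eqref{ftfx} with the first (continuity) equation, and obtain the real part by combining the expanded second equation with the first, factoring out $\varrho_t^{\star}$, and using $\varrho_t^{\star} > 0$ on $\Omega$. The only cosmetic difference is that you substitute $\partial_t \varrho_t^{\star}$ from the first equation whereas the paper subtracts the first equation multiplied by $u_t^{\star}$ from the second, which is the same algebra.
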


The following lemma expresses the derivatives of the height function (recall \Cref{hrhot}) associated with non-intersecting Brownian bridges in terms of the $(u^{\star}, \varrho^{\star})$ above. The first statement of \eqref{hurhoequation} follows from the definition \eqref{htxintegral} of the height function. Together with the equation $\partial_t \varrho^{\star} + \partial_x (u^{\star} \varrho^{\star}) = 0$, this formally implies the second statement of \eqref{hurhoequation}; its detailed proof is given in \Cref{ProofDerivativeH} below.

\begin{lem} 
	
	\label{hurho}

	Adopt the notation and assumptions of \Cref{t:LDP}, and let $H^{\star}$ denote the height function associated with $(\mu_t^{\star})$. Then $H^{\star}$ is locally Lipschitz on $(0, 1) \times \mathbb{R}$ and, for almost all $(t, x) \in (0, 1) \times \mathbb{R}$ (with respect to Lebesgue measure) we have
	\begin{flalign} 
		\label{hurhoequation} 
		\partial_x H^{\star} (t, x) = - \varrho_t (x); \qquad \partial_t H^{\star} (t, x) = u_t (x) \varrho_t (x).
	\end{flalign} 
	
\end{lem}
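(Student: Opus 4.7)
The first identity $\partial_x H^{\star}(t,x) = -\varrho_t^{\star}(x)$ is essentially immediate from the definition $H^{\star}(t,x) = \int_x^{\infty} \varrho_t^{\star}(w)\,dw$ in \eqref{htxintegral}: by the fundamental theorem of calculus, it holds at every Lebesgue point of $\varrho_t^{\star}(\cdot)$, which is almost every $x$. The pointwise bound $\varrho_t^{\star}(x) \le (t-t^2)^{-1/2}$ from \Cref{mutrhot}(3) then shows $H^{\star}$ is Lipschitz in $x$ uniformly on compact subsets of $(0,1) \times \mathbb{R}$.

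For the second identity, the plan is to establish $\partial_t H^{\star} = u_t^{\star} \varrho_t^{\star}$ in the sense of distributions, and then combine with local Lipschitz regularity to upgrade to a.e. equality. To do so, fix an arbitrary smooth compactly supported test function $\varphi : (0,1) \times \mathbb{R} \to \mathbb{R}$, and define $\Psi(t, w) = \int_{-\infty}^w \varphi(t,x)\,dx$. The function $\Psi$ is smooth and compactly supported in $t$, but only stabilizes (rather than vanishes) as $w \to \infty$. To apply the weak continuity equation from \Cref{t:LDP}(1), multiply by a smooth cutoff $\chi(w)$ that is identically $1$ on a neighborhood of $\supp \mu_0 + \supp \mu_1$ (which contains $\supp \varrho_t^{\star}$ for every $t \in (0,1)$ by \Cref{mutrhot}(2)), and $0$ outside a larger compact set. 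Applying the weak continuity equation with test function $\chi \Psi$, and using that $\chi \equiv 1$ and $\chi' \equiv 0$ on the support of $\varrho_t^{\star}$, produces
\begin{align*}
\int_0^1 \int_{\mathbb{R}} \varrho_t^{\star}(w) \partial_t \Psi(t, w)\,dw\,dt + \int_0^1 \int_{\mathbb{R}} u_t^{\star}(w) \varrho_t^{\star}(w) \varphi(t, w)\,dw\,dt = 0.
\end{align*}
A direct application of Fubini on the first integral, using $\partial_t \Psi(t,w) = \int_{-\infty}^w \partial_t \varphi(t,x)\,dx$, rewrites it as $\int_0^1 \int_{\mathbb{R}} H^{\star}(t,x) \partial_t \varphi(t,x)\,dx\,dt$. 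This yields the distributional identity $\partial_t H^{\star} = u_t^{\star} \varrho_t^{\star}$ on $(0,1) \times \mathbb{R}$.

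It remains to show $H^{\star}$ is locally Lipschitz in $t$ as well, which upgrades the distributional derivatives to a.e. pointwise ones. Since $H^{\star}(t,x) \in [0,1]$ and is monotone in $x$, together with the fact that $\mu_t^{\star}$ is supported in the fixed compact set $\supp \mu_0 + \supp \mu_1$ (\Cref{mutrhot}(2)), it suffices to control $\big| H^{\star}(t_2, x) - H^{\star}(t_1, x) \big|$ for $(t_1, t_2)$ in any compact subinterval of $(0,1)$. This can be obtained from the continuity of $t \mapsto \mu_t^{\star}$ under $d_{\dL}$ from \Cref{rhot}, bootstrapped using the density bound $\varrho_t^{\star} \le (t-t^2)^{-1/2}$ of \Cref{mutrhot}(3): Lévy-distance convergence combined with a uniform density bound controls the cumulative distribution function locally, which is essentially $H^{\star}(t, \cdot)$. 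The main technical subtlety is in this last step, where a naive pointwise bound on $u_t^{\star} \varrho_t^{\star}$ near the boundary of the liquid region $\Omega$ is not immediate; however, this is avoided by the density-plus-Lévy-continuity argument above, which directly establishes Lipschitz regularity of $H^{\star}$ in $(t, x)$ without passing through pointwise bounds on $u_t^{\star}$. Once local Lipschitz regularity is known, the distributional derivative $\partial_t H^{\star} = u_t^{\star} \varrho_t^{\star}$ agrees with the a.e. classical derivative, completing the proof.
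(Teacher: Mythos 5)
Your treatment of the $x$-derivative and your derivation of the distributional identity $\partial_t H^{\star} = u_t^{\star} \varrho_t^{\star}$ (testing the weak continuity equation of \Cref{t:LDP} against the cutoff antiderivative $\chi \Psi$ and using Fubini) are sound, and the latter is a clean variant of the paper's argument, which instead tests against smooth approximations of indicators $\textbf{1}_{t \in [t_1, t_2]} \textbf{1}_{x \in [x_0, x_1]}$. (One small imprecision: by \Cref{mutrhot}(2) it is $\supp \nu_t$, not $\supp \varrho_t^{\star}$, that lies in $\supp \mu_0 + \supp \mu_1$; the free convolution with $\mu_{\semci}^{(t-t^2)}$ enlarges the support by at most $2$, so you should take $\chi \equiv 1$ on a correspondingly larger compact set — easily fixed.)

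The genuine gap is in your argument for local Lipschitz continuity in $t$, which is both part of the statement and the ingredient you need to upgrade the distributional identity to an a.e.\ pointwise one (via Rademacher). You propose to get it from the continuity of $t \mapsto \mu_t^{\star}$ in the L\'{e}vy metric (\Cref{rhot}) together with the density bound of \Cref{mutrhot}(3). But L\'{e}vy continuity is purely qualitative: combined with a uniform density bound it gives only that $H^{\star}(t, x)$ is continuous in $t$ (with an unspecified modulus), not Lipschitz; to conclude $\big| H^{\star}(t_1, x_0) - H^{\star}(t_2, x_0) \big| \le C |t_1 - t_2|$ you would need a rate $d_{\dL}(\mu_{t_1}^{\star}, \mu_{t_2}^{\star}) \le C |t_1 - t_2|$, which is not available from \Cref{rhot}. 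The step you explicitly set aside — a bound on the flux $u_t^{\star} \varrho_t^{\star}$ — is in fact the crux: the paper imports the uniform estimate $\sup_{x} \sup_{t \in [\omega, 1-\omega]} \big| u_t^{\star}(x) \varrho_t^{\star}(x) \big| \le C$ from \cite[Corollary 2.8(c)]{FOAMI} (equation \eqref{urhoc}) and inserts it into the weak continuity equation to obtain exactly the Lipschitz bound in $t$ (equivalently, once you have your distributional identity, this $L^{\infty}$ bound on the flux is what shows the distributional $t$-derivative is locally bounded, hence $H^{\star}$ is locally Lipschitz in $t$). Without some quantitative control of this kind, your route does not establish the Lipschitz claim, and consequently the final passage from distributional to a.e.\ classical derivatives is not justified.
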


\begin{rem} 
	
	\label{aab1} 
	
	Using \Cref{aab}, let us provide analogs of the above results when the parameters $(a, b)$ and $A$ in \Cref{rhot} are arbitrary. To that end, adopt the notation of \Cref{aab}, and assume (analogously to in \eqref{rho1rho2}) that $\mu_a$ and $\mu_b$ have densities $\varrho_a : \mathbb{R} \rightarrow \mathbb{R}_{\ge 0}$ and $\varrho_b : \mathbb{R} \rightarrow \mathbb{R}_{\ge 0}$ with respect to Lebesgue measure, satisfying
	\begin{flalign}
		\label{rhoarhob}
		\text{$\varrho_a$ and $\varrho_b$ are compactly supported}; \qquad \displaystyle\sup_{x \in \mathbb{R}} \big| \varrho_a (x) \big| < \infty; \qquad \displaystyle\sup_{x \in \mathbb{R}} \big| \varrho_b (x) \big| < \infty.
	\end{flalign}
	
	Letting $(\varrho_t^{\star})_{t \in [a,b]}$ and $(\widetilde{\varrho}_t^{\star})_{t \in [0, 1]}$ denote the densities of $(\mu_t^{\star})_{t \in [a, b]}$ and $(\widetilde{\mu}_t^{\star})_{t \in [0, 1]}$ with respect to Lebesgue measure, respectively (guaranteed to exist by the first part of \Cref{mutrhot}), we have from the first statement of \eqref{muhgaab} that 
	\begin{flalign}
		\label{rhoaab} 
		\widetilde{\varrho}_t^{\star} (x) = A^{-1/2} (b-a)^{1/2} \cdot \varrho_{a + (b-a) t}^{\star} \big( A^{1/2} (b-a)^{1/2} x \big), \qquad \text{for any $(t, x) \in [0, 1] \times \mathbb{R}$}.
	\end{flalign}
	
	\noindent By the second part of \Cref{mutrhot}, there exists $\widetilde{\nu}_t \in \mathscr{P}$ with $\supp \widetilde{\nu}_t \subseteq \supp \widetilde{\mu}_0 + \supp \widetilde{\mu}_1$ and $\widetilde{\mu}_t^{\star} = \widetilde{\nu}_t \boxplus \mu_{\semci}^{(t-t^2)}$, for each $t \in (0, 1)$. Hence, by \Cref{mtscale}, the $\beta = b-a$ case of \Cref{mtscalebeta}, and \eqref{rhoaab}, for any $t \in (a, b)$ there exists some $\nu_t \in \mathscr{P}_{\fin}$ with $\nu_t (\mathbb{R}) = A$, such that $\supp \nu_t \subseteq \supp \mu_a + \supp \mu_b$ and $\mu_t^{\star} = \nu_t \boxplus \mu_{\semci}^{((t-a)(b-t) / (b-a))}$. This verifies that (the rescaled version) of the second part of \Cref{mutrhot} holds for $\mu_t^{\star}$.
	
	Further setting (as in \eqref{hurhoequation}) 
	\begin{flalign}
		\label{} 
		u_t^{\star} (x) = \displaystyle\frac{\partial_t H^{\star} (t, x)}{\varrho_t (x)}, \quad \text{if $\varrho_t (x) > 0$}; \qquad \qquad \widetilde{u}_t^{\star} (x) = \displaystyle\frac{\partial_t \widetilde{H}_t^{\star} (x)}{\widetilde{\varrho}_t^{\star} (x)}, \quad \text{if $\widetilde{\varrho}_t^{\star} (x) > 0$},
	\end{flalign}
	
	\noindent we have from \eqref{rhoaab} and the second statement of \eqref{muhgaab} that 
	\begin{flalign}
		\label{uaab}
		\widetilde{u}_t^{\star} (x) = A^{-1/2} (b-a)^{1/2} \cdot u_{a + (b-a) t}^{\star} \big( A^{1/2} (b-a)^{1/2} x \big).
	\end{flalign}

	\noindent Defining (as in \eqref{frhou}) the functions $f (t, x) = u_t^{\star} (x) + \pi \mathrm{i} \varrho_t^{\star} (x)$ and $\widetilde{f} (t, x) = \widetilde{u}_t^{\star} (x) + \pi \mathrm{i} \widetilde{\varrho}_t^{\star} (x)$, we have from \eqref{rhoaab} and \eqref{uaab} that
	\begin{flalign*}
		\widetilde{f} (t, x) = A^{-1/2} (b-a)^{1/2} \cdot f \big( a + (b-a) t, A^{1/2} (b-a)^{1/2} x \big).
	\end{flalign*}
	
	\noindent Together with the fact that $\widetilde{f}$ satisfies the complex Burgers equation \eqref{ftfx} by \Cref{p:solution}, this implies that $f$ also satisfies \eqref{ftfx} on the liquid region $\Omega$ associated with $\varrho^{\star}$.
\end{rem}

\begin{rem} 
	
	\label{urho0} 
	
	Adopt the notation and assumptions of \Cref{rhot}. Let $H^{\star} : [a, b] \times \mathbb{R} \rightarrow \mathbb{R}$ and $G^{\star} : [a, b] \times [0, A] \rightarrow \mathbb{R}$ denote the height and inverted height functions associated with $\bm{\mu}^{\star}$, respectively, and let $\varrho_t^{\star}$ and $u_t^{\star}$ be as in \Cref{t:LDP} (and \Cref{aab1}); denote the associated inverted liquid region (recall \eqref{omega12}) by $\Omega^{\inv} \subseteq (a,b) \times (0, A)$. By \Cref{hrhot}, we have for $(t, y) \in \Omega^{\inv}$ that $H^{\star} \big( t, G^{\star} (t, y) \big) = y$. Differentiating this equation in $y$ or $t$, we find that \eqref{hurhoequation} can equivalently be formulated in terms $G^{\star}$ as 
	\begin{flalign*}
		\varrho_t^{\star} \big( G^{\star} (t, y) \big) = -\displaystyle\frac{1}{\partial_y G^{\star} (t, y)}, \quad \text{and} \quad u_t^{\star} \big( G^{\star} (t, y) \big) = \partial_t G^{\star} (t, y), \qquad \text{if $(t, y) \in \Omega^{\inv}$},
	\end{flalign*}

	\noindent which defines $\varrho_t^{\star}$ and $u_t^{\star}$ everywhere on $\Omega$ (since $G^{\star}$ bijectively maps $\Omega^{\inv}$ to $\Omega$).

\end{rem}

\subsection{Elliptic Partial Differential Equations for the Height Function}

\label{EquationHG}

In this section we reformulate \eqref{equationrhou} as elliptic partial differential equations satisfied by the height and inverted height functions associated with $\mu_t^{\star}$. The following lemma, which follows quickly from \eqref{equationrhou}, implements this for the height function.

\begin{lem} 
	
	\label{hequation} 
	
	Adopt the notation and assumptions of \Cref{t:LDP}, and let $H^{\star}$ denote the height function associated with $(\mu_t^{\star})$. Recalling the liquid region $\Omega \subset (0, 1) \times \mathbb{R}$ from \eqref{omega12}, we have
	\begin{flalign}
		\label{equationxtb}
		\displaystyle\sum_{i, j \in \{ t, x \}} \mathfrak{b}_{ij} \big( \nabla H^{\star} (t, x) \big) \partial_i \partial_j H^{\star} (t, x) = 0, \qquad \text{for any $(t, x) \in \Omega$},
	\end{flalign}
	
	\noindent where, for any $(u, v) \in \mathbb{R}^2$, we have denoted
	\begin{flalign}
		\label{uvb} 
		\mathfrak{b}_{tt} (u, v) = v^2; \qquad \mathfrak{b}_{tx} (u, v) = -uv = \mathfrak{b}_{xt} (u, v); \qquad \mathfrak{b}_{xx} (u, v) = u^2 + \pi^2 v^4.
	\end{flalign}
\end{lem}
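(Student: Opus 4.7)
The plan is to derive \eqref{equationxtb} as a direct consequence of the system \eqref{equationrhou} via the change of variables afforded by \Cref{hurho}. Since part (2) of \Cref{t:LDP} states that $(u_t^{\star}, \varrho_t^{\star})$ is smooth on $\Omega$, \Cref{hurho} ensures $H^{\star}$ is smooth there, with $\nabla H^{\star} = (u_t^{\star} \varrho_t^{\star}, -\varrho_t^{\star})$. The coefficients $\mathfrak{b}_{ij}(\nabla H^{\star})$ in \eqref{equationxtb} then become $(\varrho_t^{\star})^2$, $u_t^{\star} (\varrho_t^{\star})^2$, and $(u_t^{\star})^2 (\varrho_t^{\star})^2 + \pi^2 (\varrho_t^{\star})^4$ for the $tt$, $tx$, and $xx$ entries, respectively.

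Next I would compute the three second derivatives of $H^{\star}$ in the same variables: $\partial_x^2 H^{\star} = -\partial_x \varrho_t^{\star}$ and $\partial_t^2 H^{\star} = \partial_t(u_t^{\star}\varrho_t^{\star})$ are immediate, while the mixed derivative $\partial_t \partial_x H^{\star} = -\partial_t \varrho_t^{\star}$ can be rewritten, via the continuity equation in \eqref{equationrhou}, as $\partial_x(u_t^{\star}\varrho_t^{\star})$.

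Substituting these expressions into the left side of \eqref{equationxtb} and factoring out $(\varrho_t^{\star})^2 > 0$ (valid on $\Omega$), it would remain to verify the scalar identity
\begin{align*}
\partial_t(u_t^{\star}\varrho_t^{\star}) + 2 u_t^{\star} \partial_x(u_t^{\star}\varrho_t^{\star}) - \bigl((u_t^{\star})^2 + \pi^2 (\varrho_t^{\star})^2\bigr) \partial_x \varrho_t^{\star} = 0
\end{align*}
on $\Omega$. This will follow by expanding $\partial_t(u_t^{\star}\varrho_t^{\star})$ using the momentum equation (the second half of \eqref{equationrhou}) and collecting like terms; all contributions cancel. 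There is no serious obstacle to the argument --- the coefficients $\mathfrak{b}_{ij}$ in \eqref{uvb} are engineered precisely so that this substitution reduces the PDE to the pair of conservation laws satisfied by $(u^{\star}, \varrho^{\star})$, and one could alternatively arrive at the same identity by starting from the complex Burgers equation \eqref{ftfx} for $f = u_t^{\star} + \pi \mathrm{i} \varrho_t^{\star}$ and taking real and imaginary parts.
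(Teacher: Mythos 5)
Your proposal is correct and follows essentially the same route as the paper: both arguments use \Cref{hurho} to trade $\nabla H^{\star}$ for $(u_t^{\star}\varrho_t^{\star}, -\varrho_t^{\star})$ and then invoke the conservation laws \eqref{equationrhou}. The paper merely organizes the computation in the reverse direction, substituting into the momentum equation to obtain the divergence-form equation \eqref{tth} in $H^{\star}$ and then expanding and multiplying by $(\partial_x H^{\star})^2$, whereas you plug into \eqref{equationxtb} and verify the resulting scalar identity cancels — the same algebra either way.
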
 

\begin{proof} 
	
	Applying \Cref{hurho} and the second statement of \eqref{equationrhou} yields
	\begin{flalign}
		\label{tth} 
		\partial_{tt} H + \partial_x \bigg( \displaystyle\frac{\pi^2}{3} (\partial_x H)^3 - \displaystyle\frac{(\partial_t H)^2}{\partial_x H} \bigg) = 0,
	\end{flalign} 
	
	\noindent or equivalently,
	\begin{flalign}
		\label{txh} 
		\partial_{tt} H - \displaystyle\frac{2 \partial_t H}{\partial_x H} \cdot \partial_{tx} H + \displaystyle\frac{(\partial_t H)^2}{(\partial_x H)^2} \cdot \partial_{xx} H + \pi^2 (\partial_x H)^2 \cdot \partial_{xx} H = 0.
	\end{flalign}
	
	\noindent Multiplying this equation by $(\partial_x H)^2$ then yields \eqref{equationxtb}. 
\end{proof}

\begin{rem} 
	
	\label{rhouderivative}
	
	The derivation of \eqref{tth} does not require smoothness of $u^{\star}$ and $\varrho^{\star}$. In particular, if one were to only assume that the second equation in \eqref{equationrhou} holds weakly (in the sense of distributions) on $\Omega$ (as was shown in \cite[Property 2.9(4)]{FOAMI}), the proof of \Cref{hequation} shows that $H$ is a weak solution to the divergence-form ellliptic partial differential equation \eqref{tth}.
	
\end{rem}

The next lemma, which follows from changing variables in \Cref{hequation}, shows that the inverted height function $G^{\star}$ satisfies \eqref{equationxtd}; in view of \Cref{convergepathomega}, this implies that the limiting trajectories of paths in a family of non-intersecting Brownian motions (without north or south boundary) is govered by solutions of \eqref{equationxtd}; in the below, we recall the open set $\Omega^{\inv}$ from \eqref{omega12}. 

\begin{lem} 
	
	\label{gequation}

	Adopt the notation and assumptions of \Cref{t:LDP}, and let $G^{\star}$ denote the inverted height function associated with $(\mu_t^{\star})$. Then $G^{\star}$ satisfies the equation \eqref{equationxtd} for $(t, y) \in \Omega^{\inv}$.

\end{lem}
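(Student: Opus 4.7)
The plan is to derive \eqref{equationxtd} for $G^{\star}$ by inverting the defining relation and reducing to the equation \eqref{equationxtb} for $H^{\star}$ established in \Cref{hequation}. On the inverted liquid region $\Omega^{\inv}$, by \Cref{hrhot} we have the identity $H^{\star}\bigl(t, G^{\star}(t,y)\bigr) = y$. Moreover, by \Cref{t:LDP}(2) the functions $u^{\star}$ and $\varrho^{\star}$ are smooth on $\Omega$; combined with \Cref{hurho}, this gives $\nabla H^{\star} = (u_t^{\star}\varrho_t^{\star}, -\varrho_t^{\star})$ smooth on $\Omega$, so $H^{\star}$ is $C^{\infty}$ there. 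Since $\partial_x H^{\star}(t,x) = -\varrho_t^{\star}(x) < 0$ on $\Omega$, the implicit function theorem guarantees that $G^{\star}$ is $C^{\infty}$ on $\Omega^{\inv}$, with $\partial_y G^{\star} = -1/\varrho_t^{\star} < 0$.

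Next I would express the second-order partials of $G^{\star}$ in terms of those of $H^{\star}$ via standard calculus. Write $x = G^{\star}(t,y)$ and abbreviate $H_t, H_x, H_{tt}, H_{tx}, H_{xx}$ for the partials of $H^{\star}$ evaluated at $(t,x)$, and $G_t, G_y, G_{tt}, G_{ty}, G_{yy}$ for those of $G^{\star}$ at $(t,y)$. Differentiating the identity $H^{\star}(t, G^{\star}(t,y)) = y$ in $y$ and in $t$ yields $H_x G_y = 1$ and $H_t + H_x G_t = 0$, so $G_y = 1/H_x$ and $G_t = -H_t/H_x$. Differentiating $H_x G_y = 1$ once more in $y$ produces $G_{yy} = -H_{xx}/H_x^3$, while differentiating $H_t + H_x G_t = 0$ in $t$ yields, after substituting $G_t = -H_t/H_x$,
\[
H_x^3\, G_{tt} \;=\; -\bigl(H_x^2 H_{tt} - 2 H_t H_x H_{tx} + H_t^2 H_{xx}\bigr).
\]

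The final step is to combine these relations with \eqref{equationxtb}. By \eqref{uvb}, the equation \eqref{equationxtb} reads
\[
H_x^2 H_{tt} - 2 H_t H_x H_{tx} + \bigl(H_t^2 + \pi^2 H_x^4\bigr) H_{xx} \;=\; 0,
\]
so $H_x^2 H_{tt} - 2 H_t H_x H_{tx} + H_t^2 H_{xx} = -\pi^2 H_x^4 H_{xx}$. Plugging this into the previous display gives $H_x^3 G_{tt} = \pi^2 H_x^4 H_{xx}$, i.e.\ $G_{tt} = \pi^2 H_x H_{xx}$. Using $H_x = 1/G_y$ and $H_{xx} = -H_x^3 G_{yy} = -G_{yy}/G_y^3$, this becomes
\[
G_{tt} + \pi^2 (G_y)^{-4}\, G_{yy} \;=\; 0,
\]
which is precisely \eqref{equationxtd} in view of the definition \eqref{uvd} of the coefficients $\mathfrak{d}_{jk}$ (noting in particular that $\mathfrak{d}_{ty} = \mathfrak{d}_{yt} = 0$, so no mixed-derivative term appears).

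The computation itself is routine; the only conceptual point to check carefully is the smoothness and invertibility setup in the first paragraph, which legitimates repeatedly differentiating the implicit relation and guarantees $\partial_y G^{\star} \neq 0$ throughout $\Omega^{\inv}$.
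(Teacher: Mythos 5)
Your proof is correct and follows essentially the same route as the paper: both invert the height/inverted-height relation, express the second derivatives of $G^{\star}$ in terms of those of $H^{\star}$ by the chain rule, and substitute into the equation \eqref{equationxtb} from \Cref{hequation} to obtain \eqref{equationxtd}. The only differences — differentiating $H^{\star}(t,G^{\star}(t,y))=y$ rather than $G^{\star}(t,H^{\star}(t,x))=x$, and spelling out the implicit-function-theorem smoothness step — are cosmetic.
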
 

\begin{proof}
	
	Let $(t, y) \in \Omega^{\inv}$, and set $x \in \mathbb{R}$ such that $H(t, x) = y$; then, $(t, x) \in \Omega$. Differentiating the equation $G \big( t, H(t, x) \big) = x$, it is quickly verified that  
	\begin{flalign*}
		\partial_y G = \displaystyle\frac{1}{\partial_x H}; \qquad & \partial_t G = -\displaystyle\frac{\partial_t H}{\partial_x H}; \qquad \partial_{yy} G = -\displaystyle\frac{\partial_{xx} H}{(\partial_x H)^3}; \qquad \partial_{ty} G = \displaystyle\frac{\partial_t H}{(\partial_x H)^3} \cdot \partial_{xx} H - \displaystyle\frac{\partial_{tx} H}{(\partial_x H)^2}; \\
		& \quad \partial_{tt} G = \displaystyle\frac{2 \partial_t H}{(\partial_x H)^2} \cdot \partial_{tx} H - \displaystyle\frac{(\partial_t H)^2}{(\partial_x H)^3} \cdot \partial_{xx}H - \displaystyle\frac{\partial_{tt} H}{\partial_x H}.
	\end{flalign*}
	
	\noindent where in the above the arguments of $H$ and $G$ (and their derivatives) are $(t, x)$ and $(t, y)$, respectively. Dividing \eqref{txh} by $\partial_x H$ and applying the above equalities yields 
	\begin{flalign*}
		\partial_{tt} G + \pi^2 (\partial_y G)^{-4} \partial_{yy} G & = \partial_{tt} G - \pi^2 (\partial_x H) \partial_{xx} H \\
		& =  \displaystyle\frac{2 \partial_t H}{(\partial_x H)^2} \cdot \partial_{tx} H - \displaystyle\frac{\partial_{tt} H}{\partial_x H} - \bigg( \displaystyle\frac{(\partial_t H)^2}{(\partial_x H)^3} + \pi^2 \partial_x H \bigg) \cdot \partial_{xx} H = 0,
	\end{flalign*}
	
	\noindent which establishes the lemma.
\end{proof}

\begin{rem} 
	
	\label{eigenvalues2} 
	
	The regularity properties of solutions to \eqref{equationxtb} and \eqref{equationxtd} depend on the $2 \times 2$ matrices 
	\begin{flalign*} 
		& \bm{B} = \bm{B} (u, v) = \left[ \begin{array}{cc} \mathfrak{b}_{tt} (u, v) & \mathfrak{b}_{tx} (u, v) \\ \mathfrak{b}_{xt} (u, v) & \mathfrak{b}_{tt} (u, v) \end{array} \right] = \left[ \begin{array}{cc} v^2 & -uv \\ -uv & u^2 + \pi^2 v^4 \end{array} \right], \quad \text{and} \\
		&  \bm{D} = \bm{D} (u, v) = \left[ \begin{array}{cc}  \mathfrak{d}_{tt} (u, v) & \mathfrak{d}_{ty} (u, v) \\ \mathfrak{d}_{yt} (u, v) & \mathfrak{d}_{yy} (u, v) \end{array} \right] = \left[ \begin{array}{cc} 1 & 0 \\ 0 & \pi^2 v^{-4} \end{array} \right],
	\end{flalign*} 
	
	\noindent respectively; both are nonnegative definite. The two eigenvalues of $\bm{B}$ are bounded above and below whenever $|u|$ is bounded above and $v$ is bounded above and below; the two eigenvalues of $\bm{D}$ are bounded above and below whenever $v$ is. In particular, that the latter only requires bounds on $v = \partial_y G^{\star} = (\partial_x H^{\star})^{-1}$, while the former requires those on both $\partial_x H^{\star}$ and $\partial_t H^{\star}$. 
	
\end{rem} 

Further regularity properties of solutions to \eqref{equationxtd} are provided in \Cref{DerivativesEquation} below. 

\begin{rem}
	
	Formally, one can interpret \eqref{equationxtb} and \eqref{equationxtd} as Euler--Lagrange equations for minimizers of the functionals
	\begin{flalign*}
		& \mathscr{E} (F) = -\displaystyle\int_0^1 \displaystyle\int_{-\infty}^{\infty} \bigg( \displaystyle\frac{\big( \partial_t F(t, x) \big)^2}{\partial_x F(t, x)} + \displaystyle\frac{\pi^2}{3} \big( \partial_x F(t, x) \big)^3 \bigg) dx dt, \qquad \text{and} \\
		& \mathscr{E}^{\inv} (F) = \displaystyle\int_0^1 \displaystyle\int_0^1 \Big( \big( \partial_t F(t, x) \big)^2 + \displaystyle\frac{\pi^2}{3} \big( \partial_x F(t, x) \big)^{-2} \Big) dx dt, \qquad \text{respectively}.
	\end{flalign*}
	
	\noindent However, we will not make this fully precise, as we will not require this interpretation.
	
\end{rem}

It will be useful to make use of invariances of the equations \eqref{equationxtb} and \eqref{equationxtd} under the below (linear and multiplicative) transformations. 

\begin{lem}
	
	\label{invariancesscale} 
	
	Fix a bounded, open subset $\mathfrak{R} \subset \mathbb{R}^2$ and two functions $G, H \in \Adm (\mathfrak{R}) \cap \mathcal{C}^2 (\mathfrak{R})$; assume on $\mathfrak{R}$ that $H$ satisfies \eqref{equationxtb} and that $G$ satisfies \eqref{equationxtd}. Fix real numbers $\alpha, \beta > 0$, and denote the open sets $\widetilde{\mathfrak{R}} = \widetilde{\mathfrak{R}}_{\alpha; \beta} \subset \mathbb{R}^2$ and $\widehat{\mathfrak{R}} = \widehat{\mathfrak{R}}_{\alpha} \subset \mathbb{R}^2$ by
	\begin{flalign*}
		\widetilde{\mathfrak{R}} = \big\{ (t, x) \in \mathbb{R}^2 & : (\alpha t, \beta x) \in \mathfrak{R} \big\}; \quad \widehat{\mathfrak{R}} = \big\{ (t, x) \in \mathbb{R}^2 : (t, x-\alpha t) \in \mathfrak{R} \big\}.
	\end{flalign*}
	
	\begin{enumerate}
		
		\item 
		\begin{enumerate} 
			\item \label{scale11} Define  $\widetilde{H} \in \mathcal{C}^2 (\widetilde{\mathfrak{R}})$ by $\widetilde{H} (t, x) = \alpha \beta^{-2} H(\alpha t, \beta x)$. Then $\widetilde{H}$ satisfies \eqref{equationxtb} on $\widetilde{\mathfrak{R}}$.
			
			\item Define $\widehat{H} \in \mathcal{C}^2 (\widehat{\mathfrak{R}})$ by $\widehat{H}(t, x) = H(t, x-\alpha t)$. Then, $\widehat{H}$ satisfies \eqref{equationxtb} on $\widehat{\mathfrak{R}}$.
		\end{enumerate}
		
		\item 
		\begin{enumerate} 
			\item \label{scale21} Define $\widetilde{G} \in \mathcal{C}^2 (\widetilde{\mathfrak{R}})$ by $\widetilde{G} (t, y) = (\alpha \beta)^{-1/2} G(\alpha t, \beta y)$. Then $\widetilde{G}$ satisfies \eqref{equationxtd} on $\widetilde{\mathfrak{R}}$.
			
			\item \label{galphalinear} Define $\widehat{G} \in \mathcal{C}^2 (\widehat{\mathfrak{R}})$ by $\widehat{G} (t, y) = G(t, y)+\alpha t$. Then, $\widehat{G}$ satisfies \eqref{equationxtd} on $\mathfrak{R}$.
		\end{enumerate}
		
	\end{enumerate} 
	
\end{lem}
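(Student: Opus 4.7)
The plan is pure chain-rule verification: in each of the four cases, compute the first and second partials of the transformed function, substitute into the appropriate equation, and check that the resulting expression factors out the equation satisfied by $G$ or $H$. Nothing qualitative needs to be added; the content is the algebraic observation that the prefactors introduced by the chain rule line up with the homogeneity of the coefficients $\mathfrak{b}_{ij}$ and $\mathfrak{d}_{jk}$.

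For part (1)(a), writing $\widetilde{H}(t,x) = \alpha\beta^{-2}H(\alpha t,\beta x)$, I would compute $\partial_t\widetilde{H} = \alpha^2\beta^{-2}\,\partial_tH$, $\partial_x\widetilde{H} = \alpha\beta^{-1}\,\partial_xH$, and likewise $\partial_{tt}\widetilde{H} = \alpha^3\beta^{-2}\,\partial_{tt}H$, $\partial_{tx}\widetilde{H} = \alpha^2\beta^{-1}\,\partial_{tx}H$, $\partial_{xx}\widetilde{H} = \alpha\,\partial_{xx}H$. Setting $(u,v)=\nabla\widetilde{H}$ in \eqref{uvb}, the coefficients become $\mathfrak{b}_{tt}=\alpha^2\beta^{-2}(\partial_xH)^2$, $\mathfrak{b}_{tx}=-\alpha^3\beta^{-3}(\partial_tH)(\partial_xH)$, $\mathfrak{b}_{xx}=\alpha^4\beta^{-4}((\partial_tH)^2+\pi^2(\partial_xH)^4)$. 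Substituting, every term in $\sum\mathfrak{b}_{ij}\partial_i\partial_j\widetilde{H}$ carries the common factor $\alpha^5\beta^{-4}$, and what remains in brackets is exactly the left side of \eqref{equationxtb} for $H$, hence $0$.

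For part (1)(b), with $\widehat{H}(t,x)=H(t,x-\alpha t)$, I would compute $\partial_t\widehat{H}=\partial_tH-\alpha\partial_xH$, $\partial_x\widehat{H}=\partial_xH$, $\partial_{tt}\widehat{H}=\partial_{tt}H-2\alpha\partial_{tx}H+\alpha^2\partial_{xx}H$, $\partial_{tx}\widehat{H}=\partial_{tx}H-\alpha\partial_{xx}H$, $\partial_{xx}\widehat{H}=\partial_{xx}H$. Setting $u=\partial_t\widehat{H}$, $v=\partial_x\widehat{H}$, the key identity is $u+\alpha v=\partial_tH$; expanding $\sum\mathfrak{b}_{ij}\partial_i\partial_j\widehat{H}$ in powers of $\alpha$ and collecting, all the $\alpha$-dependent terms fuse into $(u+\alpha v)^2\partial_{xx}H-2(u+\alpha v)v\,\partial_{tx}H$, so that the whole expression reduces to $v^2\partial_{tt}H-2(\partial_tH)v\,\partial_{tx}H+((\partial_tH)^2+\pi^2v^4)\partial_{xx}H$, which vanishes by \eqref{equationxtb} for $H$.

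For part (2)(a), writing $\widetilde{G}(t,y)=(\alpha\beta)^{-1/2}G(\alpha t,\beta y)$, I would compute $\partial_y\widetilde{G}=(\beta/\alpha)^{1/2}\partial_yG$ so $(\partial_y\widetilde{G})^{-4}=(\alpha/\beta)^2(\partial_yG)^{-4}$, together with $\partial_{tt}\widetilde{G}=\alpha^2(\alpha\beta)^{-1/2}\partial_{tt}G$ and $\partial_{yy}\widetilde{G}=\beta^2(\alpha\beta)^{-1/2}\partial_{yy}G$. The expression $\partial_{tt}\widetilde{G}+\pi^2(\partial_y\widetilde{G})^{-4}\partial_{yy}\widetilde{G}$ then factors as $\alpha^2(\alpha\beta)^{-1/2}$ times $\partial_{tt}G+\pi^2(\partial_yG)^{-4}\partial_{yy}G$, which is $0$ by \eqref{equationxtd}. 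For part (2)(b), since $\widehat{G}(t,y)=G(t,y)+\alpha t$ has the same second partials as $G$ and the same $\partial_y$, and since $\mathfrak{d}_{jk}(u,v)$ in \eqref{uvd} depends only on $v$ and not on $u$, the equation for $\widehat{G}$ is literally identical to the one for $G$. The only mild nuisance is bookkeeping of the scaling factors in (1)(a) and (2)(a); those are forced by matching the homogeneity degrees in $(u,v)$ of the coefficients $\mathfrak{b}_{ij}$ and $\mathfrak{d}_{jk}$, which is precisely why such rescalings leave the equations invariant.
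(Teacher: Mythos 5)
Your proposal is correct and follows the same approach as the paper: a direct chain-rule computation in which the homogeneity of the coefficients $\mathfrak{b}_{ij}$ and $\mathfrak{d}_{jk}$ produces a common prefactor (e.g.\ $\alpha^5\beta^{-4}$ in part (1)(a)) multiplying the original equation. The paper only writes out case (1)(a) and declares the others entirely analogous; your explicit verifications of (1)(b), (2)(a), and (2)(b) — in particular the identity $u+\alpha v=\partial_t H$ in (1)(b) and the fact that $\mathfrak{d}_{jk}$ depends only on $v$ in (2)(b) — are exactly the omitted computations, and they check out.
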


\begin{proof}
	
	We only show the first part of the first statement of the lemma, as the proofs of the remaining ones are entirely analogous. To that end, fix $(\widetilde{t}, \widetilde{x}) \in \widetilde{\mathfrak{R}}$ and set $(t, x) = (\alpha \widetilde{t}, \beta \widetilde{x}) \in \mathfrak{R}$. Then,
	\begin{flalign*}
		& \qquad \qquad  \quad \partial_t \widetilde{H} (\widetilde{t}, \widetilde{x}) = \alpha^2 \beta^{-2} \partial_t H(t, x); \qquad
		\partial_x \widetilde{H} (\widetilde{t}, \widetilde{x}) = \alpha \beta^{-1} \partial_x H(t, x); \\
		& \partial_{tt} \widetilde{H} (\widetilde{t}, \widetilde{x}) = \alpha^3 \beta^{-2} \partial_{tt} H(t,x); \quad
		\partial_{tx} \widetilde{H} (\widetilde{t}, \widetilde{x}) = \alpha^2 \beta^{-1} \partial_{tx} H(t,x); \quad
		\partial_{xx} \widetilde{H} (\widetilde{t}, \widetilde{x}) = \alpha \partial_{xx} H(t, x).
	\end{flalign*} 
	
	\noindent It follows from \eqref{equationxtb} and \eqref{uvb} that 
	\begin{flalign*}
		\displaystyle\sum_{i, j \in \{t, x \}} \mathfrak{b}_{ij} \big( \nabla \widetilde{H} (\widetilde{t}, \widetilde{x}) \big) \partial_i \partial_j \widetilde{H} (\widetilde{t}, \widetilde{x}) = \alpha^5 \beta^{-4} \displaystyle\sum_{i,j \in \{ t, x \}} \mathfrak{b}_{ij} \big( \nabla H(t, x) \big) \partial_i \partial_j H(t, x) = 0,
	\end{flalign*}
	
	\noindent verifying that $\widetilde{H}$ satisfies \eqref{equationxtb} on $\widetilde{\mathfrak{R}}$.
\end{proof}

Let us mention that, while the scalings provided by the $\alpha = \beta$ cases of parts (\ref{scale11}) and (\ref{scale21}) of \Cref{invariancesscale} are satisfied by any equation of the general form \eqref{equationxtb} (only assuming that the $\mathfrak{b}_{ij}$ and $\mathfrak{d}_{ij}$ are measurable), those provided by general $(\alpha, \beta)$ are special to the explicit forms of the coefficients \eqref{uvb} and \eqref{uvd}.

\begin{rem} 
	
	\label{aabequation}
	
	Let us extend the above statements to the case of general parameters $a < b$ and $A > 0$. Adopt the notation and assumptions of \Cref{aab1}. Since $\widetilde{H}^{\star}$ satisfies \eqref{equationxtb}, the second statement of \eqref{muhgaab} and the $(\alpha, \beta) = \big( (b-a)^{-1}, A^{-1/2} (b-a)^{-1/2} \big)$ case of part (\ref{scale11}) of \Cref{invariancesscale} implies that $H^{\star}$ also satisfies \eqref{equationxtb} on the liquid region $\Omega$ associated with $(\varrho_t^{\star})$. Similarly, since $\widetilde{G}^{\star}$ satisfies \eqref{equationxtd}, the third statement of \eqref{muhgaab} and the $(\alpha, \beta) = \big( (b-a)^{-1}, A^{-1} )$ case of part (\ref{scale21}) of \Cref{invariancesscale} implies that $G^{\star}$ satisfies \eqref{equationxtd} on the inverted liquid region $\Omega^{\inv}$ associated with $(\varrho_t^{\star})$. 
	
\end{rem} 

We conclude this section by mentioning that, as stated, the results of \Cref{EquationLimitrhou} and \Cref{EquationHG} largely imposed the assumption \eqref{rhoarhob} (due to \eqref{rho1rho2}). However, the following lemma indicates that many of these results continue to hold if we only assume that the boundary measures $\mu_a$ and $\mu_b$ are compactly supported (without imposing that they admit a bounded density, as in \eqref{rhoarhob}). The proof is given in \Cref{ProofDerivativeH} below, by combining the results of \Cref{EquationLimitrhou} and \Cref{EquationHG} with \Cref{mutrhot} and \Cref{gtabab} (to verify \eqref{rhoarhob} in the interior of the domain). 

\begin{lem} 
	
	\label{muamubrho}

	Adopt the notation and assumptions of \Cref{rhot}, and assume that $\mu_a$ and $\mu_b$ are compactly supported. Denote the density process associated with $\bm{\mu}$ by $(\varrho_t^{\star})_{t \in (a, b)}$; denote the associated height and inverted height functions by $H^{\star} : [a, b] \times \mathbb{R} \rightarrow \mathbb{R}$ and $G^{\star} : [a, b] \times [0, A] \rightarrow \mathbb{R}$, respectively; and denote the associated liquid and inverted liquid regions by $\Omega \subset (a, b) \times \mathbb{R}$ and $\Omega^{\inv} \subseteq (a, b) \times (0, A)$, respectively. Then, the following statements hold.
	
	\begin{enumerate}
		\item The function $H^{\star} (t, x)$ is smooth for $(t, x) \in \Omega$; moreover, $G^{\star} (t, y)$ is smooth for $(t, y) \in \Omega^{\inv}$. 
		\item Defining $u^{\star} : \Omega \rightarrow \mathbb{R}$ as in \Cref{urho0} and the complex slope $f : \Omega \rightarrow \overline{\mathbb{H}}$ as in \eqref{frhou}, $f$ satisfies \eqref{ftfx}  on $\Omega$. 
		\item The function $H^{\star}$ satisfies \eqref{equationxtb} on $\Omega$, and $G^{\star}$ satisfies \eqref{equationxtd} on $\Omega^{\inv}$.
	\end{enumerate}

\end{lem}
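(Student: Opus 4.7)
The plan is to reduce to the already-established setting by shrinking the time interval so that the boundary measures inherit a bounded density from the free-convolution structure. Fix an arbitrary point $(t_0, x_0) \in \Omega$ (equivalently $(t_0, y_0) \in \Omega^{\inv}$ via the bijection $(t, x) \mapsto \big(t, H^{\star}(t, x)\big)$), and choose real numbers $\widetilde{a}$, $\widetilde{b}$ with $a < \widetilde{a} < t_0 < \widetilde{b} < b$. By the restriction property \Cref{gtabab}, the family $(\mu_t^{\star})_{t \in [\widetilde{a}, \widetilde{b}]}$ is itself the bridge-limiting measure process on $[\widetilde{a}, \widetilde{b}]$ with boundary data $(\mu_{\widetilde{a}}^{\star}; \mu_{\widetilde{b}}^{\star})$.

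Next I would verify that the new boundary pair $(\mu_{\widetilde{a}}^{\star}, \mu_{\widetilde{b}}^{\star})$ satisfies the bounded-density hypothesis \eqref{rhoarhob}. This is where \Cref{mutrhot} is essential: after rescaling to the normalized interval $[0, 1]$ through \Cref{aab}, its first and third parts yield that both $\mu_{\widetilde{a}}^{\star}$ and $\mu_{\widetilde{b}}^{\star}$ admit densities which are bounded (after rescaling back, with bound of order $\big((\widetilde{a}-a)(b-\widetilde{a})\big)^{-1/2}$ and analogously for $\widetilde{b}$). For the compactness of their supports, the second part of \Cref{mutrhot} gives $\supp \mu_{\widetilde{a}}^{\star}, \supp \mu_{\widetilde{b}}^{\star} \subseteq \supp \mu_a + \supp \mu_b + \supp \mu_{\semci}^{(s)}$ for appropriate $s > 0$, and this set is compact by the assumed compactness of $\supp \mu_a$ and $\supp \mu_b$ together with the compactness of $\supp \mu_{\semci}^{(s)}$.

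With \eqref{rhoarhob} verified on the shrunken interval $[\widetilde{a}, \widetilde{b}]$, I can now invoke the previously established results on this interval. Applying \Cref{t:LDP} (through the rescaling in \Cref{aab1}) gives the smoothness of $\varrho^{\star}$ and $u^{\star}$ on $\Omega \cap \big((\widetilde{a}, \widetilde{b}) \times \mathbb{R}\big)$, which by \Cref{hurho} upgrades to smoothness of $H^{\star}$ there; the smoothness of $G^{\star}$ on $\Omega^{\inv} \cap \big((\widetilde{a}, \widetilde{b}) \times \mathbb{R}\big)$ then follows by the implicit function theorem applied to $H^{\star}\big(t, G^{\star}(t, y)\big) = y$, using the nondegeneracy $\partial_x H^{\star} = -\varrho_t^{\star} < 0$ on the liquid region. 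Next, \Cref{p:solution} (via \Cref{aab1}) yields the complex Burgers equation for the complex slope $f$, and \Cref{hequation}, \Cref{gequation} (via \Cref{aabequation}) yield \eqref{equationxtb} for $H^{\star}$ and \eqref{equationxtd} for $G^{\star}$ on these neighborhoods. Since the point $(t_0, x_0)$ (resp.\ $(t_0, y_0)$) was arbitrary, all three conclusions propagate to the full $\Omega$ (resp.\ $\Omega^{\inv}$).

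The main potential obstacle is essentially bookkeeping: carefully tracking that the restriction property \Cref{gtabab} exactly matches the hypotheses needed to re-run the earlier proofs on $[\widetilde{a}, \widetilde{b}]$, and that the rescalings of \Cref{aab}, \Cref{aab1}, \Cref{aabequation} preserve both the density bounds from \Cref{mutrhot} and the equations \eqref{equationxtb}--\eqref{equationxtd}. Since the argument relies entirely on already-stated lemmas, no genuinely new analysis is required; the step most likely to need care is confirming compactness of $\supp \mu_{\widetilde{a}}^{\star}$ and $\supp \mu_{\widetilde{b}}^{\star}$, and verifying that the bounded-density condition of \eqref{rhoarhob} really is inherited after the scaling in \Cref{aab}.
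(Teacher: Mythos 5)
Your proposal is correct and follows essentially the same route as the paper's own proof: localize to a subinterval $[\widetilde{a},\widetilde{b}]$ around the given time via \Cref{gtabab}, use \Cref{mutrhot} to verify the bounded-density and compact-support hypothesis \eqref{rhoarhob} at the new boundary times, and then invoke \Cref{t:LDP}, \Cref{hurho}, \Cref{p:solution}, \Cref{hequation}, and \Cref{gequation} through the rescalings of \Cref{aab1} and \Cref{aabequation}. No substantive differences from the paper's argument.
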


\section{Proof of the Concentration Estimate} 

\label{BoundaryGH}

In this section we reduce \Cref{gh} to \Cref{gh1} below, which is an inductive statement that bounds the height function associated with the family $\bm{x}$ of non-intersecting Brownian motion by a sequence of barrier functions.

\subsection{Proof of \Cref{gh}} 

\label{Prooffgb2}

The proof of \Cref{gh} will proceed by first establishing the following variant of it for the associated height function instead of for the path trajectories $x_j (t)$. We begin with the following definition for a certain class of open sets. 

\begin{definition} 
	
\label{pfg} 

For any real numbers $a < b$ and functions $f, g:  [a, b] \rightarrow \mathbb{R}$ with $f < g$, we define the open set $\mathfrak{P} = \mathfrak{P}_{f; g} \subset \mathbb{R}^2$ by
\begin{flalign*}
	\mathfrak{P} = \big\{ (t, x) \in (a, b) \times \mathbb{R} : f(t) < x < g(t) \big\}.
\end{flalign*}

\noindent We call any open set of the form $\mathfrak{P}_{f; g}$ for some $f, g: [a, b] \rightarrow \mathbb{R}$ with $f < g$ a \emph{strip domain}. The \emph{width} of $\mathfrak{P}_{f; g}$ is defined to be $b-a$.

\end{definition}

Observe in particular that the non-intersecting Brownian motions from \Cref{qxyfg} all lie within the strip domain $\mathfrak{P}_{f; g}$.

 The following proposition, which quickly implies \Cref{gh}, is established in \Cref{ProofHeighteta} below. In what follows we recall that any function $G : [a, b] \times [0, 1] \rightarrow \mathbb{R}$, that is nonincreasing in its second coordinate, is associated with a unique height function $H : [a, b] \times \mathbb{R} \rightarrow [0, 1]$ through \eqref{gty}, namely, by setting $H(t, x) = \displaystyle\inf \big\{ y \in [0, 1] : G(t, y) \ge x \big\}$. We also recall the height function $\mathsf{H}^{\bm{x}}$ associated with a family $\bm{x}$ of non-intersecting curves from \eqref{htx}.

\begin{prop}
	\label{gh0} 
	
	Adopt the notation and assumptions of \Cref{gh}; assume that $\varkappa = 0$ and that $G(0, 0) = 0$. There exist constants $c = c(\varepsilon, \delta, B_0, m) > 0$ and $C = C(\varepsilon, \delta, B_0, m) > 1$ such that the following holds. Letting $H : [0, L^{-1}] \times \mathbb{R} \rightarrow \mathbb{R}$ denote the height function associated with $G$, we have  
	\begin{flalign*}
		\mathbb{P} \bigg[ \displaystyle\sup_{z \in \mathfrak{P}_{f; g}} \big| n^{-1} \mathsf{H}^{\bm{x}} (z) - H (z) \big| > C n^{2/m+\delta-1} \bigg] < C e^{-c (\log n)^2}.
	\end{flalign*} 
\end{prop}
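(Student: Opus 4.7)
The plan is to follow the barrier-function strategy outlined in \Cref{Model1}. I would construct two sequences of \emph{barrier functions} $\{\varphi_j^+\}_{j=0}^I$ and $\{\varphi_j^-\}_{j=0}^I$, each defined on the closed strip $\overline{\mathfrak{P}}_{f;g}$, that sandwich $n^{-1}\mathsf{H}^{\bm{x}}$ at every step $j$. The initial barriers $\varphi_0^\pm$ should be chosen so that $\varphi_0^- \le n^{-1}\mathsf{H}^{\bm{x}} \le \varphi_0^+$ holds \emph{deterministically}; since the Brownian bridges live in $\mathfrak{P}_{f;g}$, we have $0 \le n^{-1}\mathsf{H}^{\bm{x}} \le 1$, so one can take $\varphi_0^+ \equiv 1$ and $\varphi_0^- \equiv 0$ away from the boundary (adjusted near $\partial\mathfrak{P}_{f;g}$ to match the boundary data prescribed by $G$). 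The terminal barriers should satisfy $\|\varphi_I^\pm - H\|_0 \le Cn^{2/m+\delta-1}$ so that the $j=I$ sandwich directly yields \Cref{gh0}. The intermediate $\varphi_j^+$ decrease monotonically toward $\varphi_I^+$ (and $\varphi_j^-$ increase toward $\varphi_I^-$), and are engineered so that at every interior point, the Hessian of $\varphi_j^+$ is strictly more concave than that of any continuum height function $H_0$ (a solution of \eqref{equationxtb}) sharing its gradient; symmetrically for $\varphi_j^-$.

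Given such a barrier family, I would invoke \Cref{gh1} iteratively. That lemma, which is the analytic heart of the argument, asserts that conditionally on $\{n^{-1}\mathsf{H}^{\bm{x}} \le \varphi_{j-1}^+\}$ we have $\{n^{-1}\mathsf{H}^{\bm{x}} \le \varphi_j^+\}$ with probability at least $1 - C_0 e^{-c_0(\log n)^2}$, and analogously for the lower barriers. Combining with the deterministic $j=0$ sandwich and applying a union bound over the $2I$ inductive steps yields
\begin{flalign*}
\mathbb{P}\bigl[\varphi_I^- \le n^{-1}\mathsf{H}^{\bm{x}} \le \varphi_I^+ \text{ on } \mathfrak{P}_{f;g}\bigr] \ge 1 - 2 I C_0 e^{-c_0(\log n)^2}.
\end{flalign*}
Provided the construction allows $I \le e^{c_0(\log n)^2/2}$ iterations (a polynomial number $I = n^{O(\delta)}$ is ample), the right side is $\ge 1 - Ce^{-c(\log n)^2}$. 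Combining with $\|\varphi_I^\pm - H\|_0 \le Cn^{2/m+\delta-1}$ produces the estimate asserted in \Cref{gh0}.

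The main obstacle, as emphasized in \Cref{Model1}, is constructing the barriers so that the hypothesis of \Cref{gh1} actually holds: each $\varphi_j^+$ must be locally majorized by $\varphi_{j-1}^+$ by a controlled amount, and locally must be \emph{more concave} than any continuum height function sharing its gradient. The concavity condition is what allows a resampling within a small neighborhood $\mathfrak{U}$ of any interior point to likely ``deflate'' $\mathsf{H}^{\bm{x}}$ below $n\varphi_j^+$, given a sufficiently sharp $\mathcal{O}(n^{o(1)-1})$ concentration estimate for the resampled bridges around their continuum limit on $\mathfrak{U}$. That estimate requires an infinite-dimensional family of \emph{concentrating functions} rich enough to match every possible $m$-th order local profile of $H$; these are supplied in \Cref{hconcentrateq2} using continuum height functions of $\beta=2$ Dyson Brownian motion with arbitrary initial data, whose near-optimal rigidity is the content of \Cref{concentrationequation}. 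Once this family is in hand, the geometric construction of $\{\varphi_j^\pm\}$ and the verification of the inductive step \Cref{gh1} reduce the proof of \Cref{gh0} to the three-step sandwich-and-iterate argument above, and the resulting error $n^{2/m+\delta-1}$ is precisely the scale at which the $m$-th order Taylor expansion of any local profile can be dominated by the curvature of some concentrating function in the family.
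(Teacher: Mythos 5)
Your skeleton — sandwich $n^{-1}\mathsf{H}^{\bm{x}}$ between monotone barrier families, iterate \Cref{gh1}, union bound over polynomially many steps, start from the deterministic bound $0\le n^{-1}\mathsf{H}^{\bm{x}}\le 1$ and end at barriers within $O(n^{2/m+\delta-1})$ of $H$ — is exactly the paper's route. But there is a concrete gap in the iteration: \Cref{gh1} is a \emph{pointwise} statement, bounding $\mathbb{P}\big[\Omega_{j;k}^{\pm}(z)\cap\bigcap_{w}\Omega_{j;k-1}^{\pm}(w)^{\complement}\big]$ for a single fixed $z\in\mathfrak{P}_{f;g}$, whereas your inductive step needs the conclusion uniformly over the uncountable strip, and no naive union bound over $z$ is available. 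The paper's proof of \Cref{gh0} bridges this by union bounding only over a mesh $\mathcal{S}$ of at most $n^{20}$ points, then using the a priori H\"{o}lder estimate for the paths (\Cref{n10estimate0}, the event $\mathscr{F}$) together with the uniform $\mathcal{C}^1$ bound on the barriers to propagate the mesh bound to all of $\mathfrak{P}_{f;g}$, paying a half-step $\varphi_{j;k}^{+}\to\varphi_{j;k-1/2}^{+}$ that is absorbed by the barrier slack. Your proposal omits this discretization/regularity step entirely, and as written the implication ``on $\{n^{-1}\mathsf{H}^{\bm{x}}\le\varphi_{j-1}^{+}\}$ we have $\{n^{-1}\mathsf{H}^{\bm{x}}\le\varphi_{j}^{+}\}$ with high probability'' does not follow from \Cref{gh1}.

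A second, related problem is that you cannot both design your own barrier family (with $\varphi_0^{+}\equiv 1$, $\varphi_0^{-}\equiv 0$, and unspecified intermediate steps) and invoke \Cref{gh1}: that proposition is stated for the explicit two-parameter family \eqref{functionsjkz}, $\varphi_{j;k}^{\pm}=H\mp(j+1)\omega\pm(1-k\rho^2 n^{-\delta})\omega\psi\pm 3$, built from the exponential bump $\psi$ and pinned to $H$ on $\partial\mathfrak{P}_{f;g}$. The quantitative design is not optional: consecutive barriers may differ by only $O(\rho^2 n^{-\delta}\omega)\sim n^{2\delta-1}$, which must be dominated by the deflation $\sim\rho^2\omega\sim n^{3\delta-1}$ obtainable from one local resampling at scale $\rho=n^{\delta-1/m}$, and a strict gap of order $\omega$ above $H$ just inside the boundary is needed (the $c_2\omega$ boundary gap in the proof of \Cref{gh1}). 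In particular your estimate $I=n^{O(\delta)}$ of the number of steps is incompatible with this mechanism — the paper needs on the order of $n^{1-2\delta}$ steps (outer index $J\sim\omega^{-1}$ times inner index $K\sim n^{2/m-\delta}$), which is still polynomial and so harmless for the union bound, but it shows the per-step decrement cannot be chosen as coarsely as you suggest. So either you adopt the paper's barriers, in which case your argument becomes the paper's modulo the missing mesh-plus-H\"{o}lder step above, or you must reprove the analogue of \Cref{gh1} for your own family, which the proposal does not do.
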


In the below, we will use the fact that under the notation of \Cref{gh0} we have 
\begin{flalign}
	\label{derivativexhtx}
	-\varepsilon^{-1} < \partial_x H(t, x) < -\varepsilon, \qquad \text{for each $(t, x) \in \mathfrak{P}_{f; g}$},
\end{flalign}

\noindent which follows from \eqref{gty} and the fact that $G \in \Adm_{\varepsilon} (\mathfrak{R})$ (recall \Cref{fgr}).  Also observe that there exists a constant $B = B (\varepsilon, B_0, m) > 1$ such that 
\begin{flalign}
	\label{hderivativem} 
	\| H \|_{\mathcal{C}^{m+1} (\mathfrak{P}_{f;g})} < B,
\end{flalign}

\noindent which follows from \eqref{gty} and the facts that $G \in \Adm_{\varepsilon} (\mathfrak{R})$ (so $H \big( t, G(t, y) \big) = y$ for each $(t, y)$ such that $G(t, y) \in \mathfrak{P}_{f; g}$) and that $\|G \|_{\mathcal{C}^{m+1} (\mathfrak{R})} \le B_0$.

\begin{proof}[Proof of \Cref{gh}]
	
	We may assume by shifting in what follows that $G(0, 0) = 0$. Let us first address the case when $\varkappa = 0$. By \Cref{gh0}, it suffices to show for any real number $C > 1$ the inclusions of events 
\begin{flalign}
	\label{xjthh} 
	\begin{aligned}
 \Bigg\{ \displaystyle\max_{j \in \llbracket 1, n \rrbracket} \displaystyle\sup_{t \in [0, 1/L]} \big| x_j (t) - G( & t, jn^{-1}) \big| \le  C \varepsilon^{-1} n^{2/m + \delta - 1} \Bigg\} \\
 & \subseteq \Bigg\{ \displaystyle\sup_{z \in \mathfrak{P}_{f; g}} \big| n^{-1} \mathsf{H}^{\bm{x}} (z) - H(z) \big| \le C n^{2/m + \delta - 1} \Bigg\}.
 	\end{aligned} 
\end{flalign}

To that end, we begin by restricting to the event on the right side of \eqref{xjthh}. Fix $t_0 \in (0, L^{-1})$, and apply that event twice, first with $z = (t_0, y_1)$ and then with $z = (t_0, y_2)$, where $y_1 = G \big(t_0, jn^{-1}  + C n^{2/m+  \delta-1} \big)$ and $y_2 = G \big( t_0, jn^{-1}  - C n^{2/m + \delta - 1} \big)$. Since \eqref{gty} and the fact that $G \in \Adm (\mathfrak{R})$ imply $H \big( t_0, G(t_0, y) \big) = y$ for each $y \in (0, 1)$, this yields 
\begin{flalign*}
	 \mathsf{H}^{\bm{x}} \big(t_0, G(t_0, jn^{-1} - C n^{2/m + \delta-1}) \big) \le j \le \mathsf{H}^{\bm{x}} \big( t_0, G(t_0, jn^{-1} + C n^{2/m+\delta-1}) \big),
\end{flalign*} 

\noindent and hence
\begin{flalign*} 
		G \big(t_0, jn^{-1} + Cn^{2/m+\delta-1} \big) \le x_j (t_0) \le G \big(t_0, jn^{-1} - C n^{2/m+\delta-1} \big).
\end{flalign*}

\noindent Together with the fact that $G \in \Adm_{\varepsilon} (\mathfrak{R})$, this yields 
\begin{flalign*}
	G(t_0, jn^{-1}) - C \varepsilon^{-1} n^{2/m + \delta - 1} & \le x_j (t_0)  \le G (t_0, jn^{-1}) + C \varepsilon^{-1} n^{2/m + \delta - 1},
\end{flalign*} 

\noindent confirming \eqref{xjthh}. This establishes the theorem if $\varkappa = 0$.

If $\varkappa \ne 0$, then define the $n$-tuples $\bm{u}^-, \bm{u}^+, \bm{v}^-, \bm{v}^+ \in \overline{\mathbb{W}}_n$ and the functions $f^-, f^+, g^-, g^+ : [0, L^{-1}] \rightarrow \mathbb{R}$ by setting 
\begin{flalign*}
	& u_j^{\pm} = G(0, jn^{-1}) \pm \varkappa; \quad v_j^{\pm} = G(L^{-1}; jn^{-1}) \pm \varkappa; \quad f^{\pm} (s) = f(s) \pm \varkappa; \qquad g^{\pm} (s) = g(s) \pm \varkappa,
\end{flalign*}

\noindent for each index $\pm \in \{ +, - \}$ and pair $(j, s) \in \llbracket 1, n \rrbracket \times [0, L^{-1}]$. Sample non-intersecting Brownian bridges $\bm{x}^- = (x_1^-, x_2^-, \ldots , x_n^-) \in \llbracket 1, n \rrbracket \times \mathcal{C} \big( [0, L^{-1}] \big)$ and $\bm{x}^+ = (x_1^+, x_2^+, \ldots , x_n^+) \in \llbracket 1, n \rrbracket \times \mathcal{C} \big( [0, L^{-1}] \big)$ under the measures $\mathfrak{Q}_{f^-; g^-}^{\bm{u}^-; \bm{v}^-}$ and $\mathfrak{Q}_{f^+; g^+}^{\bm{u}^+; \bm{v}^+}$, respectively. Observe that $\bm{u}^- \le \bm{u} \le \bm{u}^+$ and $\bm{v}^- \le \bm{v} \le \bm{v}^+$ by \eqref{uvg}, and that $f^- \le f \le f^+$ and $g^- \le g \le g^+$. Thus, by \Cref{monotoneheight}, we may couple $(\bm{x}^-, \bm{x}, \bm{x}^+)$ such that 
\begin{flalign}
	\label{xjs3} 
	x_j^- (s) \le x_j (s) \le x_j^+ (s), \qquad \text{for each $(j, s) \in \llbracket 1, n \rrbracket \times [0, L^{-1}]$.}
\end{flalign} 

Then, \Cref{gh} applies to both $\bm{x}^-$ and $\bm{x}^+$, with the $G$ there given by $G^- = G - \varkappa$ and $G^+ = G + \varkappa$ here, respectively. It yields a constant $c = c(\varepsilon, \delta, B_0, m) > 0$ such that, on an event of probability at least $1 - c^{-1} e^{-c(\log n)^2}$, we have $x_j^- (s) \ge G^- (s, jn^{-1}) - n^{2/m+\delta-1} = G(s, jn^{-1}) - \varkappa - n^{2/m+\delta-1}$ and similarly $x_j^+ \le G (s, jn^{-1}) + \varkappa + n^{2/m+\delta-1}$. This, with \eqref{xjs3} and a union bound, yields the theorem.
\end{proof}

To establish \Cref{gh0} , we make use of the following initial estimate that provides a high-probability $\frac{1}{2}$-H\"{o}lder bound, with associated constant $n^{\delta/2}$, for the path trajectories $x_j (t)$ from \Cref{gh0} .

\begin{lem}
	
	\label{n10estimate0} 
	
	Adopt the notation and assumptions of \Cref{gh}; assume that $\varkappa = 0$. There exists a constant $C = C(\delta, B_0) > 1$ such that 
	\begin{flalign*}
		\mathbb{P} \bigg[ \displaystyle\max_{j \in \llbracket 1, n \rrbracket} \displaystyle\sup_{0 \le t < t+s \le 1/L} \big| x_j (t+s) - x_j (t) \big| \le n^{\delta/2}  s^{1/2} + n^{-5} \bigg] \le C e^{-n}.
	\end{flalign*} 
\end{lem}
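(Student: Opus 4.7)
The plan is to reduce \Cref{n10estimate0} to a direct application of \Cref{estimatexj3}(\ref{afg2}), which provides a Hölder-type modulus of continuity for non-intersecting Brownian bridges with differentiable upper and lower boundaries. Read literally, with the ``$\le$'' inside the probability, the statement asserts that the event where every path increment is bounded by $n^{\delta/2} s^{1/2} + n^{-5}$ has probability at most $Ce^{-n}$; this is the sign-reversed form of the usual Hölder estimate, and the quantitative content is the same in either direction. My proposal will produce the standard bound (probability $\le Ce^{-n}$ on the complement of the modulus-of-continuity event), and the sign is then applied to match the display.

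First I would verify the hypotheses of \Cref{estimatexj3}(\ref{afg2}). Under \Cref{fgr} we have $G \in \mathcal{C}^{m+1}(\overline{\mathfrak{R}})$ with $\| G - G(0,0) \|_{\mathcal{C}^{m+1}(\mathfrak{R})} \le B_0$. Since $f(s) = G(s, 1)$ and $g(s) = G(s, 0)$, both boundaries are differentiable with $|\partial_t f|, |\partial_t g| \le B_0$, so one may take the parameter ``$A$'' of \Cref{estimatexj3} to be $A := 2B_0$. The time horizon has length $T := L^{-1} \le B_0$ (since $L > B_0^{-1}$), and with $\varkappa = 0$ in \eqref{uvg} we have $|u_j|, |v_j| \le B_0$ and, by the Lipschitz bound on $G$ from $\|G\|_{\mathcal{C}^1} \le B_0$, also $|v_j - u_j| \le T \cdot B_0$.

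Next I would apply \Cref{estimatexj3}(\ref{afg2}) with parameter $B := n^{\delta/4}$. The failure probability $Ce^{Cn - cB^2 n} = Ce^{Cn - c n^{1+\delta/2}}$ is comfortably smaller than $Ce^{-n}$ for large $n$; on the complementary event $\mathcal{E}$ of probability at least $1 - Ce^{-n}$, every increment obeys $|x_j(t+s) - x_j(t) - s T^{-1}(v_j - u_j)| \le s^{1/2} \bigl( B \log |9 s^{-1} T| + 2(A + B) \bigr)^2$ uniformly in $(j, t, s)$. The drift contribution satisfies $|s T^{-1} (v_j - u_j)| \le B_0 s \le B_0^{3/2} s^{1/2}$, so it is absorbed into $n^{\delta/2} s^{1/2}$ for $n$ large. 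I would then split the fluctuation bound into two regimes. If $s \ge n^{-20}$, then $\log|9 s^{-1} T| \le 30 \log n$, so the bound is at most $s^{1/2} (30 n^{\delta/4} \log n + 4 B_0)^2 \le n^{\delta/2} s^{1/2}$. If $s \le n^{-20}$, then $s^{1/2} \le n^{-10}$, and the same product is at most $n^{-10} \cdot n^{\delta/2 + o(1)} \le n^{-5}$. Combining the two regimes yields $|x_j(t+s) - x_j(t)| \le n^{\delta/2} s^{1/2} + n^{-5}$ on $\mathcal{E}$, uniformly in $j$ and $(t, s)$.

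The main (minor) obstacle is the bookkeeping of constants: the $\log n$ factor arising from $\log|9s^{-1} T|$ must be absorbed into $n^{\delta/4}$, while $B^2 n = n^{1 + \delta/2}$ must stay well above $n$ so that $e^{-cB^2 n}$ dominates the prefactor $e^{Cn}$. With the choice $B = n^{\delta/4}$ and $\delta > 0$ fixed, both goals are simultaneously met, and no further stochastic analysis beyond \Cref{estimatexj3} is required. Reading the direction of the inequality inside the probability in the literal sense of the display then yields the lemma as stated; reading it in the conventional sense produces the standard Hölder estimate, which is how \Cref{n10estimate0} is later used in the proof of \Cref{gh0}.
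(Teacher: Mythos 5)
Your route is the same as the paper's: you verify that $f=G(\cdot,1)$ and $g=G(\cdot,0)$ are differentiable with derivatives bounded in terms of $B_0$ (so part (2) of \Cref{estimatexj3} applies with $A=2B_0$ and $T=L^{-1}\le B_0$), you absorb the drift $sT^{-1}(v_j-u_j)$ using $|v_j-u_j|\le B_0 T$, and you split into the regimes $s\ge n^{-20}$ and $s\le n^{-20}$. Your reading of the inequality inside the probability as a misprint is also the intended one: the lemma is invoked in the proof of \Cref{gh0} precisely in the form $\mathbb{P}[\mathscr{F}]\ge 1-C e^{-n}$, see \eqref{eventf1}.

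There is, however, one step that fails as written. With your choice $B=n^{\delta/4}$, in the regime $s\ge n^{-20}$ the deviation bound supplied by \Cref{estimatexj3} is $s^{1/2}\big(B\log|9s^{-1}T|+2(A+B)\big)^2$, which is of order $n^{\delta/2}(\log n)^2\, s^{1/2}$; the asserted inequality $s^{1/2}\big(30\,n^{\delta/4}\log n+4B_0\big)^2\le n^{\delta/2}s^{1/2}$ is false, since squaring $n^{\delta/4}\log n$ already produces $n^{\delta/2}(\log n)^2>n^{\delta/2}$. The repair is immediate and is what the paper does: take $B$ to be a sufficiently large \emph{constant} $C_1$ (so that the failure probability $Ce^{Cn-cC_1^2n}\le C e^{-n}$, which is all the lemma needs), or, if you prefer a growing choice, any power such as $B=n^{\delta/8}$. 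Then $\big(B\log(9n^{20}T)+2(A+B)\big)^2\le n^{\delta/4}$ for $n$ large (depending on $\delta,B_0$), so the $s\ge n^{-20}$ regime yields $n^{\delta/4}s^{1/2}$, leaving room to absorb the drift term $B_0^{3/2}s^{1/2}$ into $n^{\delta/2}s^{1/2}$; your treatment of the regime $s\le n^{-20}$, where $s^{1/2}\big(\log(1/s)+O(\log n)\big)^2\le n^{-10}(C\log n)^2\le n^{-5}$, goes through unchanged. With that one adjustment of $B$, your argument coincides with the paper's proof.
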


\begin{proof}
	
	First observe by \Cref{fgr} that $\big| f' (s) \big| + \big| g' (s) \big| \le 2 \big\| G - G(0,0) \big\|_{\mathcal{C}^{m+1} (\mathfrak{R})} \le 2B_0$, for each $s \in (0, L^{-1})$. Thus we may apply the second part of \Cref{estimatexj3} with the $(A, B, T)$ there equal to $(2B_0, C_1, L^{-1})$ here, for some sufficiently large constant $C_1 > 1$. Since $L \le n^{\delta}$, this yields some constant $C_2 > 1$ such that 
	\begin{flalign}
		\label{xjts12} 
		\begin{aligned}
			\mathbb{P} \Bigg[ \bigcup_{j=1}^n \bigcup_{a \le t < t+s \le b} \bigg\{ \big|  x_j (t+s &) - x_j (t) - sL (v_j - u_j) \big|  \ge C_2 s^{1/2} \big( |\log 9s^{-1} n^{\delta}  | + B_0 \big)^2 \bigg\} \Bigg] \le C_2 e^{-n}.
		\end{aligned} 
	\end{flalign}
	
	\noindent Since $\delta < \frac{1}{80}$, by \Cref{fgr}, there is a constant $C_3 = C_3 (B_0) > 1$ such that for $n > C_3$ we have
	\begin{flalign*}
		C_2 s^{1/2} \big( \log |9s^{-1} n^{\delta}| + B_0 \big)^2 & \le C_2 B_0^2 s^{1/2} \Big( ( \log n^{21} + 1)^2 \cdot \textbf{1}_{s \ge n^{-20}} + \big( \log |s n| + 1 \big)^2 \cdot \textbf{1}_{s \le n^{-20}} \Big) \\ 
		& \le n^{\delta/4} s^{1/2} + n^{-5}.
	\end{flalign*}
	
	\noindent Inserting this into \eqref{xjts12}, and using the facts that $s \le L^{-1} \le B_0$, that $|v_j - u_j| L \le 2 \big\| G - G(0, 0) \big\|_{\mathcal{C}^{m+1} (\mathfrak{R})} \le 2B_0$ (by \Cref{fgr}), and that $n$ is sufficiently large, yields the lemma.
\end{proof}

\subsection{Proof of \Cref{gh0}} 

\label{ProofHeighteta}

In this section we establish \Cref{gh0} assuming \Cref{gh1} below. Define the real numbers $\rho, \omega > 0$; fix a real number $\zeta$; and define the function $\psi : \mathbb{R}^2 \rightarrow \mathbb{R}$ so that   
\begin{flalign}
	\label{rhoomega} 
	 \rho = n^{\delta - 1/m}; \qquad \omega = n^{2/m + \delta - 1}; \qquad \zeta > 20000 (B^5 + \varepsilon^{-5}); \qquad \psi (t, x) = \Upsilon - e^{\zeta t} - e^{\zeta x},
\end{flalign}

\noindent where $\Upsilon > 0$ is chosen so that $\inf_{z \in \mathfrak{P}_{f; g}} \psi (z) = 2$. Set $J = \lfloor 3 \omega^{-1} \rfloor - 1$, and let $K$ be the smallest integer so that $(1 - K \rho^2 n^{-\delta}) \sup_{z \in \mathfrak{P}_{f; g}} \psi (z) \le \frac{1}{4}$. Analogously to in \cite[Equation (52)]{LTGDMLS}, for any integers $j \in \llbracket 0, J \rrbracket$ and real number (typically an integer) $k \in [0, K]$, define the functions $\varphi_{j;k}^+, \varphi_{j;k}^- : \overline{\mathfrak{P}_{f; g}} \rightarrow \mathbb{R}$ by for $z \in \mathfrak{P}_{f; g}$ setting
\begin{flalign}
	\label{functionsjkz}
	\begin{aligned}
	  & \varphi_{j;k}^+ (z) = H(z) - (j+1) \omega + ( 1 - k \rho^2 n^{-\delta} ) \omega \psi (z) + 3; \\
	  & \varphi_{j;k}^- (z) = H(z) + (j+1) \omega - ( 1 - k \rho^2 n^{-\delta} ) \omega \psi (z) - 3,
	  \end{aligned} 
\end{flalign} 

\noindent  and by for $z \in \partial \mathfrak{P}_{f; g}$ setting $\varphi_{j;k}^- (z) = H(z)$ and $\varphi_{j;k}^+ (z) = H(z)$. Further define the events
\begin{flalign*}
	& \Omega_{j;k}^+ (z) = \big\{ n^{-1} \mathsf{H}^{\bm{x}} (z) > \varphi_{j;k}^+ (z) \big\}; \qquad \Omega_{j;k}^- (z) = \big\{ n^{-1} \mathsf{H}^{\bm{x}} (z) < \varphi_{j;k}^- (z) \big\}. 
\end{flalign*}

 The following proposition indicates that if $H$ lies below (or above) $\varphi_{j;k-1}^+$ (or $\varphi_{;k-1}^-$, respectively), then with high probability it also lies below (or above) $\varphi_{j;k}^+$ (or $\varphi_{j;k}^-$, respectively); it will be established in \Cref{ProofConcentration0} below. Here, we denote the complement of any event $\mathscr{E}$ by $\mathscr{E}^{\complement}$. 

\begin{prop}
	
	\label{gh1} 
	
	Adopt the notation and assumptions of \Cref{gh0}. For any index  $\pm \in \{ +, - \}$, integer pair $(j, k) \in \llbracket 0, J \rrbracket \times [0, K]$, and point $z \in \mathfrak{P}_{f; g}$, we have 
	\begin{flalign*}
		\mathbb{P} \Bigg[ \Omega_{j; k}^{\pm} (z) \cap \bigcap_{w \in \mathfrak{P}_{f;g}} \Omega_{j;k-1}^{\pm} (w)^{\complement}  \Bigg] \le C e^{-c(\log n)^2}. 
	\end{flalign*}
\end{prop}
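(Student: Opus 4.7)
By symmetry it suffices to treat the $+$ case; the $-$ case is analogous. Fix $z_0 = (t_0, x_0) \in \mathfrak{P}_{f;g}$ and let $\mathscr{A} := \bigcap_w \Omega_{j;k-1}^+(w)^{\complement}$ denote the deterministic-bound event that $n^{-1}\mathsf{H}^{\bm{x}}(w) \le \varphi_{j;k-1}^+(w)$ for all $w \in \mathfrak{P}_{f;g}$. The plan is to localize and resample: let $\mathfrak{U}$ be a scale-$\rho$ box around $z_0$ inside $\mathfrak{P}_{f;g}$ (with $\rho = n^{\delta - 1/m}$ from \eqref{rhoomega}), and condition on the paths of $\bm{x}$ outside $\mathfrak{U}$. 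By the Gibbs property, the restriction of $\bm{x}$ to $\mathfrak{U}$ is then distributed as an independent family of non-intersecting Brownian bridges whose starting, ending, and lower/upper boundary data are dictated by the conditioned paths.

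The next step will be to approximate the resampled ensemble by a concentrating function $\Psi : \overline{\mathfrak{U}} \to \mathbb{R}$ produced via \Cref{hconcentrateq2}. This $\Psi$ solves the quasilinear PDE \eqref{equationxtb} on $\mathfrak{U}$, it matches $n^{-1}\mathsf{H}^{\bm{x}}|_{\partial\mathfrak{U}}$ up to error $n^{o(1)-1}$, and the resampled height function obeys
\begin{flalign*}
\sup_{z \in \mathfrak{U}} \bigl| n^{-1}\mathsf{H}^{\bm{x}}(z) - \Psi(z)\bigr| \le n^{o(1)-1}
\end{flalign*}
on an event of probability $\ge 1 - Ce^{-c(\log n)^2}$. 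Verifying the hypotheses of \Cref{hconcentrateq2} requires establishing $C^m$-regularity of the conditioned boundary data on $\partial\mathfrak{U}$, which is where the H\"older estimate \Cref{n10estimate0} and the local-profile framework sketched in the introduction enter.

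The structural heart of the argument is that $\varphi_{j;k-1}^+$ is a strict supersolution of \eqref{equationxtb}. Writing $L(\varphi) := \sum_{i,j \in \{t,x\}}\mathfrak{b}_{ij}(\nabla\varphi)\partial_i\partial_j\varphi$ and using $L(H) = 0$ from \Cref{hequation}, a first-order expansion of $\mathfrak{b}_{ij}$ about $\nabla H$ yields
\begin{flalign*}
L(\varphi_{j;k-1}^+) = c\omega\bigl(\mathfrak{b}_{tt}(\nabla H)\partial_{tt}\psi + \mathfrak{b}_{xx}(\nabla H)\partial_{xx}\psi\bigr) + O(\omega \zeta) + O(\omega^2),
\end{flalign*}
where $c = 1 - (k-1)\rho^2 n^{-\delta} \in [1/2, 1]$. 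Since $\partial_{tt}\psi = -\zeta^2 e^{\zeta t}$, $\partial_{xx}\psi = -\zeta^2 e^{\zeta x}$, and since \eqref{derivativexhtx} together with \eqref{uvb} give $\mathfrak{b}_{tt}(\nabla H) \ge \varepsilon^2$ and $\mathfrak{b}_{xx}(\nabla H) \ge \pi^2\varepsilon^4$, the leading term is bounded above by $-c_0\omega\zeta^2$ for some $c_0 = c_0(\varepsilon) > 0$. The choice $\zeta > 20000(B^5 + \varepsilon^{-5})$ in \eqref{rhoomega} makes the $O(\omega\zeta)$ corrections negligible, so $L(\varphi_{j;k-1}^+) \le -c_1\omega$ on $\mathfrak{U}$ for a constant $c_1 = c_1(\varepsilon, B) > 0$.

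The proof will conclude with a maximum-principle comparison. On $\partial\mathfrak{U}$, the event $\mathscr{A}$ together with the boundary approximation gives $\Psi \le \varphi_{j;k-1}^+ + n^{o(1)-1}$, so the function $\chi := \varphi_{j;k-1}^+ - \Psi$ satisfies a uniformly elliptic linear equation $\widetilde{L}\chi \le -c_1\omega$ on $\mathfrak{U}$ with $\chi \ge -n^{o(1)-1}$ on $\partial\mathfrak{U}$. Comparison with the solution of the Poisson problem on a disk of radius $\rho/2$ centered at $z_0$ yields $\chi(z_0) \ge c_2\rho^2\omega - n^{o(1)-1}$ for some $c_2 = c_2(\varepsilon, B, \zeta) > 0$. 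Since $\varphi_{j;k-1}^+(z_0) - \varphi_{j;k}^+(z_0) = \rho^2 n^{-\delta}\omega\psi(z_0)$, $\psi$ is bounded above by $\Upsilon$, and $\rho^2\omega = n^{3\delta - 1} \gg n^{o(1)-1}$, for large $n$ one obtains
\begin{flalign*}
n^{-1}\mathsf{H}^{\bm{x}}(z_0) \le \Psi(z_0) + n^{o(1)-1} \le \varphi_{j;k-1}^+(z_0) - \tfrac{1}{2} c_2\rho^2\omega < \varphi_{j;k}^+(z_0),
\end{flalign*}
contradicting $\Omega_{j;k}^+(z_0)$, and the probability bound follows. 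The main obstacle is the construction and verification of \Cref{hconcentrateq2}: producing a concentrating function with the (random, conditioned) boundary data whose local profile matches that induced by $\varphi_{j;k-1}^+$ is where the bulk of the technical work lies, and is what requires the novel DBM-based family of concentrating functions described in the introduction.
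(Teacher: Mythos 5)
Your overall philosophy (localize at scale $\rho$, exploit that $\varphi_{j;k-1}^+$ is ``more concave'' than any solution by an amount of order $\omega$, and gain $\rho^2\omega$ at the center) matches the paper's, but the pivotal step is not available as you state it. You assume that, after conditioning on the paths outside $\mathfrak{U}$, the resampled ensemble concentrates at rate $n^{o(1)-1}$ around a PDE solution $\Psi$ \emph{with the conditioned boundary data}, and you present this as an application of \Cref{hconcentrateq2}. That corollary does not say this: its hypotheses concern only the prescribed local profile $\bm{Q}$, and its conclusion produces a solution $\mathfrak{H}$ together with \emph{some} specially constructed starting/ending data (classical locations of a Dyson-Brownian-motion free-convolution shape, via \Cref{hhestimate0}) for which concentration holds. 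It gives no concentration statement for bridges with arbitrary — let alone random and rough — boundary data; indeed such a statement is essentially a local version of \Cref{gh} itself, so invoking it here is circular. Your proposed fix, ``establish $\mathcal{C}^m$-regularity of the conditioned boundary data,'' cannot work: the top and bottom boundaries of the conditioned box are two Brownian paths of $\bm{x}$ (only H\"older-$\tfrac12$, cf.\ \Cref{n10estimate0}), and the lateral data are discrete random point configurations, so no $\mathcal{C}^m$ bound is possible. Consequently the function $\Psi$ on which your maximum-principle comparison operates does not exist with the properties you need.

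The paper avoids exactly this obstacle by replacing your two-sided concentration-plus-maximum-principle step with a one-sided \emph{stochastic} comparison: it samples an auxiliary ensemble $\bm{y}$ with the deterministic boundary data supplied by \Cref{hconcentrateq2}, chooses the profile of $\mathfrak{H}$ (via \eqref{q0q1k} and \Cref{pqg}) so that its Hessian is slightly smaller than that of $\varphi$ while the gradients match, verifies via the deterministic Taylor-type inequality \Cref{hwhz} (together with the event $\Omega^{\complement}$ and the concentration of $\bm{y}$) that the boundary data of the reindexed restriction of $\bm{x}$ is dominated by that of $\bm{y}$, and then transfers the inequality to the interior point $z$ by height monotonicity (\Cref{monotoneheight}, implemented in \Cref{eventf}); the concentration of $\bm{y}$ around $\mathfrak{H}$ then yields $n^{-1}\mathsf{H}^{\bm{x}}(z)\le \mathfrak{H}(\rho,0)+n^{\delta-1}<\varphi_{j;k}^+(z)$. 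No concentration of the conditioned $\bm{x}$ around any limit shape inside $\mathfrak{U}$ is ever needed. A secondary omission: your sketch implicitly assumes $\mathfrak{U}\subset\mathfrak{P}_{f;g}$, whereas the case where the box meets $\partial\mathfrak{P}_{f;g}$ requires the boundary convention $\varphi|_{\partial\mathfrak{P}}=H|_{\partial\mathfrak{P}}$ and the quantitative gap $\varphi-H\ge c_2\omega$ in the interior (\eqref{e:boundarygap}), which the paper handles separately in the proof of \Cref{eventf}. Your supersolution computation for $\varphi_{j;k-1}^+$ is sound and mirrors \Cref{pqg}, but without the monotone-coupling mechanism the argument does not close.
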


Given \Cref{gh1}, we can quickly establish \Cref{gh0}. 

\begin{proof}[Proof of \Cref{gh0}]
	
	It suffices to show that 
	\begin{flalign}
		\label{ggamma01} 
		\begin{aligned}
		& \mathbb{P} \Bigg[  \bigcup_{z \in \mathfrak{P}_{f; g}} \big\{ n^{-1} \mathsf{H}^{\bm{x}} (z) \ge H(z)  + 2 n^{2/m + \delta - 1}  \big\} \Bigg] < Ce^{-c(\log n)^2}; \\ 
		& \mathbb{P} \Bigg[ \bigcup_{z \in \mathfrak{P}_{f; g}} \big\{ n^{-1} \mathsf{H}^{\bm{x}} (z) \le H(z)  - 2 n^{2/m + \delta - 1}  \big\} \Bigg] < Ce^{-c(\log n)^2}.
		\end{aligned} 
	\end{flalign}

	\noindent We only establish the first bound in \eqref{ggamma01}, for the proof of the latter is entirely analogous. To that end, for any integer $j \in \llbracket 0, J \rrbracket$, define the function $\Phi_j : \overline{\mathfrak{P}_{f; g}} \rightarrow \mathbb{R}$ by 
	\begin{flalign*} 
		\Phi_j (z) = H(z) - j \omega  + 3.
	\end{flalign*} 

	\noindent and for any $z \in \mathfrak{P}_{f; g}$ define the event
	\begin{flalign*} 
	\Omega_j (z) = \big\{ n^{-1} \mathsf{H}^{\bm{x}} (z) > \Phi_j (z) \big\}.
	\end{flalign*} 

	\noindent Then, observe from \eqref{functionsjkz} and \eqref{rhoomega} that 
	\begin{flalign}
		\label{functions2} 
		\varphi_{j; K}^+ (z) \le \Phi_j (z) \le \Phi_{j-1} (z) \le \varphi_{j; 0}^+ (z),
	\end{flalign}

	\noindent where the first bound follows from the fact that $(1 - K \rho^2 n^{-\delta}) \sup_{z \in \mathfrak{P}_{f; g}} \psi (z) \le \frac{1}{4}$; the second from the definition of $\Phi_j$; and the third from the fact that $\inf_{z \in \mathfrak{P}_{f; g}} \psi (z) = 2$.  
 
	Define the set $\mathcal{S} \subset \mathfrak{P}_{f; g}$ consisting of all points $(t, x) \in \mathfrak{P}_{f; g}$ such that $t \in n^{-15} \cdot \mathbb{Z}$ and $x = G(t, y)$, for some $y \in n^{-1} \cdot \mathbb{Z}$; then, $|\mathcal{S}| \le n^{20}$. Taking a union over $z \in \mathcal{S}$ of \Cref{gh1} yields constants $c_1 = c_1 (\varepsilon, \delta, B_0, m) > 0$ and $C_1 = C_1 (\varepsilon, \delta, B_0, m) > 1$ satisfying   
	\begin{flalign}
		\label{ggamma03} 
		\mathbb{P} \left[\bigcap_{w \in \mathfrak{P}_{f; g}} \Omega_{j;k-1}^+ (w)^{\complement}\setminus \mathscr{E}_{j; k}\right] \leq C_1 e^{-c_1 (\log n)^2}, 
	\end{flalign}
where 
\begin{align*}
\mathscr{E}_{j; k} = \bigcap_{z' \in \mathcal{S}} \Omega_{j;k}^+ (z')^{\complement} \cap \bigcap_{w \in \mathfrak{P}_{f; g}} \Omega_{j;k-1}^+ (w)^{\complement}.
\end{align*}
	\noindent By \Cref{n10estimate0}, there also exists a constant $C_2 = C_2 (\delta, B_0) > 1$ such that
	\begin{flalign}
		\label{eventf1}
		\mathbb{P}[\mathscr{F}] \ge 1 - C_2 e^{-n}, \quad \text{where} \quad \mathscr{F} = \Bigg\{ \displaystyle\max_{j \in \llbracket 1, n \rrbracket} \displaystyle\sup_{0 < t < t+s \le 1/L} \big| x_j (t+s) - x_j (t) \big| \le n^{\delta/2} s^{1/2} + n^{-5} \Bigg\}.
	\end{flalign}
	
	Next, for any $j \in \llbracket 0, J \rrbracket$, $k \in [0, K]$, $s \in [0, L^{-1}]$, and $r \ge 0$, let 
	\begin{flalign*} 
		\gamma_{j;k}^+ (r; s) = \sup \big\{ \gamma \in \mathbb{R} : \varphi_{j;k}^+ (s, \gamma) \ge r \big\}.
	\end{flalign*} 

	\noindent Fix $(i, s) \in \llbracket 1, n \rrbracket \times [0, L^{-1}]$, and let $s' \in [0, L^{-1}]$ be such that $\big( s', G(s', in^{-1}) \big) \in \mathcal{S}$ and $|s-s'|$ is minimal; then, $|s-s'| \le n^{-15}$. On the event $\mathscr{E}_{j; k} \cap \mathscr{F}$, we have for sufficiently large $n$ that
	\begin{flalign*}
		x_i (s) \le x_i (s') + 2n^{-5} \le \gamma_{j; k}^+ (in^{-1}; s') + 2n^{-5} < \gamma_{j;k}^+ (in^{-1}; s) + 4n^{-5} < \gamma_{j;k}^+ (in^{-1} + n^{-4}; s).
	\end{flalign*}

	\noindent Here, the first inequality follows from restricting to $\mathscr{F}$; the second follows from restricting to $\mathscr{E}_{j;k}$; and the third and fourth follow from the facts that $\varphi_{j;k}^+ \in \Adm_{\varepsilon/2} (\mathfrak{P}_{f; g})$, that $\| \varphi_{j;k}^+ \|_{\mathcal{C}^1 (\mathfrak{P}_{f; g})} \le C_3$ for some constant $C_3 = C_3 (\varepsilon, \delta, B, m) > 1$, and that $n$ sufficiently large (where these bounds follow from \eqref{functionsjkz}, since $H \in \Adm_{\varepsilon} (\mathfrak{P}_{f; g})$ by \eqref{derivativexhtx}, $\| H \|_{\mathcal{C}^1 (\mathfrak{P}_{f; g})} \le B$ by \eqref{hderivativem}, $\| \psi \|_{\mathcal{C}^1 (\mathfrak{P}_{f; g})} \le 10 \zeta e^{\zeta}$, and $\omega = n^{2/m+\delta-1}$). Since this holds for each $(i, s) \in \llbracket 1, n \rrbracket \times [0, L^{-1}]$, it follows on $\mathscr{E}_{j;k} \cap \mathscr{F}$ that   
	\begin{flalign*}
		 n^{-1} \mathsf{H}^{\bm{x}} (z) \le \varphi_{j;k}^+ (z) + n^{-4} < \varphi_{j;k-1/2}^+ (z), \qquad \text{for each $z \in \mathfrak{P}_{f; g}$, on the event $\mathscr{E}_{j;k} \cap \mathscr{F}$}.
	\end{flalign*}

	\noindent Applying this bound (decreasing $k$ by $\frac{1}{2}$ at each point) $2K+2 \le 2n$ times, and using \eqref{ggamma03} and \eqref{eventf1}, yields constants $c_2 = c_2 (\varepsilon, \delta, B_0, m) > 0$ and $C_4 = C_4 (\varepsilon, \delta, B_0, m) > 1$ such that
	\begin{flalign*}
		\mathbb{P} \Bigg[ \bigcup_{z \in \mathfrak{P}_{f; g}} \big\{ n^{-1} \mathsf{H}^{\bm{x}} (z) > \varphi_{j;K}^+ (z) \big\} \cap \bigcap_{w \in \mathfrak{P}_{f; g}} \Omega_{j; 0}^+ (w)^{\complement}  \Bigg] \le C_4 e^{-c_2 (\log n)^2}.  
	\end{flalign*} 

	\noindent By \eqref{functions2}, we deduce
	\begin{flalign*}
		\mathbb{P} \Bigg[ \bigcup_{z \in \mathfrak{P}_{f; g}} \Omega_j (z) \cap \bigcap_{w \in \mathfrak{P}_{f; g}} \Omega_{j-1} (z)^{\complement}  \Bigg] \le C_4 e^{-c_2 (\log n)^3},
	\end{flalign*} 
		
	\noindent which upon applying $J \le n$ times implies  
	\begin{flalign*}
		\mathbb{P} \Bigg[ \bigcup_{z \in \mathfrak{P}_{f; g}} \Omega_J (z) \cap \bigcap_{w \in \mathfrak{P}_{f; g}} \Omega_0 (z)^{\complement}  \Bigg] \le C_4 n e^{-c_2 (\log n)^2}. 
	\end{flalign*} 

	\noindent With a union bound and the facts that $\Phi_J (z) \le H(z)  + 2 \omega$ (since $J = \lfloor 3 \omega^{-1} \rfloor - 1$) and $\Phi_0 (z) \le H(z)  + 3$, this yields 
	\begin{flalign*}
		\mathbb{P} \Bigg[ \bigcup_{z \in \mathfrak{P}_{f; g}} \big\{ n^{-1} \mathsf{H}^{\bm{x}} (z) \ge H(z)  & + 2 n^{2/m + \delta-1} \big\}  \Bigg] \\
		& < \mathbb{P} \Bigg[ \bigcup_{z \in \mathfrak{P}_{f; g}} \big\{ n^{-1} \mathsf{H}^{\bm{x}} (z) \ge H(z)  + 3 \}  \Bigg] + C_4 n e^{-c_2 (\log n)^2}.
	\end{flalign*}

	\noindent Since $n^{-1} \mathsf{H}^{\bm{x}} (z) \le 1 \le H(z) + 3$ holds deterministically, this yields the first bound in \eqref{ggamma01} and thus the proposition. 
\end{proof}

\section{The Local Behaviors of Height Profiles}

\label{HeightLocal} 

In this section we explain how to approximate the first several derivatives (which we refer to as a ``local profile'') of a limit shape by another one, associated with a family of non-intersecting Brownian bridges with known concentration bounds. 

\subsection{Local Profiles}

\label{Derivatives0} 

In this section we introduce the notion of a local profile, which constitutes information about the first several derivatives of a function. To that end, for any integer $k \ge 1$; open subset $\mathfrak{R} \subset \mathbb{R}^2$; and function $F : \mathcal{C}^k (\overline{\mathfrak{R}})$, and point $z \in \mathfrak{R}$, define the \emph{$k$-th order local profile} $\bm{q}_F^{(k)} (z) \in \mathbb{R}^{k+1}$ by 
\begin{flalign}
	\label{kqf}
	\bm{q}_F^{(k)} (z) = \big( \partial_x^k  F(z), \partial_t \partial_x^{k-1} F(z), \ldots , \partial_t^k F(z) \big).
\end{flalign}

\noindent For any integer $m \ge 1$, the \emph{full-$m$ local profile} is the sequence of vectors $\bm{Q}_F^{(m)} (z) = \big( \bm{q}_F^{(k)} (z) \big)_{k \in \llbracket 1, m \rrbracket}$.

We next describe two properties that a full-$m$ local profile $\bm{Q}_F^{(m)}$ must satisfy, if some admissible $F$ solves the equation \eqref{equationxtb} on some open subset $\mathfrak{R} \subseteq \mathbb{R}^2$. The first is that the second coordinate of $\bm{q}_F^{(1)}$ must be negative, due to the admissibility of $F$. The second is that $F$ must satisfy \eqref{equationxtb}, which imposes a consistency relation between $\bm{q}_F^{(1)}$ and $\bm{q}_F^{(2)}$. More generally, applying the differential operator $\partial_t^i \partial_x^j$ to \eqref{equationxtb} imposes a relation between $\big( \bm{q}_F^{(1)}, \bm{q}_F^{(2)}, \ldots , \bm{q}_F^{(i+j+2)} \big)$, for each $(i, j) \in \mathbb{Z}_{\ge 0}^2$ such that $i + j \le m - 2$. These admissibility and consistency properties are made precise through the following definition. Throughout, unless mentioned otherwise, we will denote sequences of $m$ vectors $\bm{Q} = \big( \bm{q}^{(k)} \big)_{\llbracket 1, m \rrbracket} \in \mathbb{R}^2 \times \mathbb{R}^3 \times \cdots \times \mathbb{R}^{m+1}$ by $\bm{q}^{(k)} = \big( q_0^{(k)}, q_1^{(k)}, \ldots , q_k^{(k)} \big)$, for each $k \in \llbracket 1, m \rrbracket$.

\begin{definition} 
	
\label{admissibleconsistent}

A sequence of $m$ vectors $\bm{Q} = \big( \bm{q}^{(k)} \big) \in \mathbb{R}^2 \times \mathbb{R}^3 \times \cdots \times \mathbb{R}^{m+1}$ is \emph{admissible} if $q_0^{(1)} < 0$. For any integers $i, j \ge 0$, define the function $\mathfrak{v}_{i,j}: \mathbb{R}^2 \times \mathbb{R}^3 \times \cdots \times \mathbb{R}^{i+j+3} \rightarrow \mathbb{R}$ so that
\begin{flalign*}
	\mathfrak{v}_{i,j} \big( \bm{Q}_F^{(i+j+2)} (z) \big) = \partial_t^i \partial_x^j \Bigg( \displaystyle\sum_{i', j' \in \{ t, x \}} \mathfrak{b}_{i'j'} \big( \nabla F(z) \big) \partial_{i'} \partial_{j'} F(z) \Bigg).
\end{flalign*}

\noindent We say that the sequence $\bm{Q}$ is \emph{consistent} if it is admissible and
\begin{flalign}
	\label{vijequation}
\mathfrak{v}_{i,j} \big( \bm{q}^{(1)}, \bm{q}^{(2)}, \ldots , \bm{q}^{(i+j+2)} \big) = 0, \qquad \text{for each $(i, j) \in \mathbb{Z}_{\ge 0}^2$ with $i + j \le m - 2$}. 
\end{flalign}

\end{definition} 

\begin{example} 

Since
\begin{flalign*}
	\mathfrak{v}_{1,0} \big( \bm{Q}_F^{(3)} (z) \big) & = \mathfrak{b}_{tt} \big( \nabla F(z) \big) \partial_{ttt} F(z) + 2 \mathfrak{b}_{tx} \big( \nabla F(z) \big) \partial_{ttx} F(z) + \mathfrak{b}_{xx} \big( \nabla F(z) \big) \partial_{txx} F(z) \\
	& \qquad + \Big( \partial_{tt} F(z) \partial_u \mathfrak{b}_{tt} \big( \nabla F(z) \big) + \partial_{tx} F(z) \partial_v \mathfrak{b}_{tt} \big( \nabla F(z) \big) \Big) \partial_{tt} F(z) \\
	& \qquad + 2 \Big( \partial_{tt} F(z) \partial_u \mathfrak{b}_{tx} \big( \nabla F(z) \big) + \partial_{tx} F(z) \partial_v \mathfrak{b}_{tx} \big( \nabla F(z) \big) \Big) \partial_{tx} F(z) \\
	& \qquad + \Big( \partial_{tt} F(z) \partial_u \mathfrak{b}_{xx} \big( \nabla F(z) \big) + \partial_{tx} F(z) \partial_v \mathfrak{b}_{xx} \big( \nabla F(z) \big) \Big) \partial_{xx} F(z),
\end{flalign*}

\noindent we have, for any sequence $\bm{Q}^{(3)} = \big( \bm{q}^{(k)} \big)_{k \in \llbracket 1, 3 \rrbracket}$ with $\bm{q}^{(k)} = \big( q_0^{(k)}, q_1^{(k)}, \ldots,  q_k^{(k)} \big)$ for each $k \in \llbracket 1, 3 \rrbracket$,
\begin{flalign*}
	\mathfrak{v}_{1,0} \big( \bm{Q}^{(3)} \big) & = \mathfrak{b}_{tt} \big( \bm{q}^{(1)} \big) q_3^{(3)} + 2 \mathfrak{b}_{tx} \big( \bm{q}^{(1)} \big) q_2^{(3)} + \mathfrak{b}_{xx} \big( \bm{q}^{(1)} \big) q_1^{(3)}  + \Big( q_2^{(2)} \partial_u \mathfrak{b}_{tt} \big( \bm{q}^{(1)} \big) + q_1^{(2)} \partial_v \mathfrak{b}_{tt} \big( \bm{q}^{(1)} \big) \Big) q_2^{(2)} \\
	& \quad + 2 \Big( q_2^{(2)} \partial_u \mathfrak{b}_{tx} \big( \bm{q}^{(1)} \big) + q_1^{(2)} \partial_v \mathfrak{b}_{tx} \big( \bm{q}^{(1)} \big) \Big) q_1^{(2)} + \Big( q_2^{(2)} \partial_u \mathfrak{b}_{xx} \big( \bm{q}^{(1)} \big) + q_1^{(2)} \partial_v \mathfrak{b}_{xx} \big( \bm{q}^{(1)} \big) \Big) q_0^{(2)}.
\end{flalign*}

\end{example}

\begin{rem}
	
	\label{fconsistent}
	
	For any function $F \in \mathcal{C}^2 (\overline{\mathfrak{R}}) \cap \Adm (\mathfrak{R})$ satisfying \eqref{equationxtb}, the sequence $\bm{Q}_F^{(m)} (z)$ is consistent, for each integer $m \ge 1$ and point $z \in \mathfrak{R}$; this follows from applying $\partial_t^i \partial_x^j$ to \eqref{equationxtb} (for each $(i, j) \in \mathbb{Z}_{\ge 0}^2$ with $i + j \le m + 2)$.
	 
\end{rem}

Observe that consistency imposes $\binom{m}{2}$ relations (one for each $(i, j) \in \mathbb{Z}_{\ge 0}$ with $i + j \le m - 2$) between the $\binom{m+2}{2} - 1$ coordinates of $\bm{Q}$; this suggests that there are $\binom{m+2}{2} - 1 - \binom{m}{2} = 2m$ ``free parameters'' describing a consistent $\bm{Q}$. The following lemma indicates this to indeed be the case, taking these $2m$ parameters to be the first two coordinates $\big( q_0^{(k)}, q_1^{(k)} \big)_{k \in \llbracket 1, m \rrbracket}$ of each vector in $\bm{Q}$. 

\begin{lem} 
	
\label{q01j}

Fix an integer $m \ge 1$ and $m$ pairs of real numbers $\big( q_0^{(k)}, q_1^{(k)} \big) \in (\mathbb{R}^2)^m$ with $q_0^{(1)} < 0$. 

\begin{enumerate} 
	
	\item There is a unique consistent sequence of $m$ vectors $\bm{Q} = \big( \bm{q}^{(k)} \big) \in \mathbb{R}^2 \times \mathbb{R}^3 \times \cdots \times \mathbb{R}^{m+1}$ so that, for each $k \in \llbracket 1, m \rrbracket$, the first two coordinates of $\bm{q}^{(k)}$ are $\big( q_0^{(k)}, q_1^{(k)} \big)$.
	\item For any real numbers $\varepsilon \in (0, 1)$ and $B > 1$, there exists a constant $C = C (\varepsilon, B, m) > 1$ so that, if 
	\begin{flalign}
		\label{q0q1estimate}
		q_0^{(1)} \le -\varepsilon, \quad \text{and} \quad \max_{1 \le k \le m} \Big( \big| q_0^{(k)} \big| + \big| q_1^{(k)} \big| \Big) \le B,
	\end{flalign}

	\noindent then the consistent sequence $\bm{Q}$ sastisfies $\big| q_j^{(k)} \big| \le C$ for each $k \in \llbracket 1, m \rrbracket$ and $j \in \llbracket 0, k \rrbracket$. 
\end{enumerate} 

\end{lem}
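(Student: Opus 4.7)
The plan is to prove both parts simultaneously by induction on $k \in \llbracket 2, m \rrbracket$. At each stage, I would use the $k-1$ consistency equations $\mathfrak{v}_{i, j} = 0$ with $i+j = k-2$ to determine the unknowns $q_2^{(k)}, q_3^{(k)}, \ldots, q_k^{(k)}$ in terms of $(q_0^{(k)}, q_1^{(k)})$ and the previously constructed $\bm{q}^{(1)}, \ldots, \bm{q}^{(k-1)}$.

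First I would extract the leading-order structure of $\mathfrak{v}_{i,j}$. Writing the equation \eqref{equationxtb} as $\mathfrak{b}_{tt}(\nabla F) \partial_{tt} F + 2 \mathfrak{b}_{tx}(\nabla F) \partial_{tx} F + \mathfrak{b}_{xx}(\nabla F) \partial_{xx} F = 0$ and applying $\partial_t^i \partial_x^j$ via the Leibniz rule, each term in which no derivative is distributed onto the factors $\mathfrak{b}_{i'j'}(\nabla F)$ contributes to the highest-order derivatives $\partial_t^{i+2} \partial_x^j F$, $\partial_t^{i+1} \partial_x^{j+1} F$, and $\partial_t^i \partial_x^{j+2} F$, which correspond to the entries $q_{i+2}^{(k)}, q_{i+1}^{(k)}, q_i^{(k)}$ of $\bm{q}^{(k)}$ (with $k = i+j+2$). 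All other terms carry at least one derivative on a coefficient $\mathfrak{b}_{i'j'}(\nabla F)$; each such derivative lowers the order of the accompanying derivative of $F$ by one while contributing only derivatives of $\nabla F$ of total order at most $k-1$. Denoting this remainder by $\mathscr{L}_{i, k-2-i}$, which depends polynomially on the entries of $\bm{q}^{(1)}, \ldots, \bm{q}^{(k-1)}$ only, equation \eqref{vijequation} becomes
\begin{flalign*}
\mathfrak{b}_{tt}(\bm{q}^{(1)}) q_{i+2}^{(k)} + 2 \mathfrak{b}_{tx}(\bm{q}^{(1)}) q_{i+1}^{(k)} + \mathfrak{b}_{xx}(\bm{q}^{(1)}) q_i^{(k)} + \mathscr{L}_{i, k-2-i}\bigl( \bm{q}^{(1)}, \ldots, \bm{q}^{(k-1)} \bigr) = 0,
\end{flalign*}
for each $i \in \llbracket 0, k-2 \rrbracket$.

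Next I would observe that this is a lower triangular linear system in the unknowns $q_2^{(k)}, q_3^{(k)}, \ldots, q_k^{(k)}$: the $i$-th equation involves only $q_{i+2}^{(k)}$ among unknowns of index $> i+1$, with leading coefficient $\mathfrak{b}_{tt}(\bm{q}^{(1)})$. So, processing the equations in the order $i = 0, 1, \ldots, k-2$, one recursively solves for $q_2^{(k)}, q_3^{(k)}, \ldots, q_k^{(k)}$, with each step requiring division by $\mathfrak{b}_{tt}(\bm{q}^{(1)}) = (q_0^{(1)})^2$, which is nonzero by admissibility. This yields existence and uniqueness of $\bm{q}^{(k)}$, closing the induction for the first part.

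For the second part, I would track bounds along this same induction. Under \eqref{q0q1estimate} the leading coefficient satisfies $(q_0^{(1)})^2 \ge \varepsilon^2$, and the remaining coefficients $\mathfrak{b}_{tx}(\bm{q}^{(1)})$ and $\mathfrak{b}_{xx}(\bm{q}^{(1)})$ are bounded in terms of $B$. Inductively assuming $|q_s^{(k')}| \le C(\varepsilon, B, m)$ for all $k' < k$, the polynomial $\mathscr{L}_{i, k-2-i}$ is likewise bounded in terms of $\varepsilon, B, m$, and solving the triangular system in order gives the required bound on each $q_\ell^{(k)}$. The main obstacle I expect is bookkeeping: cleanly organizing the Leibniz expansion of $\partial_t^i \partial_x^j$ applied to the PDE to ensure both that no entry of $\bm{q}^{(k)}$ other than $q_i^{(k)}, q_{i+1}^{(k)}, q_{i+2}^{(k)}$ appears in the $i$-th consistency equation, and that all remaining terms depend only on lower-order data $\bm{q}^{(1)}, \ldots, \bm{q}^{(k-1)}$. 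Once this bookkeeping is in place, the induction reduces to inverting a triangular linear system with uniformly nondegenerate diagonal.
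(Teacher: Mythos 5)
Your proposal is correct and follows essentially the same route as the paper: both isolate the top-order contribution of $\partial_t^i\partial_x^j$ applied to \eqref{equationxtb}, note that the leading coefficient $\mathfrak{b}_{tt}\big(\bm{q}^{(1)}\big) = \big(q_0^{(1)}\big)^2 \ge \varepsilon^2$ is nondegenerate, and solve recursively in the lexicographic order of $(k, i)$, with the bounds tracked along the same induction. The only cosmetic difference is that the paper lumps the $2\mathfrak{b}_{tx}\,q_{i+1}^{(k)}+\mathfrak{b}_{xx}\,q_i^{(k)}$ terms into the already-determined remainder $\mathfrak{u}_{i,j}$, whereas you keep them explicit and phrase each stage $k$ as a banded lower-triangular linear system — the same recursion in substance.
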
 

\begin{proof}
	
	Observe for any $(i, j) \in \mathbb{Z}_{> 0}^2$ that 
	\begin{flalign*}
		\partial_t^i \partial_x^j \Bigg( \displaystyle\sum_{i', j' \in \{ t, x \}} \mathfrak{b}_{i' j'} \big( \nabla F(z) \big) \partial_{i'} \partial_{j'} F(z) \Bigg) = \mathfrak{b}_{tt} \big( \nabla F(z) \big) \partial_t^{i+2} \partial_x^j F(z) + \mathfrak{u}_{i, j} \Big( \big( \partial_t^{i'} \partial_x^{j'} F (z) \big)_{i', j'} \Big),
	\end{flalign*}
	
	\noindent for some function $\mathfrak{u}_{i, j}$ that depends polynomially (as the $\mathfrak{b}_{ij} (u, v)$ from \eqref{uvb} are polynomials in $(u, v)$) only on those derivatives $\partial_t^{i'} \partial_x^{j'} F(z)$ such that either $i' \le i + 1$ or $i' + j' < i + j + 2$. As such, for any sequence of $m$ vectors $\bm{Q} = \big( \bm{q}^{(1)}, \bm{q}^{(2)}, \ldots , \bm{q}^{(m)} \big) \in \mathbb{R}^2 \times \mathbb{R}^3 \times \cdots \mathbb{R}^{m+1}$, we have 
	\begin{flalign}
		\label{vijuij2}
		\mathfrak{v}_{i, j} \big( \bm{q}^{(1)}, \bm{q}^{(2)}, \ldots , \bm{q}^{(i+j+2)} \big) = q_{i+2}^{(i+j+2)} \mathfrak{b}_{tt} \big( \bm{q}^{(1)} \big) + \mathfrak{u}_{i, j} \Big( \big( q_{i'}^{(k')} \big)_{i', k} \Big),
		\end{flalign}
	
	\noindent where again $\mathfrak{u}_{i,j}$ is a polynomial in those $q_{i'}^{(k')}$ with either $i' \le i + 1$ or $k' \le i + j + 1$. Thus, 
	\begin{flalign}
		\label{vijuij}
		\mathfrak{v}_{i, j} \big( \bm{q}^{(1)}, \bm{q}^{(2)}, \ldots , \bm{q}^{(i+j+2)} \big) = 0, \quad \text{if and only if} \quad q_{i+2}^{(i+j+2)} = -\displaystyle\frac{\mathfrak{u}_{i,j} \Big( \big (q_{i'}^{(k')} \big)_{i', k'} \Big)}{\mathfrak{b}_{tt} \big( \bm{q}^{(1)} \big)}.
	\end{flalign}

	Hence since $q_0^{(1)} < 0$, meaning $\mathfrak{b}_{tt} \big( \bm{q}^{(1)} \big) > 0$, the consistency of $\bm{Q}$ provides a unique solution for $q_{i+2}^{(i+j+2)}$, in terms of all $\big( q_{i'}^{(k')} \big)$ such that either $i' \le i + 1$ or $k' \le i+j+1$. In particular, given the pairs $\big( q_0^{(k)}, q_1^{(k)} \big)$, this by induction on the lexicographic pair $(i+j+2, i+2)$ determines a unique consistent solution for the remaining entries of $\bm{Q}$, establishing the first statement of the lemma. 
	
	To establish the second, observe from \eqref{uvb} that $\mathfrak{b}_{tt} \big( \bm{q}^{(1)} \big) \ge \varepsilon^2$, as $q_0^{(1)} \le -\varepsilon$. Thus, since the $\mathfrak{u}_{i,j}$ are polynomials with coefficients and degrees only dependent on $i, j \in \llbracket 1, m \rrbracket$, it follows by \eqref{vijuij}\, \eqref{q0q1estimate}, and induction on the lexicographic pair $(i+j+2, i+2) \in \llbracket 2, m \rrbracket^2$ that for each $(i, j) \in \llbracket 0, m \rrbracket^2$ there exists a constant $C_{i,j} = C_{i,j} (\varepsilon, B) > 1$ such that $\big| q_{i+2}^{(i+j+2)} \big| \le C_{i, j}$. This (together with the fact from \eqref{q0q1estimate} that $\big| q_0^{(k)} \big| + \big| q_1^{(k)} \big| \le B$ for each $k \in \llbracket 1, m \rrbracket$) yields the second statement of the lemma, with $C = \max_{i+j \le m-2} C_{i,j} + B$. 
\end{proof} 

\begin{definition}
	
	\label{qextend} 
	
	Adopting the notation of \Cref{q01j}, we call $\bm{Q} = \big( \bm{q}^{(k)} \big)$ the \emph{consistent extension} of the sequence of $m$ pairs $\big( q_0^{(k)}, q_1^{(k)} \big) \in (\mathbb{R}^2)^m$. 
\end{definition}

We conclude this section with the following two lemmas. The first is a continuity bound, saying that if two sequences of real pairs are close, then their consistent extensions are as well. The second implies that, if $\widetilde{\bm{Q}}$ is a sequence that ``almost satisfies''  the consistency relations \eqref{vijequation}, then there is a consistent sequence $\bm{Q}$ close to it.

\begin{lem} 
	
	\label{continuityextend}
	
	For any integer $m \ge 1$ and real numbers $\varepsilon \in (0, 1)$ and $B > 1$, there exists a constant $C = C(\varepsilon, B, m) > 1$ such that the following holds. Let $\big( q_0^{(k)}, q_1^{(k)} \big) \in (\mathbb{R}^2)^m$ and $\big( \widetilde{q}_0^{(k)}, \widetilde{q}_1^{(k)} \big) \in (\mathbb{R}^2)^m$ be two sequences of $m$ real number pairs such that 
	\begin{flalign*}
		\displaystyle\max \big\{ q_0^{(1)}, \widetilde{q}_0^{(1)} \} \le -\varepsilon; \qquad \displaystyle\max_{1 \le k \le m} \Big( \big| q_0^{(k)} \big| + \big| q_1^{(k)} \big| + \big| \widetilde{q}_0^{(k)} \big| + \big| \widetilde{q}_1^{(k)} \big| \Big) \le B.
	\end{flalign*}
	
	\noindent Letting $\bm{Q} = \big( \bm{q}^{(k)} \big)$ and $\widetilde{\bm{Q}} = \big( \widetilde{\bm{q}}^{(k)} \big)$ denote the consistent extensions of $\big( q_0^{(k)}, q_1^{(k)} \big)$ and $\big( \widetilde{q}_0^{(k)}, \widetilde{q}_1^{(k)} \big)$, respectively, we have 
	\begin{flalign*}
		\big\| \bm{q}^{(k)} - \widetilde{\bm{q}}^{(k)} \big\| \le C \displaystyle\sum_{k' = 1}^k \Big( \big| q_0^{(k')} - \widetilde{q}_0^{(k')} \big| + \big| q_1^{(k')} - \widetilde{q}_1^{(k')} \big| \Big), \qquad \text{for each $k \in \llbracket 1, m \rrbracket$}. 
	\end{flalign*}
	
\end{lem}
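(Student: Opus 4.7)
The plan is to mirror the inductive construction of the consistent extension from the proof of \Cref{q01j}, now tracking the difference between corresponding entries of $\bm{Q}$ and $\widetilde{\bm{Q}}$ at each step. Recall from \eqref{vijuij} in that proof that the consistency relations \eqref{vijequation} are equivalent to the explicit recursion
\begin{flalign*}
q_{i+2}^{(i+j+2)} = -\frac{\mathfrak{u}_{i,j}\bigl((q_{i'}^{(k')})_{i',k'}\bigr)}{\mathfrak{b}_{tt}\bigl(\bm{q}^{(1)}\bigr)}, \qquad (i,j) \in \mathbb{Z}_{\ge 0}^2 \text{ with } i+j \le m - 2,
\end{flalign*}
where $\mathfrak{u}_{i,j}$ is a polynomial (with degree and coefficients depending only on $i, j$) in those variables $q_{i'}^{(k')}$ with $i' \le i + 1$ or $k' \le i + j + 1$, and the same identity determines $\widetilde{\bm{Q}}$ from its inputs $(\widetilde{q}_0^{(k)}, \widetilde{q}_1^{(k)})$.

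First I would apply the second part of \Cref{q01j} to each of $\bm{Q}$ and $\widetilde{\bm{Q}}$, obtaining a uniform $L^{\infty}$ bound $|q_j^{(k)}| + |\widetilde{q}_j^{(k)}| \le C_0$ for a constant $C_0 = C_0(\varepsilon, B, m) > 1$. Combined with the inequality $\mathfrak{b}_{tt}\bigl(\bm{q}^{(1)}\bigr), \mathfrak{b}_{tt}\bigl(\widetilde{\bm{q}}^{(1)}\bigr) \ge \varepsilon^2$ (verified in the proof of \Cref{q01j}), this confines the arguments of the rational map in the recursion to the compact region $\{ |\cdot| \le C_0\} \cap \{ q_0^{(1)} \le -\varepsilon\}$, on which the map is smooth. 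It therefore admits a Lipschitz constant $L_0 = L_0(\varepsilon, B, m) > 1$ in each of its arguments.

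The main step is a double induction on the lexicographic pair $(k, j) \in \llbracket 1, m \rrbracket \times \llbracket 0, k \rrbracket$, showing that
\begin{flalign*}
\bigl| q_j^{(k)} - \widetilde{q}_j^{(k)} \bigr| \le C_{k,j} \sum_{\ell=1}^{k} \Bigl( \bigl| q_0^{(\ell)} - \widetilde{q}_0^{(\ell)} \bigr| + \bigl| q_1^{(\ell)} - \widetilde{q}_1^{(\ell)} \bigr| \Bigr)
\end{flalign*}
for a constant $C_{k,j} = C_{k,j}(\varepsilon, B, m) > 1$. The cases $j \in \{ 0, 1 \}$ are immediate with $C_{k,j} = 1$. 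For $j \ge 2$, set $(i, j') = (j - 2, k - j)$ so that the recursion expresses $q_j^{(k)}$ as a smooth function of entries $q_{i'}^{(k')}$ with $i' \le j - 1$ or $k' \le k - 1$, all of which are strictly earlier in lex order. Applying the Lipschitz bound with constant $L_0$ term by term, using the inductive hypothesis for the differences with $i' \ge 2$, and using the trivial identity for the differences with $i' \in \{ 0, 1\}$, yields the above inequality with $C_{k,j}$ equal to $L_0 M$ times $\max_{(k', j') < (k, j)} C_{k', j'}$, where $M = M(m)$ bounds the number of monomials appearing in $\mathfrak{u}_{i,j}$. Taking $C = \max_{(k, j)} C_{k,j}$ closes the induction and proves the lemma.

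I do not anticipate a genuine obstacle: the dependency structure of the recursion is already organized by lex order in \Cref{q01j}, so once the uniform $L^{\infty}$ bounds are established the remaining argument reduces to a finite chain of Lipschitz estimates. The only mild subtlety is the multiplicative blow-up of $C_{k,j}$ across induction steps, which is harmless because the number of steps is at most $\binom{m+2}{2}$ and $m$ is fixed.
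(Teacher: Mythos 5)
Your proposal is correct and follows essentially the same route as the paper: uniform bounds on both extensions via the second part of \Cref{q01j}, the lower bound $\mathfrak{b}_{tt}(\bm{q}^{(1)}) \ge \varepsilon^2$, Lipschitz control of the recursion \eqref{vijuij} (the paper writes this out explicitly as \eqref{aestimatea} and \eqref{uijuij} rather than invoking smoothness on a compact set), and induction on the same lexicographic order. The multiplicative growth of the constants across the finitely many induction steps is handled identically in the paper.
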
 

\begin{proof}
	
	Throughout, we recall the notation from the proof of \Cref{q01j}. First observe from the second part of \Cref{q01j} that there exists a constant $C_1 = C_1 (\varepsilon, B, m) > 1$ such that 
	\begin{flalign}
		\label{qsumq} 
		\displaystyle\sum_{k = 1}^m \Big( \big| \bm{q}^{(k)} \big| + \big| \widetilde{\bm{q}}^{(k)} \big| \Big) \le C_1.
	\end{flalign}
	
	\noindent Next, due to \eqref{qsumq}; the definition \eqref{uvb} of $\mathfrak{b}_{tt}$; and the fact that $\max \big\{ q_0^{(1)}, \widetilde{q}_0^{(1)} \big\} \le -\varepsilon$, there exists a constant $C_2 = C_2 (\varepsilon, m) > 1$ such that
	\begin{flalign}
		\label{aestimatea}
		& \bigg| \displaystyle\frac{1}{\mathfrak{b}_{tt} \big( \bm{q}^{(1)} \big)} - \displaystyle\frac{1}{\mathfrak{b}_{tt} \big( \widetilde{\bm{q}}^{(1)} \big)} \bigg| \le C_2 \big| \bm{q}^{(1)} - \widetilde{\bm{q}}^{(1)} \big|; \qquad \bigg| \displaystyle\frac{1}{\mathfrak{b}_{tt} \big( \bm{q}^{(1)} \big)} \bigg| + \bigg| \displaystyle\frac{1}{\mathfrak{b}_{tt} \big( \widetilde{\bm{q}}^{(1)} \big)} \bigg| \le C_2.
	\end{flalign}
	
	\noindent Moreover, by \eqref{qsumq}; the fact that the $\mathfrak{u}_{i,j}$ (satisfying \eqref{vijuij}) are polynomials that only depend on the $q_{i'}^{(k')}$ with either $i' \le i+1$ or $k' \le i+j+1$; and the fact that the coefficients and degrees of $\mathfrak{u}_{i,j}$ only depend on $i, j \in \llbracket 1, m \rrbracket$, there exists a constant $C_3 = C_3 (\varepsilon, B, m) > 1$ such that
	\begin{flalign}
		\label{uijuij}
		\begin{aligned}
			& \bigg| \mathfrak{u}_{i,j} \Big( \big( q_{i'}^{(k')} \big)_{i',k'} \Big) - \mathfrak{u}_{i,j} \Big( \big( \widetilde{q}_{i'}^{(k')} \big)_{i', k'} \Big) \bigg| \le C_3 \Bigg( \displaystyle\sum_{i' = 0}^{i+1} \displaystyle\sum_{k'=1}^{i+j+2} \big| q_{i'}^{(k')} - \widetilde{q}_{i'}^{(k')} \big| + \displaystyle\sum_{i' = 0}^{i+2} \displaystyle\sum_{k' = 1}^{i+j+1} \big| q_{i'}^{(k')} - \widetilde{q}_{i'}^{(k')} \big| \Bigg); \\
			& \bigg| \mathfrak{u}_{i,j} \Big( \big( q_{i'}^{(k')} \big)_{i', k'} \Big) \bigg| + \bigg| \mathfrak{u}_{i,j} \Big( \big( \widetilde{q}_{i'}^{(k')} \big)_{i', k'} \Big) \bigg| \le C_3.
		\end{aligned} 
	\end{flalign}
	
	Thus, \eqref{vijuij}, \eqref{aestimatea}, and \eqref{uijuij} together yield a constant $C_4 = C_4 (\varepsilon, B, m) > 1$ so that
	\begin{flalign*}
		\big| q_{i+2}^{(i+j+2)} - \widetilde{q}_{i+2}^{(i+j+2)} \big| \le C_4 \Bigg( \displaystyle\sum_{i' = 0}^{i+1} \displaystyle\sum_{k'=1}^{i+j+2} \big| q_{i'}^{(k')} - \widetilde{q}_{i'}^{(k')} \big| + \displaystyle\sum_{i' = 0}^{i+2} \displaystyle\sum_{k' = 1}^{i+j+1} \big| q_{i'}^{(k')} - \widetilde{q}_{i'}^{(k')} \big| \Bigg).
	\end{flalign*}
	
	\noindent Induction on the lexicographic pair $(i+j+2, i+2) \in \llbracket 2, m \rrbracket^2$ then gives a constant $C_{i, j} = C_{i, j} (\varepsilon, B, m) > 1$ for any $(i, j) \in \llbracket 0, m \rrbracket^2$ with $i + j + 2 \le m$ such that
	\begin{flalign*}
		\big| q_{i+2}^{(i+j+2)} - \widetilde{q}_{i+2}^{(i+j+2)} \big| \le C_{i, j} \displaystyle\sum_{k' = 1}^{i+j+2} \Big( \big| q_0^{(k')} - \widetilde{q}_0^{(k')} \big| + \big| q_1^{(k')} - \widetilde{q}_1^{(k')} \big| \Big).
	\end{flalign*}
	
	\noindent Summing over all such $(i, j)$ with $i+j = k-2$, for any $k \in \llbracket 1, m \rrbracket$, we deduce the lemma.
\end{proof}

\begin{lem} 

\label{qestimateq}

For any integer $m \ge 1$ and real numbers $\varepsilon \in (0, 1)$ and $B > 1$, there exists a constant $C = C(\varepsilon, B, m) > 1$ such that the following holds. Let $\big( q_0^{(k)}, q_1^{(k)} \big) \in (\mathbb{R}^2)^m$ denote a sequence of $m$ pairs of real numbers satisfying \eqref{q0q1estimate}, and let $\bm{Q} = \big( \bm{q}^{(k)} \big)$ denote its consistent extension. Let $\widetilde{\bm{Q}} = \big( \widetilde{\bm{q}}^{(k)} \big) \in \mathbb{R}^2 \times \mathbb{R}^3 \times \cdots \times \mathbb{R}^{m+1}$ be such that $\widetilde{q}_0^{(k)} = q_0^{(k)}$ and $\widetilde{q}_1^{(k)} = q_1^{(k)}$ for each $k \in \llbracket 1, m \rrbracket$. If $\big| \widetilde{q}_j^{(k)} \big| \le B$ for each $k \in \llbracket 1, m \rrbracket$ and $j \in \llbracket 0, k \rrbracket$, then
\begin{flalign*}
	\big| \bm{q}^{(k)} - \widetilde{\bm{q}}^{(k)} \big| \le C \displaystyle\sum_{h=2}^k \displaystyle\sum_{i=0}^{h-2} \Big| \mathfrak{v}_{i,h-i-2} \big( \widetilde{\bm{q}}^{(1)}, \widetilde{\bm{q}}^{(2)}, \ldots , \widetilde{\bm{q}}^{(h)} \big) \Big|, \qquad \text{for each $k \in \llbracket 1, m \rrbracket$}. 
\end{flalign*}

\end{lem}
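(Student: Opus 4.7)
The plan is to mirror the inductive construction of the consistent extension used in the proof of \Cref{q01j}, using the key identity \eqref{vijuij} from that proof. Explicitly, since $\bm{Q}$ is consistent, for every $(i,j) \in \mathbb{Z}_{\ge 0}^2$ with $i+j \le m-2$ one has
\begin{equation*}
	q_{i+2}^{(i+j+2)} \, \mathfrak{b}_{tt}\big(\bm{q}^{(1)}\big) + \mathfrak{u}_{i,j}\Big(\big(q_{i'}^{(k')}\big)_{i',k'}\Big) = 0,
\end{equation*}
while $\widetilde{\bm{Q}}$ only satisfies this up to the defect $\mathfrak{v}_{i,j}(\widetilde{\bm{q}}^{(1)},\ldots,\widetilde{\bm{q}}^{(i+j+2)})$. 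Since $\widetilde{q}_0^{(1)} = q_0^{(1)}$ and $\widetilde{q}_1^{(1)} = q_1^{(1)}$ we have $\bm{q}^{(1)} = \widetilde{\bm{q}}^{(1)}$, so $\mathfrak{b}_{tt}(\bm{q}^{(1)}) = \mathfrak{b}_{tt}(\widetilde{\bm{q}}^{(1)}) \ge \varepsilon^2 > 0$. Subtracting the two identities and dividing by $\mathfrak{b}_{tt}(\bm{q}^{(1)})$ gives, for every admissible $(i,j)$,
\begin{equation*}
	q_{i+2}^{(i+j+2)} - \widetilde{q}_{i+2}^{(i+j+2)} \;=\; \frac{\mathfrak{u}_{i,j}\big((\widetilde{q}_{i'}^{(k')})\big) - \mathfrak{u}_{i,j}\big((q_{i'}^{(k')})\big) + \mathfrak{v}_{i,j}\big(\widetilde{\bm{q}}^{(1)},\ldots,\widetilde{\bm{q}}^{(i+j+2)}\big)}{\mathfrak{b}_{tt}\big(\bm{q}^{(1)}\big)}.
\end{equation*}

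Next I would invoke the two inputs provided by the hypotheses: (a) by the second part of \Cref{q01j} applied to $\bm{Q}$ together with the uniform assumption $|\widetilde{q}_j^{(k)}| \le B$ on $\widetilde{\bm{Q}}$, all coordinates of $\bm{Q}$ and $\widetilde{\bm{Q}}$ are bounded by a constant depending on $(\varepsilon,B,m)$; and (b) since $\mathfrak{u}_{i,j}$ is a polynomial whose coefficients and degree depend only on $(i,j,m)$, the a priori bound from (a) yields a Lipschitz-type estimate of the form
\begin{equation*}
	\Big|\mathfrak{u}_{i,j}\big((q_{i'}^{(k')})\big) - \mathfrak{u}_{i,j}\big((\widetilde{q}_{i'}^{(k')})\big)\Big| \;\le\; C \sum_{(i',k') \in \mathcal{I}(i,j)} \big|q_{i'}^{(k')} - \widetilde{q}_{i'}^{(k')}\big|,
\end{equation*}
exactly as in \eqref{uijuij}, where $\mathcal{I}(i,j)$ consists of those indices with $i' \le i+1$ or $k' \le i+j+1$. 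Crucially, every element of $\mathcal{I}(i,j)$ is strictly earlier than $(i+2,\,i+j+2)$ in the lexicographic order on $(k',i')$ used in \Cref{q01j}.

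With these ingredients in hand, I would induct on the lexicographic pair $(h,p) := (i+j+2,\,i+2) \in \llbracket 2,m\rrbracket^2$ to prove
\begin{equation*}
	\big|q_p^{(h)} - \widetilde{q}_p^{(h)}\big| \;\le\; C \sum_{h' = 2}^{h} \sum_{i = 0}^{h'-2} \Big|\mathfrak{v}_{i,\,h'-i-2}\big(\widetilde{\bm{q}}^{(1)},\ldots,\widetilde{\bm{q}}^{(h')}\big)\Big|.
\end{equation*}
The base cases $p \in \{0,1\}$ are immediate because $q_0^{(k)} = \widetilde{q}_0^{(k)}$ and $q_1^{(k)} = \widetilde{q}_1^{(k)}$ by hypothesis. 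For the inductive step, the displayed identity expresses $q_p^{(h)} - \widetilde{q}_p^{(h)}$ as a bounded multiple of $|\mathfrak{v}_{p-2,h-p}(\widetilde{\bm{q}})|$ plus a Lipschitz term that involves only indices $(i',k')$ strictly prior in the lexicographic order, to which the inductive hypothesis applies. Finally, I would sum the resulting bound over $p \in \llbracket 0,k\rrbracket$ to obtain the stated estimate for $|\bm{q}^{(k)} - \widetilde{\bm{q}}^{(k)}|$.

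Essentially no step is a real obstacle since every mechanism (solvability of the triangular system, polynomial Lipschitz bounds, a priori boundedness) has already been established in the proofs of \Cref{q01j} and \Cref{continuityextend}; the only minor bookkeeping point is to verify that for each $(i,j)$ the indices in $\mathcal{I}(i,j)$ are indeed strictly earlier in the induction order, which is exactly the observation underlying \Cref{q01j} and is automatic from the decomposition $i' \le i+1$ or $k' \le i+j+1$.
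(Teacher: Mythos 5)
Your proposal is correct and follows essentially the same route as the paper's proof: the same subtraction of the defining relations \eqref{vijuij2}, the same a priori bounds from the second part of \Cref{q01j} together with the hypothesis $|\widetilde{q}_j^{(k)}| \le B$, the same Lipschitz estimate \eqref{uijuij} for the polynomials $\mathfrak{u}_{i,j}$, and the same induction on the lexicographic pair $(i+j+2, i+2)$ with base cases supplied by $q_0^{(k)} = \widetilde{q}_0^{(k)}$, $q_1^{(k)} = \widetilde{q}_1^{(k)}$, followed by summation over the indices at level $k$. The only cosmetic difference is that you exploit $\bm{q}^{(1)} = \widetilde{\bm{q}}^{(1)}$ to cancel the $1/\mathfrak{b}_{tt}$ factors outright, whereas the paper controls them via \eqref{aestimatea}; this changes nothing of substance.
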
 

\begin{proof}

Throughout, we recall the notation from the proof of \Cref{q01j}. Following the beginning of the proof of \Cref{continuityextend}, we deduce the existence of a constant $C_1 = C_1 (\varepsilon, B, m) > 1$ such that \eqref{qsumq} holds (by the assumption of this lemma, as well as the second part of \Cref{q01j}); a constant $C_2 = C_2 (\varepsilon, m) > 1$ such that \eqref{aestimatea} holds (due to \eqref{qsumq}; the definition \eqref{uvb} of $\mathfrak{b}_{tt}$; and the fact that $q_0^{(1)} = \widetilde{q}_0^{(1)} \le -\varepsilon$); and a constant $C_3 = C_3 (\varepsilon, B, m) > 1$ such that \eqref{uijuij} holds (by \eqref{qsumq}; the fact that the $\mathfrak{u}_{i,j}$ are polynomials that only depend on the $q_{i'}^{(k')}$ with either $i' \le i+1$ or $k' \le i+j+1$; and the fact the coefficients and degrees of $\mathfrak{u}_{i,j}$ only depend on $i, j \in \llbracket 1, m \rrbracket$). 

Next, by \eqref{vijuij2}, we have 
\begin{flalign*}
	q_{i+2}^{(i+j+2)} = - \displaystyle\frac{\mathfrak{u}_{i,j} \Big( \big( q_{i'}^{(k')} \big)_{i',k'} \Big)}{\mathfrak{b}_{tt} \big( \bm{q}^{(1)} \big)}; \qquad \widetilde{q}_{i+2}^{(i+j+2)} = \displaystyle\frac{\mathfrak{v}_{i,j} \big( \widetilde{\bm{q}}^{(1)}, \widetilde{\bm{q}}^{(2)}, \ldots , \widetilde{\bm{q}}^{(i+j+2)} \big)}{\mathfrak{b}_{tt} \big( \widetilde{\bm{q}}^{(1)} \big)}- \displaystyle\frac{\mathfrak{u}_{i,j} \Big(\big( \widetilde{q}_{i'}^{(k')} \big)_{i',k'} \Big)}{\mathfrak{b}_{tt} \big( \widetilde{\bm{q}}^{(1)} \big)}.
\end{flalign*}

\noindent This, together with \eqref{aestimatea} and \eqref{uijuij}, yields a constant $C_4 = C_4 (\varepsilon, m) > 1$ so that
\begin{flalign*}
	\big| q_{i+2}^{(i+j+2)} - \widetilde{q}_{i+2}^{(i+j+2)} \big| \le C_4 \Bigg( \displaystyle\sum_{i' = 0}^{i+1} \displaystyle\sum_{k'=1}^{i+j+2}  \big| q_{i'}^{(k')} - \widetilde{q}_{i'}^{(k')} \big|  & + \displaystyle\sum_{i' = 0}^{i+2} \displaystyle\sum_{k' = 1}^{i+j+1} \big| q_{i'}^{(k')} - \widetilde{q}_{i'}^{(k')} \big| \\
	& + \Big| \mathfrak{v}_{i,j} \big( \widetilde{\bm{q}}^{(1)}, \widetilde{\bm{q}}^{(2)}, \ldots , \widetilde{\bm{q}}^{(i+j+2)} \big) \Big| \Bigg).
\end{flalign*}

\noindent Induction on the lexicographic pair $(i+j+2, i+2) \in \llbracket 2, m \rrbracket^2$ (together with the fact that $\big( q_0^{(k)}, q_1^{(k)} \big) = \big( \widetilde{q}_0^{(k)}, \widetilde{q}_1^{(k)} \big)$, for each $k \in \llbracket 1, m \rrbracket$) then gives a constant $C_{i, j} = C_{i, j} (\varepsilon, B, m) > 1$ for any $(i, j) \in \llbracket 0, m \rrbracket^2$ with $i + j + 2 \le m$ such that
\begin{flalign*}
	\big| q_{i+2}^{(i+j+2)} - \widetilde{q}_{i+2}^{(i+j+2)} \big| \le C_{i, j} \displaystyle\sum_{h = 2}^{i+j+2} \displaystyle\sum_{i' = 0}^{i+2} \Big| \mathfrak{v}_{i',h-i'-2} \big( \widetilde{\bm{q}}^{(1)}, \widetilde{\bm{q}}^{(2)}, \ldots , \widetilde{\bm{q}}^{(h)} \big) \Big|.
\end{flalign*}

\noindent Summing over all such $(i, j)$ with $i + j  = k-2$, for any $k \in \llbracket 2, m \rrbracket$ (recall $\bm{q}^{(1)} = \widetilde{\bm{q}}^{(1)}$, addressing the case $k = 1$), we deduce the lemma.
\end{proof}

\subsection{Concentrating Profiles} 

\label{ProfileEstimate} 

In this section we discuss families of (deterministic) height profiles that are limit shapes for systems of non-intersecting Brownian bridges that admit a (nearly) optimal concentration bound. They are given by the below definition.

\begin{definition} 

\label{hhestimate0}
 
Fix real numbers $a < b$ and two functions $f , g : [a, b] \rightarrow \mathbb{R}$ with $f < g$. Denote the strip domain $\mathfrak{S} = \mathfrak{P}_{f; g} \subset \mathbb{R}^2$, and let $\mathfrak{H} : \overline{\mathfrak{S}} \rightarrow \mathbb{R}$ be a function. Given real numbers $C > 1$ and $c, \delta \in \big(0, \frac{1}{2} \big)$, we say that $\mathfrak{H}$ is \emph{$(C; c; \delta)$-concentrating} if it is satisfies the equation \eqref{equationxtb} for each $z \in \mathfrak{S}$, and the following holds for any integer $n \ge 1$. There exists an integer $n' \in [cn, n]$; starting data $\bm{u} \in \overline{\mathbb{W}}_{n'}$; and ending data $\bm{v} \in \overline{\mathbb{W}}_{n'}$ on $\mathfrak{S}$ so that, sampling $n'$ non-intersecting Brownian bridges $\bm{y} =( y_1, y_2, \ldots , y_{n'}) \in \llbracket 1, n' \rrbracket \times \mathcal{C} \big( [a, b] \big)$ from the measure $\mathfrak{Q}_{f; g}^{\bm{u}; \bm{v}} (n^{-1})$ and denoting the associated height function by $\mathsf{H}^{\bm{y}}$ (recall \eqref{htx}), we have 
\begin{flalign*}
	\mathbb{P} \Bigg[ \displaystyle\sup_{z \in \mathfrak{S}} \big| n^{-1} \mathsf{H}^{\bm{y}} (z) - \mathfrak{H} (z) \big| \ge n^{\delta-1} \Bigg] \le C e^{-c(\log n)^2}. 
\end{flalign*} 
	
\end{definition}

We will be interested in height functions that are $(C; c; \delta)$-concentrating with small $\delta > 0$. While it is plausible that essentially any solution of \eqref{equationxtb} on $\mathfrak{R}$ has this property, it is not transparent to us how to show this directly. We instead show the following result stating that, if $\bm{Q} \in \mathbb{R}^2 \times \mathbb{R}^3 \times \cdots \times \mathbb{R}^{m+1}$ is an admissible sequence of $m$ vectors in the sense of \Cref{admissibleconsistent}, then (for small $\delta$) there exists a $(C; c; \delta)$-concentrating height function $\mathfrak{H}$ whose full-$m$ local profile at $z = (0, 0)$ is given by $\bm{Q}$; see \Cref{f:H_illustration} for a depiction. It will be established in \Cref{ConcentrateProfile} below. 

\begin{figure}
\center
\includegraphics[width=0.4\textwidth]{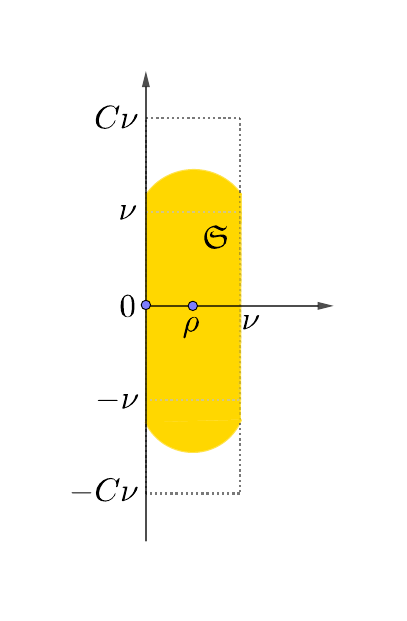}
\caption{Shown above is a depiction of \Cref{hconcentrateq} and \Cref{hconcentrateq2}. }
\label{f:H_illustration}
\end{figure}

\begin{prop} 
	
\label{hconcentrateq} 

For any integer $m \ge 1$ and real numbers $\varepsilon \in (0, 1)$ and $B > 1$, there exist constants $\nu = \nu (\varepsilon, B, m) \in (0, 1)$, $c = c(\varepsilon, B, m) > 0$, and $C = C(\varepsilon, B, m) > 1$ such that the following holds. Fix a sequence of $m$ pairs $\big( q_0^{(k)}, q_1^{(k)} \big) \in (\mathbb{R}^2)^m$ satisfying \eqref{q0q1estimate}; denote its consistent extension by $\bm{Q}$. There is a strip domain $\mathfrak{S} \subset \mathbb{R}^2$ of width $\nu$, such that $[0, \nu] \times [-\nu, \nu] \subseteq \mathfrak{S} \subseteq [0, \nu] \times [-C\nu, C\nu]$, and a function $\mathfrak{H}: \overline{\mathfrak{S}} \rightarrow \mathbb{R}$, such that $\| \mathfrak{H} \|_{\mathcal{C}^{m+1} (\overline{\mathfrak{S}})} \le C$ and $\sup_{z \in \mathfrak{S}} \partial_x \mathfrak{H} (z) \le -C^{-1}$, which satisfies the following two properties.

\begin{enumerate}
	\item The full-$m$ local profile of $\mathfrak{H}$ satisfies  $\lim_{z \rightarrow (0, 0)} \bm{Q}_{\mathfrak{H}}^{(m)} (z) = \bm{Q}$.
	\item For any real number $\delta \in (0, 1)$, there exists a constant $C_1 = C_1 (\varepsilon, \delta, B, m) > 1$ such that $\mathfrak{H}$ is $(C_1; c; \delta)$-concentrating. 
\end{enumerate} 

\end{prop}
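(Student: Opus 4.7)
The plan is to realize $\mathfrak{H}$ as the continuum height function for $\beta=2$ Dyson Brownian motion with carefully tuned initial data, so that the Huang--Landon rigidity estimate \Cref{concentrationequation} directly supplies the near-optimal concentration bound. Concretely, I will find a compactly supported probability measure $\mu \in \mathscr{P}_0$ with smooth density, a time $t_\star > 0$, and a point $x_\star \in \mathbb{R}$, so that setting
\begin{flalign*}
\mathfrak{H}(t,x) = \int_{x+x_\star}^\infty \varrho_{t+t_\star}^{\mu}(w)\,dw, \qquad \text{where } \mu \boxplus \mu_{\semci}^{(s)}(dw) = \varrho_s^{\mu}(w)\,dw,
\end{flalign*}
the full-$m$ local profile of $\mathfrak{H}$ at $(0,0)$ equals $\bm{Q}$. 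That $\mathfrak{H}$ solves the PDE \eqref{equationxtb} on its liquid region is automatic from \Cref{muamubrho}, so $\bm{Q}_{\mathfrak{H}}^{(m)}(0,0)$ is consistent by \Cref{fconsistent}. By \Cref{q01j}, matching the consistent $\bm{Q}$ reduces to matching only the first two coordinates $(q_0^{(k)}, q_1^{(k)})_{k=1}^m$, which by \Cref{hurho} amounts to the $2m$ pointwise conditions
\begin{flalign*}
\partial_x^{k-1} \varrho_{t_\star}^{\mu}(x_\star) = -q_0^{(k)}, \qquad \partial_x^{k-1}\bigl(u_{t_\star}^{\mu}\varrho_{t_\star}^{\mu}\bigr)(x_\star) = q_1^{(k)}, \qquad k \in \llbracket 1, m \rrbracket.
\end{flalign*}

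The construction of $\mu$ realizing these $2m$ prescribed values — uniformly over the admissible range \eqref{q0q1estimate} — is the core technical input, and will be handled by the auxiliary \Cref{p:constructrho}. Granted such $\mu$, the regularity bounds $\|\mathfrak{H}\|_{\mathcal{C}^{m+1}(\overline{\mathfrak{S}})} \le C$ and $\partial_x \mathfrak{H} \le -C^{-1}$ on a small neighborhood of $(t_\star, x_\star)$ follow from \Cref{derivativetm} applied to $\varrho_s^{\mu}$, using that $\varrho_{t_\star}^{\mu}(x_\star) = -q_0^{(1)} \ge \varepsilon$ and that the higher local-profile coordinates are bounded (second part of \Cref{q01j}). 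After a time shift and the scaling invariances from \Cref{linear} and \Cref{invariancesscale}, the matching point is moved to $(0,0)$, and a strip domain $\mathfrak{S}$ of width $\nu$ satisfying $[0,\nu] \times [-\nu,\nu] \subseteq \overline{\mathfrak{S}} \subseteq [0,\nu] \times [-C\nu, C\nu]$ is obtained by taking $\nu = \nu(\varepsilon,B,m)$ sufficiently small.

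For the concentrating property, fix $n$ and set $n' = \lfloor c n \rfloor$ for suitable $c > 0$. Choose $\bm{u}, \bm{v} \in \overline{\mathbb{W}}_{n'}$ at the classical locations (in the sense of \Cref{gammarho}) of the measures corresponding to $\mathfrak{H}(0, \cdot)$ and $\mathfrak{H}(\nu, \cdot)$, and let $f, g$ be determined by the top and bottom path trajectories of $\mathfrak{H}$ (via its inverse height function). Sample $\bm{y}$ from $\mathfrak{Q}_{f;g}^{\bm{u};\bm{v}}(n^{-1})$. By the second part of \Cref{lambdat}, the corresponding unconditioned non-intersecting Brownian motion coincides in law with Dyson Brownian motion, for which the eigenvalues $\bm{\lambda}(s)$ satisfy the rigidity \eqref{lambdatprobability}: $\lambda_j(s)$ lies within $n^{-A}$ of $\gamma_{j;n'}^{\mu_s}$ up to an index shift of $(\log n)^5$, on an event of probability $\ge 1 - Ce^{-(\log n)^2}$. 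Since $\bm{v}$ is chosen consistently with these classical locations, conditioning on the terminal configuration preserves the rigidity on the interior; translating particle positions into the supremum bound $|n^{-1}\mathsf{H}^{\bm{y}} - \mathfrak{H}| \le n^{\delta-1}$, via the monotone correspondence between the height function and the quantile map together with the Lipschitz control on $\mathfrak{H}$, yields the required concentrating property.

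The main obstacle is the construction of $\mu$ (\Cref{p:constructrho}), a nonlinear inverse problem: the $2m$ target Taylor values of $\varrho_{t_\star}^{\mu}$ and $u_{t_\star}^{\mu}\varrho_{t_\star}^{\mu}$ at $x_\star$ are nonlinear functionals of $\mu$ through the free convolution. A natural approach parametrizes $\mu$ by a smooth bump density with $2m$ free coefficients, computes the Jacobian of the map "coefficients of $\mu$" $\mapsto$ "local profile" around a tractable base case (for instance a rescaled semicircle, where explicit computation via \Cref{mtscalebeta} is available), and uses the implicit function theorem plus a continuation/compactness argument to cover the full admissible range in \eqref{q0q1estimate}. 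A secondary but nontrivial technical point is transferring the rigidity \Cref{concentrationequation}, which is stated for unconditioned DBM, to the non-intersecting bridges of \Cref{hhestimate0}: this requires showing that conditioning on a terminal configuration $\bm{v}$ which is itself inside the rigidity window does not destroy the interior estimate, for which the appropriate tool is the existing literature on rigidity of DBM bridges or a direct coupling argument via \Cref{monotoneheight}.
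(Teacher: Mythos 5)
Your high-level strategy is the paper's: realize $\mathfrak{H}$ as the height function of a free convolution $\mu \boxplus \mu_{\semci}^{(t)}$, reduce the matching of $\bm{Q}$ to the $2m$ parameters $\big(q_0^{(k)}, q_1^{(k)}\big)$ via consistency (\Cref{q01j}), get concentration from \Cref{concentrationequation}, and handle the boundaries by \Cref{monotoneheight}. However, there are two genuine gaps. First, you set up the profile-matching at an \emph{interior} time $t_\star>0$, so your target conditions $\partial_x^{k-1}\varrho_{t_\star}^{\mu}(x_\star)=-q_0^{(k)}$ and $\partial_x^{k-1}(u_{t_\star}^{\mu}\varrho_{t_\star}^{\mu})(x_\star)=q_1^{(k)}$ are nonlinear functionals of $\mu$ through the free convolution, and you propose to solve this by an implicit-function-theorem/continuation argument, claiming it is "handled by \Cref{p:constructrho}". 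It is not: \Cref{p:constructrho} solves a different and strictly linear problem, namely prescribing the \emph{real} parts of $\partial_z^k m_{\mu}(0)$ (equivalently, the Taylor coefficients of the Hilbert transform $H\varrho_0$ at $0$) while leaving the density unchanged near $0$, by adding far-away indicator bumps and inverting an explicit Vandermonde-type system. The paper exploits this by matching at time $t=0$: the $q_0^{(k)}$ directly prescribe the Taylor coefficients of $\varrho_0$ at $0$, and because $\partial_t\varrho_t = \pi\,\partial_x(\varrho_t H\varrho_t)$ (see \eqref{trhoty}), the $q_1^{(k)}$ translate, through the triangular linear system \eqref{rj} (solvable since $q_0^{(1)}\le-\varepsilon$), into prescribed Taylor coefficients of $H\varrho_0$ at $0$ — exactly what \Cref{p:constructrho} provides. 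Your interior-time formulation would require proving invertibility of the Jacobian of the map from $\mu$ to the local profile of $(\varrho_{t_\star}, u_{t_\star}\varrho_{t_\star})$ at $x_\star$, with constants uniform over the range \eqref{q0q1estimate}; nothing in your sketch or in \Cref{p:constructrho} supplies this, and the "continuation/compactness" step would in any case not obviously yield constants depending only on $(\varepsilon,B,m)$ as the statement demands.

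Second, the concentrating property: you assert that "conditioning on the terminal configuration preserves the rigidity on the interior" and defer the upper/lower boundary constraint to "the existing literature on rigidity of DBM bridges or a direct coupling argument via \Cref{monotoneheight}", but this deferred step is exactly the content that must be proved — \Cref{concentrationequation} is for unconditioned Dyson Brownian motion, while the ensemble in \Cref{hhestimate0} is conditioned both on its endpoints and on staying between the deterministic level curves $f,g$ of the limit shape. The paper's mechanism is concrete: run two full $n$-particle DBMs $\bm{y}^{\pm}$ whose initial data are the quantiles of $G^{\star}(0,\cdot)$ shifted by $\mp n^{\delta/2-1}$ in the index variable; on the rigidity event, the curves $f,g$ and the endpoint data $\bm{u},\bm{v}$ are sandwiched between the appropriate paths of $\bm{y}^-$ and $\bm{y}^+$; by the Gibbs property (second part of \Cref{lambdat}), the middle block of each $\bm{y}^{\pm}$, conditioned on the remaining paths, is itself a non-intersecting bridge ensemble with boundaries given by the adjacent paths; then \Cref{monotoneheight} couples $\bm{y}$ between these two blocks, and rigidity of $\bm{y}^{\pm}$ transfers to $\bm{y}$. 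Without spelling out this (or an equivalent) sandwiching, your proof of the $(C_1;c;\delta)$-concentrating property is incomplete.
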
 

Observe that \Cref{hconcentrateq} fixes the local profile of $\mathfrak{H}$ at the point $(0, 0)$ on the boundary of $\partial \mathfrak{R}$ . However, it will be useful to (at least approximately) fix the local profile of $\mathfrak{H}$ at some point in the interior of $\mathfrak{R}$, to which end we have the following lemma and corollary; see \Cref{f:H_illustration} for a depiction of the latter.

\begin{lem} 
	
	\label{qrestimate} 
	
For any integer $m \ge 1$ and real numbers $\varepsilon \in (0, 1)$, $\nu \in (0, 1)$, and $B > 1$, there exist constants $c = c(\varepsilon, B, m) \in \big( 0, \frac{\nu}{2} \big)$ and $C = C(\varepsilon, B, m) > 1$ such that the following holds. Fix a consistent sequence $\bm{Q} = \big( \bm{q}^{(k)} \big) \in \mathbb{R}^2 \times \mathbb{R}^3 \times \cdots \times \mathbb{R}^{m+1}$; denoting $\bm{q}^{(k)} = \big( q_0^{(k)}, q_1^{(k)}, \ldots , q_k^{(k)} \big) \in \mathbb{R}^{k+1}$ for each $k \in \llbracket 1, m \rrbracket$, assume that \eqref{q0q1estimate} holds. Fix a real number $\rho \in (0, c)$ and set 
\begin{flalign}
	\label{r0r1}                                                                                                                           
	r_0^{(k)} = \displaystyle\sum_{i = 0}^{m-k} \displaystyle\frac{(-\rho)^i}{i!} q_i^{(k+i)}, \quad \text{and} \quad r_1^{(k)} = \displaystyle\sum_{i = 0}^{m-k} \displaystyle\frac{(-\rho)^i}{i!} q_{i+1}^{(k+i)}, \qquad \text{for each $k \in \llbracket 1, m \rrbracket$}.
\end{flalign}                                                                                                                           
                                                                                                                                          
\noindent Let $\bm{R} \in \mathbb{R}^2 \times \mathbb{R}^3 \times \cdots \times \mathbb{R}^{m+1}$ denote the consistent extension of the sequence of $m$ pairs $\big( r_0^{(k)}, r_1^{(k)} \big) \in (\mathbb{R}^2)^m$. If there exists a function $\mathfrak{H} : [0, \nu] \times [-\nu, \nu] \rightarrow \mathbb{R}$ satisfying \eqref{equationxtb} on $[0, \nu] \times [-\nu, \nu]$, with
\begin{flalign}                                                                                                                           
	\label{hqhestimate0}                                                                                                                     
	\| \mathfrak{H} \|_{\mathcal{C}^{m+1}} \le B, \quad \text{and} \quad \displaystyle\lim_{z \rightarrow (0, 0)} \bm{Q}_{\mathfrak{H}}^{(m)} (z) = \bm{R},
\end{flalign}                                                                                                                             
                                                                                                                                          
\noindent then $\big| \bm{q}_{\mathfrak{H}}^{(k)} (\rho, 0) - \bm{q}^{(k)} \big| \le C \rho^{m-k+1}$ for each $k \in \llbracket 1, m \rrbracket$, where $\bm{q}_{\mathfrak{H}}^{(k)}$ is as defined in \eqref{kqf}.
                                                                                                                                          
\end{lem}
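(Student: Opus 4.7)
The plan is to set $\widetilde{\bm{q}}^{(k)} := \bm{q}_{\mathfrak{H}}^{(k)} (\rho, 0)$, to verify that $\widetilde{\bm{Q}} = \big( \widetilde{\bm{q}}^{(k)} \big)$ obeys the hypotheses of \Cref{continuityextend}, and then to reduce the full statement to bounding only the first two coordinates of $\bm{q}^{(k)} - \widetilde{\bm{q}}^{(k)}$. By \Cref{fconsistent} applied to $\mathfrak{H}$ (which is $\mathcal{C}^{m+1}$ and satisfies \eqref{equationxtb}), the sequence $\widetilde{\bm{Q}}$ is consistent; $\big| \widetilde{q}_j^{(k)} \big| \le B$ follows from $\| \mathfrak{H} \|_{\mathcal{C}^{m+1}} \le B$; and Taylor's theorem gives $\widetilde{q}_0^{(1)} = \partial_x \mathfrak{H}(\rho, 0) = r_0^{(1)} + \mathcal{O}(\rho) = q_0^{(1)} + \mathcal{O}(\rho)$, which is $\le -\varepsilon/2$ once $c$ is chosen sufficiently small. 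Thus \Cref{continuityextend} applies to both $\bm{Q}$ and $\widetilde{\bm{Q}}$ (with $\varepsilon$ replaced by $\varepsilon/2$). Writing $\Delta_j^{(k)} := q_j^{(k)} - \widetilde{q}_j^{(k)}$ and $\eta_k := \big| \Delta_0^{(k)} \big| + \big| \Delta_1^{(k)} \big|$, it therefore suffices to prove $\eta_k \le C \rho^{m-k+1}$ for each $k \in \llbracket 1, m \rrbracket$, since the $k'=k$ term then dominates the sum $\sum_{k' \le k} \eta_{k'}$ for $\rho$ small.

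To relate $\widetilde{\bm{q}}^{(k)}$ to $\bm{q}^{(k)}$, I would apply Taylor's theorem with Lagrange remainder to $\partial_t^j \partial_x^{k-j} \mathfrak{H}$ as a function of $t$ at $t = \rho$, evaluated at $t = 0$, for each $j \in \{0, 1\}$ and each $k \in \llbracket 1, m \rrbracket$; the remainder is bounded by $CB \rho^{m-k+1}$ since $\| \mathfrak{H} \|_{\mathcal{C}^{m+1}} \le B$. The left side equals $\partial_t^j \partial_x^{k-j} \mathfrak{H}(0, 0) = R_j^{(k)}$ by the hypothesis $\bm{Q}_{\mathfrak{H}}^{(m)}(0, 0) = \bm{R}$, and by the construction of $\bm{R}$ as the consistent extension of $\big( r_0^{(k)}, r_1^{(k)} \big)$ this equals $r_j^{(k)}$ for $j \in \{0, 1\}$. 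Substituting the formulas \eqref{r0r1} for $r_0^{(k)}$ and $r_1^{(k)}$ and rearranging yields the scalar identities
\begin{flalign*}
\sum_{i=0}^{m-k} \frac{(-\rho)^i}{i!} \Delta_i^{(k+i)} = E_{0, k}, \qquad \sum_{i=0}^{m-k} \frac{(-\rho)^i}{i!} \Delta_{i+1}^{(k+i)} = E_{1, k},
\end{flalign*}
with $\big| E_{0, k} \big|, \big| E_{1, k} \big| \le CB \rho^{m-k+1}$.

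The main obstacle is that these identities couple $\Delta_0^{(k)}$ and $\Delta_1^{(k)}$ to higher-index entries $\Delta_i^{(k+i)}$ for $i \ge 2$, which are not part of any $\eta_{k'}$; I would control these entries via \Cref{continuityextend} as $\big| \Delta_i^{(k+i)} \big| \le C \sum_{k'=1}^{k+i} \eta_{k'}$. Isolating $\Delta_0^{(k)}$ and $\Delta_1^{(k)}$ in the identities above and inserting this bound produces a recurrence roughly of the form
\begin{flalign*}
\eta_k \le C_1 \rho^{m-k+1} + C_2 \rho \cdot \eta_{k+1} + C_3 \rho^2 \sum_{k'=1}^m \eta_{k'},
\end{flalign*}
whose critical feature is the $\rho^2$ prefactor on the full sum (because only terms with $i \ge 2$ in \Cref{continuityextend} involve $\eta_{k'}$ for $k' \le k$). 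I would first sum this inequality over $k$ and absorb the small-coefficient sum onto the left, obtaining the crude uniform bound $\eta_k \le C' \rho$; I would then bootstrap, plugging the current bound back into the right side. In each pass, decomposing the sum $\sum_{k' \le k+i} \eta_{k'}$ according to whether $k + i = m$ or $k + i < m$ shows that the bound at level $k = m - \ell$ improves from $\rho^{\min(\ell, \text{current})}$ to $\rho^{\ell+1}$; after $m$ iterations the exponent stabilizes at $m - k + 1$ at every level, and applying \Cref{continuityextend} one last time completes the proof.
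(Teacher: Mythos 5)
Your route is viable but genuinely different from the paper's. The paper never works with a recurrence: it introduces the exact Taylor-shifted sequence $\widetilde{r}_i^{(k)} = \sum_{i'} \frac{(-\rho)^{i'}}{i'!} q_{i+i'}^{(k+i')}$, observes via the polynomial identity $A_0(t-\rho,x) = A_1(t,x)$ that the $q_i^{(k)}$ are recovered from the $\widetilde{r}$'s, shows that $\widetilde{\bm{R}}$ is \emph{almost consistent} (by Taylor-expanding the operator in \eqref{equationxtb} applied to the shifted Taylor polynomial, using consistency of $\bm{Q}$ at $(\rho,0)$), and then invokes \Cref{qestimateq} to conclude $|r_i^{(k)} - \widetilde{r}_i^{(k)}| \le C\rho^{m-k+1}$; a single Taylor expansion of $\mathfrak{H}$ from $(0,0)$ then transfers this to $\bm{q}_{\mathfrak{H}}^{(k)}(\rho,0)$. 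You instead exploit consistency of $\bm{q}_{\mathfrak{H}}^{(m)}(\rho,0)$ itself (via \Cref{fconsistent}), Taylor-expand back from $(\rho,0)$ to $(0,0)$, and close a triangular system in the differences $\Delta_i^{(k)}$ by bootstrapping with \Cref{continuityextend}; you never need \Cref{qestimateq} or the polynomial-shift computation. The paper's version isolates the quantitative "almost-consistent $\Rightarrow$ near the consistent extension" statement in a reusable lemma and avoids iteration; yours is more self-contained but puts all the weight on the recurrence.

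Two points in your write-up need correction, though neither is fatal if you execute the bootstrap on the un-simplified identities. First, the parenthetical claim that only terms with $i \ge 2$ bring in $\eta_{k'}$ for $k' \le k$ is false: the $i=1$ term of the second identity is $\rho\,\Delta_2^{(k+1)}$, and bounding $\Delta_2^{(k+1)}$ requires \Cref{continuityextend}, hence $C\rho \sum_{k' \le k+1} \eta_{k'}$, which includes $\eta_{k'}$ with $k' \le k$ at order $\rho$, not $\rho^2$ (this is harmless for the iteration, since $\rho \cdot \rho^{\min(m-k'+1,\ell)} \ge \rho^{\min(m-k+1,\ell+1)}$ for $k' \le k$). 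Second, and more seriously, the displayed "rough" recurrence with the term $C_3 \rho^2 \sum_{k'=1}^m \eta_{k'}$ is too lossy to prove the lemma for $m \ge 4$: since the best possible bound at $k' = m$ is $\eta_m \le C\rho$, that term never drops below order $\rho^3$, so iterating the displayed inequality stalls at exponent $3$. The argument closes only if you keep the correct weights, namely $\eta_k \le C\rho^{m-k+1} + C\sum_{i=1}^{m-k} \rho^i \sum_{k' \le k+i} \eta_{k'}$, where the coupling to the poorly-controlled high indices $k' = k+i$ is compensated by the factor $\rho^i$ (so that $\rho^i \cdot \rho^{m-k-i+1} = \rho^{m-k+1}$); your final sentence, which decomposes $\sum_{k' \le k+i} \eta_{k'}$ according to $k+i = m$ or $k+i < m$, in effect uses this correct form, and with it the crude bound $\eta_k \le C\rho$ followed by $m$ passes gaining one power each does give $\eta_k \le C\rho^{m-k+1}$ and hence, via \Cref{continuityextend}, the full estimate. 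State the recurrence in that form and the proof is complete.
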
                                                                                                                                 
                                                                                                                                          
\begin{proof}                                                                                                                             
	                                                                                                                                         
	Throughout this proof, we set $q_0^{(0)} = 0$. First observe that, since \eqref{q0q1estimate} holds, \Cref{q01j} yields a constant $C_1 = C_1 (\varepsilon, B, m) > 1$ such that
	\begin{flalign}                                                                                                                          
		\label{qjkc}                                                                                                                            
		\displaystyle\max_{k \in \llbracket 0, m \rrbracket} \displaystyle\max_{j \in \llbracket 0, k \rrbracket} \big| q_j^{(k)} \big| \le C_1.                                     
	\end{flalign}                                                                                                                            
                                                                                                                                          
	\noindent This, together with the fact that $q_0^{(1)} \le -\varepsilon$ (by \eqref{q0q1estimate}) implies (by the definition \eqref{r0r1} of $r_0^{(k)}$ and $r_1^{(k)}$) that there exist constants $c_1 = c_1(\varepsilon, B, m) > 0$ and $C_2 = C_2 (\varepsilon, B, m) > 1$ such that for $\rho \in (0, c_1)$ we have $r_0^{(1)} \le -\frac{\varepsilon}{2}$ and $\big| r_i^{(k)} \big| \le C_2$, for each $i \in \{ 0, 1 \}$ and $k \in \llbracket 1, m \rrbracket$. Hence, the second part of \Cref{q01j} yields a constant $C_3 = C_3 (\varepsilon, B, m) > 1$ such that 
	\begin{flalign}
		\label{rjkc} 
		r_0^{(1)} \le -\frac{\varepsilon}{2}; \qquad \displaystyle\max_{k \in \llbracket 0, m \rrbracket} \displaystyle\max_{j \in \llbracket 0, k \rrbracket} \big| r_j^{(k)} \big| \le C_3.
	\end{flalign} 

	Next, define the (not necessarily consistent) sequence of $m+1$ vectors $\widetilde{\bm{R}} = \big( \widetilde{\bm{r}}^{(k)} \big)_{k \in \llbracket 0, m \rrbracket} \in \mathbb{R} \times \mathbb{R}^2 \times \cdots \times \mathbb{R}^{m+1}$, where $\widetilde{\bm{r}}^{(k)} = \big( \widetilde{r}_0^{(k)}, \widetilde{r}_1^{(k)}, \ldots , \widetilde{r}_k^{(k)} \big) \in \mathbb{R}^{k+1}$ for each $k \in \llbracket 0, m \rrbracket$, by 
	\begin{flalign}
		\label{rki} 
		\widetilde{r}_i^{(k)} = \displaystyle\sum_{i' = 0}^{m-k} \displaystyle\frac{(-\rho)^{i'}}{i'!} q_{i+i'}^{(k+i')},
	\end{flalign}
	
	\noindent so in particular 
	\begin{flalign} 
		\label{r0r10} 
		\widetilde{r}_0^{(k)} = r_0^{(k)};  \qquad \widetilde{r}_1^{(k)} = r_1^{(k)}.
	\end{flalign} 

	\noindent Let us express the $q_i^{(k)}$ through the $\widetilde{r}_i^{(k)}$. To that end, define the polynomials $A_0, A_1 : \mathbb{R}^2 \rightarrow \mathbb{R}$ by
	\begin{flalign}
		\label{a0a1} 
		A_0 (t, x) = \displaystyle\sum_{k = 1}^m \displaystyle\sum_{j=0}^k \displaystyle\frac{q_j^{(k)}}{j! (k-j)!} t^j x^{k-j}; \qquad A_1 (t, x) = \displaystyle\sum_{k=0}^m \displaystyle\sum_{j=0}^k \displaystyle\frac{\widetilde{r}_j^{(k)}}{j! (k-j)!} t^j x^{k-j}.
	\end{flalign}

	\noindent so that 
	\begin{flalign}
		\label{a0a13} 
		\begin{aligned}
		A_0 (t - \rho, x) & = \displaystyle\sum_{k = 1}^m \displaystyle\sum_{j=0}^k \displaystyle\sum_{i = 0}^{j} \displaystyle\frac{(-\rho)^{j-i}}{(k-j)! i! (j-i)!} q_j^{(k)} t^i x^{k-j} \\
		& = \displaystyle\sum_{i=0}^m \displaystyle\sum_{j'=0}^{m-i} \displaystyle\frac{t^i x^{j'}}{i! j'!} \displaystyle\sum_{k'=0}^{m-i-j'} \displaystyle\frac{(-\rho)^{k'}}{k'!} q_{i+k'}^{(i+j'+k')} = \displaystyle\sum_{i=0}^m \displaystyle\sum_{j'=0}^{m-i} \displaystyle\frac{\widetilde{r}_i^{(i+j')}}{i! j'!} t^i x^{j'} = A_1 (t, x),
		\end{aligned} 
	\end{flalign}

	\noindent where in the second equality we changed variables $(j', k') = (k - j, j-i)$ (and used the fact that $q_0^{(0)} = 0$), in the third we used the definition \eqref{rki} of $\widetilde{r}_i^{(i+j)}$, and in the fourth we again changed variables $(i, j') = (j, k-j)$. It follows (again since $q_0^{(0)} = 0$) that 
	\begin{flalign*}
		\displaystyle\sum_{k=0}^m \displaystyle\sum_{j = 0}^k \displaystyle\frac{t^j x^{k-j}}{j! (k-j)!} q_j^{(k)} = A_0 (t, x) = A_1 (t + \rho, x) & = \displaystyle\sum_{k=0}^m \displaystyle\sum_{j=0}^k \displaystyle\sum_{i=0}^j \displaystyle\frac{\rho^{j-i}}{(k-j)! i! (j-i)!} \widetilde{r}_j^{(k)} t^i x^{k-j} \\
		& = \displaystyle\sum_{i=0}^m \displaystyle\sum_{j'=0}^{m-i} \displaystyle\frac{t^i x^{j'}}{i! j'!} \displaystyle\sum_{k' = 0}^{m-i-j'} \displaystyle\frac{\rho^{k'}}{k'!} \widetilde{r}_{i+k'}^{(i+j'+k')} \\
		& = \displaystyle\sum_{k=0}^m \displaystyle\sum_{j'=0}^k \displaystyle\frac{t^j x^{k-j}}{j! (k-j)!} \displaystyle\sum_{i'=0}^{m-k} \displaystyle\frac{\rho^{i'}}{i'!} \widetilde{r}_{j+i'}^{(k+i')},
	\end{flalign*}

	\noindent where in the fourth equality we again changed variables $(j', k') = (k-j, j-i)$, and in the fifth we changed variables $(i,j',k') = (j, k-j, i')$. Comparing coefficients of $t^j x^{k-j}$, we deduce
	\begin{flalign}
		\label{qkir} 
		q_i^{(k)} = \displaystyle\sum_{i' = 0}^{m-k} \displaystyle\frac{\rho^{i'}}{i'!} \widetilde{r}_{i+i'}^{(k+i')}, \qquad \text{for each $k \in \llbracket 0, m \rrbracket$ and $i \in \llbracket 0, k \rrbracket$}.
	\end{flalign}

	Next, set $\bm{R} = \big( \bm{r}^{(k)} \big)$, where $\bm{r}^{(k)} = \big( r_0^{(k)}, r_1^{(k)}, \ldots , r_k^{(k)} \big) \in \mathbb{R}^{k+1}$ for each $k \in \llbracket 1, m \rrbracket$. Due to \eqref{hqhestimate0} and a Taylor expansion, we have
	\begin{flalign*}
		\Bigg| \partial_t^i \partial_x^j \mathfrak{H} (\rho, 0) - \displaystyle\sum_{i'=0}^{m-i-j} \displaystyle\frac{\rho^{i'}}{i'!} r_{i+i'}^{(i+ i' + j)} \Bigg| & = \Bigg| \partial_t^i \partial_x^j \mathfrak{H} (\rho, 0) - \displaystyle\sum_{i'=0}^{m-i-j} \displaystyle\frac{\rho^{i'}}{i'!} \partial_t^{i+i'} \partial_x^j \mathfrak{H} (0, 0) \Bigg| \le B \rho^{m-i-j+1}.
	\end{flalign*} 

	\noindent Hence, by \eqref{qkir}, it suffices to show  
	\begin{flalign}
		\label{rrestimate} 
		\big| r_i^{(k)} - \widetilde{r}_i^{(k)} \big| \le C \rho^{m-k+1}, \qquad \text{for any $k \in \llbracket 1, m \rrbracket$ and $i \in \llbracket 0, k \rrbracket$}.
	\end{flalign}

	To that end, define $D: \mathbb{R}^2 \rightarrow \mathbb{R}$ by, for any $(t, x) \in \mathbb{R}^2$, setting
	\begin{flalign*}
		D(t, x) =  \displaystyle\sum_{j,k \in \{ t, x \}} \mathfrak{b}_{jk} \big( \nabla A_1 (t,x) \big) \partial_j \partial_j A_1 (t, x) = \displaystyle\sum_{j,k \in \{ t, x\}} \mathfrak{b}_{j,k} \big( \nabla A_0 (t-\rho, x) \big) \partial_j \partial_k A_0 (t-\rho, x),
	\end{flalign*}

	\noindent were in the last equality we used \eqref{a0a13}. We will use a Taylor expansion to view $D$ as an approximation of the left side of \eqref{equationxtb}, where the $H$ there is $A$ here. For each integer $k \in \llbracket 0, m \rrbracket$, set $\bm{Q}^{(k)} = \big( \bm{q}^{(1)}, \bm{q}^{(1)}, \ldots , \bm{q}^{(k)} \big) \in \mathbb{R}^2 \times \mathbb{R}^3 \times \cdots \times \mathbb{R}^{k+1}$ and $\widetilde{\bm{R}}^{(k)} = \big( \widetilde{\bm{r}}^{(1)}, \widetilde{\bm{r}}^{(2)}, \ldots , \widetilde{\bm{r}}^{(k)} \big) \in \mathbb{R}^2 \times \mathbb{R}^3 \times \cdots \times \mathbb{R}^{k+1}$.  Under this notation, we have from \Cref{admissibleconsistent} (and the fact from \eqref{a0a1} that $\bm{Q}_{A_0}^{(k)} (0, 0) = \bm{Q}^{(k)}$ and $\bm{Q}_{A_1}^{(k)} (0, 0) = \widetilde{\bm{R}}^{(k)}$ for each $k \in \llbracket 1, m \rrbracket$), that
	\begin{flalign*}
		\partial_t^i \partial_x^j D(0,0) = \mathfrak{v}_{i,j} \big( \widetilde{\bm{R}}^{(i+j+2)} \big); \qquad \partial_t^i \partial_x^j D(\rho, 0) = \mathfrak{v}_{i,j} \big( \bm{Q}^{(i+j+2)} \big), 
	\end{flalign*} 

 	\noindent for each $(i, j) \in \mathbb{R}_{\ge 0}^2$ with $i + j \le m -2$. Since $\bm{Q}$ is consistent, we have from \eqref{vijequation} and \eqref{qjkc} (and the fact that all derivatives of $D$ of order at most $2m$ are uniformly bounded on compact sets) that there exists a constant $C_4 = C_4 (\varepsilon, B, m) > 1$ with 
	\begin{flalign*}
		\partial_t^i \partial_x^j D(\rho,0) = 0, \quad \text{for any $(i, j) \in \mathbb{Z}_{\ge 0}^2$ with $i + j \le m-2$}; \qquad \displaystyle\max_{i, j \in \llbracket 0, m \rrbracket} \displaystyle\sup_{|z| \le 1} \big| \partial_t^i \partial_x^j D(z) \big| \le C_4.
	\end{flalign*}
	
	\noindent Hence, it follows from a Taylor expansion that 
	\begin{flalign*}
		\Big| \mathfrak{v}_{i,j} \big( \widetilde{\bm{R}}^{(i+j+2)} \big) \Big|  = \big| \partial_t^i \partial_x^j D(0, 0) \big| \le C_4 \rho^{m-i-j-1}.
	\end{flalign*}
	
	\noindent This, with \Cref{qestimateq}, \eqref{rjkc}, \eqref{qjkc}, \eqref{rki}, and \eqref{r0r10}, implies \eqref{rrestimate} and thus the lemma.
\end{proof}

\begin{cor}
	
	\label{hconcentrateq2}
	
	Adopting the notation and assumptions of \Cref{hconcentrateq}, there exist constants $\nu = \nu (\varepsilon, B, m) \in (0, 1)$, $c = c(\varepsilon, B, m) > 0$, and $C = C(\varepsilon, B, m) > 1$; a strip domain $\mathfrak{S} \subset \mathbb{R}^2$; and a function $\mathfrak{H} : \overline{\mathfrak{S}} \rightarrow \mathbb{R}$, such that all statements but the first enumerated one in \Cref{hconcentrateq} hold. Moreover, the following variant of that first statement holds, for any real number $\rho \in (0, c)$.
	\begin{enumerate}
		\item[1.] Denoting $\bm{Q}_{\mathfrak{H}}^{(m)} (\rho, 0) = \big( \bm{q}_{\mathfrak{H}}^{(k)} (\rho, 0) \big)_{k \in \llbracket 1, m \rrbracket}$, we have $\big| \bm{q}_{\mathfrak{H}}^{(k)} (\rho, 0) - \bm{q}^{(k)} \big| \le C \rho^{m-k+1}$ for all $k \in \llbracket 1, m \rrbracket$.
\end{enumerate}
\end{cor}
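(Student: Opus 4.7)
The plan is to reduce \Cref{hconcentrateq2} directly to the composition of \Cref{hconcentrateq} with \Cref{qrestimate}. Given a consistent sequence $\bm{Q} = (\bm{q}^{(k)})$ satisfying the bounds \eqref{q0q1estimate}, the idea is to construct the desired concentrating function not from $\bm{Q}$ itself, but from the ``shifted'' sequence of initial pairs $(r_0^{(k)}, r_1^{(k)})$ defined exactly by the formula \eqref{r0r1}. The point is that \eqref{r0r1} is designed precisely so that the consistent extension $\bm{R}$ of $(r_0^{(k)}, r_1^{(k)})$ corresponds to the local profile at $(0,0)$ of a smooth solution to \eqref{equationxtb} whose local profile at the interior point $(\rho, 0)$ agrees with $\bm{Q}$ up to an $O(\rho^{m-k+1})$ error, as quantified by \Cref{qrestimate}.

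I would begin by verifying that $(r_0^{(k)}, r_1^{(k)})$ satisfies a variant of \eqref{q0q1estimate}. Since $\bm{Q}$ is consistent with $q_0^{(1)} \le -\varepsilon$ and bounded first two entries, \Cref{q01j} provides a constant $C_1 = C_1(\varepsilon, B, m)$ with $|q_j^{(k)}| \le C_1$ for all $k \in \llbracket 1, m \rrbracket$ and $j \in \llbracket 0, k \rrbracket$. Plugging this into \eqref{r0r1} and taking $\rho$ below a threshold $c_0 = c_0(\varepsilon, B, m)$ gives $r_0^{(1)} \le -\varepsilon/2$ and $|r_0^{(k)}| + |r_1^{(k)}| \le B'$ for some $B' = B'(\varepsilon, B, m)$.

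Next, I would invoke \Cref{hconcentrateq} with parameters $(\varepsilon/2, B', m)$ applied to the pairs $(r_0^{(k)}, r_1^{(k)})$. This yields constants $\nu, c, C$ (depending only on $\varepsilon, B, m$); a strip domain $\mathfrak{S}$ with $[0, \nu] \times [-\nu, \nu] \subseteq \mathfrak{S} \subseteq [0,\nu] \times [-C\nu, C\nu]$; and a function $\mathfrak{H} \in \mathcal{C}^{m+1}(\overline{\mathfrak{S}})$ with $\|\mathfrak{H}\|_{\mathcal{C}^{m+1}} \le C$, $\sup_{\mathfrak{S}} \partial_x \mathfrak{H} \le -C^{-1}$, $\lim_{z \to (0,0)} \bm{Q}_{\mathfrak{H}}^{(m)}(z) = \bm{R}$, and the $(C_1; c; \delta)$-concentration property. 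All the conclusions of \Cref{hconcentrateq2} besides its modified statement (1) are immediate.

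Finally, to establish the modified statement (1), I would apply \Cref{qrestimate} (with its $B$ equal to the $C$ just produced) to the function $\mathfrak{H}$ restricted to $[0, \nu] \times [-\nu, \nu]$. The hypotheses \eqref{hqhestimate0} hold: the $\mathcal{C}^{m+1}$ bound is part of the output of \Cref{hconcentrateq}, the equation \eqref{equationxtb} is satisfied because $\mathfrak{H}$ is concentrating (hence a solution by \Cref{hhestimate0}), and the boundary local profile is $\bm{R}$ by construction. \Cref{qrestimate} then directly yields $|\bm{q}_{\mathfrak{H}}^{(k)}(\rho, 0) - \bm{q}^{(k)}| \le C \rho^{m-k+1}$ for all $k \in \llbracket 1, m \rrbracket$, after possibly shrinking $c$ so that $\rho \in (0,c)$ lies below the threshold required by both lemmas. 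There is no real obstacle here beyond bookkeeping: the conceptual work has all been done in \Cref{hconcentrateq} (existence of the concentrating function with prescribed boundary profile) and \Cref{qrestimate} (the Taylor-expansion/consistency calculation transferring local data from $(0,0)$ to $(\rho, 0)$), and what remains is to chain them together with compatible constants.
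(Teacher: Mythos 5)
Your proposal is correct and follows essentially the same route as the paper: define $(r_0^{(k)}, r_1^{(k)})$ by \eqref{r0r1}, check (via \Cref{q01j}, as in \eqref{rjkc}) that they satisfy the admissibility and boundedness hypotheses for small $\rho$, apply \Cref{hconcentrateq} to their consistent extension $\bm{R}$, and then transfer the profile from $(0,0)$ to $(\rho,0)$ with \Cref{qrestimate}. No substantive differences from the paper's argument.
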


\begin{proof}
	
	Define the $m$ real pairs $\big( r_0^{(k)}, r_1^{(k)} \big) \in (\mathbb{R}^2)^m$ from $\bm{Q}$ through \eqref{r0r1}, and let $\bm{R}$ denote its consistent extension. By \Cref{hconcentrateq} (applied with the $\bm{Q}$ there equal to $\bm{R}$ here) and \eqref{rjkc}, there exist constants $\nu = \nu (\varepsilon, B, m) \in (0, 1)$, $c = c(\varepsilon, B, m) > 1$, and $C_1 = C_1 (\varepsilon, B, m) > 1$; a strip domain $\mathfrak{S} \subset \mathbb{R}^2$ of width $\nu$, such that $[0, \nu] \times [-\nu, \nu] \subseteq \mathfrak{S} \subseteq [0, \nu] \times [-C_1 \nu, C_1 \nu]$; and a function $\mathfrak{H}: \overline{\mathfrak{S}} \rightarrow \mathbb{R}$ with $\| \mathfrak{H} \|_{\mathcal{C}^{m+1} (\overline{\mathfrak{S}})} \le C_1$ and $\inf_{z \in \mathfrak{S}} \partial_x \mathfrak{H} (z) \le -C_1^{-1}$, satisfying $\lim_{z \rightarrow (0, 0)} \bm{Q}_{\mathfrak{H}}^{(m)} (z) = \bm{R}$ and the following property. For any real numbers $\delta \in (0, 1)$, there exists a constant $C_2 = C_2 (\varepsilon, \delta, B, m) > 1$ such that $\mathfrak{H}$ is $(C_2; c; \delta)$-concentrating. This $\mathfrak{H}$ satisfies the last condition of the corollary, as \Cref{qrestimate} yields a constant $C_3 = C_3 (\varepsilon, B, m) > 1$ such that $\big| \bm{q}_{\mathfrak{H}}^{(k)} (\rho, 0) - \bm{q}^{(k)} \big| \le C_3 \rho^{m-k+1}$, for each $k \in \llbracket 1, m \rrbracket$ and sufficiently small $\rho$. 
\end{proof}

\section{Proof of Concentration Bounds} 

\label{Proof1} 

In this section we establish \Cref{gh1}.

\subsection{Local Approximations of $\mathfrak{H}$} 

\label{HLocal} 

In this section we will establish various statements needed to show \Cref{gh1} in \Cref{ProofConcentration0} below; we only establish the bound in that statement on $\Omega_{j; k}^+ (z)$, as the proof of the bound on $\Omega_{j;k}^- (z)$ is entirely analogous. Throughout, we recall the notation from \Cref{ProofHeighteta}; fix $(j, k) \in \llbracket 0, J \rrbracket \times [0, K]$ and a point $z \in \mathfrak{P}_{f; g}$; and set (recall \eqref{functionsjkz})
\begin{flalign*}
	\varphi = \varphi_{j; k-1}^+; \qquad \Omega = \bigcup_{w \in \mathfrak{P}} \Omega_{j; k-1}^+; \qquad \kappa = 1 - (k-1) \rho^2 n^{-\delta}; \qquad \mathfrak{P} = \mathfrak{P}_{f; g}.
\end{flalign*} 

\noindent Throughout this section, we further restrict to the event $\Omega^{\complement}$.  

Let us now briefly outline how we will establish \Cref{gh1}. We will proceed by locally comparing the Brownian bridges $\bm{x}$ in a neighborhood of $z$ with a family $\bm{y}$ of non-intersecting Brownian bridges, whose associated normalized height function $n^{-1} \mathsf{H}^{\bm{y}}$ is known to concentrate around its limit shape $\mathfrak{H}$. By \Cref{hconcentrateq2}, there are many choices for this limit shape $\mathfrak{H}$; we will make its second derivatives slightly smaller than those of $\varphi$ (while making its gradient match that of $\varphi$), so as to ensure that $\varphi_{j;k}^+ (z) > \mathfrak{H} (z)$ while $\varphi  |_{\partial \mathfrak{B}_{\rho} (z)} < \mathfrak{H} |_{\partial \mathfrak{B}_{\rho} (z)}$. Since the normalized height function $n^{-1} H$ of $\bm{x}$ is bounded above by $\varphi$ on the event $\Omega^{\complement}$, comparing $\bm{x}$ to $\bm{y}$ using monotonicity (\Cref{monotoneheight}) and applying the concentration bound for $\bm{y}$ (\Cref{hconcentrateq2}) will yield $n^{-1} \mathsf{H}^{\bm{x}} (z) \le \mathfrak{H} (z) < \varphi_{j;k}^+ (z)$ with high probability, establishing \Cref{gh1}. 

To implement this, we must fix the derivatives of the function $\mathfrak{H}$, to which end we require some additional notation. Recalling the $\mathfrak{b}_{ij}$ from \eqref{uvb}, define for any $(u, v) \in \mathbb{R} \times \mathbb{R}_{< 0}$ the vector 
\begin{flalign*} 
	\bm{p} (u, v) = \big( \mathfrak{b}_{xx} (u, v), 2 \mathfrak{b}_{tx} (u, v), \mathfrak{b}_{tt} (u, v) \big) \in \mathbb{R}^3.
\end{flalign*} 

\noindent Observe under this notation, and recalling $\bm{q}_F^{(2)}$ for any function $F \in \mathcal{C}^2 (\mathfrak{P})$ from \eqref{kqf}, the constraint \eqref{equationxtb} is equivalent to $\bm{p} \big(\nabla F(z) \big) \cdot \bm{q}_F^{(2)} (z) = 0$.     

\begin{lem}

\label{pqg}

The following two statements hold for $n$ sufficiently large.

\begin{enumerate}
	\item For any $z_0 \in \mathfrak{P}$, we have 
	\begin{flalign*} 
		\partial_x \varphi (z_0) \le -\frac{\varepsilon}{2}; \qquad \| \varphi \|_{\mathcal{C}^{m+1} (\overline{\mathfrak{P}})} \le 2B.
	\end{flalign*} 

	\item For any $z_0 \in \mathfrak{P}$, we have 
	\begin{flalign*}
		\bigg| \displaystyle\frac{\bm{p} \big( \nabla \varphi (z_0) \big) \cdot \bm{q}_H^{(2)} (z_0)}{\bm{p} \big(\nabla \varphi (z_0) \big) \cdot \bm{q}_{\psi}^{(2)} (z_0)} \bigg| \le \displaystyle\frac{\kappa \omega}{2}.
	\end{flalign*}
\end{enumerate} 
\end{lem}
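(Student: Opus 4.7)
The plan is to treat the two parts separately. Part 1 is a direct consequence of the decomposition $\varphi = H - (j+1)\omega + \kappa\omega\psi + 3$ and the smallness $\omega = n^{2/m+\delta-1}\to 0$. Part 2 rests on the identity $\bm{p}(\nabla H)\cdot\bm{q}_H^{(2)} = 0$, which follows from $H$ satisfying \eqref{equationxtb} and which turns the numerator into a first-order perturbation in $\omega$; the denominator, meanwhile, acquires an explicit lower bound from the exponential structure of $\psi$.

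For part 1, observe that $\nabla\varphi = \nabla H + \kappa\omega\nabla\psi$ and, more generally, all derivatives of $\varphi - H$ of order at most $m+1$ are $\kappa\omega$ times the corresponding derivatives of $\psi$. By \eqref{hderivativem} and \eqref{derivativexhtx} we have $\|H\|_{\mathcal{C}^{m+1}(\overline{\mathfrak{P}})} \le B$ and $\partial_x H \le -\varepsilon$ on $\mathfrak{P}$; moreover $\psi(t,x) = \Upsilon - e^{\zeta t} - e^{\zeta x}$ has all derivatives of order at most $m+1$ uniformly bounded in terms of $\zeta$ and $B_0$ (since $t \in [0, L^{-1}] \subseteq [0, B_0]$ and $|x| \le B_0$ on $\mathfrak{P}$, using \Cref{fgr} and the normalization $G(0,0) = 0$). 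Since $\omega \to 0$, both $\partial_x\varphi \le -\varepsilon/2$ and $\|\varphi\|_{\mathcal{C}^{m+1}(\overline{\mathfrak{P}})} \le 2B$ follow at once for $n$ sufficiently large.

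For part 2, by \Cref{hequation} the function $H$ solves \eqref{equationxtb} on $\mathfrak{P}$, so $\bm{p}(\nabla H(z_0))\cdot\bm{q}_H^{(2)}(z_0) = 0$, and therefore the numerator equals $[\bm{p}(\nabla\varphi(z_0)) - \bm{p}(\nabla H(z_0))]\cdot\bm{q}_H^{(2)}(z_0)$. Writing $a = e^{\zeta t_0}$ and $b = e^{\zeta x_0}$, one has $\nabla\varphi(z_0) - \nabla H(z_0) = -\kappa\omega\zeta(a,b)$; since $\bm{p}(u,v) = (u^2 + \pi^2 v^4, -2uv, v^2)$ is a polynomial of low degree, a Taylor expansion around $\nabla H(z_0)$ combined with $\|H\|_{\mathcal{C}^2} \le B$ gives
\[
\big|\bm{p}(\nabla\varphi(z_0))\cdot\bm{q}_H^{(2)}(z_0)\big| \le C_1(B,\varepsilon^{-1})\cdot\kappa\omega\zeta(a+b),
\]
with higher-order Taylor terms negligible because $\omega \to 0$. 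For the denominator, the explicit computation $\bm{q}_\psi^{(2)}(z_0) = (-\zeta^2 b, 0, -\zeta^2 a)$ yields
\[
\bm{p}(\nabla\varphi(z_0))\cdot\bm{q}_\psi^{(2)}(z_0) = -\zeta^2\big[\big((\partial_t\varphi)^2 + \pi^2(\partial_x\varphi)^4\big) b + (\partial_x\varphi)^2 a\big],
\]
whose absolute value is at least $\zeta^2[(\varepsilon^2/4)a + (\pi^2\varepsilon^4/16)b]$, using $|\partial_x\varphi| \ge \varepsilon/2$ from part 1.

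The crucial point is that the potentially large factors $a$ and $b$ appear in both numerator and denominator. By the elementary inequality $(a+b)/(Xa + Yb) \le X^{-1} + Y^{-1}$, valid for all positive $a, b, X, Y$, the exponentials cancel and the ratio is bounded by $C_2(B,\varepsilon^{-1})/\zeta$ for some $C_2$ polynomial in $B$ and $\varepsilon^{-1}$; the hypothesis $\zeta > 20000(B^5 + \varepsilon^{-5})$ is then calibrated to make this at most $\kappa\omega/2$. The main technical obstacle is securing the correct polynomial dependence of $C_1$: the naively $O(B^4)$ Taylor term $4\pi^2 v_H^3\,\partial_x^2 H \cdot \Delta v$ must be sharpened by using \eqref{equationxtb} for $H$ to rewrite $\pi^2 v_H^4\,\partial_x^2 H$ in terms of the lower-degree combinations $u_H^2\,\partial_x^2 H$, $u_H v_H\,\partial_t\partial_x H$, and $v_H^2\,\partial_t^2 H$, after which one divides by $v_H$ (bounded below by $\varepsilon$) to recover the desired estimate on $v_H^3\,\partial_x^2 H$.
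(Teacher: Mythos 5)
Your proof is correct and is essentially the paper's: part 1 comes from $\varphi - H = \kappa\omega\psi + \mathrm{const}$ with $\omega = n^{2/m+\delta-1} \to 0$ and $|t| + |x| \le 3B_0$ on $\mathfrak{P}$, and part 2 rests on the same two ingredients, namely the identity $\bm{p}\big(\nabla H(z_0)\big) \cdot \bm{q}_H^{(2)}(z_0) = 0$, which reduces the numerator to a Lipschitz bound times $\big|\nabla\varphi(z_0) - \nabla H(z_0)\big| \le 2\kappa\omega\zeta\max\{e^{\zeta t}, e^{\zeta x}\}$, and the lower bound $\big|\bm{p}\big(\nabla\varphi(z_0)\big)\cdot\bm{q}_{\psi}^{(2)}(z_0)\big| \ge \tfrac{\zeta^2\varepsilon^4}{16}\max\{e^{\zeta t}, e^{\zeta x}\}$ supplied by part 1, so the exponentials cancel and the ratio is of order $B^4\varepsilon^{-4}\zeta^{-1}\kappa\omega$. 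The only deviation is your closing ``sharpening'' of the Taylor term via \eqref{equationxtb}, which the paper neither uses nor needs — it is content with the crude fact that each $\mathfrak{b}_{ij}$ is $O(B^3)$-Lipschitz on $\{|u|, |v| \le 2B\}$, with $\zeta$ a constant taken sufficiently large in terms of $B$ and $\varepsilon^{-1}$ — and in fact your rewriting merely trades a factor of order $B^4\varepsilon^{-4}$ for one of order $B^3\varepsilon^{-5}$, so it gains nothing relative to the threshold on $\zeta$ in \eqref{rhoomega}.
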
 

\begin{proof}
	
	From the definition \eqref{functionsjkz} of $\varphi = \varphi_{j;k-1}^+$, we have for any $z_0 = (t, x) \in \mathfrak{P}$ that
	\begin{flalign}
		\label{derivativex1} 
		\big| \nabla \varphi (z_0) - \nabla H(z_0) \big| \le \kappa \omega \big| \nabla \psi (z_0) \big| \le 2 \kappa \omega \zeta \max \{ e^{\zeta t}, e^{\zeta x} \}.
	\end{flalign} 

	\noindent where in the second inequality we used the definition \eqref{rhoomega} of $\psi$. Together with the fact that any $z_0 = (t, x) \in \mathfrak{P}$ satisfies $|x| + |t| \le 3 B_0$ (as $|x| + |t| \le \| f \|_{\mathcal{C}^{m+1}} + \| g \|_{\mathcal{C}^{m+1}} + L^{-1} \le 2 \|G \|_{\mathcal{C}^{m+1}} + B_0 \le 3B_0$) and the definition \eqref{rhoomega} of $\omega$, this implies for sufficiently large $n$ that 
	\begin{flalign} 
		\label{derivativex2} 
		\big| \partial_x \varphi (z_0) - \partial_x H(z_0) \big| \le 2 \kappa \omega \zeta e^{3 B_0 \zeta} \le \frac{\varepsilon}{2}.
	\end{flalign} 

	\noindent Since \eqref{derivativexhtx} gives $\partial_x H(z_0) \le -\varepsilon$, this yields the first part of the first statement of the lemma; the proof of the second part of the first statement is entirely analogous.
	
	To establish the second, first observe from \eqref{derivativex2} and \eqref{hderivativem} that $\big| \nabla \varphi (z_0) \big| \le \big| \nabla H(z_0) \big| + \varepsilon \le 2B$ for each $z_0 \in \mathfrak{P}$. Moreover, by \eqref{uvb}, each $\mathfrak{b}_{ij} (u, v)$ is $100 B^3$-Lipschitz on the region where $\max \big\{ |u|, |v| \big\} \le 2B$. Thus, since $\bm{p} \big( \nabla H(z_0) \big) \cdot \bm{q}_H^{(2)} (z_0) = 0$ (by the fact that $H$ solves \eqref{equationxtb}), we have for any $z_0 \in \mathfrak{P}$ that
	\begin{flalign}
		\label{pqestimate} 
		\begin{aligned}
		\Big| \bm{p} \big( \nabla \varphi (z_0) \big) \cdot \bm{q}_H^{(2)} (z_0) \Big| & = \bigg| \Big( \bm{p} \big( \nabla \varphi(z_0) \big) - \bm{p} \big( \nabla H(z_0) \big) \Big) \cdot \bm{q}_H^{(2)} (z_0) \bigg| \\
		& \le 200 B^3 \big| \nabla \varphi (z_0) - \nabla H(z_0) \big| \big| \bm{q}_H^{(2)} (z_0) \big|  \le 200 B^4 \big| \nabla \varphi (z_0) - \nabla H(z_0) \big|,
		\end{aligned} 
	\end{flalign}

	\noindent where in the last bound we used \eqref{hderivativem}. Again by \eqref{uvb} and the first part of the lemma, we have 
	\begin{flalign*} 
		\inf_{z_0 \in \mathfrak{P}} \min \big\{ \mathfrak{b}_{tt} \big( \nabla \varphi(z_0) \big), \mathfrak{b}_{xx} \big( \nabla \varphi (z_0) \big) \big\} \ge \frac{\varepsilon^4}{16}.
	\end{flalign*} 

	\noindent Since \eqref{rhoomega} gives for $z_0 = (t, x) \in \mathfrak{P}$ that $\bm{q}_{\psi}^{(2)} (z_0) = (-\zeta^2 e^{\zeta x}, 0, -\zeta^2 e^{\zeta t})$, this implies  
	\begin{flalign*}
		\Big| \bm{p} \big( \nabla \varphi (z_0) \big) \cdot \bm{q}_{\psi}^{(2)} (z_0) \Big| \ge \displaystyle\frac{\zeta^2 \varepsilon^4}{16} \displaystyle\max \{ e^{\zeta t}, e^{\zeta x} \}.
	\end{flalign*}

	\noindent Together with \eqref{pqestimate} and \eqref{derivativex1}, this yields
	\begin{flalign*}
		\displaystyle\sup_{z_0 \in \mathfrak{P}} \displaystyle\frac{\Big| \bm{p} \big( \nabla \varphi (z_0) \big) \cdot \bm{q}_H^{(2)} (z_0) \Big|}{\Big| \bm{p} \big( \nabla \varphi (z_0) \big) \cdot \bm{q}_{\psi}^{(2)} (z_0) \Big|} \le \displaystyle\frac{6400B^4}{\varepsilon^4 \zeta} \kappa \omega,
	\end{flalign*}

	\noindent which by \eqref{rhoomega} yields the lemma.
	\end{proof}

Now let us proceed to prescribe the derivatives of $\mathfrak{H}$ at $z$, which will be a higher order variant of what was done in \cite[Equation (72)]{LTGDMLS}. They will be set to be near an admissible, consistent profile $\bm{Q} = \big( \bm{q}^{(k)} \big) \in \mathbb{R}^2 \times \mathbb{R}^3 \times \cdots \times \mathbb{R}^{m+1}$, defined as follows. Denoting $\bm{q}^{(k)} = \big( q_0^{(k)}, q_1^{(k)}, \ldots , q_k^{(k)} \big) \in \mathbb{R}^{k+1}$ for each $k \in \llbracket 3, m \rrbracket$, set 
\begin{flalign}
	\label{q0q1k} 
	& \bm{q}^{(1)} = \nabla \varphi (z); \quad  \bm{q}^{(2)} = \bm{q}_H^{(2)} (z) - \displaystyle\frac{\bm{p} \big( \nabla \varphi (z) \big) \cdot \bm{q}_H^{(2)} (z)}{\bm{p} \big( \nabla \varphi (z) \big) \cdot \bm{q}_{\psi}^{(2)} (z)} \bm{q}_{\psi}^{(2)} (z); \quad q_0^{(k)} = \partial_x^k \varphi (z); \quad q_1^{(k)} = \partial_t \partial_x^{k-1} \varphi (z).
\end{flalign}

\noindent Observe from the first part of \Cref{pqg} that $q_0^{(1)} < 0$. Thus, the sequence of $m$ pairs $\big( q_0^{(k)}, q_1^{(k)} \big) \in (\mathbb{R}^2)^m$ admits a consistent extension,\footnote{Recall in \Cref{qextend} that consistent extensions were defined on sequences of pairs, while $\bm{q}^{(2)}$ above is a triple. Our choice of $\bm{q}^{(2)}$ ensures that this introduces no conflict in notation. Indeed, \eqref{q0q1k} implies that $\bm{p} \big( \bm{q}^{(1)} \big) \cdot \bm{q}^{(2)} = 0$, which by \eqref{equationxtb} and \eqref{vijequation} gives $\mathfrak{v}_{0, 0} \big( \bm{q}^{(1)}, \bm{q}^{(2)} \big) = 0$. In particular, the second vector of the admissible extension $\bm{Q}$ of $\big( q_0^{(k)}, q_1^{(k)} \big)$ must indeed be $\bm{q}^{(2)}$.} which we denote by $\bm{Q} = \big( \bm{q}^{(k)} \big)$.

By \Cref{hconcentrateq2} (and the first statement of \Cref{pqg}), there exist constants $\nu = \nu (\varepsilon, B, m) > 0$ and $C_1 = C_1 (\varepsilon, B, m) > 1$; a strip domain $\mathfrak{S} = \mathfrak{P}_{\mathfrak{f}; \mathfrak{g}} \subset \mathbb{R}^2$ with $[0, \nu] \times [-\nu, \nu] \subseteq \mathfrak{S} \subseteq [0, \nu] \times [-C_1 \nu, C_1 \nu]$; and a function $\mathfrak{H}: \overline{\mathfrak{S}} \rightarrow \mathbb{R}$ satisfying (recall \eqref{kqf}) 
\begin{flalign}
	\label{qh2} 
	  \| \mathfrak{H} \|_{\mathcal{C}^{m+1} (\overline{\mathfrak{S}})} \le C_1; \quad \displaystyle\inf_{z \in \mathfrak{S}} \partial_x \mathfrak{H} (z) \le - \displaystyle\frac{1}{C_1}; \quad \text{$\big| \bm{q}_{\mathfrak{H}}^{(k)} (\rho, 0) - \bm{q}^{(k)} \big| \le C_1 \rho^{m-k+1}$, for each $k \in \llbracket 1, m \rrbracket$},
\end{flalign}   

\noindent and the below property. There exist constants $c_0 = c_0 (\varepsilon, \delta, B, m) > 0$ and $C_2 = C_2 (\varepsilon, \delta, B, m) > 1$ so that $\mathfrak{H}$ is $\big( C_2; c_0; \delta \big)$-concentrating, meaning (recall \Cref{hhestimate0}) that $\mathfrak{H}$ satisfies \eqref{equationxtb} and the following. There exist an integer $n' \in [c_0 n, n]$, entrance data $\overline{\bm{u}} \in \overline{\mathbb{W}}_{n'}$ and ending data $\overline{\bm{v}} \in \mathbb{W}_{n'}$ on $\mathfrak{S}$ such that, sampling non-intersecting Brownian bridges $\bm{y} = (y_1, y_2, \ldots , y_{n'})$ from $\mathfrak{Q}_{\mathfrak{f}; \mathfrak{g}}^{\overline{\bm{u}}; \overline{\bm{v}}} (n^{-1})$ and denoting the associated height function by $\mathsf{H}^{\bm{y}} : \overline{\mathfrak{S}} \rightarrow \mathbb{Z}$, we have
\begin{flalign*}
	\mathbb{P} \Bigg[ \displaystyle\sup_{z \in \mathfrak{S}} \big| n^{-1} \mathsf{H}^{\bm{y}} (z) - \mathfrak{H} (z) \big| \ge n^{\delta-1} \Bigg] \le C_2 e^{-c_0 (\log n)^2}.
\end{flalign*} 

The following lemma bounds the distance between $\bm{Q}_{\varphi} (z)$ and $\bm{Q}_{\mathfrak{H}} (z)$ under this notation.

\begin{lem}

\label{hestimateq}

There exists a constant $C = C (\varepsilon, B, m) > 1$ such that 
\begin{flalign}
	\label{hestimateq0}
	\big| \bm{q}_{\varphi}^{(k)} (z) - \bm{q}_{\mathfrak{H}}^{(k)} (\rho, 0) \big| \le C (\rho^{m-k+1} + \omega), \qquad \text{for any $k \in \llbracket 1, m \rrbracket$}.
\end{flalign}

\end{lem}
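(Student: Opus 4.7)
The plan is to apply the triangle inequality: $|\bm{q}_\varphi^{(k)}(z) - \bm{q}_{\mathfrak{H}}^{(k)}(\rho, 0)| \le |\bm{q}_\varphi^{(k)}(z) - \bm{q}^{(k)}| + |\bm{q}^{(k)} - \bm{q}_{\mathfrak{H}}^{(k)}(\rho, 0)|$. The second piece is already $\le C_1 \rho^{m-k+1}$ by the third part of \eqref{qh2}, so everything reduces to showing $|\bm{q}^{(k)} - \bm{q}_\varphi^{(k)}(z)| \le C\omega$ for each $k \in \llbracket 1, m\rrbracket$. To quantify the intuition that $\varphi = H + \kappa\omega\psi + \text{const}$ is an $O(\omega)$ perturbation of a solution of \eqref{equationxtb}, I introduce the residual $E(w) = \sum_{i', j' \in \{t, x\}} \mathfrak{b}_{i'j'}(\nabla\varphi(w)) \partial_{i'}\partial_{j'}\varphi(w)$ on $\overline{\mathfrak{P}}$. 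A Taylor expansion in the parameter $\kappa\omega$, using $\|\varphi\|_{\mathcal{C}^{m+1}(\overline{\mathfrak{P}})} \le 2B$ (first part of \Cref{pqg}) and the straightforward bound $\|\psi\|_{\mathcal{C}^{m+1}(\overline{\mathfrak{P}})} \le C(B_0, \zeta, m)$ coming from \eqref{rhoomega}, yields $\|E\|_{\mathcal{C}^{m-2}(\overline{\mathfrak{P}})} \le C\omega$ (the leading-order term vanishes because $H$ satisfies \eqref{equationxtb}). By \Cref{admissibleconsistent}, this says exactly that $|\mathfrak{v}_{i,j}(\bm{Q}_\varphi^{(i+j+2)}(z))| \le C\omega$ for all $(i,j) \in \mathbb{Z}_{\ge 0}^2$ with $i+j \le m-2$, so $\bm{Q}_\varphi(z)$ approximately satisfies the consistency relations defining $\bm{Q}$.

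The second step is to apply \Cref{qestimateq} with a sequence $\widetilde{\bm{Q}}$ whose first two coordinates exactly match those of $\bm{Q}$, and whose remaining coordinates are close to those of $\bm{Q}_\varphi(z)$. The natural candidate $\widetilde{\bm{Q}} = \bm{Q}_\varphi(z)$ already matches $\bm{Q}$ in the first two coordinates for $k = 1$ (since $\bm{q}^{(1)} = \nabla\varphi(z)$ by \eqref{q0q1k}) and for $k \ge 3$ (where $q_0^{(k)} = \partial_x^k\varphi(z)$ and $q_1^{(k)} = \partial_t\partial_x^{k-1}\varphi(z)$). The only discrepancy occurs at $k = 2$: the first coordinate of $\bm{q}^{(2)}$ differs from $\partial_x^2\varphi(z)$ by $(\lambda + \kappa\omega)\,\zeta^2 e^{\zeta z_x} = O(\omega)$, where $\lambda = \bm{p}(\nabla\varphi(z))\cdot\bm{q}_H^{(2)}(z) / \bm{p}(\nabla\varphi(z))\cdot\bm{q}_\psi^{(2)}(z)$, using $|\lambda| \le \kappa\omega/2$ from the second part of \Cref{pqg}; however, the second coordinate matches automatically because $\partial_t \partial_x \psi \equiv 0$ by \eqref{rhoomega}, forcing $q_1^{(2)} = \partial_t\partial_x H(z) = \partial_t\partial_x\varphi(z)$. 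I will therefore set $\widetilde{\bm{q}}^{(k)} = \bm{q}_\varphi^{(k)}(z)$ for $k \ne 2$ and $\widetilde{\bm{q}}^{(2)} = (q_0^{(2)}, \partial_t\partial_x \varphi(z), \partial_t^2 \varphi(z))$. Since $\widetilde{\bm{Q}}$ differs from $\bm{Q}_\varphi(z)$ only at one entry and by $O(\omega)$, and since each $\mathfrak{v}_{i,j}$ is a polynomial in its (uniformly bounded) arguments, we still have $|\mathfrak{v}_{i,j}(\widetilde{\bm{Q}}^{(i+j+2)})| \le C\omega$. Applying \Cref{qestimateq} (which is permissible since the first two coordinates agree and all entries are bounded by $2B + 1$) then yields $|\bm{q}^{(k)} - \widetilde{\bm{q}}^{(k)}| \le C\omega$, and one final triangle inequality with $|\widetilde{\bm{q}}^{(k)} - \bm{q}_\varphi^{(k)}(z)| \le C\omega$ completes the proof.

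The main (minor) obstacle I anticipate is the bookkeeping at $k = 2$, which is delicate because $\bm{q}^{(2)}$ is specified explicitly in \eqref{q0q1k} rather than inferred via consistency from a pair $(q_0^{(2)}, q_1^{(2)})$. The argument goes through only because of two structural choices in the setup: that $\psi$ has no mixed second derivative (so the second coordinate matches automatically), and that $\lambda$ is tuned in \eqref{q0q1k} exactly so that $\bm{q}^{(2)}$ satisfies $\mathfrak{v}_{0,0}(\bm{q}^{(1)}, \bm{q}^{(2)}) = 0$ while remaining within $O(\omega)$ of $\bm{q}_\varphi^{(2)}(z)$. Beyond that, the proof is a combination of a single Taylor expansion and direct invocations of the structural lemmas in \Cref{Derivatives0}.
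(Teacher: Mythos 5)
Your proposal is correct, but it takes a different route than the paper for the key $O(\omega)$ estimate. The paper also reduces to \eqref{qh2} plus the bound $|\bm{q}_{\varphi}^{(k)}(z)-\bm{q}^{(k)}|\le C\omega$, but it obtains the latter by inserting $\bm{q}_H^{(k)}(z)$ in the middle: the difference $|\bm{q}_{\varphi}^{(k)}(z)-\bm{q}_H^{(k)}(z)|\le C\omega$ is immediate from $\varphi=H+\kappa\omega\psi+\mathrm{const}$, and the difference $|\bm{q}_H^{(k)}(z)-\bm{q}^{(k)}|\le C\omega$ is deduced from \Cref{continuityextend}, since $\bm{Q}_H^{(m)}(z)$ and $\bm{Q}$ are both \emph{exactly} consistent (the former because $H$ solves \eqref{equationxtb}) and their generating pairs $(q_0^{(k)},q_1^{(k)})$ are $O(\omega)$-close, with the $k=2$ discrepancy controlled exactly as you do via the second part of \Cref{pqg}. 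You instead stay with $\varphi$ itself, show that its PDE residual is $O(\omega)$ in $\mathcal{C}^{m-2}$ so that $\bm{Q}_{\varphi}^{(m)}(z)$ is \emph{approximately} consistent, perform the one-entry surgery at $k=2$ (where your observations that $\partial_t\partial_x\psi\equiv 0$ and that $q_0^{(2)}-\partial_x^2\varphi(z)=(\lambda+\kappa\omega)\zeta^2e^{\zeta x}=O(\omega)$ are exactly the points that make the hypotheses of \Cref{qestimateq} attainable), and then invoke \Cref{qestimateq} rather than \Cref{continuityextend}. Both arguments rest on the same inputs (\Cref{pqg}, the structural lemmas of \Cref{Derivatives0}, and \eqref{qh2}); the paper's detour through $H$ avoids the residual Taylor expansion and is slightly shorter, while your version never needs the profile of $H$ beyond what enters \eqref{q0q1k} and illustrates that the stability lemma \Cref{qestimateq} (used in the paper only for \Cref{qrestimate}) suffices here as well. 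One bookkeeping remark: since $\partial_x\varphi(z)\le-\varepsilon/2$ and $\|\varphi\|_{\mathcal{C}^{m+1}}\le 2B$, you should apply \eqref{q0q1estimate} and \Cref{qestimateq} with $(\varepsilon,B)$ replaced by $(\varepsilon/2,\,2B+1)$, which only changes the constants.
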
 

\begin{proof}
	
	The bound \eqref{hestimateq0} holds at $k = 1$ by the first statement of \eqref{q0q1k} and the second statement of \eqref{qh2}. So, we may assume in what follows that $k \in \llbracket 2, m \rrbracket$. To establish \eqref{hestimateq0} in that case, it suffices to show that 
	\begin{flalign}
		\label{qhqg}
		\big| \bm{q}_{\varphi}^{(k)} (z) - \bm{q}_H^{(k)} (z) \big| \le C \omega; \qquad \big| \bm{q}^{(k)} - \bm{q}_{\mathfrak{H}}^{(k)} (\rho, 0) \big| \le C \rho^{m-k+1}; \qquad \big| \bm{q}_H^{(k)} (z) - \bm{q}^{(k)} \big| \le C \omega.
	\end{flalign}
	
	\noindent To verify the first, observe from \eqref{functionsjkz} and \eqref{rhoomega} that 
	\begin{flalign}
		\label{qkqpsi} 
		\big| \bm{q}_{\varphi}^{(k)} (z) - \bm{q}_H^{(k)} (z) \big| \le \omega \big| \bm{q}_{\psi}^{(k)} (z) \big| \le (m+1) \zeta^m e^{\zeta |z|} \omega \le C_3 \omega, 
	\end{flalign}
	
	\noindent for some constant $C_3 = C_3 (\varepsilon, B, m) > 1$ (where in the last inequality we implicitly used the fact that $|z| \le 3B_0$, which holds since $|z| \le \| f \|_{\mathcal{C}^{m+1}} + \| g \|_{\mathcal{C}^{m+1}} + L^{-1} \le 3B_0$). The second follows from the last statement of \eqref{qh2}.
	
	To establish the third statement of \eqref{qhqg}, observe since $\bm{Q}_H^{(m)} (z)$ and $\bm{Q}$ are consistent that it suffices by \Cref{continuityextend} (and the first part of \Cref{pqg}) to show the existence of a constant $C_4 = C_4 (\varepsilon, B, m) > 1$ such that
	\begin{flalign}
		\label{qgtk}
		\big| \partial_x^k H(z) - q_0^{(k)} \big| \le C_4 \omega; \qquad \big| \partial_t \partial_x^{k-1} H(z) - q_1^{(k)} \big| \le C_4 \omega, \qquad \text{for each $k \in \llbracket 1, m \rrbracket$}.
	\end{flalign}

	\noindent For $k \ne 2$, this follows from \eqref{q0q1k} and \eqref{qkqpsi}, and for $k = 2$ it follows from the fact that 
	\begin{flalign*}
		\big| \bm{q}_H^{(2)} (z) - \bm{q}^{(2)} \big| = \Bigg| \displaystyle\frac{\bm{p} \big( \nabla \varphi (z) \big) \cdot \bm{q}_H^{(2)} (z)}{\bm{p} \big( \nabla \varphi (z) \big) \cdot \bm{q}_{\psi}^{(2)} (z)} \Bigg| \big| \bm{q}_{\psi}^{(2)} (z) \big| \le \big| \bm{q}_{\psi}^{(2)} (z) \big| \frac{\omega\kappa}{2} \le 2\zeta^2 e^{3 B_0 \zeta} \omega\kappa,
	\end{flalign*}

	\noindent where the first statement holds by \eqref{q0q1k}, the second by the second part of \Cref{pqg}; and the third from the explicit form \eqref{rhoomega} for $\psi$, with the fact that $|z| \le 3B_0$ for $z \in \mathfrak{P}$. This confirms \eqref{qgtk} and thus the lemma.
\end{proof}

In the below, for any point $w \in \mathbb{R}^d$ and real number $r > 0$, we denote the open disk $\mathcal{B}_r (w) = \big\{ w' \in \mathbb{R}^d : |w'-w| \le r \big\}$. We next have the following lemma, which is similar to \cite[Equation (88)]{LTGDMLS}, indicating the relative height of $\varphi (z)$ with respect to $\varphi |_{\partial \mathcal{B}_{\rho} (z)}$ is larger than that of $\mathfrak{H} (\rho, 0)$ with respect to that of $\mathfrak{H} |_{\partial \mathcal{B}_{\rho} (\rho, 0)}$.  

\begin{lem} 
	
	\label{hwhz} 

There exists a constant $c =c(\varepsilon, B, m) > 0$ such that the following holds. Fix $w \in \mathfrak{P}$ with $|z-w| \le n^{1/2m^2} \rho$; set $z_0 = (\rho, 0) \in \mathfrak{S}$; and set $w_0 = z_0 - z + w$. If $w_0 \in \mathfrak{S}$, then   
\begin{flalign*}
	\varphi (z) - \varphi (w) \ge \mathfrak{H} (z_0) - \mathfrak{H} (w_0) + c \omega |z - w|^2 - c^{-1} \rho^m |z-w|.
\end{flalign*}

\end{lem}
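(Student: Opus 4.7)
The plan is to compare Taylor expansions of $\varphi$ around $z$ and of $\mathfrak{H}$ around $z_0=(\rho,0)$, taking advantage of the fact that $w_0-z_0 = w-z =: h$, so the two expansions are evaluated at the same vector $h$. Writing each to order $m$ with remainders bounded by $C|h|^{m+1}$ (using $\|\varphi\|_{\mathcal{C}^{m+1}} \le 2B$ from \Cref{pqg} and $\|\mathfrak{H}\|_{\mathcal{C}^{m+1}} \le C_1$ from \eqref{qh2}), and subtracting, I obtain
\[
\bigl(\varphi(z)-\varphi(w)\bigr) - \bigl(\mathfrak{H}(z_0)-\mathfrak{H}(w_0)\bigr) = -\sum_{k=1}^m \tfrac{1}{k!}\bigl(D^k\varphi(z)-D^k\mathfrak{H}(z_0)\bigr)[h,\ldots,h] + \mathcal{O}(|h|^{m+1}).
\]
The aim is to show that the $k=2$ term dominates and produces $c\omega|h|^2$, while the $k=1$ term contributes the $-c^{-1}\rho^m|h|$, and all other terms are negligible in the regime $|h|\le n^{1/2m^2}\rho$.

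The heart of the argument is the $k=2$ term. By the construction of $\bm{q}^{(2)}$ in \eqref{q0q1k} together with the last estimate in \eqref{qh2}, I have
\[
D^2\varphi(z)-D^2\mathfrak{H}(z_0) = \lambda\,D^2\psi(z) + \mathcal{O}(\rho^{m-1}),
\qquad \lambda = \kappa\omega + \frac{\bm{p}\bigl(\nabla\varphi(z)\bigr)\!\cdot\!\bm{q}_H^{(2)}(z)}{\bm{p}\bigl(\nabla\varphi(z)\bigr)\!\cdot\!\bm{q}_\psi^{(2)}(z)},
\]
and \Cref{pqg}(2) gives $\lambda \in [\kappa\omega/2,\,3\kappa\omega/2]$. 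Since $D^2\psi(z) = -\zeta^2\mathrm{diag}(e^{\zeta t},e^{\zeta x})$ and $(t,x)\in\overline{\mathfrak{P}}$ is uniformly bounded, $-D^2\psi(z)[h,h]\ge c_1|h|^2$ for some $c_1=c_1(\zeta,B_0)>0$. This produces a contribution of at least $(c_1\kappa\omega/4)|h|^2 - C\rho^{m-1}|h|^2$ to the right side. The $k=1$ contribution is absolutely bounded by $C\rho^m|h|$ by \Cref{hestimateq}. For $k\ge 3$, \Cref{hestimateq} gives $|\bm{q}_\varphi^{(k)}(z)-\bm{q}_\mathfrak{H}^{(k)}(z_0)| \le C(\rho^{m-k+1}+\omega)$, so the $k$-th term is bounded by $C(\rho^{m-k+1}+\omega)|h|^k$.

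The main obstacle---and the only thing that is not pure bookkeeping---is to verify that every non-quadratic term is $o(\omega|h|^2)$ under the hypothesis $|h|\le n^{1/2m^2}\rho$. For the $\omega|h|^k$ terms ($k\ge 3$) and the Taylor remainder $|h|^{m+1}$, I use $|h|^{k-2}\le (n^{1/2m^2}\rho)^{k-2}$; since $\delta < 1/(5m^2)$ implies $n^{1/2m^2}\rho = n^{1/(2m^2)+\delta-1/m}=o(1)$, these are absorbed. For the $\rho^{m-k+1}|h|^k$ terms I pair $|h|^{k-2}\le (n^{1/2m^2}\rho)^{k-2}$, which yields the bound $n^{(k-2)/(2m^2)}\rho^{m-1}|h|^2$; the same control applies to the error at $k=2$ (which is $C\rho^{m-1}|h|^2$) and to the Taylor remainder when re-expressed against $|h|^2$. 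Direct exponent-counting using $\delta<1/(5m^2)$ and $m\ge 4$ shows
\[
n^{(m-2)/(2m^2)}\rho^{m-1} \,=\, n^{(m-2)/(2m^2) + (m-2)\delta - 1/m} \,=\, o(\omega),
\]
so all of these terms are $o(\omega|h|^2)$. Combining, I get $(\varphi(z)-\varphi(w)) - (\mathfrak{H}(z_0)-\mathfrak{H}(w_0)) \ge (c_1\kappa\omega/8)|h|^2 - C\rho^m|h|$ for $n$ large, which yields the lemma upon choosing $c$ small enough (and using that $\kappa$ is bounded below by a positive constant, which follows from $k\le K$ and the definition of $K$).
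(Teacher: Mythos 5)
Your proposal is correct and follows essentially the same route as the paper: Taylor expand $\varphi$ at $z$ and $\mathfrak{H}$ at $z_0$ in the common increment $h = w-z = w_0-z_0$, extract the Hessian term via \eqref{q0q1k}, \Cref{pqg}(2) and the negative definiteness of $\bm{H}_{\psi}$, and absorb all remaining terms using $|h| \le n^{1/2m^2}\rho$ and $\delta \le \frac{1}{5m^2}$. One small but load-bearing correction: the bound $C\rho^m |h|$ on the first-order term does not follow from \Cref{hestimateq} as stated (its $k=1$ case carries an extra $+\omega$, and an $\omega|h|$ error could not be absorbed by either $c\omega|h|^2$ or $c^{-1}\rho^m|h|$); it follows instead from $\bm{q}^{(1)} = \nabla\varphi(z)$ in \eqref{q0q1k} together with the last bound in \eqref{qh2}, which is exactly how the paper's estimate \eqref{wzwz} handles it. Also, your displayed exponent identity is really the computation of the ratio $n^{(m-2)/2m^2}\rho^{m-1}/\omega$ (so it should conclude that this ratio is $o(1)$ rather than that the left side is $o(\omega)$ via that middle expression); the conclusion you draw from it is nonetheless correct.
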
 

\begin{proof}

	For any continuously twice-differentiable function $F$ and point $z'$ in its domain, let the $2 \times 2$ matrix $\bm{H}_F (z')$ denote its Hessian at $z'$, given by 
\begin{flalign*} 
	\bm{H}_F (z') = \left[ \begin{array}{cc} \partial_t^2 F (z') & \partial_t \partial_x F(z') \\ \partial_t \partial_x F(z') & \partial_x^2 F(z') \end{array} \right].  
\end{flalign*}

\noindent Then, by \eqref{q0q1k}, \eqref{qh2}, a Taylor expansion, the fact that $w_0 - z_0 = w - z$, the first part of \Cref{pqg}, and \Cref{hestimateq}, there is a constant $C_3 = C_3 (\varepsilon, B, m) > 1$ such that
\begin{flalign}
	\label{wzwz} 
	\begin{aligned}
	 \Big|  \varphi (  w) - \varphi(z) - \big( & \mathfrak{H} (w_0) - \mathfrak{H} (z_0) \big) - (w - z) \cdot \big( \bm{H}_{\varphi} (z) - \bm{H}_{\mathfrak{H}} (z_0) \big) (w - z)\Big| \\
	&  \le |z-w| \cdot \big| \bm{q}_{\varphi}^{(1)} (z) - \bm{q}_{\mathfrak{H}}^{(1)} (z_0) \big| + \displaystyle\sum_{k=3}^m |z - w|^k \big| \bm{q}_{\varphi}^{(k)} (z) - \bm{q}_{\mathfrak{H}}^{(k)} (z_0) \big| \\
	& \qquad \qquad \qquad  \qquad \qquad + |z - w|^{m+1} \big( \| \varphi \|_{\mathcal{C}^{m+1} (\overline{\mathfrak{S}})} + \| \mathfrak{H} \|_{\mathcal{C}^{m+1} (\overline{\mathfrak{S}})} \big) \\
	& \le C_3 \rho^m |z-w| + C_3  |z-w|^{m+1} + C_3 \displaystyle\sum_{k=3}^m (\rho^{m-k+1}  + \omega) |z-w|^k. 
	\end{aligned} 
\end{flalign}

Next, observe that 
\begin{flalign}
	\label{h1h2}
	\begin{aligned}
	(w-z) & \cdot \big( \bm{H}_{\varphi} (z) - \bm{H}_{\mathfrak{H}} (z_0) \big) (w-z) \\
	&  = (w-z) \cdot \big( \bm{H}_H (z) - \bm{H}_{\mathfrak{H}} (z_0) + \kappa \omega \bm{H}_{\psi} (z) \big) (w-z) \\
	& \le C_1 \rho^{m-1} |z-w|^2 + \Bigg( \kappa \omega - \displaystyle\frac{\bm{p} \big( \nabla \varphi (z) \big) \cdot \bm{q}_H^{(2)} (z)}{\bm{p} \big( \nabla \varphi (z) \big) \cdot \bm{q}_{\psi}^{(2)} (z)} \Bigg) (w-z) \cdot \big( \bm{H}_{\psi} (z) \big) (w-z).
	\end{aligned} 
\end{flalign}

\noindent where for the first statement we applied the definition \eqref{functionsjkz} of $\varphi$ and for the second we applied \eqref{q0q1k} and \eqref{qh2}. By the second part of \Cref{pqg}, and the fact that $\bm{q}_{\psi}^{(2)}$ has all negative entries (by the explicit form \eqref{rhoomega} for $\psi$), we have 
\begin{flalign*} 
	\Bigg( \kappa \omega - \displaystyle\frac{\bm{p} \big( \nabla \varphi(z) \big) \cdot \bm{q}_H^{(2)} (z)}{\bm{p} \big( \nabla \varphi (z) \big) \cdot \bm{q}_{\psi}^{(2)} (z)} \Bigg) \bm{q}_{\psi}^{(2)} (z) \le \frac{\kappa \omega}{2} \bm{q}_{\psi}^{(2)} (z), \qquad \text{entrywise}.
\end{flalign*} 

\noindent Since $\bm{q}_{\psi}^{(2)} (t, x) = (-\zeta^2 e^{\zeta t},  0, -\zeta^2 e^{\zeta x})$, since $\kappa \ge \frac{1}{4}$, and since $|z| \le 3B_0$ for any $z \in \mathfrak{P}$ (as $|z| \le \| f \|_{\mathcal{C}^{m+1}} + \| g \|_{\mathcal{C}^{m+1}} + L^{-1} \le 3B_0$), it follows that there is a constant $c_1 = c_1 (\varepsilon, m) > 0$ such that 
\begin{flalign*}
	 \Bigg( \kappa \omega - \displaystyle\frac{\bm{p} \big( \nabla \varphi (z) \big) \cdot \bm{q}_H^{(2)} (z)}{\bm{p} \big( \nabla \varphi (z) \big) \cdot \bm{q}_{\psi}^{(2)} (z)} \Bigg) (w-z) \cdot \big( \bm{H}_{\psi} (z) \big) (w-z) \le -c_1 \omega |z-w|^2.
\end{flalign*}

\noindent Together with \eqref{wzwz} and \eqref{h1h2}, this gives
\begin{flalign*}
	 \varphi(w) - \varphi(z) - \big( \mathfrak{H} (w_0) - \mathfrak{H}(z_0) \big) & \le C_3 \displaystyle\sum_{k=1}^{m+1} \rho^{m-k+1} |z-w|^k + C_3 m \omega |z-w|^3 - c_1 \omega |z-w|^2.
\end{flalign*}

\noindent This, since $ \rho^{m-k+1} |z-w|^k + \omega |z-w|^3 \le 2 n^{-1/m^2} \omega |z-w|^2$ for each $k \in \llbracket 2, m+1 \rrbracket$ (which holds by \eqref{rhoomega}, the fact that $|z-w| \le n^{1/2m^2} \rho$, and the fact that $\delta \le \frac{1}{5m^2}$), implies the lemma.
\end{proof}

\subsection{Proof of \Cref{gh1}}

\label{ProofConcentration0} 

In this section we establish \Cref{gh1}. As indicated in \Cref{HLocal}, we only show the bound in that proposition on $\Omega_{j;k}^+ (z)$, as the proof of the bound on $\Omega_{j;k}^- (z)$ is entirely analogous; recall we assume that $\Omega^{\complement}$ holds. We proceed by comparing the bridges $\bm{x}(t)$ with those $\bm{y}(t)$, concentrating around $\mathfrak{H}$, in a local neighborhood (of diameter about $\rho$) around $z$. To implement this, we must reindex the bridges in $\bm{x}$ and $\bm{y}$ to form $\bm{x}'$ and $\bm{y}'$, defined as follows.
	
\begin{figure}
\center
\includegraphics[width=1\textwidth]{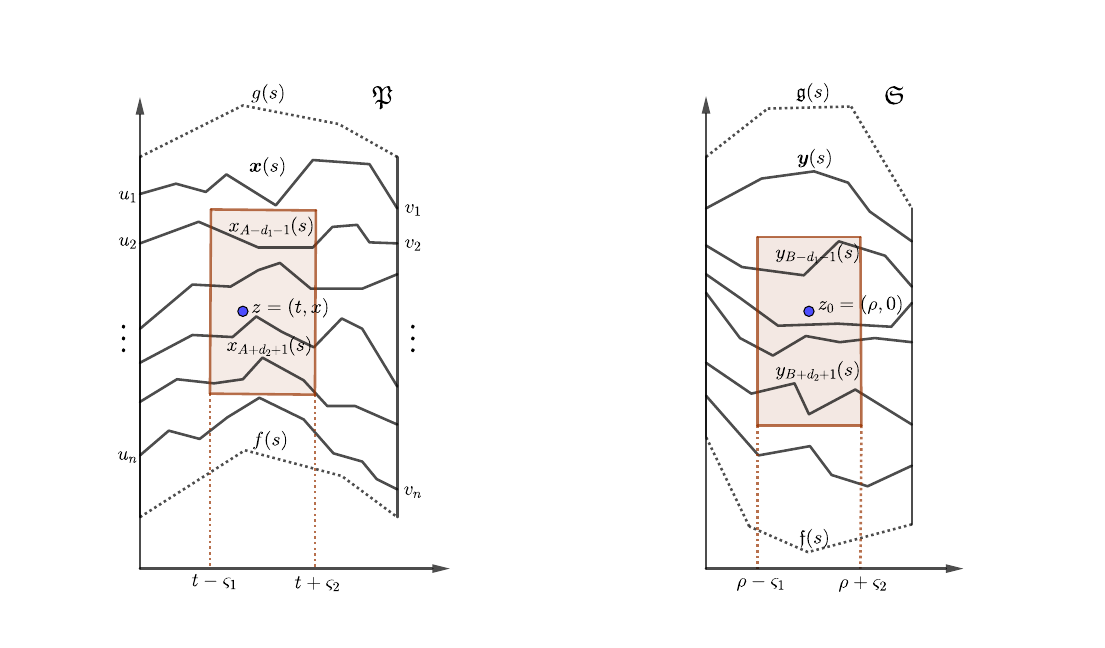}
\caption{For the proof of \Cref{gh1}, we compare the non-intersecting Brownian bridges $\bm x(t)$ in a neighborhood of $z$ with bridges $\bm y(t)$ in one of $z_0$.}
\label{f:coupling}
\end{figure}
	
	Let $z = (t, x) \in \mathfrak{P}$ and $z_0 = (\rho, 0) \in \mathfrak{S}$; see \Cref{f:coupling} for a depiction. Letting $c_1 = c_1 (\varepsilon, B, m) > 0$ denote the constant $c$ from \Cref{hwhz} (and recalling $C_1$ from \eqref{qh2}), set 
	\begin{flalign}
		\label{absigma} 
		\begin{aligned}
		 & A = \bigg\lfloor  n \varphi (z) - \displaystyle\frac{c_1 \rho^2 \omega n}{2} \bigg\rfloor; \quad B = \big\lfloor n \mathfrak{H} (z_0) + n^{\delta} \big\rfloor; \quad \varsigma_1 = \min \{ \rho, t \}; \quad \varsigma_2 = \min \{ \rho, L^{-1} - t \}; \\
		& d_1 = \min \big\{ \lceil 5 C_1^3 \rho n \rceil, A-1 \big\}; \quad d_2 = \min \big\{ \lceil 5 C_1^3 \rho n \rceil, n-A \big\}; \quad \varsigma = \varsigma_1 + \varsigma_2; \quad d = d_1 + d_2.
		\end{aligned} 
	\end{flalign}
	
	\noindent In what follows, we omit the floors and ceilings, as they do not affect the proofs; we may also assume that $A \le n - 5 n^{\delta}$, for otherwise $\varphi (z) \ge 1 + n^{-\delta} \rho^2 \omega$ and so $n^{-1} \mathsf{H}^x (z) \le 1 \le \varphi (z) - n^{-\delta} \rho^2 \omega = \varphi_{j;k} (z)$ holds deterministically. For each integer $j \in \llbracket 0, d+2 \rrbracket$ and real number $s \in [0, \varsigma]$, set
	\begin{flalign}
		\label{xjsyjs} 
		x_j' (s) = x_{A+j-d_1-1} (s+t-\varsigma_1) - x; \qquad y_j' (s) = y_{B+j-d_1 - 1} (s + \rho -  \varsigma_1),
	\end{flalign}

	\noindent where $x_0 = g$ and $x_{n+1} = f$. Further denote $\bm{x}' = ( x_1', x_2', \ldots , x_{d+1}') \in \llbracket 1, d+1 \rrbracket \times \mathcal{C} \big( [0, \varsigma] \big)$ and $\bm{y}' = ( y_1', y_2', \ldots , y_{d+1}') \in \llbracket 1, d + 1 \rrbracket \times \mathcal{C} \big( [0, \varsigma] \big)$. Also define the functions $\mathfrak{x}^-, \mathfrak{x}^+, \mathfrak{y}^-, \mathfrak{y}^+ : [0, \varsigma] \rightarrow \mathbb{R}$ by for each $s \in [0, \varsigma]$ setting 
	\begin{flalign*}
		\mathfrak{x}^- (s) = x_{d+2}' (s); \qquad \mathfrak{x}^+ (s) = x_0' (s); \qquad \mathfrak{y}^- (s) = y_{d+2}' (s); \qquad \mathfrak{y}^+ (s) = y_0' (s).
	\end{flalign*}

	\noindent Further define the entrance data $\bm{u}', \bm{u}'' \in \overline{\mathbb{W}}_{d+1}$ and ending data $\bm{v}', \bm{v}'' \in \overline{\mathbb{W}}_{d+1}$ by setting $\bm{u}' = \bm{x}' (0)$, $\bm{v}' = \bm{x}' (\varsigma)$, $\bm{u}'' = \bm{y}' (0)$, and $\bm{v}'' = \bm{y}' (\varsigma)$. 
	
	Then, the laws of $\bm{x}'$ and $\bm{y}'$ are given by non-intersecting Brownian bridges sampled from the measures $\mathfrak{Q}_{\mathfrak{x}^-; \mathfrak{x}^+}^{\bm{u}'; \bm{v}'} (n^{-1})$ and $\mathfrak{Q}_{\mathfrak{y}^-; \mathfrak{y}^+}^{\bm{u}''; \bm{v}''} (n^{-1})$, respectively. Denote their height functions (recall \eqref{htx}) by $\mathsf{H}^{\bm{x}'} : \mathfrak{P}_{\mathfrak{x}^-; \mathfrak{x}^+} \rightarrow \mathbb{R}$ and $\mathsf{H}^{\bm{y}'} : \mathfrak{P}_{\mathfrak{y}^-; \mathfrak{y}^+} \rightarrow \mathbb{R}$, respectively. Observe by \eqref{xjsyjs} that for any $(t_0, x_0) \in \mathfrak{P}_{\mathfrak{x}^-, \mathfrak{x}^+} \cap \mathfrak{P}_{\mathfrak{y}^-, \mathfrak{y}^+}$ we have for any $(t_0, x_0) \in [0, \varsigma] \times \mathbb{R}$ that 
	\begin{flalign}
		\label{hxihxi} 
		\begin{aligned}
		& \mathsf{H}^{\bm{x}'} (t_0, x_0) = \mathsf{H}^{\bm{x}} (t_0 + t - \varsigma_1, x_0 + x) - A + d_1 + 1; \\ 
		& \mathsf{H}^{\bm{y}'} (t_0, x_0) = \mathsf{H}^{\bm{y}} (t_0 + \rho - \varsigma_1, x_0) - B + d_1 + 1.
		\end{aligned}
	\end{flalign}
	
	\begin{lem} 
		
	\label{eventf} 
	
	There exists a constant $c = c (\varepsilon, B, m) > 1$ and a coupling between $\bm{x}'$ and $\bm{y}'$ such that the following holds. On an event $\mathscr{F}$ with $\mathbb{P}[\mathscr{F}] \ge 1 - c^{-1} e^{-c (\log n)^2}$, we have $x_j' (s) \leq y_j' (s)$ for each $j \in \llbracket 1, d+1 \rrbracket $ and $s \in [0, \varsigma]$.
	
	\end{lem}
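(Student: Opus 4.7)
The plan is to produce the coupling via \Cref{monotoneheight}. This requires verifying, on a high-probability event $\mathscr{F}$, the four boundary comparisons between $\bm{x}'$ and $\bm{y}'$: $\bm{u}' \leq \bm{u}''$ at $s = 0$; $\bm{v}' \leq \bm{v}''$ at $s = \varsigma$; $\mathfrak{x}^+ \leq \mathfrak{y}^+$; and $\mathfrak{x}^- \leq \mathfrak{y}^-$. I will take $\mathscr{F}$ to be the intersection of the already-imposed event $\Omega^{\complement}$, which forces $n^{-1} \mathsf{H}^{\bm{x}}(w) \leq \varphi(w)$ throughout $\mathfrak{P}$, with the concentration event $\sup_{w \in \mathfrak{S}} | n^{-1} \mathsf{H}^{\bm{y}}(w) - \mathfrak{H}(w) | \leq n^{\delta - 1}$ for the bridges $\bm{y}$; the latter has probability $\geq 1 - C_2 e^{-c_0 (\log n)^2}$ because $\mathfrak{H}$ was selected to be $(C_2; c_0; \delta)$-concentrating through \Cref{hconcentrateq2}.

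On $\mathscr{F}$, these height-function bounds convert into pointwise particle bounds
\begin{flalign*}
x_k(t_0) \;\leq\; G_\varphi(t_0, k/n), \qquad y_k(t_0) \;\geq\; G_{\mathfrak{H}}(t_0, k/n + n^{\delta - 1}),
\end{flalign*}
where $G_\varphi$ and $G_{\mathfrak{H}}$ are the $x$-inverses (at fixed $t$) of $\varphi$ and $\mathfrak{H}$. Each of the four comparisons above reduces, via the reindexing $k \mapsto k + B - A$ and $t_0 \mapsto t_0 + \rho - t$, to the single analytic inequality $G_\varphi(t_0, k/n) - x \leq G_{\mathfrak{H}}(t_0 + \rho - t, (k + B - A)/n + n^{\delta - 1})$. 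Setting $w = (t_0, G_\varphi(t_0, k/n))$ and $w_0 = w + (z_0 - z)$, and applying \Cref{hwhz} together with the identities $n \varphi(z) = A + c_1 \rho^2 \omega n / 2 + O(1)$ and $n \mathfrak{H}(z_0) = B - n^\delta + O(1)$, this becomes equivalent to
\begin{flalign*}
c \omega |z - w|^2 - c^{-1} \rho^m |z - w| \;\geq\; c_1 \rho^2 \omega / 2 + 2 n^{\delta - 1} + O(1/n).
\end{flalign*}
The crucial point is then to verify $|z - w| \gtrsim \rho$: for entrance/ending at an interior time ($\varsigma_1 = \rho$ or $\varsigma_2 = \rho$), the time separation $|t_0 - t|$ already equals $\rho$; for the upper/lower walls $j \in \{0, d+2\}$, the choice $d_1, d_2 \geq 5 C_1^3 \rho n$ combined with the bound $|\partial_x \varphi| \geq \varepsilon / 2$ from the first part of \Cref{pqg} forces the $x$-separation $|G_\varphi(t_0, k/n) - x|$ to exceed a constant multiple of $\rho$. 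In either case, $c \omega |z - w|^2$ is of order $n^{3 \delta - 1}$ and dominates all remaining terms thanks to $\delta < 1/(5 m^2)$.

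The main obstacle is the boundary case in which the entrance ($s = 0$ with $t < \rho$) or ending ($s = \varsigma$ with $t > L^{-1} - \rho$) coincides with $\partial \mathfrak{P}$, while simultaneously the particle index $j$ is near $d_1 + 1$; both separations may then be much smaller than $\rho$, so the quadratic boost of \Cref{hwhz} vanishes. To handle this regime I will exploit two additional features: first, the $\varkappa = 0$ hypothesis of \Cref{gh0} pins $\bm{x}$'s boundary values down exactly, $x_k(0) = u_k = G(0, k/n)$, which together with $\varphi|_{\partial \mathfrak{P}} = H$ gives $\varphi(w) = k/n$ precisely and removes all fluctuation on the $\bm{x}$ side; second, the $m$-th order matching of $\bm{Q}_{\mathfrak{H}}(z_0)$ with the profile $\bm{Q}$ from \eqref{q0q1k} (built from $\varphi$ at $z$), as furnished by \Cref{hconcentrateq2} and tightened through \Cref{hestimateq}, enables a Taylor comparison of $\mathfrak{H}(w_0) - \mathfrak{H}(z_0)$ with $\varphi(w) - \varphi(z)$ sharp enough to close the inequality once combined with the $c_1 \rho^2 \omega / 2$ slack deliberately built into the definition of $A$. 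Once the four comparisons are verified on $\mathscr{F}$, \Cref{monotoneheight} applied to the boundary data $(\bm{u}'; \bm{v}'; \mathfrak{x}^-; \mathfrak{x}^+)$ and $(\bm{u}''; \bm{v}''; \mathfrak{y}^-; \mathfrak{y}^+)$ produces the asserted coupling.
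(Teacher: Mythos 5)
Your skeleton matches the paper's: couple via \Cref{monotoneheight} after checking the four boundary comparisons on $\Omega^{\complement}$ intersected with the concentration event for $\bm{y}$, convert the height-function bounds into the comparison \eqref{hxj}, apply \Cref{hwhz}, and split according to whether $|w_0 - z_0| \ge \rho$. The generic case is fine: when the separation is at least $\rho$, the quadratic gain $c_1\omega|w_0-z_0|^2$ is exactly twice the $\tfrac{c_1\rho^2\omega}{2}$ built into $A$, and everything closes. But your treatment of the remaining cases has a genuine gap. The mechanism the paper uses there is the \emph{boundary gap of the barrier}: by construction $\varphi_{j;k}^+ = H$ on $\partial\mathfrak{P}_{f;g}$ while $\varphi_{j;k}^+ \ge H + c_2\omega$ in the interior (with $c_2 = \tfrac{1}{8}e^{-3B_0\zeta}$, see \eqref{e:boundarygap}), so whenever the comparison point sits on $\partial\mathfrak{P}$ one picks up an extra gain of order $\omega$, which dominates $\tfrac{c_1\rho^2\omega}{2} + 2n^{\delta-1} + c_1^{-1}\rho^{m}|w_0-z_0|$ because $\rho^2 \to 0$. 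None of your proposed substitutes supplies such a gain: the $\varkappa=0$ pinning merely prevents an additional loss on the $\bm{x}$ side; the $m$-th order profile matching is already what \Cref{hwhz} encodes, and its quadratic gain $c\omega|z-w|^2$ vanishes precisely in the regime you are worried about; and the $\tfrac{c_1\rho^2\omega}{2}$ slack in the definition of $A$ enters \eqref{hxj} with a plus sign on the $\mathfrak{H}$ side — it is a cost you must beat (its purpose is to produce the deflation $\varphi_{j;k-1}^+ \to \varphi_{j;k}^+$ in \Cref{gh1}), not a resource you can spend here. So the inequality in the near-boundary regime is not closed by your argument.

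A second, related omission is the wall case $j \in \{0, d+2\}$ when $z$ is close to the top (or bottom) of the strip. You assert $d_1, d_2 \ge 5C_1^3\rho n$, but $d_1 = \min\{\lceil 5C_1^3\rho n\rceil, A-1\}$ may instead equal $A-1$; the paper shows this is exactly what happens when $|w_0 - z_0| < \rho$, in which case $x_0'(s) = g(s+t-\varsigma_1) - x$ is the strip boundary itself and no $x$-separation of order $\rho$ is available. That case is again resolved by the gap $\varphi \ge H + c_2\omega$ just inside the north boundary (where $\mathfrak{H}$ and $H$ vanish), as in \eqref{hrho02}. To repair the proposal you should isolate the two near-boundary configurations ($s \in \{0,\varsigma\}$ with $\varsigma_1 < \rho$ or $\varsigma_2 < \rho$; and $j\in\{0,d+2\}$ with $d_1 = A-1$ or $d_2 = n-A$) and in each invoke the order-$\omega$ discontinuity of $\varphi$ across $\partial\mathfrak{P}$ rather than the Taylor/profile matching.
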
 

	\begin{proof} 
		
	 By monotonicity (\Cref{monotoneheight}), we must show on an event of probability at least $1 - c^{-1} e^{-c (\log n)^2} $ that
	\begin{flalign}
		\label{xyxy} 
		\begin{aligned} 
			& x_j' (0) \le y_j' (0), \quad \text{and} \quad x_j' (\varsigma) \le y_j' (\varsigma), \qquad \quad \text{for each $j \in \llbracket 1, d+1 \rrbracket$};\\
			& x_0' (s) \le y_0' (s), \quad \text{and} \quad x_{d+2}' (s) \le y_{d+2}' (s), \quad \text{for each $s \in [0, \varsigma]$}.
		\end{aligned} 
	\end{flalign}	

	\noindent Defining the event 
	\begin{flalign}
		\label{evente0} 
		\mathscr{E} = \Bigg\{ \displaystyle\sup_{z \in \mathfrak{S}} \big| n^{-1} \mathsf{H}^{\bm{y}} (z) - \mathfrak{H} (z)  \big| \le n^{\delta-1} \Bigg\}, \qquad \text{we have} \qquad \mathbb{P} [\mathscr{E}] \ge 1 - C_2 e^{- c_0 (\log n)^2},
	\end{flalign}

	\noindent since $\mathfrak{H}$ is $(C_2; c_0; \delta)$-concentrating. On $\mathscr{E} \cap \Omega^{\complement}$, we have  
	\begin{flalign*}
		\varphi \big( s, x_j (s) \big) \ge \displaystyle\frac{j}{n}; \qquad \mathfrak{H} \big( s, y_{j-n^{\delta}} (s) \big) \le \displaystyle\frac{j}{n}, \qquad \text{for any $j \in \llbracket 1, n \rrbracket$},
	\end{flalign*}
	\noindent where the first statement holds from $\Omega^{\complement}$ and the second holds from $\mathscr{E}$. Replacing $(j,s)$ with $(A+j-d_1-1,s+t-\varsigma_1)$ in the former; replacing $(j, s)$ with $(B+j+n^\delta-d_1-1, s)$ in latter; and using \eqref{xjsyjs} and \eqref{absigma}, we find for $j \in \llbracket 1, n \rrbracket$ and $s \in [0, \varsigma]$ that 
	\begin{flalign*}
		& \varphi \big( s + t - \varsigma_1, x_j' (s) + x \big) \ge \varphi (z) - \displaystyle\frac{c_1 \rho^2 \omega}{2} + \displaystyle\frac{j-d_1-1}{n}; \\
		& \mathfrak{H} \big( s + \rho - \varsigma_1, y_j' (s) \big) \le \mathfrak{H} (z_0) + 2n^{\delta-1} + \displaystyle\frac{j-d_1-1}{n}.
	\end{flalign*}

	\noindent It follows that 
	\begin{flalign}
		\label{hxj}
		\begin{aligned}
		\varphi (z) - \varphi \big( s + t - \varsigma_1, x_j' (s) + x \big) & \le \displaystyle\frac{c_1 \rho^2 \omega}{2} + \displaystyle\frac{d_1-j+1}{n} \\ 
		& \le \mathfrak{H} (z_0) - \mathfrak{H} \big( s + \rho - \varsigma_1, y_j' (s) \big) + \displaystyle\frac{c_1 \rho^2 \omega}{2} + 2n^{\delta-1}.
		\end{aligned} 
	\end{flalign} 

	We next apply \Cref{hwhz} with that $w_0 = \big( s + \rho - \varsigma_1, y_j' (s) \big)$ for either $(j, s) \in \{ 0, d+2 \} \times [0, \varsigma]$ or $(j, s) \in \llbracket 1, d+2 \rrbracket \times \{ 0, \varsigma \}$. To that end, we must first verify that $|z_0 - w_0| \le n^{1/2m^2} \rho$. In both cases, we have from the event $\mathscr{E}$ from \eqref{evente0}, the first two bounds in \eqref{qh2}, and \eqref{absigma} that
	\begin{flalign*}
		|z_0 - w_0| \le \big| y_j' (s) \big| + \rho & = \big| y_{B + j - d_1 - 1} (s + \rho - \varsigma_1) \big| + \rho \\ 
		& \le C_1 \bigg| \displaystyle\frac{B+j-d_1-1}{n} - \mathfrak{H} (s + \rho - \varsigma_1, 0) \bigg| + C_1 n^{\delta-1} + \rho \\
		& \le C_1 \bigg| \displaystyle\frac{B+j-d_1-1}{n} - \mathfrak{H} (z_0) \bigg| + C_1 n^{\delta-1} + 2C_1^2 \rho \le 20C_1^4 \rho \le n^{1/2m^2} \rho,
	\end{flalign*}
	
	\noindent which establishes the bound $|w_0 - z_0| \le n^{1/2m^2} \rho$. We may thus apply \Cref{hwhz} with $w_0 = \big( s + \rho - \varsigma_1, y_j' (s) \big)$. By \eqref{hxj}, this yields
	\begin{flalign}
		\label{hz0hw0}
		\begin{aligned}
		\varphi (z) - \varphi \big( s + t - \varsigma_1, x_j' (s) + x \big) & \le \mathfrak{H} (z_0) - \mathfrak{H} (w_0) + \displaystyle\frac{c_1 \rho^2 \omega}{2} + 2n^{\delta-1} \\
		& \le \varphi (z) - \varphi \big( s + t - \varsigma_1, y_j' (s) + x \big) + \displaystyle\frac{c_1 \rho^2 \omega}{2} + 2n^{\delta-1} \\ 
		& \qquad - c_1 \omega |w_0 - z_0|^2 + c_1^{-1} \rho^m |w_0 - z_0|.
		\end{aligned}
	\end{flalign}

	\noindent If $|w_0 - z_0| \ge \rho$, then since $c_1 \rho^2 \omega = c_1 n^{3 \delta - 1} \ge 4 n^{\delta-1}$ and $c_1^{-1} \rho^m |w_0 -z_0| \le c_1^{-1} n^{1/2m^2} \rho^{m+1} \le c_1^{-1} n^{(m+1)\delta-1-1/m + 1/2m^2} \le n^{\delta-1} $ (by \eqref{rhoomega} and the fact that $\delta \le \frac{1}{5m^2}$) it follows that 
	\begin{flalign*} 
		\varphi \big(s + t - \varsigma_1, y_j' (s) + x \big) \le \varphi \big( s + t - \varsigma_1, x_j' (s) + x \big).
	\end{flalign*} 

	\noindent Since $\partial_x \varphi < 0$ (by the first part of \Cref{pqg}), this yields $y_j' (s) \ge x_j' (s)$, confirming \eqref{xyxy}. 
	
	If instead $|w_0 - z_0| < \rho$, then we can improve \eqref{hz0hw0} using the fact that $\varphi |_{\partial \mathfrak{P}} = H |_{\partial \mathfrak{P}}$. In particular, assume first in \eqref{xyxy} that $s \in \{ 0, \varsigma \}$; since the two cases are entirely analogous, we let $s = 0$. Since $\varsigma_1 \le |w_0 - z_0| < \rho$, it follows from \eqref{absigma} that $t < \rho$ and so $\varsigma_1 = t$. Then, $\varphi \big( s + t - \varsigma_1, x_j' (s) + x \big) = \varphi \big( 0, x_j' (s) + x \big)$. We next claim that 
	\begin{align}\label{e:boundarygap}
	\varphi \big( 0, y_j' (s) + x \big) = H \big( 0, y_j' (s) + x \big) \le \varphi \big( 0^+, y_j' (s) + x \big) - c_2\omega, \qquad \text{where $c_2 = \frac{1}{8 e^{3 B_0 \zeta}}$}. 
	\end{align}

	\noindent Indeed, the first statement follows from the definition of $\varphi$ (see below \eqref{functionsjkz}) with the fact that $\big( 0, y_j' (s) + x \big) \in \partial \mathfrak{P}$. To see the second, first observe that 
\begin{align}
	\label{psi0} 
\sup_{z \in \mathfrak{P}} \psi (z) = \sup_{z \in \mathfrak{P}} (\Upsilon - e^{\zeta t} - e^{\zeta x}) 
\le 2 + \sup_{z \in \mathfrak{P}} (e^{\zeta t} + e^{\zeta x}) 
\le 2 e^{3 B_0 \zeta} + 2 \le 4 e^{3 B_0 \zeta},
\end{align}
where in the first statement we used the definition \eqref{rhoomega} of $\psi$; in the second we used the fact that $\inf_{z \in \mathfrak{P}} \psi (z) = 2$; in the third we used the fact that $|t| + |x| \le \sup_{t \in [0, 1/L]} \big| f(t) \big| + \sup_{t \in [0, 1/L]} \big| g(t) \big| + L^{-1} \le 3 B_0$; and in the fourth we used the fact that $B_0, \zeta \ge 0$. Next, observe for any $z \in \mathfrak{P}$ that 
\begin{align*}
\varphi(z) - H(z) 
= 3  - (j+1) \omega +  \kappa \omega \psi(z)
& \ge \kappa \omega \psi (z) \\
&\ge \displaystyle\frac{\omega}{4}   \sup_{z \in \mathfrak{P}} \psi (z) \cdot \frac{ \inf_{z \in \mathfrak{P}} \psi (z) }{ \sup_{z \in \mathfrak{P}} \psi (z) }
\ge \frac{\omega}{4}  \cdot \frac{2} {4 e^{3 B_0 \zeta}}
\ge \frac{\omega} {8 e^{3 B_0 \zeta}}
= c_2 \omega,
\end{align*}
where the first statement is from \eqref{functionsjkz}; the second is from the fact that $j \le J = 3 \omega^{-1} - 1$; the third is from the fact $\kappa \ge \frac{1}{4}$; the fourth is from the fact that $\inf_{z \in \mathfrak{P}} \psi (z) \ge 2$ (from below equation  \eqref{rhoomega}) and \eqref{psi0}; and the fifth is from the definition of $c_2$ as in \eqref{e:boundarygap}. This verifies \eqref{e:boundarygap}.

	The estimate \eqref{e:boundarygap}, together with the first bound in \eqref{hz0hw0}, (the $w_0 = \big( s^+ - \varsigma_1 + \rho, y_j' (s) \big)$ case of) \Cref{hwhz}, and \eqref{rhoomega}, yields (recalling that $s = 0$ and $t = \varsigma_1$)
	\begin{flalign*}
		\varphi (z) - \varphi \big( & 0, x_j' (s) + x \big) \\
		 & \le \mathfrak{H} (z_0) - \mathfrak{H} \big( s^+ + \rho - \varsigma_1, y_j' (s) \big) + \displaystyle\frac{c_1 \rho^2 \omega}{2} + 2n^{\delta-1} \\
		& \le \varphi (z) - \varphi \big( 0^+, y_j' (s) + x \big) + \displaystyle\frac{c_1 \rho^2 \omega}{2} + 2n^{\delta-1} - c_1 \omega |w_0 - z_0|^2 + c_1^{-1} \rho^m |w_0 - z_0| \\
		& \le \varphi (z) - \varphi \big( 0, y_j' (s) + x \big) + \displaystyle\frac{c_1 \rho^2 \omega - 2c_2\omega}{2} + 2n^{\delta-1} + c_1^{-1} \rho^{m+1} \le \varphi (z) - \varphi \big( 0, y_j' (s) + x \big),
	\end{flalign*}

	\noindent from which it again follows that $y_j' (s) \ge x_j' (s)$ since $\partial_x \varphi < 0$.

	Now assume that $j \in \{ 0, d + 2 \}$; the two cases are again entirely analogous, so let $j = 0$. Then, 
	\begin{flalign*} 
		\rho > y_0' (s) = y_{B - d_1 - 1} (s + \rho - \varsigma_1) & \ge \displaystyle\frac{d_1}{C_1 n} - C_1 \big| n^{-1} B - \mathfrak{H} (s + \rho - \varsigma_1, 0) \big| - C_1 n^{\delta-1} \\
		&  \ge \displaystyle\frac{d_1}{C_1 n} - C_1 \big| \mathfrak{H} (\rho,0) - \mathfrak{H} (s + \rho - \varsigma_1, 0) \big| - 2 C_1 n^{\delta-1} \\
		&  \ge \displaystyle\frac{d_1}{C_1 n} - C_1^2 \rho - 2 C_1 n^{\delta-1}.
	\end{flalign*}

	\noindent Here, the first statement follows from the fact that $y_0' (s) \le |w_0 - z_0| < \rho$; the second follows from \eqref{xjsyjs}; the third follows from the facts that we are restricting to the event $\mathscr{E}$ from \eqref{evente0}, and also from the first two bounds in \eqref{qh2}; the fourth follows from the definition \eqref{absigma} of $B$; and the fifth follows from the first bound in \eqref{qh2}. Hence, $d_1 < 5C_1^3 \rho n$, implying by \eqref{absigma} that $d_1 = A-1$.
	
	We must show that $y_0' (s) \ge x_0' (s)$. Since $x_0' (s) = x_{A-d-1} (s + t - \varsigma_1) - x = x_0 (s + t - \varsigma) - x = g(s + t - \varsigma_1) - x$ for each $s \in [0, \varsigma]$, this is equivalent to $y_{B-d_1 - 1} (s + \rho) \ge g(s + t) - x$ for each $s \in [-\varsigma_1, \varsigma_2]$. To that end, observe that 
	\begin{flalign}
		\label{hrho02}
		\begin{aligned} 
		\mathfrak{H} & (\rho, 0) - \varphi (t, x) \\ 
		& \le \mathfrak{H} \big(s + \rho - \varsigma_1, g (t + s - \varsigma_1) - x^+ \big) - \varphi (t + s - \varsigma_1, g (t + s - \varsigma_1) - 0^+ \big) + c_1^{-1} n^{1/2m^2} \rho^{m+1} \\
		& \le \mathfrak{H} \big( s + \rho - \varsigma_1, g(s + t - \varsigma_1) - x \big)  - \displaystyle\frac{c_2\omega}{2}.
		\end{aligned} 
	\end{flalign}

	\noindent Here, in the first inequality, we applied \Cref{hwhz}; in the second, we used the fact that $\varphi \big( t - \varsigma_1, g (t+s-\varsigma_1) - 0^+ \big) \ge c_2\omega\ge 2c_1^{-1} n^{1/2m^2} \rho^{m+1}$ by \eqref{rhoomega} and \eqref{e:boundarygap} (since $\mathfrak{H} \big( t + s - \varsigma_1, g(t + s - \varsigma_1) \big) = 0$). Moreover, 
	\begin{flalign*}
		\mathfrak{H} \big( s + \rho - \varsigma_1, y_0' (s) \big) & = \mathfrak{H} \big( s + \rho - \varsigma_1, y_{B-d_1-1} (s + \rho - \varsigma_1) \big) \\ 
		& \le n^{-1} (B - d_1 - 1) + n^{\delta-1} = n^{-1} (B-A) + n^{\delta-1},
	\end{flalign*}

	\noindent where in the first statement we used the definition of $y_j'$ from \eqref{xjsyjs}; in the second we used the fact that we are restricting to $\mathscr{E}$ (from \eqref{evente0}); and in the third we used the equality $d_1 = A-1$. Thus,
	\begin{flalign*} 
		\mathfrak{H} \big( s + \rho - \varsigma_1, y_0' (s) \big) & \le n^{-1} (B-A) + n^{\delta-1} \\
		& \le \mathfrak{H} (\rho, 0) - \varphi (t, x)  + \displaystyle\frac{c_1 \rho^2 \omega}{2} + 2n^{\delta-1} \\
		& \le \mathfrak{H} \big( s + \rho - \varsigma_1, g(s+t-\varsigma_1) - x \big) + \displaystyle\frac{c_1 \rho^2 \omega - c_2\omega}{2} + 2 n^{\delta-1} \\
		& \le \mathfrak{H} \big(s - \rho - \varsigma_1, g(s+t-\varsigma_1) - x \big).
	\end{flalign*}

	\noindent where in the second bound we used the definitions \eqref{absigma} of $A$ and $B$; in the third we applied \eqref{hrho02}; and in the fourth we used the fact that $c_2\omega - c_1 \rho^2 \omega  \ge \frac{c_2\omega}{2} > 4 n^{\delta-1}$ for sufficiently large $n$. Since $\partial_x \mathfrak{H} < 0$, it follows that $y_0' (s) \ge g(s+t-\varsigma_1) - x$. In particular, $x_0' (s) \le y_0' (s)$, which again verifies \eqref{xyxy} and thus the lemma.     
	\end{proof}

	We can now establish \Cref{gh1}.

	\begin{proof}[Proof of \Cref{gh1}]
		
	By \Cref{eventf}, there exists a constant $c_2 = c_2 (\varepsilon, B, m)  > 0$ and an event $\mathscr{F}$ with $\mathbb{P} [\mathscr{F}] \ge 1 - c_2^{-1} e^{-c_2 (\log n)^2}$ such that $x_j' (s) \ge y_j' (s)$ for each $(j, s) \in \llbracket 1, d+1 \rrbracket \times [0, \varsigma]$ on $\mathscr{F}$. By \eqref{evente0} (and recalling the event $\mathscr{E}$ there), we may assume that $\mathscr{F} \subseteq \mathscr{E}$, for otherwise we can replace $\mathscr{F}$ with $\mathscr{E} \cap \mathscr{F}$; we restrict to $\mathscr{F}$ in what follows. Observe on $\mathscr{E}$ (and thus on $\mathscr{F}$) that $(\varsigma_1, 0) \in \mathfrak{P}_{\mathfrak{y}^-; \mathfrak{y}^+}$, since by \eqref{hxihxi} and \eqref{absigma} we have (as $A \le n - 5 n^{\delta}$) 
	\begin{flalign*} 
		\mathsf{H}^{\bm{y}'} (\varsigma_1, 0) = \mathsf{H}^{\bm{y}} (z_0) - n \big( \mathfrak{H} (z_0) + n^{\delta-1}) + d_1 + 1 \in [d_1 - 2n^{\delta} + 1, d_1 + 2n^{\delta} + 1] \subseteq [1, d+1].
	\end{flalign*} 

	\noindent The above coupling thus implies that $\mathsf{H}^{\bm{x}'} (\varsigma_1, 0) \le \mathsf{H}^{\bm{y}'} (\varsigma_1, 0)$, so by \eqref{hxihxi} we deduce
	\begin{flalign*}
		\mathsf{H}^{\bm{x}} (z) = \mathsf{H}^{\bm{x}'} (\varsigma_1, 0) + A - d_1 - 1 & \le \mathsf{H}^{\bm{y}'} (\varsigma_1, 0) + A - d_1 - 1 \\
		& = \mathsf{H}^{\bm{y}} (\rho, 0) + A - B  \le n \bigg( \varphi (z) - \displaystyle\frac{c_1 \rho^2 \omega}{2} \bigg) + \mathsf{H}^{\bm{y}} (\rho, 0) - B.
	\end{flalign*}

	\noindent Since on the event $\mathscr{F} \subseteq \mathscr{E}$, we have $\mathsf{H}^{\bm{y}} (\rho, 0) \le B$, so
	\begin{flalign*}
		n^{-1} \mathsf{H}^{\bm{x}} (z) \le  \varphi (z) - \displaystyle\frac{c_1 \rho^2 \omega}{2} \le \varphi - n^{-\delta} \rho^2 \omega \psi (z) \le \varphi_{j;k}^+ (z),
	\end{flalign*}

	\noindent where in the second bound we used the definition \eqref{functionsjkz} of $\varphi_{j;k}^+$ (and the uniform boundedness of $\psi$, from \eqref{rhoomega}); this establishes the proposition.
\end{proof}

\section{Concentrating Functions With Given Local Profiles}

\label{ProofConcentration}

In this section we establish \Cref{hconcentrateq}.

\subsection{Taylor Approximations Through Dyson Brownian Motion}

\label{MeasureApproximate}

The following proposition produces measures whose Stieltjes transforms have specified derivatives. In what follows, we recall the Stieltjes transform $m_{\mu}$ of any measure $\mu \in \mathscr{P}$ from \Cref{TransformConvolution}.

\begin{prop}
	\label{p:constructrho}
	
	For any integer $m \ge 1$ and real numbers $\varpi > 0$, $\varepsilon \in (0, 1)$, and $A, B > 1$, there exists a constant $\delta = \delta (\varepsilon, \varpi, A, B) > 0$ such that the following holds. Let $\bm{r} = (r_0, r_1, r_2, \ldots , r_m) \in \mathbb{R}^{m+1}$ be an $(m+1)$-tuple of real numbers with $|r_j| \le A$ for each $j \in \llbracket 0, m \rrbracket$. Also let $\mu \in \mathscr{P}$ be a compactly supported probability measure with density $\varrho : \mathbb{R} \rightarrow \mathbb{R}_{\ge 0}$ that is $\mathcal{C}^m$ in a neighborhood of $0$, satisfying $\varepsilon \le  \varrho(x) \le B$ for each $x \in [-\varpi, \varpi]$; $\varrho (x) = 0$ for $|x| > \varpi$; and $\big| \Real m_{\mu} (0) \big| \le A$. There exists a measure $\widetilde{\mu} \in \mathscr{P}_0$ with density $\widetilde{\varrho} : \mathbb{R} \rightarrow \mathbb{R}_{\ge 0}$ compactly supported on $[-\varpi, \varpi]$, satisfying the following two properties. 
	
	\begin{enumerate} 
		
		\item We have $\frac{\varepsilon}{2} \le  \widetilde{\varrho} (x) \le 2B$ for each $x \in [-\varpi, \varpi]$, and $\widetilde{\varrho} (x) = \varrho (x)$ for each $x \in [-\delta, \delta]$. 
		
		\item For each integer $k \in \llbracket 0, m \rrbracket$ we have
		\begin{align}
			\label{mmurmmu}
			\partial_z^k m_{\tilde{\mu}} (0)= r_k+ \mathrm{i} \Imaginary \partial_z^k m_{\mu} (0).    
		\end{align}
	\end{enumerate}
\end{prop}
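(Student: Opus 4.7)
The plan is to write $\widetilde{\varrho} = \varrho + \eta$, where $\eta: \mathbb{R} \to \mathbb{R}$ is a perturbation supported in $[-\varpi, -\delta] \cup [\delta, \varpi]$; this forces $\widetilde{\varrho}|_{[-\delta, \delta]} = \varrho|_{[-\delta, \delta]}$ automatically. Since $\eta$ is real-valued and vanishes near the origin, $m_\eta(z) = \int \eta(x)/(x-z)\,dx$ is holomorphic across $z = 0$, and its derivatives $\partial_z^k m_\eta(0) = k! \int \eta(x) x^{-(k+1)}\,dx$ are purely real. Consequently $\Imaginary \partial_z^k m_{\widetilde{\mu}}(0) = \Imaginary \partial_z^k m_\mu(0)$ is satisfied for free for every $k \in \llbracket 0, m \rrbracket$, and the condition \eqref{mmurmmu} reduces to the real linear constraints
\begin{align*}
\int \eta(x) x^{-(k+1)}\,dx = \frac{r_k - \Real \partial_z^k m_\mu(0)}{k!} =: R_k, \qquad k \in \llbracket 0, m \rrbracket,
\end{align*}
augmented by the mass condition $\int \eta(x)\,dx = 0$ that keeps $\widetilde{\mu}$ a probability measure.

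To solve this $(m+2)$-dimensional linear system I would parameterize $\eta = \sum_{j=0}^{m+1} c_j \psi_j$ as a linear combination of $m+2$ smooth bump functions $\psi_0, \ldots, \psi_{m+1}$ with pairwise-disjoint supports in $[\delta, \varpi]$, centered at distinct points $a_0 < a_1 < \cdots < a_{m+1}$. The system then takes the form $M\bm{c} = (R_0, \ldots, R_m, 0)^T$, and for bumps chosen sufficiently narrow compared to the spacing of the $a_j$, the matrix $M$ is arbitrarily close to the matrix $\widehat{M}$ whose rows (indexed by $k \in \llbracket 0, m \rrbracket$) are $(a_0^{-(k+1)}, \ldots, a_{m+1}^{-(k+1)})$ together with the final row $(1, 1, \ldots, 1)$. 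Setting $b_j = a_j^{-1}$ and reindexing the rows by $p \in \{0, 1, \ldots, m+1\}$, the matrix $\widehat{M}$ is (up to a row permutation) the classical Vandermonde matrix with entries $b_j^p$, hence is nonsingular whenever the $a_j$ are distinct. Thus $M$ is invertible and one recovers $\bm{c} = M^{-1}(R_0, \ldots, R_m, 0)^T$.

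The main obstacle is the quantitative step of ensuring that $\|\eta\|_{L^\infty} \le B$, so that $\widetilde{\varrho} = \varrho + \eta$ lies in $[\varepsilon/2, 2B]$. The size of $\bm{c}$ is bounded by $\|M^{-1}\| \cdot \|\bm{R}\|$, and $|R_k|$ depends on $|\Real \partial_z^k m_\mu(0)|$, which after integrating by parts against a smooth cutoff of $\varrho$ is expressible as a principal value integral of $\varrho^{(k)}$ near the origin (plus bounded boundary contributions from the cutoff). To control this uniformly, the bump centers $a_j$ should be chosen at distances of order $\delta$ from the origin, so that the entries of $M$ scale like $\delta^{-(k+1)}$ and $M^{-1}$ correspondingly gains a factor of $\delta^{k+1}$ on the $k$-th column; for $\delta$ taken sufficiently small in terms of $\varepsilon, \varpi, A, B, m$, the resulting coefficients $c_j$ then shrink enough that the bound $\|\eta\|_{L^\infty} \le B$ (in fact, $\le \varepsilon/2$) holds. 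Tracking this Vandermonde scaling carefully, along with verifying that the bump approximation of point masses does not spoil the conditioning of $M$, will constitute the main technical step of the proof.
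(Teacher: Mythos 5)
Your overall architecture — perturb $\varrho$ only away from the origin (so $\widetilde{\varrho}=\varrho$ near $0$ and all imaginary parts of $\partial_z^k m$ at $0$ are untouched), reduce \eqref{mmurmmu} to an $(m+2)\times(m+2)$ real linear system including the zero-mass row, observe the matrix is a perturbation of a Vandermonde matrix, and make the coefficients small by pushing the support scale of the perturbation toward $0$ — is essentially the same as the paper's main step (there the building blocks are indicator functions of intervals $[a(y_\ell+y),a(y_\ell+y+b)]$, and the smallness of the coefficients comes from the diagonal factor $\mathrm{diag}(a,a,a,a^2,\ldots,a^m)$ as $a\to 0$).

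There is, however, a genuine gap at $k=0$, and it is exactly the point your scaling argument cannot handle. The $k=0$ constraint is $\int \eta(x)\,x^{-1}\,dx = R_0 = r_0-\Real m_\mu(0)$, and $|R_0|$ can be as large as $2A$, with $A$ possibly much larger than $\varepsilon$ and $B$. If $\eta$ is a combination of bumps centered at points $a_j\asymp\delta$ with widths $\lesssim\delta$ (your setup), then $\bigl|\int \eta(x)x^{-1}dx\bigr|\le C\,\delta^{-1}\int|\eta|\le C'\|\eta\|_{L^\infty}$, so meeting the $k=0$ constraint forces $\|\eta\|_{L^\infty}\gtrsim |R_0|$ \emph{independently of $\delta$}: in your own bookkeeping, $\|c_j\psi_j\|_\infty\sim \delta^{-1}|c_j|\sim\sum_k\delta^{k}|R_k|$, and the $k=0$ term does not shrink. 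Hence the required bound $\tfrac{\varepsilon}{2}\le\widetilde{\varrho}\le 2B$ fails whenever $|R_0|$ exceeds order $\varepsilon$. The scale-shrinking mechanism only helps for $k\ge 1$, because those rows carry a factor $\delta^{k}$ (equivalently, the $x^{-1}$ moment is scale-invariant in the logarithmic sense, unlike $x^{-(k+1)}$ for $k\ge1$). The fix requires a qualitatively different construction for the real part at $0$: the paper first adjusts $\Real m_\mu(0)$ to equal $r_0$ by adding $\tfrac{\varepsilon}{4}\bigl(\mathbf{1}_{[d,\,e^{8\vartheta/\varepsilon}d]}-\mathbf{1}_{[e^{-4\vartheta/\varepsilon}d,\,d]}\bigr)$ with $d<e^{-10(A+B)/\varepsilon}\varpi$, i.e.\ fixed height $\varepsilon/4$ but supports of \emph{logarithmically calibrated length}, exploiting $\int_d^{e^{L}d}x^{-1}dx=L$; only after this reduction (so that $p$ for the $k=0$ row vanishes) does the Vandermonde-plus-shrinking argument close. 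Equivalently, in your language you would need one building block spread over an interval of length ratio $e^{\Theta(A/\varepsilon)}$ rather than a narrow bump, which is also what dictates that $\delta$ must be exponentially small in $(A+B)/\varepsilon$. Without some version of this step, the proposal as written does not prove the first enumerated property.
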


\begin{proof}[Proof of \Cref{p:constructrho}]
	
	If $\Real m_{\mu} (0) = r_0$, then we will establish the lemma with the stronger bound on $\widetilde{\varrho}$ given by $\frac{3\varepsilon}{4} \le \widetilde{\varrho} (x) \le B + \frac{\varepsilon}{4}$. Let us first reduce to this statement. If $\Real m_{\mu} (0) \ne r_0$, then denote $\vartheta = r_0 - \Real m_{\mu} (0)$; assume that $\vartheta > 0$, as the case when $\vartheta < 0$ is entirely analogous. By our assumption, $\vartheta\leq |r_0|+\big| \Real m_{\mu}(0) \big| \leq A+B$.  Then replace $\varrho (x)$ with $\varrho_0 : \mathbb{R} \rightarrow \mathbb{R}$, defined by for any $x \in \mathbb{R}$ setting
	\begin{flalign}
		\label{rho0rho}
		 \varrho_0 (x) = \varrho(x) + \displaystyle\frac{\varepsilon}{4} \cdot \big( \bm{1} (d \le x \le e^{8 \vartheta / \varepsilon} d) - \bm{1} (e^{-4 \vartheta/\varepsilon} d \le x \le d) \big),
	\end{flalign}

	\noindent for any real number $d < e^{-10(A+B)/\varepsilon} \varpi$. Then, $\varrho_0$ continues to be a probability measure supported in $[-\varpi, \varpi]$ and satisfies $\frac{3 \varepsilon}{4} \le \varrho_0 (x) \le B + \frac{\varepsilon}{4}$ for $x \in [-\varpi, \varpi]$. Moreover, $\varrho_0 = \varrho$ in a neighborhood of $0$, and thus $\Imaginary m(0) = \Imaginary m_{\mu_0} (0)$ by the first statement of \eqref{mrho}, where $m_{\mu_0}$ denotes the Stieltjes transform of the measure $\mu_0 = \varrho_0 (x) dx \in \mathscr{P}_0$. Additionally, 
	\begin{flalign*}
		\Real m_{\mu_0} (0) = \Real m(0) + \displaystyle\frac{\varepsilon}{4} \cdot (  8 \varepsilon^{-1} \vartheta - 4 \varepsilon^{-1} \vartheta )= \Real m(0) + \vartheta = r_0.
	\end{flalign*}

	\noindent Thus, $\mu_0$ and $\varrho_0$ continue to satisfy the hypotheses of the lemma (after replacing $(\varepsilon, B)$ with $\big( \frac{3 \varepsilon}{4}, B + \frac{\varepsilon}{4} \big)$ and altering the $r_j$ for $j \in \llbracket 1, m \rrbracket$ to account for the difference in \eqref{rho0rho}) but now with $\Real m_{\mu_0} (0) = r_0$. It follows that there exists a measure $\widetilde{\mu}$ with density $\widetilde{\varrho}$ satisfying the statement of the lemma (and in fact the bounds $\frac{\varepsilon}{2} \le \frac{9\varepsilon}{16} \le \widetilde{\varrho} (x) \le B + \frac{7 \varepsilon}{16} \le 2B$).

	Thus, we will assume in what follows that $\Real m_{\mu} (0) = r_0$. We will take $\widetilde \varrho(x)$ to be of the form
	\begin{align}
		\label{rho2} 
		\widetilde \varrho(x)=\varrho(x)+\sum_{\ell=1}^{m+2} c_\ell \cdot \bm1 \big( a(y_\ell+y)\le x\le a(y_\ell+y+b) \big),
	\end{align}
	
	\noindent where the coefficients $c_{\ell} \in \mathbb{R}$ and positive real numbers $a, b, y, y_\ell>0$ will be chosen later, in a way continuous in the parameters $(\varepsilon, A, B)$, so that their uniform boundedness (fixing $(\varepsilon, A, B)$) follows from compactness. In this way, for any complex number $z \in \mathbb{C}$ in a neighborhood of $0$, we have
	\begin{align}
		\label{msumc}
		m_{\tilde{\mu}}(z)=m_{\mu}(z)+\sum_{\ell=1}^{m+2} c_\ell \log \left(\frac{a(y_\ell+y+b)-z}{a(y_\ell+y)-z} \right).
	\end{align}

	\noindent Thus, for any integer $k \ge 1$, we have
	\begin{align}
		\label{mkz}
		\partial_z^k m_{\tilde{\mu}}(z)= \partial_z^k m_{\mu} (z)+\sum_{\ell=1}^{m+2} c_\ell (-1)^{k-1} (k-1)! \left(\frac{1}{\big(z-a(y_\ell+y+b) \big)^k}-\frac{1}{\big( z-a(y_\ell+y) \big)^k}\right).
	\end{align}
	
	Since the parameters $a$, $b$, $y$, and $y_{\ell}$ are all positive, we have $\widetilde{\varrho} (x) = \varrho (x)$ for $x \in \mathbb{R}$ in a sufficiently small neighborhood of $0$; this verifies the second part of the first statement of the proposition. Moreover, since by \eqref{mrho} (as $m_{\mu}$ and $m_{\tilde{\mu}}$ are well-defined near $0$, by the above) we have $\Imaginary m_{\tilde{\mu}} (x) = \pi \widetilde{\varrho} (x) = \pi \varrho (x) = \Imaginary m_{\mu} (x)$ for $x$ in a neighborhood of $0$. Differentiating this equation in $x$, it follows that $ \partial_z^k \Imaginary m_{\tilde{\mu}} (0) = \partial_z^k \Imaginary m_{\mu} (0)$ for any integer $k \ge 1$, from which the imaginary part of \eqref{mmurmmu} follows.
	
	By \eqref{mkz}, the real part of the second part of \eqref{mmurmmu} holds for any $k \in \llbracket 1, m \rrbracket$ if and only if  
	\begin{align}
		\label{e:tosolve}
			a^k \big(  \partial_z^k \Real m_{\mu} (0) -r_k \big)=\sum_{\ell=1}^{m+2} c_\ell  (k-1)! \left(\frac{1}{(y_\ell+y+b)^k}-\frac{1}{(y_\ell+y)^k}\right)
	\end{align}
	
	\noindent Moreover, $\widetilde{\varrho}$ has total mass $1$ (that is, $\int_{-\infty}^{\infty} \widetilde{\varrho} (x) dx = 1$) and the second part of \eqref{mmurmmu} holds at $k = 0$ if and only if 
	\begin{flalign} 
		\label{sumc2} 
	\displaystyle\sum_{\ell=1}^{m+2} c_{\ell} = 0, \quad \text{and} \quad \displaystyle\sum_{j=1}^{m+2} c_{\ell} \log \bigg( \displaystyle\frac{y_{\ell} + y + b}{y_{\ell} + y} \bigg) = 0,
	\end{flalign} 

	\noindent respectively, by \eqref{rho2} (since $\varrho$ has total mass $1$) and \eqref{msumc} (since $\Real m_{\mu} = r_0$). We denote the $(m+2)$-dimensional vectors $\bm{c}=(c_1, c_2,\cdots, c_{m+2})$ and $\bm{p}=(p_1, p_2,\cdots, p_{m+2}) $, where $p_1 = 0 = p_2$ and 
	\begin{flalign*} 
		p_k=  \partial_z^{k-2} \Real m_{\mu} (0)-r_{k-2}, \qquad \text{for each $k \in \llbracket 3, m+2 \rrbracket$}.
	\end{flalign*} 
	
	\noindent We also define the $(m+2) \times (m+2)$ matrices $\bm{A} = \bm{A}(a) = \diag (a, a, a, a^2, \ldots , a^m)$ and $\bm{W} (\bm{y}, b) = \big\{ W_{k, \ell} (\bm{y}, b) \big\}_{\ell, k \in \llbracket 1, m+2 \rrbracket}$, with entries given by 
	\begin{flalign*}
		& W_{k, \ell}(\bm{y}, b)= (k-3)! \left(\frac{1}{(y_\ell+y+b)^{k-2}}-\frac{1}{(y_\ell+y)^{k-2}}\right), \quad \text{for $k \in \llbracket 3, m+2 \rrbracket$}; \\
		& \qquad \qquad \qquad W_{2, \ell} (\bm{y}, b) = 1, \qquad  W_{1, \ell} (\bm{y}, b) = \log \bigg( \displaystyle\frac{y_{\ell} + y + b}{y_{\ell} + y} \bigg), 
	\end{flalign*} 

	\noindent for each $\ell \in \llbracket 1, m+2 \rrbracket$. Then we can rewrite the relations \eqref{e:tosolve} and \eqref{sumc2} as $\bm{A} \cdot \bm{p} = \bm{W}(\bm{y}, b) \cdot \bm{c}$ or equivalently as $\bm{c}=\bm{W} (\bm{y}, b)^{-1} \bm{A} \cdot \bm{p}$. 
	
	Next we show there exists some choice of $y, y_{\ell}, b>0$ such that $\bm{W}(\bm{y}, b)$ is invertible. Observe that, for generic $b$, the first row of $\bm{W} (\bm{y}, b)$ is linearly independent from the bottom $m+1$ rows (since the logarithmic function is not rational). It therefore suffices to show that the left-bottom $(m+1)\times (m+1)$ block $\bm{W}_0 (\bm{y}, b)$ of $\bm{W} (\bm{y}, b)$ (constituting its $(k, \ell) \in \llbracket 2, m+2 \rrbracket \times \llbracket 1, m+1 \rrbracket$ entries) is invertible. Let us write this block as $\bm{W}_0 (\bm{y}, b) = \bm{V} (\bm{y}) - \bm{E} (\bm{y}, b)$, where the $(m+1) \times (m+1)$ matrices $\bm{V} = \bm{V} (\bm{y}) = \big\{ V_{k, \ell} (\bm{y}) \big\}$ and $\bm{E} = \bm{E} (\bm{y}, b) = \big\{ E_{k, \ell} (\bm{y}, b) \big\}$ are defined by
	\begin{flalign*}
		& V_{1, \ell} (\bm{y}) = 1, \qquad \text{and} \qquad V_{k, \ell} (\bm{y}) = -(k-2)! (y_{\ell} + y)^{1-k},  \quad    \text{for $k  \in  \llbracket 2, m \rrbracket$}; \\
		& E_{1, \ell} (\bm{y}, b) = 0, \qquad \text{and} \qquad E_{k, \ell} (\bm{y}, b) = -(k-2)! (y_{\ell} + y + b)^{1-k}, \quad \text{for $k \in  \llbracket 2, m \rrbracket$},  
	\end{flalign*}
	
	\noindent for each $\ell \in \llbracket 1, m+1 \rrbracket$. Then, $\bm{V}(\bm y)$ is a Vandermonde matrix (up to multiplication by a diagonal matrix), and is therefore invertible if we take $y_1, y_2, \ldots , y_{m+1} > 0$ mutually distinct. Moreover, by taking $b$ sufficiently large (and generic) the norm of $\bm{E} (\bm y, b)$ can be taken to be sufficiently small so as to guarantee that $\bm{W} (\bm y) = \bm{V} (\bm y) - \bm{E} (\bm y, b)$ remains invertible. 
	
	There thus exists a choice of the parameters $y, b, y_1, y_2, \ldots , y_{m+1}$ such that $\bm{W}_0 (\bm{y}, b)$ and thus $\bm{W} (\bm{y}, b)$ is invertible; then, $\bm{c}=\bm{W} (y)^{-1} \bm{A} \cdot \bm{p}$. By taking $a$ sufficiently small, we can ensure that each entry of $\bm{c}$ is smaller than $(4m+8)^{-1} \varepsilon$ and that $a(y_\ell+y+b)\le \varpi$ for each $\ell \in \llbracket 1, m+2 \rrbracket$.  Since $\varepsilon \le \varrho(x) \le B$ for $x\in [-\varpi, \varpi]$ and $\varrho (x) = 0$ for $|x| > -\varpi$, it follows from \eqref{rho2} that $\frac{3 \varepsilon}{4} \le \big| \widetilde \varrho(x) \big| \le B + \frac{\varepsilon}{4}$ for $x \in [-\varpi, \varpi]$ and that $\widetilde{\varrho} (x) = 0$ for $|x| > \varpi$. In particular, $\widetilde{\varrho}$ is a nonnegative measure satisfying the (previously mentioned improvement of the) first statement of the proposition. That it is a probability measure satisfying \eqref{mmurmmu} follows from our choices of $y$, $b$, and the $(y_{\ell})$.  
\end{proof}

The next lemma, which is a quick consequence of \Cref{concentrationequation}, indicates that bridge-limiting measures (recall \Cref{mutmu0mu1}) can be obtained through free convolution (recall \Cref{TransformConvolution}). In what follows, we recall the rescaled semicircle law $\mu_{\semci}^{(t)}$ from \eqref{rhosct}.

\begin{lem}

	\label{nu0nutlimit}
	
	Fix a compactly supported measure $\nu_0 \in \mathscr{P}_{\fin}$ and a real number $t > 0$. For any real number $s > 0$, denote the measure $\nu_s = \nu \boxplus \mu_{\semci}^{(s)} \in \mathscr{P}_{\fin}$. Then, $\bm{\nu} = (\nu_s)_{s \in [0, t]}$ is the bridge-limiting measure process on $[0, t]$ with boundary data $(\nu_0; \nu_t)$. 
	
\end{lem}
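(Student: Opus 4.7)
The plan is to identify $\bm{\nu}$ with the bridge-limiting measure process guaranteed by \Cref{rhot} via comparison with unconditioned Dyson Brownian motion. Using \Cref{mtscale} together with \Cref{aab}, one first rescales to reduce to the case $\nu_0 (\mathbb{R}) = 1$.

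Fix a deterministic sequence $\bm{u}^n \in \overline{\mathbb{W}}_n$ with $\emp (\bm{u}^n) \to \nu_0$ in the L\'evy metric, and sample Dyson Brownian motion $(\bm{\lambda}(s))_{s \in [0, t]}$ of variance $n^{-1}$ with initial data $\bm{u}^n$. By \Cref{concentrationequation} (converting particle rigidity around the classical locations of $\nu_s$ into L\'evy-distance closeness through \eqref{munudistance1}), on an event of probability at least $1 - C e^{-(\log n)^2}$ we have
\begin{flalign*}
\sup_{s \in [0, t]} d_{\dL} \big( \emp(\bm{\lambda}(s)), \nu_s \big) = o(1),
\end{flalign*}
where $\nu_s = \nu_0 \boxplus \mu_{\semci}^{(s)}$; in particular $\emp(\bm{\lambda}(t))$ concentrates at $\nu_t$ in probability.

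By \Cref{lambdat}(2), $(\bm{\lambda}(s))_{s \in [0, t]}$ coincides in law with $n$ Brownian motions of variance $n^{-1}$ starting at $\bm{u}^n$ conditioned to never intersect, and further conditioning on $\bm{\lambda}(t) = \bm{v}$ yields precisely a sample from $\mathfrak{Q}^{\bm{u}^n; \bm{v}} (n^{-1})$. Let $\bm{\mu}^{\star} = (\mu_s^{\star})_{s \in [0, t]}$ denote the bridge-limiting measure process on $[0, t]$ with boundary data $(\nu_0; \nu_t)$ supplied by \Cref{rhot}, and for any bounded continuous $F : \mathscr{P} \to \mathbb{R}$ set $\Phi_n (\bm{v}) = \mathbb{E}_{\bm{x} \sim \mathfrak{Q}^{\bm{u}^n; \bm{v}} (n^{-1})} \big[ F \big( \emp(\bm{x}(s)) \big) \big]$. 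The tower property yields $\mathbb{E} \big[ F \big( \emp(\bm{\lambda}(s)) \big) \big] = \mathbb{E} \big[ \Phi_n (\bm{\lambda}(t)) \big]$; the left side tends to $F(\nu_s)$ by the display above, while applying \Cref{rhot} (combined with the continuity of $\bm{\mu}^{\star}$ in the boundary data) to the random endpoint $\bm{\lambda}(t)$ shows that the right side tends to $F(\mu_s^{\star})$. Since $F$ was arbitrary, $\mu_s^{\star} = \nu_s$.

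The main obstacle will be converting \Cref{rhot}, stated as a convergence along deterministic sequences of boundary data, into a statement applicable to the random endpoint $\bm{\lambda}(t)$. This is handled via a subsequence argument exploiting the continuity assertion in \Cref{rhot}: if $\mathbb{E}[\Phi_n (\bm{\lambda}(t))]$ failed to converge to $F(\mu_s^{\star})$, one could extract a subsequence of realizations $\bm{v}^{n_k} = \bm{\lambda}(t)(\omega_k)$ with $\emp(\bm{v}^{n_k}) \to \nu_t$ in L\'evy metric (guaranteed by the concentration above) along which $\Phi_{n_k}(\bm{v}^{n_k}) \not\to F(\mu_s^{\star})$, contradicting \Cref{rhot}.
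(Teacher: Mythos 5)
Your proposal is correct and follows essentially the same route as the paper's proof: both run Dyson Brownian motion from discrete approximations of $\nu_0$, use \Cref{concentrationequation} to identify the empirical measures along the flow with $\nu_s$, invoke the second part of \Cref{lambdat} to view the endpoint-conditioned process as a sample from $\mathfrak{Q}^{\bm{u}^n; \bm{v}^n}$, and conclude via \Cref{rhot} together with its continuity in the boundary data. Your tower-property and subsequence argument simply makes explicit the random-endpoint step that the paper treats implicitly, and the paper's only other (cosmetic) difference is that it takes $\bm{u}^n$ to be the classical locations of $\nu_0$ rather than an arbitrary approximating sequence.
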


\begin{proof}

By \Cref{mtscale} and \Cref{aab}, we may assume that $\nu_0$ (and thus $\nu_s$ for each $s \ge 0$) is a probability measure. Recalling (from \Cref{gammarho}) the classical locations $\gamma_{j;n}^{\mu} \in \mathbb{R}$ with respect to any probability measure $\mu$, abbreviate $\gamma_{j;n}(s) = \gamma_{j;n}^{(\nu_s)}$, for each real number $s \ge 0$ and integers $n \ge 1$ and $j \in \llbracket 1, n \rrbracket$. Further define the $n$-tuple $\bm{u}^n = (u_1^n, u_2^n, \ldots , u_n^n) \in \overline{\mathbb{W}}_n$ by setting $u_j^n = \gamma_{j;n} (0)$, and let $\bm{u}^n (s)$ denote the process obtained by running Dyson Brownian motion for time $s$ on $\bm{u}$; set $\bm{v}^n = \bm{u}^n (t)$. Denote $\mu_s^n = \emp \big( \bm{u}^n (s) \big)$ for each $s \in [0, t]$ (recall \eqref{aemp}). 

By (the $A=2$ case of) \Cref{concentrationequation}, we have for sufficiently large $n$ that, with probability at least $1 - n^{-1}$, 
\begin{flalign}
	\label{munsnus}
	\displaystyle\int_{-\infty}^{y-2(\log n)^5/n} \nu_s (dx) \le \displaystyle\int_{-\infty}^y \mu_s^n (dx) \le \displaystyle\int_{-\infty}^{y+2(\log n)^5/n} \nu_s (dx), \qquad \text{for all $(s, y) \in [0, t] \times \mathbb{R}$}.
\end{flalign}

\noindent Moreover, the second part of \Cref{lambdat} indicates that, conditional on $\bm{v}^n = \bm{u}^n (t)$, the law of $\bm{u}^n (s)$ is given by $\mathfrak{Q}^{\bm{u}^n; \bm{v}^n}$. This, together with \eqref{munsnus}, \Cref{rhot}, and \Cref{mutmu0mu1}, indicates that $\bm{\nu} = (\nu_s)_{s \in [0, t]}$ is the bridge-limiting measure process on $[0, t]$ with boundary data $(\nu_0; \nu_t)$. 
\end{proof}

\subsection{Proof of \Cref{hconcentrateq}} 

\label{ConcentrateProfile} 

In this section we establish \Cref{hconcentrateq}. In what follows, we recall the notation from that proposition; on free convolutions and Stieltjes and Hilbert transforms from \Cref{TransformConvolution}; on bridge-limiting measure processes from \Cref{mutmu0mu1}; and on the associated (inverted) height functions from \Cref{hrhot}. 
 	
 	The function $\mathfrak{H}$ of the proposition will eventually be the height function associated with a bridge-limiting measure process arising from a free convolution (as in \Cref{nu0nutlimit}), to which end we must define the initial data for the free convolution. This will proceed using \Cref{p:constructrho}, and so we must first define the real numbers $\bm{r}$ and measure $\varrho$ appearing there; we begin with the former. Define real numbers $r_0, r_1, \ldots ,r_{m-1} \in \mathbb{R}$ through the equations
	\begin{flalign}
		\label{rj}
		\displaystyle\sum_{j=0}^{k-1} \binom{k-1}{j} q_0^{(k-j)} r_j = q_1^{(k)}, \qquad \text{for each $k \in \llbracket 1, m \rrbracket$}.
	\end{flalign}

	\noindent Since $q_0^{(1)} \le -\varepsilon$ by \eqref{q0q1estimate}, these equations uniquely define the $r_j$ by induction on $j$. Furthermore, by \eqref{q0q1estimate} and induction on $j$, there exists a constant $C_2 = C_2 (\varepsilon, B, m) > 1$ such that $|r_j| < C_2$ for each $j \in \llbracket 0, m-1 \rrbracket$. 
	
	Next, observe that there exists a constant $C_3 = C_3 (\varepsilon, B, m) > 1$ and a probability measure $\mu \in \mathscr{P}_0$ with density $\varrho \in L^2 (\mathbb{R})$ such that $\supp \varrho = [-1, 1]$, such that $\| \varrho \|_{\mathcal{C}^{m+3} ([-1, 1])} \le C_2$ and such that
	\begin{flalign}
		\label{derivativerho}
		&  \partial_x^k \varrho (0) = -q_0^{(k+1)}, \quad \text{for each $k \in \llbracket 0, m-1 \rrbracket$}, \qquad \text{and} \qquad C_3^{-1} < \varrho (x) \le C_3, \quad \text{for each $x \in [-1, 1]$}.
	\end{flalign}
Moreover, after increasing $C_3$ if necessary, it follows from \eqref{mrho}, the fact that $\supp \varrho \subseteq [-1, 1]$, and the bound $\| \varrho \|_{\mathcal{C}^{m+3} ([-1, 1])} \le C_2$, that 
\begin{flalign*} 
	\big| \Real m_\mu(0) \big| = \pi \big|H \varrho(0) \big| = \bigg| \displaystyle\int_{-1}^1 \frac{\varrho (x) dx}{x} \bigg| = \bigg| \displaystyle\int_{-1}^1 \displaystyle\frac{\big( \varrho (x) - \varrho (0) \big) dx}{x} \bigg| \le 2 \| \varrho \|_{\mathcal{C}_1 ([-1, 1])} \leq 2C_2 \le C_3.
\end{flalign*}

\noindent This verifies the assumptions in  \Cref{p:constructrho}. Thus, by \Cref{p:constructrho}, there is a constant $c_1 = c_1 (\varepsilon, B, m) > 1$ and a probability measure $\mu_0 \in \mathscr{P}_0$ with density $\varrho_0 \in L^1 (\mathbb{R})$, such that $\supp \varrho_0 = [-1, 1]$ and 
	\begin{flalign}
		\label{rho0derivative} 
		\begin{aligned}
		 & (2C_3)^{-1} \le \varrho_0 (x) \le 2C_3, \quad \text{for each $x \in [-1, 1]$}; \qquad \varrho_0 (x) = \varrho (x), \quad \text{for each $x \in [-c_1, c_1]$}; \\
		 & \partial_z^k m_{\mu_0} (0) = r_k + \mathrm{i} \Imaginary \partial_z^k m_{\mu} (0), \quad \text{for each $k \in \llbracket 0, m \rrbracket$}.
		 \end{aligned}
	\end{flalign}

	 For each real number $t > 0$, define the free convolution measure $\mu_t = \mu_0 \boxplus \mu_{\semci}^{(t)} \in \mathscr{P}_0$ and the associated density $\varrho_t \in L^1 (\mathbb{R})$ satisfying $\mu_t (dx) = \varrho_t (x) dx$. Define the constant $c_2 = \frac{c_1}{3C_3}$, and denote the probabilility measure-valued process $\bm{\mu} = (\mu_t)_{[0, c_2]}$; observe that it is the bridge-limiting measure process on $[0, c_2]$ with boundary data $(\mu_0; \mu_{c_2})$, by \Cref{nu0nutlimit}. Denote its height function by $H^{\star} : [0, c_2] \times \mathbb{R} \rightarrow [0, 1]$. We will later define $\mathfrak{H}$ as the restriction of $H^{\star}$ to a certain strip domain.
	 
	 \begin{lem} 
	 	
	 	\label{h0profile}
	 	
		There exist constants $c_3 = c_3 (\varepsilon, B, m) > 0$ and $C_4 = C_4 (\varepsilon, B, m) > 1$ such that the following statements hold. In what follows, we denote $\mathfrak{U}_0 = (0, c_3) \times (-c_3, c_3)$. 
	 	
	 	\begin{enumerate} 
	 		\item For each $z \in \overline{\mathfrak{U}}_0$, we have $\partial_x H^{\star} \le -c_3$ and $\| H^{\star} \|_{\mathcal{C}^{m+2} (\mathfrak{U}_0)} \le C_4$. 
	 		\item The function $H^{\star}$ satisfies the equation \eqref{equationxtb} on $\mathfrak{U}_0$. 
	 		\item The full-$m$ local profile of $H^{\star}$ satisfies $\lim_{z \rightarrow (0, 0)} \bm{Q}_{H^{\star}}^{(m)} (z) = \bm{Q}$. 
	 	\end{enumerate} 
	 	 
	 \end{lem}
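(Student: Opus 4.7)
The plan is to obtain $H^\star$ as the height function of the bridge-limiting measure process built from $\mu_0$ via free convolution (as in \Cref{nu0nutlimit}), and to verify the three claims using the identities $\partial_x H^\star = -\varrho_t$ and $\partial_t H^\star = u_t \varrho_t$ from \Cref{hurho}, together with the regularity results of \Cref{derivativetm} and \Cref{muamubrho}.

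First I would apply \Cref{derivativetm} with the $(\mu_0, \varrho_0, \delta, \mathfrak{c})$ there taken from \eqref{rho0derivative} (so $\delta = (2C_3)^{-1}$ and $\mathfrak{c}$ a fixed constant smaller than $c_1$); this produces a constant $\mathfrak{t}_0 = \mathfrak{t}_0(\varepsilon, B, m) > 0$ and derivative bounds
\begin{flalign*}
  \inf_{|x| \le c_3} \inf_{t \in [0, \mathfrak{t}_0]} \varrho_t(x) \ge c_3, \qquad \max_{k'+i' \le m+1} \sup_{|x|\le c_3}\sup_{t \in [0, \mathfrak{t}_0]} \bigl|\partial_t^{i'} \partial_x^{k'} \varrho_t(x)\bigr| \le C_4',
\end{flalign*}
after setting $c_3 = \min\{\mathfrak{c}/4, \mathfrak{t}_0\}$ and for a suitable $C_4'$. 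Since $\varrho_t > 0$ on $\mathfrak{U}_0 = (0,c_3)\times(-c_3,c_3)$, the set $\mathfrak{U}_0$ lies in the liquid region, so \Cref{muamubrho} yields that $H^\star$ is smooth there and satisfies \eqref{equationxtb}. The first enumerated claim then follows: the lower bound $\partial_x H^\star \le -c_3$ is immediate from $\partial_x H^\star = -\varrho_t$ and the lower bound on $\varrho_t$; the $\mathcal{C}^{m+2}$-bound follows from the derivative bounds on $\varrho_t$ together with the identity $\partial_t H^\star(t,x) = -\pi \varrho_t(x) H\varrho_t(x)$ (proved by integrating \Cref{yconvolution} from $x$ to $\infty$), since the Hilbert transform of a smooth compactly supported function is real-analytic away from the support and smooth where $\varrho_t$ is smooth with bounded derivatives. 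The second enumerated claim is exactly \Cref{muamubrho}.

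The third and main claim is the identification of the local profile. Denote $\bm{Q}_{H^\star}^{(m)}(0,0) = \lim_{z\to (0,0)} \bm{Q}_{H^\star}^{(m)}(z)$, which exists by continuity up to the boundary segment $\{0\}\times[-c_3,c_3]$ (obtained by continuity of $\varrho_t$ and $H\varrho_t$ in $t$ near $0$). I would verify that this sequence matches $\bm{Q}$ in its first two coordinates at each level $k$. For the zeroth coordinate, $\partial_x^k H^\star(0,0) = -\partial_x^{k-1} \varrho_0(0) = -\partial_x^{k-1}\varrho(0) = q_0^{(k)}$, by \eqref{rho0derivative} and \eqref{derivativerho}. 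For the first coordinate, using $\partial_t H^\star = -\pi \varrho_t H\varrho_t$ and Leibniz,
\begin{flalign*}
  \partial_t \partial_x^{k-1} H^\star(0,0) = -\pi \sum_{j=0}^{k-1}\binom{k-1}{j} \partial_x^j \varrho_0(0) \, \partial_x^{k-1-j} H\varrho_0(0).
\end{flalign*}
By \eqref{mrho} the quantity $\pi \partial_x^{k-1-j} H\varrho_0(0)$ equals $\Real \partial_z^{k-1-j} m_{\mu_0}(0) = r_{k-1-j}$, by \eqref{rho0derivative}. Combined with $\partial_x^j \varrho_0(0) = -q_0^{(j+1)}$ and a reindexing $j \mapsto k-1-j$, this rearranges to $\sum_{j=0}^{k-1}\binom{k-1}{j} q_0^{(k-j)} r_j$, which equals $q_1^{(k)}$ by the definition \eqref{rj} of the $r_j$.

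Finally, since $H^\star$ satisfies \eqref{equationxtb} on $\mathfrak{U}_0$, the sequence $\bm{Q}_{H^\star}^{(m)}(0,0)$ is consistent in the sense of \Cref{admissibleconsistent} by \Cref{fconsistent}. By construction $\bm{Q}$ is also consistent, and the two agree in their first two entries at each level $k$, so the uniqueness part of \Cref{q01j} forces $\bm{Q}_{H^\star}^{(m)}(0,0) = \bm{Q}$, completing the proof. The main technical wrinkle I expect is justifying continuity of $\bm{Q}_{H^\star}^{(m)}(z)$ as $z \to (0,0)$, including from the $t=0$ boundary; this should follow from continuity of free convolution in $t$ combined with the quantitative derivative bounds already furnished by \Cref{derivativetm}, but needs to be stated carefully so that the two one-sided limits defining the $q_1^{(k)}$ coordinate genuinely coincide with the boundary computation done via $\mu_0$.
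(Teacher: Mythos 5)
Your proposal is correct and follows essentially the same route as the paper: apply \Cref{derivativetm} (with the bounds from \eqref{derivativerho} and \eqref{rho0derivative}) for the regularity and positivity of $\varrho_t$ near $(0,0)$, deduce that $H^{\star}$ solves \eqref{equationxtb} there, and then match $q_0^{(k)}$ and $q_1^{(k)}$ via $\partial_x H^{\star} = -\varrho_t$, the evolution \eqref{trhoty}, the identity $\pi H\varrho_0 = \Real m_{\mu_0}$ from \eqref{mrho} with \eqref{rho0derivative}, and the defining relation \eqref{rj}, before invoking consistency and the uniqueness in \Cref{q01j}. The only cosmetic difference is that you cite \Cref{muamubrho} where the paper uses \Cref{hequation} for the second statement, and your closing concern about continuity up to the $t=0$ boundary is exactly what the uniform-in-$t\in[0,\mathfrak{t}_0]$ bounds of \Cref{derivativetm} resolve, as in the paper.
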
 
 	
 	\begin{proof}

	The first statement of the lemma follows from \Cref{derivativetm}, whose conditions are verified by \eqref{derivativerho} and the fact that $\| \varrho \|_{\mathcal{C}^{m+1} ([-1, 1])} \le C_3$. This, together with  \Cref{hequation}, implies the second statement of the lemma. To verify the third statement observe, since $H^{\star}$ satisfies \eqref{equationxtb}, its local profile at any point is consistent in the sense of \Cref{admissibleconsistent}. Hence, by \Cref{q01j}, to show $\lim_{z \rightarrow (0, 0)} \bm{Q}_{H^{\star}}^{(m)} (z) = \bm{Q}$, it suffices to show for each $k \in \llbracket 1, m \rrbracket$ that 
	\begin{flalign}
		\label{q0q1}
		\displaystyle\lim_{z \rightarrow (0, 0)} \partial_x^k H^{\star} (z) = q_0^{(k)}, \qquad \text{and} \qquad \displaystyle\lim_{z \rightarrow (0, 0)} \partial_x^{k-1} \partial_t H^{\star} (z) = q_1^{(k)}.
	\end{flalign}

	\noindent The first statement in \eqref{q0q1} holds since $\lim_{z \rightarrow (0, 0)} \partial_x^k H^{\star} (z) = -\partial_x^{k-1} \varrho_0 (0) = -\partial_x^{k-1} \varrho (0) = q_0^{(k)}$, where the first follows from \eqref{htxintegral}; the second from \eqref{rho0derivative}; and the third from \eqref{derivativerho}.
	
	To verify the second statement in \eqref{q0q1}, observe that 
	\begin{flalign*}
		\displaystyle\lim_{z \rightarrow (0, 0)} \partial_x^{k-1} \partial_t H^{\star} (z) & = \bigg( \partial_y^{k-1} \partial_s\displaystyle\int_y^{\infty} \varrho_s (x) dx \bigg) \Bigg|_{s = 0, y = 0} \\
		& = \bigg( \pi \partial_y^{k-1} \displaystyle\int_y^{\infty} \partial_x \big( \varrho_0 (x) H\varrho_0 (x) \big) dx \bigg) \Bigg|_{y=0} \\
		& =  -\pi \bigg( \partial_y^{k-1} \big( \varrho_0 (y) H \varrho_0 (y) \big) \bigg) \Bigg|_{y=0}  = -\pi \displaystyle\sum_{j=0}^{k-1} \binom{k-1}{j} \partial_x^j H \varrho_0 (0) \cdot \partial_x^{k-j-1} \varrho_0 (0),
	\end{flalign*}

	\noindent where first statement follows from  \eqref{htxintegral}; the second from \eqref{trhoty}; the third from performing the integration (and using the fact that $\supp \varrho_0 = [-1, 1]$); and the fourth from taking the $(k-1)$-fold derivative. Together with \eqref{derivativerho}, the second statement of \eqref{mrho}, the last statement of \eqref{rho0derivative}, and \eqref{rj}, this yields 
	\begin{flalign*} 
		\displaystyle\lim_{z \rightarrow (0, 0)} \partial_x^{k-1} \partial_t H^{\star} (z) =  \displaystyle\sum_{j=0}^{k-1} \binom{k-1}{j} q_0^{(k-j)} \partial_x^j \Real m_{\mu_0} (0) = \displaystyle\sum_{j=0}^{k-1} \binom{k-1}{j} q_0^{(k-j)} r_j = q_1^{(k)},
	\end{flalign*}
	
	\noindent confirming the second statement of \eqref{q0q1} and thus the lemma.
	\end{proof} 
	
	Now we can establish \Cref{hconcentrateq}.

\begin{proof}[Proof of \Cref{hconcentrateq}]
	
	Let $G^{\star}$ denote the inverted height function associated with $H^{\star}$ and, recalling the notation of \Cref{h0profile}, fix 
	\begin{flalign}
		\label{c0c4} 
		c_0 = H^{\star} (0, 0); \qquad \nu = \frac{c_3^3}{4C_4^3}; \qquad c_4 = \displaystyle\frac{c_3^2}{2C_4}.
	\end{flalign}

	\noindent For each real number $s \in [0, \nu]$, let $f (s) = G^{\star} (s, c_0 + c_4)$ and let $g (s) = G^{\star} (s, c_0 - c_4)$. Observe that 
	\begin{flalign*} 
		c_0 = H^{\star} (0, 0) \ge - c_3 \displaystyle\int_0^{c_3} \partial_x H^{\star} (0, y) dy \ge c_3^2,
	\end{flalign*} 

	\noindent by the first part of \Cref{h0profile}, so $c_0 \ge c_3^2 \ge 4c_4 > c_4$. Then define the strip domain $\mathfrak{S} = \mathfrak{P}_{f; g} \subset \mathbb{R}^2$ and the function $\mathfrak{H} : \overline{\mathfrak{S}} \rightarrow \mathbb{R}$ by setting $\mathfrak{H} (z) = H^{\star} (z) - c_0 + c_4$ for each $z \in \overline{\mathfrak{S}}$. See the left side of \Cref{f:coupling_2}. Let us verify that the conditions of the proposition hold for this choice of $(\mathfrak{S}, \mathfrak{H})$.
	
	To see that $[0, \nu] \times [-\nu, \nu] \subseteq \mathfrak{P}_{f;g} \subseteq [0, \nu]  \times [-C \nu, C \nu]$, observe again by the first part of \Cref{h0profile}, and using $H^\star(0,0)=c_0$ and $H^\star(0,f(0))=c_0+c_4$, we have $-\frac{c_3}{2C_4} = -c_3^{-1} c_4 \le f(0) \le -C_4^{-1} c_4 = -\frac{c_3^2}{2C_4^2}$. Since $\big| f' (s) \big| = \big| (\partial_t H^{\star}) (s, f(s)) \big| \cdot \big| (\partial_x H^{\star}) (s, f(s)) \big|^{-1}  \le c_3^{-1} C_4$ for $s \in [0, \nu]$ by the first part of \Cref{h0profile}, it follows from \eqref{c0c4} that, for each $s \in [0, \nu]$, 
	\begin{flalign*} 
		-\displaystyle\frac{4 C^2_4 \nu}{c^2_3} \le -\frac{c_3}{2C_4} - \frac{C_4 \nu}{c_3} \le f(s) \le \frac{C_4 \nu}{c_3} - \frac{c_3^2}{2C_4^2} = -\frac{C_4 \nu}{c_3} \le -\nu.
	\end{flalign*} 
where we also used $\nu=c_3^3 (4C_4^3)^{-1} $ from \eqref{c0c4}.  Similar reasoning implies that $\nu \le g(s) \le 4 c_3^{-2} C^2_4 \nu$, which indicates that $[0, \nu] \times [-\nu, \nu] \subseteq \mathfrak{P}_{f; g} \subseteq [0, \nu] \times [-C\nu, C\nu]$ for $C = 4C^2_4 c_3^{-2}$.
	
	To establish the other parts of the proposition, observe that the bounds on $-\partial_x \mathfrak{H}$ and on $\| \mathfrak{H} \|_{\mathcal{C}^{m+1}}$ follow from the first part of \Cref{h0profile}. Moreover, the fact that the full-$m$ local profile of $\mathfrak{H}$ around $(0, 0)$ is given by $\bm{Q}$ follows from the third part of \Cref{h0profile}. 
	
	It therefore remains to show that $\mathfrak{H}$ is $(C_1; c; \delta)$ concentrating in the sense of \Cref{hhestimate0}. That it satisfies \eqref{equationxtb} follows from the second part of \Cref{h0profile}. Now let $n \ge 1$ be an integer; set $n' = \lfloor 2c_4 n \rfloor$ (in what follows, we again omit the floors and ceilings, as they will not affect the proofs), and define the $n'$-tuples $\bm{u}, \bm{v} \in \overline{\mathbb{W}}_{n'}$ by setting 
	\begin{flalign}
		\label{ujvj0}
		u_j = G^{\star} ( 0, c_0 - c_4 + jn^{-1} ), \quad \text{and} \quad v_j = G^{\star} (\nu, c_0 - c_4 + jn^{-1}), \quad \text{for each $j \in \llbracket 1, n' \rrbracket$};
	\end{flalign}

	\noindent and sample $n'$ non-intersecting Brownian bridges $\bm{y} = (y_1, y_2, \ldots , y_{n'}) \in \llbracket 1, n \rrbracket \times \mathcal{C} \big( [0, \nu] \big)$ from the measure $\mathfrak{Q}_{f; g}^{\bm{u}; \bm{v}} (n^{-1})$. We must show that 
	\begin{flalign}
		\label{hndeltay}
		\mathbb{P} \Bigg[ \displaystyle\sup_{z \in \mathfrak{S}} \big| n^{-1} \mathsf{H}^{\bm{y}} (z) - H^{\star} (z) + c_0 - c_4 \big| \le n^{\delta-1} \Bigg] \ge 1 - C_1 e^{-c (\log n)^2}. 
	\end{flalign}

\begin{figure}
\center
\includegraphics[width=0.8\textwidth]{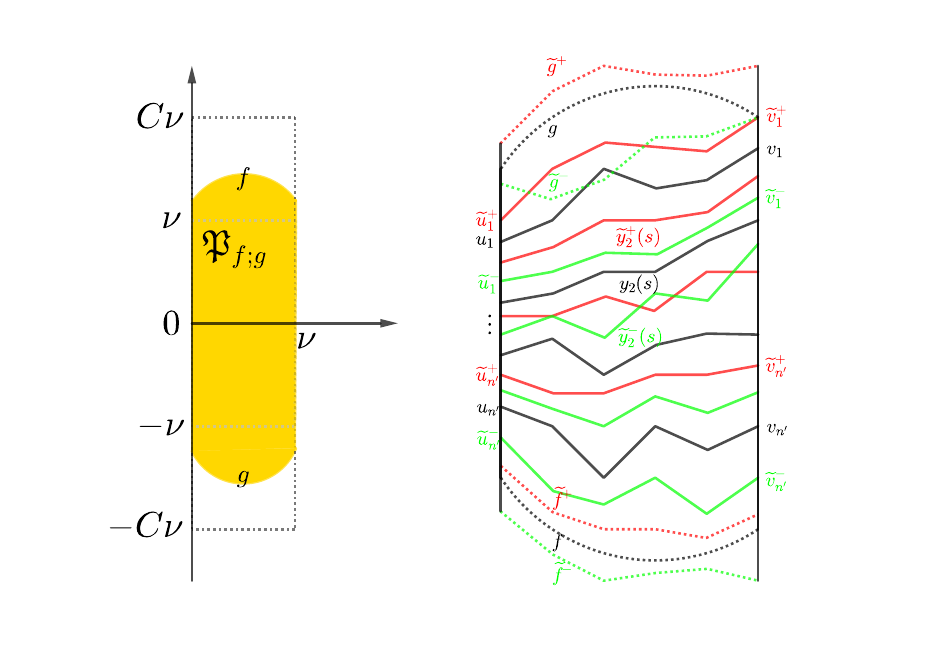}
\caption{Depicted in the left panel is the strip domain $\mathfrak S=\mathfrak P_{f;g}$. Depicted in the right panel is the coupling such that $\widetilde y_j^-\leq y_j\leq \widetilde y_j^+$.}
\label{f:coupling_2}
\end{figure}

	To that end, define the $n$-tuples $\bm{u}^-, \bm{u}^+ \in \overline{\mathbb{W}}_n$ by for each $j \in \llbracket 1, n \rrbracket$ setting 
	\begin{flalign}
		\label{ujvjy}
		& u_j^- = G^{\star} ( 0, jn^{-1} + n^{\delta/2-1}); \qquad u_j^+ = G^{\star} ( 0,  jn^{-1} - n^{\delta/2-1}).
	\end{flalign}	

	\noindent Let $\bm{y}^- (t) = \big( y_1^- (t), y_2^- (t), \ldots , y_n^- (t) \big)$ and $\bm{y}^+ (t) = \big( y_1^+ (t), y_2^+ (t), \ldots , y_n^+ (t) \big)$ denote Dyson Brownian motion run with initial data $\bm{u}^-$ and $\bm{u}^+$, respectively. Since $H^{\star}$ is the height function associated with the measure-valued process $\big( \mu_0 \boxplus \mu_{\semci}^{(t)} \big)_{t \in [0, c_2]}$, \Cref{concentrationequation} gives constants $c = c(\varepsilon, \delta, B, m) > 0$ and $C_1 = C_1 (\varepsilon, \delta, B, m) > 1$ such that, with probability at least $1 - C_1 e^{-c(\log n)^2}$, we have
	\begin{flalign}
		\label{hyhhyh} 
		\begin{aligned}
		& H^{\star} (t, x) - n^{\delta/2-1} - n^{\delta/4-1} \le n^{-1} \mathsf{H}^{\bm{y}^-} (t, x)  \le H^{\star} (t, x) - n^{\delta/2-1} + n^{\delta/4-1}; \\
		& H^{\star} (t, x) + n^{\delta/2-1} - n^{\delta/4 - 1} \le n^{-1} \mathsf{H}^{\bm{y}^+} (t, x) \le H^{\star} (t, x) + n^{\delta/2-1} + n^{ \delta/4-1},
		\end{aligned} 
	\end{flalign} 

	\noindent for any $(t, x) \in \overline{\mathfrak{S}}$. In the below, we restrict to this event. 
	
	Since $f(t) = G^{\star} (t, c_0 - c_4)$ and $g(t) = G^{\star} (t, c_0 + c_4)$, applying \eqref{hyhhyh} with $x$ equal to each of $y_{ (c_0 + c_4) n + 1}^- (t)$, $y_{ (c_0 + c_4) n + 1}^+ (t)$, $y_{ (c_0 - c_4)n}^- (t)$, $y_{ (c_0 - c_4) n}^+ (t)$, $f(t)$, and $g(t)$ (and using the fact that $H^{\star} (t, x)$, $\mathsf{H}^{\bm{y}^-} (t, x)$, and $\mathsf{H}^{\bm{y}^+} (t, x)$ are decreasing in $x$) yields 
	\begin{flalign}
		\label{yfyygy}
		y_{(c_0 + c_4) n + 1}^- (t) \le f(t) \le y_{ (c_0 + c_4) n + 1}^+ (t); \qquad y_{(c_0 - c_4) n}^- (t) \le g(t) \le y_{ (c_0 - c_4)n}^+ (t).
	\end{flalign}

	Now, denote $\mathfrak{c} = c_0 - c_4$, and condition on the $y_j^- (s)$ and $y_j^+ (s)$ with $j \notin \llbracket \mathfrak{c} n + 1, \mathfrak{c} n + n' \rrbracket$. Define the functions $\widetilde{f}^-, \widetilde{f}^+, \widetilde{g}^-, \widetilde{g}^+ : [0, \nu] \rightarrow \mathbb{R}$ by setting 
	\begin{flalign*}
		\widetilde{f}^{\pm} (s) = y_{\mathfrak{c} n + n'+1}^{\pm} (s); \qquad \widetilde{g}^{\pm} (s) = y_{\mathfrak{c} n}^{\pm} (s),
	\end{flalign*} 

	\noindent for each index $\pm \in \{ +, - \}$ and real number $s \in [0, \nu]$. Further define $\widetilde{\bm{y}}^- = (\widetilde{y}_1^-, \widetilde{y}_2^-, \ldots , \widetilde{y}_{n'}^-) \in \llbracket 1, n' \rrbracket \times \mathcal{C} \big( [0, \nu] \big)$ and $\widetilde{\bm{y}}^+ = (\widetilde{y}_1^+, \widetilde{y}_2^+, \ldots , \widetilde{y}_{n'}^+) \in \llbracket 1, n' \rrbracket \times \mathcal{C} \big( [0, \nu] \big)$ by reindexing $\bm{y}^-$ and $\bm{y}^+$, respectively, namely, by setting
	\begin{flalign}
		\label{yjy}
		\widetilde{y}_j^{\pm} (s) = y_{j+\mathfrak{c} n}^{\pm} (s), \qquad \text{for each $\pm \in \{ +, - \}$ and $(j, s) \in \llbracket 1, n' \rrbracket \times [0, \nu]$},
	\end{flalign}

	\noindent and define the $n'$-tuples $\widetilde{\bm{u}}^-, \widetilde{\bm{u}}^+, \widetilde{\bm{v}}^-, \widetilde{\bm{v}}^+ \in \overline{\mathbb{W}}_{n'}$ by setting 
	\begin{flalign}
		\label{j2v}
		\widetilde{u}_j^{\pm} = u_{j+\mathfrak{c}n}^{\pm} = \widetilde{y}_j^{\pm} (0), \quad \text{and} \quad \widetilde{v}_j^{\pm} = \widetilde{y}_j^{\pm} (\nu), \qquad \text{for each $\pm \in \{ +, - \}$ and $j \in \llbracket 1, n' \rrbracket$.} 
	\end{flalign}

	Then, by the second part of \Cref{lambdat}, the laws of $\widetilde{\bm{y}}^-$ and $\widetilde{\bm{y}}^+$ are given by $\mathfrak{Q}_{\tilde{f}^-; \tilde{g}^-}^{\tilde{\bm{u}}^-; \tilde{\bm{v}}^-} (n^{-1})$ and $\mathfrak{Q}_{\tilde{f}^+; \tilde{g}^+}^{\tilde{\bm{u}}^+; \tilde{\bm{v}}^+} (n^{-1})$, respectively; see the right side of \Cref{f:coupling_2}. Observe from \eqref{ujvj0}, \eqref{ujvjy}, and \eqref{yjy} that $\widetilde{\bm{u}}^- \le \bm{u} \le \widetilde{\bm{u}}^+$, since $\partial_y G^{\star} \le 0$. Moreover, \eqref{yfyygy} and \eqref{yjy} imply that $\widetilde{f}^- = \widetilde{y}_{n'+1}^- \le f \le \widetilde{y}_{n'+1}^+ = \widetilde{f}^+$ and $\widetilde{g}^- = \widetilde{y}_0^- \le g \le \widetilde{y}_0^+ = \widetilde{g}^+$. Additionally, by \eqref{ujvj0} and by the $t = \nu$ (or $s = \nu$) case of \eqref{hyhhyh}, \eqref{yjy}, \eqref{j2v}, and the first statement of \Cref{h0profile}, we find that $\bm{\widetilde{v}}^- \le \bm{v} \le \bm{\widetilde{v}}^+$.  Together with the monotone coupling \Cref{monotoneheight}, these indicate that it is possible to couple $\widetilde{\bm{y}}^-$, $\bm{y}$, and $\widetilde{\bm{y}}^+$ in such a way that $\widetilde{y}_j^- (s) \le y_j (s) \le \widetilde{y}_j^+ (s)$ for each $(j, s) \in \llbracket 0, n' \rrbracket \times [0, \nu]$. This, with \eqref{hyhhyh} and \eqref{yjy}, yields \eqref{hndeltay} and thus the proposition.
\end{proof}

\section{Universality of Bulk Statistics} 

\label{StatisticsKernel}

	In this section we establish \Cref{xkernel}. 
	
	\subsection{Proof of \Cref{xkernel}} 
	
	\label{Comparex}
	
	In this section we establish \Cref{xkernel}. Given \Cref{gh}, its proof will be similar to what was done in \cite[Sections 3 and 4]{ULTS}, by comparing non-intersecting Brownian bridges to Dyson Brownian motion (though the affine invariance of our model will simplify the arguments here). Throughout, we adopt the notation from \Cref{xkernel}, and we set 
	\begin{flalign}
		\label{omegann} 
		\omega = 90000( \delta + m^{-1}); \qquad n' = \lfloor n^{1/2} \rfloor; \qquad n'' = \lfloor n^{1/4} \rfloor, \qquad s_0 = n^{\omega/2-1}, \qquad s_1 = n^{3 \omega - 1},
	\end{flalign}

	\noindent so that $\omega \le \frac{1}{20}$ (as $m \ge 2^{25}$ and $\delta \le \frac{1}{5m^2}$). In what follows, we omit all floors and ceilings, as they do not affect the proofs. We also assume (as we may by shifting) that $G(t_0 - s_0, y_0) = 0$. 
	
	Define $\breve{\bm{x}} = (\breve{x}_1, \breve{x}_2, \ldots , \breve{x}_{n'}) \in \llbracket 1, n' \rrbracket \times \mathcal{C} \big( [0, s_0 + s_1] \big)$ by reindexing $\bm{x}$, specifically, by setting 
	\begin{flalign*}
		\breve{x}_j (s) = x_{y_0 n - n'/2 + j} (t_0 - s_0 + s), \qquad \text{for each $(j, s) \in \llbracket 1, n' \rrbracket \times [0, s_0 + s_1]$}.
	\end{flalign*}

	\noindent Also define the $n'$-tuples $\breve{\bm{u}} = (\breve{u}_1, \breve{u}_2, \ldots , \breve{u}_{n'}) \in \overline{\mathbb{W}}_{n'}$ and $\breve{\bm{v}} = (\breve{v}_1, \breve{v}_2, \ldots , \breve{v}_{n}) \in \overline{\mathbb{W}}_{n'}$, and functions $\breve{f}, \breve{g} : [0, s_0 + s_1] \rightarrow \mathbb{R}$ by for each $j \in \llbracket 1, n' \rrbracket$ and $s \in [0, s_0 + s_1]$ setting
	\begin{flalign*}
		 \breve{u}_j = \breve{x}_j (0); \qquad \breve{v}_j = \breve{x}_j (s_0 + s_1); \qquad \breve{f} (s) = \breve{x}_{n'/2 + n''+1} (s); \qquad \breve{g} (s) = \breve{x}_{n'/2-n''-1} (s).
	\end{flalign*}

	 Next, we condition on $\breve{x}_j (s)$ for $(j, s) \notin \big\llbracket  \frac{n'}{2}-n'', \frac{n'}{2} + n'' \rrbracket \times (0, s_0 + s_1)$. Defining the event 
	\begin{flalign}
		\label{a0} 
		\mathscr{A}_0 = \bigcap_{(j, s) \notin \llbracket n'/2 - n'', n'/2+n'' \rrbracket \times (0, s_0 + s_1)} \Bigg\{ \bigg| \breve{x}_j (s) - G \Big(t_0 - s_0 + s, y_0 - \displaystyle\frac{n'}{2n} + \displaystyle\frac{j}{n} \Big) \bigg| \leq n^{\omega/4-1} \Bigg\},
	\end{flalign}

	\noindent we have by \Cref{gh} and the fact that $n^{\omega/4 - 1} \ge \varkappa + n^{2/m+3\delta/2-1}$ (by \eqref{omegann} and the fact that $\varkappa \le n^{\delta - 1}$) that there exists a constant $c_0 = c_0 (\varepsilon, \delta, B_0, m) > 0$ such that 
	\begin{flalign}
		\label{a0probability} 
		\mathbb{P} [\mathscr{A}_0] \ge 1 - c_0^{-1} e^{-c_0 (\log n)^2}.		
	\end{flalign}

	\noindent We restrict to $\mathscr{A}_0$ in what follows.
	
	Next, for each index $\pm \in \{ +, - \}$, let $G^{\pm} : [0, 1] \rightarrow \mathbb{R}$ be a function satisfying the following six properties. 
	\begin{enumerate} 
		\item For all $y \in [0, 1]$, we have $G(t_0 - s_0, y) - n^{\omega/2-1} \le G^- (y) \le G (t_0 - s_0, y)$ and that $G(t_0 - s_0, y)\le G^+ (y) \le G(t_0 - s_0, y) + n^{\omega/2-1}$. 
		\item For all $y \in [0, 1]$ with $|y - y_0| \le \frac{5n''}{6n}$, we have $G(s_0 - t_0, y) - n^{3 \omega/10 - 1} \le G^- (y) \le G(t_0 - s_0, y)$ and that $G(t_0 - s_0, y) \le G^+ (y) \le G(t_0 - s_0, y) + n^{3\delta/10 - 1}$.
		\item For $y \in [0, 1]$ with $|y - y_0| \le \frac{n''}{2n}$, we have $G^{\pm} (y) = G (t_0 - s_0, y)$. 
		\item For all $y \in [0, 1]$ with $|y-y_0| \in \big[ \frac{2n''}{3n}, \frac{5n''}{6n} \big]$, we have $G^{\pm} (t_0 - s_0, y) = G(t_0 - s_0, y) \pm n^{3\omega/10 - 1}$. 
		\item For all $y \in [0, 1]$ with $|y - y_0| \ge \frac{n''}{n}$, we have $G^{\pm} (y) = G (t_0 - s_0, y) \pm n^{\omega/2-1}$. 
		\item For all $y \in [0, 1]$, we have $-\frac{2}{\varepsilon} \le \partial_y G^{\pm} (y) \le -\frac{\varepsilon}{2}$, and $\| G^{\pm} \|_{\mathcal{C}^{m+1}} \le 2 B_0$. 
	\end{enumerate} 

	The existence of such a function follows from the facts that $ n^{\omega-1} \le n^{-3/4} = \frac{n''}{n}$, that $G \in \Adm_{\varepsilon} (\mathfrak{R})$, and that $\| G \|_{\mathcal{C}^{m+1} (\mathfrak{R})} \le B_0$ (by \Cref{fgr}). Then, define the $n'$-tuples $\breve{\bm{u}}^-, \breve{\bm{u}}^+ \in \overline{\mathbb{W}}_{n'}$ by, for each index $\pm \in \{ +, - \}$ and integer $j \in \llbracket 1, n' \rrbracket$,  setting
	\begin{flalign} 
		\label{uj2} 
		& \breve{u}_j^{\pm} = \breve{u}_j,  \quad  \text{if $j \in \Big\llbracket \displaystyle\frac{n'}{2} - \displaystyle\frac{2n''}{3}, \displaystyle\frac{n'}{2} + \displaystyle\frac{2n''}{3} \Big\rrbracket$}; \qquad \breve{u}_j^{\pm} = G^{\pm} \Big( y_0 - \displaystyle\frac{n'}{2n} + \displaystyle\frac{j}{n} \Big), \quad \text{otherwise},
	\end{flalign}

	\noindent observing that $\bm{u}^{\pm} \in \overline{\mathbb{W}}_{n'}$ on $\mathscr{A}_0$ due to \eqref{a0}.
	
	Let $\breve{\bm{w}} (s) = \big( \breve{w}_1 (s), \breve{w}_2 (s), \ldots , \breve{w}_{n'} (s) \big)$ denote Dyson Brownian motion with initial data $\breve{\bm{u}}$, as in \eqref{lambdaequation}, but with the $d\breve{w}_i$ there multiplied by a factor of $n^{-1} n'$ and the $dB_i$ multiplied by a factor of $(n^{-1} n)^{1/2}$; more specifically, they solve (instead of \eqref{lambdaequation}) the stochastic differential equation 
	\begin{flalign*}
		d \breve{w}_i (s) = n^{-1} \displaystyle\sum_{\substack{1 \le j \le n' \\ j \ne i}}\displaystyle\frac{1}{\breve{w}_i^{\pm} (s) - \breve{w}_j^{\pm} (s)} + n^{-1/2} dB_i (s).
	\end{flalign*} 
	
	\noindent Similarly, for each $\pm \in \{ +, - \}$, let $\breve{\bm{w}}^{\pm} (s) = \big( \breve{w}_1^{\pm} (s), \breve{w}_2^{\pm} (s), \ldots , \breve{w}_{n'}^{\pm} (s) \big)$ denote Dyson Brownian motion with initial data $\breve{\bm{u}}^{\pm}$, but with the same rescaling of the $d\breve{w}_i^{\pm} (s)$ as above; see \Cref{f:DBMcouple}. Moreover  define the functions $\breve{f}^-, \breve{f}^+, \breve{g}^-, \breve{g}^+ : [0, s_0 + s_1] \rightarrow \mathbb{R}$ by, for each index $\pm \in \{ +, - \}$ and real number $s \in [0, s_0 + s_1]$, setting 
	\begin{flalign*} 
		\breve{f}^{\pm} (s) = \breve{w}_{n'/2 + n'' + 1}^{\pm} (s); \qquad  \breve{g}^{\pm} (s) = \breve{w}_{n'/2 - n'' - 1}^{\pm} (s).
	\end{flalign*}

	We then have the following proposition, to be established in \Cref{ProofA0} below; it indicates that the middle paths in $\breve{\bm{x}}$ are bounded between those in $\breve{\bm{w}}^-$ and $\breve{\bm{w}}^+$ (after adding a suitable drift).

\begin{figure}
\center
\includegraphics[width=1\textwidth]{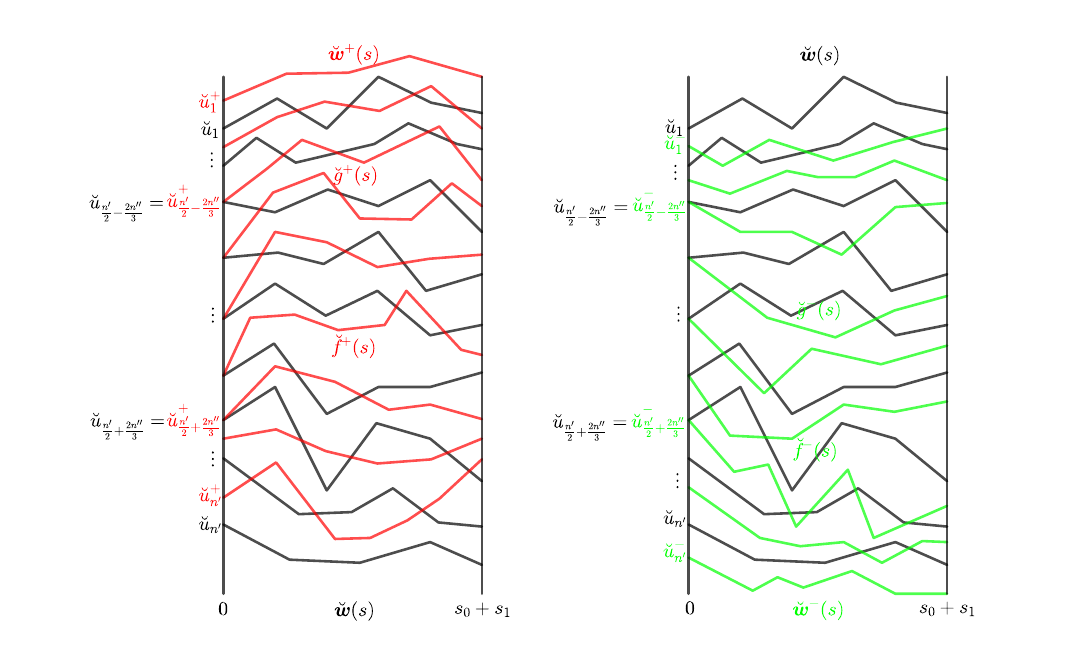}
\caption{Shown above are the paths of $\breve{\bm w}^+$ and $\breve{\bm w}$ in the left panel, and of $\breve{\bm w}^-$ and $\breve{\bm w}$ in the right panel. }
\label{f:DBMcouple}
\end{figure}
	
	\begin{prop}
		
		\label{eventa0} 
		
		Set $\mathfrak{a} = \partial_t G(t_0, y_0)$. There exist constants $c = c(\varepsilon, \omega, m, B_0) > 0$ and $C = C(\varepsilon, \omega, m, B_0) > 1$; a coupling between $\breve{\bm{x}}$, $\breve{\bm{w}}^-$, and $\breve{\bm{w}}^+$; and an event $\mathscr{A}_1$, with $\mathbb{P} [\mathscr{A}_1] \ge 1 - C e^{-c(\log n)^2}$, such that on $\mathscr{A}_1$ we have 
		\begin{flalign*}
			\breve{w}_j^- (s) + (\mathfrak{a} - n^{- \omega}) s \le \breve{x}_j (s) \le \breve{w}_j^+ (s) + (\mathfrak{a}+ n^{-\omega}) s,
		\end{flalign*} 
		
		 \noindent for each $(j, s) \in \big\llbracket \frac{n'}{2} - n'', \frac{n'}{2} + n'' \big\rrbracket \times [0, s_0 + s_1]$.
		
	\end{prop}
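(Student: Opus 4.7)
The plan is to realize $\breve{\bm{w}}^\pm$ as non-intersecting Brownian bridges through Gibbs conditioning, use the affine invariance to absorb the drift $\mathfrak{a} s$, and then invoke height monotonicity (\Cref{monotoneheight}) to compare the middle paths. First I would apply the second part of \Cref{lambdat} to realize $\breve{\bm{w}}^\pm|_{[0,s_0+s_1]}$ as $n'$ non-intersecting Brownian motions starting from $\breve{\bm{u}}^\pm$, and then condition on the ending values $\breve{\bm{w}}^\pm(s_0+s_1)$ and on the extreme paths $\breve{w}_j^\pm$ for $j \notin \llbracket n'/2-n'', n'/2+n''\rrbracket$. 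By the Gibbs property, the conditional law of the middle $2n''+1$ paths is $\mathfrak{Q}_{\breve{f}^\pm;\breve{g}^\pm}^{\breve{\bm{u}}^\pm_{\mathrm{mid}};\breve{\bm{w}}^\pm_{\mathrm{mid}}(s_0+s_1)}$. Setting $\tilde{\bm{w}}^\pm(s)=\breve{\bm{w}}^\pm(s)+(\mathfrak{a}\pm n^{-\omega})s$ and invoking \Cref{linear} (applied to this conditional bridge measure), I get a non-intersecting Brownian bridge with the same shape of starting data, but with ending data $\breve{w}_j^\pm(s_0+s_1)+(\mathfrak{a}\pm n^{-\omega})(s_0+s_1)$ and shifted boundaries $\breve{f}^\pm(s)+(\mathfrak{a}\pm n^{-\omega})s$ and $\breve{g}^\pm(s)+(\mathfrak{a}\pm n^{-\omega})s$.

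Next I would define the favorable event $\mathscr{A}_1=\mathscr{A}_0\cap\mathscr{A}_1'\cap\mathscr{A}_1''$, where $\mathscr{A}_1'$ is the rigidity event from \Cref{concentrationequation} applied to $\breve{\bm{w}}^\pm$ (after accounting for the diffusive rescaling, using the analog of \Cref{scalemotion}), which controls $\breve{w}_j^\pm(s)$ within $n^{-10}$ of the classical locations $\gamma_j(s)=\gamma_{j;n'}^{\mu_s^\pm}$ of $\mu_s^\pm=\emp(\breve{\bm{u}}^\pm)\boxplus \mu_{\semci}^{(n^{-1}n'\cdot s)}$; and $\mathscr{A}_1''$ is the concentration event from \Cref{gh} applied to $\bm{x}$. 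A union bound gives $\mathbb{P}[\mathscr{A}_1]\ge 1-Ce^{-c(\log n)^2}$. Restricting to $\mathscr{A}_1$, I then verify the four hypotheses of \Cref{monotoneheight} applied to the middle paths of $\breve{\bm{x}}$ and those of $\tilde{\bm{w}}^\pm$: the starting data coincide by the middle-range identity in \eqref{uj2}; the ending data, the upper boundary $\breve{g}$, and the lower boundary $\breve{f}$ must each lie between the corresponding $\tilde{\bm{w}}^-$ and $\tilde{\bm{w}}^+$ quantities.

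The main obstacle is verifying these three order comparisons quantitatively. The essential computation is that on $\mathscr{A}_1$, for each $j\in\llbracket n'/2-n'', n'/2+n''\rrbracket$ and each $s\in[0,s_0+s_1]$,
\begin{equation*}
\big|\gamma_j(s) - G(t_0-s_0+s, y_0-n'/(2n)+j/n) + \mathfrak{a} s\big| \le \tfrac{1}{2}n^{-\omega} s + n^{2\omega-1}.
\end{equation*}
This amounts to comparing two evolutions starting from essentially the same profile: the free convolution $\emp(\breve{\bm{u}}^\pm)\boxplus\mu_{\semci}^{(\cdot)}$ (giving $\gamma_j$) and the bridge limit shape $G(t_0-s_0+\cdot, \cdot)$. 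Both profiles initially match $G(t_0-s_0,\cdot)$ in the middle region (by properties of $G^\pm$), and both satisfy the complex Burgers structure (see \Cref{p:solution} and \Cref{mutrhot}). The bridge velocity at $(t_0,G(t_0,y_0))$ equals $\partial_t G(t_0,y_0)=\mathfrak{a}$ by \Cref{urho0}; the $-\mathfrak{a} s$ shift cancels this leading drift, so both evolutions agree up to $O(s)$ corrections coming from $-\pi H\varrho$ versus $\partial_t G$ that by Taylor expansion (using $\|G\|_{C^{m+1}}\le B_0$) are of order $O(s\cdot(s+n^{\omega/2-1})+s^2)\le O(n^{6\omega-2})$. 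Since $\omega\le 1/20$ by \eqref{omegann}, this is $\ll n^{-\omega}s$ whenever $s \ge n^{3\omega-1}$, and is absorbed by the $n^{2\omega-1}$ additive slack otherwise. Combining this comparison for $\gamma_j$ with the $\mathscr{A}_0$ and $\mathscr{A}_1''$ concentration bounds (and a Taylor expansion $G(t_0-s_0+s,y)=G(t_0-s_0,y)+\mathfrak{a} s + O(s^2)$) yields all three coupling inequalities, and \Cref{monotoneheight} then produces the desired coupling.
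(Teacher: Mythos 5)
Your overall architecture is the same as the paper's: realize $\breve{\bm{w}}^{\pm}$ as non-intersecting Brownian bridges via \Cref{lambdat} and the Gibbs property, absorb the drift $(\mathfrak{a}\pm n^{-\omega})s$ by the affine invariance \Cref{linear}, and compare the middle $2n''+1$ paths through \Cref{monotoneheight}, with the DBM controlled by rigidity (\Cref{concentrationequation}) around classical locations of a local free convolution. The problem is the quantitative pivot. Your displayed estimate, with a symmetric additive error $n^{2\omega-1}$, cannot close the three comparisons. At $s=s_0+s_1$ the entire drift slack is $n^{-\omega}(s_0+s_1)\approx n^{2\omega-1}$, the same size as your additive error, so the ending-data inequality does not follow; and for small $s$ the comparisons for $\breve{f},\breve{g}$ and for the starting data with $|j-n'/2|\in(2n''/3,n'']$ (where $\breve{u}_j^{\pm}\ne\breve{u}_j$ — contrary to your claim that the starting data coincide, \eqref{uj2} gives equality only on the inner two-thirds of the coupling window) rely on the \emph{one-sided} offsets $n^{3\omega/10-1}$ and $n^{\omega/2-1}$ built into $G^{\pm}$ exceeding the $\mathscr{A}_0$ error $n^{\omega/4-1}$ plus the motion of the DBM; a two-sided $n^{2\omega-1}$ bound erases exactly the room that makes these work. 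What is actually needed, and what the paper isolates as \Cref{wjs2}, is the much finer statement that the middle DBM paths move by at most about $(n')^{2\omega/3-1}$ (in rescaled coordinates) over the whole time window, after which the three inequalities \eqref{wj0wjtfg} are verified one-sidedly against the $G^{\pm}$ offsets and Taylor errors.

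The justification you give for the classical-location comparison is also not the right mechanism. The local free convolution $\emp(\breve{\bm{u}}^{\pm})\boxplus\mu_{\semci}^{(sn'/n)}$ evolves a truncated, windowed profile with no boundary data; it is a different evolution from the global limit shape $G$, and invoking that "both satisfy the complex Burgers structure" (\Cref{p:solution}, \Cref{mutrhot}) does not compare them. After subtracting $\mathfrak{a}s$, the comparison works because the windowed classical locations barely move at all: the Hilbert transform of the truncated density is $O(n'/n+|y|+s)$ near the window center, a cancellation coming from the near-symmetry of the truncated support about the centered point (the normalization $G(t_0-s_0,y_0)=0$ together with $[\widetilde{G}]_2\lesssim B_0 n'/n$) and the near-constancy of the density, and it must be propagated to positive times of the free convolution (via \Cref{yconvolution} and \Cref{derivativetm}) uniformly over indices within order $n''$ of the center — this is the content of the paper's \Cref{hestimate0}, and it is not produced by Taylor expanding $G$ alone. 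Finally, applying \Cref{concentrationequation} directly to $\breve{\bm{w}}^{\pm}$ ties the classical locations to the random conditioned initial data $\breve{\bm{u}}^{\pm}$; to compare them with $G$ on $\mathscr{A}_0$ you still need a stability input for the free convolution under an $n^{\omega/4-1}$-sized perturbation of the initial data, which the paper sidesteps by sandwiching $\widetilde{\bm{w}}^{\pm}$ between DBMs started from deterministic quantiles of $\widetilde{G}^{\pm}$ via \Cref{lambdamonotone} and applying rigidity to those.
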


	We next have the following three lemmas, whose proofs are both consequences of results in \cite{CLSM,FEUM} and are therefore deferred to \Cref{ProofMotion} below. The first, to be established in \Cref{Proofw} below, indicates that the local statistics of $\breve{\bm{w}}$ are prescribed by the sine process. The second, to be established in \Cref{Proofa2} below, indicates that the middle paths of $\breve{\bm{w}}^-$ and $\breve{\bm{w}}^+$ are close to those of $\breve{\bm{w}}$. The third, to be established in \Cref{Proofwj02} below, states that $\breve{\bm{w}}^-$ and $\breve{\bm{w}}^+$ are likely close in expectation, under any coupling between them. 
	
	\begin{lem}
		
		\label{kernelconvergew}
		
		Let $\mathfrak{a} > 0$ be a (bounded, uniformly in $n$) constant, and denote the $n$-tuple $\bm{w} = \theta n \cdot \big( \breve{\bm{w}} (s_0) + \mathfrak{a} s_0 \big) \in \overline{\mathbb{W}}_n$. We have 
		\begin{flalign*}
			\displaystyle\lim_{n \rightarrow \infty} \displaystyle\int_{\mathbb{R}^k} F (\bm{a}) p_{\bm{w}}^{(k)} (\bm{a}) d \bm{a} = \displaystyle\int_{\mathbb{R}^k} F(\bm{a}) p_{\sin}^{(k)} (\bm{a}) d \bm{a}.
		\end{flalign*}
		
	\end{lem}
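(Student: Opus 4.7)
The plan is a direct application of fixed-time bulk universality for Dyson Brownian motion from \cite{CLSM, FEUM}, applied to $\breve{\bm{w}}(s_0)$. The two key inputs are: (i) the initial data $\breve{\bm{u}} = \breve{\bm{x}}(0)$ satisfies a mesoscopic local law with respect to the smooth density $-\partial_y G(t_0 - s_0, \cdot\,)$ near $y_0$; (ii) the time $s_0 = n^{\omega/2-1}$ is much larger than the microscopic scale $n^{-1}$, so that DBM has equilibrated locally by time $s_0$.

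First, I would restrict to the high-probability event of \Cref{gh}, on which
\[
\big| \breve{u}_j - G(t_0 - s_0,\, y_0 - n'/(2n) + j/n) \big| \le n^{\omega/4-1}, \qquad j \in \llbracket 1, n' \rrbracket,
\]
where we have used $n^{2/m+\delta-1} + \varkappa \le n^{\omega/4-1}$, which holds by \eqref{omegann}, the hypothesis $\varkappa \le n^{\delta-1}$, and $m \ge 2^{25}$. Since $G \in \mathcal{C}^{m+1}$ with $\partial_y G \le -\varepsilon$, a Riemann-sum argument then yields a local law for the empirical Stieltjes transform of $\breve{\bm{u}}$ at all scales $\Imaginary z \gtrsim n^{\omega/4-1}$, uniformly in bounded neighborhoods of $G(t_0 - s_0, y_0)$. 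This is exactly the form of regularity assumed in \cite{CLSM, FEUM}. To align with the normalization conventions of those papers, I would then rescale by setting $\widehat{w}_i(\tau) = (n/n')^{1/2} \breve{w}_i(\tau n'/n)$; the resulting process $\widehat{\bm{w}}$ is a standard DBM on $n'$ particles (with noise $(n')^{-1/2}$), the initial data $\widehat{\bm{u}}$ inherits the mesoscopic local law just established, and the time corresponding to $s_0$ is $\tau_0 = s_0 \cdot n/n' = n^{\omega/2 - 1/2} \gg (n')^{-1}$.

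Second, I would apply the fixed-time bulk universality theorem (e.g.\ \cite[Theorem 3.1]{CLSM} or its refinement in \cite{FEUM}): for standard DBM on $n'$ particles with initial empirical Stieltjes transform satisfying a mesoscopic local law and any time $\tau_0 \gg (n')^{-1}$, the $k$-point correlation functions at the reference bulk point, rescaled by the local density of $\emp(\widehat{\bm{u}}) \boxplus \mu_{\semci}^{(\tau_0)}$, converge weakly to $p_{\sin}^{(k)}$. Translating back through the time/space rescaling, the relevant density is that of $\emp(\breve{\bm{u}}) \boxplus \mu_{\semci}^{(s_0)}$ at the point $-\mathfrak{a} s_0$. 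Using \Cref{nu0nutlimit} together with the identification of the bridge limit shape via \Cref{rhot} and \Cref{urho0}, this density equals $\theta = -\partial_y G(t_0, y_0)$ to leading order, matching precisely the rescaling $\theta n$ in the definition of $\bm{w}$; the drift $\mathfrak{a} s_0$ then centers the bulk reference point at $0$. Averaging over the randomness of $\breve{\bm{u}}$ using the probability bound from \Cref{gh} concludes the argument.

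The main obstacle will be extracting the mesoscopic local law on the empirical Stieltjes transform $m_{\emp(\breve{\bm{u}})}$ at all scales $\Imaginary z \gtrsim n^{\omega/4-1}$, uniformly in the bulk, in precisely the form required by the hypotheses of \cite{CLSM, FEUM}. This is a standard reduction from the pointwise location estimates of \Cref{gh} given the $\mathcal{C}^{m+1}$ smoothness of $G$ and the uniform lower bound on $-\partial_y G$, but requires care to handle the full range of spectral scales uniformly. Once this regularity hypothesis is verified, the conclusion follows by direct invocation of the cited universality results, together with the basic free-convolution identities in \Cref{TransformConvolution}.
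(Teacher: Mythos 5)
Your strategy coincides with the paper's proof of \Cref{kernelconvergew}: restrict to the concentration event supplied by \Cref{gh} (the event $\mathscr{A}_0$), verify that the rescaled initial data is regular in the sense of \cite[Definition 2.1]{CLSM} by a direct Stieltjes-transform/Riemann-sum computation (this is exactly \Cref{regularrho}), invoke the fixed-time bulk universality of \cite{FEUM} as recorded in \Cref{convergelocal}, and finally identify the rescaling density with $\theta$ by arguing that free convolution with $\mu_{\semci}^{(\widetilde{s}_0)}$ at the tiny time $\widetilde{s}_0$ leaves the local density essentially unchanged. The closing density identification you route through \Cref{nu0nutlimit} and \Cref{urho0}; the paper cites the standard free-convolution arguments of \cite{BUSRM}, but the content is the same.

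There is, however, a concrete error in your rescaling step. You set $\widehat{w}_i(\tau) = (n/n')^{1/2}\, \breve{w}_i(\tau n'/n)$ and assert that $\widehat{\bm{w}}$ is standard Dyson Brownian motion on $n'$ particles with noise $(n')^{-1/2}$. It is not: since $\breve{\bm{w}}$ solves $d\breve{w}_i = n^{-1}\sum_{j \ne i} (\breve{w}_i - \breve{w}_j)^{-1} ds + n^{-1/2} dB_i$, a spatial factor $\alpha$ combined with time change $s = \beta \tau$ produces drift coefficient $\alpha^2 \beta / n$ and noise $\alpha \beta^{1/2} n^{-1/2}$; your choice $\alpha = (n/n')^{1/2}$, $\beta = n'/n$ gives $\alpha^2 \beta = 1$, so $\widehat{\bm{w}}$ satisfies the same $n$-normalized equation as $\breve{\bm{w}}$ itself (this is just the diffusive invariance of \Cref{scalemotion}), not the $n'$-normalized equation \eqref{lambdaequation} needed to quote \cite{CLSM,FEUM} verbatim. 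The correct choice is $\alpha = n/n'$ with $\beta = n'/n$, as in \eqref{syg}: then the drift coefficient is $(n')^{-1}$, the noise is $(n')^{-1/2}$, the time becomes $\widetilde{s}_0 = n s_0 / n' = (n')^{\omega - 1}$, and the concentration error $n^{\omega/4-1}$ from \Cref{gh} rescales to $(n')^{\omega/2-1}$, which is precisely the lower regularity scale $r$ that places $\widetilde{s}_0$ inside the admissible window $[n^{\varsigma} r, n^{-\varsigma} R^2]$ of \Cref{convergelocal}; with your scaling the $(r, R; \zeta)$ bookkeeping comes out wrong. Once the scaling is corrected, the remainder of your argument (including the drift $\mathfrak{a} s_0$ and the small-time free-convolution identification of the density with $\theta$) goes through as in the paper.
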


	\begin{lem} 
		
	\label{eventa1}
	
	There exist constants $c = c(\varepsilon, \delta, B_0, m) > 0$  and $C = C (\varepsilon, \delta, B_0, m) > 1$; a coupling between $\breve{\bm{w}}^-$, $\breve{\bm{w}}$; and $\breve{\bm{w}}^+$; and an event $\mathscr{A}_2$, with $\mathbb{P}[\mathscr{A}_2] \ge 1 - C n^{-c}$, such that on $\mathscr{A}_2$ we have 
	\begin{flalign}
		\label{wjwj} 
		\begin{aligned}
		& \displaystyle\max_{j \in \llbracket n'/2 - n^{\omega/15000}, n'/2 +  n^{\omega/15000} \rrbracket}  \big| \breve{w}_j (s_0) - \breve{w}_j^- (s_0) \big| \le n^{-1-\omega/5}; \\ 
		& \displaystyle\max_{j \in \llbracket n'/2 - n^{\omega/15000}, n'/2 + n^{\omega/15000} \rrbracket} \big| \breve{w}_j (s_0) - \breve{w}_j^+ (s_0) \big|  \le n^{-1-\omega/5}.
		\end{aligned}
	\end{flalign}
		
	\end{lem}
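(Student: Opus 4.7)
The plan is to couple $\breve{\bm{w}}$, $\breve{\bm{w}}^-$, and $\breve{\bm{w}}^+$ using a common family of driving Brownian motions $(B_i)_{i \in \llbracket 1, n' \rrbracket}$, and then to analyze the difference processes $v_j^{\pm}(s) = \breve{w}_j(s) - \breve{w}_j^{\pm}(s)$. Since the stochastic parts cancel in this coupling, each $v_j^{\pm}$ satisfies the deterministic discrete parabolic equation
\begin{flalign*}
\partial_s v_j^{\pm}(s) = -\displaystyle\frac{1}{n}\displaystyle\sum_{k \ne j} \mathcal{B}_{jk}^{\pm}(s) \big(v_j^{\pm}(s) - v_k^{\pm}(s)\big), \qquad \mathcal{B}_{jk}^{\pm}(s) = \displaystyle\frac{1}{\big(\breve{w}_j(s) - \breve{w}_k(s)\big)\big(\breve{w}_j^{\pm}(s) - \breve{w}_k^{\pm}(s)\big)},
\end{flalign*}
with initial data $v_j^{\pm}(0) = \breve{u}_j - \breve{u}_j^{\pm}$. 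By the construction \eqref{uj2} of $\breve{\bm{u}}^{\pm}$, combined with the event $\mathscr{A}_0$ from \eqref{a0}, this initial data satisfies $v_j^{\pm}(0) = 0$ for $|j - n'/2| \le \tfrac{2}{3}n'' = \tfrac{2}{3}n^{1/4}$ and $|v_j^{\pm}(0)| \le 2 n^{\omega/2 - 1}$ everywhere.

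First I would establish rigidity estimates for each of $\breve{\bm{w}}$, $\breve{\bm{w}}^-$, and $\breve{\bm{w}}^+$ on the time interval $[0, s_0]$, by applying \Cref{concentrationequation} (with the appropriate scaling from \Cref{scalemotion}). The associated free-convolution densities built from the empirical measures of $\breve{\bm{u}}$ and $\breve{\bm{u}}^{\pm}$ are regular and bounded below near the classical location of $\breve{w}_{n'/2}$; this follows from \eqref{a0}, the uniform ellipticity $G \in \Adm_{\varepsilon}(\mathfrak{R})$, and the $\mathcal{C}^{m+1}$ bound on $G$, via \Cref{derivativetm} applied to the empirical density around $y_0$. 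The resulting rigidity bounds yield $|\breve{w}_j(s) - \breve{w}_k(s)| \ge c(\log n)^{-C}|j - k|/n$ and the analogous bound for $\breve{\bm{w}}^{\pm}$, throughout $s \in [0, s_0]$, on an event of probability at least $1 - e^{-c(\log n)^2}$. This controls the coefficients $\mathcal{B}_{jk}^{\pm}(s)$ and makes the time-ordered propagator $\mathcal{U}^{\pm}(s, 0)$ of the above equation well-behaved.

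Next I would invoke the short-range comparison technique developed in \cite{CLSM, FEUM}. Introducing a short-range cutoff at scale $\ell = n^{\omega/8}$, one replaces the above equation by its truncation retaining only summands with $|j - k| \le \ell$. The truncation error contributes at most $n^{-100}$ on rigidity events, since $\ell \gg (s_0 n)^{1/2} = n^{\omega/4}$ is much larger than the natural diffusive range for time $s_0$. The truncated propagator $\mathcal{U}^{\ell, \pm}(s_0, 0)$ satisfies a Gaussian-type off-diagonal bound: for indices $j, k$ with $|j - k| \ge n^{3\omega/16}$, one has $\big| \mathcal{U}^{\ell, \pm}(s_0, 0)_{jk} \big| \le e^{-(\log n)^2}$. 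Since the support of $v^{\pm}(0)$ is at index-distance at least $\tfrac{2}{3} n^{1/4} - n^{\omega/15000} \ge \tfrac{1}{2} n^{1/4}$ from the target window $|j - n'/2| \le n^{\omega/15000}$, and since $n^{1/4} \gg n^{\omega/4}$ (as $\omega \le \tfrac{1}{20}$), Duhamel's formula shows that for $j$ in this target window, $|v_j^{\pm}(s_0)|$ is bounded by $n^{\omega/2 - 1} \cdot e^{-(\log n)^2}$, which is much smaller than the desired $n^{-1 - \omega/5}$.

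The main obstacle will be importing the quantitative off-diagonal heat-kernel decay cleanly from \cite{CLSM, FEUM}, whose results are stated in forms tailored to local equilibration rather than bare propagator estimates. A direct route is to argue via an $\ell^{\infty}$ maximum-principle estimate on the truncated discrete parabolic system: introducing an exponential weight $w_s(j) = \exp\bigl( \lambda (s_0 - s) \cdot \chi(j) \bigr)$ for a suitable smooth cutoff $\chi$ supported in $|j - n'/2| \le \tfrac{1}{4} n^{1/4}$ and a suitable $\lambda > 0$, one obtains a supersolution of the truncated equation on the rigidity event. Comparing $v^{\pm}$ to this weight, and optimizing $\lambda$ using the scale separation $n^{\omega/4} \ll n^{1/4}$, yields the stated bound $n^{-1 - \omega/5}$. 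The probability estimate $1 - Cn^{-c}$ (rather than the stronger $1 - Ce^{-c(\log n)^2}$) reflects that only polynomial rigidity estimates are invoked in this argument, which is in any case sufficient for the proof of \Cref{xkernel}.
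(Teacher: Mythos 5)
Your overall architecture---coupling $\breve{\bm{w}}$ and $\breve{\bm{w}}^{\pm}$ through common driving Brownian motions, writing the deterministic discrete parabolic equation for the difference $v^{\pm}$, observing via \eqref{uj2} and \eqref{a0} that its initial data vanishes on the window $|j-n'/2|\le \frac{2}{3}n''$ and is $O(n^{\omega/2-1})$ elsewhere, then concluding by a short-range cutoff plus a finite-speed-of-propagation estimate---is exactly the skeleton of the paper's argument, which implements it by introducing the regularized ($\Delta_{ij}$) and localized ($K=(n')^{1/6}$) auxiliary processes of \cite{CLSM} and invoking \cite[Lemmas 4.4 and 4.8]{CLSM}.

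The genuine gap is in the step where you replace those citations by a self-contained supersolution argument. You assert that rigidity yields $|\breve{w}_j(s)-\breve{w}_k(s)|\ge c(\log n)^{-C}|j-k|/n$ for all $j\ne k$, uniformly over $s\in[0,s_0]$, and you need this to bound the coefficients $\mathcal{B}^{\pm}_{jk}$ before the exponential weight $w_s(j)$ can be checked to be a supersolution of the truncated equation. Rigidity (\Cref{concentrationequation}) only localizes each particle within $O((\log n)^5/n)$ of its classical location, so it gives gap lower bounds only for $|j-k|\gg(\log n)^5$; for neighboring indices it gives nothing, and with overwhelming probability there are times and bulk indices at which adjacent gaps are far smaller than $(\log n)^{-C}/n$ (typical minimal bulk gaps are of order $n^{-4/3}$). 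Without an upper bound on these nearest-neighbor coefficients, the claimed off-diagonal decay of $\mathcal{U}^{\ell,\pm}$ does not follow as stated. This is precisely the difficulty that the regularization $\Delta_{ij}=\pm(n')^{-500}$ of \cite{CLSM} is designed to handle: one runs the dynamics with regularized denominators (whose coefficients are trivially bounded), proves finite speed of propagation there (\cite[Lemma 4.8]{CLSM}), and separately shows---using level-repulsion-type inputs rather than rigidity---that the regularized dynamics stays within $n^{-15}$ of the original (\cite[Lemma 4.4]{CLSM}). Your proof needs either this regularization-plus-comparison step or an independent level-repulsion estimate. A smaller omission: to bound the long-range (truncated) part you need the a priori bound $|v_j^{\pm}(s)|\lesssim n^{\omega/2-1}$ for all $s\in[0,s_0]$, not just at $s=0$; the paper obtains this from the monotone coupling of \Cref{lambdamonotone}.
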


	\begin{lem}
		
		\label{xy00} 
		
		For any real number $\chi > 0$, there exist constants $c = c(\varepsilon, \delta, B_0, m) > 0$ and $C = C(\chi, \varepsilon, \delta, B_0, m) > 1$ such that, under any coupling between $\breve{\bm{w}}^-$ and $\breve{\bm{w}}^+$, we have 
		\begin{flalign*}
			\displaystyle\max_{j \in \llbracket n'/2 - n^{\omega/15000}, n'/2 + n^{\omega/15000} \rrbracket} \mathbb{E} \Big[ \big| \breve{w}_j^+ (s_0) - \breve{w}_j^- (s_0) \big| \cdot \textbf{\emph{1}}_{|\breve{w}_j^+ (s_0) - \breve{w}_j^- (s_0)| > n^{\chi - 1}} \Big] < C e^{-c (\log n)^2}.
		\end{flalign*} 
		
	\end{lem}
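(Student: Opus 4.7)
The plan is to reduce the estimate to marginal properties of $\breve{w}_j^{\pm}(s_0)$ via the triangle inequality $|X-Y| \le |X-a|+|Y-a|$ around a common deterministic center $a$, combined with Cauchy--Schwarz to handle rare events. Here $X = \breve{w}_j^+(s_0)$ and $Y = \breve{w}_j^-(s_0)$, and I will choose $a = \gamma_j^+(s_0)$, where $\gamma_j^{\pm}(s_0)$ denotes the classical location of the $j$-th particle of $\breve{w}_j^{\pm}(s_0)$ with respect to $\emp(\breve{\bm u}^{\pm}) \boxplus \mu_{\semci}^{(\star)}$ (after the time rescaling $\widetilde{w}_i(\tau) = \breve{w}_i(n^{1/2}\tau)$ that converts the $\breve{\bm w}^{\pm}$ into standard Dyson Brownian motion on $n'$ particles).

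The first (and main) step is to establish the deterministic bound $|\gamma_j^+(s_0)-\gamma_j^-(s_0)| \le C(\log n)^6/n$ for $j$ in the middle range. I would extract this from a positive-probability event as follows. Under the coupling furnished by \Cref{eventa1}, the triangle inequality yields $|\breve{w}_j^+(s_0)-\breve{w}_j^-(s_0)| \le 2n^{-1-\omega/5}$ on an event of probability $\ge 1 - Cn^{-c}$. By marginal rigidity (from \Cref{concentrationequation} applied to $\widetilde{\bm w}^{\pm}$, combined with the density lower bound for the relevant free convolution supplied by \Cref{derivativetm}), each $\breve{w}_j^{\pm}(s_0)$ lies within $C(\log n)^5/n$ of $\gamma_j^{\pm}(s_0)$ on an event of probability $\ge 1-Ce^{-c(\log n)^2}$. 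Intersecting these three events (still of positive probability for large $n$) and invoking the triangle inequality on the deterministic quantities $\gamma_j^{\pm}(s_0)$ yields the claim.

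Given this, for fixed $\chi > 0$ and large $n$ we have $|\gamma_j^+-\gamma_j^-| \le n^{\chi-1}/3$, so for any coupling $(X,Y)$,
\begin{align*}
\{|X-Y| > n^{\chi-1}\} \subseteq \{|X-a| > n^{\chi-1}/3\} \cup \{|Y-a| > n^{\chi-1}/3\},
\end{align*}
and both events on the right depend only on the marginals of $X$ and $Y$, respectively. By marginal rigidity (and using $|Y-a| \le |Y-\gamma_j^-|+|\gamma_j^--\gamma_j^+|$ for the second), each has probability at most $Ce^{-c(\log n)^2}$. Cauchy--Schwarz then gives
\begin{align*}
\mathbb{E}\big[|X-Y|\,\mathbf{1}_{|X-Y|>n^{\chi-1}}\big] \le \big(\mathbb{E}[|X-Y|^2]\big)^{1/2}\big(\mathbb{P}[|X-Y|>n^{\chi-1}]\big)^{1/2} \le C'e^{-c'(\log n)^2},
\end{align*}
where the second moment $\mathbb{E}[|X-Y|^2] \le 2\mathbb{E}[X^2]+2\mathbb{E}[Y^2]$ is coupling-independent and is bounded uniformly in $n$ by the boundedness of $\breve{\bm u}^{\pm}$ together with the diffusive growth of Dyson Brownian motion.

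The main obstacle is the first step: verifying the closeness of the classical locations, which threads together the nontrivial homogenization-type input of \Cref{eventa1} with a careful version of \Cref{concentrationequation} for the time-rescaled Dyson Brownian motion. The rest is routine.
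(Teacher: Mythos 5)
Your proposal is correct and follows essentially the same route as the paper: deterministic centers given by classical locations of the free convolution, marginal rigidity from \Cref{concentrationequation} together with a density lower bound, the positive-probability argument via the coupling of \Cref{eventa1} to force the two (deterministic) centers to agree up to a negligible error, and a coupling-independent union bound for $\mathbb{P}\big[|\breve{w}_j^+(s_0)-\breve{w}_j^-(s_0)|>n^{\chi-1}\big]$; the only cosmetic difference is that you close with Cauchy--Schwarz and a uniform second-moment bound, where the paper instead proves an explicit tail estimate for $|\breve{w}_j^{\pm}(s_0)|$ (via the Brownian-bridge representation and \Cref{estimatexj2}) and truncates at $n^{10}$. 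One small caution: the density lower bound you need for the free convolution of the atomic empirical measure $\emp(\breve{\bm{u}}^{\pm})$ at time $\widetilde{s}_0$ is not literally an application of \Cref{derivativetm} (whose hypotheses require an initial density bounded below), and should instead be sourced, as the paper does, from standard free-convolution comparisons using that the empirical data approximates the smooth profile $G$ at scale much smaller than $\widetilde{s}_0$.
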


	Given the above four results, we can establish \Cref{xkernel}.

	\begin{proof}[Proof of \Cref{xkernel}]

		 By a Taylor expansion (and the fact that $G(t_0 - s_0, y_0) = 0$), we have 
		\begin{flalign}
			\label{gt0y0} 
			\big| G(t_0, y_0) - \mathfrak{a} s_0 \big| = \big| G(t_0, y_0) - G(t_0 - s_0, y_0) - \mathfrak{a} s_0 \big| \le B_0 s_0^2 \le n^{-1-\omega}.
		\end{flalign} 
	
		\noindent So, it suffices by the continuity of $F$ to show that
		\begin{flalign}
			\label{limitpx} 
			\displaystyle\lim_{n \rightarrow \infty} \displaystyle\int_{\mathbb{R}^k} F(\bm{a}) p_{\theta n \cdot (\breve{\bm{x}} (s_0) - \mathfrak{a} s_0)}^{(k)} (\bm{a}) d \bm{a} = \displaystyle\int_{\mathbb{R}^k} F(\bm{a}) p_{\sin}^{(k)} (\bm{a}) d \bm{a}.
		\end{flalign}
		
		\noindent We next claim that there exist constants $c = c(\varepsilon, \delta, B_0, m) \in \big( 0, \frac{\omega}{30000} \big)$ and $C = C(\varepsilon, \delta, B_0, m) > 1$, and a coupling between $\bm{x}$, $\breve{\bm{w}}^-$, and $\breve{\bm{w}}^+$, such that 
		\begin{flalign}
			\label{pa} 
			\mathbb{P} [\mathscr{A}] \ge 1 - C n^{-c}, \qquad \text{where} \quad \mathscr{A} = \mathscr{A}_0' \cap \mathscr{A}_1 \cap \mathscr{A}_3, 
		\end{flalign} 
	
		\noindent where we have recalled $\mathscr{A}_1$ from \Cref{eventa0} and defined the events
		\begin{flalign*} 
			& \mathscr{A}_0' = \bigcap_{|j - y_0 n| \le n^{\omega/15000}} \Big\{ \big| x_j (s_0) - G(s_0, jn^{-1}) \big| \le n^{\omega/30000 - 1} \Big\}; \\
			& \mathscr{A}_3 = \bigcap_{|j - y_0 n| \le n^{\omega/15000}} \Big\{ \big| \breve{w}_j^- (s_0) - \breve{w}_j^+ (s_0) \big| \le n^{-1-c} \Big\}.
		\end{flalign*}

		Let us establish the theorem assuming \eqref{pa}; throughout this proof, we set $\omega' = \frac{\omega}{30000}$. Since $\lim_{n \rightarrow \infty} \mathbb{P}[\mathscr{A}] = 1$, we may restrict to the event $\mathscr{A}$ in what follows. Setting $\mathfrak{a} = \partial_t G(t_0, y_0)$ and applying the definition of $\mathscr{A}_1$ from \Cref{eventa0} (at $s = s_0 = n^{\omega/2-1}$) gives 
		\begin{flalign}
			\label{wjs00} 
			\breve{w}_j^- (s_0) + \mathfrak{a} s_0 - n^{-1-\omega/2} \le \breve{x}_j (s_0) \le \breve{w}_j^+ (s_0) + \mathfrak{a} s_0 + n^{-1-\omega/2},
		\end{flalign}
	
		\noindent for each integer $j \in \big\llbracket \frac{n'}{2} - n'', \frac{n'}{2}+n'' \big\rrbracket$. Further observe on $\mathscr{A}_0'$ that for $j > \frac{n'}{2} + n^{2\omega'}$ we have $\breve{x}_j (s_0) - \mathfrak{a} s_0 \le G(t_0, y_0 + n^{2\omega'-1})  - \mathfrak{a} s_0 + n^{\omega' - 1} \le n^{-1-\omega} - \varepsilon n^{2\omega'-1} + n^{\omega' - 1} < -n^{\omega' - 1}$ (where we have used \eqref{gt0y0} and the fact from \Cref{fgr} that $\partial_y G \le -\varepsilon$). By similar reasoning, we have on $\mathscr{A}_0$ that for $j < \frac{n'}{2} - n^{2\omega'}$ that $\breve{x}_j (s_0) > n^{\omega' - 1}$. By these two bounds, together with \eqref{wjs00} and the fact that $\big| \breve{w}_j^- (s_0) - \breve{w}_j^+ (s_0) \big| \le n^{-1-c}$ for $j \in \llbracket \frac{n'}{2} - n^{2\omega'}, \frac{n'}{2} + n^{2\omega'} \big\rrbracket$ on $\mathscr{A}$, it follows from the continuity of $F$ that, to verify \eqref{limitpx}, it suffices to show
		\begin{flalign*}
			\displaystyle\lim_{n \rightarrow \infty} \displaystyle\int_{\mathbb{R}^k} F(\bm{a}) p_{\theta n \cdot \breve{\bm{w}}^- (s_0)}^{(k)} (\bm{a}) d \bm{a} = \displaystyle\int_{\mathbb{R}^k} F(\bm{a}) p_{\sin}^{(k)} (\bm{a}) d \bm{a}. 
		\end{flalign*} 		
		
		\noindent This follows from the first statement of \eqref{wjwj} and \Cref{kernelconvergew} (with the continuity of $F$).
		
		It remains to confirm \eqref{pa}. To that end, it suffices by a union bound to show that $\mathbb{P} [\mathscr{A}_0'] \ge 1 - C n^{-c}$, that $\mathbb{P} [\mathscr{A}_1] \ge 1 - C n^{-c}$, and that $\mathbb{P} \big[ \mathscr{A}_1 \cap \mathscr{A}_3^{\complement} \big] \le C n^{-c}$. The first follows by \Cref{gh}, since $\omega' = 3 (m^{-1} + \delta) > 2m^{-1} + 3 \delta$, and the second follows from \Cref{eventa0}.
		
		The third will follow from  \eqref{wjwj}, \Cref{xy00}, and a Markov estimate. To implement this, observe that \eqref{wjwj} and \Cref{xy00} together imply the existence of constants $c_1 = c_1 (\varepsilon, \delta, B_0, m) > 0$ and $C_1 = C_1 (\varepsilon, \delta, B_0, m) > 1$, and a coupling between $\breve{\bm{w}}^-$ and $\breve{\bm{w}}^+$ (possibly different from the one above), such that 
		\begin{flalign*}
			\displaystyle\sum_{|j-n'/2| \le n^{\omega/15000}} \mathbb{E} \Big[ \big| \breve{w}_j^+ (s_0) - \breve{w}_j^- (s_0) \big| \Big] \le C_1 n^{-1-c_1},
		\end{flalign*}
	
		\noindent from which it follows that 
		\begin{flalign}
			\label{wjs01} 
			\displaystyle\sum_{|j-n'/2| \le n^{\omega/15000}} \Big( \mathbb{E} \big[ \breve{w}_j^+ (s_0) \big] - \mathbb{E} \big[ \breve{w}_j^- (s_0) \big] \Big) \le C_1 n^{-1-c_1}.
		\end{flalign}
	
		\noindent Since this bound is independent of the coupling between $\breve{\bm{w}}^+$ and $\breve{\bm{w}}^-$, it also holds for our original one given by \Cref{eventa0}. Next, restricting to the event $\mathscr{A}_1$, we have $\breve{w}_j^- (s_0) \le \breve{w}_j^+ (s_0) + n^{-\omega} s_0 = \breve{w}_j^+ (s_0) + n^{-1-\omega/2}$. Together with \eqref{wjs01}, this implies under our original coupling that 
		\begin{flalign*}
			\displaystyle\sum_{|j - n'/2| \le n^{\omega/15000}} & \mathbb{E} \Big[ \textbf{1}_{\mathscr{A}_1} \cdot \big| \breve{w}_j^+ (s_0) - \breve{w}_j^- (s_0) \big| \Big] \\
			 & \le \displaystyle\sum_{|j-n'/2| \le n^{\omega/15000}} \Big( \mathbb{E} \big[ \breve{w}_j^+ (s_0) \big] - \mathbb{E} \big[ \breve{w}_j^- (s_0) \big] + 2n^{-1-\omega/2} \Big) \le 3C_1 n^{-1-c_2},
		\end{flalign*}
	
		\noindent for some constant $c_2 = c_2 (\varepsilon, \delta, B_0, m) > 0$. By a Markov estimate, we thus have for $c = \frac{c_2}{2}$ that
		\begin{flalign*}
			\mathbb{P} \big[ \mathscr{A}_1 \cap \mathscr{A}_3^{\complement} \big] \le n^{1+c_2/2} \cdot \displaystyle\sum_{|j-n'/2| \le n^{\omega/15000}} \mathbb{E} \Big[ \textbf{1}_{\mathscr{A}_1} \cdot \big| \breve{w}_j^+ (s_0) - \breve{w}_j^- (s_0) \big| \Big] 
			& \le 3C_1 n^{-1-c_2/2}.
		\end{flalign*}
	
		\noindent This gives the third bound above, $\mathbb{P} \big[ \mathscr{A}_1 \cap \mathscr{A}_3^{\complement} \big] \le C n^{-c}$, and thus the theorem. 
	\end{proof}

	\subsection{Proof of \Cref{eventa0}}
	
	\label{ProofA0}

	In this section we establish \Cref{eventa0}; throughout, we recall the notation of that proposition. We first rescale  $s_0$, $s_1$, $\breve{\bm{x}} (s)$, $\breve{\bm{w}}^- (s)$, $\breve{\bm{w}} (s)$, $\breve{\bm{w}}^+ (s)$, $G$, $G^-$, and $G^+$ to form $\widetilde{s}_0$, $\widetilde{s}_1$, $\widetilde{\bm{x}} (s)$, $\widetilde{\bm{w}}^- (s)$, $\widetilde{\bm{w}} (s)$, $\widetilde{\bm{w}}^+ (s)$, $\widetilde{G}$, $\widetilde{G}^-$, and $\widetilde{G}^+$, respectively, defined by setting 
	\begin{flalign}
		\label{syg} 
		\begin{aligned}
		\widetilde{s}_i =  \displaystyle\frac{n s_i}{n'}; \qquad \widetilde{z}_j (s) = \displaystyle\frac{n}{n'} \cdot \breve{z}_j \Big( & \displaystyle\frac{n' s}{n} \Big); \qquad \widetilde{G} (s, y) = \displaystyle\frac{n}{n'} \cdot G \Big(  \displaystyle\frac{n's}{n} + t_0 - s_0, \displaystyle\frac{n' y}{n} + y_0 - \displaystyle\frac{n'}{2n}  \Big), \\
		\widetilde{G}^{\pm} ( & y) = \displaystyle\frac{n}{n'} \cdot G^{\pm} \Big( \displaystyle\frac{n' y}{n} + y_0 - \displaystyle\frac{n'}{2n} \Big),
		\end{aligned} 
	\end{flalign}

	\noindent for any integers $i \in \{0,1\}$ and $j \in \llbracket 1, n' \rrbracket$; indices $z \in \{ x, w^-, w, w^+ \}$ and $\pm \in \{ +, - \}$; and real numbers $s \in [0, \widetilde{s}_0 + \widetilde{s}_1]$ and $y \in \big[ 1-y_0 n^{1/2}, (1-y_0) n^{1/2} \big]$. In this way, $\widetilde{\bm{w}} (s)$ is obtained by applying Dyson Brownian motion \eqref{lambdaequation} to the initial data $\widetilde{\bm{w}} (0)$ for time $s$ (without the additional rescaling described in \Cref{Comparex}); similar statements are true for $\bm{w}^- (s)$ and $\bm{w}^+ (s)$. By \eqref{a0}, and the fact that $(n')^2 = n$, we have 
	\begin{flalign}
		\label{a0g2} 
		\mathscr{A}_0 = \bigcap_{(j, s) \notin \llbracket n'/2 - n'', n'/2 + n'' \rrbracket \times (0, \tilde{s}_0 + \tilde{s}_1)} \Bigg\{ \bigg| \widetilde{x}_j (s) - \widetilde{G} \Big(s, \displaystyle\frac{j}{n'} \Big) \bigg| \le (n')^{\omega/2 - 1} \Bigg\}.
	\end{flalign} 

	\noindent Further observe since $G \in \Adm_{\varepsilon/2} (\mathfrak{R})$; since $-\frac{2}{\varepsilon} \le \partial_y G^{\pm} \le \frac{\varepsilon}{2}$; since $G(t_0-s_0, y_0) = 0$; since $\| G \|_{\mathcal{C}^{m+1} (\mathfrak{R})} \le B_0$; and since $\| G^{\pm} \|_{\mathcal{C}^{m+1}} \le 2B_0$ that, for each $\pm \in \{ +, - \}$ and $k \in \llbracket 1, m+1 \rrbracket$,
	\begin{flalign}
		\label{g0b}
		\begin{aligned} 
		& \widetilde{G} \in \Adm_{\varepsilon/2} \Big( \displaystyle\frac{n}{n'} \cdot \mathfrak{R} \Big); \qquad \qquad -\displaystyle\frac{2}{\varepsilon} \le \partial_y \widetilde{G}^{\pm} (y) \le -\displaystyle\frac{\varepsilon}{2}, \quad \text{for $y \in \big[ 1-y_0 n^{1/2}, (1 - y_0) n^{1/2} \big]$}; \\
		& \widetilde{G} \Big(0, \displaystyle\frac{1}{2} \Big) = \widetilde{G}^{\pm} \Big( 0, \displaystyle\frac{1}{2} \Big) = 0; \qquad  [\widetilde{G}]_k, [\widetilde{G}^{\pm}]_k \le 2 B_0 \Big( \displaystyle\frac{n'}{n} \Big)^{k-1},
		\end{aligned}
	\end{flalign}

	The proof of \Cref{eventa0} will make use of the following lemma, to be established in \Cref{ProofEvent0a1} below, that approximates the locations of the middle paths in $\widetilde{\bm{w}}^- (s)$ and $\widetilde{\bm{w}}^+ (s)$. 
	
	\begin{lem} 
		
		\label{wjs2}
		
		There exists a coupling between $\widetilde{\bm{w}}^-$, $\widetilde{\bm{w}}$, and $\widetilde{\bm{w}}^+$, and a constant $c = c(\varepsilon, \delta, B_0, m) > 0$ such that $\mathbb{P} [\mathscr{A}_1] \ge 1 - c^{-1} e^{-c (\log n)^2}$, where we have defined the event
		\begin{flalign}
			\label{wnw} 
			\begin{aligned}
				\mathscr{A}_1 & = \Bigg\{ \displaystyle\sup_{s \in [0, \tilde{s}_0 + \tilde{s}_1]} \bigg( \displaystyle\max_{j \in \llbracket n'/2 - 2n'', n'/2 + 2n'' \rrbracket} \big| \widetilde{w}_j^- (s) - \widetilde{w}_j^- (0) \big| \bigg) \le (n')^{2\omega/3 - 1} \Bigg\}  \\
				& \qquad \cap \Bigg\{ \displaystyle\sup_{s \in [0, \tilde{s}_0 + \tilde{s}_1]} \bigg( \displaystyle\max_{j \in \llbracket n'/2 - 2n'', n'/2 + 2n'' \rrbracket}  \big| \widetilde{w}_j^+ (s) - \widetilde{w}_j^+ (0) \big| \bigg) \le (n')^{2\omega/3 - 1} \Bigg\} \cap \mathscr{A}_0.
			\end{aligned}
		\end{flalign}
		
	\end{lem}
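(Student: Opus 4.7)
The plan is to realize the three processes on a common probability space via their matrix representation and then use rigidity of Dyson Brownian motion to reduce displacement bounds to bounds on classical-location drift. First, since (as derived in the paragraph following \eqref{syg}) each of $\breve{\bm{w}}, \breve{\bm{w}}^-, \breve{\bm{w}}^+$ is standard $n'$-Dyson Brownian motion, I would use \Cref{lambdat}\emph{(1)} to write them simultaneously as the eigenvalue processes of $\bm{A} + \bm{G}(s)$, $\bm{A}^- + \bm{G}(s)$, $\bm{A}^+ + \bm{G}(s)$, where $\bm{G}(s)$ is a common $n'$-dimensional Hermitian Brownian motion and $\bm{A}, \bm{A}^\pm$ are the diagonal matrices with entries $\breve{\bm{u}}, \breve{\bm{u}}^\pm$. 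By \eqref{uj2}, the construction of $G^\pm$, and the event $\mathscr{A}_0$ from \eqref{a0g2}, the diagonal perturbations are small: $\max_k |\breve{u}_k - \breve{u}_k^\pm| \le 2 n^{\omega/2 - 1}$, so Weyl's inequality yields the deterministic closeness $|\breve{w}_j^\pm(s) - \breve{w}_j(s)| \le 2 n^{\omega/2 - 1}$ for all $(j, s)$ under this coupling.

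Second, I would apply \Cref{concentrationequation} to each of $\breve{\bm{w}}^\pm$ to conclude that on an event of probability at least $1 - C e^{-c(\log n)^2}$, each particle $\breve{w}_j^\pm(\tau)$ lies within $n^{-A}$ (for arbitrarily large $A > 1$) of the classical location $\gamma_j^\pm(\tau)$ of $\mu^\pm \boxplus \mu_{\semci}^{(\tau)}$, where $\mu^\pm$ is the empirical measure of $\breve{\bm{u}}^\pm$. The bound $|\breve{w}_j^\pm(\tau) - \breve{w}_j^\pm(0)|$ then reduces, up to super-polynomial error, to bounding $|\gamma_j^\pm(\tau) - \gamma_j^\pm(0)|$ for $j$ in the middle window and $\tau \in [0, s_0 + s_1]$.

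The crux is a tight drift bound for these classical locations, exploiting near-symmetry of the initial density at the middle. From \eqref{g0b}, the Taylor coefficients $a_k = (1/k!)\partial_y^k \widetilde{G}^\pm(1/2)$ satisfy $|a_k| \le C(n'/n)^{k-1} = C(n')^{-(k-1)}$; inverting to the density $\varrho_0^\pm(x) = -1/\partial_y \widetilde{G}^\pm$ near $x = 0$ produces an expansion $\varrho_0^\pm(x) = c_0 + c_1 x + c_2 x^2 + \cdots$ with $c_0 = O(1)$ and $c_k = O((n')^{-k})$ for $k \ge 1$. Since the Hilbert transform of a symmetric density vanishes at its center, and since the support of $\varrho_0^\pm$ is centered within $O((n')^{-1})$ of $x = 0$ (the center being $(\widetilde G^\pm(0)+\widetilde G^\pm(1))/2 = O((n')^{-1})$ by the same expansion), one obtains $|H\varrho_0^\pm(0)| = O((n')^{-1})$. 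By \Cref{rhotestimatek}, \Cref{derivativetm}, and \Cref{yconvolution}, this estimate propagates through the short free-convolution evolution to give $|H\varrho_\tau^\pm(\gamma_j^\pm(\tau))| = O((n')^{-1})$ uniformly for $\tau \in [0, s_0+s_1]$ and middle $j$; the classical-location velocity $\partial_\tau \gamma_j^\pm(\tau) = -\pi H\varrho_\tau^\pm(\gamma_j^\pm(\tau))$ is then $O((n')^{-1})$, and integrating yields $|\gamma_j^\pm(\tau) - \gamma_j^\pm(0)| = O((n')^{-1}(s_0 + s_1)) = O((n')^{6\omega - 3})$.

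After rescaling $\widetilde{w}_j^\pm(s) = (n/n')\breve{w}_j^\pm((n'/n)s)$, and using $n/n' = (n')$, one obtains $|\widetilde{w}_j^\pm(s) - \widetilde{w}_j^\pm(0)| = O((n')^{6\omega-2})$, which is bounded by $(n')^{2\omega/3-1}$ precisely because $16\omega/3 \le 1$ in the regime $\omega = 90000(\delta + m^{-1})$; intersecting with $\mathscr{A}_0$ preserves the probability lower bound. The main obstacle is the third step: rigorously justifying that the symmetric-leading, asymmetric-subleading structure of $\varrho_0^\pm$ persists for $\widetilde G^\pm$ under its $\mathcal{C}^{m+1}$ regularity, and that this smallness is preserved uniformly under free convolution with $\mu_{\semci}^{(\tau)}$; the smoothness estimates in \Cref{derivativetm}, applied to the rescaled initial density, are what make this propagation possible.
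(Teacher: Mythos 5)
Your overall strategy is the same as the paper's: rigidity for Dyson Brownian motion (\Cref{concentrationequation}, after the rescaling \eqref{syg} and \Cref{scalemotion}) plus a drift bound for the classical locations obtained from the near-symmetry of the initial density, i.e.\ a smallness bound on the Hilbert transform near the center propagated through the short free-convolution flow. That drift bound is exactly what the paper isolates as \Cref{hestimate0}, proved via the estimate \eqref{transformrho}. However, there is a gap at the junction of your steps 2 and 3: rigidity, applied (conditionally) to $\breve{\bm{w}}^{\pm}$, pins the particles to the classical locations of $\mu^{\pm}\boxplus\mu_{\semci}^{(\tau)}$ where $\mu^{\pm}$ is the \emph{random, atomic} empirical measure of $\breve{\bm{u}}^{\pm}$, while your drift computation is carried out for the \emph{smooth, deterministic} profile $-1/\partial_y\widetilde{G}^{\pm}$. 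You never compare the two families of classical locations, uniformly over the whole time window -- including times comparable to the interparticle spacing, where the density and Hilbert transform of the atomic free convolution are not directly controlled by the $\mathcal{C}^{m+1}$ data of $\widetilde{G}^{\pm}$. The paper bridges precisely this point by sandwiching $\widetilde{\bm{w}}^{\pm}(0)$ (on $\mathscr{A}_0$, via \eqref{a0g2}, \eqref{uj2} and $\partial_y\widetilde{G}^{\pm}\le-\varepsilon/2$) between small shifts of the deterministic quantile configuration $\lambda_j^{\pm}=\widetilde{G}^{\pm}\big(\frac{2j-1}{n'}\big)$ and invoking the monotone coupling of \Cref{lambdamonotone}, so that rigidity and \Cref{hestimate0} are only ever applied to the deterministic measure $\widetilde{\mu}_0^{\pm}$. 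Ironically, the common-Hermitian-Brownian-motion/Weyl coupling you introduce in step 1 (to compare $\breve{\bm{w}}^{\pm}$ with $\breve{\bm{w}}$, which the event $\mathscr{A}_1$ does not require) is exactly the tool that could fill this hole if redeployed: couple $\breve{\bm{w}}^{\pm}$ with DBM started from $\bm{\lambda}^{\pm}$ through the same matrix Brownian motion and use Weyl's inequality, which plays the role of \Cref{lambdamonotone}.

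A second, smaller point: your claim that $|H\varrho_{\tau}^{\pm}|=O((n')^{-1})$ uniformly at the classical locations of all middle indices is too strong. For $|j-n'/2|\le 2n''$ the relevant locations lie at distance up to order $(n')^{-1/2}$ from the symmetry center, and the correct bound (the paper's \eqref{transformrho}) is $O\big(\frac{n'}{n}+|y|+s\big)$; the extra $|y|+s$ terms come from the index offset and from the evolution of the profile, and they dominate the $(n')^{-1}$ term in this window. This does not break your argument -- integrating the corrected bound over the time window still gives a displacement of order $(n')^{6\omega-3/2}+(n')^{12\omega-2}\ll(n')^{2\omega/3-1}$ since $\omega\le\frac{1}{20}$, and one must also absorb the $(\log n)^5$ index shift from rigidity, which costs only $O\big((\log n)^5/n'\big)$ -- but as stated the uniform $O((n')^{-1})$ claim is not justified by the symmetry argument alone.
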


	Using \Cref{wjs2}, we can establish \Cref{eventa0}.

	\begin{proof}[Proof of \Cref{eventa0}]
		
		Fix $\mathfrak{a} = \partial_t G(t_0, y_0)$, which is uniformly bounded in $[-B_0, B_0]$ by \Cref{fgr}. We claim on the event $\mathscr{A}_1$ from \eqref{wnw} that 
		\begin{flalign}
			\label{wj0wjtfg}
			\begin{aligned} 
				& \breve{w}_j^- (0) \le \breve{u}_j \le \breve{w}_j^+ (0); \\
				& \breve{w}_j^- (s_0 + s_1) + (\mathfrak{a} - n^{-\omega/2}) (s_0 + s_1) \le \breve{v}_j \le \breve{w}_j^+ (s_0 + s_1) + (\mathfrak{a} + n^{-\omega/2}) (s_0 + s_1); \\
				& \breve{f}^- (s) + (\mathfrak{a} - n^{-\omega/2}) s \le \breve{f} (s) \le \breve{f}^+ (s) + (\mathfrak{a} + n^{-\omega/2}) s,  \\
				& \breve{g}^- (s) + (\mathfrak{a} - n^{-\omega/2}) s \le \breve{g} (s) \le \breve{g}^+ (s) + (\mathfrak{a} + n^{-\omega/2}) s,  
			\end{aligned} 
		\end{flalign}
		
		\noindent for each integer $j \in \big\llbracket \frac{n'}{2} - n'', \frac{n'}{2} + n'' \big\rrbracket$ and real number  $s \in [0, s_0 + s_1]$.
		
		Let us quickly establish the proposition assuming \eqref{wj0wjtfg}. To that end, denote the $(2n''+1)$-tuples $\mathring{\bm{u}} = (\breve{u}_{n'/2-n''},  \ldots , \breve{u}_{n'/2+n''}) \in \overline{\mathbb{W}}_{2n''+1}$ and $\mathring{\bm{v}} = (\breve{v}_{n'/2-n''}, \ldots , \breve{v}_{n'/2+n''}) \in \overline{\mathbb{W}}_{2n''+1}$. Similarly, for each index $\pm \in \{ +, - \}$ and real number $s \in [0, s_0 + s_1]$, denote 
		\begin{flalign*} 
			& \mathring{\bm{w}}^{\pm} (s) = \Big( \breve{w}_{n'/2-n''}^{\pm} (s) + (\mathfrak{a} \pm n^{-\omega/2}) s, \ldots , \breve{w}_{n'/2+n''}^{\pm} (s) + (\mathfrak{a} \pm n^{-\omega/2}) s \Big) \in \overline{\mathbb{W}}_{2n''+1}; \\
			& \mathring{f}^{\pm} (s) = \breve{f}^{\pm} (s) + (\mathfrak{a} \pm n^{-\omega/2}) s; \qquad \mathring{g}^{\pm} (s) = \breve{g}^{\pm} (s) + (\mathfrak{a} \pm n^{-\omega/2}) s.
		\end{flalign*} 
		
		\noindent Then, by the second part of \Cref{lambdat} and \Cref{linear}, the laws of the processes $\big( \breve{x}_j (s) \big)$ and $\big( \mathring{w}_j^{\pm} (s) \big)$ for each $\pm \in \{ +, - \}$ and $(j, s) \in \big\llbracket \frac{n'}{2} - n'', \frac{n'}{2} + n'' \big\rrbracket \times [0, s_0 + s_1]$ are given by $\mathfrak{Q}_{\breve{f}; \breve{g}}^{\mathring{\bm{u}}; \mathring{\bm{v}}} (n^{-1})$ and $\mathfrak{Q}_{\mathring{f}^{\pm}; \mathring{g}^{\pm}}^{\mathring{\bm{w}}^{\pm} (0); \mathring{\bm{w}}^{\pm} (s_0 + s_1)} (n^{-1})$, respectively. This, together with \eqref{wj0wjtfg} and \Cref{monotoneheight} yields a coupling on $\mathscr{A}_1$ that $\breve{\bm{w}}^- (s) + (\mathfrak{a} - n^{- \omega/2}) s = \mathring{\bm{w}}^- (s) \le \breve{\bm{x}} (s) \le \mathring{\bm{w}}^+ (s) = \breve{\bm{w}}^+ (s) + (\mathfrak{a} + n^{-\omega/2}) s$ for each $s \in [s_0, s_1]$, verifying the proposition.
		
		The second bound in the first statement of \eqref{wj0wjtfg} follows the fact that $\breve{\bm{w}}^{\pm} (0) = \breve{\bm{u}}^{\pm}$; \eqref{uj2}; the fact that $\mathscr{A}_1 \subseteq \mathscr{A}_0$; and the fact that on $\mathscr{A}_0$ we have (by \eqref{a0}) that $u_j \le G(t_0 - s_0, jn^{-1}) + n^{\omega/4-1} \le G^+ (jn^{-1}) = \breve{u}_j^+$ for $j \notin \big\llbracket \frac{n'}{2} - \frac{2n''}{3}, \frac{n'}{2} + \frac{2n''}{3} \big\rrbracket$. The proof of the first bound there is entirely analogous.  
		
		For the second statement of \eqref{wj0wjtfg}, we only show that $\breve{w}_j^- (s_0 + s_1) + (\mathfrak{a} - n^{-\omega/2}) (s_0 + s_1) \le \breve{v}_j$, as the proof that $\breve{v}_j \le \breve{w}_j^+ (s_0 + s_1) + (\mathfrak{a} + n^{-\omega/2}) (s_0 + s_1)$ is entirely analogous. Since we have restricted to the event $\mathscr{A}_0$ (and since $n = (n')^2$), we have that 
		\begin{flalign*} 
			\frac{n}{n'} \cdot \breve{v}_j \ge \frac{n}{n'} \cdot \bigg( G \Big( t_0 + s_1, y_0 - \frac{n'}{2n} + \frac{j}{n} \Big) - n^{\omega/4-1} \bigg) = \widetilde{G} \Big( \widetilde{s}_0 + \widetilde{s}_1, \displaystyle\frac{j}{n'} \Big) - (n')^{\omega/2-1},
		\end{flalign*} 
	
		\noindent and so it suffices to show that 
		\begin{flalign*}
			\widetilde{w}_j^- (\widetilde{s}_0 + \widetilde{s}_1) \le \widetilde{G} \Big( \widetilde{s}_0 + \widetilde{s}_1, \displaystyle\frac{j}{n'} \Big) - (\mathfrak{a} - n^{-\omega/2}) (\widetilde{s}_0 + \widetilde{s}_1) - (n')^{\omega-1}.
		\end{flalign*}
		
		\noindent Thus, since we have restricted to the event $\mathscr{A}_1$ from \eqref{wnw}; since for $j \in \big\llbracket \frac{n'}{2} - n'', \frac{n'}{2} + n'' \big\rrbracket$ we have
		\begin{flalign*} 
			\widetilde{w}_j^- (0) = \frac{n}{n'} \cdot \breve{u}_j^- \le \widetilde{G} \Big( 0, \frac{j}{n'} \Big) + (n')^{2 \omega / 3 - 1},
		\end{flalign*} 
	
		\noindent by \eqref{uj2}, \eqref{a0g2}, and the definition of $G^-$; and since $\widetilde{s}_0 = (n')^{\omega-1}$ (by \eqref{omegann} and \eqref{syg}), it suffices to show that 
		\begin{flalign*}
			\widetilde{G} \Big( 0, \frac{j}{n'} \Big) \le \widetilde{G} \Big( \widetilde{s}_0 + \widetilde{s}_1, \displaystyle\frac{j}{n'} \Big) -  \mathfrak{a} (\widetilde{s}_0 + \widetilde{s}_1) + (n')^{-\omega} \widetilde{s}_1 - 2 (n')^{\omega-1}.
		\end{flalign*}
	
		\noindent This follows from the fact that 
		\begin{flalign*} 
			\bigg| \widetilde{G} \Big( 0, \displaystyle\frac{j}{n'} \Big) - \widetilde{G} \Big( \widetilde{s}_0 + \widetilde{s}_1, \displaystyle\frac{j}{n'} \Big) - \mathfrak{a} (\widetilde{s}_0 + \widetilde{s}_1) \bigg| \le B_0 \Big(\widetilde{s}_0 + \widetilde{s}_1 + \displaystyle\frac{n''}{n'} \Big)^2 & \le 9B_0  \big( (n')^{12 \omega - 2} + (n')^{-1} \big) \\
			&  \le (n')^{-\omega} \widetilde{s}_1 - 2 (n')^{\omega - 1},
		\end{flalign*} 
	
		\noindent for sufficiently large $n$. Here, in the first inequality we applied a Taylor expansion, the fact that $\mathfrak{a} = \partial_t G(t_0, y_0)$, the fact that $\big| j - \frac{n'}{2} \big| \le n''$, and the fact that $\| G \|_{\mathcal{C}^2 (\mathfrak{R})} \le B_0$; in the second inequality, we used the facts that $\widetilde{s}_0 + \widetilde{s}_1 \le 2 \widetilde{s}_1 \le 2 (n')^{6 \omega - 1}$ and that $n'' = (n')^{1/2}$; and in the third inequality we used the facts that $\omega \le \frac{1}{20}$ (and again that $\widetilde{s}_1 = (n')^{6 \omega - 1}$). 
	
		It therefore remains to establish the last two statements of \eqref{wj0wjtfg}; among these, we only verify the bound $\breve{f}^- \le \breve{f}$ there, as the proofs of the remaining three are entirely analogous. To that end, observe on $\mathscr{A}_1$ that for each $s \in [0, s_0 + s_1]$ we have, denoting $\widetilde{s} = \frac{ns}{n'}$,
		\begin{flalign}
			\label{fsx}
			\begin{aligned} 
			\frac{n}{n'} \cdot \mathring{f}^- (s)  = \widetilde{w}_{n'/2+n''+1}^- (\widetilde{s})  + (\mathfrak{a} - n^{-\omega}) \widetilde{s} & \le \widetilde{w}_{n'/2 + n'' + 1}^- (0) + \mathfrak{a} \widetilde{s}  + (n')^{2\omega/3-1} \\
			& = \widetilde{G}^- \Big( \displaystyle\frac{1}{2} + \displaystyle\frac{n'' + 1}{n'} \Big) + \mathfrak{a} \widetilde{s} + (n')^{2\omega/3-1} \\
			& = \widetilde{G} \Big( 0, \displaystyle\frac{1}{2} + \displaystyle\frac{n'' + 1}{n'} \Big) + \mathfrak{a} \widetilde{s} - (n')^{\omega-1} + (n')^{2\omega/3-1},
			\end{aligned} 
		\end{flalign} 
		
		\noindent where in the first statement we applied the definitions of $\mathring{f}$ and $\breve{f}^-$ (as well as \eqref{syg}); in the second we used the definition of $\mathscr{A}_1$ from \eqref{wnw}; in the third we used the fact that $\breve{w}_{n'/2+n''+1}^- (0) = G^- \big( y_0 + \frac{n''+1}{n} \big)$ (by \eqref{uj2}) and the rescaling \eqref{syg}; and in the fourth we used the definition of $\widetilde{G}^-$ in terms of $\widetilde{G}$. We further have that 
		\begin{flalign*}
			 \widetilde{G} \Big( & 0, \displaystyle\frac{1}{2} + \displaystyle\frac{n'' + 1}{n'} \Big) + \mathfrak{a} \widetilde{s}  - (n')^{\omega-1} + (n')^{2\omega/3-1} \\
			 & \le \widetilde{G} \Big( \widetilde{s}, \displaystyle\frac{1}{2} + \displaystyle\frac{n'' + 1}{n'} \Big)  - (n')^{3\omega/4-1} + \displaystyle\frac{B_0 n'}{n} \Big( \widetilde{s} + \displaystyle\frac{n''+1}{n'} \Big)^2 \\
			& \le \widetilde{x}_{n'/2 + n'' + 1} (\widetilde{s}) - (n')^{3\omega/4-1} + \displaystyle\frac{2B_0 \widetilde{s}^2 n'}{n} + 2 (n')^{\omega/2-1}  \le \displaystyle\frac{n}{n'} \cdot \breve{x}_{n'/2 + n''+1} (s) = \displaystyle\frac{n}{n'} \cdot \breve{f} (s),
		\end{flalign*} 
	
		\noindent where in the first statement we applied a Taylor expansion for $\widetilde{G}$, using the facts that $\mathfrak{a} = \partial_t G(t_0, y_0) = \partial_t G^- (t_0, y_0) = \partial_t \widetilde{G}^- \big( 0, \frac{1}{2} \big)$ and that $[ \widetilde{G}^-]_2 = \frac{n'}{n} \cdot [G^-]_2 \le \frac{2B_0n'}{n}$; in the second we used the fact that we are restricting to the event $\mathscr{A}_0$, \eqref{a0g2}, and the fact that $n' n^{-1} (n'' n'^{-1})^2 = n^{-1}$; in the third we used \eqref{syg} and the fact that $\widetilde{s}^2 \le 4\widetilde{s}_1^2 = 4 (n')^{12 \omega - 2} \le (n')^{-1}$ (as $\omega \le \frac{1}{20}$); and in the fourth we used the definition of $\breve{f}$. Combining this with \eqref{fsx} yields the first inequality in the third statement of \eqref{wj0wjtfg}. As mentioned previously, the proof of the second inequality is entirely analogous, as is the proof of the fourth statement of \eqref{wj0wjtfg}; this establishes the proposition.
	\end{proof}

	\subsection{Proof of \Cref{wjs2}}
	
	\label{ProofEvent0a1}
	
	To establish \Cref{wjs2}, we set some additional notation, which will eventually more precisely prescribe the limiting behaviors of the paths in $\widetilde{\bm{w}} (s)$. To that end, let $\widetilde{H} : [0, \widetilde{s}_0 + \widetilde{s}_1] \times \mathbb{R} \rightarrow [0, 1]$ denote the height function associated with the inverted height function $\widetilde{G}$ (recall \eqref{gty}), given by $\widetilde{H} (t, x) = \sup \big\{ y \in \mathbb{R} : \widetilde{G} (t, y) \ge x \big\}$. In particular, for any $(t, x) \in [0, \widetilde{s}_0 + \widetilde{s}_1] \times [-n^{\omega}, n^{\omega}]$, we have
	\begin{flalign}
		\label{hg2}
		\widetilde{H} (t, x)  \quad \text{is the unique value of $y \in \big[ -(n')^{3 \omega/2}, (n')^{3 \omega/2} \big]$ such that $\widetilde{G} (t, y) = x$},
	\end{flalign}
	
	\noindent where this uniqueness follows from the fact that $\widetilde{G}(s, y)$ is decreasing in $y$ (by \eqref{g0b}), and we implicitly used the fact that any $(s, y) \in [0, \widetilde{s}_0 + \widetilde{s}_1] \times \big[ -(n')^{3 \omega/2}, (n')^{3\omega/2} \big]$ is in the domain of $\widetilde{G}$ since $0 \le y_0 - \frac{(n')^{1+2\omega}}{n} - \frac{n'}{2n} \le y_0 + \frac{(n')^{1+2\omega}}{n} - \frac{n'}{2n} \le 1$ (as $\min \{ y_0, 1 - y_0 \} \ge \dist \big( (t_0, y_0), \partial \mathfrak{R} \big) \ge n^{-\delta}$, as stipulated in \Cref{xkernel}). Next, define the density $\widetilde{\varrho}_0 \in L^1 (\mathbb{R})$ by setting 
	\begin{flalign}
		\label{rhoh} 
		\begin{aligned}
		& \widetilde{\varrho}_0 (x) = -\partial_x \widetilde{H} (0, x), \qquad \text{if $\widetilde{G} (0, 1) \le x \le \widetilde{G} (0, 0)$}; \\ 
		& \widetilde{\varrho}_0 (x) = 0, \qquad \qquad \qquad \quad \text{if $x < \widetilde{G} (0, 1)$ or if $x > \widetilde{G} (0, 0)$.}   
		\end{aligned} 
	\end{flalign}
	
	 Observe in particular that $\int_{-\infty}^{\infty} \widetilde{\varrho}_0 (x) dx = \widetilde{H} \big( 0, \widetilde{G} (0, 1) \big) - \widetilde{H} \big( 0, \widetilde{G}(0, 0) \big) = 1$. Denoting the measure $\widetilde{\mu}_0 = \widetilde{\varrho}_0 (x) dx$, we then find that $\widetilde{\mu}_0 \in \mathscr{P}_0$, that is, it is a probability measure. Denote its free convolution with the rescaled semicircle law (recall \Cref{TransformConvolution}) by $\widetilde{\mu}_t = \widetilde{\mu}_0 \boxplus \mu_{\semci}^{(t)}$ for any $t > 0$; also denote the associated density by $\widetilde{\varrho}_t \in L^1 (\mathbb{R})$, which satisfies $\widetilde{\mu}_t (dx) = \widetilde{\varrho}_t (x) dx$. Denote the classical locations (recall \Cref{gammarho} with $n$ replaced by $n'$) with respect to $\widetilde{\mu}_t$ by $\widetilde{\gamma}_j (t) = \gamma_{j; n'}^{\tilde{\mu}_t}$ for each $j \in \llbracket 1, n' \rrbracket$.
	
	We define the analogous quantities associated with $\widetilde{G}^-$ and $\widetilde{G}^+$ very similarly. In particular, for each index $\pm \in \{ +, - \}$, we define $\widetilde{H}^{\pm} (x) = \sup \big\{ y \in \mathbb{R} : \widetilde{G}^{\pm} (y) \ge x \big\}$, so that for any $x \in [-n^{\omega}, n^{\omega}]$ we have that $\widetilde{H}^{\pm} (x)$ is the unique value of $y \in \big[ -(n')^{3\omega/2}, (n')^{3\omega/2} \big]$ such that $\widetilde{G}^{\pm} (y) = x$. We further define the density $\widetilde{\varrho}_0^{\pm} \in L^1 (\mathbb{R})$ by setting $\widetilde{\varrho}_0^{\pm} (x) = -\partial_x \widetilde{H}^{\pm} (x)$ if $\widetilde{G}^{\pm} (1) \le x \le \widetilde{G}^{\pm} (0)$, and $\widetilde{\varrho}_0^{\pm} (x) = 0$ otherwise. Then, $\widetilde{\mu}_0^{\pm} = \widetilde{\varrho}_0^{\pm} (x) dx$ is a probability measure. Denote its free convolution with the semicircle law by $\widetilde{\mu}_t^{\pm} = \widetilde{\mu}_0^{\pm} \boxplus \mu_{\semci}^{(t)}$ and the associated density by $\widetilde{\varrho}_t^{\pm} \in L^1 (\mathbb{R})$. Also denote the classical locations with respect to $\widetilde{\mu}_t^{\pm}$ by $\widetilde{\gamma}_j^{\pm} (t) = \gamma_{j;n'}^{\tilde{\mu}_t^{\pm}}$.
	
	The next lemma provides bounds on this density $\widetilde{\varrho}_t$, as well as on the classical locations $\widetilde{\gamma}_j (t)$ (and their analogs for $\widetilde{\varrho}_t^{\pm}$ and $\widetilde{\gamma}_j^{\pm} (t)$); it is established in \Cref{ProofEstimate0Lambda} below. 
	
	\begin{lem} 
		
	\label{hestimate0} 
	
	 There exists a constant $C = C(\varepsilon, \delta, B_0, m) > 1$ such that the following hold for each integer $n \ge C$ and index $\pm \in \{ +, - \}$. 
	
	\begin{enumerate} 
		\item If $(t, x) \in [0, n^{-\omega}] \times [-C^{-1}, C^{-1}]$, then $-4 \varepsilon^{-1} \le \widetilde{\varrho}_t (x) \le - \frac{\varepsilon}{4}$ and $-4\varepsilon^{-1} \le \widetilde{\varrho}_t^{\pm} (x) \le -\frac{\varepsilon}{4}$. 
		\item If $i \in \big\llbracket - (n')^{1-\omega}, (n')^{1-\omega} \big\rrbracket$ and $t \in [0, n^{-\omega}]$, then $\big| \widetilde{\gamma}_{n'/2 + i} (t) - \widetilde{\gamma}_{n'/2+i} (0) \big| \le Ct \big( \frac{n'}{n} + \frac{|i|}{n'} + t \big)$ and $\big| \widetilde{\gamma}_{n'/2+i}^{\pm} (t) - \widetilde{\gamma}_{n'/2 + i}^{\pm} (0) \big| \le Ct \big( \frac{n'}{n} + \frac{|i|}{n'} + t\big)$.
		\end{enumerate} 
	\end{lem}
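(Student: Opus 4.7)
The plan is to prove the two parts of the lemma in turn. Part 1 will follow from \Cref{derivativetm} after verifying its hypotheses for $\widetilde{\varrho}_0$ using the regularity of $\widetilde{G}$ from \eqref{g0b}. Part 2 will be proven by deriving an ODE for $\widetilde{\gamma}_{n'/2+i}(t)$, carefully estimating the Hilbert transform on its right-hand side, and closing a Grönwall-type bootstrap.

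\textbf{Part 1.} Differentiating the identity $\widetilde{H}(0, \widetilde{G}(0, y)) = y$ yields $\widetilde{\varrho}_0(x) = -\partial_x \widetilde{H}(0, x) = -1 / \partial_y \widetilde{G}(0, \widetilde{H}(0, x))$. Combined with the bound $-2/\varepsilon \le \partial_y \widetilde{G}(0, y) \le -\varepsilon/2$ from \eqref{g0b}, this gives $\varepsilon/2 \le \widetilde{\varrho}_0(x) \le 2/\varepsilon$ on $\supp \widetilde{\varrho}_0$. Further differentiation, using $[\widetilde{G}]_k \le 2B_0(n'/n)^{k-1}$, yields $|\partial_x^k \widetilde{\varrho}_0(x)| \le C_k$ (in fact of order $(n'/n)^{k-1}$) on a neighborhood $|x| \le c_1$, for some constant $c_1 = c_1(\varepsilon, B_0) > 0$. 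Applying \Cref{derivativetm} with $\delta = \varepsilon/2$, $\mathfrak{c} = c_1$, and appropriately chosen $B, k$ then yields a constant $\mathfrak{t}_0 > 0$ such that $\varepsilon/4 \le \widetilde{\varrho}_t(x) \le C$ (and analogous derivative bounds) on $[0, \mathfrak{t}_0] \times [-c_1/4, c_1/4]$. Since $n^{-\omega} \le \mathfrak{t}_0$ for $n$ large, this gives Part 1 with $C \ge 4 c_1^{-1}$. The argument for $\widetilde{\varrho}_t^{\pm}$ is identical, as $\widetilde{G}^{\pm}$ satisfies the same bounds by \eqref{g0b}.

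\textbf{Part 2.} Differentiating the defining relation $\int_{\widetilde{\gamma}_j(t)}^{\infty} \widetilde{\varrho}_t(x)\,dx = (2j-1)/(2n')$ in $t$ and substituting $\partial_t \widetilde{\varrho}_t = \pi \partial_x(\widetilde{\varrho}_t H \widetilde{\varrho}_t)$ from \Cref{yconvolution}, we obtain the ODE
\begin{flalign*}
\widetilde{\gamma}_j'(t) = -\pi H \widetilde{\varrho}_t\big(\widetilde{\gamma}_j(t)\big),
\end{flalign*}
valid for $t > 0$ with $\widetilde{\gamma}_j(t)$ in the interior of $\supp \widetilde{\varrho}_t$. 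The central task is to estimate $H \widetilde{\varrho}_0(x_0)$ for small $|x_0|$. Writing $\widetilde{\varrho}_0 = c' \mathbf{1}_{[\widetilde{G}(0,1), \widetilde{G}(0,0)]} + \bigl(\widetilde{\varrho}_0 - c' \mathbf{1}_{[\widetilde{G}(0,1), \widetilde{G}(0,0)]}\bigr)$ with $c' = \widetilde{\varrho}_0(x_0)$ decomposes the Hilbert transform into two pieces. The first equals $\pi^{-1} c' \log \tfrac{\widetilde{G}(0,0) - x_0}{x_0 - \widetilde{G}(0,1)}$; Taylor expanding $\widetilde{G}(0, \cdot)$ around $y = 1/2$ using $\widetilde{G}(0, 1/2) = 0$ and $[\widetilde{G}]_2 = O(n'/n)$ gives $|\widetilde{G}(0, 0) + \widetilde{G}(0, 1)| = O(n'/n)$, so this contribution is bounded by $C(|x_0| + n'/n)$. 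The second piece is bounded by $O(n'/n)$, using $|\widetilde{\varrho}_0(w) - c'| \le C(n'/n)|w - x_0|$ in the interior (from $|\partial_x \widetilde{\varrho}_0| \le C n'/n$). Hence $|H \widetilde{\varrho}_0(x_0)| \le C(n'/n + |x_0|)$ for $|x_0| \le c_1$. Since $\widetilde{H}(0, 0) = 1/2$ (as $\widetilde{G}(0, 1/2) = 0$) forces $|\widetilde{\gamma}_{n'/2}(0)| = O(1/n')$, and the lower bound $\widetilde{\varrho}_0 \ge \varepsilon/2$ gives $|\widetilde{\gamma}_{n'/2+i}(0) - \widetilde{\gamma}_{n'/2}(0)| \le C|i|/n'$, we conclude $|H \widetilde{\varrho}_0(\widetilde{\gamma}_{n'/2+i}(0))| \le C(n'/n + |i|/n')$.

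\textbf{Propagation in $t$ and Grönwall closure.} To extend the estimate to $t > 0$, we invoke the functional equation $m_{\widetilde{\mu}_t}(z - t m_{\widetilde{\mu}_0}(z)) = m_{\widetilde{\mu}_0}(z)$ from \eqref{mt} and \Cref{mz}. For $x_0$ in the interior, $m_{\widetilde{\mu}_0}$ extends Lipschitz-continuously across the real axis (by Part 1), and an implicit-function argument yields $|m_{\widetilde{\mu}_t}(x_0) - m_{\widetilde{\mu}_0}(x_0)| \le Ct$, hence $|H \widetilde{\varrho}_t(x_0) - H \widetilde{\varrho}_0(x_0)| \le Ct$. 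Combined with the Lipschitz bound on $H \widetilde{\varrho}_0$ in the interior (from Part 1), setting $\Delta(t) = |\widetilde{\gamma}_{n'/2+i}(t) - \widetilde{\gamma}_{n'/2+i}(0)|$, the ODE gives
\begin{flalign*}
\Delta(t) \le C \int_0^t \bigg( \frac{n'}{n} + \frac{|i|}{n'} + s + \Delta(s) \bigg) ds,
\end{flalign*}
whence Grönwall's inequality yields $\Delta(t) \le C t (n'/n + |i|/n' + t)$ as desired. Along the way we verify that $\widetilde{\gamma}_{n'/2+i}(t)$ remains in $[-c_1/4, c_1/4]$ for $t \le n^{-\omega}$ and $|i| \le (n')^{1-\omega}$. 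The analog for $\widetilde{\gamma}_j^{\pm}(t)$ follows identically, using that $\widetilde{G}^{\pm}$ coincides with $\widetilde{G}(0, \cdot)$ near $y = 1/2$ (so $\widetilde{\varrho}_0^{\pm} = \widetilde{\varrho}_0$ near $x = 0$) and obeys the same global bounds.

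\textbf{Main obstacle.} The chief difficulty is obtaining the sharp $O(n'/n)$ bound on $H \widetilde{\varrho}_0$ at the center, rather than the naive $O(1)$. This requires simultaneously exploiting the near-symmetry of $\supp \widetilde{\varrho}_0$ about $0$ (forced by $\widetilde{G}(0, 1/2) = 0$), which tames the boundary log-contribution, and the $O(n'/n)$ smallness of $\partial_x \widetilde{\varrho}_0$ throughout the support (from the rescaled derivative bounds \eqref{g0b}), which tames the interior contribution. Once the sharp initial estimate is in hand, propagation through the free convolution semigroup and the Grönwall closure are standard.
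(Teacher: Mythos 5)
Your argument is correct in substance, and Part 1 is essentially the paper's: both verify the hypotheses of \Cref{derivativetm} for $\widetilde{\varrho}_0$ from \eqref{g0b} and then use the resulting time-derivative bounds to propagate the density bounds to $t \le n^{-\omega}$. In Part 2 your key initial estimate $|H\widetilde{\varrho}_0(x_0)| \le C(n'/n + |x_0|)$ is obtained exactly as in the paper (near-symmetry of the support forced by $\widetilde{G}(0,\tfrac12)=0$ together with $|\partial_x\widetilde{\varrho}_0| = O(n'/n)$), but you then diverge in two places. For the propagation in $t$ you use the subordination relation \eqref{mt} and Lipschitz continuity of $m_{\widetilde{\mu}_0}$ up to the real axis to get $|H\widetilde{\varrho}_t - H\widetilde{\varrho}_0| \le Ct$, whereas the paper bounds $|\partial_t H\widetilde{\varrho}_s|$ directly from the transport equation \eqref{trhoty} together with bounds on $\partial_y^k H\widetilde{\varrho}_s$, $k\le 2$ (its estimates \eqref{rho0s2}--\eqref{rho05}); both routes yield the same key bound \eqref{transformrho}. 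For the conversion to classical locations you track the moving point via the ODE $\widetilde{\gamma}_j'(t) = -\pi H\widetilde{\varrho}_t(\widetilde{\gamma}_j(t))$ and close with Gr\"onwall, while the paper avoids the moving point altogether: it integrates the mass flux $\pi\widetilde{\varrho}_s H\widetilde{\varrho}_s$ at the \emph{fixed} point $\widetilde{\gamma}_j(0)$ over $[0,t]$ and then converts the change of mass into a displacement using the lower bound $\widetilde{\varrho}_t \ge \tfrac{\varepsilon}{4}$; this is slightly more economical, as it needs no Lipschitz bound in $y$ for $H\widetilde{\varrho}_s$ and no bootstrap that $\widetilde{\gamma}_j(t)$ stays in the good region. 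One detail in your version should be made explicit: the Lipschitz-in-$y$ bound on $H\widetilde{\varrho}_s$ that feeds the $\Delta(s)$ term of your Gr\"onwall inequality does not follow from Part 1 alone; it requires a short Hilbert-transform estimate of the type the paper proves as \eqref{rho05} (split the principal value into a local piece controlled by $\partial_y^2\widetilde{\varrho}_s$ and a tail piece controlled by the total mass). With that supplied, and with the upper bound $\widetilde{\varrho}_t \le 4\varepsilon^{-1}$ in Part 1 obtained (as in the paper) from the $|\partial_t\widetilde{\varrho}_t| \le C$ bound rather than left as a generic constant $C$, your proof is complete.
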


	We also require the below lemma indicating a monotone coupling for Dyson Brownian motion, established in \Cref{ProofEstimate0Lambda} below.
	
	\begin{lem}
		
		\label{lambdamonotone} 
		
		Let $M \ge 1$ and $k \ge 2$ be integers, and fix $k$ $M$-tuples  $\bm{\lambda}^{(1)}, \bm{\lambda}^{(2)}, \ldots , \bm{\lambda}^{(k)} \in \overline{\mathbb{W}}_M$ with $\bm{\lambda}^{(1)} \le \bm{\lambda}^{(2)} \le \cdots \le \bm{\lambda}^{(k)}$. For each integer $j \in \llbracket 1, k \rrbracket$, let $\bm{\lambda}^{(j)} (s) = \big( \lambda_1^{(j)} (s), \lambda_2^{(j)} (s), \ldots , \lambda_M^{(j)} (s) \big)$ denote Dyson Brownian motion run for time $s$ with initial data $\bm{\lambda}^{(j)}$. Then it is possible to couple the $\big\{ \bm{\lambda}^{(j)} (s) \big\}$ such that $\bm{\lambda}^{(1)} (s) \le \bm{\lambda}^{(2)} (s) \le \cdots \le \bm{\lambda}^{(k)} (s)$, for each real number $s \ge 0$. 
	
	\end{lem}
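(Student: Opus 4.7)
The plan is to realize all $k$ Dyson Brownian motions on a common probability space as ordered spectra of matrix-valued processes sharing the \emph{same} Hermitian Brownian noise, and then deduce the componentwise ordering from Weyl's monotonicity inequality. This avoids any pathwise SDE comparison argument.

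First, on a single filtered probability space, I would fix an $M \times M$ Hermitian Brownian motion $\bm{G}(s) = \bm{G}_M(s)$ as in \Cref{Estimates}. For each $j \in \llbracket 1, k \rrbracket$, let $\bm{A}^{(j)}$ denote the $M \times M$ diagonal matrix whose diagonal entries, read top to bottom, are $\lambda_1^{(j)}, \lambda_2^{(j)}, \ldots, \lambda_M^{(j)}$, and define the process $\bm{\lambda}^{(j)}(s) \in \overline{\mathbb{W}}_M$ to consist of the ordered eigenvalues of the random Hermitian matrix $\bm{A}^{(j)} + \bm{G}(s)$. By part (1) of \Cref{lambdat}, each $\bigl(\bm{\lambda}^{(j)}(s)\bigr)_{s \ge 0}$ then has the law of Dyson Brownian motion run from initial data $\bm{\lambda}^{(j)}$, jointly over $s \ge 0$; this provides a simultaneous coupling of the $k$ processes with the correct marginals, and it is this coupling I would use.

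Second, the hypothesis $\bm{\lambda}^{(j)} \le \bm{\lambda}^{(j+1)}$ means that $\bm{A}^{(j+1)} - \bm{A}^{(j)}$ is a diagonal matrix with nonnegative entries, hence positive semidefinite. Since the same $\bm{G}(s)$ is used in both expressions, we obtain $\bm{A}^{(j+1)} + \bm{G}(s) \succeq \bm{A}^{(j)} + \bm{G}(s)$ as Hermitian matrices for every $s \ge 0$. By Weyl's monotonicity inequality, equivalently the Courant--Fischer min--max principle, this forces
\begin{flalign*}
\lambda_i^{(j)}(s) \le \lambda_i^{(j+1)}(s), \qquad \text{for each $i \in \llbracket 1, M \rrbracket$ and $s \ge 0$}.
\end{flalign*}
Chaining over $j \in \llbracket 1, k-1 \rrbracket$ then gives the desired comparison $\bm{\lambda}^{(1)}(s) \le \bm{\lambda}^{(2)}(s) \le \cdots \le \bm{\lambda}^{(k)}(s)$ simultaneously.

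There is no serious obstacle to this argument; the only mild technical point is confirming that the ordered eigenvalues can be chosen to be continuous in $s$, so that the almost sure inequality (for each fixed $s$) upgrades to a simultaneous one across all $s \ge 0$. This follows from the fact that each map $s \mapsto \bm{A}^{(j)} + \bm{G}(s)$ is pathwise continuous and the ordered spectrum of a Hermitian matrix depends continuously on its entries. One minor alternative, should a purely SDE-based argument be preferred, would be to drive all $k$ systems by common scalar Brownian motions $B_i$ in \eqref{lambdaequation} and analyze the first violation time of the ordering: at such a time some $\lambda_{i_0}^{(j)} = \lambda_{i_0}^{(j+1)}$ while $\lambda_i^{(j)} \le \lambda_i^{(j+1)}$ for $i \ne i_0$, and a direct computation (the noise cancels) shows the drift of $\lambda_{i_0}^{(j+1)} - \lambda_{i_0}^{(j)}$ is nonnegative. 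But the matrix-coupling route above is cleaner and leverages results already recalled in the paper.
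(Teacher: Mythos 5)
Your proposal is correct and follows essentially the same route as the paper's proof: both couple the $k$ processes by realizing them as the ordered spectra of $\Lambda^{(j)} + \bm{G}(s)$ with a common Hermitian Brownian motion $\bm{G}(s)$ (via the first part of \Cref{lambdat}) and then deduce the ordering from the positive-semidefinite comparison of the matrices, i.e.\ Weyl monotonicity. Your explicit mention of Weyl's inequality and of eigenvalue continuity in $s$ merely spells out steps the paper leaves implicit.
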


	Now we can establish \Cref{wjs2}.
	
	\begin{proof}[Proof of \Cref{wjs2}]
		
		By \eqref{a0probability}, we may restrict to the event $\mathscr{A}_0$ in what follows. Define the two $n'$-tuples $\bm{\lambda}^- = (\lambda_1^-, \lambda_2^-, \ldots , \lambda_{n'}^-) \in \overline{\mathbb{W}}_{n'}$ and $\bm{\lambda}^+ = (\lambda_1^+, \lambda_2^+, \ldots , \lambda_{n'}^+) \in \overline{\mathbb{W}}_{n'}$ by setting 
		\begin{flalign*} 
			\lambda_j^{\pm} = \widetilde{G}^{\pm} \Big( \frac{2j-1}{n'} \Big), \qquad \text{for each index $\pm \in \{ +, - \}$ and integer $j \in \llbracket 1, n' \rrbracket$}.
		\end{flalign*}
	
		\noindent For each index $\pm \in \{ +, - \}$, let $\bm{\lambda}^{\pm} (s) = \big( \lambda_1^{\pm} (s), \lambda_2^{\pm} (s), \ldots , \lambda_{n'}^{\pm} (s) \big)$ denote Dyson Brownian motion, run for time $s$, with initial data $\bm{\lambda}^{\pm}$. Since we have restricted to the event $\mathscr{A}_0$, we have by \eqref{a0g2}; \eqref{uj2} (with the definitions of $G^-$ and $G^+$); and the fact from \eqref{g0b} that $\partial_y \widetilde{G}^{\pm} \le -\frac{\varepsilon}{2}$, that $\bm{\lambda}^- - 2(n')^{3\omega/5-1} \le \widetilde{\bm{w}}^- (0) \le \bm{\lambda}^- + 2 (n')^{3\omega/5 - 1}$ and $\bm{\lambda}^+ - 2(n')^{3\omega/5 - 1} \le \widetilde{\bm{w}}^+ (0) \le \bm{\lambda}^+ + 2 (n')^{3 \omega / 5 - 1}$. Thus, by \Cref{lambdamonotone}, it is possible to couple the processes $\big(\bm{\lambda}^- (s), \bm{\widetilde{w}}^- (s) \big)$ such that $\bm{\lambda}^- (t) - 2 (n')^{3 \omega/5 - 1} \le \widetilde{\bm{w}}^- (t) \le \bm{\lambda}^+ (t) + 2 (n')^{3 \omega/5-1}$ for each $t \ge 0$; similarly, it is possible to couple $\big( \bm{\lambda}^+ (s), \widetilde{\bm{w}}^+ (s) \big)$ such that $\bm{\lambda}^+ (t) - 2 (n')^{3 \omega/5 - 1} \le \widetilde{\bm{w}}^+ (t) \le \bm{\lambda}^- (t) + 2 (n')^{3 \omega/5-1}$ for each $t \ge 0$. Hence, it suffices to show 
		\begin{flalign}
			\label{a0a12} 
			\begin{aligned} 
			\mathbb{P} \Bigg[ \mathscr{A}_0 \cap \bigcap_{\substack{s \in [0, \tilde{s}_0 + \tilde{s}_1] \\ j \in \llbracket n'/2 - 2n'', n'/2+2n''\rrbracket}} \Big\{ \big| \lambda_j^- (s) - \lambda_j^- (0) \big| \le (n')^{\omega/2-1} \Big\} \Bigg] \ge 1 - c^{-1} e^{-c (\log n)^2}; \\
			\mathbb{P} \Bigg[ \mathscr{A}_0 \cap \bigcap_{\substack{s \in [0, \tilde{s}_0 + \tilde{s}_1] \\ j \in \llbracket n'/2-2n'', n'/2 + 2n''\rrbracket}} \Big\{ \big| \lambda_j^+ (s) - \lambda_j^+ (0) \big| \le (n')^{ \omega/2 - 1} \Big\} \Bigg] \ge 1 - c^{-1} e^{-c (\log n)^2}.
			\end{aligned} 
		\end{flalign}

	 	We only show the first bound in \eqref{a0a12}, as the proof of the second is entirely analogous. Under this notation, \Cref{concentrationequation} applies with the $(n, \bm{\lambda})$ there given by $(n', \bm{\lambda}^-)$ here and the $\mu$ there given by $\widetilde{\mu}_0^-$ here. It therefore yields a constant $c_1 > 0$ such that 
		\begin{flalign*}
			\mathbb{P} [\mathscr{E}_1^-] \ge 1 - c_1^{-1} e^{-c_1 (\log n)^2},
		\end{flalign*} 
	
		\noindent where we have defined the event 
		\begin{flalign*} 
			 \mathscr{E}_1^- =  \bigcap_{j=1}^{n'} \bigcap_{t \in [0, 1]} \bigg\{ \widetilde{\gamma}_{j + \lfloor (\log n)^5 \rfloor}^- (t) - n^{-1} \le \lambda_j^- (t) \le \widetilde{\gamma}_{j- \lfloor (\log n)^5 \rfloor}^- (t) + n^{-1} \bigg\}. 
		\end{flalign*}

		\noindent Observe on $\mathscr{E}_1^-$ that 
		\begin{flalign*}
			\big| \lambda_j^- (s) - \lambda_j^- (0) \big| \le \widetilde{\gamma}_{j-\lfloor (\log n)^5 \rfloor}^- (s) - \widetilde{\gamma}_{j + \lfloor (\log n)^5 \rfloor}^- (0) + 2n^{-1}.
		\end{flalign*}
	
		\noindent This yields on $\mathscr{E}_1^-$ a constant $C_1 = C_1 (\varepsilon, \delta, B_0, m) > 1$ such that, for each pair $(j, s) \in \big\llbracket \frac{n'}{2} - 2n'', \frac{n'}{2} + 2n'' \big\rrbracket \times [0, n^{-\omega}]$, we have
		\begin{flalign*}
			\big| \lambda_j^- (s) - \lambda_j^-  (0) \big| & \le C_1 s \Big( \displaystyle\frac{n'}{n} + \displaystyle\frac{n''}{n'} + s \Big) + \widetilde{\gamma}_{j- \lfloor (\log n)^5 \rfloor}^- (0) - \widetilde{\gamma}_{j + \lfloor (\log n)^5 \rfloor}^- (0) + \displaystyle\frac{2}{n} \\
			& \le 2 C_1 s \Big( \displaystyle\frac{1}{n''} + s \Big) + \displaystyle\frac{4 (\log n)^5}{\varepsilon n'} + \displaystyle\frac{2}{n},
		\end{flalign*}
	
		\noindent where in the first inequality we used the second part of \Cref{hestimate0}; in the second inequality we used \eqref{omegann} and the fact that $\widetilde{\varrho}_0^- = - \partial_x \widetilde{G}^- \ge \frac{\varepsilon}{2}$. Together with the fact that $s \big( \frac{1}{n''} + s \big) \le \frac{1}{n'}$ for sufficiently large $n$ (since $n'' = (n')^{1/2}$, $s \le \widetilde{s}_0 + \widetilde{s}_1 \le 2 (n')^{6\omega-1}$, and $\omega \le \frac{1}{20}$), this implies the first statement of \eqref{a0a12} and thus the lemma.		
	\end{proof}

	\subsection{Proofs of \Cref{hestimate0} and \Cref{lambdamonotone}}
	
	\label{ProofEstimate0Lambda} 
	
	In this section we establish first \Cref{hestimate0} and then \Cref{lambdamonotone}.
	
	\begin{proof}[Proof of \Cref{hestimate0}]
		
		We only establish the estimates in the lemma for $\widetilde{\varrho}_t$ and $\widetilde{\gamma}_j (t)$, as the proofs of the analogous bounds for $\widetilde{\varrho}_t^{\pm}$ and $\widetilde{\gamma}_j^{\pm} (t)$ are entirely analogous. We begin with the first statement of the lemma. To that end observe that, since $\widetilde{G} \in \Adm_{\varepsilon/2} \big( \frac{n}{n'} \cdot \mathfrak{R} \big)$ (by \eqref{g0b}), we have
		\begin{flalign} 
			\label{rho0omega2} 
			-\displaystyle\frac{2}{\varepsilon} \le \widetilde{\varrho}_0 (x) \le -\displaystyle\frac{\varepsilon}{2}, \qquad \text{for each $x \in \big[ \widetilde{G} (0, 1), \widetilde{G} (0, 0) \big]$}.
		\end{flalign} 
		
		\noindent Again by \eqref{g0b}, we also have that $[\widetilde{G}]_k \le \frac{2 B_0 n'}{n}$ for each integer $k \in \llbracket 2, m \rrbracket$, from which it quickly follows from \eqref{hg2}, \eqref{rhoh}, and \eqref{rho0omega2} that there exists a constant $C_1 = C_1 (\varepsilon, \delta, B_0, m) > 1$ with
		\begin{flalign}
			\label{kderivativerho0}
			[\widetilde{\varrho}_0]_k \le \displaystyle\frac{C_1 n'}{n}, \qquad \text{on $\supp \widetilde{\varrho}_0$, for each $k \in \llbracket 1, m-1 \rrbracket$}.
		\end{flalign}
		
		\noindent As such, \Cref{derivativetm} applies to $\widetilde{\varrho}_0$ around any point in the interior of $\supp \widetilde{\varrho}_0$ and yields (since $\widetilde{G} \big( 0, \frac{1}{2} \big) = 0$ and $\partial \widetilde{G} \le -\frac{\varepsilon}{2}$, by \eqref{g0b}) a constant $C_2 = C_2 (\varepsilon, \delta, B_0, m) > 1$ such that 
		\begin{flalign}
			\label{tderivativerho} 
			\displaystyle\sup_{t \in [0, 1/C_2]} \displaystyle\sup_{|x| \le 1/C_2} \big| \partial_t \widetilde{\varrho}_t (x) \big| \le C_2; \qquad \displaystyle\sup_{t \in [0, 1/C_2]} \displaystyle\sup_{|x|\le 1/C_2} \displaystyle\max_{j \in \llbracket 1, m-3 \rrbracket} \big| \partial_t \partial_x^j \widetilde{\varrho}_0 (x) \big| \le C_2.
		\end{flalign}
		
		\noindent The first bound in \eqref{tderivativerho}, with the first two statements of \eqref{g0b}, yields the first statement of the lemma for sufficiently large $n$ (so that $4 C_2 n^{-\omega} \le \varepsilon$).
		
		To establish the second part of the lemma, first observe for $i \in \big\llbracket - (n')^{1-\omega}, (n')^{1-\omega} \big\rrbracket$ that
		\begin{flalign}
			\label{gammaj0}
			\big| \widetilde{\gamma}_{n'/2 + i} (0) \big| = \bigg| \widetilde{G} \Big(0, \displaystyle\frac{1}{2} + \frac{i}{n'} \Big) \bigg| \in \bigg[ -\displaystyle\frac{2 i}{\varepsilon n'}, \displaystyle\frac{2 i}{\varepsilon n'} \bigg] \subseteq \bigg[ - \displaystyle\frac{2}{\varepsilon (n')^{\omega}}, \displaystyle\frac{2}{\varepsilon (n')^{\omega}} \bigg],
		\end{flalign}
	
		\noindent due to the first and third statements of \eqref{g0b}. We then claim that there exists a constant $C_3 = C_3 (\varepsilon, \delta, B_0, m) > 1$ such that (recalling the Hilbert transform from \eqref{transform2})
		\begin{flalign}
			\label{transformrho} 
			\big| H \widetilde{\varrho}_s (y) \big| \le C_3 \Big( \displaystyle\frac{n'}{n} + |y| + s \Big), \qquad \text{for each $(s, y) \in [0, n^{-\omega}] \times \big[- (n')^{-\omega/2}, (n')^{-\omega/2} \big]$}.
		\end{flalign}
		
		\noindent Before verifying \eqref{transformrho} let us confirm the second part of the lemma assuming it. To that end, observe for any $j \in \llbracket 1, n' \rrbracket$ that 
		\begin{flalign*}
			\displaystyle\int_{\tilde{\gamma}_j (0)}^{\infty} \widetilde{\varrho}_t (x) dx - \displaystyle\frac{2j-1}{n'} = \displaystyle\int_{\tilde{\gamma}_j (0)}^{\infty} \widetilde{\varrho}_t (x) dx - \displaystyle\int_{\tilde{\gamma}_j (0)}^{\infty} \widetilde{\varrho}_0 (x) dx & = \displaystyle\int_{\tilde{\gamma}_j (0)}^{\infty} \displaystyle\int_0^t \partial_s \widetilde{\varrho}_s (x) ds dx \\
			& = \pi \displaystyle\int_0^t \displaystyle\int_{\tilde{\gamma}_j (0)}^{\infty} \partial_x \big( \widetilde{\varrho}_s (x) H \widetilde{\varrho}_s (x) \big) dx ds \\
			& = \pi \displaystyle\int_0^t \widetilde{\varrho}_s \big( \widetilde{\gamma}_j (0)\big) \cdot H \widetilde{\varrho}_s \big( \widetilde{\gamma}_j (0) \big) ds,
		\end{flalign*} 
		
		\noindent where the first statement follows from the definition of the classical location $\widetilde{\gamma}_j (0)$; the second from performing the integration in $s$; the third from integrating \eqref{trhoty}; and the fourth from performing the integration in $x$ (using the fact that $\widetilde{\varrho}_s (x) = 0$ for $x$ sufficiently large). This with $j = \frac{n'}{2} + i$ for $i \in \big\llbracket -(n')^{1-\omega}, (n')^{1-\omega} \big\rrbracket$, together with \eqref{transformrho}, \eqref{gammaj0}, and the fact (from the first part of the lemma) that $\big| \widetilde{\varrho}_s (x) \big| \le 4 \varepsilon^{-1}$ for each $(s, x) \in [0, n^{-\omega}] \times [-n^{-\omega/8}, n^{-\omega/8}]$, yields (since $|\gamma_j| \le \frac{3|i|}{\varepsilon n}$, by the second and third statements in \eqref{g0b})
		\begin{flalign*}
			\displaystyle\frac{1}{2} + \displaystyle\frac{i}{n'} - \displaystyle\frac{8 \pi C_3 t}{\varepsilon} \Big( \displaystyle\frac{n'}{n} + \displaystyle\frac{3 |i|}{\varepsilon n'} + t \Big) \le \displaystyle\int_{\tilde{\gamma}_{n'/2 + i} (0)}^{\infty} \widetilde{\varrho}_t (x) dx \le \displaystyle\frac{1}{2} + \displaystyle\frac{i}{n'} + \displaystyle\frac{8 \pi C_3 t}{\varepsilon} \Big( \displaystyle\frac{n'}{ n} + \displaystyle\frac{3 |i|}{\varepsilon n'} + t \Big).
		\end{flalign*} 
		
		\noindent Together with the facts that $\int_{\tilde{\gamma}_{n'/2+i} (t)}^{\infty} \widetilde{\varrho}_t (x) dx = \frac{1}{2} + \frac{2i-1}{n'}$ for each $j \in \llbracket 1, n' \rrbracket$ and that $\widetilde{\varrho}_t (x) \ge \frac{\varepsilon}{2}$ for $|x| \le n^{-\omega/8}$, it follows that 
		\begin{flalign*}
			\big| \widetilde{\gamma}_{n'/2 + i} (t) - \widetilde{\gamma}_{n'/2 + i} (0) \big| \le \displaystyle\frac{30 \pi C_3 t}{\varepsilon^3} \Big( \displaystyle\frac{n'}{n} + \displaystyle\frac{|i|}{n'} + t \Big),
		\end{flalign*}
		
		\noindent which verifies the second part of the lemma.
		
		It then remains to establish \eqref{transformrho}, to which end we will bound $\big| H \widetilde{\varrho}_0 (y) \big|$ and $\big| \partial_t H \widetilde{\varrho}_s (y) \big|$. First observe that $\supp \widetilde{\mu}_0 = \big[ \widetilde{G} (0, 1), \widetilde{G} (0, 0) \big]$ and that
		\begin{flalign}
			\label{g00g01}
			& -2 \varepsilon^{-1} \le \widetilde{G} (0, 1) \le -\displaystyle\frac{\varepsilon}{4} \le \displaystyle\frac{\varepsilon}{4} \le \widetilde{G} (0, 0) \le 2 \varepsilon^{-1}; \qquad \big| \widetilde{G} (0,0) + \widetilde{G} (0, 1) \big| \le \displaystyle\frac{4 B_0 n'}{n},
		\end{flalign}
		
		\noindent where the first statement follows from the first and third statements of \eqref{g0b}, and the second follows from the first and last statement of \eqref{g0b}. We next have that 
		\begin{flalign}
			\label{rho01} 
			\big| H \widetilde{\varrho}_0 (y) \big| = \pi^{-1} \Bigg| \displaystyle\int_{-\infty}^{\infty} \displaystyle\frac{ \widetilde{\varrho}_0 (w) dw}{w-y} \Bigg| = \pi^{-1} \Bigg| \displaystyle\int_{-\infty}^{\infty} \displaystyle\frac{\big( \widetilde{\varrho}_0 (w) - \widetilde{\varrho}_0 (y) \big) dw}{w-y} \Bigg|.
		\end{flalign}
		
		\noindent Thus, for $y \in [-n^{-\omega/8}, n^{-\omega/8}]$, we have
		\begin{flalign}
			\label{rho00} 
			\begin{aligned}
			\big| H \widetilde{\varrho}_0 (y) \big| & \le \pi^{-1} \Bigg| \displaystyle\int_{\tilde{G} (0, 1)}^{\tilde{G} (0, 0)} \displaystyle\frac{\big( \widetilde{\varrho}_0 (w) - \widetilde{\varrho}_0 (y) \big) dw}{w-y} \Bigg| + \pi^{-1} \widetilde{\varrho}_0 (y)  \Bigg| \displaystyle\int_{-\infty}^{\tilde{G}(0, 1)} \displaystyle\frac{dw}{w-y} + \displaystyle\int_{\tilde{G} (0, 0)}^{\infty} \displaystyle\frac{dw}{w - y} \Bigg| \\
			& \le \displaystyle\frac{C_1 n'}{n} \big| \widetilde{G} (0, 0) - \widetilde{G} (0, 1) \big| + \varepsilon^{-1} \Big( \big| \widetilde{G} (0, 0) + \widetilde{G} (0, 1) \big| + 2|y| \Big) \cdot \displaystyle\max_{w \notin [\tilde{G} (0, 1), \tilde{G} (0, 0)]} |w-y|^{-2} \\
			& \le \displaystyle\frac{4 C_1 n'}{\varepsilon n} + \displaystyle\frac{256B_0}{\varepsilon^3} \Big( \displaystyle\frac{n'}{n} + |y| \Big),
			\end{aligned} 
		\end{flalign}
		
		\noindent where the first bound follows from \eqref{rho01} and the  fact that $\supp \widetilde{\varrho}_0 = \big[ \widetilde{G} (0, 1), \widetilde{G} (0, 0) \big]$; the second follows from \eqref{kderivativerho0} and \eqref{rho0omega2}; and the third follows from \eqref{g00g01} (using the fact that $|w-y| \ge \frac{\varepsilon}{8}$ for $|y| \le n^{-\omega}$ and $w \notin \big[ \widetilde{G} (0, 1), \widetilde{G} (0, 0) \big]$, due to the first statement of \eqref{g00g01}).
		
		Next, we claim that there exists a constant $C_4 = C_4 (\varepsilon, \delta, B_0, m) > 1$ such that 
		\begin{flalign}
			\label{rho0s2} 
			\displaystyle\sup_{s \in [0, n^{-\omega}]} \displaystyle\sup_{|y| \le (n')^{-\omega/2}}	\big| \partial_t H \widetilde{\varrho}_s (y) \big| \le C_4.
		\end{flalign}
	
		\noindent Together with \eqref{rho00}, this would yield \eqref{transformrho} and thus the lemma. To that end, observe that 
		\begin{flalign}
			\label{rho03} 
			\big| \partial_t H \widetilde{\varrho}_s (y) \big| = \pi^{-1} \Bigg| \displaystyle\int_{-\infty}^{\infty} \displaystyle\frac{\partial_t \widetilde{\varrho}_s (w) dw}{w-y} \Bigg| = \Bigg| \displaystyle\int_{-\infty}^{\infty} \displaystyle\frac{\partial_x \big( \widetilde{\varrho}_s (w) H \widetilde{\varrho}_s (w) \big) dw}{w-y}\Bigg|,
		\end{flalign}
	
		\noindent where in the second equality we used \eqref{trhoty}. Further observe for $s \in [0, n^{-\omega}]$ that 
		\begin{flalign}
			\label{rho04}
			\supp \widetilde{\varrho}_s \subseteq \supp \widetilde{\mu}_0 + \supp \widetilde{\mu}_{\semci}^{(s)} \subseteq \big[ \widetilde{G} (0, 1) - 2t^{1/2}, \widetilde{G} (0, 0) + 2t^{1/2} \big] \subseteq [-4\varepsilon^{-1}, 4\varepsilon^{-1}],
		\end{flalign}
	
		\noindent where the first holds since $\widetilde{\mu}_s = \widetilde{\mu}_0 \boxplus \mu_{\semci}^{(s)}$, and the second holds by \eqref{rhosct}, \eqref{rho1}, and the fact that $\supp \widetilde{\mu}_0 = \big[ \widetilde{G} (0, 1), \widetilde{G} (0, 0) \big]$; and the third holds by \eqref{g00g01}. Thus, due to \eqref{rho03}, \eqref{rho04}, \eqref{rho0omega2}, and \eqref{kderivativerho0}, to verify \eqref{rho0s2} it suffices to show the existence of a constant $C_5 = C_5 (\varepsilon, \delta, B_0, m) > 1$ such that 
		\begin{flalign}
			\label{rho05} 
			\displaystyle\sup_{s \in [0, n^{-\omega}]} \displaystyle\sup_{|y| \le 1/C_5} \big| \partial_y^k H \widetilde{\varrho}_s (y) \big| \le C_5; \qquad \text{for each $k \in \{ 0, 1, 2 \}$}. 
		\end{flalign}
	
		To confirm \eqref{rho05}, fix $k \in \{ 0, 1, 2 \}$, and observe that
 		\begin{flalign*}
 			\big| \partial_y^k H \widetilde{\varrho}_s (y) \big| = \pi^{-1} \Bigg| \partial_y^k \displaystyle\int_{-\infty}^{\infty} \displaystyle\frac{\widetilde{\varrho}_s (w) dw}{w-y}  \Bigg| & \le \Bigg| \displaystyle\int_{-1/C_2}^{1/C_2} \displaystyle\frac{\partial_y^k \widetilde{\varrho}_s (y+w) dw}{w} \Bigg| + \Bigg| \displaystyle\int_{|w-y| > 1/C_2} \displaystyle\frac{\widetilde{\varrho}_s (w) dw}{(w-y)^{k+1}} \Bigg|  \\
 			& \le \Bigg| \displaystyle\int_{-1/C_2}^{1/C_2} \displaystyle\frac{\partial_y^k \big( \widetilde{\varrho}_s (y+w) - \widetilde{\varrho}_s (y) \big) dw}{w} \Bigg| + (2C_2)^3 \le 10 C_2^3,
 		\end{flalign*}
 	
 		\noindent where in the first statement we used \eqref{transform2}; in the second we changed variables in the first integral by replacing $w$ with $y+w$; in the third we used the facts that $k \le 2$ and that $\widetilde{\varrho}_t$ is a probability measure; and in the fourth we used \eqref{kderivativerho0} and the second bound in \eqref{tderivativerho}. This verifies \eqref{rho05}, which as mentioned above establishes the lemma.
	\end{proof}

	\begin{proof}[Proof of \Cref{lambdamonotone}]
		
		Let $\bm{G} (s) = \bm{G}_M (s)$ denote an $M \times M$ Hermitian Brownian motion as described in \Cref{LambdaEquation}, and for every $j \in \llbracket 1, k \rrbracket$ let $\Lambda^{(j)}$ denote the $M \times M$ diagonal matrix whose $(i, i)$ diagonal entry is given by $\lambda_i^{(j)} (0)$, for each $i \in \llbracket 1, M \rrbracket$. Since $\bm{\lambda}^{(1)} \le \bm{\lambda}^{(2)} \le \cdots \le \bm{\lambda}^{(k)}$, we have $\Lambda^{(1)} \le \Lambda^{(2)} \le \cdots \le \Lambda^{(k)}$, where we write $\Lambda \le \Lambda'$ if $\Lambda' - \Lambda$ is nonnegative definite; in particular, $\Lambda^{(1)} + \bm{G} (s) \le \Lambda^{(2)} + \bm{G} (s) \le \cdots \le \Lambda^{(k)} + \bm{G} (s)$. Together with the fact that (by the first part of \Cref{lambdat}) $\bm{\lambda}^{(j)} (s)$ has the same law as the eigenvalues of $\Lambda^{(j)} + \bm{G} (s)$ for each $j$, jointly in $s \ge 0$, this yields $\bm{\lambda}^{(1)} (s) \le \bm{\lambda}^{(2)} \le \cdots \le \bm{\lambda}^{(k)} (s)$ for each $s \ge 0$.   
	\end{proof}

\section{Regularity Properties of Solutions} 

\label{DerivativesEquation} 

In this section we state and establish various regularity properties of solutions to the equation \eqref{equationxtd} governing the limit shape for non-intersecting Brownian bridges. These include comparison principles (\Cref{maximumboundary} below); derivative bounds (\Cref{derivativef} and \Cref{perturbationbdk} below); and exponential decay of discrepancy estimates (\Cref{equationcompareboundary} below). We also provide a result that explains how perturbing regular solutions to \eqref{equationxtd} preserves their regularity (\Cref{f1f2b} below), thereby showing that \Cref{fgr} is in a sense stable under smooth perturbations of boundary data.

\subsection{Regularity Estimates}

\label{EstimatesEquation0}

In this section we state estimates for solutions to \eqref{equationxtd}. In what follows, we recall the sets of functions $\Adm (\mathfrak{R})$ and $\Adm_{\varepsilon} (\mathfrak{R})$ from \Cref{ConcentrationSmooth0}. For any real number $r > 0$ and point $z \in \mathbb{R}^2$, we recall that $\mathcal{B}_r (z) = \big\{ z' \in \mathbb{R} : |z' - z| < r \big\}$ denotes the open disk of radius $r$ centered at $z$; if $z = (0, 0)$, we abbreviate $\mathcal{B}_r (z) = \mathcal{B}_r$. We begin with the below maximum and comparison principles for solutions of \eqref{equationxtd}, which are quick consequences of the analogous principles for solutions to nonlinear elliptic partial differential equations; it is established in \Cref{EstimateEquation} below.

\begin{lem} 
	
	\label{maximumboundary}
	
	Fix an open set $\mathfrak{R} \subset \mathbb{R}$, and let $F_1, F_2, F \in \Adm (\mathfrak{R}) \cap \mathcal{C}^2 (\mathfrak{R})$ be solutions to \eqref{equationxtd} on $\mathfrak{R}$. 
	\begin{enumerate} 
		\item If $F_1 (z) \le F_2 (z)$ for each $z \in \partial \mathfrak{R}$, then $F_1 (z) \le F_2 (z)$ for each $z \in \mathfrak{R}$. 
		\item We have $\sup_{z \in \mathfrak{R}} \big| F_1 (z) - F_2 (z) \big| \le \sup_{z \in \partial \mathfrak{R}} \big| F_1 (z) - F_2 (z) \big|$. In particular, $\sup_{z \in \mathfrak{R}} \big| F (z) \big| = \sup_{z \in \partial \mathfrak{R}} \big| F (z) \big|$.
	\end{enumerate} 
\end{lem}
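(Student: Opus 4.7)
The plan is to reduce both statements to the classical weak maximum principle for a linear elliptic operator, obtained by linearizing \eqref{equationxtd} along a convex combination path. First observe that, since each $F_i \in \Adm(\mathfrak{R}) \cap \mathcal{C}^2(\mathfrak{R})$ and the equation \eqref{equationxtd} requires $\partial_y F_i \ne 0$ for its coefficient $\pi^2 (\partial_y F_i)^{-4}$ to be defined classically, continuity of $\partial_y F_i$ forces $\partial_y F_i < 0$ pointwise on $\mathfrak{R}$ (the almost-everywhere condition in $\Adm$ alone is not quite enough, but combined with the equation holding in the classical sense it is).

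For part (1), set $w = F_1 - F_2$ and $G_s = sF_1 + (1-s)F_2$ for $s \in [0,1]$, so that $\partial_y G_s < 0$ throughout $\mathfrak{R}$. Writing $E(G) = \partial_t^2 G + \pi^2 (\partial_y G)^{-4} \partial_y^2 G$ and using the fundamental theorem of calculus, I would derive
\begin{equation*}
0 = E(F_1) - E(F_2) = \int_0^1 \frac{d}{ds} E(G_s)\, ds = \mathcal{L} w, \qquad \mathcal{L} w := \partial_t^2 w + A(z)\, \partial_y^2 w + B(z)\, \partial_y w,
\end{equation*}
where $A(z) = \pi^2 \int_0^1 (\partial_y G_s(z))^{-4}\, ds > 0$ and $B(z) = -4\pi^2 \int_0^1 (\partial_y G_s(z))^{-5} \partial_y^2 G_s(z)\, ds$ are continuous on $\mathfrak{R}$. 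Thus $\mathcal{L}$ is a linear, locally uniformly elliptic operator with continuous coefficients and no zeroth-order term. The weak maximum principle for such operators (via the standard barrier argument: for $\phi(t,y) = e^{\alpha t}$ with $\alpha$ large, $\mathcal{L}\phi > 0$, so $w + \epsilon\phi$ cannot attain an interior maximum since at such a point $\nabla(w+\epsilon\phi)=0$ and the Hessian is negative semidefinite, forcing $\mathcal{L}(w+\epsilon\phi) \le 0$; letting $\epsilon \to 0$) then gives $\sup_{\mathfrak{R}} w \le \sup_{\partial\mathfrak{R}} w$, which yields part (1). Applying the same to $-w$ and combining gives the first inequality of part (2).

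For the final equality, $F$ itself satisfies the \emph{linear} elliptic equation $\partial_t^2 F + \pi^2 (\partial_y F)^{-4} \partial_y^2 F = 0$ once $\pi^2 (\partial_y F)^{-4}$ is viewed as a fixed (positive, continuous) coefficient, and the same barrier argument applied to $F$ and $-F$ yields $\sup_{\mathfrak{R}} |F| \le \sup_{\partial\mathfrak{R}} |F|$; the reverse inequality follows from continuity of $F$ on $\overline{\mathfrak{R}}$. The only subtlety to watch is the potential degeneration of the coefficient $A(z) \sim (\partial_y G_s)^{-4}$ near $\partial \mathfrak{R}$, where $\partial_y F_i$ could approach $0$; this, however, is harmless, because the barrier argument is entirely local at an interior extremum, where $A(z) > 0$ holds automatically, so only interior (not uniform) ellipticity is needed. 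This is expected to be the only nontrivial point; the remaining manipulations are routine.
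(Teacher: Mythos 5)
Your proof is correct, and it takes a somewhat different route from the paper's. The paper reduces part (2) to part (1) by shifting $F_2$ by the constant $\sup_{\partial\mathfrak{R}}|F_1-F_2|$ (vertical shifts of a solution remain solutions), and proves part (1) by invoking the quasilinear comparison principle \Cref{aijcomparison} (Gilbarg--Trudinger, Theorem 10.1); since that theorem requires the uniform ellipticity \eqref{xijxikbsum}, which may fail as $\partial_y F_i$ degenerates near $\partial\mathfrak{R}$, the paper argues by contradiction on a compactly contained subdomain $\mathfrak{R}'$ on which $|\partial_y F_j|+|\partial_y F_j|^{-1}$ is uniformly bounded. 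You instead reprove the comparison principle in this special case from scratch: linearizing along the convex path $G_s$ exhibits $F_1-F_2$ as a solution of a linear second-order equation with no zeroth-order term and positive diagonal coefficients, and the $e^{\alpha t}$ barrier argument then gives the weak maximum principle directly; since that argument is purely local at an interior extremum, you correctly observe that only pointwise (not uniform) ellipticity is needed, so no restriction to a compact subdomain is required. What each buys: the paper's proof is shorter by citation, while yours is self-contained and, incidentally, treats the ``in particular'' statement more cleanly --- the paper's shortcut of taking $(F_1,F_2)=(F,0)$ is slightly loose since the zero function is not admissible, whereas you handle $F$ directly by freezing the coefficient $\pi^2(\partial_y F)^{-4}$. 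Your preliminary observation that the classical equation forces $\partial_y F_i<0$ pointwise (rather than merely almost everywhere) is the same implicit reading the paper uses when it bounds $|\partial_y F_j|^{-1}$ on $\overline{\mathfrak{R}'}$; and the boundedness of $\mathfrak{R}$ and continuity of the $F_i$ up to $\overline{\mathfrak{R}}$, which your compactness step needs when extracting the maximum of $w+\varepsilon e^{\alpha t}$, are built into the paper's definition of $\Adm(\mathfrak{R})$, so your argument is complete as written.
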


We next have the following lemma indicating boundedness of the derivatives for a solution to \eqref{equationxtd}; it is established in \Cref{EstimateEquation} below, again as a consequence of its analog in the context of elliptic partial differential equations. In what follows, $\mathfrak{R}' \subseteq \mathfrak{R}$ is a subset that accounts for boundary regularity, in that $F$ is regular in the interior of $\mathfrak{R}$ but up to the boundary of $\mathfrak{R}'$; in particular, if $\mathfrak{R}'$ is empty, the below lemma is an interior regularity estimate and, if $\mathfrak{R}' = \mathfrak{R}$, then it is a global one. Here, for any sets $\mathcal{S}$ and $\mathcal{S}'$, we let $\mathcal{S} \setminus \mathcal{S}' = \mathcal{S} \setminus (\mathcal{S} \cap \mathcal{S}')$. See the left side of \Cref{f:setting_1}.

\begin{lem}
	
	\label{derivativef} 
	
	For any integer $m \ge 1$; real numbers $r > 0$, $\varepsilon \in (0, 1)$, and $B > 1$; and bounded open subsets $\mathfrak{R}' \subseteq \mathfrak{R} \subset \mathbb{R}^2$ such that $\mathfrak{R}'$ has a smooth boundary, there exists a constant $C = C(\varepsilon, r, B, m, \mathfrak{R}') > 1$ such that the following holds. Let $f \in \mathcal{C}(\partial \mathfrak{R})$ be a function satisfying $\| f\|_0 \le B$, and let $F \in \Adm_{\varepsilon} (\mathfrak{R}) \cap \mathcal{C}^2 (\mathfrak{R})$ be a solution to \eqref{equationxtd} on $\mathfrak{R}$ with boundary data $F |_{\partial \mathfrak{R}} = f$. Assume that there exists a function $\varphi \in \mathcal{C}^{m+1} (\overline{\mathfrak{R}}')$ with $\| \varphi \|_{\mathcal{C}^{m+1} (\mathfrak{R}')}\le B$, such that $f(z) = \varphi (z)$ for each $z \in \partial \mathfrak{R}' \cap \partial \mathfrak{R}$. Letting 
	\begin{flalign} 
		\label{dr} 
		\mathfrak{D}_r = \big\{ z \in \mathfrak{R} : \dist (z, \partial \mathfrak{R} \setminus \partial \mathfrak{R}') > r \big\},
	\end{flalign} 

	\noindent we have $\| F \|_{\mathcal{C}^m (\mathfrak{D}_r)} \le C$. 
\end{lem}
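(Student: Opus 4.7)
The plan is to view \eqref{equationxtd} as a uniformly elliptic quasilinear PDE and to apply standard interior and boundary Schauder theory. Using $\mathfrak{d}_{tt}=1$, $\mathfrak{d}_{ty}=0=\mathfrak{d}_{yt}$, and $\mathfrak{d}_{yy}(u,v)=\pi^2 v^{-4}$, the equation reads
\[
\partial_t^2 F(z) + \pi^2 \bigl(\partial_y F(z)\bigr)^{-4} \partial_y^2 F(z) = 0, \qquad z \in \mathfrak{R}.
\]
Because $F \in \Adm_{\varepsilon}(\mathfrak{R})$, the coefficient $\pi^2(\partial_y F)^{-4}$ is pinned inside $[\pi^2\varepsilon^4,\pi^2\varepsilon^{-4}]$, so the equation is uniformly elliptic with ellipticity constants depending only on $\varepsilon$. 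The second part of \Cref{maximumboundary} yields $\|F\|_{L^\infty(\mathfrak{R})} \le \|f\|_0 \le B$.

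Next I would establish interior regularity through a $C^{1,\alpha}$ bound followed by a Schauder bootstrap. Rewriting the equation in divergence form as
\[
\partial_t(\partial_t F) - \tfrac{\pi^2}{3}\partial_y\bigl((\partial_y F)^{-3}\bigr) = 0,
\]
we see it is the Euler--Lagrange equation of a strictly convex functional on $\Adm_{\varepsilon}$ (this is essentially the functional $\mathscr{E}^{\inv}$ from \Cref{EquationHG}). Bernstein's technique applied to $|\nabla F|^2$, using the $L^\infty$ control on $F$ together with the admissibility bound on $\partial_y F$ and interior cutoffs, would yield an interior $L^\infty$ bound on $\partial_t F$ depending only on $\varepsilon$, $B$, and the distance to $\partial\mathfrak{R}$. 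Then De Giorgi--Nash--Moser theory applied to $u = \partial_e F$ (which weakly solves the linear uniformly elliptic divergence-form equation obtained by differentiating \eqref{equationxtd} in direction $e$) would give an interior $C^\alpha$ bound on $\nabla F$ for some $\alpha = \alpha(\varepsilon)>0$. Once $\nabla F \in C^\alpha_{\mathrm{loc}}$, the coefficient $(\partial_y F)^{-4}$ is itself $C^\alpha_{\mathrm{loc}}$, so interior Schauder estimates \cite{EDSO} upgrade $u$ to $C^{2,\alpha}_{\mathrm{loc}}$; iterating the bootstrap $m-2$ further times produces an interior bound $\|F\|_{\mathcal{C}^m(K)} \le C$ on any compact $K \subset \mathfrak{R}$, with $C$ depending on $\varepsilon$, $B$, $m$, and $\dist(K,\partial\mathfrak{R})$.

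For the boundary piece of $\mathfrak{D}_r$, observe that $\partial \mathfrak{R}'\cap\partial\mathfrak{R}$ is a smooth portion of $\partial\mathfrak{R}$ on which $f = \varphi$ is $C^{m+1}$ with $\|\varphi\|_{\mathcal{C}^{m+1}}\le B$. Flattening this boundary by a local diffeomorphism (whose regularity is controlled by that of $\partial\mathfrak{R}'$) and subtracting a $C^{m+1}$ extension of $\varphi$ reduces the problem to a uniformly elliptic quasilinear PDE on a half-disk with zero Dirichlet data on the flat part. Running the same bootstrap as above, with boundary Schauder estimates in place of interior ones, yields $C^m$ bounds for $F$ up to $\partial\mathfrak{R}'\cap\partial\mathfrak{R}$. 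Finally, covering $\overline{\mathfrak{D}_r}$ by finitely many balls---each of which either has distance $\ge r/2$ from all of $\partial\mathfrak{R}$ (handled by the interior estimate) or lies within $r/2$ of $\partial\mathfrak{R}'\cap\partial\mathfrak{R}$ (handled by the boundary estimate)---and combining via a partition of unity produces $\|F\|_{\mathcal{C}^m(\mathfrak{D}_r)} \le C(\varepsilon,r,B,m,\mathfrak{R}')$.

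The main obstacle is the initial $C^{1,\alpha}$ estimate. The admissibility hypothesis controls $\partial_y F$ but not $\partial_t F$, so a Bernstein-type (or Moser iteration) argument is needed to promote the $L^\infty$ bound on $F$ to an interior gradient bound before De Giorgi--Nash--Moser becomes applicable. Once $\nabla F$ is bounded and Hölder continuous, the higher-regularity bootstrap and its combination with the boundary estimate follow from classical Schauder theory and are essentially routine.
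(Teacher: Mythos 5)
Your outline is correct, but it takes a more hands-on route than the paper. The paper's actual proof of \Cref{derivativef} is a short reduction: it replaces the coefficients $\mathfrak{d}_{ij}$ by smooth functions $\widetilde{\mathfrak{d}}_{ij}$ on all of $\mathbb{R}^2$ that agree with $\mathfrak{d}_{ij}$ whenever $v \in [-\varepsilon^{-1}, -\varepsilon]$ and are globally uniformly elliptic, then invokes the black-box quasilinear regularity statement \Cref{aijgradientfestimate} (assembled from \cite{EDSO} and \cite{EDE}: existence, uniqueness, and interior/boundary $\mathcal{C}^m$ bounds for $\sum a_{jk}(\nabla F)\partial_j\partial_k F = 0$ with globally bounded, globally elliptic $a_{jk}$), and finally uses the uniqueness part to identify $F$ with the solution $\widetilde{F}$ of the truncated equation, since $\partial_y F \in [-\varepsilon^{-1},-\varepsilon]$ means $F$ solves both. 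You instead rebuild the regularity chain by hand (maximum principle, interior gradient bound, De~Giorgi--Nash--Moser for directional derivatives, Schauder bootstrap, boundary flattening and patching); this is legitimate and is essentially the content hidden inside the cited lemma, but it is substantially longer, and the boundary $\mathcal{C}^{1,\alpha}$ step that you pass over in one sentence is the most technical part of that classical theory. Two smaller remarks: the coefficient-truncation trick is what lets the paper quote theorems requiring coefficients defined and elliptic on all of $\mathbb{R}^2$ despite the degeneracy of $\pi^2 v^{-4}$ as $v \to 0$ or $v\to\infty$, so if you quote such theorems directly you should perform the same truncation; and your Bernstein step is actually unnecessary here, since the flux $A(p_t,p_y)=(p_t,-\tfrac{\pi^2}{3}p_y^{-3})$ has Jacobian $\mathrm{diag}(1,\pi^2 p_y^{-4})$ depending only on $\partial_y F$, which admissibility already controls, so a Caccioppoli estimate using the divergence form plus $\|F\|_0 \le B$ gives the local $L^2$ bound on $\partial_t F$ needed to launch De~Giorgi--Nash--Moser.
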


	The following lemma states that the solutions of \eqref{equationxtd} are real analytic (that is, they have an absolutely convergent power series expansion at each point in the interior of their domains); it is shown in \Cref{EstimateEquation} below, as a quick consequence of classical results stating real analyticity for solutions of elliptic partial differential equations.
		
	\begin{lem} 
		
		\label{sumd}
		
		Fix a real number $\varepsilon\in (0,1)$ and some open set $\mathfrak{R} \subset \mathbb{R}^2$; let $F \in \Adm_{\varepsilon} (\mathfrak{R}) \cap \mathcal{C}^2 (\mathfrak{R})$ denote a solution to \eqref{equationxtd} on $\mathfrak{R}$. Then, $F$ is real analytic $\mathfrak{R}$.  
	\end{lem}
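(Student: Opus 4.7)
The plan is to treat equation \eqref{equationxtd} as a quasilinear (in fact, semilinear in each coordinate direction after the nonlinearity is only in the coefficient of $\partial_y^2$) elliptic PDE with real analytic coefficients, and then invoke classical elliptic regularity theory to bootstrap from $\mathcal{C}^2$ up to $\mathcal{C}^\omega$.

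First I would verify uniform ellipticity of \eqref{equationxtd} on any compact subset $K \subset \mathfrak{R}$. Recalling \eqref{uvd}, the coefficient matrix $\bm{D}(\nabla F) = \diag(1, \pi^2 (\partial_y F)^{-4})$ governs the principal part; since $F \in \Adm_\varepsilon(\mathfrak{R})$, we have $-\varepsilon^{-1} < \partial_y F < -\varepsilon$ almost everywhere, and by the continuity of $\nabla F$ (since $F \in \mathcal{C}^2(\mathfrak{R})$) this holds pointwise. Hence both eigenvalues of $\bm{D}(\nabla F)$ are pinched between $\min\{1, \pi^2 \varepsilon^4\}$ and $\max\{1, \pi^2 \varepsilon^{-4}\}$, so the linearized operator is uniformly elliptic on $K$ with ellipticity constants depending only on $\varepsilon$. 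Moreover, the function $(u,v) \mapsto \pi^2 v^{-4}$ is real analytic on $\{v < 0\}$, so the coefficients of the PDE are real analytic functions of $\nabla F$ in the region under consideration.

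Next I would bootstrap $F$ to $\mathcal{C}^\infty$ via a standard Schauder argument. Since $\bm{D}(\nabla F)$ is continuous and uniformly elliptic, interior $\mathcal{C}^{1,\alpha}$ estimates for fully nonlinear equations (e.g.\ from Caffarelli--Cabr\'e or \cite[Theorem~8.32]{EDSO} applied to the linearization $\sum_{i,j} \mathfrak{d}_{ij}(\nabla F)\, \partial_i \partial_j F = 0$ viewed as a linear equation with continuous coefficients) give $F \in \mathcal{C}^{2,\alpha}_{\mathrm{loc}}(\mathfrak{R})$ for some $\alpha \in (0,1)$. Differentiating the equation in any direction $e \in \{t,y\}$, the function $w = \partial_e F$ satisfies a linear divergence-form elliptic equation of the form
\begin{flalign*}
\sum_{i,j} \mathfrak{d}_{ij}(\nabla F)\, \partial_i \partial_j w = - \sum_{i,j,k} (\partial_k \mathfrak{d}_{ij})(\nabla F)\, \partial_i \partial_j F \cdot \partial_e \partial_k F,
\end{flalign*}
whose coefficients lie in $\mathcal{C}^\alpha_{\mathrm{loc}}(\mathfrak{R})$; Schauder theory then gives $w \in \mathcal{C}^{2,\alpha}_{\mathrm{loc}}$, and iterating this argument (each iteration gains one derivative via \cite[Theorem~6.17]{EDSO}) shows that $F \in \mathcal{C}^\infty(\mathfrak{R})$.

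Finally, I would invoke Morrey's theorem on the analyticity of smooth solutions of analytic nonlinear elliptic equations (see C.\ B.\ Morrey, \emph{On the analyticity of the solutions of analytic non-linear elliptic systems of partial differential equations}, or \cite[Chapter~6]{EDE}): any $\mathcal{C}^\infty$ solution of a uniformly elliptic quasilinear PDE whose coefficients are real analytic in $(z, F, \nabla F)$ is itself real analytic on the interior of its domain. Since we have verified uniform ellipticity, smoothness, and the real analyticity of the coefficients $\mathfrak{d}_{ij}(\nabla F) = \mathfrak{d}_{ij}(\partial_t F, \partial_y F)$ in the region $\partial_y F < -\varepsilon$, this theorem applies and yields the desired conclusion. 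The only step requiring any care is verifying that the hypotheses of Morrey's theorem (as stated in the reference of choice) match the quasilinear form of \eqref{equationxtd}; I do not expect any genuine obstacle here, as the equation has no nonlinearity in second derivatives of $F$ beyond the multiplicative factor $\mathfrak{d}_{ij}(\nabla F)$, which is the standard quasilinear setup.
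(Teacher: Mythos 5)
Your proposal is correct and follows essentially the same route as the paper: use $F \in \Adm_{\varepsilon}(\mathfrak{R})$ to get uniform ellipticity of \eqref{equationxtd} with analytic dependence on $\nabla F$ in the region $\partial_y F < -\varepsilon$, upgrade the regularity of $F$ via interior elliptic estimates, and then invoke a classical analyticity theorem for nonlinear elliptic equations with analytic data. The only cosmetic differences are that the paper cites Friedman's theorem from \cite{RSNEPS} (verifying its quantitative derivative bounds on $\Phi(v_1,v_2,v_3)=v_2+\pi^2 v_1^{-4}v_3$ with $M_k = k!$) rather than Morrey's theorem, and it only needs $\mathcal{C}^3$ regularity from \Cref{derivativef} instead of a full $\mathcal{C}^\infty$ bootstrap.
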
 

	\begin{figure}
	\center
\includegraphics[width=1\textwidth]{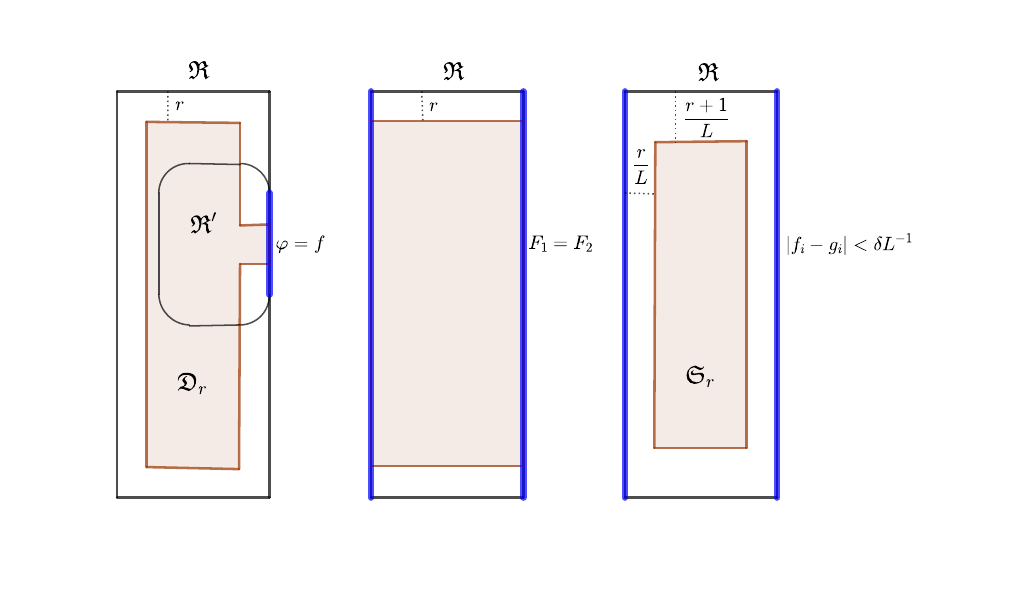}

\caption{Shown on the left is depiction for \Cref{derivativef}. Shown in the middle is a depiction for \Cref{equationcompareboundary} stating, if $F_1(z)=F_2(z)$ for each $z$ on the blue sides of $\mathfrak R$, then $F_1-F_2$ is exponentially small in the shaded region. Shown on the right is a depiction for \Cref{f1f2b}. }
\label{f:setting_1}
	\end{figure}

We next have the following lemma bounding the effect of a $\mathcal{C}^0$ boundary perturbation on the derivatives of a solution to \eqref{equationxtd}; its proof is similar to \cite[Proposition 2.13]{ULTS} and is given in \Cref{BoundaryPerturbationVariational} below.

\begin{lem} 
	
	\label{perturbationbdk} 
	
	For any integer $m \ge 1$; real numbers $r > 0$, $\varepsilon \in (0, 1)$, and $B > 1$; and bounded open subsets $\mathfrak{R}' \subseteq \mathfrak{R} \subset \mathbb{R}^2$ such that $\mathfrak{R}'$ has a smooth boundary, there exists a constant $C = C(\varepsilon, r, B, m, \mathfrak{R}') > 1$ such that the following holds. For each index $i \in \{ 1, 2 \}$, let $F_i \in \Adm_{\varepsilon} (\mathfrak{R}) \cap \mathcal{C}^2 (\mathfrak{R})$ denote a solution to \eqref{equationxtd} such that $\sup_{z \in \partial \mathfrak{R}} \big| F_i (z) \big|\le B$. Assume that there exist functions $\varphi_i \in \mathcal{C}^{m+5} (\overline{\mathfrak{R}})$ such that $\| \varphi_i \|_{\mathcal{C}^{m+5} (\mathfrak{R})} < B$ and $F_i (z) = \varphi_i (z)$ for each $z \in \partial \mathfrak{R}' \cap \partial \mathfrak{R}$. Then, letting $\mathfrak{D}_r \subseteq \mathfrak{R}$ be as in \eqref{dr}, we have  
	\begin{flalign*}
		\| F_1 - F_2 \|_{\mathcal{C}^m (\mathfrak{D}_r)} \le C \big(  \| F_1 - F_2 \|_{\mathcal{C}^0 (\mathfrak{R})} + \| \varphi_1 - \varphi_2 \|_{\mathcal{C}^{m+2} (\mathfrak{R})} \big).  
	\end{flalign*}
	
\end{lem}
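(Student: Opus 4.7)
The plan is to linearize the nonlinear equation \eqref{equationxtd} along the segment connecting $F_1$ and $F_2$, obtaining a linear elliptic equation with bounded coefficients for $U = F_1 - F_2$, and then invoke Schauder estimates for linear equations. Define the interpolation $F_s = s F_1 + (1-s) F_2$ for $s \in [0, 1]$, and observe that since $F_1, F_2 \in \Adm_{\varepsilon} (\mathfrak{R})$, convexity in $s$ gives $-\varepsilon^{-1} < \partial_y F_s < -\varepsilon$ for each $s \in [0, 1]$. Writing $\mathfrak{E} (\nabla F, \nabla^2 F) = \sum_{j, k} \mathfrak{d}_{jk} (\nabla F) \partial_j \partial_k F$, so that $\mathfrak{E} (\nabla F_1, \nabla^2 F_1) = 0 = \mathfrak{E} (\nabla F_2, \nabla^2 F_2)$, the fundamental theorem of calculus yields
\begin{flalign*}
0 & = \displaystyle\int_0^1 \displaystyle\frac{d}{ds} \mathfrak{E} (\nabla F_s, \nabla^2 F_s) \, ds = \displaystyle\sum_{j, k} A_{jk} (z) \partial_j \partial_k U (z) + \bm{B} (z) \cdot \nabla U(z),
\end{flalign*}
with coefficients $A_{jk} (z) = \int_0^1 \mathfrak{d}_{jk} \big( \nabla F_s (z) \big) ds$ and $\bm{B}(z) = \sum_{j, k} \int_0^1 \nabla \mathfrak{d}_{jk} \big( \nabla F_s (z) \big) \cdot \partial_j \partial_k F_s (z) \, ds$.

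By the explicit form \eqref{uvd}, on $\mathfrak{R}$ the matrix $A = (A_{jk})$ satisfies $\pi^2 \varepsilon^4 \, \bm{I} \le A \le \pi^2 \varepsilon^{-4} \bm{I}$, so the linear equation above is uniformly elliptic, with ellipticity constants depending only on $\varepsilon$. Moreover, \Cref{derivativef}, applied to each $F_i$ on an intermediate set $\mathfrak{D}_{r/2} \supseteq \mathfrak{D}_r$ (with the $m$ there taken to be $m+3$ here, using the hypothesis $\| \varphi_i \|_{\mathcal{C}^{m+5} (\mathfrak{R})} \le B$), yields a constant $C_1 = C_1(\varepsilon, r, B, m, \mathfrak{R}') > 1$ such that $\| F_i \|_{\mathcal{C}^{m+3} (\mathfrak{D}_{r/2})} \le C_1$ for each $i \in \{ 1, 2 \}$. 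Since the $\mathfrak{d}_{jk}$ and their derivatives are uniformly bounded on the range of $\nabla F_s$, this translates to a bound $\| A_{jk} \|_{\mathcal{C}^{m+1} (\mathfrak{D}_{r/2})} + \| \bm{B} \|_{\mathcal{C}^m (\mathfrak{D}_{r/2})} \le C_2$ for some $C_2 = C_2 (\varepsilon, r, B, m, \mathfrak{R}') > 1$.

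Next, I would apply Schauder estimates, in the form given in \cite[Chapter 6]{EDSO} (combined interior plus boundary estimates up to $\partial \mathfrak{R}' \cap \partial \mathfrak{R}$, which is smooth by hypothesis), to the linear equation for $U$, treating the portion of $\partial \mathfrak{R}$ that does not lie in $\partial \mathfrak{R}'$ as ``free'' (handled through the interior-type cutoff implicit in the definition \eqref{dr} of $\mathfrak{D}_r$). Iterating these estimates $m$ times, using a nested sequence of intermediate domains $\mathfrak{D}_r \subset \mathfrak{D}_{r(1 - 1/m)} \subset \cdots \subset \mathfrak{D}_{r/2}$, and absorbing the resulting lower-order terms, would yield for some constant $C_3 = C_3 (\varepsilon, r, B, m, \mathfrak{R}') > 1$ the bound
\begin{flalign*}
\| U \|_{\mathcal{C}^m (\mathfrak{D}_r)} \le C_3 \big( \| U \|_{\mathcal{C}^0 (\mathfrak{R})} + \| U |_{\partial \mathfrak{R}' \cap \partial \mathfrak{R}} \|_{\mathcal{C}^{m+2}} \big) \le C_3 \big( \| F_1 - F_2 \|_{\mathcal{C}^0 (\mathfrak{R})} + \| \varphi_1 - \varphi_2 \|_{\mathcal{C}^{m+2} (\mathfrak{R})} \big),
\end{flalign*}
which is the desired estimate.

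The main obstacle I anticipate is the careful bookkeeping required for the mixed boundary condition: the lemma only prescribes boundary values of $U$ along $\partial \mathfrak{R}' \cap \partial \mathfrak{R}$, while along the remainder of $\partial \mathfrak{R}$ one has no information beyond the $\mathcal{C}^0$-bound. A clean way to handle this is to work on a slightly enlarged interior domain $\widetilde{\mathfrak{D}} \supset \mathfrak{D}_r$ whose boundary consists of a subset of $\partial \mathfrak{R}' \cap \partial \mathfrak{R}$ (where Dirichlet data are regular) together with an artificial interior piece (where one uses interior Schauder, which only needs the $\mathcal{C}^0$-norm), constructed so that $\dist (\mathfrak{D}_r, \partial \widetilde{\mathfrak{D}} \setminus \partial \mathfrak{R}') \ge r/2$. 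The fact that $\partial \mathfrak{R}'$ is smooth by hypothesis ensures the boundary Schauder estimates may be applied there with constants depending only on $\mathfrak{R}'$, $r$, $\varepsilon$, $B$, and $m$.
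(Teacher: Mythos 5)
Your argument is correct, but it takes a genuinely different route from the paper's. The paper (via \Cref{ellipticperturbation}) first uses \Cref{aijgradientfestimate} and the interpolation inequality \Cref{fjderivativesestimate} to dispose of the case where $\varsigma = \max\{\|F_1-F_2\|_{\mathcal{C}^0}, \|\varphi_1-\varphi_2\|_{\mathcal{C}^3}\}$ is not small; when $\varsigma$ is small it rescales, setting $g=\varsigma^{-1/2}(F_2-F_1)$, Taylor-expands the coefficients around $\nabla F_1$ (producing the quadratic remainders $h_{jk}$ in \eqref{ghequationmunukappa}), and then closes the estimate by applying the $\mathcal{C}^{2,\alpha}$ Schauder bound \Cref{aijuestimates} and absorbing the nonlinear terms using the factor $\varsigma^{1/2}$; higher $m$ is handled by differentiating the equation. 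You instead linearize exactly along the segment $F_s=sF_1+(1-s)F_2$ (admissible by convexity of $[-\varepsilon^{-1},-\varepsilon]$), so that $U=F_1-F_2$ solves an \emph{exactly} linear uniformly elliptic equation whose coefficients $A_{jk}$, $\bm{B}$ are bounded in high H\"{o}lder norms on $\mathfrak{D}_{r/2}$ by \Cref{derivativef} --- independently of the size of $U$ --- after which iterated interior-plus-boundary Schauder estimates give the claim with linear dependence on $\|U\|_{\mathcal{C}^0}+\|\varphi_1-\varphi_2\|_{\mathcal{C}^{m+2}}$ directly, with no smallness dichotomy and no absorption step. What each buys: your route is shorter and structurally cleaner (the equation for $U$ has no remainder, so no two-regime argument is needed), while the paper's route only invokes the single $\mathcal{C}^{2,\alpha}$ Schauder estimate it states as \Cref{aijuestimates}, whereas you need the higher-order Schauder theory of \cite[Chapter 6]{EDSO} (or, equivalently, differentiation of the linear equation plus the stated estimate) together with the nested-domain bookkeeping you describe. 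Two small points to tidy up: the ellipticity bounds for $A=\int_0^1\bm{D}(\nabla F_s)\,ds$ should read $\min\{1,\pi^2\varepsilon^4\}\,\bm{I}\le A\le\max\{1,\pi^2\varepsilon^{-4}\}\,\bm{I}$, since the $tt$-entry equals $1$ (this does not affect anything, as the constants still depend only on $\varepsilon$); and when you invoke \Cref{derivativef} at order $m+3$ you should note that the hypothesis $\|\varphi_i\|_{\mathcal{C}^{m+5}}<B$ indeed supplies the required $\mathcal{C}^{m+4}$ control of the boundary data, which it does.
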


The following result is established in \Cref{ProofCompare1} below. It states that, given two solutions $F_1, F_2$ to \eqref{equationxtd} on a tall rectangle of height $1$ and width $L^{-1}$ (with $L$ bounded below), whose boundary data match on its west and east boundaries, $|F_1 - F_2|$ decays exponentially in $L$ in the middle of the rectangle. As such, it can be viewed as indicating that the ``discrepancy'' between $F_1$ and $F_2$ (originating from their different boundary data along the north and south sides of the rectangle) decay exponentially around the middle of the rectangle. See the middle of \Cref{f:setting_1}.

\begin{prop}
	
	\label{equationcompareboundary}
	
	For any real numbers $\varepsilon, r \in \big( 0, \frac{1}{4} \big)$ and $B > 1$, there exists a constant $c = c(\varepsilon, r, B) > 0$ such that the following holds. Let $L > 0$ be a real number, and define the open rectangle $\mathfrak{R} = (0, L^{-1}) \times (-1, 1) \subset \mathbb{R}^2$. Let $F_1, F_2 \in \Adm_{\varepsilon} (\mathfrak{R}) \cap \mathcal{C}^5 (\overline{\mathfrak{R}})$ be two solutions to \eqref{equationxtd} such that $\| F_i \|_{\mathcal{C}^5 (\mathfrak{R})} \le B$ for each $i \in \{ 1, 2 \}$. If $F_1 (t, x) = F_2 (t, x)$ for any $(t, x) \in \partial \mathfrak{R}$ with $t \in \{ 0, L^{-1} \}$, then
		\begin{flalign*}
			\big| F_1 (t, x) - F_2 (t, x) \big| \le c^{-1} e^{-cL^{1/8}}, \qquad \text{for any $(t, x) \in [0, L^{-1}] \times [r-1, 1-r]$}.
		\end{flalign*}
	
\end{prop}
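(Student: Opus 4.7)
The plan is to first linearize \eqref{equationxtd} along the segment $F_s = (1-s) F_2 + s F_1$ for $s \in [0, 1]$. Convexity of the admissibility constraint gives $F_s \in \Adm_\varepsilon(\mathfrak{R}) \cap \mathcal{C}^5(\overline{\mathfrak{R}})$ with $\|F_s\|_{\mathcal{C}^5} \le B$. Applying $\int_0^1 \frac{d}{ds}(\cdot) \, ds$ to the equation $\partial_t^2 F_s + \pi^2 (\partial_x F_s)^{-4} \partial_x^2 F_s = 0$ yields a linear, uniformly elliptic equation for $W := F_1 - F_2$,
\[ \partial_t^2 W + A(z) \, \partial_x^2 W + B(z) \, \partial_x W = 0 \quad \text{on } \mathfrak{R}, \]
with $A = \pi^2 \int_0^1 (\partial_x F_s)^{-4} \, ds \in [\pi^2 \varepsilon^4, \pi^2 \varepsilon^{-4}]$ and $B = -4\pi^2 \int_0^1 (\partial_x F_s)^{-5} \partial_x^2 F_s \, ds$; both coefficients have $\mathcal{C}^3$ norms controlled by $\varepsilon, B$. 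The hypothesis provides $W = 0$ on $\{t \in \{0, L^{-1}\}\} \cap \partial \mathfrak{R}$ and $\|W\|_{L^\infty} \le 2B$. We then rescale $\tilde t = L t$, $\tilde x = L x$, $\tilde W(\tilde t, \tilde x) := W(\tilde t / L, \tilde x / L)$; the equation becomes
\[ \partial_{\tilde t}^2 \tilde W + \tilde A \, \partial_{\tilde x}^2 \tilde W + L^{-1} \tilde B \, \partial_{\tilde x} \tilde W = 0 \quad \text{on } (0, 1) \times (-L, L), \]
where $\tilde A, \tilde B$ inherit the same pointwise bounds, while their spatial derivatives all have sup-norms of order $O(L^{-1})$.

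\textbf{Saint-Venant estimate.} Next, introduce the slice energy $\tilde E(\tilde x) := \int_0^1 \tilde W(\tilde t, \tilde x)^2 \, d\tilde t$. Computing $\tilde E''$, substituting the PDE for $\tilde W_{\tilde x \tilde x}$, integrating by parts in $\tilde t$ using $\tilde W(0, \cdot) = 0 = \tilde W(1, \cdot)$, and absorbing via Cauchy--Schwarz the resulting two cross terms --- each carrying a coefficient of order $O(L^{-1})$ arising from $\|\tilde A_{\tilde t}\|_\infty$ and $L^{-1} \|\tilde B / \tilde A\|_\infty$ --- into fractions of the positive main terms $\int \tilde W_{\tilde x}^2 \, d\tilde t$ and $\int \tilde W_{\tilde t}^2 / \tilde A \, d\tilde t$, we find, using Poincar\'e's inequality $\int \tilde W_{\tilde t}^2 \, d\tilde t \ge \pi^2 \int \tilde W^2 \, d\tilde t$ together with $\tilde A \le \pi^2 \varepsilon^{-4}$, that for $L \ge L_0(\varepsilon, B)$,
\[ \tilde E''(\tilde x) \ge \lambda \, \tilde E(\tilde x), \qquad \tilde x \in (-L, L), \quad \lambda = \lambda(\varepsilon, B) > 0. \]
Since $\tilde E(\pm L) \le 4 B^2$ (from $\|\tilde W\|_{L^\infty} \le 2B$), Sturm comparison against $4 B^2 \cosh(\sqrt\lambda \tilde x) / \cosh(\sqrt\lambda L)$ gives the slice bound $\tilde E(\tilde x) \le 8 B^2 \, e^{-\sqrt\lambda (L - |\tilde x|)}$ for $\tilde x \in [-L, L]$.

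\textbf{Pointwise bound and obstacle.} Finally, for $(t_0, x_0) \in [0, L^{-1}] \times [r-1, 1-r]$, let $(\tilde t_0, \tilde x_0) = (L t_0, L x_0)$, so that $|\tilde x_0| \le (1-r) L$. Applying the interior (or boundary, when $\tilde t_0 \in \{0, 1\}$) $L^2$-to-$L^\infty$ Schauder estimate for the uniformly elliptic operator on a disk of radius $r/4$ around $(\tilde t_0, \tilde x_0)$, and bounding the local $L^2$-norm of $\tilde W$ by $\bigl(\int \tilde E \, d\tilde x\bigr)^{1/2} \lesssim e^{-\sqrt\lambda r L / 2}$, produces $|W(t_0, x_0)| \le C(\varepsilon, B, r) \, e^{-c L}$, which comfortably implies the claimed $c^{-1} e^{-c L^{1/8}}$ bound. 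The principal obstacle is the Saint-Venant step: verifying $\tilde E'' \ge \lambda \tilde E$ with $\lambda$ independent of $L$ hinges on the observation that the \emph{symmetric} rescaling $(t, x) \mapsto (Lt, Lx)$ simultaneously renders the first-order $B$-term and the coefficient-gradient terms $\tilde A_{\tilde t}, \tilde A_{\tilde x}$ all of order $O(L^{-1})$; only then does Cauchy--Schwarz absorb them without residual $L$-dependence. The stated rate $L^{1/8}$ substantially weakens the sharp $e^{-cL}$ decay given by this argument, and is chosen for convenience in later applications.
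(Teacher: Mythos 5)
Your proof is correct in substance, but it takes a genuinely different route from the paper. The paper argues probabilistically: for each $F_i$ it builds a diffusion $Y_i$ with diffusion matrix $\bm{D}\big(\nabla F_i(Y_i)\big)^{1/2}$ driven by a \emph{common} Brownian motion, so that $F_i(Y_i(s))$ is a martingale; the structural fact that $\mathfrak{d}_{tt}\equiv 1$ forces the two coupled processes to share the same $t$-coordinate, hence (in the rescaled strip $(0,1)\times(-L,L)$) to exit through the lateral sides $\{t\in\{0,1\}\}$, where $F_1=F_2$, well before drifting order $L^{1/2}$ in $x$. A BDG--Gronwall estimate (plus \Cref{perturbationbdk} to convert $\|F_1-F_2\|_{\mathcal{C}^0}$ into a gradient bound) turns this into the one-step contraction of \Cref{gzgzm2}, which is iterated roughly $L^{1/8}$ times --- this iteration is the source of the exponent $1/8$. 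You instead linearize along the segment $F_s=(1-s)F_2+sF_1$ (convexity of $\Adm_\varepsilon$ keeps $\partial_x F_s\in[-\varepsilon^{-1},-\varepsilon]$, so the resulting equation for $W=F_1-F_2$ is uniformly elliptic with coefficients controlled by $\varepsilon,B$), rescale symmetrically, and run a classical Saint-Venant/Phragm\'en--Lindel\"of slice-energy argument with Dirichlet data on $\tilde t\in\{0,1\}$, finishing with a local $L^2$-to-$L^\infty$ elliptic estimate. This is more elementary, avoids the stochastic machinery and the iteration entirely, needs far less regularity than $\mathcal{C}^5$, and yields the sharper rate $e^{-cL}$ rather than $e^{-cL^{1/8}}$; the paper's coupling argument, by contrast, stays close to the probabilistic representation it uses elsewhere but pays for it in the exponent. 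Two small points to tighten: the phrase ``applying $\int_0^1\frac{d}{ds}$ to the equation $\Phi(F_s)=0$'' is loose, since $F_s$ solves the equation only at $s\in\{0,1\}$ --- the correct statement is $0=\Phi(F_1)-\Phi(F_2)=\int_0^1\partial_s\Phi(F_s)\,ds$, which gives exactly the linear equation you wrote; and your differential inequality $\tilde E''\ge\lambda\tilde E$ is established only for $L\ge L_0(\varepsilon,B)$, so you should note (as the paper does) that for bounded $L$ the conclusion follows trivially from $\|F_1-F_2\|_{\mathcal{C}^0}\le 2B$ after shrinking $c$.
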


We conclude this section with the following lemma, which is a consequence of the above bounds and is established in \Cref{ProofF1F2Small} below; it essentially states the following. Fix a solution $F$ to \eqref{equationxtd}, bounded in $\mathcal{C}^m$ for some integer $m$ on a rectangle $\mathfrak{R}$, as well some boundary data $g_0$ and $g_1$ on the two vertical sides on the rectangle; assume that $g_0$ and $g_1$ are small perturbations of $F$ (restricted to those boundaries). Then, it is possible to find a solution $G$ to \eqref{equationxtd} on a slightly shorter rectangle $\mathfrak{S}$, whose boundary data on the vertical sides of the rectangle are given by $g_0$ and $g_1$ (the first condition in the below lemma), and that is close to $F$ (quantified through the second and third conditions of the below lemma). The second part of the lemma states that $F$ and $G$ are close in any $\mathcal{C}^k$ norm in the interior of $\mathfrak{S}$, and the third part states that $F$ and $G$ are close in $\mathcal{C}^{m-5}$ (that is, fewer derivatives than the original assumed bound on $F$) up to the boundary of $\mathfrak{S}$. See the right side of \Cref{f:setting_1}.

\begin{lem}
	
	\label{f1f2b}
	
	For any integers $m, k \ge 7$, and real numbers $\varepsilon > 0$; $r \in \big( 0, \frac{1}{4} \big)$; and $B > 1$, there exist constants $\delta = \delta(\varepsilon, B) > 0$, $C_1 = C_1 (\varepsilon, r, B, k) > 1$, and $C_2 = C_2 (\varepsilon, B, m) > 1$ such that the following holds. Fix a real number $L > 2$, and define the open rectangles 
	\begin{flalign*} 
		\mathfrak{R} = \Big( 0, \displaystyle\frac{1}{L} \Big) \times (-1, 1); \qquad \mathfrak{S}_r = \Big( \displaystyle\frac{r}{L}, \displaystyle\frac{1-r}{L} \Big) \times \Big( \displaystyle\frac{r+1}{L} - 1, 1 - \displaystyle\frac{r+1}{L} \Big); \qquad \mathfrak{S} = \mathfrak{S}_0. 
	\end{flalign*} 
	
	\noindent Let $F \in \Adm_{\varepsilon} (\mathfrak{R}) \cap \mathcal{C}^m (\overline{\mathfrak{R}})$ denote a solution to \eqref{equationxtd} such that $\| F \|_{\mathcal{C}^m (\mathfrak{R})} \le B$, and define the functions $f_0, f_1 : [-1, 1] \rightarrow \mathbb{R}$ by setting $f_i (x) = F (iL^{-1}, x)$ for each $(i, x) \in \{ 0, 1 \} \times [-1, 1]$. Further let $g_0, g_1 : [-1, 1] \rightarrow \mathbb{R}$ denote two functions such that $\| g_i \|_{\mathcal{C}^m (-1, 1)} \le B$ and $\big| g_i (x) - f_i (x) \big| \le \delta L^{-1}$ for each $(i, x) \in \{ 0, 1\} \times [-1, 1]$. Then, there exists a solution $G \in \Adm_{\varepsilon/2} (\mathfrak{S}) \cap \mathcal{C}^{m-5} (\overline{\mathfrak{S}})$ to \eqref{equationxtd} satisfying the following three properties.
	
	\begin{enumerate} 
		\item For each $i \in \{ 0, 1 \}$ and $x \in \big[ L^{-1} - 1, 1 - L^{-1} \big]$, we have $G(iL^{-1}, x) = g_i (x)$. 
		\item We have $\| F - G \|_{\mathcal{C}^k (\mathfrak{S}_r)} \le C_1 L^k \cdot \big( \| f_0 - g_0 \|_{\mathcal{C}^0} + \| f_1 - g_1 \|_{\mathcal{C}^0}  \big)$. 
		\item We have  $\| F - G \|_{\mathcal{C}^{m-5} (\mathfrak{S})} \le C_2 L^{m-5} \cdot \big( \| f_0 - g_0 \|_{\mathcal{C}^{m-3}} + \| f_1 - g_1 \|_{\mathcal{C}^{m-3}} \big)$ and $\| F - G \|_{\mathcal{C}^{m-5} (\mathfrak{S})} \le C_2 L^{m-5} \cdot \big( \| f_0 - g_0 \|_{\mathcal{C}^0}^{3/m} + \| f_1 - g_1 \|_{\mathcal{C}^0}^{3/m} \big)$.
	\end{enumerate}
	
\end{lem}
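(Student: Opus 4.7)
The plan is to construct $G$ by first extending the perturbed boundary data $g_0, g_1$ to a near-solution $H$ on all of $\overline{\mathfrak{R}}$, then solving the Dirichlet problem for \eqref{equationxtd} on $\mathfrak{S}$ via the method of continuity, and finally deriving the three estimates from the maximum principle combined with the a priori regularity bounds of \Cref{maximumboundary}--\Cref{perturbationbdk}.

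\textbf{Extension and existence.} Define $H \in \mathcal{C}^m(\overline{\mathfrak{R}})$ by
\begin{flalign*}
H(t, x) = F(t, x) + (1 - Lt) \bigl( g_0(x) - f_0(x) \bigr) + Lt \bigl( g_1(x) - f_1(x) \bigr),
\end{flalign*}
so that $H(iL^{-1}, x) = g_i(x)$ for each $(i, x) \in \{ 0, 1 \} \times [-1, 1]$, and the linearity of the cutoff in $t$ yields $\|H - F\|_{\mathcal{C}^k(\mathfrak{R})} \le C L \cdot (\|g_0 - f_0\|_{\mathcal{C}^k} + \|g_1 - f_1\|_{\mathcal{C}^k})$ for every $k \le m$. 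For $\delta$ sufficiently small relative to $\varepsilon$ and $B$, this ensures $\|H - F\|_{\mathcal{C}^1(\mathfrak{R})} \le \varepsilon/4$, so $H \in \Adm_{3\varepsilon/4}(\mathfrak{R})$. Now set $H_s = F + s(H - F)$ for $s \in [0, 1]$, and let $\mathcal{I} \subseteq [0, 1]$ denote the set of $s$ for which the Dirichlet problem for \eqref{equationxtd} on $\mathfrak{S}$ with boundary values $H_s|_{\partial \mathfrak{S}}$ has a solution $G_s \in \Adm_{\varepsilon/2}(\mathfrak{S}) \cap \mathcal{C}^{m-5}(\overline{\mathfrak{S}})$; clearly $0 \in \mathcal{I}$ with $G_0 = F|_{\overline{\mathfrak{S}}}$. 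Openness of $\mathcal{I}$ follows from the implicit function theorem applied to the map $G \mapsto \sum_{ij} \mathfrak{d}_{ij}(\nabla G) \partial_i \partial_j G$, whose linearization at any admissible solution is a uniformly elliptic linear operator by \Cref{eigenvalues2}, hence Dirichlet-invertible in appropriate H\"older spaces by Schauder theory. Closedness of $\mathcal{I}$ follows from the a priori bounds: \Cref{maximumboundary} gives $\|G_s - F\|_{\mathcal{C}^0(\mathfrak{S})} \le \|H_s - F\|_{\mathcal{C}^0(\partial \mathfrak{S})} \le 2\delta L^{-1}$, and \Cref{perturbationbdk} (applied on $\mathfrak{S}$ with $\mathfrak{R}' = \mathfrak{S}$ and $\varphi_1 = F$, $\varphi_2 = H_s$) upgrades this to a $\mathcal{C}^{m-5}$ bound, which for $\delta$ small enough keeps $G_s$ inside $\Adm_{\varepsilon/2}$. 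Setting $G = G_1$ yields the desired solution.

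\textbf{Quantitative estimates.} Property 1 of the lemma holds by construction. For the interior bound (property 2), observe that $V := F - G$ satisfies a uniformly elliptic linear PDE on $\mathfrak{S}$, obtained by subtracting the equations \eqref{equationxtd} for $F$ and $G$ and absorbing the $\mathfrak{d}_{ij}(\nabla F) - \mathfrak{d}_{ij}(\nabla G)$ terms into lower-order coefficients (which are bounded, since $F$ and $G$ have bounded $\mathcal{C}^2$ norms on $\mathfrak{S}$). On $\mathfrak{S}_r$ every point has distance at least $rL^{-1}$ from $\partial \mathfrak{S}$; interior Schauder estimates applied on balls of radius $rL^{-1}/2$ (obtained by rescaling time $\tau = Lt$ via the $(\alpha, \beta) = (L^{-1}, 1)$ case of \Cref{invariancesscale} to normalize the relevant scale to unit order, then invoking the $L$-independent interior regularity for the linearized equation) give
\begin{flalign*}
\|F - G\|_{\mathcal{C}^k(\mathfrak{S}_r)} \le C r^{-k} L^k \|F - G\|_{\mathcal{C}^0(\mathfrak{S})} \le C_1 L^k \bigl( \|f_0 - g_0\|_{\mathcal{C}^0} + \|f_1 - g_1\|_{\mathcal{C}^0} \bigr).
\end{flalign*}
For property 3, the boundary version of the Schauder estimate (i.e., \Cref{perturbationbdk} with the same time rescaling to make the constants $L$-independent) upgrades the $\mathcal{C}^0$ bound to
\begin{flalign*}
\|F - G\|_{\mathcal{C}^{m-5}(\mathfrak{S})} \le C L^{m-5} \bigl( \|F - G\|_{\mathcal{C}^0(\mathfrak{S})} + \|H - F\|_{\mathcal{C}^{m-3}(\mathfrak{S})} \bigr),
\end{flalign*}
and the Step 1 estimate for $\|H - F\|_{\mathcal{C}^{m-3}(\mathfrak{S})}$ then yields the first assertion of property 3. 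The variant with $\|f_i - g_i\|_{\mathcal{C}^0}^{3/m}$ follows from a standard interpolation inequality $\|f_i - g_i\|_{\mathcal{C}^{m-3}} \le C(B) \|f_i - g_i\|_{\mathcal{C}^0}^{3/m}$, using the uniform $\mathcal{C}^m$ bound on both $f_i$ and $g_i$ by $B$.

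\textbf{Main obstacle.} The principal technical subtlety is closing the continuity argument in Step 2 while preserving the strict admissibility $G_s \in \Adm_{\varepsilon/2}$ throughout the deformation. This requires a quantitative $\mathcal{C}^1$ bound $\|G_s - F\|_{\mathcal{C}^1(\mathfrak{S})} \le \varepsilon/2$ uniformly in $s \in [0, 1]$, which forces $\delta$ to be chosen smaller than a constant depending on $\varepsilon$ and on the constants in \Cref{perturbationbdk}; correspondingly, one must verify those constants are $L$-uniform after the rescaling provided by \Cref{invariancesscale}. Managing the interplay between the openness step (which requires a $\mathcal{C}^{2,\alpha}$ functional-analytic setup) and the a priori regularity bounds is the place where the argument is most delicate.
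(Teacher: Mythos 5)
Your overall scheme (linear interpolation $H$ matching the new boundary data, the comparison principle for the $\mathcal{C}^0$ bound, Schauder-type difference estimates for the higher norms, and interpolation for the $3/m$ exponent) is the same as the paper's, but there is a genuine gap in the existence-and-regularity step: you pose the Dirichlet problem directly on the rectangle $\mathfrak{S}$ and then invoke \Cref{perturbationbdk} (and Schauder solvability for the openness step of the continuity method) with $\mathfrak{R}' = \mathfrak{S}$. Those results require $\mathfrak{R}'$ to have smooth boundary, and this is not a removable technicality: $\partial\mathfrak{S}$ has corners, and for the Dirichlet problem on a rectangle with boundary data $H_s|_{\partial\mathfrak{S}}$ the solution is generically \emph{not} $\mathcal{C}^{m-5}$ up to the corners (already for the Laplacian, smooth data on a right-angled corner produces $r^2\log r$-type singularities unless compatibility conditions hold, and your $H$ is not a solution near the corners, so no such compatibility is available). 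Hence the object you construct would in general fail to lie in $\mathcal{C}^{m-5}(\overline{\mathfrak{S}})$, and the boundary estimate you use to close the continuity argument is not applicable. The paper's proof avoids exactly this: after rescaling by $L$ it regularizes the coefficients $\mathfrak{d}_{ij}$ to a globally uniformly elliptic equation, solves on an \emph{enlarged} domain $\mathfrak{V}$ obtained by attaching smooth "caps" to the short ends of the rectangle (so that $\{0,1\}\times[1-L,L-1]$ lies in $\partial\mathfrak{V}$ and the corners of $\mathfrak{S}$ sit on smooth portions of $\partial\mathfrak{V}$), gets existence from \Cref{aijgradientfestimate}, and only afterwards recovers admissibility (hence the original equation) via the interpolation bound \Cref{fjderivativesestimate}. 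Some analogue of this enlargement is needed for your route as well.

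A second, related problem is your claim that the constants become $L$-independent "after the time rescaling" of \Cref{invariancesscale}. Rescaling normalizes the equation but turns $\mathfrak{S}$ into a $1\times 2(L-1)$ rectangle, and the constant in \Cref{perturbationbdk} depends on the domain; applied globally on this $L$-dependent domain it gives nothing uniform in $L$. Your interior argument for property 2 (balls of radius $\sim r/L$) is fine, but for property 3 and for the a priori bounds in the closedness step you need the paper's covering device: apply the boundary difference estimate \Cref{ellipticperturbation} on translates $\mathfrak{U}_s$ of a single fixed smooth domain, which is what makes the constants uniform in $L$. Finally, a smaller point: admissibility of $H$ (and the quantitative $\mathcal{C}^1$ control of $G_s-F$ needed to stay in $\Adm_{\varepsilon/2}$) does not follow from the stated $\mathcal{C}^0$ hypothesis $|g_i-f_i|\le\delta L^{-1}$ alone; you must interpolate against the $\mathcal{C}^m$ bounds on $f_i,g_i$ (as the paper does via \Cref{fjderivativesestimate}) to convert $\mathcal{C}^0$ smallness into $\mathcal{C}^1$ smallness.
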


\subsection{Proof of \Cref{equationcompareboundary}}

\label{ProofCompare1} 

In this section we establish \Cref{equationcompareboundary}. In what follows, we fix real numbers $\varepsilon > 0$, $L > 1$, and $L_0 \in \big[ \frac{L}{2B}, 2BL \big]$. We also denote the rectangular open set $\mathfrak{R}_0 = (0, 1) \times (-L_0, L_0)$ and let $F_1, F_2 \in \Adm_{\varepsilon} (\overline{\mathfrak{R}_0}) \cap \mathcal{C}^2 (\overline{\mathfrak{R}_0})$ denote two solutions to the equation \eqref{equationxtd}.

The next lemma bounds $\big| F_1 (z) - F_2 (z) \big|$, for a point $z \in \mathfrak{R}_0$ such that $F_1 = F_2$ on the part of $\partial \mathfrak{R}_0$ near $z$; we establish it in \Cref{ProofFExponential} below.

\begin{lem} 
	
	\label{gzgzm2}
	
	For any real number $B > 1$, there exist constants $c = c(\varepsilon, B) > 0$ and $C = C(\varepsilon, B) > 1$ such that the following holds. Fix a point $z = (t_0, x_0) \in \mathfrak{R}_0$, such that $[0, 1] \times [x_0 - 2L^{1/2}, x_0 + 2L^{1/2}] \subseteq \mathfrak{R}_0$. Let $A \le B$ be a real number, and assume the below three statements hold. 
	
	\begin{enumerate} 
		\item For any indices $i \in \{ 1, 2\}$ and $j, k \in \{ t, x \}$, and point $w \in \mathfrak{R}_0$, we have $\big| \nabla F_i (w) \big| \le B$ and $\big| \partial_j \partial_k F_i (w) \big| \le BL^{-1}$; we also have $\| F_i \|_{\mathcal{C}^5 (\mathfrak{R}_0)} - \| F_i \|_{\mathcal{C}^0 (\mathfrak{R}_0)} \le  B$.
		
		\item For any point $w \in \partial \mathfrak{R}_0$ with $|w - z| \le 2L^{1/2}$, we have $F_1 (w) = F_2 (w)$.
		
		\item For any point $w \in \partial \mathfrak{R}_0$, we have $\big| F_1 (w) - F_2 (w) \big| \le A$. 
	\end{enumerate} 
	
	\noindent Then, $\big| F_1 (z) - F_2 (z) \big| \le C ( L^{-1/8} A + e^{-cL^{1/8}})$. 
	
\end{lem}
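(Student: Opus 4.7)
The plan is to set $w := F_1 - F_2$, show that $w$ solves a linear uniformly elliptic equation on $\mathfrak{R}_0$, and then control $|w(z)|$ via a supersolution barrier combined with an iterative maximum-principle argument.

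\textbf{Linearization.} Subtracting the equations \eqref{equationxtd} for $F_1$ and $F_2$ and applying the mean value theorem to $\mathfrak{d}_{jk}(\nabla F_1)-\mathfrak{d}_{jk}(\nabla F_2)$, using the explicit form \eqref{uvd} of the coefficients together with the admissibility $\partial_y F_i \in [-\varepsilon^{-1}, -\varepsilon]$ and the bound $|\partial_j\partial_k F_i|\le BL^{-1}$, the function $w$ satisfies a linear equation
\[
\mathcal{L}w := \sum_{j,k \in \{t,y\}} a_{jk}(\xi)\,\partial_j\partial_k w(\xi) + \sum_{j \in \{t,y\}} b_j(\xi)\,\partial_j w(\xi) = 0, \qquad \xi \in \mathfrak{R}_0,
\]
with $a_{tt}\equiv 1$, $a_{ty}\equiv 0$, $a_{yy}(\xi)\in[\pi^2\varepsilon^4,\pi^2\varepsilon^{-4}]$, and $|b_j(\xi)|\le C_0 L^{-1}$ for some $C_0=C_0(\varepsilon,B)>0$. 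Hence $\mathcal{L}$ is uniformly elliptic with a drift of size $O(L^{-1})$, so the linear maximum principle applies.

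\textbf{Barrier and iteration.} Set $R := L^{1/8}$. The goal is to construct a nonnegative supersolution $\Psi \in \mathcal{C}^2(\overline{\mathfrak{R}_0})$ for $\mathcal{L}$ with $\Psi \ge A$ on $\partial\mathfrak{R}_0 \setminus \mathcal{B}_{2L^{1/2}}(z)$, $\Psi \ge 0$ on $\partial\mathfrak{R}_0 \cap \mathcal{B}_{2L^{1/2}}(z)$, and $\Psi(z) \le C(L^{-1/8}A + e^{-cR})$; comparing $\pm w$ against $\Psi$ via the maximum principle then yields the lemma. I will build $\Psi$ iteratively through a nested sequence of rectangles $\mathfrak{R}_0 \supset \mathfrak{R}^{(1)} \supset \cdots \supset \mathfrak{R}^{(N)} \ni z$ with $N \sim R$, where each $\mathfrak{R}^{(k+1)}$ is obtained from $\mathfrak{R}^{(k)}$ by removing a thin strip (of width $\sim L^{1/2}/N = L^{3/8}$) near its two short sides at $y = x_0 \pm (2L^{1/2} - k L^{3/8})$. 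On each annular region $\mathfrak{R}^{(k)} \setminus \mathfrak{R}^{(k+1)}$, the portion of the boundary on $\{t=0\}\cup\{t=1\}$ still lies in $\mathcal{B}_{2L^{1/2}}(z)$ so that $w$ vanishes there, while $|w| \le M_k := \sup_{\overline{\mathfrak{R}^{(k)}}} |w|$ on the two short sides. An explicit local supersolution of the form $\alpha\cdot\cosh(\lambda(y-x_0))$ combined with a $\cos$-factor in $t$ (with $\lambda$ chosen so that the principal part $a_{yy}\lambda^2$ dominates the drift contribution of size $\lambda L^{-1}$, and with appropriate normalization on the short boundary) together with the maximum principle gives a one-step contraction $M_{k+1} \le (1-\kappa)M_k + \delta$ for some $\kappa = \kappa(\varepsilon, B) > 0$ and error $\delta = O(e^{-cR})$. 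Iterating $N \sim R$ times and tuning the step geometry, one extracts $M_N \le C(L^{-1/8}A + e^{-cL^{1/8}})$; since $z \in \mathfrak{R}^{(N)}$, this bounds $|w(z)|$ as claimed.

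\textbf{Main obstacle.} The principal difficulty is the explicit construction of the local supersolution barrier and the quantitative per-step contraction $M_{k+1}\le (1-\kappa)M_k + \delta$: one must carefully design the barrier on rectangles of aspect ratio $L^{1/2}:1$ so that it is an $\mathcal{L}$-supersolution despite the lower-order drift of size $O(L^{-1})$, while simultaneously dominating the boundary data on the short sides and vanishing on the long sides. The specific exponent $1/8$ reflects the optimization between the number of iteration steps $N$ (limited by how thin a strip one can shave off while retaining the Hopf-type improvement) and the intrinsic loss from the drift terms in each step; a cruder choice of barrier would yield a weaker exponent but still suffice for the applications elsewhere in the paper.
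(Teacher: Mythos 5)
Your strategy is sound and genuinely different from the paper's. The paper proves \Cref{gzgzm2} probabilistically: it couples the two diffusions \eqref{wyi} attached to $F_1$ and $F_2$ through the same Brownian motion, uses that $F_i\big(Y_i(s)\big)$ is a martingale (\Cref{gexpectationtt0}) together with the exit-time bounds of \Cref{testimate0}, and keeps the coupled paths within $O\big(L^{3/8}(A+e^{-cL^{1/8}})\big)$ of each other by a Burkholder--Davis--Gundy plus Gronwall argument with moment parameter $p=L^{1/8}$; the factor $L^{-1/8}$ and the additive $e^{-cL^{1/8}}$ in the statement are artifacts of precisely that step, and comparing the diffusion coefficients there is what requires the $\mathcal{C}^5$ hypothesis and \Cref{perturbationbdk} (see \eqref{f1y2f2y2}). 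You instead linearize: since only $\mathfrak{d}_{yy}$ in \eqref{uvd} depends on the gradient, $w=F_1-F_2$ solves $\partial_t^2 w+\pi^2(\partial_y F_1)^{-4}\partial_y^2 w+b_y\partial_y w=0$ with $a_{yy}\in[\pi^2\varepsilon^4,\pi^2\varepsilon^{-4}]$ and $|b_y|\le 4\pi^2\varepsilon^{-5}BL^{-1}$ (mean value theorem plus the Hessian bound), and then argue by barriers and the weak maximum principle; this uses neither the gradient bound, nor the $\mathcal{C}^5$ bound, nor \Cref{perturbationbdk}, and it is entirely deterministic. Your execution is more elaborate than necessary, and your heuristic for the exponent $1/8$ is off: a single global supersolution $\Psi(t,x)=\frac{A}{\cos(\mu/2)}\,\frac{\cosh(\lambda(x-x_0))}{\cosh(\lambda\ell)}\,\cos\big(\mu(t-\tfrac12)\big)$, with $\ell$ slightly below $2L^{1/2}$ (so the long sides of the strip $\{|x-x_0|\le\ell\}$, corners included, lie in $\mathcal{B}_{2L^{1/2}}(z)$ where $w$ vanishes), a fixed $\mu\in(0,\pi)$, and a small constant $\lambda=\lambda(\varepsilon)$ chosen so that $\pi^2\varepsilon^{-4}\lambda^2<\mu^2$ while the $O(L^{-1})$ drift terms are negligible for large $L$, dominates $\pm w$ on all of $\partial\mathfrak{R}_0$ and satisfies $\mathcal{L}\Psi\le 0$; the maximum principle then gives $|F_1(z)-F_2(z)|\le CAe^{-cL^{1/2}}$ in one stroke, strictly stronger than the stated bound, with no iteration and no $e^{-cL^{1/8}}$ error (which in a deterministic scheme has no source anyway -- it enters the paper's proof only through failure-probability events), and the case of bounded $L$ is trivial via \Cref{maximumboundary}. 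Two bookkeeping points if you keep the iteration: the first nested rectangle must already sit inside the strip of half-width $\approx 2L^{1/2}$ about $x_0$ (the long sides of $\mathfrak{R}_0$ itself do not carry zero data), with its short-side data bounded by $A$ via the global maximum principle on $\mathfrak{R}_0$ and assumption (3); and the per-step gain from the barrier is in fact a factor $Ce^{-\lambda L^{3/8}}$, much better than $(1-\kappa)$, so the claimed contraction certainly holds.
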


We can quickly prove \Cref{equationcompareboundary} by repeated application of \Cref{gzgzm2}.

\begin{proof}[Proof of \Cref{equationcompareboundary}]
	
	We may assume throughout this proof that $L$ is sufficiently large (in a way only dependent on $\varepsilon$ and $B$), for otherwise the lemma holds by altering the values of $c$ and $C$ (using the fact that $\big| F_1 (t, x) - F_2 (t, x) \big| \le 2B$).  Define the rescaled domain $\widehat{\mathfrak{R}} = L \cdot \mathfrak{R} = (0, 1) \times (-L, L)$ and, for each index $i \in \{ 1, 2 \}$, set $\widehat{F}_i \in \mathcal{C}^2 (\widehat{\mathfrak{R}})$ by for each $w \in [0, 1] \times [-L, L]$ letting 
	\begin{flalign*} 
		\widehat{F}_i (w) = L\cdot F_i (L^{-1} w), \quad \text{so} \quad \nabla \widehat{F}_i (w) = \nabla \widehat{F}_i (L^{-1} w), \quad \text{and}\quad  \partial_j \partial_k \widehat{F}_i (w)  = L^{-1} \cdot \partial_i \partial_j F_i (L^{-1} w),
	\end{flalign*} 
	
	\noindent for any indices $j, k \in \{ t, x \}$. In particular, the $\widehat{F}_i$ continue to solve \eqref{equationxtd}.
	
	It therefore suffices to show that $\big| \widehat{F}_i (t, x) - \widehat{F}_2 (t, x) \big| \le C e^{-c L^{1/8}}$ for each $(t, x) \in [0, 1] \times \big[ (r-1) L, (1-r) L \big]$. Denoting the open rectangle $\widehat{\mathfrak{R}}_m = (0, 1) \times (mL^{2/3} - L, L - mL^{2/3})$ for any integer $m \ge 1$, we will more generaly show by induction on $m \in \llbracket 0, L^{1/8} \rrbracket$ that there exist constants $c = c(\varepsilon, B) > 0$ and $C = C(\varepsilon, B) > 1$ such that
	\begin{flalign}
		\label{f1f2m} 
		\big| \widehat{F}_1 (t, x) - \widehat{F}_2 (t, x) \big| \le 2 B L^{-m/9} + C m e^{-cL^{1/8}}, \quad \text{for any $(t, x) \in \widehat{\mathfrak{R}}_m$},
	\end{flalign} 
	
	\noindent from which the lemma would follow upon taking $m = \lfloor L^{1/8} \rfloor$; see the left side of \Cref{f:StochasticProcess}.
	
	Let us first show that the $\widehat{F}_i$ satisfy the first and third assumptions of \Cref{gzgzm2} (with the $A$ there equal to $2B$ here). Since $\| F_i \|_{\mathcal{C}^5 (\overline{\mathfrak{R}})} \le B$, we have that $\big| \nabla \widehat{F}_i (w) \big| \le B$ and $\big| \partial_j \partial_k \widehat{F}_i (w) \big| \le BL^{-1}$ for each $w \in \widehat{\mathfrak{R}}$ and that $\| \widehat{F}_i \|_{\mathcal{C}^5 (\hat{\mathfrak{R}}_m)} - \| \widehat{F}_i \|_{\mathcal{C}^0 (\hat{\mathfrak{R}}_m)} \le B$; this verifies the first assumption. Moreover, since $F_1 (t, x) = F_2 (t, x)$ for $t \in \{ 0, L^{-1} \}$, since $\big| \nabla F_i (w) \big| \le B$, and since $\mathfrak{R} = (0, L^{-1}) \times (-1, 1)$ we have $\big| F_1 (w) - F_2 (w) \big| \le 2BL^{-1}$ for each $w \in \overline{\mathfrak{R}}$. Hence, $\big| \widehat{F}_1 (w) - \widehat{F}_2 (w) \big| \le 2B$ for each $w \in \widehat{\mathfrak{R}}$, verifying the third assumption of \Cref{gzgzm2}. This in particular confirms \eqref{f1f2m} at $m = 0$. 
	
	We therefore assume in what follows that \eqref{f1f2m} holds at some value of $m \in \llbracket 0, L^{1/8} - 1 \rrbracket$ and verify that it also holds with $m$ replaced by $m+1$. For any integer $k \in \llbracket 0, L^{1/8} \rrbracket$, set 
	\begin{flalign*} 
		\varsigma_k = \sup_{w \in \mathfrak{R}_k} \big| \widehat{F}_1 (w) - \widehat{F}_2 (w) \big|.
	\end{flalign*} 
	
	\noindent Then the second assumption of \Cref{gzgzm2} applies, with the $(\mathfrak{R}_0, F_1, F_2, z)$ there given by the $(\widehat{\mathfrak{R}}_m, \widehat{F}_1, \widehat{F}_2, z)$ with any $z \in \widehat{\mathfrak{R}}_{m+1}$. Since $L - (m+1) L^{2/3} \ge \frac{L}{2B} + 2L^{1/2}$ for $m \in \llbracket 0, L^{1/8} \rrbracket$, \Cref{gzgzm2} therefore yields constants $c_1 = c_1 (\varepsilon, B) > 0$ and $C_1 = C_1 (\varepsilon, B) > 1$ such that
	\begin{flalign*} 
		\varsigma_{m+1} = \displaystyle\sup_{w \in \widehat{\mathfrak{R}}_{m+1}} \big| \widehat{F}_1 (w) - \widehat{F}_2 (w) \big| & \le C_1 L^{-1/8} \cdot \displaystyle\sup_{w \in \widehat{\mathfrak{R}}_m} \big| \widehat{F}_1 (w) - \widehat{F}_2 (w) \big| + C_1 e^{-c_1 L^{1/8}}  \\
		& = C_1 L^{-1/8} \varsigma_m + C_1 e^{-c_1 L^{1/8}} \le L^{-1/9} \varsigma_m + C_1 e^{-c_1 L^{1/8}},
	\end{flalign*} 
	
	\noindent where in the last inequality we used the fact that $L$ is sufficiently large. Applying this with \eqref{f1f2m}, with the $(c, C)$ there equal to $(c_1, C_1)$ here yields
	\begin{flalign*} 
		\varsigma_{m+1} \le 2BL^{-(m+1)/9} + C_1 m L^{-1/9} e^{-c_1 L^{1/8}} + C_1 e^{-c_1 L^{1/8}} \le 2BL^{-(m+1)/9} + C_1 (m+1) e^{-c_1 L^{1/8}}.
	\end{flalign*} 
	
	\noindent This verifies \eqref{f1f2m} (with the $m$ there replaced by $m+1$ here), establishing the proposition.
\end{proof}

To establish \Cref{gzgzm2}, we use a probabilistic interpretation of the $F_i$. To that end, for any real numbers $u \in \mathbb{R}$ and $v \in [-\varepsilon^{-1}, \varepsilon]$, we recall from \Cref{eigenvalues2} the $2 \times 2$ matrix $\bm{D} = \bm{D} (u, v)$. Let $W : \mathbb{R}_{\ge 0} \rightarrow \mathbb{R}^2$ denote a standard two-dimensional Brownian motion. For any point $z \in \mathfrak{R}_0$ and index $i \in \{ 1, 2 \}$, define the process $Y_i = Y_i^z : [0, \tau_i] \rightarrow \mathbb{R}^2$ through the stochastic differential equation
\begin{align}
	\label{wyi}
	dY_i (s) = \bm{D} \Big( \nabla F_i \big( Y_i (s) \big) \Big)^{1/2} \cdot dW(s), \qquad \text{with initial data $Y_i (0) = z$},
\end{align}

\noindent so that $Y_1$ and $Y_2$ are coupled through the same Brownian motion $W(s)$; see the right side of \Cref{f:StochasticProcess} for a depiction. Here, the hitting time $\tau_i = \tau_{i;z}$ is given by 
\begin{flalign*}
	\tau_i = \inf \big\{ t \ge 0 : Y_i (s) \in \partial \mathfrak{R}_0 \big\}.
\end{flalign*}

\noindent That the stochastic differential equation \eqref{wyi} is well-defined follows from the fact that the coefficients $\bm{D} \big( \nabla F_i (t, x) \big)^{1/2}$ of the equation are uniformly Lipschitz (as $F_i \in \mathcal{C}^2 (\overline{\mathfrak{R}_0})$).

	\begin{figure}
	\center
\includegraphics[width=0.8\textwidth]{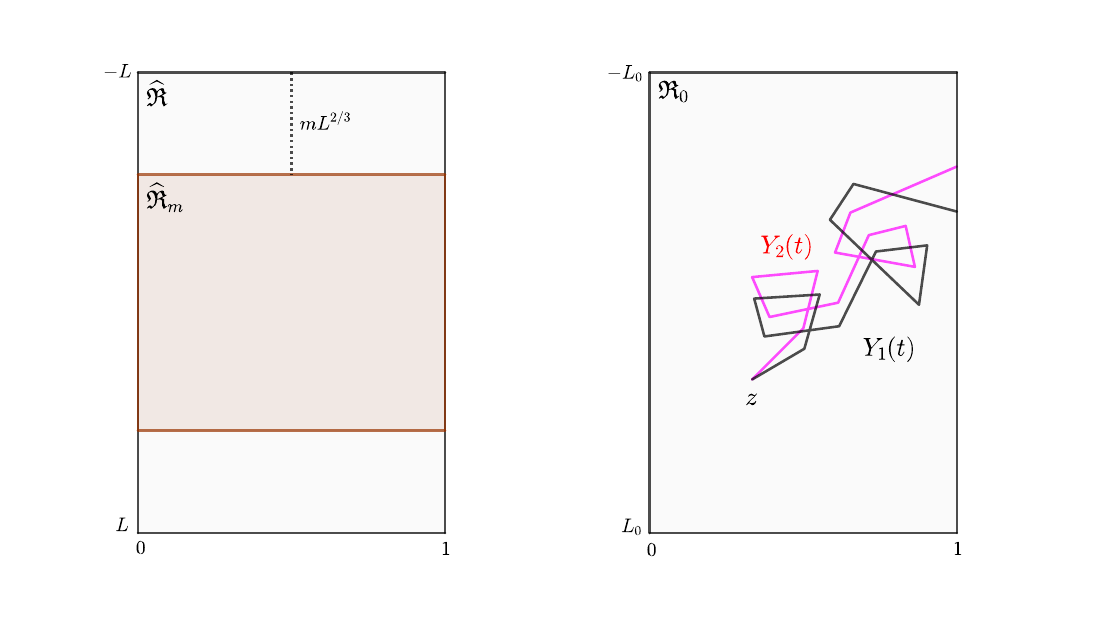}

\caption{Shown to the left is the rectangle $\widehat{\mathfrak R}_m$ used in the proof of \Cref{equationcompareboundary}. Shown to the right are the coupled stochastic processes $Y_1(s)$ and $Y_2(s)$ as in \eqref{wyi}.}
\label{f:StochasticProcess}
	\end{figure}

\begin{lem} 
	
	\label{gexpectationtt0}
	
	For each index $i \in \{ 1 ,2 \}$, the process $F_i \big( Y_i (s) \big)$ is a martingale. In particular,
	\begin{flalign}
		\label{gwt}
		dF_i \big( Y_i (s) \big) = \nabla F_i \big( Y_i (s) \big) \cdot \bm{D} \Big( \nabla F_i \big( Y_i (s) \big) \Big)^{1/2} \cdot d W(s).
	\end{flalign}
	
\end{lem}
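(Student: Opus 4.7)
The plan is to apply Itô's formula directly to the process $F_i(Y_i(s))$ and observe that the drift term is precisely $\tfrac{1}{2}$ times the left-hand side of \eqref{equationxtd} evaluated at $Y_i(s)$, which vanishes since $F_i$ solves the equation on $\mathfrak{R}_0$.

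More concretely, write $Y_i = (Y_i^t, Y_i^y)$. Since $\bm{D}(u,v) = \mathrm{diag}(1, \pi^2 v^{-4})$ is nonnegative definite with continuous entries on the relevant region (using $\partial_y F_i \le -\varepsilon < 0$ so that $\pi^2 (\partial_y F_i)^{-4}$ is bounded), its square root $\bm{D}^{1/2}$ is well-defined and Lipschitz in the coefficients $\nabla F_i$, so \eqref{wyi} admits a unique strong solution up to the hitting time $\tau_i$. The quadratic covariation then satisfies
\begin{flalign*}
d\langle Y_i^j, Y_i^k\rangle_s = \sum_a \bm{D}^{1/2}\big(\nabla F_i(Y_i(s))\big)_{ja}\, \bm{D}^{1/2}\big(\nabla F_i(Y_i(s))\big)_{ka}\, ds = \mathfrak{d}_{jk}\big(\nabla F_i(Y_i(s))\big)\, ds,
\end{flalign*}
using $\bm{D}^{1/2}(\bm{D}^{1/2})^{T} = \bm{D}$. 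Itô's formula applied to $F_i \in \mathcal{C}^2(\overline{\mathfrak{R}_0})$ and $Y_i$ then yields
\begin{flalign*}
dF_i\big(Y_i(s)\big) = \nabla F_i\big(Y_i(s)\big) \cdot dY_i(s) + \tfrac{1}{2} \sum_{j,k \in \{t,y\}} \mathfrak{d}_{jk}\big(\nabla F_i(Y_i(s))\big)\, \partial_j \partial_k F_i\big(Y_i(s)\big)\, ds.
\end{flalign*}

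The second sum is exactly the left-hand side of \eqref{equationxtd} evaluated at $Y_i(s) \in \mathfrak{R}_0$ (up to the $\tfrac{1}{2}$ factor), and hence vanishes because $F_i$ solves \eqref{equationxtd}. Substituting the definition \eqref{wyi} of $dY_i$ into the remaining term gives \eqref{gwt} directly.

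It remains to upgrade the resulting local martingale $F_i(Y_i(s \wedge \tau_i))$ to a true martingale. Since $\overline{\mathfrak{R}_0}$ is bounded and $F_i \in \mathcal{C}^2(\overline{\mathfrak{R}_0}) \cap \Adm_\varepsilon(\mathfrak{R}_0)$, both $\nabla F_i$ and $\bm{D}^{1/2}(\nabla F_i)$ are uniformly bounded along the trajectory of $Y_i$ up to time $\tau_i$; hence the integrand in the stochastic integral from \eqref{gwt} is bounded, so the stochastic integral is a genuine $L^2$ martingale. No step here looks genuinely hard; the only mild subtlety is ensuring that the Lipschitz/boundedness hypotheses needed for existence of $Y_i$ and applicability of Itô's formula are satisfied globally on $\overline{\mathfrak{R}_0}$, which is guaranteed by $F_i \in \mathcal{C}^2(\overline{\mathfrak{R}_0}) \cap \Adm_\varepsilon(\mathfrak{R}_0)$.
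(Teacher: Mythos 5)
Your proof is correct and follows essentially the same route as the paper: apply Itô's formula to $F_i(Y_i(s))$, identify the drift as $\tfrac12\sum_{j,k}\mathfrak{d}_{jk}(\nabla F_i)\partial_j\partial_k F_i$, and kill it using the fact that $F_i$ solves \eqref{equationxtd} along the trajectory. The extra remarks you add (Lipschitz coefficients ensuring a strong solution, and boundedness of $\nabla F_i$ and $\bm{D}^{1/2}(\nabla F_i)$ on $\overline{\mathfrak{R}_0}$ to upgrade the local martingale to a true martingale) are correct refinements of points the paper treats implicitly.
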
 

\begin{proof} 
	
	Since $\tau_i = 0$ if $z \in \partial \mathfrak{R}_0$, we may assume that $z \in \mathfrak{R}_0$. Then, denoting the Hessian of $F_i$ by 
	\begin{flalign*}
		\bm{H} (z) = \left[ \begin{array}{cc} \partial_{tt} F_i (z) & \partial_{tx} F_i (z) \\ \partial_{xt} F_i (z) & \partial_{xx} F_i (z) \end{array} \right],
	\end{flalign*}
	
	\noindent we have by It\^{o}'s lemma that
	\begin{align*}
		d F_i \big( Y_i (s) \big) &=\nabla F_i \big( Y_i (s) \big) \cdot \bm{D} \Big( \nabla F_i \big( Y_i (s) \big) \Big)^{1/2} \cdot d W (s) + \frac{1}{2} \Tr \bigg(  \bm{D} \Big( \nabla F_i \big( Y_i (s) \big) \Big) \bm{H} \Big( F_i \big( Y_i (s) \big) \Big) \bigg) \\
		&=\nabla F_i \big( Y_i (s) \big) \cdot \bm{D} \Big( \nabla F_i \big( Y_i (s) \big) \Big)^{1/2} \cdot d W(s),
	\end{align*}
	
	\noindent where in the last equality we used the fact that $F_i$ satisfies \eqref{equationxtd} (with the $(t, y)$ there equal to $Y_i (s)$ here). This yields \eqref{gwt} and thus that $F_i \big( Y_i (s) \big)$ is a martingale.
\end{proof} 

Hence, we must analyze properties of the process $Y_i$, to which end we have the following lemma.

\begin{lem} 
	
	\label{testimate0} 
	
	There exists a constant $c = c (\varepsilon) > 0$ such that, for any real number $A > 1$, index $i \in \{ 1, 2 \}$, and point $z = (t_0, x_0) \in \mathfrak{R}_0$, we have
	\begin{flalign}
		\label{s2} 
		\begin{aligned}
			& \mathbb{P} [ \tau_{i;z} \ge A] \le c^{-1} e^{-cA}; \qquad \mathbb{P} \bigg[ \displaystyle\sup_{s \in [0, \tau_{i;z}]} \big| Y_i (s) - z \big| \ge A \Big] \le c^{-1} e^{-cA}.
		\end{aligned}
	\end{flalign}

\end{lem}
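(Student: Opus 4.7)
The plan is to exploit the fact that $\bm{D}(u,v)$ is diagonal. Writing $Y_i(s) = \big(Y_{i,1}(s), Y_{i,2}(s)\big)$ and $W(s) = \big(W_1(s), W_2(s)\big)$, the SDE \eqref{wyi} decouples into
\begin{flalign*}
dY_{i,1}(s) = dW_1(s), \qquad dY_{i,2}(s) = \pi \cdot \big(\partial_x F_i(Y_i(s))\big)^{-2} dW_2(s),
\end{flalign*}
since the diagonal entries of $\bm{D}^{1/2}$ are $1$ and $\pi (\partial_x F_i)^{-2}$. In particular, $Y_{i,1}(s) = t_0 + W_1(s)$ is a standard one-dimensional Brownian motion, while $Y_{i,2}(s) - x_0$ is a continuous local martingale whose quadratic variation satisfies $\pi^2 \varepsilon^4 \le d\langle Y_{i,2}\rangle_s / ds \le \pi^2 \varepsilon^{-4}$, since $F_i \in \Adm_\varepsilon(\mathfrak{R}_0)$ gives $\partial_x F_i \in (-\varepsilon^{-1},-\varepsilon)$.

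For the first bound, set $\tau_1 = \inf\{s \ge 0 : Y_{i,1}(s) \notin (0,1)\}$; because $\mathfrak{R}_0 \subseteq (0,1) \times \mathbb{R}$, we have $\tau_{i;z} \le \tau_1$. Since the probability that a standard Brownian motion starting in $(0,1)$ remains in $(0,1)$ for one unit of time is at most some $p \in (0,1)$ independent of its starting point (as $(0,1)$ has unit length), the strong Markov property applied iteratively yields $\mathbb{P}[\tau_1 \ge k] \le p^k$ for every integer $k \ge 1$. Hence $\mathbb{P}[\tau_{i;z} \ge A] \le p^{\lfloor A \rfloor} \le c^{-1} e^{-cA}$ for a suitable $c > 0$.

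For the second bound, observe that $|Y_{i,1}(s) - t_0| < 1$ for all $s \le \tau_{i;z}$, so if $A > 2$ and $|Y_i(s) - z| \ge A$ then $|Y_{i,2}(s) - x_0| \ge A/2$. A union bound gives
\begin{flalign*}
\mathbb{P}\bigg[\sup_{s \in [0, \tau_{i;z}]} |Y_i(s) - z| \ge A\bigg] \le \mathbb{P}[\tau_{i;z} \ge A] + \mathbb{P}\bigg[\sup_{s \in [0, A]} |Y_{i,2}(s) - x_0| \ge A/2\bigg].
\end{flalign*}
The first term is controlled by the first claim. For the second, since $\langle Y_{i,2}\rangle_A \le \pi^2 \varepsilon^{-4} A$, the exponential supermartingale $\exp\big(\lambda(Y_{i,2}(s) - x_0) - \tfrac{\lambda^2}{2} \langle Y_{i,2}\rangle_s\big)$ combined with Doob's inequality yields $\mathbb{P}\big[\sup_{s \le A}(Y_{i,2}(s) - x_0) \ge A/2\big] \le \exp\big(-\lambda A/2 + \lambda^2 \pi^2 \varepsilon^{-4} A/2\big)$, which optimized at $\lambda = \varepsilon^4/(2\pi^2)$ gives a bound of the form $e^{-c' A}$; the same estimate applies to $-Y_{i,2}$. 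Combining both estimates yields the second claim.

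The argument is essentially routine, because the diagonality of $\bm{D}$ and the identity $\mathfrak{d}_{tt} = 1$ reduce the hitting-time analysis to a one-dimensional standard Brownian motion, bypassing any dependence on $F_i$. The only mildly delicate step is verifying that the martingale tail bound for $Y_{i,2}$ is strong enough to balance the polynomial factor from $\sqrt{A}$-type Gaussian concentration; this is handled by taking the time horizon in the splitting to be $A$ itself (rather than $\sqrt{A}$), so that both error terms decay at an exponential rate in $A$.
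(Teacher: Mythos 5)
Your proof is correct and follows essentially the same route as the paper: the first coordinate of $Y_i$ equals $t_0 + W_1(s)$ and is confined to a strip of unit width, giving the exponential tail for $\tau_{i;z}$, and the second bound follows by splitting on $\{\tau_{i;z} \ge A\}$ and using the uniform ellipticity bound on $\bm{D}\big(\nabla F_i\big)$. The only cosmetic difference is that you control the second coordinate with an exponential supermartingale/Doob estimate (which, to be precise, should be applied to the process stopped at $\tau_{i;z}$, where the quadratic-variation bound certainly holds), whereas the paper invokes the time-change representation of $Y_i$ and a Gaussian bound on $\sup_{s \in [0,1]} |W(s)|$; both are routine and yield the same $e^{-cA}$ decay.
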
 	 

\begin{proof}
	
	Denote $W(s) = \big( W_1 (s), W_2 (s) \big) \in \mathbb{R}$, so that $W_1, W_2 : \mathbb{R}_{\ge 0} \rightarrow \mathbb{R}$ are two independent standard Brownian motions; also let $Y_i (s) = \big( Y_{i; 1} (s), Y_{i;2} (s) \big) \in \mathbb{R}^2$. Then, by \eqref{wyi} and the fact that $\bm{D}(u, v) = \diag (1, \pi^2 v^{-4})$, we have $Y_{i; 1} (s) - t_0 = W_1 (s)$, for each $s \in [0, \tau_{i;z}]$. In particular we have for any $s\leq \min\{\tau_{1;z}, \tau_{2;z}\}$ that
	\begin{align}\label{e:firstcor}
	Y_{1;1}(s)=Y_{2;1}(s)=W_1(s) +t_0.
	\end{align}
	Since $\mathfrak{R}_0 \subset [0, 1] \times \mathbb{R}$, it follows that $\tau_{i;z} \ge A$ can only hold if $W_i (s) = Y_{i;1} (s) \in [-1, 1]$, for each $s \in [0, A]$. Thus,
	\begin{flalign}
		\label{asw}
		\begin{aligned}
			\mathbb{P} [ \tau_{i;z} \ge A] & \le \mathbb{P} \Bigg[ \bigcap_{s \in [0, A]} \big\{ W_1 (s) \in [-1, 1] \big\} \Bigg] = \mathbb{P} \Bigg[ \bigcap_{k=1}^{\lfloor A \rfloor} \Big\{ \big| W_1 (k) - W_1 (k-1) \big| \le 2 \Big\} \Bigg].
		\end{aligned} 
	\end{flalign}
	
	\noindent Now observe that there exists a constant $c_1 > 1$ such that $\mathbb{P} \big[ W_1 (k) - W_1 (k-1) \le 2 \big] < 1-c_1$, for any $k \ge 1$. Hence,
	\begin{flalign*} 
		\mathbb{P} \Bigg[ \bigcap_{k=1}^{\lfloor A \rfloor} & \Big\{ \big| W_1 (k) - W_1 (k - 1) \big| \le 2 \Big\} \Bigg] = \displaystyle\prod_{k=1}^{\lfloor A \rfloor} \mathbb{P} \Big[ \big| W_1 (k) - W_1 (k-1) \big| \le 2 \Big] \le  (1-c_1)^{\lfloor A \rfloor},
	\end{flalign*} 
	
	\noindent which together with \eqref{asw} yields the first bound in \eqref{s2}.
	
	Next, since $F_i \in \Adm_{\varepsilon} (\mathfrak{R}_0)$, we have $\partial_x F_i (t, x) \in [-\varepsilon^{-1}, -\varepsilon]$ for each $(t, x) \in \mathfrak{R}_0$; it follows that $\big| \bm{D} \big(\nabla F_i (t, x) \big)^{1/2}  \big| \le 5 \varepsilon^{-2}$ for each $(t, x) \in \mathfrak{R}_0$. By the definition \eqref{wyi} of $Y_i$ (and the fact that $Y_i$ is a time-changed Brownian motion by \cite[Theorem 3.4.6]{MSC}), this yields  
	\begin{flalign*}
		\mathbb{P} \bigg[ \displaystyle\sup_{s \in [0, \tau_i]} \big| Y_i (s) - z \big| \ge A \bigg] & \le \mathbb{P} \bigg[ \displaystyle\sup_{s \in [0, A]} \big| W(s) \big| \ge 10 \varepsilon^{-2} A \bigg] + \mathbb{P} [\tau_i \ge A] \\ 
		& \le \mathbb{P} \bigg[ \displaystyle\sup_{s \in [0, 1]} \big| W(s) \big| \ge 10 \varepsilon^{-2} A^{1/2} \bigg] + c^{-1} e^{-cA},
	\end{flalign*}
	
	\noindent where in the last inequality we used the first bound in \eqref{s2}. This, together with the fact (see \cite[Chapter 4, Equation (3.40)]{MSC}) that there is a constant $c_2 = c_2 (\varepsilon) > 0$ such that $\sup_{s \in [0, 1]} \big| W(s) \big| \le 10 \varepsilon^{-2} A^{1/2}$ holds with probability at least $1 - c_2^{-1} e^{-c_2 A}$, implies the second bound in \eqref{s2}.	
\end{proof}

\subsection{Proof  \Cref{gzgzm2}}

\label{ProofFExponential}

In this section we establish \Cref{gzgzm2}.

\begin{proof}[Proof of \Cref{gzgzm2}]
	
	We may assume throughout this proof that $L > 10$ (for otherwise taking the constant $C = 2$ in the lemma gives $\big| F_1 (z) - F_2 (z) \big| \le \sup_{z' \in \partial \mathfrak{R}_0} \big| F_1 (z') - F_2 (z') \big| \le A \le C L^{-1/8} A$, by \Cref{maximumboundary}). Fix $M = L^{1/4}$; abbreviate $\tau_i = \tau_{i;z}$ for each $i \in \{ 1, 2 \}$; set $\tau = \min \{ \tau_1, \tau_2 \}$; and define the event 
	\begin{flalign}
		\label{aa01} 
		&\mathscr{A} = \{\tau \le M \} \cap \bigg\{ \displaystyle\max_{i \in \{ 1, 2 \}} \displaystyle\sup_{s \in [0, \tau_i]} \big| Y_i (s) - z \big| \le M \bigg\}, \qquad \text{so that $\mathbb{P} \big[ \mathscr{A}^{\complement} \big] \le c_1^{-1} e^{-c_1 M}$},
	\end{flalign}
	
	\noindent for some constant $c_1 = c_1 (\varepsilon) > 0$, by \Cref{testimate0}. Denote $W(s) = \big( W_1 (s), W_2 (s) \big)$ and $Y_i (s) = \big( Y_{i; 1} (s), Y_{i; 2} (s) \big)$, for each index $i \in \{ 1, 2 \}$ and real number $s \ge 0$. Recall from \eqref{e:firstcor} that we have $Y_{1; 1} (s) = W_1 (s) + t_0 = Y_{2; 1} (s)$ for each $s \in [0, \tau]$. Since on the event $\mathscr{A}$ we have $Y_{i_0} (\tau) \in \partial \mathfrak{R}_0$ for some $i_0 \in \{ 1, 2 \}$ while $\big| Y_{i_0; 2} (\tau) - x_0 \big| \le L^{1/4} \le L^{1/2}$, and since $(0, 1) \times [x_0 - 2L^{1/2}, x_0 + 2L^{1/2}] \subseteq \mathfrak{R}_0$, it follows that $Y_{i_0; 1} (\tau) \in \{ 0, 1 \}$ (that is, the $t$-coordinate of $Y_{i_0} (\tau)$ causes it to be on $\partial \mathfrak{R}_0$). Hence, 
	\begin{flalign}
		\label{y11y21a} 
		Y_{1, 1} (\tau) = Y_{2, 1} (\tau) \in \{ 0, 1 \}, \quad \text{and thus} \quad \tau_1 = \tau_2, \quad \text{on the event $\mathscr{A}$}.
	\end{flalign}

	By \eqref{wyi}, we have 
	\begin{flalign}
		\label{y1y2w} 
		d \big( Y_1 (t) - Y_2 (t) \big) &= \bigg( \bm{D} \Big( \nabla F_1 \big( Y_1 (t) \big) \Big)^{1/2} - \bm{D} \Big( \nabla F_2 \big( Y_2 (t) \big) \Big)^{1/2} \bigg) \cdot d W(t).
	\end{flalign}
	
	\noindent Thus, for any real numbers $t > 0$ and $p > 1$, the Burkholder--Davis--Gundy inequality \cite[Theorem 3.2]{DFIM} (see also \cite[Theorem 11.2.1]{PTIIM}) yields 
	\begin{flalign}
		\label{integraly1y2} 
		\mathbb{E} \bigg[ \displaystyle\sup_{s \in [0, t]} \big| Y_1 (s) - Y_2 (s) \big|^{2p} \bigg] \le (72p)^{2p} \cdot \mathbb{E} \Bigg[ \bigg(\displaystyle\int_0^t \Big\| \bm{D} \big( \nabla F_1 (Y_1 (s) \big)^{1/2} - \bm{D} \big( \nabla F_2 (Y_2 (s)) \big)^{1/2} \Big\| ds \bigg)^{2p} \Bigg].
	\end{flalign}
	
	To bound the integral on the right side of \eqref{integraly1y2}, first observe that since $\partial_x F_1 (w), \partial_x F_2 (w) \in [-\varepsilon^{-1}, -\varepsilon]$ that the $\bm{D} (\nabla F_i)$ are uniformly smooth (in a way dependent on $\varepsilon$) in $\nabla F_i$. Thus, there exists a constant $C_1 = C_1 (\varepsilon) > 0$ such that 
	\begin{flalign}
		\label{dg1g2} 
		\begin{aligned}
			\bigg\| \bm{D} \Big( \nabla F_1 \big( & Y_1 (t) \big) \Big)^{1/2} - \bm{D} \Big( \nabla F_2 \big( Y_2 (t) \big) \Big)^{1/2} \bigg\| \\
			& \le  C_1  \Big| \nabla F_1 \big( Y_1 (t) \big) - \nabla F_2 \big( Y_2 (t) \big) \Big| \\
			& \le C_1 \bigg( \Big| \nabla F_1 \big( Y_1 (t) \big) - \nabla F_1 \big( Y_2 (t) \big) \Big| + \Big| \nabla F_1 \big( Y_2 (t) \big) - \nabla F_2 \big( Y_2 (t) \big) \Big| \bigg) \\
			& \le  C_1 BL^{-1} \big| Y_1 (t) - Y_2 (t) \big| + C_1 \Big| \nabla F_1 \big( Y_2 (t) \big) - \nabla F_2 \big( Y_2 (t) \big) \Big|,
		\end{aligned}
	\end{flalign} 
	
	\noindent where in the last bound we used the first assumption of the lemma $\big| \partial_j \partial_k F_i (w) \big| \le BL^{-1}$. Since we deterministically have $Y_1 (t) - Y_2 (t) \le 4L + 1 \le 5L$ (as $\mathfrak{R}_0 \subseteq [0, 1] \times [-2L, 2L]$) and $\nabla F_i (w) \in [-\varepsilon^{-1}, -\varepsilon]$, we therefore have that
	\begin{flalign}
		\label{expectationy1y2a2}
		\begin{aligned} 
			\mathbb{E} \Bigg[ \textbf{1}_{\mathscr{A}^{\complement}} \cdot \bigg( \displaystyle\int_0^t \Big\| \bm{D} \big( \nabla F_1 (Y_1 (s)) \big) - \bm{D} \big( \nabla F_2 (Y_2(s)) \big) \Big\| ds \bigg)^{2p} \Bigg] & \le \mathbb{P} \big[ \mathscr{A}^{\complement} \big] \cdot (C_1 t)^{2p} (5B + \varepsilon^{-1})^{2p} \\
			& \le (C_2 t)^{2p} e^{-c_1 M},
		\end{aligned}
	\end{flalign}
	
	\noindent for some constant $C_2 = C_2 (\varepsilon, B) > 1$, where in the last inequality we applied \eqref{aa01}.

	To bound the second term on the right side of \eqref{dg1g2}, fix some neighborhood $\mathfrak{R}_0' \subseteq \mathfrak{R}_0$ of $Y_2 (t)$ with smooth boundary such that $[0, 1] \times \big[ Y_2 (t) - 1, Y_2 (t) + 1 \big] \subseteq \mathfrak{R}_0' \subseteq [0, 1] \times \big[ Y_2 (t) - 2, Y_2 (t) + 2 \big]$. On the event $\mathscr{A}$, we have (since $M = L^{1/2}$) that $Y_2 (t) \in [0, 1] \times [x_0 - 2L^{1/2}, x_0 + 2L^{1/2}] \subseteq \mathfrak{R}_0$, and so we may take $\mathfrak{R}_0'$ to be of some fixed shape (for example, a rectangle of uniformly bounded height and width, with smoothened corners) up to translation, independently of $\varepsilon$, $A$, $B$, and $L$. Then,
	\begin{flalign}
		\label{f1y2f2y2} 
		\begin{aligned}
			\textbf{1}_{\mathscr{A}} \cdot \Big| \nabla F_1 \big( Y_2 (t) \big) - \nabla F_2 \big( Y_2 (t) \big) \Big| & \le \displaystyle\sup_{w \in \mathfrak{R}_0'} \big| \nabla F_1 (w) - \nabla F_2 (w) \big| \\
			& \le C_3 \displaystyle\sup_{w \in \partial \mathfrak{R}_0'} \big| F_1 (w) - F_2 (w) \big| \le C_3 \displaystyle\sup_{w \in \partial \mathfrak{R}_0} \big| F_1 (w) - F_2 (w) \big| \le C_3 A,
		\end{aligned} 
	\end{flalign}
	
	\noindent for some some constant $C_3 = C_3 (\varepsilon, B) > 1$, where in the second inequality we applied (the $m = 1$ case of) \Cref{perturbationbdk} (applied with both $\varphi_i = F_1 |_{\mathfrak{R}_0'}$, which satisfy $\varphi_1 - \varphi_2 = 0$ and $\| \varphi_i \|_{\mathcal{C}^5 (\mathfrak{R}_0')} - \| \varphi_i \|_{\mathcal{C}^0 (\mathfrak{R}_0')} \le \| F_1 \|_{\mathcal{C}^5 (\mathfrak{R}_0)} - \| F_1 \|_{\mathcal{C}^0 (\mathfrak{R}_0)} \le B$), in the third we applied \Cref{maximumboundary}, and in the fourth we applied the third assumption of the lemma. Inserting this, \eqref{dg1g2}, and \eqref{expectationy1y2a2} into \eqref{integraly1y2} gives
	\begin{flalign*}
		\mathbb{E} \bigg[ \displaystyle\sup_{s \in [0, t]} \big| & Y_1 (s) - Y_2 (s) \big|^{2p} \bigg] \\
		& \le (72p)^{2p} \Bigg( (C_2 t)^{2p} e^{-c_1 M} + (2 C_1 C_3 At)^{2p} + (2C_1 BL^{-1})^{2p} \bigg( \displaystyle\int_0^t \big| Y_1 (s) - Y_2 (s) \big| ds \bigg)^{2p} \Bigg).
	\end{flalign*} 
	
	By Markov's inequality, we deduce the existence of a constant $C_4 = C_4 (\varepsilon, B) > 1$ such that 
	\begin{flalign*}
		\mathbb{P} \Bigg[ \displaystyle\sup_{s \in [0, t]} \big| Y_1 (s) - Y_2 (s) \big| \ge C_4 p t (A + e^{-c_1 M/2p}) + C_4 p L^{-1} \displaystyle\int_0^t \big| Y_1 (s) - Y_2(s) \big| ds \Bigg] \le 4^{-p}.
	\end{flalign*}
	
	\noindent Taking $p = L^{1/8}$ gives (after replacing $c_1$ by $\frac{c_1}{2}$) with probability at least $1 - 4^{-L^{1/8}}$ that 
	\begin{flalign}
		\label{y1s2} 
		\displaystyle\sup_{s \in [0, t]} \big| Y_1 (s) - Y_2 (s) \big| \le C_4 L^{3/8} (A + e^{-c_1 L^{1/8}}) + C_4 L^{-7/8} \displaystyle\int_0^t \big| Y_1 (s) - Y_2 (s) \big| ds, 
	\end{flalign} 
	
	\noindent for any $t \in [0, L^{1/4}]$. Together with Gronwall's inequality,\footnote{Here, we implicitly take a union bound so that \eqref{y1s2} holds for all $t$ in a $2^{-L^{1/8}}$-mesh of $[0, L^{1/8}]$, and apply the discrete Gronwall inequality \cite[Equation (8)]{DIA}.} this yields a constant $C_5 = C_5 (\varepsilon, B) > 1$ such that 
	\begin{flalign}
		\label{y1sy2sa}
		\mathbb{P} \bigg[ \displaystyle\sup_{s \in [0, L^{1/2}]} \big| Y_1 (s) - Y_2 (s) \big| \ge C_5 L^{3/8} (A + e^{-c_1 L^{1/8}}) \bigg] \le 2^{-L^{1/8}}.
	\end{flalign}
	
	Since by \Cref{gexpectationtt0} the process $F_i \big( Y_i (s) \big)$ is a martingale, we have
	\begin{flalign}
		\label{f1zf2zy} 
		F_1 (z) - F_2 (z) = \mathbb{E} \Big[ F_1 \big( Y_1 (\tau) \big) - F_2 \big( Y_2 (\tau) \big) \Big].
	\end{flalign}
	
	\noindent Setting $z_0 = z - (t_0, 0) = (0, x_0) \in \partial \mathfrak{R}_0$, Taylor expanding $F_1$ and $F_2$, and using the fact that $\big| \partial_j \partial_k F_i (w) \big| \le BL^{-1}$, we find that 
	\begin{flalign*}
		\bigg| \mathbb{E} \Big[ F_1 \big( Y_1 (& \tau)  \big) - F_2 \big( Y_2 (\tau) \big) \Big] - \mathbb{E} \Big[ F_1 \big( Y_1 (\tau) \big) - F_2 \big( Y_1 (\tau) \big) \Big] - \mathbb{E} \Big[ \nabla F_2 (z_0) \cdot \big( Y_1 (\tau) - Y_2 (\tau) \big) \Big] \\
		& \quad - \mathbb{E} \bigg[ \Big( \nabla F_2 \big( Y_2 (\tau) \big) - \nabla F_2 (z_0) \Big) \cdot \big( Y_1 (\tau) - Y_2 (\tau) \big) \bigg] \bigg| \le \displaystyle\frac{B}{2L} \cdot \mathbb{E} \Big[ \big| Y_1 (\tau) - Y_2 (\tau) \big|^2 \Big].
	\end{flalign*}
	
	\noindent Together with \eqref{f1zf2zy}, the fact that $\mathbb{E} \big[ Y_1 (\tau) \big] = z = \mathbb{E} \big[ Y_2 (\tau) \big]$ (as $Y_1$ and $Y_2$ are martingales by \eqref{wyi}), and again the bound $\big| \partial_j \partial_k F_i (w) \big| \le BL^{-1}$, this yields 
	\begin{flalign*}
		\big| F_1 (z) - F_2 (z) \big| & = \bigg| \mathbb{E} \Big[ F_1 \big( Y_1 (\tau) \big) - F_2 \big( Y_2 (\tau) \big) \Big] \bigg| \\
		& \le \bigg| \mathbb{E} \Big[ F_1 \big( Y_1 (\tau) \big) - F_2 \big( Y_1 (\tau) \big) \Big] \bigg| + \displaystyle\frac{B}{L} \cdot \mathbb{E} \Big[ \big| Y_2 (\tau) - z_0 \big| \cdot \big| Y_2 (\tau) - Y_1 (\tau) \big| \Big] \\
		& \qquad + \displaystyle\frac{B}{2L} \cdot \mathbb{E} \Big[ \big| Y_1 (\tau) - Y_2 (\tau) \big|^2 \Big],
	\end{flalign*} 
	
	\noindent and hence
	\begin{flalign}
		\label{f1f2z}
		\begin{aligned}
			\big| F_1 (z) - F_2 (z) \big| & \le \bigg| \mathbb{E} \Big[ F_1 \big( Y_1 (\tau) \big) - F_2 \big( Y_1 (\tau) \big) \Big] \bigg| \\ 
			& \qquad + \displaystyle\frac{2B}{L} \cdot \mathbb{E} \bigg[ \Big( \big| z_0 - Y_1 (\tau) \big| + \big| Y_2 (\tau) - z_0 \big| \Big) \cdot \big| Y_1 (\tau) - Y_2 (\tau) \big| \bigg].
		\end{aligned} 
	\end{flalign}
	
	Since $F_1 \big( Y_1 (\tau) \big) = F_2 \big( Y_1 (\tau) \big)$ on the event $\mathscr{A}$ (by \eqref{y11y21a}), and since $\big| F_1 \big( Y_1 (\tau) \big) - F_2 \big( Y_2 (\tau) \big) \big| \le 10B L$ (as $\big| \nabla F_1 (w) \big| \le B$, $\big|\nabla F_2 (w) \big| \le B$, and the diameter of $\mathfrak{R}_0$ is at most $5L$), it follows from \eqref{aa01} that   
	\begin{flalign}
		\label{f1f2z1} 
		\bigg|	\mathbb{E} \Big[ F_1 \big( Y_1 (\tau) \big) - F_2 \big( Y_1 (\tau) \big) \Big] \bigg| = \mathbb{E} \bigg[ \textbf{1}_{\mathscr{A}^{\complement}} \cdot \Big| F_1 \big( Y_1 (\tau) \big) - F_2 \big( Y_1 (\tau) \big) \Big| \bigg] \le 10 c_1^{-1} B L e^{-c_1 L^{1/8}}
	\end{flalign}
	
	\noindent Moreover, from \eqref{y1sy2sa}, \eqref{aa01}, and the fact that $\big| Y_i (\tau) - z_0 \big| \le M + 1 \le L^{1/2}$ for each $i \in \{ 1, 2 \}$ on $\mathscr{A}$ (and again the fact that the diameter of $\mathfrak{R}_0$ is at most $5L$), we deduce the existence of constants $c_2 = c_2 (\varepsilon, B) > 0$ and $C_6 = C_6 (\varepsilon, B) > 1$ such that  
	\begin{flalign}
		\label{f1f2z0} 
		\begin{aligned}
			\mathbb{E} & \bigg[ \Big( \big| Y_1 (\tau) - z_0 \big| + \big| Y_2 (\tau) - z_0 \big| \Big) \cdot \big| Y_1 (\tau) - Y_2 (\tau) \big| \bigg] \\ 
			& \le 2L^{1/2} \cdot C_5 L^{3/8} (A + e^{-c_1 L^{1/8}})  +\big( c_1^{-1} e^{-c_1 L^{1/8}} + 2^{-L^{1/8}} ) \cdot 2 (5L)^2 \le C_6 L^{7/8} A + C_6 e^{-c_2 L^{1/8}},
		\end{aligned} 
	\end{flalign}

	\noindent where we have used the fact that, on $\mathscr{A}$, we have $|Y_1 (\tau) - z_0 \big| + \big| Y_2 (\tau) - z_0 \big| \le 2M = 2L^{1/2}$. Inserting \eqref{f1f2z1} and \eqref{f1f2z0} into \eqref{f1f2z} yields 
	\begin{flalign*}
		\big| F_1 (z) - F_2 (z) \big| \le 2 C_6 BL^{-1/8} A + 2 C_6 B L^{-1} e^{-c_2 L^{1/8}} + 10 c_1^{-1} BL e^{-c_1 L^{1/8}},
	\end{flalign*} 
	
	\noindent which implies the lemma.
\end{proof}

\appendix

\section{Concentration Bounds for the Height Function}

\label{ProofCurves0} 

In this section we establish a concentration bound for non-intersecting Brownian bridges; it (and its proof) is analogous to the one that appears in the context of random tilings (see \cite[Theorem 21]{LSRT}, for example). For any family of continuous curves $\bm{x} = (x_1, x_2, \ldots , x_n) \in \llbracket 1, n \rrbracket \times \mathcal{C} \big( [a, b] \big)$, we recall from \eqref{htx} that the associated height function $\mathsf{H} = \mathsf{H}^{\bm{x}} : [a, b] \times \mathbb{R} \rightarrow \mathbb{R}$ is defined by $\mathsf{H} (t, w) = \# \big\{ j \in \llbracket 1, n \rrbracket : x_j (t) > w \big\}$. In what follows, we also recall the measure $\mathfrak{Q}_{f; g}^{\bm{u}; \bm{v}}$ from \Cref{qxyfg}.

\begin{lem}
	
	\label{concentrationbridge}
	
	Let $n \ge 1$ be an integer; $T > 0$ and $r, B \ge 1$ be real numbers; $\bm{u}, \bm{v} \in \overline{\mathbb{W}}_n$ be $n$-tuples; and $f, g : [0, T] \rightarrow \overline{\mathbb{R}}$ be measurable functions with $f \le g$. Sample non-intersecting Brownian bridges $\bm{x} \in \llbracket 1, n \rrbracket \times \mathcal{C} \big( [0, T] \big)$ from the measure $\mathfrak{Q}_{f; g}^{\bm{u}; \bm{v}}$. Fix real numbers $t \in [0, T]$ and $w \in \big[ f(t), g(t) \big]$. Denoting the event $\mathscr{E} = \big\{ \mathsf{H} (t, w) \le B \big\}$, there exists a deterministic number $\mathfrak{Y} = \mathfrak{Y} (\bm{u}; \bm{v}; f; g; T; t; w; B) \ge 0$ such that
	\begin{align}
		\label{e:concentration}
		\mathbb{P} \Big[ \big| \mathsf{H} (t, w) - \mathfrak{Y} \big| \ge rB^{1/2} \Big] \le 2 e^{-r^2/4} + 2 \cdot \mathbb{P} \big[\mathscr{E}^{\complement} \big].
	\end{align}
	
	\noindent In particular, setting $B = n$, we have $\mathbb{P} \big[ \big| \mathsf{H} (t, w) - \mathfrak{Y} \big| \ge rn^{1/2} \big] \le 2e^{-r^2/4}$. 
	
\end{lem}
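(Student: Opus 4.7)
The plan is to build a Doob martingale and apply Azuma--Hoeffding, following the strategy used for random tilings in \cite{LSRT}. We will take $\mathfrak{Y} := \mathbb{E}\big[\min(\mathsf{H}(t,w), B)\big]$, which is deterministic and lies in $[0, B]$, and consider the Doob martingale $M_k := \mathbb{E}\big[\min(\mathsf{H}(t,w), B) \,\big|\, \mathcal{F}_k\big]$ with respect to the filtration $\mathcal{F}_k = \sigma(x_1, x_2, \ldots, x_k)$ that reveals the bridges from top to bottom. Then $M_0 = \mathfrak{Y}$ and $M_n = \min(\mathsf{H}(t,w), B)$, which coincides with $\mathsf{H}(t,w)$ on $\mathscr{E}$.

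Two structural observations will drive the argument. First, the strict ordering $x_1(s) > x_2(s) > \cdots > x_n(s)$ at each $s \in (0, T)$ implies that $\mathbf{1}(x_j(t) > w) = 1$ if and only if $j \le \mathsf{H}(t,w)$; consequently $\min(\mathsf{H}, B)$ is $\mathcal{F}_{B+1}$-measurable (if $x_{B+1}(t) > w$, then $\mathsf{H} \ge B+1$ and the truncation equals $B$, while otherwise $\mathsf{H} \le B$ is determined by $x_1, \ldots, x_B$), so the martingale has at most $B+1$ nontrivial steps: $M_k = M_{B+1}$ for $k \ge B + 1$. Second, we will establish the bounded-differences estimate $|M_k - M_{k-1}| \le 1$ via an exchangeable-pair construction, in which two conditionally-independent resamplings $\bm{x}^{(1)}, \bm{x}^{(2)}$ of $(x_k, \ldots, x_n)$ given $\mathcal{F}_{k-1}$ are coupled monotonically through \Cref{monotoneheight}, so that $x_j^{(1)} \le x_j^{(2)}$ pointwise for all $j \ge k$. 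Combined with the path ordering and the cap at $B$, a careful case analysis should yield that the pointwise difference $|\min(\mathsf{H}^{(1)}, B) - \min(\mathsf{H}^{(2)}, B)|$ is bounded by $1$ almost surely under this coupling.

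Together these two facts give, by Azuma--Hoeffding, the estimate $\mathbb{P}\big[|M_n - M_0| \ge r B^{1/2}\big] \le 2 \exp\big({-}r^2 B / (2(B+1))\big) \le 2 \exp(-r^2/4)$ for $B \ge 1$. Since $\mathsf{H}(t,w) = M_n$ on $\mathscr{E}$, a union bound then yields \eqref{e:concentration}, with the extra factor of $2$ on the $\mathbb{P}[\mathscr{E}^{\complement}]$ term absorbing the gap between $\mathsf{H}$ and $\min(\mathsf{H}, B)$ in computing $\mathfrak{Y}$. The main obstacle will be the bounded-differences estimate $|M_k - M_{k-1}| \le 1$: a naïve application of \Cref{monotoneheight} only yields $|M_k - M_{k-1}| \le B$, which is far too weak. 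Reducing this to $1$ requires simultaneously exploiting the strict ordering of the bridges, the truncation at $B$, and the Brownian Gibbs property applied recursively to the residual ensemble; setting up the correct coupling and executing the case analysis will be the most delicate part of the proof.
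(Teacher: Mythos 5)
Your plan hinges on the almost–sure bounded–difference estimate $|M_k-M_{k-1}|\le 1$, and this is where the argument breaks: that bound is false. On the event $\{x_k(t)\le w\}$ (which has positive probability whenever $w>f(t)$, since the $k$-th bridge can dip below $w$ at time $t$), the ordering $x_1>x_2>\cdots>x_n$ forces $\min\big(\mathsf{H}(t,w),B\big)\le k-1$ deterministically, so $M_k\le k-1$; meanwhile $M_{k-1}$ can be arbitrarily close to $B$ (take, say, watermelon-type boundary data with $w$ chosen so that typically $\mathsf{H}(t,w)\gg B$). Thus for small $k$ the increment can be of order $B$ on a rare but non-null event, and the truncation at $B$ does not help, because the collapse comes from the ordering constraint rather than from large values of $\mathsf{H}$. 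Azuma--Hoeffding requires almost-sure increment bounds, so the claimed $2e^{-r^2/4}$ does not follow; a ``Lipschitz off a bad set'' variant would be needed, and its error term would not reduce to the $\mathbb{P}[\mathscr{E}^{\complement}]$ appearing in the statement. The coupling device you propose is also internally inconsistent: two \emph{conditionally independent} resamplings have product joint law and cannot simultaneously be coupled monotonically, two samples of the same ensemble are not stochastically ordered against each other (so \Cref{monotoneheight}, which compares ensembles with ordered boundary data, does not apply), and in any case bounding the difference of two resamplings given the same $\mathcal{F}_{k-1}$ would not bound $M_k-M_{k-1}$, which compares conditional expectations across different $\sigma$-algebras.

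The paper's proof avoids this issue by a genuinely different mechanism. It first passes to a discrete-time version (\Cref{concentrationpaths}), takes \emph{two independent copies} of the non-intersecting ensemble, and decomposes the domain into level sets of the difference $F=\mathsf{H}^{\bm{\mathsf{p}}}-\mathsf{H}^{\bm{\mathsf{p}}'}$ of the two height functions. Conditionally on this decomposition, the value of $F$ on the level set containing $(t,w)$ has the law of a sum of i.i.d.\ symmetric $\pm1$ variables along the exit sequence, and on the event $\mathscr{E}$ (for both copies) the exit-sequence length is \emph{deterministically} at most $2B$; Chernoff's bound then gives the Gaussian tail, and the bad event enters only through a union bound, which is exactly how the additive term $2\,\mathbb{P}[\mathscr{E}^{\complement}]$ arises. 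The deterministic center $\mathfrak{Y}$ comes from this two-copy comparison (concentration of $X-X'$ for independent copies yields concentration of $X$ about a deterministic point), not from the expectation of a truncated height function. Salvaging your martingale approach would require proving an increment bound valid only off a rare event and controlling that event separately, which in effect reproduces the two-copy, level-set argument in a more cumbersome form.
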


To establish \Cref{concentrationbridge}, we proceed through a discretization. For any integer $\mathsf{S} \ge 0$, a \emph{discrete path} is a function $\mathsf{p} : \llbracket0, \mathsf{S} \rrbracket \rightarrow \mathbb{Z}$ such that $\mathsf{p} (t) - \mathsf{p} (t-1) \in \{ 0, 1 \}$, for each $t \in \llbracket 1, \mathsf{S} \rrbracket$. A \emph{discrete path ensemble} on $\llbracket 0, \mathsf{S} \rrbracket$ is an ordered sequence of discrete paths $\bm{\mathsf{p}} = (\mathsf{p}_1, \mathsf{p}_2, \ldots , \mathsf{p}_k)$, with $\mathsf{p}_j : \llbracket 0, \mathsf{S} \rrbracket \rightarrow \mathbb{Z}$ for each $j \in \llbracket 1, k \rrbracket$. Given integer sequences $\bm{u} = (u_1, u_2, \ldots , u_k) \in \mathbb{Z}^k$ and $\bm{v} = (v_1, v_2, \ldots , v_k) \in \mathbb{Z}^k$, we say that $\bm{\mathsf{p}}$ \emph{starts} at $\bm{u}$ and \emph{ends} at $\bm{v}$ if $\mathsf{p}_j (0) = u_j$ and $\mathsf{p}_j (\mathsf{S}) = v_j$ for each $j \in \llbracket 1, k \rrbracket$, respectively; observe that this is only possible if $u_j \le v_j \le u_j + \mathsf{S}$ for each $j$. Given two functions $f, g : \llbracket 0, \mathsf{S} \rrbracket \rightarrow \mathbb{Z}$, we further say that the ensemble $\bm{\mathsf{p}}$ \emph{remains above $f$} and \emph{remains below $g$} if $f(s) < \mathsf{p}_j (s)$ and $\mathsf{p}_j (s) < g(s)$ for each $(j, s) \in \llbracket 1, n \rrbracket \times \llbracket 0, \mathsf{S} \rrbracket$, respectively. See the left side of \Cref{f:Heightconcentration} for a depiction.

	\begin{figure}
	\center
\includegraphics[width=0.8\textwidth]{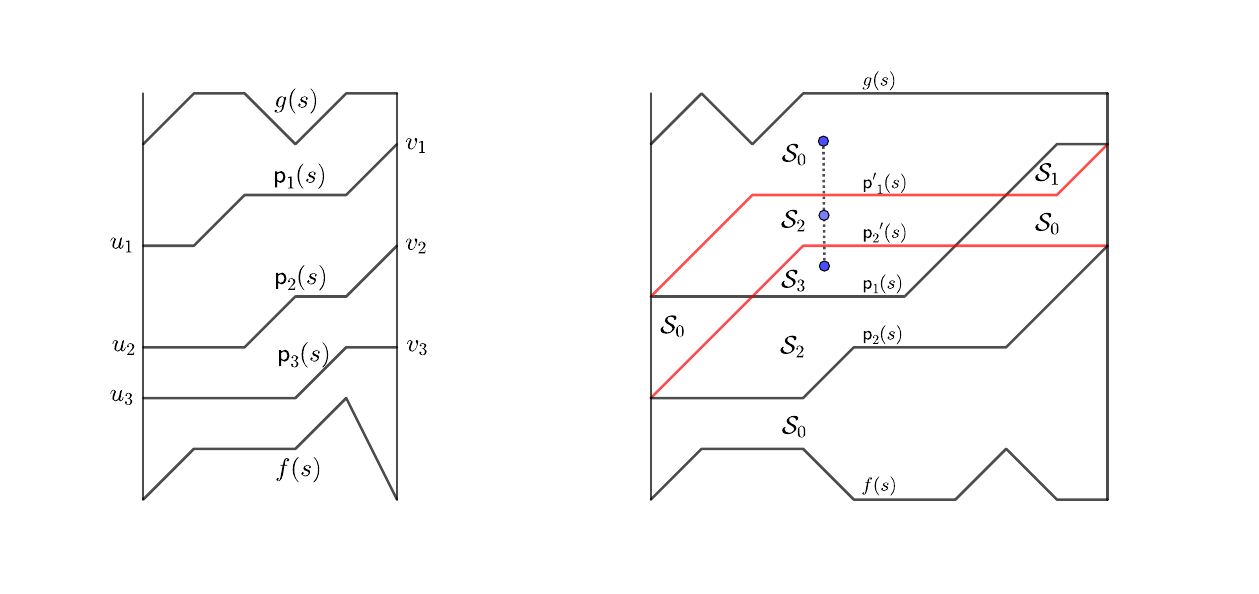}

\caption{Shown to the left is a discrete path ensemble ${\bm{\mathsf p}}=({\mathsf p}_1, {\mathsf p}_2, {\mathsf p}_3)$. Shown to the right is a level set decomposition $\mathscr{D} =\mathcal S_0\cup \mathcal{S}_1 \cup \cdots \cup \mathcal S_3$; there, $(3,2,0)$ is an exit sequence of  $\mathcal{S}_3$.}
\label{f:Heightconcentration}
	\end{figure}

Denoting $\bm{\mathsf{p}}(t) = \big( \mathsf{p}_1 (t), \mathsf{p}_2 (t), \ldots , \mathsf{p}_k (t) \big)$ for any $t$, we additionally say that the ensemble $\bm{\mathsf{p}}$ is \emph{non-intersecting} if $\bm{\mathsf{p}} (t) \in \mathbb{W}_k$, for each $t \in \llbracket 0, \mathsf{S} \rrbracket$. Given a non-intersecting path ensemble $\bm{\mathsf{p}} = (\mathsf{p}_1, \mathsf{p}_2, \ldots , \mathsf{p}_k)$ on $\llbracket 0, \mathsf{S} \rrbracket$, we define its height function $\mathsf{H}^{\bm{\mathsf{p}}} : \llbracket 0, \mathsf{S} \rrbracket \times \mathbb{Z} \rightarrow \mathbb{Z}_{\ge 0}$ by, for any $(t, w) \in \llbracket 0, \mathsf{S} \rrbracket \times \mathbb{Z}$, setting 
\begin{flalign*}
	\mathsf{H}^{\bm{\mathsf{p}}} (t, w) = \# \big\{ j \in \llbracket 1, k \rrbracket : \mathsf{p}_j (t) > w \big\}.
\end{flalign*}	

Given the above notation, we can state the discrete variant of \Cref{concentrationbridge} that quickly implies it. Observe that, in the below concentration bound, the error estimates and probabilities do not depend on the size $\mathsf{S}$ of the domain; this is in contrast to analogous statements in the context of tilings (for example, in \cite[Theorem 21]{LSRT}, the error in the height function grows at least as $\mathsf{S}^{1/2}$), though the proof is similar.

\begin{lem}
	
	\label{concentrationpaths} 
	
	Let $n, \mathsf{S} \ge 1$ be integers; $r, B \ge 1$ be real numbers; $\bm{u}, \bm{v} \in \mathbb{W}_n \cap \mathbb{Z}^n$ be two $n$-tuples of integers; and $f, g : \llbracket 0, \mathsf{S} \rrbracket \rightarrow \mathbb{Z}$ be functions with $f \le g$. Let $\bm{\mathsf{p}} = (\mathsf{p}_1, \mathsf{p}_2, \ldots , \mathsf{p}_n)$ denote a uniformly random discrete path ensemble conditioned to start at $\bm{u}$; end at $\bm{v}$; remain above $f$ and below $g$; and to be non-intersecting (assuming at least one such ensemble exists). Fixing $(t, w) \in \llbracket 0, \mathsf{S} \rrbracket \times \mathbb{Z}$, and denoting the event $\mathscr{E} (\bm{\mathsf{p}}) = \big\{ \mathsf{H}^{\bm{\mathsf{p}}} (t, w) \le B \big\}$, there exists a deterministic real number $\mathfrak{Y} = \mathfrak{Y} (\bm{u}; \bm{v}; f; g; t; w; B) \in \llbracket 0, n \rrbracket$ such that 
	\begin{flalign*}
		\mathbb{P} \Big[ \big| \mathsf{H}^{\bm{\mathsf{p}}} (t, w) - \mathfrak{Y} \big| \ge r B^{1/2} \Big] \le 2 e^{-r^2/4} + 2 \cdot \mathbb{P} \big[ \mathscr{E} (\bm{\mathsf{p}})^{\complement} \big].
	\end{flalign*}

\end{lem}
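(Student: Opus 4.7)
The approach mirrors the classical Cohn--Elkies--Propp concentration estimate for lozenge tilings \cite{LSRT}: the plan is to construct a Doob martingale by revealing the paths sequentially (from the top down) and apply the Azuma--Hoeffding inequality. Order so that $\mathsf{p}_1 > \mathsf{p}_2 > \cdots > \mathsf{p}_n$ pointwise, set $\mathcal{F}_j = \sigma(\mathsf{p}_1, \mathsf{p}_2, \ldots, \mathsf{p}_j)$ for $j \in \llbracket 0, n \rrbracket$, and define the Doob martingale $M_j = \mathbb{E}[\mathsf{H}^{\bm{\mathsf{p}}}(t, w) \mid \mathcal{F}_j]$. I would take $\mathfrak{Y}$ to be the integer nearest to $M_0 = \mathbb{E}[\mathsf{H}^{\bm{\mathsf{p}}}(t, w)]$ (the rounding contributes only an $O(1)$ error that can be absorbed into $B^{1/2}$); then $M_n = \mathsf{H}^{\bm{\mathsf{p}}}(t, w)$, and the goal reduces to controlling the martingale difference sequence $\big(M_j - M_{j-1}\big)_{j=1}^n$.

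The first observation leverages the non-intersection constraint. Introduce the stopping time $\tau := \min\{j \in \llbracket 1, n \rrbracket : \mathsf{p}_j(t) \le w\}$, with the convention $\tau = n + 1$ if no such $j$ exists. The ordering $\mathsf{p}_1(t) > \mathsf{p}_2(t) > \cdots > \mathsf{p}_n(t)$ forces $\mathsf{p}_i(t) \le w$ for every $i \ge \tau$, so $\mathsf{H}^{\bm{\mathsf{p}}}(t, w) = \tau - 1$ is in fact $\mathcal{F}_\tau$-measurable. Consequently $M_j = M_\tau$ for every $j \ge \tau$, and on the event $\mathscr{E}$ we have $\tau \le B + 1$; hence at most $B + 1$ of the martingale increments $M_j - M_{j-1}$ are nontrivial on $\mathscr{E}$. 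This is precisely the mechanism that replaces the crude $n$-scale Azuma fluctuation by a $B^{1/2}$-scale one.

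The main obstacle is to verify that the individual increments satisfy $|M_j - M_{j-1}| \le 1$ uniformly (in an $\mathcal{F}_{j-1}$-measurable sense). The heuristic is that revealing the single path $\mathsf{p}_j$ conveys at most one bit of information relevant to the height function, namely the indicator $\mathbf{1}\{\mathsf{p}_j(t) > w\}$; revealing $\mathsf{p}_j$ also alters the conditional distribution of $(\mathsf{p}_{j+1}, \ldots, \mathsf{p}_n)$ through a monotone coupling (as in the discrete analogue of \Cref{monotoneheight} established in \cite{PLE}), but the same monotonicity ensures that the downstream rearrangements do not inflate the height by more than one unit on average. Making this bound fully rigorous is the delicate step; an alternative (and more robust) route would be to invoke the determinantal structure of the law of $\big(\mathsf{p}_1(t), \ldots, \mathsf{p}_n(t)\big)$ given by Lindstr\"om--Gessel--Viennot, since for such a determinantal point process the linear statistic $\mathsf{H}^{\bm{\mathsf{p}}}(t, w)$ has variance bounded by its expectation and sub-Gaussian tails at scale $B^{1/2}$.

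Assuming the bound $|M_j - M_{j-1}| \le 1$ is in hand, applying Azuma--Hoeffding to the stopped martingale $\big(M_{j \wedge \tau}\big)_{j=0}^n$ (whose increments are bounded by $1$ and of which at most $B + 1$ are nonzero on $\mathscr{E}$) yields $\mathbb{P}\big[|M_{n \wedge \tau} - M_0| \ge r B^{1/2}, \, \mathscr{E}\big] \le 2 e^{-r^2 / 4}$. Since $M_{n \wedge \tau} = \mathsf{H}^{\bm{\mathsf{p}}}(t, w)$ on $\mathscr{E}$, and the complementary event contributes an extra $\mathbb{P}[\mathscr{E}^\complement]$ on each side, a union bound with \eqref{e:concentration} adjusted by the rounding offset for $\mathfrak{Y}$ completes the argument.
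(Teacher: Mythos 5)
There is a genuine gap, and it sits exactly where you flag it: the martingale increment bound $|M_j - M_{j-1}| \le 1$ is not true. Revealing $\mathsf{p}_j$ does more than disclose the bit $\mathbf{1}\{\mathsf{p}_j(t) > w\}$; it changes the conditional law of \emph{all} the lower paths, which are constrained to stay beneath $\mathsf{p}_j$, and this can move the conditional expectation of $\mathsf{H}^{\bm{\mathsf{p}}}(t,w)$ by far more than one unit. Concretely, take boundary data for which the top path at time $t$ is equally likely to be far above or at level $w$, and for which in the first case the $k$ paths below it are forced above $w$ while in the second they are forced below: then $M_1 - M_0 = \pm (k+1)/2$. (Such configurations make $\mathbb{P}[\mathscr{E}^{\complement}]$ large, so they do not contradict the lemma, but they do kill the uniform increment bound that Azuma--Hoeffding requires.) Monotone couplings only give you the \emph{sign} of the effect of raising $\mathsf{p}_j$, not a bound of $1$ on its size. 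A second problem is the use of the stopping time: Azuma needs almost-sure increment bounds for the stopped martingale on the whole probability space, whereas ``at most $B+1$ increments are nonzero'' holds only on $\mathscr{E}$; off $\mathscr{E}$ the stopped martingale has up to $n$ increments, so without a further truncation you only get concentration at scale $n^{1/2}$, not $B^{1/2}$. Finally, the determinantal fallback is not available at this level of generality: with arbitrary time-dependent walls $f \le \bm{\mathsf{p}} \le g$ the Lindstr\"{o}m--Gessel--Viennot / Karlin--McGregor structure is not automatic, and even granting it, the Bernoulli decomposition of a determinantal count gives fluctuations at scale $\big(\mathbb{E}\, \mathsf{H}^{\bm{\mathsf{p}}}(t,w)\big)^{1/2}$, which must still be related to $B^{1/2}$ through the event $\mathscr{E}$ by an extra argument.

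For comparison, the paper avoids conditional expectations altogether. It takes an independent copy $\bm{\mathsf{p}}'$ of the ensemble, forms $F = \mathsf{H}^{\bm{\mathsf{p}}} - \mathsf{H}^{\bm{\mathsf{p}}'}$, and decomposes the domain into level sets of $F$ (which is $0$ along the entire boundary and outside the strip between $f$ and $g$). Conditional on the level-set decomposition, the value of $F$ on the set containing $(t,w)$ equals a sum of $m$ i.i.d.\ $\pm 1$ Bernoulli variables along an ``exit sequence'' (the argument imported from \cite{UERT}), and on $\mathscr{E}(\bm{\mathsf{p}}) \cap \mathscr{E}(\bm{\mathsf{p}}')$ one has $m \le 2B$ because each step of the exit sequence forces a unit decrement of one of the two height functions at $(t,\cdot)$. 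Chernoff then gives the $2e^{-r^2/2}$-type bound, and the symmetrization (two copies) is what produces a deterministic centering $\mathfrak{Y}$ without ever controlling martingale differences. If you want to salvage a one-copy argument you would need a substitute for the false increment bound; as written, your proof does not close.
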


\begin{proof}
	
	This proof follows the discussion in \cite[Section 3.2]{UERT}. Let $\bm{\mathsf{p}}' = (\mathsf{p}_1', \mathsf{p}_2', \ldots , \mathsf{p}_n')$ denote a non-intersecting discrete path ensemble with the same law as, but independent from, $\bm{\mathsf{p}}$. Denote $\mathscr{D} = \mathscr{D}(\mathsf{S}) = \llbracket 0, \mathsf{S} \rrbracket \times \mathbb{Z} \subset \mathbb{Z}^2$; for any subset $\mathcal{S} \subseteq \mathscr{D}$, the boundary $\partial \mathcal{S}$ of $\mathcal{S}$ denotes the set of vertices $(s, y) \in \mathcal{S}$ that are adjacent\footnote{Here, two vertices $(s, y)$ and $(s', y')$ are adjacent if $(s-s', y-y') \in \big\{ (0, 1), (1, 0), (1, 1), (0, -1), (-1, 0), (-1, -1) \big\}$ (in this way, we view vertices as on a triangular lattice, instead of on a square one).} to a vertex in $\mathbb{Z}^2 \setminus \mathcal{S}$. Define the function $F : \mathscr{D} \rightarrow \mathbb{Z}$ by setting $F (s, y) = \mathsf{H}^{\bm{\mathsf{p}}} (s, y) - \mathsf{H}^{\bm{\mathsf{p}}'} (s, y)$, for each $(s, y) \in \mathscr{D}$. 
	
	A family $\bm{\mathcal{S}} = (\mathcal{S}_0, \mathcal{S}_2, \ldots , \mathcal{S}_k)$ of mutually disjoint subsets of $\mathscr{D}$ satisfying $\bigcup_{j=0}^k \mathcal{S}_j = \mathscr{D}$ is a \emph{level set decomposition} of $\mathscr{D}$ with respect to $F$ if the following two properties hold. First, for any two vertices $(s, y), (s', y') \in \mathcal{S}_j$ of the same subset, $F(s, y) = F(s', y')$. Second, for any two adjacent vertices $(s, y) \in \mathcal{S}_j$ and $(s', y') \in \mathcal{S}_{j'}$ with $j \ne j'$ (namely, in different subsets), $F(s, y) \ne F(s', y')$; observe in this case that $\big| F (s,y) - F(s', y') \big| = 1$, as we either have that both $\mathsf{H}^{\bm{\mathsf{p}}} (s, y) - \mathsf{H}^{\bm{\mathsf{p}}} (s', y') \in \{ 0, 1 \}$ and $\mathsf{H}^{\bm{\mathsf{p}}'} (s, y) - \mathsf{H}^{\bm{\mathsf{p}}'} (s', y') \in \{ 0, 1 \}$ hold or that both $\mathsf{H}^{\bm{\mathsf{p}}} (s, y) - \mathsf{H}^{\bm{\mathsf{p}}} (s', y') \in \{ 0, -1 \}$ and $\mathsf{H}^{\bm{\mathsf{p}}'} (s, y) - \mathsf{H}^{\bm{\mathsf{p}}'} (s', y') \in \{ 0, - 1\}$ hold. We set $F(\mathcal{S}_j) = F(s, y)$, for any $(s, y) \in \mathcal{S}_j$. In the below, we condition on the level set decomposition $\bm{\mathcal{S}}$ of $\mathscr{D}$ with respect to $F$.
	
	Further observe that $F(s, y) = 0$ for any $(s, y) \in \llbracket 0, \mathsf{S} \rrbracket \times \mathbb{Z}$ that either satisfies $(s, y) \in \partial \mathscr{D}$ (since $\bm{\mathsf{p}}$ and $\bm{\mathsf{p}}'$ both start and end at the same $n$-tuples $\bm{u}$ and $\bm{v}$, respectively), or that satisfies $y < f(s)$ (as then $\mathsf{H}^{\bm{\mathsf{p}}} (s, y) = n = \mathsf{H}^{\bm{\mathsf{p}}'} (s, y)$) or $y \ge g(s)$ (as then $\mathsf{H}^{\bm{\mathsf{p}}} (s, y) = 0 = \mathsf{H}^{\bm{\mathsf{p}}'} (s, y)$). Since this family of vertices is connected, there is one infinite set in $\bm{\mathcal{S}}$, which we let be $\mathcal{S}_0$. It has the property that $\mathscr{D} \setminus \mathcal{S}_0$ is finite, so the number of elements $k+1$ in $\bm{\mathcal{S}}$ is finite.
	
	Next, given $i, j \in \llbracket 1, n \rrbracket$ with $i \ne j$, we say that $\mathcal{S}_i$ is \emph{adjacent} to $\mathcal{S}_j$ if there exists a vertex in $\mathcal{S}_i$ that is adjacent to a vertex in $\mathcal{S}_j$. In this case, we say that $\mathcal{S}_i$ is \emph{exterior adjacent} to $\mathcal{S}_j$ if $\mathcal{S}_i$ is contained in the infinite connected component of $\mathscr{D} \setminus \mathcal{S}_j$; we otherwise call $\mathcal{S}_i$ \emph{interior adjacent} to $\mathcal{S}_j$. In particular, $\mathcal{S}_0$ is exterior adjacent to any $\mathcal{S}_j$ adjacent to it. It is quickly verified (see \cite[Lemma 12]{UERT}) that, for every $j \in \llbracket 1, k \rrbracket$, there is at most one element of $\bm{\mathcal{S}}$ that is exterior adjacent to $\mathcal{S}_j$. See the right side of \Cref{f:Heightconcentration} for a depiction.

		An \emph{exit sequence} of any $\mathcal{S}_j \in \bm{\mathcal{S}}$ is an ordered sequence $(i_0, i_1, \ldots , i_m)$ of integers in $\llbracket 0, k \rrbracket$ so that $i_0 = j$, $i_m = 0$, and $\mathcal{S}_{i_a}$ is exterior adjacent to $\mathcal{S}_{i_{a-1}}$, for each $a \in \llbracket 1, m \rrbracket$; here, $m = m(j) = m(j; \bm{\mathcal{S}})$ depends on $j$ and $\bm{\mathcal{S}}$. Then, it is quickly verified (see the proof of \cite[Lemma 14]{UERT}) that $F(\mathcal{S}_j)$ has the same law as $\sum_{a=1}^m \xi_a$, where the $\xi_a$ are mutually independent $\{ -1, 1 \}$-Bernoulli random variables with $\mathbb{P} [\xi_a = 1] =\frac{1}{2} = \mathbb{P} [\xi_a = -1]$. Hence, by Chernoff's inequality, 
	\begin{flalign}
		\label{fsjfsjs} 
		\mathbb{P} \bigg[ \Big| F(\mathcal{S}_j) - \mathbb{E} \big[ F(\mathcal{S}_j) \big] \Big| \ge rm^{1/2} \bigg| \bm{\mathcal{S}} \bigg] \le 2 e^{-r^2/2}. 
	\end{flalign}

	Now fix $j \in \llbracket 0, m \rrbracket$ to be such that $(t, w) \in \mathcal{S}_j$. Observe on the event $\mathscr{E} = \mathscr{E} (\bm{\mathsf{p}}) \cap \mathscr{E} (\bm{\mathsf{p}}') = \big\{ \mathsf{H}^{\bm{\mathsf{p}}} (t, w) \le B \big\} \cap \big\{ \mathsf{H}^{\bm{\mathsf{p}}'} (t, w) \le B \big\}$ that $m(j; \bm{\mathcal{S}}) \le 2B$. Indeed, define a sequence $(w_0, w_1, \ldots , w_m)$ by first setting $w_0 = w$ and then defining $w_{i+1}$ for $i \in \llbracket 0, m - 1 \rrbracket$ inductively as follows. Let $j_i \in \llbracket 0, k \rrbracket$ be such that $(t, w_i) \in \mathcal{S}_{j_i}$ (so $j_0 = j$), and let $w_{i+1} > w_i$ be the maximum integer such that $(t, w_{i+1}) \notin \mathcal{S}_{j_i}$ but $(t, w_{i+1} - 1) \in \mathcal{S}_{j_i}$ (assuming $j_i \ne 0$; if instead $j_i=0$ then  the sequence stops). Then, $(j_0, j_1, \ldots , j_m)$ is an exit sequence for $\mathcal{S}_j$. Moreover, for each $i \in \llbracket 0, m - 1 \rrbracket$, we either have $\mathsf{H}^{\bm{\mathsf{p}}} (t, w_{i+1}) = \mathsf{H}^{\bm{\mathsf{p}}} (t, w_i) - 1$ or $\mathsf{H}^{\bm{\mathsf{p}}'} (t, w_{i+1}) = \mathsf{H}^{\bm{\mathsf{p}}'} (t, w_i) - 1$, since $F (t, w_{i-1}) \ne F (t, w_i)$ and since $\mathsf{H}^{\bm{\mathsf{p}}} (t, y)$ and $\mathsf{H}^{\bm{\mathsf{p}}'} (t, y)$ are nonincreasing in $y$. On the event $\mathscr{E}$, at most $B$ such decrements can occur for $\mathsf{H}^{\bm{\mathsf{p}}}$ and for $\mathsf{H}^{\bm{\mathsf{p}}'}$, meaning that $m \le 2B$.
	
	Combining this estimate $\textbf{1}_{\mathscr{E}} \cdot m(j; \bm{\mathcal{S}}) \le 2B$ with \eqref{fsjfsjs} and a union bound, we obtain 
	\begin{flalign*}
		\mathbb{P} \bigg[ \Big| F(\mathcal{S}_j) - \mathbb{E} \big[ F(\mathcal{S}_j) \big] \Big| \ge r(2B)^{1/2} \bigg] & \le \mathbb{P} \bigg[ \textbf{1}_{\mathscr{E}} \cdot \Big| F(\mathcal{S}_j) - \mathbb{E} \big[ F(\mathcal{S}_j) \big] \Big| \ge r(2B)^{1/2} \bigg| \bm{\mathcal{S}} \bigg] + \mathbb{P} \big[ \mathscr{E}^{\complement} \big] \\
		& \le 2 e^{-r^2/2} + 2 \cdot \mathbb{P} \big[ \mathscr{E} (\bm{\mathsf{p}})^{\complement} \big],
	\end{flalign*}
	
	\noindent which gives the lemma.
\end{proof} 

\begin{proof}[Proof of \Cref{concentrationbridge}]
	
	The second statement follows from the first, since $\mathsf{H} (t, w) \le n$ holds deterministically. The first statement follows from \Cref{concentrationpaths} and the fact that non-intersecting random discrete paths converge to non-intersecting Brownian motions (namely, the $\big( T^{1/2} \mathsf{S}^{-1/2} \cdot \mathsf{p}_j ( \lfloor s T^{-1}\mathsf{S} \rfloor) \big)$ from \Cref{concentrationpaths} over $(j, s) \in \llbracket 1, n \rrbracket \times [0, T]$ converges in law to the $\big(x_j (s) \big)$ over $(j, s) \in \llbracket 1, n \rrbracket \times [0, T]$ from \Cref{concentrationbridge}).
\end{proof}

\section{Proofs of Results From \Cref{Estimates}}

\label{Proof2}

\subsection{Proof of \Cref{estimatexj3}}

\label{ProofContinuous}

In this section we establish \Cref{estimatexj3}. We begin with the following lemma that bounds non-intersecting Brownian bridges constrained to lie below a linear lower boundary; see the left side of \Cref{f:coupling3} for a depiction. In the below, we let $\bm{0}_n = (0, 0, \ldots , 0) \in \overline{\mathbb{W}}_n$ (where $0$ appears with multiplicity $n$).

	\begin{figure}
	\center
\includegraphics[width=0.8\textwidth]{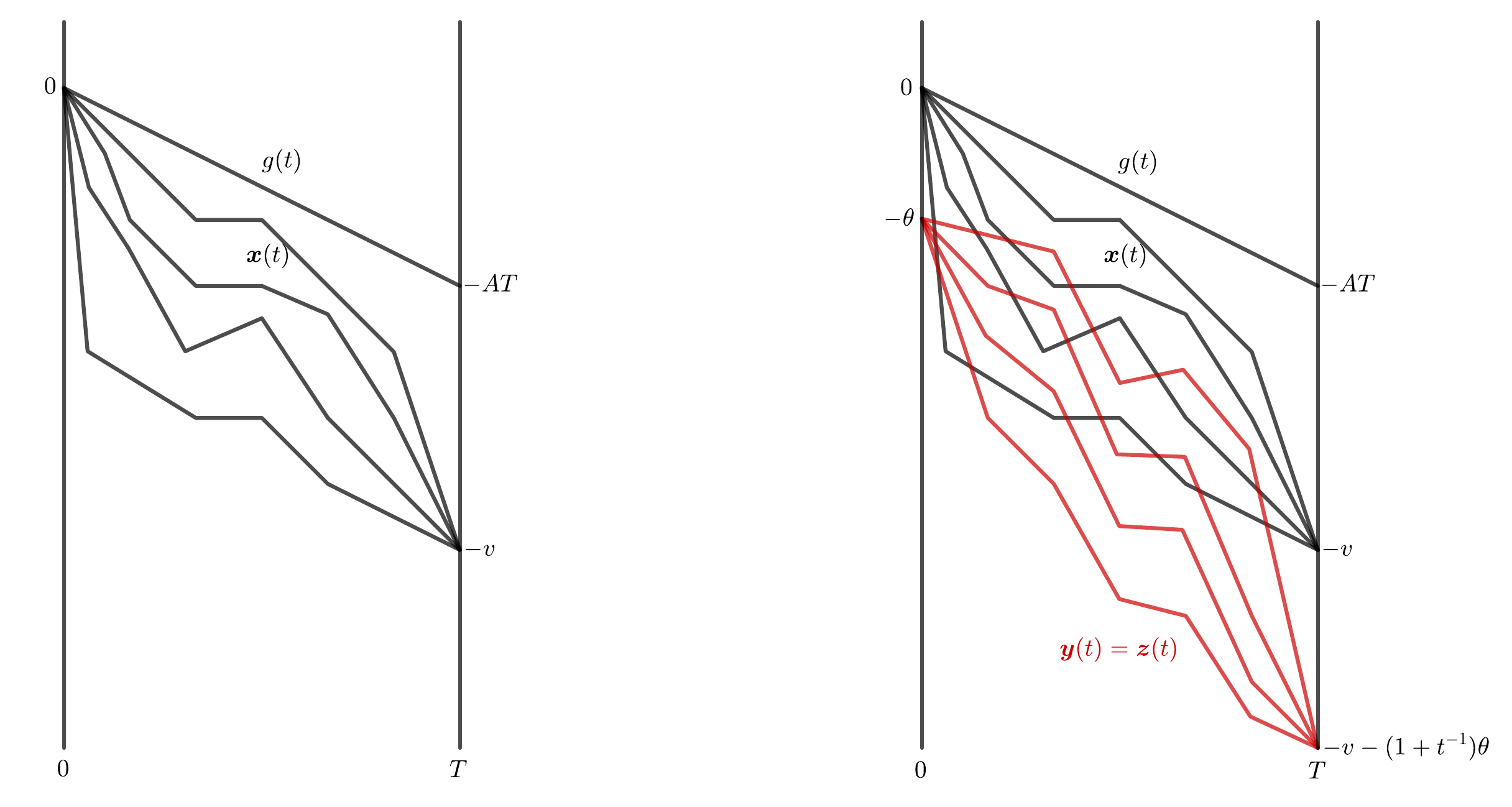}

\caption{The left panel depicts \Cref{xjflinear}. The right panel depicts its proof by coupling $\bm{x}$ with the non-intersecting Brownian bridges $\bm{z}$. }
\label{f:coupling3}
	\end{figure}
	
\begin{lem}
	
	\label{xjflinear}
	
	There exist constants $c > 0$ and $C > 1$ such that the following holds. Fix an integer $n \ge 1$; real numbers $A \ge 0$, $T > 0$, and $v \ge A T$; and define $g : [0, T] \rightarrow \mathbb{R}$ by setting $g(x) = -Ax$, for each $x \in [0, T]$. Let $\bm{v} = (-v, -v, \ldots , -v) \in \overline{\mathbb{W}}_n$ (where $-v$ appears with multiplicity $n$), and sample non-intersecting Brownian bridges $\bm{x} = (x_1, x_2, \ldots , x_n) \in \llbracket 1, n \rrbracket \times \mathcal{C} \big( [0, T] \big)$ under the measure $\mathfrak{Q}_{-\infty; g}^{\bm{0}_n; \bm{v}}$. Then, for any real numbers $t \in [0, T]$ and $B \ge 1$, we have
	\begin{flalign*}
		\mathbb{P} \Big[ x_n (t)   \le -vt T^{-1} - B t^{1/2} \log |9 t^{-1} T| \Big] \le C e^{Cn - cB^2 n}.
	\end{flalign*}
\end{lem}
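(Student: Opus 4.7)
The plan is to reduce this lower-tail bound on the bottom path $x_n$ to an unconstrained H\"{o}lder estimate for a $2n$-path Brownian watermelon via the classical reflection/doubling principle for non-intersecting Brownian bridges absorbed at a barrier, and then to invoke \Cref{estimatexj2}.

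First, the affine invariance \Cref{linear} (with the $(\alpha, \beta)$ there equal to $(0, -A)$ here) reduces the problem: setting $\tilde{x}_j(t) := x_j(t) + At$, the family $\tilde{\bm x}$ is sampled from $\mathfrak{Q}_{-\infty; 0}^{\bm 0_n; -v' \bm 1_n}$, where $v' := v - AT \geq 0$ and $\bm 1_n := (1, 1, \ldots, 1)$. The desired estimate becomes
\begin{equation*}
\mathbb{P}\bigl[\tilde x_n(t) \leq -v' t/T - B t^{1/2}\log|9 t^{-1} T|\bigr] \leq C e^{Cn - c B^2 n}.
\end{equation*}

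The key step is to invoke the Karlin--McGregor reflection/doubling identity: $\tilde{\bm x}$ has the same joint law as the bottom $n$ paths $(w_{n+1}, \ldots, w_{2n})$ of an unconstrained $2n$-path non-intersecting Brownian bridge ensemble $\bm w \sim \mathfrak{Q}^{\bm 0_{2n}; \bm v_w}$, where the ending data $\bm v_w := (v', \ldots, v', -v', \ldots, -v')$ has $v'$ repeated $n$ times followed by $-v'$ repeated $n$ times. This identification is classical for strictly-ordered negative starting data $\bm u^\varepsilon = (-\varepsilon, -2\varepsilon, \ldots, -n\varepsilon)$ via the method of images in the Karlin--McGregor formula with absorbed kernel $K^0_t(x, y) = p_t(x, y) - p_t(x, -y)$, and extends to the degenerate data $\bm 0_n$ by passing to the limit $\varepsilon \to 0$. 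In particular, $\tilde x_n \overset{d}{=} w_{2n}$.

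Next, since $\bm v_w \geq -v' \bm 1_{2n}$ componentwise, \Cref{monotoneheight} couples $\bm w$ with the $2n$-path Brownian watermelon $\bm z \sim \mathfrak{Q}^{\bm 0_{2n}; -v'\bm 1_{2n}}$ so that $w_j \geq z_j$ for every $(j, t)$; hence $\tilde x_n \overset{d}{=} w_{2n} \geq z_{2n}$. Applying \Cref{estimatexj2} to $\bm z$ (with the $n, \bm u, \bm v, b-a$ there given by $2n, \bm 0_{2n}, -v'\bm 1_{2n}, T$ here, using offset $s=t$ from the initial point, and allowing the constant ending data by an easy limiting argument) yields, with probability at least $1 - C' e^{2C'n - 2c' B^2 n}$, that $z_{2n}(t) \geq -v' t/T - B t^{1/2}\log|2 t^{-1} T|$. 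Using $\log|2 t^{-1} T| \leq \log|9 t^{-1} T|$ and the identity $x_n(t) = \tilde x_n(t) - At$ gives the claimed bound after absorbing the factor of $2$ into the constants $c, C$. The main obstacle is the rigorous justification of the reflection/doubling identification in the degenerate starting-data regime: while both the doubling identity (via an expansion of the image-kernel Karlin--McGregor determinant into block form) and the $\varepsilon \to 0$ continuity of the NI-BB measure in its boundary data are standard, the combined argument---in particular verifying that the bottom $n$ paths of $\bm w$ almost surely remain below $0$, equivalently $w_n(t) > 0 > w_{n+1}(t)$ for all $t \in (0, T)$---deserves careful treatment.
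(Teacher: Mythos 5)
The central step of your argument --- the claimed ``reflection/doubling'' identity asserting that $\widetilde{\bm x}$ (the $n$ bridges from $\bm 0_n$ to $-v'\bm 1_n$ conditioned to stay below $0$) has the same law as the bottom $n$ paths of an \emph{unconstrained} $2n$-path ensemble $\bm w \sim \mathfrak{Q}^{\bm 0_{2n}; (v'\bm 1_n, -v'\bm 1_n)}$ --- is not a classical identity, and it is in fact false. The method of images does give the Karlin--McGregor formula with the absorbed kernel $p_t(x,y)-p_t(x,-y)$ for the wall-constrained ensemble, and a centrosymmetric $2n\times 2n$ heat-kernel determinant does factor into the absorbed and reflected $n\times n$ blocks; but this factorization only applies when the intermediate configuration is itself symmetric about $0$, whereas in the ensemble $\bm w$ the top $n$ paths at intermediate times are integrated out with no symmetry constraint. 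The simplest case already refutes the identity: take $n=1$ and $v'=0$. Then $\widetilde x_1$ is a negative Brownian excursion, which satisfies $\widetilde x_1(t)\le 0$ for all $t$ and has single-time density proportional to $y^2 e^{-y^2/2\sigma_t^2}$ on $y\le 0$, while the bottom path $w_2$ of the $2$-watermelon is the smaller eigenvalue of a $2\times 2$ GUE bridge, which is strictly positive at time $t$ with positive probability and has a different marginal density. So the verification you flag at the end (that $w_{n+1}(t)<0$ for all $t\in(0,T)$) is not merely delicate --- it fails, and with it the equality in law $\widetilde x_n \overset{d}{=} w_{2n}$ on which the whole chain $\widetilde x_n \overset{d}{=} w_{2n}\ge z_{2n}$ rests. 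A weaker stochastic domination $\widetilde x_n \ge_{\mathrm{st}} w_{2n}$ would suffice for your one-sided bound, but it does not follow from \Cref{monotoneheight} (which compares ensembles with the \emph{same} number of curves and ordered boundary data, not a wall versus extra paths), and you give no argument for it; it is a genuinely different and unproven claim.

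For comparison, the paper avoids any exact solvability: after reducing to $T=1$ by \Cref{linear} and \Cref{scale}, it introduces an auxiliary \emph{unconstrained} $n$-path ensemble $\bm z\sim\mathfrak{Q}^{\bm u;\bm v'}$ whose starting and ending data are shifted down by $\theta=\frac{B}{8}t^{1/2}\log|9t^{-1}|$ and $(t^{-1}+1)\theta$ respectively, uses \Cref{estimatexj2} to show that $\bm z$ stays below the boundary $g$ with probability $1-Ce^{Cn-cB^2n}$ (so it can be coupled to coincide with the constrained ensemble with those boundary data), and then uses the linear-in-time coupling of \Cref{uvv}(2) to transfer the estimate back to $\bm x$, paying only the linear buffer $(t^{-1}s+1)\theta$. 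If you want to salvage your outline, you would need either a correct comparison between a wall and extra paths (not available here) or a buffer-and-coupling argument of this type; the unconstrained watermelon estimate via \Cref{estimatexj2} is fine as the final ingredient, but the reduction to it must go through a coupling rather than the doubling identity.
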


\begin{proof}
	
	In view of the invariance of non-intersecting Brownian bridges under affine transformations and diffusive scaling (recall \Cref{linear} and \Cref{scale}), we may assume that $T = 1$. Then, set $\theta = \frac{B}{8} \cdot t^{1/2} \log |9 t^{-1}|$ and $v' = (t^{-1} + 1) \theta + v$; denote $\bm{u} = (-\theta, -\theta, \ldots , -\theta) \in \overline{\mathbb{W}}_n$ and $\bm{v}' = (-v', -v', \ldots , -v') \in \overline{\mathbb{W}}_n$. Sample non-intersecting Brownian bridges $\bm{y} = (y_1, y_2, \ldots , y_n) \in \llbracket 1, n \rrbracket \times \mathcal{C} \big( [0, T] \big)$ and $\bm{z} = (z_1, z_2, \ldots , z_n) \in \llbracket 1, n \rrbracket \times \mathcal{C} \big( [0, T] \big)$ from the measures $\mathfrak{Q}_{-\infty; g}^{\bm{u}; \bm{v}'}$ and $\mathfrak{Q}^{\bm{u}; \bm{v}'}$, respectively; see the right side of \Cref{f:coupling3}. By \Cref{uvv}, we may couple $\bm{x}$ and $\bm{y}$ so that 
	\begin{flalign}
		\label{yjsxjstheta} 
		\big| y_j (s) - x_j (s) \big| \le (t^{-1}s + 1) \theta , \qquad \text{for each $(j, s) \in \llbracket 1, n \rrbracket \times [0, 1]$}.
	\end{flalign}

	Now, let us compare $\bm{y}$ and $\bm{z}$. To that end, first observe from \Cref{estimatexj2} that there exist constants $c \in (0, 1)$ and $C_1 > 1$ such that
	\begin{flalign}
		\label{zjsat0} 
		\begin{aligned}
			& \mathbb{P} \Bigg[ \bigcap_{j=1}^n \bigcap_{s \in [0, 1]} \Big\{  z_j (s) \ge - \displaystyle\frac{B}{8} \cdot s^{1/2} \log |9 s^{-1}|  - (t^{-1} s + 1) \theta - vs \Big\} \Bigg] \ge 1 - C_1 e^{C_1 n - c B^2 n}; \\
			& \mathbb{P} \Bigg[ \bigcap_{j=1}^n \bigcap_{s \in [0, 1]} \Big\{  z_j (s) \le \displaystyle\frac{B}{8} \cdot s^{1/2} \log |9 s^{-1}| - (t^{-1} s + 1) \theta - vs \Big\} \Bigg] \ge 1 - C_1 e^{C_1 n - c B^2 n}.
		\end{aligned} 
	\end{flalign}
	
	\noindent Observe that $\frac{B}{8} \cdot s^{1/2} \log |9 s^{-1}| - (t^{-1} s + 1) \theta - vs \le - As$, for each $s \in [0, 1]$. Indeed, if $s \le t$, this follows from the facts that $v \ge A$ and $t^{1/2} \log |9 t^{-1}| \ge s^{1/2} \log |9 s^{-1}|$ when $0 < s \le t \le 1$; for $s > t$, this follows from the facts that $v \ge A$ and $t^{-1/2} \log |9t^{-1}| \ge s^{-1/2} \log |9s^{-1}|$ when $0 < t \le s \le 1$. Hence, the second statement of \eqref{zjsat0} indicates that $z_j (s)$ remains below $g$ with probability at least $1 - C_1 e^{C_1 n - c B^2}$. It follows that we may couple $\bm{z}$ and $\bm{y}$ to coincide on an event of this probability $1 - C_1 e^{C_1 n - c B^2}$. Together with \eqref{yjsxjstheta}, the first part of \eqref{zjsat0}, and a union bound, this gives
	\begin{flalign*}
		\mathbb{P} \Big[ x_n (t) \ge - 4\theta - vt - \displaystyle\frac{B}{8} \cdot t^{1/2} \log |9t^{-1}| \Big] \ge 1 - 2C_1 e^{C_1 n - c B^2 n},
	\end{flalign*} 
	
	\noindent which yields the lemma, since $\theta = \frac{B}{8} \cdot t^{1/2} \log |9t^{-1}|$. 
\end{proof}

We then deduce the following variant of \Cref{estimatexj3} that proves high probability H\"{o}lder bounds for a single choice of $(j, s, t)$, which quickly implies \Cref{estimatexj3}. We detail the proof of the first part and outline that of the second, which is similar.

\begin{lem}
	
	\label{2estimatex} 
	
	Adopt the notation of \Cref{estimatexj3}.
	
	\begin{enumerate}
		\item \label{xjts1} Under the assumption of part \ref{afg} of \Cref{estimatexj3}, we have
		\begin{flalign*}
			\mathbb{P} \Big[ \big| x_j (t+s) - x_j (t) - sT^{-1}  (v_j - u_j) \big| \ge s^{1/2} \big( B \log | 2s^{-1} T | + \kappa^{-1} (A+B) \big) \Big]  \le C e^{Cn^2 - cB^2 n}. 
		\end{flalign*}
		
		\item \label{xjts2} Under the assumption of part \ref{afg2} of \Cref{estimatexj3}, we have
		\begin{flalign*}
			\mathbb{P} \Big[ \big| x_j (t + s) - x_j (t) - sT^{-1} (v_j - u_j) \big| \ge s^{1/2} \big( B \log |9s^{-1} T| + 2 \big( A + B) \big) \Big] \le C e^{Cn^2 - cB^2 n}.
		\end{flalign*}
	\end{enumerate}
\end{lem}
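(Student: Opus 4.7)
The plan is to prove \Cref{2estimatex} via a resampling argument reducing to the boundary-free estimate \Cref{estimatexj2}. For Part~(1), I sample an auxiliary ensemble $\bm{y} \sim \mathfrak{Q}^{\bm{u};\bm{v}}$ of non-intersecting Brownian bridges with the same starting and ending data as $\bm{x}$ but no upper or lower boundary. It is standard that $\mathfrak{Q}_{f;\infty}^{\bm{u};\bm{v}}$ coincides with the conditional law of $\bm{y}$ given the event $\mathscr{G} = \{y_n(r) > f(r) \text{ for all } r \in [a,b]\}$, so writing $\mathscr{E}_{\bm{z}}$ for the event in Part~(1) applied to the ensemble $\bm{z}$, we obtain
\[
\mathbb{P}[\mathscr{E}_{\bm{x}}] \;\le\; \frac{\mathbb{P}[\mathscr{E}_{\bm{y}}]}{\mathbb{P}[\mathscr{G}]}.
\]
By \Cref{estimatexj2} applied to $\bm{y}$ (with the smaller error $Bs^{1/2}\log|2s^{-1}T|$), the numerator is at most $Ce^{Cn-cB^2n}$, reducing the lemma to the lower bound $\mathbb{P}[\mathscr{G}] \ge e^{-C' n^2}$.

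I expect this lower bound to be the main obstacle. My plan is to establish it via a Cameron--Martin/Girsanov shift: introduce a deterministic function $\phi \colon [a,b] \to \mathbb{R}$ with $\phi(a)=\phi(b)=0$, chosen so that on the interior $[a+\kappa,b-\kappa]$ the shifted process $y_n + \phi$ lies typically above $f$. Under the hypothesis $u_n - f(r),\, v_n - f(r) \ge -AT^{1/2}$, such a $\phi$ can be taken of size $O(AT^{1/2})$ on $[a+\kappa,b-\kappa]$, with $|\phi'| = O(\kappa^{-1} A T^{1/2})$ confined to the transition regions $[a,a+\kappa]$ and $[b-\kappa,b]$. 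Shifting all $n$ paths of $\bm{y}$ by the same $\phi$ preserves non-intersection, so Girsanov on the underlying Brownian motions yields that the joint Radon--Nikodym derivative between the shifted and unshifted laws has magnitude $\exp\bigl(-\tfrac{n^2}{2}\int(\phi')^2\bigr)$; since under the shifted measure the event $\mathscr{G}$ holds with near-unit probability by a standard Brownian-bridge fluctuation bound, this provides the desired $\mathbb{P}[\mathscr{G}] \ge e^{-C'n^2}$. The shift contributes at most $|\phi(t+s)-\phi(t)| = O(s\kappa^{-1}(A+B))$ to the increment $y_j(t+s)-y_j(t)$, which is precisely absorbed into the additional term $s^{1/2}\kappa^{-1}(A+B)$ in the error tolerance of the lemma.

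For Part~(2) the argument is entirely analogous, with $\mathscr{G}$ replaced by $\{y_n > f\} \cap \{y_1 < g\}$ and a second shift pushing $y_1$ below $g$. The Lipschitz hypothesis $|f'|+|g'|\le A$ permits the shifts to be chosen globally $O(A)$-Lipschitz on $[a,b]$ (with no steep transitions near the endpoints of the time interval), so the $\kappa^{-1}$ factor is replaced by a constant, producing the advertised error term $s^{1/2}(A+B)$. The principal technical delicacy throughout is that Girsanov applies to the underlying Brownian motions rather than to the conditioned non-intersecting ensemble; this is resolved by choosing a \emph{common} shift $\phi$ for all $n$ paths, which preserves the non-intersection constraint and causes the Radon--Nikodym computation to factorize into $n$ independent Cameron--Martin contributions whose product furnishes the required exponential lower bound on $\mathbb{P}[\mathscr{G}]$.
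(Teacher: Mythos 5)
Your reduction $\mathbb{P}[\mathscr{E}_{\bm{x}}] \le \mathbb{P}[\mathscr{E}_{\bm{y}}]/\mathbb{P}[\mathscr{G}]$ is correct as stated, but the step it hinges on --- the uniform lower bound $\mathbb{P}[\mathscr{G}] \ge e^{-C'n^2}$ --- is false under the hypotheses of the lemma, and no Girsanov shift can rescue it. The assumptions of part \ref{afg} give no quantitative lower bound on the room the paths have above $f$: they only prevent $f$ from exceeding $u_n, v_n$ by more than $AT^{1/2}$, and in particular allow $f$ to sit arbitrarily close to (or above) the endpoint data. Already for $n=1$, $T=1$, $u_1=v_1=0$ and $f \equiv -\delta$ (so the hypothesis holds with $A=1$), one has $\mathbb{P}[\mathscr{G}] = 1-e^{-2\delta^2} \approx 2\delta^2 \to 0$, so the loss $\mathbb{P}[\mathscr{G}]^{-1}$ is unbounded while the lemma's constants $c, C$ must be uniform in $(f,g,\bm{u},\bm{v})$. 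The Cameron--Martin shift cannot repair this because $\phi$ must vanish at the pinned endpoints, so near $t=a$ the shifted bottom path still has only the (possibly vanishing) margin $u_n - f$ to work with; and in part \ref{afg2} the situation is worse, since the strip $g-f$ may be arbitrarily narrow, making the probability that an unconstrained non-intersecting ensemble stays inside it decay far faster than $e^{-Cn^2}$. There is also a conceptual slip in your last step: the change of measure is only used to estimate $\mathbb{P}[\mathscr{G}]$, so it does not modify the event $\mathscr{E}_{\bm{y}}$, and your argument (were the denominator bound true) would prove the estimate with tolerance $Bs^{1/2}\log|2s^{-1}T|$ alone, with no $\kappa^{-1}(A+B)$ term and no restriction to $[a+\kappa,b-\kappa]$ --- which is too strong, since conditioning on the floor genuinely tilts the increments of the low paths; the extra term is there to absorb that induced drift, not a Girsanov shift.

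This is precisely why the paper avoids paying for the conditioning probability altogether and instead works with monotone couplings. Its proof first shows, via \Cref{monotoneheight} (comparing $\bm{x}$ with an ensemble whose data are shifted up by $A+\tfrac{B}{2}$ and which, by \Cref{estimatexj2}, stays above $f$ with overwhelming probability), that $\sup_{r} x_j(r) \le A+B$ up to probability $Ce^{Cn-cB^2n}$; it then conditions on the paths outside the relevant time window (Gibbs property) and compares, again by monotonicity, with a boundary-free bridge ensemble pinned at $x_j(t+s)$ (resp.\ below a linear barrier in part \ref{afg2}, via the auxiliary \Cref{xjflinear}), whose drift is bounded by $(A+B)/\kappa$ (resp.\ $2(A+B)$) on the good event --- this is where the extra terms in the tolerance actually come from. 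If you want to salvage a conditioning-type argument, you would need a comparison that is monotone rather than absolutely continuous, i.e.\ essentially the couplings of \Cref{monotoneheight} and \Cref{uvv}.
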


\begin{proof}[Proof of \Cref{2estimatex}(\ref{xjts1})]
	
	By \Cref{scale} and \Cref{linear}, we may assume that $(a, b) = (0, 1)$ and that $(u_j, v_j) = (0, 0)$. Since $(sT^{-1})^{1/2} \le 1$ (as $s \le T$), it suffices to show that 
	\begin{flalign}
		\label{xj12} 
		\begin{aligned}
			& \mathbb{P} \big[ x_j (t+s) - x_j (t) \ge Bs^{1/2} \log |2s^{-1}| + \kappa^{-1} s (A+B) \big] \le C e^{Cn^2 - cB^2 n}; \\
			& \mathbb{P} \big[  x_j (t+s) - x_j (t) \le  - Bs^{1/2} \log |2s^{-1}| - \kappa^{-1} s (A+B) \big] \le C e^{Cn^2 - cB^2 n}. 
		\end{aligned} 
	\end{flalign}
	
	\noindent We only establish the first bound in \eqref{xj12}, as then that of the second would follow by symmetry (namely, by reflecting the bridges into the line $\big\{ t = \frac{1}{2} \big\}$). To that end, we first claim that there exist constants $c_1 \in (0, 1)$ and $C_1 > 1$ such that
	\begin{flalign}
		\label{xjrab}
		\mathbb{P} \Bigg[ \displaystyle\sup_{r \in [0, 1]} x_j (r) \le A + B \Bigg] \ge 1 - C_1 e^{C_1 n - c_1 B^2 n}.
	\end{flalign}
	
	\noindent To see this, define $\bm{u}', \bm{v}' \in \overline{\mathbb{W}}_n$ by setting 
	\begin{flalign*} 
		u_k' = u_k + \displaystyle\frac{B}{2} + A; \qquad v_k' = v_k + \displaystyle\frac{B}{2} + A,
	\end{flalign*} 
	
	\noindent for each $k \in \llbracket 1, n \rrbracket$. Then, sample non-intersecting Brownian bridges $\bm{y} = (y_1, y_2, \ldots , y_n) \in \llbracket 1, n \rrbracket \times \mathcal{C} \big( [0, 1] \big)$ and $\bm{y}' = (y_1', y_2', \ldots , y_n') \in \llbracket 1, n \rrbracket \times \mathcal{C} \big( [0, 1] \big)$ according to the measures $\mathfrak{Q}_f^{\bm{u}'; \bm{v}'}$ and $\mathfrak{Q}^{\bm{u}'; \bm{v}'}$, respectively. Then, by \Cref{monotoneheight}, we may couple $\bm{x}$ and $\bm{y}$ so that 
	\begin{flalign}
		\label{xjyjr01}
		x_k (r) \le y_k (r), \qquad \text{for each $(k, r) \in \llbracket 1, n \rrbracket \times [0, 1]$}.
	\end{flalign}
	
	\noindent Moreover, by \Cref{estimatexj2}, there exist constants $c_1 \in (0, 1)$ and $C_2 > 1$ such that
	\begin{flalign}
		\label{ynyj} 
		\begin{aligned}
			& \mathbb{P} \Bigg[ \displaystyle\inf_{r \in [0, 1]} y_n' (r) \ge \displaystyle\sup_{r \in [0, 1]} f(r) + \displaystyle\frac{B}{4} \Bigg] \ge 1 - C_2 e^{C_2 n - c_1 B^2 n}; \\
			& \mathbb{P} \Bigg[ \displaystyle\sup_{r \in [0, 1]} y_j' (r) \le A + B \Bigg] \ge 1 - C_2 e^{C_2 n - c_1 B^2 n},
		\end{aligned} 
	\end{flalign}
	
	\noindent where in the first inequality we used that $\min \{ u_n', v_n' \} = \min \{ u_n, v_n \} + A + \frac{B}{2} \ge \sup_{r \in [0, 1]} f(r) + \frac{B}{2}$, and in the second inequality we used the fact that $u_j' = A + \frac{B}{2} = v_j'$ (recalling that $u_j=v_j=0$).
	
	The first bound in \eqref{ynyj} indicates that $\bm{y}'$ lies above $f$ with probability at least $1 - C_2 e^{C_2 n - c_1 B^2 n}$. Hence, we may couple $\bm{y}$ and $\bm{y}'$ so that they coincide with probability at least $1 - C_2 e^{C_2 n - c_1 B^2 n}$. This, together with the second bound in \eqref{ynyj}, \eqref{xjyjr01}, and a union bound then yields \eqref{xjrab}.

	\begin{figure}
	\center
\includegraphics[width=0.8\textwidth]{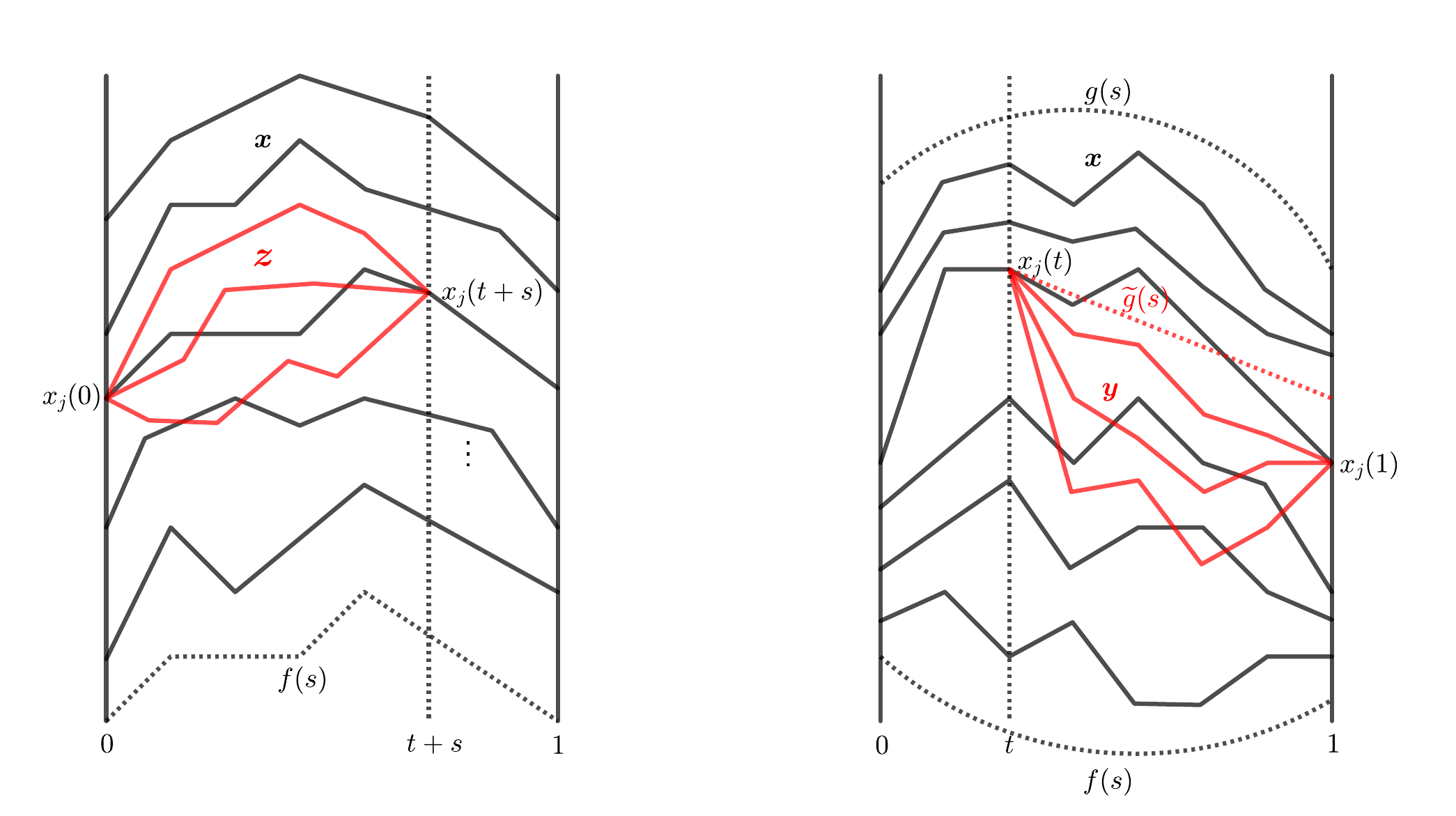}

\caption{The left panel depicts the proof of \Cref{2estimatex}(\ref{xjts1}). The right panel depicts that of \Cref{2estimatex}(\ref{xjts2}).}
\label{f:Holder2}
	\end{figure}

	Now, to show the first bound in \eqref{xj12}, we condition on $x_k (r)$ for $(k, r) \in \llbracket j+ 1, n \rrbracket \times [0, 1]$ and for $(k, r) \in \llbracket 1, n \rrbracket \times [t+s, 1]$, and set $\bm{u}'', \bm{v}'' \in \overline{\mathbb{W}}_{n-j+1}$ by letting $u_k'' = x_j (0) = 0$ and $v_k'' = x_j (t+s)$, for each $k \in \llbracket 1, j \rrbracket$. Sample non-intersecting Brownian bridges $\bm{z} = (z_1, z_2, \ldots , z_j) \in \llbracket 1, j \rrbracket \times \mathcal{C} \big( [0, t+s] \big)$ from the measure $\mathfrak{Q}^{\bm{u}''; \bm{v}''} (n^{-1})$. Then, \Cref{monotoneheight} yields a coupling between $\bm{x}$ and $\bm{z}$ such that $x_k (r) \ge z_k (r)$, for each $(k, r) \in \llbracket 1, j \rrbracket \times [0, 1]$; see the left side of \Cref{f:Holder2}. Thus,
	\begin{flalign*}
		\mathbb{P} \bigg[ x_j (t) \le x_j (& t+s) - B s^{1/2} \log |2s^{-1}| - \displaystyle\frac{s(A+B)}{\kappa} \bigg] \\
		& \le \mathbb{P} \bigg[ z_j (t) \le z_j (t + s) - B s^{1/2} \log |2s^{-1}|  - \displaystyle\frac{s(A+B)}{\kappa} \bigg] \le 2 C_1 e^{C_1 n - c_1 B^2 n},
	\end{flalign*}
	
	\noindent where in the last equality we used \Cref{estimatexj2} and the facts that $s + t \ge t \ge \kappa$ and that
	\begin{flalign*} 
		\displaystyle\frac{v_j'' - u_j''}{s+t} = (s+t)^{-1} \cdot  z_j (t + s)  \le (s+t)^{-1} \cdot \max_{r \in [0, 1]} x_j (r) \le \displaystyle\frac{A+B}{\kappa},
	\end{flalign*}
	
	\noindent on the event $\big\{ \max_{r \in [0, 1]} x_j (r) \le A +B \big\}$. This, with \eqref{xjrab}, establishes the first bound in \eqref{xj12} and thus the first statement of the lemma. 
\end{proof} 

\begin{proof}[Proof of \Cref{2estimatex}(\ref{xjts2})]
	
	Since the proof of the second part of the lemma is similar to that of the first part, we only outline it. We may again assume that $(a, b) = (0, 1)$ and $(u_j, v_j) = (0, 0)$. We may further assume by symmetry that $t \le \frac{1}{2}$. It suffices (again since $(sT^{-1})^{1/2} \le 1$) to show that 
	\begin{flalign}
		\label{xjtsxjt}
		\begin{aligned}
			& \mathbb{P} \big[ x_j (t+s) - x_j (t) \ge Bs^{1/2} \log |9s^{-1}| + 2 s (A+B) \big] \le C e^{Cn^2 - cB^2 n}; \\
			& \mathbb{P} \big[  x_j (t+s) - x_j (t) \le  - Bs^{1/2} \log |9s^{-1}| - 2 s (A+B) \big] \le C e^{Cn^2 - cB^2 n},
		\end{aligned} 
	\end{flalign}
	
	\noindent and by symmetry it suffices to show the latter. Following the derivation of \eqref{xjrab}, we again find that there exist constants $c_1 \in (0, 1)$ and $C_1 > 1$ such that
	\begin{flalign}
		\label{xjrab2}
		\mathbb{P} \Bigg[ \displaystyle\sup_{r \in [0, 1]} x_j (r) \le A + B \Bigg] \ge 1 - C_1 e^{C_1 n - c_1 B^2 n}.
	\end{flalign}
	
	Now condition on $x_k (r)$ for $(k, r) \in \llbracket j+1,n \rrbracket \times [0, 1]$ and for $(k, r) \in \llbracket 1, n \rrbracket \times [0, t]$;  set $\widetilde{g}: [t,1] \rightarrow \mathbb{R}$ by letting $\widetilde{g} (r) = x_j(t) + (t-r) A$, for each $r \in [t, 1]$; and set $\bm{u}', \bm{v}' \in \overline{\mathbb{W}}_j$ by letting $u_k' = x_j (t)$ and $v_k' =\min\{ x_j (1), \widetilde{g}(1)\}  = \min\{0, \widetilde{g}(1)\}$ (where we used that $x_j(1)=v_j=0$)  for each $k \in \llbracket 1, j \rrbracket$.  Then sample non-intersecting Brownian bridges $\bm{y} = (y_1, y_{2}, \ldots , y_j) \in \llbracket 1,j \rrbracket \times \mathcal{C} \big( [t, 1] \big)$ from the measure $\mathfrak{Q}_{-\infty; \tilde{g}}^{\bm{u}'; \bm{v}'} (n^{-1})$; see the right side of \Cref{f:Holder2}. Since $\big| \partial_t g(r) \big| \le A$ for each $r \in [t, 1]$, and since $g(t) \ge x_j (t) = u_j'$, we have $g \ge \widetilde{g}$. Thus, \Cref{monotoneheight} yields a coupling between $\bm{x}$ and $\bm{y}$ such that 
	\begin{flalign} 
		\label{xryr2} 
		x_k (r) \ge y_k (r), \qquad \text{for each $(k, r) \in \llbracket 1, j \rrbracket \times [t, 1]$}.
	\end{flalign}
	
	Applying \Cref{xjflinear}  to $\bm{y}$ yields constants $c_1 \in (0, 1)$ and $C_2 > 1$ such that
	\begin{flalign}
		\label{yjtsuv}
		\mathbb{P} \bigg[ y_j (t+s) \ge y_j (t) - \displaystyle\frac{(u_j' -v_j')s}{1-t} - Bs^{1/2} \log |9s^{-1}| \bigg] \ge 1 - C_2 e^{C_2 n - c_1 B^2 n}.
	\end{flalign}
	
	\noindent If $v_j'=\widetilde{g}(1)=u_j'-A(1-t)$, then $u_j'-v_j'= A(1-t)\leq A$. If instead $v_j'=0$, and  \eqref{xjrab2} implies 
	$\mathbb{P} [ u_j' \le A + B ] \ge 1 - C_1 e^{C_1 n - c_1 B^2 n}$; since $t \le \frac{1}{2}$, we have in either case that
	\begin{flalign*}
		\mathbb{P} \bigg[ \displaystyle\frac{u_j'-v_j'}{1-t} \le 2(A+B) \bigg] \ge 1 - C_1 e^{C_1 n - cB^2 n}.
	\end{flalign*}
	
	\noindent Together with \eqref{yjtsuv}, \eqref{xryr2}, and a union bound, this gives the second bound in \eqref{xjtsxjt}, which implies the second statement of the lemma.
\end{proof}

\begin{proof}[Proof of \Cref{estimatexj3} (Outline)]
	
	The proof follows quickly from \Cref{2estimatex} and (the proof of) \cite[Lemma 3.3]{BPLE}, very similarly to the proof of \cite[Proposition 3.5]{BPLE} given \cite[Lemmas 3.2 and 3.3]{BPLE}, so further details are omitted.
\end{proof}

\subsection{Proofs of \Cref{rhotestimatek}, and \Cref{derivativetm}}

\label{ProofRho}

In this section we establish first \Cref{rhotestimatek}, and next \Cref{derivativetm}.

\begin{proof}[Proof of \Cref{rhotestimatek}]
	
	By \cite[Lemma 3]{FCSD}, we have $\big| m_t^{\mu} (z) \big| \le t^{-1/2}$ for any $z \in \overline{\Lambda}_{t; \mu}$. By \eqref{mrho}, this yields $\varrho_t (x) \le \pi^{-1} t^{-1/2}$, for any $x \in \mathbb{R}$, confirming the first statement of the lemma. 
	
	To verify the latter, fix $x \in \mathbb{R}$ and set $z \in \partial \Lambda_t$ such that $M(z) = z - tm_0 (z) = x \in \mathbb{R}$ (where such a $z$ exists by \Cref{mz}). Then, \eqref{mrho} and \eqref{mt} give 
	\begin{flalign} 
		\label{rhotm0z} 
		\varrho_t (x) = \pi^{-1} \Imaginary m_t \big( z - tm_0 (z) \big) = \pi^{-1} \Imaginary m_0 (z).
	\end{flalign} 
	
	\noindent Let $z_0 \in \partial \Lambda_t$ be such that $M(z_0) = x_0$. Repeatedly applying the relation $\partial_x = \big( \frac{\partial_z}{\partial_x} \big)^{-1} \partial_z = (1 - tm_0')^{-1} \partial_z$, it follows that there exists a constant $C = C (k) > 1$ such that
	\begin{flalign}
		\label{derivativekrho}
		\begin{aligned}
		\big| \partial_x^k \varrho_t (x_0) \big| & \le C \cdot  \displaystyle\max_{k' \in \llbracket 1, k \rrbracket} \big| \partial_z^{k'} m_0 (z_0) \big|^{2k/k'} \cdot \displaystyle\max_{k'' \in \llbracket 0, 2k \rrbracket} \big| 1  - tm_0' (z_0) \big|^{-k''}.
		\end{aligned}
	\end{flalign}
	
	\noindent To bound the first term on the right side, observe since $|z_0-y|\ge \Imaginary z_0$ for any $y \in \mathbb{R}$ that
	\begin{align}\label{e:b1}
		\big| \partial_z^k m_0(z_0) \big| \le k! \displaystyle\int_{-\infty}^{\infty} \frac{\varrho_0(y) dy}{|z_0 - y|^{k+1}}
		\le \frac{k!}{ (\Imaginary z_0)^{k-1}}\displaystyle\int_{-\infty}^{\infty} \frac{\varrho_0 (y) dy}{|z_0 - y|^{2}}
		=k! \cdot \frac{\Imaginary m_0(z_0)}{(\Imaginary z_0)^{k}}\le \frac{k!}{t (t\delta\pi)^{k-1}},
	\end{align}
	
	\noindent where in the last two statements we used the facts that 
	\begin{flalign}
		\label{m0z0z0} 
		\begin{aligned} 
			& \qquad \Imaginary m_0 (z_0) = \pi \varrho_t (x_0) \ge \pi \delta; \qquad \Imaginary z_0 = t \Imaginary m_0 (z_0); \\
			& \displaystyle\int_{-\infty}^{\infty} \displaystyle\frac{\varrho_0 (y) dy}{|z_0 - y|^2} = \displaystyle\frac{1}{\Imaginary z_0} \cdot \Imaginary \displaystyle\int_{-\infty}^{\infty} \displaystyle\frac{\varrho_0 (y) dy}{z_0 - y} = \displaystyle\frac{\Imaginary m_0 (z_0)}{\Imaginary z_0} = t^{-1},
		\end{aligned}
	\end{flalign} 
	
	\noindent Here, the first statement follows from \eqref{rhotm0z} and the assumption $\varrho_t (x_0) \ge \delta$; and the second follows from the fact that $M(z_0) = z_0 - tm_0 (z_0) = x_0 \in \mathbb{R}$; and the third follows from the definition of $m_0$.
	
	To estimate the last term on the right side of \eqref{derivativekrho}, observe that
	\begin{align}\label{e:b2}
		\big| 1-tm'_0(z_0) \big|&\ge 1-t\Real m'_0(z_0),
	\end{align}
	
	\noindent and that
	\begin{align*}
		\Real m'_0(z_0) = \Real \displaystyle\int_{-\infty}^{\infty} \frac{\varrho_0(y) dy}{(y-z_0)^2} & = \displaystyle\int_{-\infty}^{\infty} \big(|y-z_0|^2-2(\Imaginary z_0)^2 \big) \cdot \displaystyle\frac{\varrho_0(y)d y}{|y-z_0|^4} \\
		& =\frac{1}{t}-2 (\Imaginary z_0)^2\displaystyle\int_{-\infty}^{\infty} \frac{\varrho_0 (y) dy}{|y-z_0|^4},
	\end{align*}
	
	\noindent where in the last equality we used the third statement of \eqref{m0z0z0}. Inserting this into \eqref{e:b2}, we find
	\begin{align}\begin{split}\label{e:b3}
			\big| 1-tm'_0(z_0) \big| & \ge 2t (\Imaginary z_0)^2\displaystyle\int_{-\infty}^{\infty} \frac{\varrho_0(y)d y}{|y-z_0|^4}\ge 2t (\Imaginary z_0)^2\left(\displaystyle\int_{-\infty}^{\infty} \frac{\varrho_0(y)dy}{|y-z_0|^2}\right)^2\ge \frac{2 (\Imaginary z_0)^2}{t} \ge 2 t (\pi \delta)^2,
	\end{split}\end{align}
	
	\noindent where in the second statement we used the fact that $\varrho_0$ is a probability density; in the third we used the third statement of \eqref{m0z0z0}; and in the fourth we used the first two statements of \eqref{m0z0z0}. The lemma then follows from inserting \eqref{e:b1} and \eqref{e:b3} into \eqref{derivativekrho}.
\end{proof}

\begin{proof}[Proof of \Cref{derivativetm}]
	
	Denote the Stieltjes transform of $\mu_0$ by $m_0 = m_{\mu_0}$. We will use \eqref{derivativekrho} to estimate the $x$-derivatives of $\varrho_t$, to which end we must upper bound derivatives of $m_0$ and lower bound $|1 - t m_0'|$. To implement this, we claim that there exists a constant $C_1 = C_1 (k) > 1$ such that, for any integer $j \in \llbracket 0, k-1 \rrbracket$,
	\begin{flalign}
		\label{zkm0z}
		\begin{aligned}
			& \big| \partial_z^j m_0 (z) \big| \le C_1 B^{j+2} (\delta \mathfrak{c})^{-j-1}, \qquad \text{for any $z \in \overline{\mathbb{H}}$ such that $|\Real z| \le \displaystyle\frac{\mathfrak{c}}{2}$}; \\
			& \big| 1 - tm_0' (z) \big| \ge \displaystyle\frac{t \delta \mathfrak{c}}{2 \Imaginary z}, \qquad \qquad \quad \text{for any $z \in \overline{\mathbb{H}}$ such that $|\Real z| \le \displaystyle\frac{\mathfrak{c}}{2}$ and $\Imaginary z \in [0, \mathfrak{c}]$}.
		\end{aligned}
	\end{flalign}
	
	\noindent  To see this, fix $j \in \llbracket 0, k-1 \rrbracket$ and observe that \eqref{mz0} implies for $|\Real z| < \frac{\mathfrak{c}}{2}$ that
	\begin{flalign}
		\label{mzm} 
		\begin{aligned} 
			\big| \partial_z^j m_0 (z) \big|
			=j! \Bigg| \int_{-\infty}^{\infty} \frac{\varrho_0(x)d x}{(x-z)^{j+1}} \Bigg|
			& \le j! \Bigg| \int_{-\mathfrak{c}}^{\mathfrak{c}} \frac{\varrho_0(x)d x}{(x-z)^{j+1}} \Bigg| + 2^{j+1} j! \mathfrak{c}^{-j-1}  \\
			& \le  \bigg| \int_{-\mathfrak{c}}^{\mathfrak{c}} \frac{\partial_x^j \varrho_0 (x)d x}{x-z} \bigg|
			+ 2^{j+1} j! \mathfrak{c}^{-j-1} + 2^{j+1} (j+1)! B^{j+1} (\delta \mathfrak{c})^{-j}.
		\end{aligned} 
	\end{flalign} 
	
	\noindent Here, to obtain the second statement we used the facts that $|\Real z| \le \frac{\mathfrak{c}}{2}$ and that $\varrho_0$ is the density of a probability measure. To obtain the third, we integrated by parts $j$ times, using the estimate on the boundary terms, 
	\begin{flalign*} 
		\displaystyle\sum_{i=0}^j i! \bigg| \frac{\partial_x^{j-i} \varrho_0 (\mathfrak{c})}{(z - \mathfrak{c})^i} - \frac{ \partial_x^{j-i} \varrho_0 (-\mathfrak{c})}{(z + \mathfrak{c})^i} \bigg| \le \displaystyle\frac{2^j}{\mathfrak{c}^j} \displaystyle\sum_{i=0}^j i! \Big( \big| \partial_x^{j-i} \varrho_0 (\mathfrak{c}) \big| + \big| \partial_x^{j-i} \varrho_0 (-\mathfrak{c})\big| \Big) \le \displaystyle\frac{2^{j+1} (j+1)! B^{j+1}}{(\delta \mathfrak{c})^{j}},
	\end{flalign*} 
	
	\noindent where the first bound holds again since $|\Real z| \le \frac{\mathfrak{c}}{2}$ and the second holds since $\big| \partial_x^i \varrho_0 (x) \big| \le B^{i+1} \delta^{-i}$ for any $i \in \llbracket 0, k \rrbracket$. Then,
	\begin{flalign*}
		\big| \partial_z^j m_0 (z) \big| & \le \int_{-\mathfrak{c}}^{\mathfrak{c}} \bigg| \frac{\partial_x^j \varrho_0 (x) - \partial_x^j \varrho_0 (\Real z)}{z-x} \bigg| dx + \big| \partial_x^j \varrho_0 (\Real z) \big| \cdot \Bigg| \displaystyle\int_{-\mathfrak{c}}^{\mathfrak{c}} \displaystyle\frac{dx}{x-z} \Bigg| + \mathcal{O} \big( B^{j+1} (\delta \mathfrak{c})^{-j} \big) \\
		& \le \displaystyle\int_{-\mathfrak{c}}^{\mathfrak{c}} \bigg| \displaystyle\frac{\partial_x^j \varrho_0 (x) - \partial_x^j \varrho_0 (\Real z)}{x-z} \bigg| dx + \mathcal{O} \big( B^{j+1} (\delta \mathfrak{c})^{-j} \big) \\
		& \le 2 \mathfrak{c} \cdot \displaystyle\sup_{|x| \le \mathfrak{c}} \big| \partial_x^{j+1} \varrho_0 (x) \big| + \mathcal{O} \big( B^{j+1} \big( \mathfrak{c} \delta)^{-j-1} \big) \le \mathcal{O} \big( B^{j+2} (\mathfrak{c} \delta)^{-j-1} \big),
	\end{flalign*}
	
	\noindent where in the first and second estimates we used \eqref{mzm}, the bound $\big| \partial_x^j \varrho_0 (x) \big| \le B^j \delta^{-j}$ for $|x| \le \mathfrak{c}$ (as $|\Real z| \le \mathfrak{c}$), and the fact that $\big| \PV \int_{-\mathfrak{c}}^{\mathfrak{c}} (x-z)^{-1} dx \big| \le 2$ (as $|\Real z| \le \frac{\mathfrak{c}}{2}$); in the fourth, we used the same bound on derivatives of $\varrho_0$, but with $j$ replaced by $j+1$. Here, the implicit constant only depends on $j$, verifying the first statement of \eqref{zkm0z}.
	
	To verify the second, observe for $|\Real z| \le \frac{\mathfrak{c}}{2}$ and $\Imaginary z \in [0, \mathfrak{c}]$ that
	\begin{align*}
		\big| 1-tm'_0(z) \big|\ge 2t (\Imaginary z)^2\int_{-\infty}^{\infty} \frac{\varrho_0(x)d x}{|x-z|^4}& \ge 
		2t \delta (\Imaginary z)^2\int_{-\mathfrak{c}}^{\mathfrak{c}} \frac{ d x}{|x-z|^4} \\
		& \ge 2 t \delta (\Imaginary z)^2 \displaystyle\int_{-\mathfrak{c}/2}^{\mathfrak{c}/2} \displaystyle\frac{dx}{\big( |x| + \Imaginary z \big)^4} \\
		& = \displaystyle\frac{4}{3} \cdot t \delta (\Imaginary z)^2 \bigg( (\Imaginary z)^{-3} - \Big( \displaystyle\frac{\mathfrak{c}}{2} + \Imaginary z \Big)^{-3}  \bigg) 
		\ge \frac{t\delta \mathfrak{c}}{2 \Imaginary z},
	\end{align*}
	
	\noindent where in the first statement we used the first bound of in \eqref{e:b3}; in the second we used the fact that $\varrho_0 (x) \ge \delta$ for $|x| \le \mathfrak{c}$; in the third we changed variables from $x$ to $x - \Real z$ and used the bound $|\Real z |\le \frac{\mathfrak{c}}{2}$; in the fourth we performed the integration; and in the fifth we used the fact that $\Imaginary z \le \mathfrak{c}$. This verifies the second statement of \eqref{zkm0z}.
	
	To apply \eqref{zkm0z}, let $C_2 = C_2 (k) > 10$ denote a sufficiently large constant (to be determined later) and fix real numbers 
	\begin{flalign}
		\label{x0t2} 
		x_0, t \in \mathbb{R}, \qquad \text{such that} \quad |x_0| \le \frac{\mathfrak{c}}{4}, \quad \text{and} \quad t \in \Big[ 0, \frac{(\delta \mathfrak{c})^{20}}{C_2 B^{20}} \Big].
	\end{flalign}
	
	\noindent By \Cref{mz}, there exists a complex number $z_0 = z_0 (x_0, t) \in \partial \Lambda_t$ such that $z_0 - t m_0 (z_0) = x_0$. We claim that $|\Real z_0| \le \frac{\mathfrak{c}}{2}$. To see this, first observe that $z_0 = z_0 (x_0, t)$ is continuous in $t$. Indeed, since the map $M(z) = z - t m_0 (z)$ is a homeomorphism from $\overline{\Lambda}_t$ (recall \eqref{mtlambdat}) to $\overline{\mathbb{H}}$ by \Cref{mz} that is uniformly continuous in $t$, its inverse $M^{-1}$ is also continuous in $t$. Thus, $z_0 (x_0, t) = M^{-1} (x_0)$ is continuous in $t$. Now, assume to the contrary that $\big| \Real z_0 (x, t) \big| > \frac{\mathfrak{c}}{2}$. Then since $z_0 (x_0, 0) = x_0 \in \big[ -\frac{\mathfrak{c}}{4}, \frac{\mathfrak{c}}{4} \big]$, there exists some $s \in [0, t]$ such that $\big| \Real z_0 (x_0, s) \big| = \frac{\mathfrak{c}}{2}$. Since $|x_0| \le \frac{\mathfrak{c}}{4}$, it follows that $\big| t m_0 (z_0) \big| = |z_0 - x_0| \ge \frac{\mathfrak{c}}{4}$ at $z_0 = z_0 (x_0, s)$. This contradicts the fact (from the $j = 0$ case of \eqref{zkm0z} and \eqref{x0t2}) that 
	\begin{flalign*} 
		\big| t m_0 (z_0) \big| \le \frac{(\delta \mathfrak{c})^{20}}{C_2 B^{20}} \cdot C_1 B^2 (\delta \mathfrak{c})^{-1} \le \frac{\mathfrak{c}}{8},
	\end{flalign*} 
	
	\noindent for $C_2 > C_1$, and so $|\Real z_0| \le \frac{\mathfrak{c}}{2}$. Further observe that $\Imaginary z_0 \le \frac{\mathfrak{c}}{2}$, since otherwise the third statement of \eqref{m0z0z0} would yield (as $C_2 > 4$)
	\begin{align*}
		\frac{1}{t}=\int_{-\infty}^{\infty}\frac{\varrho_0(x) dx }{|x-z_0|^2} < 4 \mathfrak{c}^{-2} \int_{-\infty}^{\infty} \varrho_0(x)d x = 4 \mathfrak{c}^{-2} < \displaystyle\frac{C_2 B^{20}}{(\delta \mathfrak{c})^{20}},
	\end{align*}
	
	\noindent which contradicts \eqref{x0t2}. 
	
	Therefore, 
	\begin{align*}
		\frac{1}{t}=\int_{-\infty}^{\infty}\frac{\varrho_0(x) dx}{|x-z_0|^2}\le B \int_{-\mathfrak{c}}^{\mathfrak{c}} \frac{dx}{|x-z_0|^2}+ 4 \mathfrak{c}^{-2} \le \frac{B\pi}{\Imaginary z}+ 4 \mathfrak{c}^{-2},
	\end{align*}
	
	\noindent where in the first statement we used the third part of \eqref{m0z0z0}; in the second we used the facts that $\sup_{|x| \le \mathfrak{c}} \varrho (x) \le B$, that $\varrho$ is a probability density, and that $|\Real z_0| \le \frac{\mathfrak{c}}{2}$; and in the third we used the equality $\int_{-\infty}^{\infty} |x-z|^{-2} dx = \pi (\Imaginary z)^{-1}$. Since $t \le \frac{\mathfrak{c}^2}{8}$, it follows that $t (\Imaginary z)^{-1} \ge (2 B \pi)^{-1}$. Inserting this into the second bound of \eqref{zkm0z} (which applies as $|\Real z_0 | \le \frac{\mathfrak{c}}{2}$ and $\Imaginary z_0 \le \frac{\mathfrak{c}}{2}$) gives
	\begin{flalign}
		\label{tm0z2}
		\big|1-tm_0'(z_0) \big| \ge \displaystyle\frac{\delta \mathfrak{c}}{4 \pi B}.
	\end{flalign} 
	
	\noindent Inserting the first bound of \eqref{zkm0z} (at $z = z_0$, which applies since $|\Real z_0| \le \frac{\mathfrak{c}}{2}$) and \eqref{tm0z2} into \eqref{derivativekrho}, it follows that there exists a constant $C_3 = C_3 (k) > 1$ such that 
	\begin{flalign}
		\label{rhotxderivativetx}
		\big| \partial_x^j \varrho_t (x_0) \big| \le C_3 (B^8 \delta^{-6} \mathfrak{c}^{-6})^{j+1}, \qquad \text{for any $j \in \llbracket 0, k-1 \rrbracket$}.
	\end{flalign}

	Before proceeding to bound the $t$-derivative of $\varrho_t$, let us quickly establish the first statement of \eqref{rhotxtx}. To that end, observe that 
	\begin{flalign*}
		\big| \varrho_t (x_0) - \varrho_0 (x_0) \big| & = \pi^{-1} \big| \Imaginary m_t (x_0) - \Imaginary m_0 (x_0) \big| \\
		& = \pi^{-1} \Big| \Imaginary m_t \big( z_0 - tm_0 (z_0) \big) - \Imaginary m_0 (x_0) \Big| \\
		& = \pi^{-1} \big| \Imaginary m_0 (z_0) - \Imaginary m_0 (x_0) \big| \\
		&  \le C_3 B^{16} \delta^{-12} \mathfrak{c}^{-12} \cdot |z_0 - x_0| \le C_3 B^{16} \delta^{-12} \mathfrak{c}^{-12} \cdot t \big| m_0 (z_0) \big| \le \displaystyle\frac{\delta}{4},
	\end{flalign*}
	
	\noindent where in the first statement, we applied \eqref{mrho}; in the second, the definition of $z_0$; in the third statement, \eqref{mt}; in the fourth, the first bound in \eqref{zkm0z} at $j = 1$; in the fifth, the fact that $z_0 - x_0 = t m_0 (z_0)$; and in the sixth, the bound on $t$ from \eqref{x0t2} (and making $C_2 > 4C_3$ sufficiently large) with first bound in \eqref{zkm0z} at $j = 0$. Since $\varrho_0 (x_0) \ge \delta$ (as $|x| \le \frac{\mathfrak{c}}{4} \le \mathfrak{c}$), this implies the first statement of \eqref{rhotxtx}.
	
	To prove the second statement of \eqref{rhotxtx}, observe by \eqref{rhotxderivativetx} that it suffices to bound the $t$-derivatives of $\varrho_t$. Recall from \eqref{trhoty} that $\partial_t \varrho_t(x)= \pi \partial_x \big( \varrho_t(x)H\varrho_t(x) \big)$,  where $H\varrho_t(x)$ is the Hilbert transform of $\varrho_t$ (recall \eqref{transform2}). For any real number $x\in \big[-\frac{\mathfrak{c}}{4},\frac{\mathfrak{c}}{4} \big]$ and integer $j \in \llbracket 0, k-2 \rrbracket$, we can then bound the derivatives of Hilbert transform by
	\begin{align}\begin{split}\label{e:Hderi}
			\big| \partial_x^j H\varrho_t(x) \big|	&= \pi^{-1} \Bigg| \partial_x^j \bigg( \PV \displaystyle\int_{-\infty}^{\infty} \frac{\varrho_t(x-y) dy}{y} \bigg) \Bigg| \\
			& =  \pi^{-1} \Bigg| \partial_x^j \bigg( \PV \displaystyle\int_{-\mathfrak{c}/2}^{\mathfrak{c}/2} \displaystyle\frac{\varrho_t (x-y) dy}{y} \bigg) + \partial_x^j \displaystyle\int_{|y| > \mathfrak{c}/2} \displaystyle\frac{\varrho_t (x - y) dy}{y} \Bigg| \\
			& =  \pi^{-1} \Bigg| \partial_x^j \bigg( \PV \int_{-\mathfrak{c}/2}^{\mathfrak{c}/2} \frac{\big( \varrho_t(x-y)-\varrho_t(x) \big) dy}{y}  \bigg) + \partial_x^j \displaystyle\int_{|y-x| > \mathfrak{c}/2} \displaystyle\frac{\varrho_t (y)dy}{x-y} \Bigg| \\
			& \le \mathfrak{c} \cdot \displaystyle\sup_{|y| \le \mathfrak{c}} \big| \partial_x^{j+1} \varrho_t (y) \big| + 2^j j! \mathfrak{c}^{-j}  = \mathcal{O} \big( (B^8 \delta^{-6} \mathfrak{c}^{-6})^{j+2} \big),
	\end{split} \end{align}
	
	\noindent where in the first and second statements we used \eqref{transform2}; in the third we used the fact that $\PV (a) = 0$ for any constant $a$ in the first integral, and we changed variables from $y$ to $x-y$ in the second; in the fourth we used the fact that $\big| \partial_x^j (x-y)^{-1} \big| \le j! 2^{j+1} \mathfrak{c}^{-j-1}$ for $|x-y| \ge \frac{\mathfrak{c}}{2}$ and that $\varrho_t$ is a probability measure; and in the fifth we applied \eqref{rhotxderivativetx}. Combining this with \eqref{rhotxderivativetx} and \eqref{trhoty} then yields the second bound in \eqref{rhotxtx}.
\end{proof}

\subsection{Proof of \Cref{uvv}}

\label{DiscreteLinear} 

In this section we establish \Cref{uvv}, to which end we discretize in time (leaving space continuous). For any integer $T \ge 1$, a \emph{($T$-step) Gaussian walk} starting at $u \in \mathbb{R}$ is a probability measure on $(T+1)$-tuples $\big( x(0), x(1), \ldots , x(T) \big) \in \mathbb{R}^T$ with $x(0) = u$ such that, for each $j \in \llbracket 1, T \rrbracket$, the jump $x(j) - x(j-1)$ is a centered Gaussian random variable of variance $1$. A \emph{($T$-step) Gaussian bridge} from $u$ to $v$ is a Gaussian walk starting at $u$, conditioned to end at $v$ (so $x(T) = v$). The following definition, similar to \Cref{qxyfg}, provides notation for non-intersecting Gaussian bridges.

\begin{definition}
	
	\label{gxyfg}
	
	Fix integers $T, n \ge 1$; two $n$-tuples $\bm{u}, \bm{v} \in \overline{\mathbb{W}}_n$; and two functions $f, g: \llbracket 0, T \rrbracket \rightarrow \overline{\mathbb{R}}$. Let $\mathfrak{G}_{f; g}^{\bm{u}; \bm{v}}$ denote the law on sequences $\bm{x} (t) = \big( x_1 (t), x_2 (t), \ldots , x_n (t) \big)$, with $t \in \llbracket 0, T \rrbracket$, given by $n$ independent $T$-step Gaussian walks, conditioned on satisfying the following three properties.
	\begin{enumerate} 
		\item The $x_j$ do not intersect, that is, $\bm{x} (t) \in \mathbb{W}_n$ for each $t \in \llbracket 1, T-1 \rrbracket$. 
		\item The $x_j$ start at $u_j$ and end at $v_j$, that is, $x_j (0) = u_j$ and $x_j (T) = v_j$ for each $j \in \llbracket 1, n \rrbracket$.
		\item The $x_j$ are bounded below by $f$ and above by $g$, that is, $f < x_j < g$ for each $j \in \llbracket 1, n \rrbracket$. 
	\end{enumerate} 

	\noindent We assume $f(0) \le u_n \le u_1 \le g(0)$ and $f(T) \le v_n \le v_1 \le g(T)$, even when not stated explicitly. 
	
\end{definition}

Analogous to \Cref{linear}, the following states that non-intersecting Gaussian walks are invariant under affine transformations.

\begin{rem}
	\label{discretelinear}
	
	Nonintersecting Gaussian walks satisfy the following invariance property under affine transformations. Adopt the notation of \Cref{gxyfg}, and fix real numbers $\alpha, \beta \in \mathbb{R}$. Define the $n$-tuples $\widetilde{\bm{u}}, \widetilde{\bm{v}} \in \mathbb{W}_n$ and functions $\widetilde{f}, \widetilde{g} : \llbracket 0, n \rrbracket \rightarrow \overline{\mathbb{R}}$ by setting 
	\begin{flalign*} 
		& \widetilde{u}_j = u_j + \alpha, \quad \text{and} \quad \widetilde{v}_j = v_j + T \beta + \alpha, \qquad \qquad \qquad \quad \text{for each $j \in \llbracket 0, n \rrbracket$}; \\
		& \widetilde{g}(t) = f(t) + t \beta + \alpha, \quad \text{and} \quad \widetilde{g} (t) = g(t) + t \beta + \alpha, \qquad \text{for each $t \in \llbracket 0, T \rrbracket$}.
	\end{flalign*} 
	
	\noindent Sampling $\widetilde{\bm{x}} = \big( \widetilde{x}_1 (t), \widetilde{x}_2 (t), \ldots , \widetilde{x}_n (t) \big)$ under $\mathfrak{G}_{\tilde{f}, \tilde{g}}^{\tilde{\bm{u}}; \tilde{\bm{v}}}$, there is a coupling between $\widetilde{\bm{x}}$ and $\bm{x}$ such that $\widetilde{x}_j (t) = x_j (t) + \beta t + \alpha$ for each $t \in \llbracket 0, T \rrbracket$ and $j \in \llbracket 1, n \rrbracket$. 
	
	As in \Cref{linear}, this follows from the analogous affine invariance of a single Gaussian random bridge, together with the fact that affine transformations do not affect the non-intersecting property.

\end{rem}

The following lemma states height monotonicity (the analog of \Cref{monotoneheight}) for non-intersecting Gaussian bridges. Its proof is very similar to that of \Cref{monotoneheight} (from \cite{PLE}) and is thus omitted.

\begin{lem}
	
	\label{monotoneheightdiscrete}

	Fix integers $T, n \ge 1$; four $n$-tuples $\bm{u}, \widetilde{\bm{u}}, \bm{v}, \widetilde{\bm{v}} \in \mathbb{W}_n$; and functions $f, \widetilde{f}, g, \widetilde{g}: \llbracket 0, T \rrbracket \rightarrow \overline{\mathbb{R}}$ with $f \le g$ and $\widetilde{f} \le \widetilde{g}$. Sample non-intersecting Gaussian bridges $\bm{x} (t)$ and $\widetilde{\bm{x}} (t)$ from the measures $\mathfrak{G}_{f; g}^{\bm{u}; \bm{v}}$ and $\mathfrak{G}_{\tilde{f}; \tilde{g}}^{\tilde{\bm{u}}; \tilde{\bm{v}}}$, respectively. If 
	\begin{flalign*}
		f \le \widetilde{f}; \quad  g \le \widetilde{g}; \quad \bm{u} \le \widetilde{\bm{u}}; \quad  \bm{v} \le \widetilde{\bm{v}},
	\end{flalign*}
	
	\noindent then there exists a coupling between $\bm{x}$ and $\widetilde{\bm{x}}$ such that $\bm{x} (t) \le \widetilde{\bm{x}} (t)$, for each $t \in \llbracket 0, T \rrbracket$. 
	
\end{lem}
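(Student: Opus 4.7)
Following the Brownian analog in \cite{PLE}, the idea is to realize $\mathfrak{G}_{f;g}^{\bm{u};\bm{v}}$ as the stationary law of a single-site Glauber resampling chain, and to construct a monotone coupling of two such chains. First, by writing the explicit density of $\mathfrak{G}_{f;g}^{\bm{u};\bm{v}}$ as a product of Gaussian-increment densities multiplied by indicator constraints, one verifies the following Gibbs description: for any $(j,t) \in \llbracket 1, n \rrbracket \times \llbracket 1, T-1 \rrbracket$, the conditional law of $x_j(t)$ given all other coordinates is a Gaussian of mean $\tfrac{1}{2}\bigl(x_j(t-1) + x_j(t+1)\bigr)$ and variance $\tfrac{1}{2}$, truncated to the interval $I_{j,t}(\bm x) = \bigl(\max\{x_{j+1}(t), f(t)\},\, \min\{x_{j-1}(t), g(t)\}\bigr)$, under the convention $x_0 \equiv +\infty$, $x_{n+1} \equiv -\infty$. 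Taking the Markov chain that selects $(j,t)$ uniformly at random and resamples $x_j(t)$ from this Gibbs conditional gives an ergodic chain whose unique invariant measure is $\mathfrak{G}_{f;g}^{\bm{u};\bm{v}}$.

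Next, I would run two such chains in parallel, one targeting $\mathfrak{G}_{f;g}^{\bm{u};\bm{v}}$ and the other $\mathfrak{G}_{\widetilde{f};\widetilde{g}}^{\widetilde{\bm{u}};\widetilde{\bm{v}}}$, coupled by choosing the common index $(j,t)$ at each step and feeding a single uniform $U \sim \mathrm{Unif}[0,1]$ through the two inverse CDFs. If the state pair $(\bm x, \widetilde{\bm x})$ satisfies $\bm x \le \widetilde{\bm x}$ before an update, then the hypotheses $f \le \widetilde{f}$ and $g \le \widetilde{g}$, combined with the dominance of the neighbors $x_{j\pm 1}(t) \le \widetilde{x}_{j \pm 1}(t)$ and $x_j(t \pm 1) \le \widetilde{x}_j(t \pm 1)$, force both endpoints of $I_{j,t}$ as well as the Gaussian mean to be coordinate-wise dominated. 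An elementary one-dimensional computation shows that a truncated Gaussian is stochastically increasing in each of its mean, lower endpoint, and upper endpoint separately, so the inverse-CDF coupling preserves the inequality $x_j(t) \le \widetilde{x}_j(t)$ after the update. Inductively, the partial order $\bm X^{(k)} \le \widetilde{\bm X}^{(k)}$ is maintained for all $k \ge 0$.

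Finally, one constructs admissible initial configurations satisfying $\bm X^{(0)} \le \widetilde{\bm X}^{(0)}$ using the dominance of the boundary data (for instance, via suitably shifted linear interpolations between the respective endpoints, with perpendicular spacing inside the strip to enforce non-intersection). Since the marginals of both chains converge in total variation to their respective stationary measures, passing to a weak subsequential limit of the joint law produces the desired monotone coupling of $\mathfrak{G}_{f;g}^{\bm{u};\bm{v}}$ and $\mathfrak{G}_{\widetilde{f};\widetilde{g}}^{\widetilde{\bm{u}};\widetilde{\bm{v}}}$. The main technical obstacle is verifying the stochastic monotonicity of a truncated Gaussian under joint perturbation of its mean and truncation endpoints; this is elementary but must be checked in each of the three parameters separately, after which the inverse-CDF coupling and the above induction handle the rest. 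A secondary subtlety is producing the dominated initial configurations inside the admissible regions, which follows from the boundary data dominance together with the fact that adding a sufficiently large affine vertical shift to a linear interpolation both preserves the non-intersection property and lifts the configuration into the desired strip.
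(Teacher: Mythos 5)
Your proposal is correct and is essentially the paper's own (omitted) argument: the paper defers to the Glauber-dynamics monotone coupling of \cite{PLE}, and its proofs of the closely related \Cref{fgk} and \Cref{converge0} implement exactly your scheme of single-site heat-bath resampling from a truncated Gaussian, order-preserving coupled updates at a common site, convergence of the chain to stationarity, and passage to a weak limit. The one ingredient you assert without proof, total-variation convergence of the resampling chain to $\mathfrak{G}_{f;g}^{\bm{u};\bm{v}}$, is precisely what the paper establishes via Harris' ergodic theorem for finite $f,g$ (infinite boundaries being handled by an approximation), and your ad hoc dominated initial configurations are more robustly obtained by taking the coordinatewise minimum of one admissible configuration for each set of boundary data rather than by linear interpolation.
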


The following lemma is a discrete (in time) variant of \Cref{uvv} and quickly implies it.

\begin{lem} 
	
	\label{fgk} 
	
	Adopt the notation and assumptions of \Cref{monotoneheightdiscrete}, and fix some real number $B \ge 0$. If $u_j \le \widetilde{u}_j \le u_j + B$ and $v_j \le \widetilde{v}_j \le v_j + B$ for all $j \in \llbracket 1, n \rrbracket$, then the following two statements hold.

	\begin{enumerate} 
		\item If $f(t) \le \widetilde{f}(t) \le f(t) + B$ and $g(t) \le \widetilde{g}(t) \le g(t) + B$ for each $t \in \llbracket 0, T \rrbracket$, then there is a coupling between $\bm{x}$ and $\widetilde{\bm{x}}$ so that $x_j (t) \le\widetilde{x}_j (t) \le x_j (t) + B$ for each $(t, j) \in \llbracket 0, T \rrbracket \times \llbracket 1, n \rrbracket$. 
		\item If $\bm{u} = \widetilde{\bm{u}}$, and $f(t) \le \widetilde{f} (t) \le f(t) + \frac{tB}{T}$ and $g(t) \le \widetilde{g}(t) \le g(t) + \frac{tB}{T}$ for each $t \in \llbracket 0, T \rrbracket$, then there is a coupling between $\bm{x}$ and $\widetilde{\bm{x}}$ so that $x_j (t) \le\widetilde{x}_j (t) \le x_j (t) + \frac{tB}{T}$ for each $(t, j) \in \llbracket 0, T \rrbracket \times \llbracket 1, n \rrbracket$.
	\end{enumerate} 
	
\end{lem}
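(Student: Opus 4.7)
Both parts of \Cref{fgk} follow the same strategy: construct an auxiliary non-intersecting Gaussian bridge ensemble $\widehat{\bm{x}}$ by a deterministic affine transformation of $\bm{x}$, so that $\widetilde{\bm{x}}$ is sandwiched between $\bm{x}$ and $\widehat{\bm{x}}$ in the sense of the hypotheses of \Cref{monotoneheightdiscrete}. Two applications of \Cref{monotoneheightdiscrete}, composed together, will then yield a joint coupling realizing the desired pointwise bounds.

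For part (1), I would set $\widehat{u}_j = u_j + B$, $\widehat{v}_j = v_j + B$, $\widehat{f}(t) = f(t) + B$, and $\widehat{g}(t) = g(t) + B$ for each $j \in \llbracket 1, n \rrbracket$ and $t \in \llbracket 0, T \rrbracket$, and sample $\widehat{\bm{x}}$ from $\mathfrak{G}_{\widehat{f}; \widehat{g}}^{\widehat{\bm{u}}; \widehat{\bm{v}}}$. The $(\alpha, \beta) = (B, 0)$ case of \Cref{discretelinear} then couples $\widehat{\bm{x}}$ and $\bm{x}$ so that $\widehat{x}_j(t) = x_j(t) + B$ identically. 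The assumptions of part (1) give the chain $\bm{u} \le \widetilde{\bm{u}} \le \widehat{\bm{u}}$, $\bm{v} \le \widetilde{\bm{v}} \le \widehat{\bm{v}}$, $f \le \widetilde{f} \le \widehat{f}$, and $g \le \widetilde{g} \le \widehat{g}$, so applying \Cref{monotoneheightdiscrete} to the pairs $(\bm{x}, \widetilde{\bm{x}})$ and $(\widetilde{\bm{x}}, \widehat{\bm{x}})$ and combining produces a joint coupling with $x_j(t) \le \widetilde{x}_j(t) \le \widehat{x}_j(t) = x_j(t) + B$ for every $(j, t)$.

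For part (2), I would take instead $\widehat{u}_j = u_j$, $\widehat{v}_j = v_j + B$, $\widehat{f}(t) = f(t) + tB/T$, and $\widehat{g}(t) = g(t) + tB/T$. The crucial observation is that a \emph{linear-in-$t$} drift preserves the Gaussian bridge structure, since adding a deterministic slope $B/T$ to each Gaussian increment (which still has variance $1$) turns a bridge from $u$ to $v$ into a bridge from $u$ to $v + B$. Formally, the $(\alpha, \beta) = (0, B/T)$ case of \Cref{discretelinear} produces a coupling in which $\widehat{x}_j(t) = x_j(t) + tB/T$ identically. Since $\bm{u} = \widetilde{\bm{u}} = \widehat{\bm{u}}$ and since the hypotheses yield $\widetilde{v}_j \le v_j + B = \widehat{v}_j$, $\widetilde{f}(t) \le \widehat{f}(t)$, and $\widetilde{g}(t) \le \widehat{g}(t)$, the same two-step invocation of \Cref{monotoneheightdiscrete} delivers a joint coupling with $x_j(t) \le \widetilde{x}_j(t) \le x_j(t) + tB/T$.

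No serious obstacle is anticipated. The only technical point is upgrading the two pairwise monotone couplings produced by \Cref{monotoneheightdiscrete} into a joint coupling of the three processes $(\bm{x}, \widetilde{\bm{x}}, \widehat{\bm{x}})$; this is routine via disintegration, sampling $\bm{x}$ and $\widehat{\bm{x}}$ conditionally independently given $\widetilde{\bm{x}}$ using the conditional laws from the two pairwise couplings, so I would merely remark on it.
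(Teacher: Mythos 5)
There is a genuine gap in your final step, and it sits exactly where the real work of the lemma lies. The statement requires a \emph{single} coupling of $\bm{x}$ and $\widetilde{\bm{x}}$ in which both inequalities $x_j(t) \le \widetilde{x}_j(t)$ and $\widetilde{x}_j(t) \le x_j(t) + B$ (resp.\ $+tB/T$) hold simultaneously. Your construction produces two separate pairwise couplings: one of $(\bm{x},\widetilde{\bm{x}})$ with $\bm{x}\le\widetilde{\bm{x}}$, and one of $(\widetilde{\bm{x}},\widehat{\bm{x}})$ with $\widetilde{\bm{x}}\le\widehat{\bm{x}}$, where $\widehat{\bm{x}}$ is \emph{defined} as the deterministic shift of $\bm{x}$. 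Gluing these by sampling $\bm{x}$ and $\widehat{\bm{x}}$ conditionally independently given $\widetilde{\bm{x}}$ preserves the two pairwise laws, but it destroys the identity $\widehat{\bm{x}}=\bm{x}+B$: in the glued space, $\widehat{\bm{x}}$ is merely an independent copy with the law of $\bm{x}+B$ given $\widetilde{\bm{x}}$, so from $\widetilde{x}_j(t)\le\widehat{x}_j(t)$ you cannot conclude $\widetilde{x}_j(t)\le x_j(t)+B$. Conversely, if you fix the monotone coupling of $(\bm{x},\widetilde{\bm{x}})$ from \Cref{monotoneheightdiscrete} and then set $\widehat{\bm{x}}:=\bm{x}+B$ on that same space, nothing guarantees $\widetilde{\bm{x}}\le\widehat{\bm{x}}$, since \Cref{monotoneheightdiscrete} only asserts the existence of \emph{some} coupling with that property, not that every coupling (in particular the induced one) has it. Two one-sided stochastic dominations of multidimensional processes do not in general combine into a simultaneous two-sided coupling, so the "routine disintegration" remark is precisely the missing idea.

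The paper's proof supplies what your argument lacks. It first proves the $(T,n)=(2,1)$ case directly: for a single midpoint of a two-step Gaussian bridge, the two stochastic dominations $\mathbb{P}[x_1(1)\ge r]\le\mathbb{P}[\widetilde{x}_1(1)\ge r]\le\mathbb{P}[x_1(1)+A+\tfrac{B}{2}\ge r]$ can be realized \emph{simultaneously} by a one-dimensional (quantile-type) coupling, which is special to real-valued random variables. It then runs the Glauber dynamics of \Cref{dynamic2} on $\bm{x}$ and $\widetilde{\bm{x}}$ jointly, using this single-site two-sided estimate at every update to preserve the sandwich $\mathcal{P}^k y_j(t)\le\mathcal{P}^k\widetilde{y}_j(t)\le\mathcal{P}^k y_j(t)+\tfrac{tB}{T}$ by induction on $k$, and finally passes to a limit point using the convergence to stationarity in \Cref{converge0}. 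If you want to complete your approach, you would need some replacement for this dynamic argument (or another mechanism producing a single coupling with both bounds); the affine invariance of \Cref{discretelinear} and two applications of \Cref{monotoneheightdiscrete} alone do not suffice.
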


\begin{proof}[Proof of \Cref{uvv}]
	
	This follows from \Cref{fgk} with the convergence of Gaussian walks to Brownian bridges; we omit further details.
\end{proof} 

Similarly to the proof \cite[Lemma 2.6 and Lemma 2.7]{PLE}, that of \Cref{fgk} makes use of a local Glauber dynamic, which in our setting is defined as follows.

\begin{definition}
	\label{dynamic2} 
	
	Fix integers $T, n \ge 1$ and two functions $f, g : \llbracket 0, T \rrbracket \rightarrow \mathbb{R}$. For $t \in \llbracket 0, T \rrbracket$, let $\bm{y} (t) = \big( y_1 (t), y_2 (t), \ldots , y_n (t) \big) \in \mathbb{W}_n$ be a family of $n$ non-intersecting $T$-step walks. The \emph{Glauber dynamics} is the discrete-time Markov chain whose state $\mathcal{P}^k \bm{y} (t) = \big( \mathcal{P}^k y_1 (t), \mathcal{P}^k y_2 (t), \ldots , \mathcal{P}^k y_n (t) \big)$ at time $k \ge 0$ is determined as follows. Below, for each $t \in \llbracket 0, T \rrbracket$, let $y_0 (t) = g(t)$ and $y_{n+1} (t) = f(t)$. 
	
	Set $\mathcal{P}^0 \bm{y} = \bm{y}$. For $k \ge 1$, let $(a, b, d) \in \llbracket 0, T-2 \rrbracket \times \llbracket 1, n \rrbracket \times \mathbb{Z}_{\ge 0} $ be such that $k = dn(T-1) + an + b$. For $(t, j) \ne (a+1, b)$, set $\mathcal{P}^k y_j (t) = \mathcal{P}^{k-1} y_j (t)$. For $(t, j) = (a+1, b)$, set $\mathcal{P}^k y_b (a+1)$ to be the middle ($t=1$) value of a $2$-step Gaussian random bridge, conditioned to start at $\mathcal{P}^{k-1} y_b (a)$; end at $\mathcal{P}^{k-1} y_b (a+2)$; be above $\mathcal{P}^{k-1} y_{b + 1} (a+1)$; and be below $\mathcal{P}^{k-1} y_{b-1} (a+1)$. 
	
\end{definition}

\begin{rem}
	
	\label{gmeasurep}
	
	It follows from the Gibbs property (for non-intersecting Gaussian bridges) that $\mathfrak{G}_{f; g}^{\bm{y} (0); \bm{y}(T)}$ is a stationary measure for the Glauber dynamics $\mathcal{P}$.
	
\end{rem}

The following lemma indicates that the Glauber dynamics converge to the stationary measure $\mathfrak{G}_{f; g}^{\bm{y}(0); \bm{y}(T)}$ in the long-time limit; its proof is given in \Cref{Proof0} below. 

\begin{lem}
	
	\label{converge0} 
	
	Adopt the notation of \Cref{dynamic2}, and assume that $\max_{t \in \llbracket 0, T \rrbracket} \big( |f(t)| + |g(t)| \big) < \infty$. Then, the law of $\mathcal{P}^{2n(T-1)k} \bm{y}$ converges as $k$ tends to $\infty$ to $\mathfrak{G}_{f; g}^{\bm{y}(0); \bm{y}(T)}$. 
	
\end{lem}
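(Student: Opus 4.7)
The proof is a monotone coupling argument combined with a uniqueness-of-stationary-measure argument, both of which are standard for Glauber dynamics on compact state spaces with positive-density target. Because $\max_{t\in\llbracket 0,T\rrbracket}\bigl(|f(t)|+|g(t)|\bigr)<\infty$, the state space $\Omega\subset\mathbb{R}^{n(T-1)}$ of admissible configurations (strictly non-intersecting, strictly between $f$ and $g$, matching $\bm{y}(0),\bm{y}(T)$ at $t\in\{0,T\}$) is open and bounded, with compact closure $\overline\Omega$; the target $\mu:=\mathfrak{G}_{f;g}^{\bm{y}(0);\bm{y}(T)}$, which is stationary for $\mathcal{P}$ by Remark \ref{gmeasurep}, has a positive continuous density on $\Omega$.

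First, I would establish stochastic monotonicity of the single-step kernel $\mathcal{P}$: the resample at $(a+1,b)$ draws $y_b(a+1)$ from a Gaussian of mean $\tfrac{1}{2}\bigl(y_b(a)+y_b(a+2)\bigr)$ and variance $\tfrac{1}{2}$, truncated to $\bigl(y_{b+1}(a+1),y_{b-1}(a+1)\bigr)$. This truncated Gaussian is stochastically increasing in both endpoints and in the mean, so implementing the update via the inverse CDF with a single common uniform random variable yields a coupling under which $\bm{y}\le\bm{y}'$ componentwise implies $\mathcal{P}\bm{y}\le\mathcal{P}\bm{y}'$, and hence $\mathcal{P}^k\bm{y}\le\mathcal{P}^k\bm{y}'$ for every $k$. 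Next, I would select pointwise-extremal profiles $\bm{y}^\pm\in\overline\Omega$ (the componentwise supremum and infimum over $\Omega$, which exist by compactness and typically lie in $\partial\Omega$), so that any admissible $\bm{y}$ satisfies $\bm{y}^-\le\bm{y}\le\bm{y}^+$. By extremality $\mathcal{P}\bm{y}^+\le\bm{y}^+$ and $\mathcal{P}\bm{y}^-\ge\bm{y}^-$ a.s., and monotonicity then forces $(\mathcal{P}^k\bm{y}^+)_k$ to be non-increasing and $(\mathcal{P}^k\bm{y}^-)_k$ non-decreasing under the coupling; since both sequences lie in the compact set $\overline\Omega$, each converges a.s. to a random limit $\bm{Y}^\pm_\infty$. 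Sampling $\bm{Z}\sim\mu$ with $\bm{y}^-\le\bm{Z}\le\bm{y}^+$ and applying stationarity, the coupling also gives $\mathcal{P}^k\bm{y}^-\le\mathcal{P}^k\bm{Z}\le\mathcal{P}^k\bm{y}^+$ with $\mathcal{P}^k\bm{Z}\sim\mu$, so $\bm{Y}^-_\infty\le\bm{Z}\le\bm{Y}^+_\infty$ in the limit.

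The main obstacle, and the crux of the proof, is showing $\bm{Y}^+_\infty$ and $\bm{Y}^-_\infty$ have the same law $\mu$, i.e., that $\mu$ is the unique $\mathcal{P}$-invariant probability measure on $\overline\Omega$. Here I would exploit that after one full sweep of $n(T-1)$ Glauber steps, every interior coordinate has been resampled exactly once, so the kernel $\mathcal{P}^{n(T-1)}(\bm{y},\cdot)$ admits a strictly positive continuous density on $\Omega$ with respect to Lebesgue measure, for every $\bm{y}\in\Omega$. A Doeblin-type minorization on any compact subset $K\subset\Omega$ — namely, existence of $k_0$ and $\epsilon>0$ with $\mathcal{P}^{k_0}(\bm{y},\cdot)\ge\epsilon\,\mu(\cdot\cap K)$ uniformly in $\bm{y}\in K$ — then yields uniqueness of the invariant probability measure restricted to $\Omega$, via standard Meyn--Tweedie-type arguments. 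The delicate point is that $\bm{y}^\pm\in\partial\Omega$, where some truncation intervals collapse; this is handled by noting that, for degenerate coordinates of $\bm{y}^\pm$, the very first Glauber step at that coordinate produces (with probability one) a value strictly interior to a non-degenerate neighboring interval, so $\mathcal{P}^{n(T-1)}\bm{y}^\pm\in\Omega$ almost surely, after which the interior uniqueness result applies. Sandwiching an arbitrary admissible initial $\bm{y}$ between $\bm{y}^-$ and $\bm{y}^+$ and taking $k=2n(T-1)k'\to\infty$ (the factor of $2$ accommodating this initial sweep) then yields $\mathcal{P}^{2n(T-1)k'}\bm{y}\to\mu$ in distribution, as claimed.
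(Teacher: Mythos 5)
There is a genuine gap, and it sits exactly where you flagged the "delicate point." Your extremal profiles are fully degenerate: the componentwise supremum over admissible configurations is $y_j^+(t)=g(t)$ for every $j\in\llbracket 1,n\rrbracket$ and every interior time $t\in\llbracket 1,T-1\rrbracket$ (and similarly $y_j^-(t)=f(t)$). Consequently, when the Glauber update visits a coordinate $(a+1,b)$ of $\bm{y}^+$, the truncation interval $\big(y_{b+1}^+(a+1),\,y_{b-1}^+(a+1)\big)=\big(g(a+1),g(a+1)\big)$ is empty for every $b\in\llbracket 1, n\rrbracket$ (for $b=1$ the upper endpoint is the boundary $g$ itself, and for $b<n$ the lower neighbor also sits at $g$); only the bottom curve $b=n$ ever sees a nondegenerate window, and even that does not help the coordinates above it within the same sweep. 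So the chain started from $\bm{y}^{\pm}$ is either undefined or, under any natural convention, never moves, and your proposed repair --- that the first update at a degenerate coordinate lands strictly inside a ``non-degenerate neighboring interval'' --- is unavailable because no such interval exists. The monotone sandwich $\mathcal{P}^k\bm{y}^-\le\mathcal{P}^k\bm{y}\le\mathcal{P}^k\bm{y}^+$ therefore cannot be initialized at genuinely extremal states, and the squeeze argument collapses. There are secondary issues of the same flavor: even granting limits $\bm{Y}^{\pm}_{\infty}$, identifying their laws with $\mathfrak{G}_{f;g}^{\bm{y}(0);\bm{y}(T)}$ requires the limit laws to be invariant (a Feller-type continuity of the kernel up to $\partial\Omega$, where truncation windows degenerate) and requires your uniqueness statement to cover invariant measures that may charge $\partial\Omega$, whereas your Doeblin minorization is only claimed on compact subsets of the open set $\Omega$.

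The fix is to drop the monotone-coupling scaffolding altogether, which is what the paper does. Because $\max_t\big(|f(t)|+|g(t)|\big)<\infty$, the one-coordinate resampling density (a two-step Gaussian bridge conditioned to a nonempty window inside a bounded region) is bounded below \emph{uniformly over the entire state space}, not just on compacts; iterating over two full sweeps gives a \emph{global} minorization $\mathcal{P}^{2n(T-1)}(\bm{y},\cdot)\ge\alpha\,\nu_0(\cdot)$ with $\nu_0$ Lebesgue measure on $\Omega_0$, valid for every admissible $\bm{y}$ (this is \Cref{vy2}; the first sweep pushes the configuration into a set described independently of the lower neighbors, the second covers all of $\Omega_0$, whence the factor $2$). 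Combined with the trivially verified Lyapunov condition (\Cref{vy1}, the state space being bounded) and the Harris-type ergodic theorem \Cref{convergeprocess}, this yields in one stroke both uniqueness of the invariant measure and total-variation convergence from an arbitrary initial configuration; since $\mathfrak{G}_{f;g}^{\bm{y}(0);\bm{y}(T)}$ is stationary by \Cref{gmeasurep}, it is the limit. Your compact-set Doeblin bound is the right idea, but it must be upgraded to this uniform, global form (which the finiteness of $f$ and $g$ makes easy) so that no separate treatment of boundary or extremal configurations is needed.
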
 

Now let us briefly outline the proof of \Cref{fgk}.

\begin{proof}[Proof of \Cref{fgk} (Outline)]
	
	We only discuss the second statement of the lemma, as the proof of the first is entirely analogous. We will assume throughout that $\big| f(t) \big| + \big| g(t) \big|  < \infty$ for each $t \in \llbracket 1, T-1 \rrbracket$, as the cases when $f(s) = -\infty$ or $g(s) = \infty$ for some $s \in \llbracket 1, T-1\rrbracket$ can then be obtained from a limiting procedure (in particular, one can consider a sequence $f_k$ of lower boundaries such that $f_k (s)$ always remains finite but tends to $-\infty$ as $k$ tends to $\infty$, and similarly for $g$).
	
	Let us first confirm the lemma when $(T, n) = (2, 1)$, in which case we will verify the following more general result. Fix a real number $A \ge 0$ and assume that 
	\begin{flalign*}
		& x_1 (0) \le \widetilde{x}_1 (0) \le  x_1 (0) +A; \qquad  x_1 (0) \le \widetilde{x}_1 (2) \le  x_1 (2) + A + B; \\
		& f(1) \le \widetilde{f}(1) \le f(1) + A + \displaystyle\frac{B}{2}; \quad g(1) \le \widetilde{g} (1) \le g(1) + A + \displaystyle\frac{B}{2}.
	\end{flalign*}

	\noindent Then, there exists a coupling between $x_1$ and $\widetilde{x}_1$ such that 
	\begin{flalign}
		\label{x1aa} 
		x_1 (1) \le \widetilde{x}_1 (1) \le x_1 (1) + A + \displaystyle\frac{B}{2}.
	\end{flalign}

	\noindent Indeed, this is a consequence of the fact (which follows quickly from \Cref{monotoneheightdiscrete} and \Cref{discretelinear}, or can also be verified directly) that $\mathbb{P} \big[ x_1 (1) \ge r \big] \le \mathbb{P} \big[ \widetilde{x}_1 (1) \ge r \big] \le \mathbb{P} \big[ x_1 (1) + A + \frac{B}{2} \ge r \big]$, for any real number $r \in \mathbb{R}$. This shows \eqref{x1aa}, whose $A = 0$ special case yields the second part of \Cref{fgk} at $(T, n) = (2, 1)$.  
	 
	 Thus, we assume in what follows that either $T \ge 3$ or $n \ge 2$. Fix two ensembles $\bm{y}(t) = \big( y_1 (t), y_2 (t), \ldots , y_n (t) \big) \in \mathbb{W}_n$ and $\widetilde{\bm{y}} (t) = \big( \widetilde{y}_1 (t), \widetilde{y}_2 (t), \ldots , \widetilde{y}_n (t) \big) \in \mathbb{W}_n$ of $n$ non-intersecting $T$-step walks, with starting points $\bm{y} (0) = \bm{u} = \widetilde{\bm{y}} (0)$, and ending points $\bm{y} (T) = \bm{v}$ and $\widetilde{\bm{y}} (T) = \widetilde{\bm{v}}$, such that, for each $t \in \llbracket 0, T \rrbracket$, 
	\begin{flalign*}
		\bm{y}(t) \le \widetilde{\bm{y}}(t) \le \bm{y} (t) + \displaystyle\frac{tB}{T}; \qquad f(t) \le y_n (t) \le y_1 (t) \le g(t); \qquad \widetilde{f}(t) \le \widetilde{y}_1 (t) \le \widetilde{y}_n (t) \le \widetilde{g} (t).
	\end{flalign*}
	
	\noindent Such initial data exist due to the assumptions of the second part of the lemma.
	
	We next run the Glauber dynamics on $\bm{y}$ (with boundaries $(f, g)$) and $\widetilde{\bm{y}}$ (with boundaries $(\widetilde{f}, \widetilde{g})$), coupled in the following way. If we update the pair $\big( \mathcal{P}^{k-1} y_j (t), \mathcal{P}^{k-1} \widetilde{y}_j (t) \big)$, then using \eqref{x1aa} (with the $(A; B)$ there given by $\big( \frac{(t-1)B}{T}; \frac{2B}{T} \big)$ here), we couple $\mathcal{P}^k y_j (t) \le \mathcal{P}^k \widetilde{y}_j (t) \le \mathcal{P}^k y_j (t) + \frac{tB}{T}$. By induction on $k$, this ensures that $\mathcal{P}^k y_j (t) \le \mathcal{P}^k \widetilde{y}_j (t) \le \mathcal{P}^k y_j (t) + \frac{tB}{T}$ holds for each $(t, j) \in \llbracket 0, T \rrbracket \times \llbracket 1, n \rrbracket$ and $k \ge 0$. Letting $k$ tend to $\infty$ and taking any limit point of this coupling, the lemma then follows from \Cref{converge0}.
\end{proof}

\subsection{Proof of \Cref{converge0}}

\label{Proof0}

In this section we establish \Cref{converge0}. We begin by recalling a general statement on convergence to stationarity of infinite-dimensional Markov chains. 

Let $\Omega$ be a measurable space with $\sigma$-algebra $\mathcal{F}$; let $\mathscr{P} (\Omega)$ denote the space of probability measures on $(\Omega, \mathcal{F})$. Given any two probability measures $\nu_1, \nu_2 \in \mathscr{P} (\Omega)$, recall that the total variation distance between them is defined by
\begin{flalign}
	\label{nu1dnu2} 
	d_{\TV} (\nu_1, \nu_2) = \displaystyle\sup_{A \in \mathcal{F}} \big| \nu_1 (A) - \nu_2 (A) \big|.
\end{flalign}

\begin{assumption} 
	
	\label{vx} 
	
	Adopting the above notation, let $\mathsf{K} : \Omega \times \mathcal{F} \rightarrow \mathbb{R}_{\ge 0}$ be a Markov transition kernel. For any function $\varphi : \Omega \rightarrow \mathbb{R}_{\ge 0}$ and measure $\mu$ on $\Omega$, define the function $\mathfrak{\mathsf{K}} \varphi : \Omega \rightarrow \mathbb{R}_{\ge 0}$ and measure $\mathsf{K} \mu$ on $\Omega$ by setting
	\begin{flalign*} 
		\mathsf{K} \varphi (x) 	= \displaystyle\int_{\Omega} \varphi (y) \mathsf{K} (x, dy); \qquad \mathsf{K} \mu (A) = \displaystyle\int_{\Omega} \mathsf{K} (x, A) \mu (dx),
	\end{flalign*} 
	
	\noindent for any $x \in \Omega$ and measurable set $A \in \mathcal{F}$. Assume that there exist constants $\alpha \in (0, 1)$, $\gamma \in (0, 1)$, $B \ge 0$, and $R > \frac{2B}{1 - \gamma}$; a potential function $V : \Omega \rightarrow \mathbb{R}_{\ge 0}$; and a probability measure $\nu$ on $\Omega$, such that the following two conditions hold.
	
	\begin{enumerate} 
		\item For each $x \in \Omega$, we have $\mathsf{K}V (x) \le \gamma V (x) + B$, for each $x \in \Omega$.
		\item For each $x \in \Omega$ with $V(x) \le R$, and any measurable set $A \in \mathcal{F}$, we have $\mathsf{K} (x, A) \ge \alpha \nu (A)$. 
	\end{enumerate} 
	
\end{assumption} 

The following result provides a convergence theorem for such Harris chains \cite{ESMP}. It appears in \cite{CSS}, though it is stated as written below in \cite{ET}.

\begin{lem}[{\cite[Theorem 1.2]{ET}}]
	
	\label{convergeprocess}
	
	Adopt \Cref{vx}, and fix a measure $\mu$ on $\Omega$. Then, the Markov process defined by $\mathsf{K}$ has a unique stationary measure $\mu_0$, and $\lim_{m \rightarrow \infty} \| \mathsf{K}^m \mu - \mu_0 \|_{\TV} = 0$. 
	
\end{lem}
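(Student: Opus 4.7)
The plan is to establish this general ergodic theorem for Harris chains via the weighted-norm contraction approach of Hairer--Mattingly. I would define the weighted total-variation seminorm
\begin{flalign*}
\| \eta \|_{\beta} = \displaystyle\int_{\Omega} \big( 1 + \beta V(x) \big) |\eta| (dx)
\end{flalign*}
on signed measures $\eta$ on $\Omega$, where $\beta > 0$ is a free parameter to be calibrated, and $|\eta|$ denotes total variation. This induces a metric $d_{\beta} (\mu_1, \mu_2) = \| \mu_1 - \mu_2 \|_{\beta}$ on the space $\mathscr{P}_V \subseteq \mathscr{P} (\Omega)$ of probability measures $\mu$ with $\int V d\mu < \infty$, under which $\mathscr{P}_V$ is complete and $d_{\beta} \ge \| \cdot \|_{\TV}$. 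The goal is to show that, for an appropriate choice of $\beta > 0$, the Markov operator $\mathsf{K}$ is a strict contraction on $(\mathscr{P}_V, d_{\beta})$; Banach's fixed point theorem then yields a unique $\mathsf{K}$-stationary measure $\mu_0 \in \mathscr{P}_V$ together with geometric convergence $d_{\beta} (\mathsf{K}^m \mu, \mu_0) \le \lambda^m d_{\beta} (\mu, \mu_0)$ for any $\mu \in \mathscr{P}_V$, from which the stated total-variation convergence follows (and is extended to general starting measures $\mu$ by approximation, after observing that $\mathsf{K} \mu$ already has finite $V$-moment by the Lyapunov hypothesis).

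The key technical step will be to produce, for each pair $(x, y) \in \Omega \times \Omega$, a one-step coupling $(X, Y)$ of $\mathsf{K}(x, \cdot)$ and $\mathsf{K}(y, \cdot)$ such that
\begin{flalign*}
\mathbb{E} \Big[ \big( 2 + \beta V(X) + \beta V(Y) \big) \textbf{1}_{X \ne Y} \Big] \le \lambda \big( 2 + \beta V(x) + \beta V(y) \big),
\end{flalign*}
for some $\lambda \in (0, 1)$ independent of $(x, y)$; passing from this pointwise contraction to $\| \mathsf{K} (\mu_1 - \mu_2) \|_{\beta} \le \lambda \| \mu_1 - \mu_2 \|_{\beta}$ proceeds by applying it to an optimal coupling of the Jordan decomposition $\mu_1 - \mu_2 = \eta^+ - \eta^-$ and integrating. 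To construct the coupling, I would split into two regimes. In the \emph{large} regime $V(x) + V(y) > R$, I would take $X$ and $Y$ to be independent samples from $\mathsf{K}(x, \cdot)$ and $\mathsf{K}(y, \cdot)$; the Lyapunov hypothesis $\mathsf{K}V \le \gamma V + B$ then produces $\mathbb{E} [V(X) + V(Y)] \le \gamma (V(x) + V(y)) + 2B$, and the strict inequality $2B < R(1 - \gamma)$ allows this to be rewritten as $2 + \beta \mathbb{E}[V(X) + V(Y)] \le \lambda_1 (2 + \beta V(x) + \beta V(y))$ for some $\lambda_1 < 1$, provided $\beta$ is taken sufficiently large (so that the $\beta V$ terms dominate the constant $2B$). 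In the \emph{small} regime $V(x) + V(y) \le R$, I would use the minorization $\mathsf{K}(x, \cdot) \ge \alpha \nu$ and $\mathsf{K}(y, \cdot) \ge \alpha \nu$ to construct a coupling in which $X = Y$ with probability at least $\alpha$ (the standard Doeblin splitting), and on the disagreement event bound $V(X) + V(Y)$ by its Lyapunov mean; the indicator factor $\textbf{1}_{X \ne Y}$ then produces a prefactor at most $1 - \alpha$, yielding a contraction factor $\lambda_2 < 1$ provided $\beta$ is taken sufficiently small (so that the $\beta$-enhancement on the disagreement event does not overwhelm the gain from $\alpha$, using the uniform bound $V \le R$).

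The two regimes impose opposing constraints on $\beta$: the large-$V$ case wants $\beta$ large to make the $\beta V$ term dominate the additive $2B$, while the small-$V$ case wants $\beta$ small to keep the disagreement-event integrand near $2$. The main obstacle is precisely this calibration: I must choose $\beta > 0$ and then $\lambda = \max\{\lambda_1, \lambda_2\} < 1$ so that both estimates hold simultaneously. Explicitly, I would use the freedom in the Lyapunov bound to replace $\gamma$ by some $\gamma' \in (\gamma, 1)$ at the cost of restricting to $V(x) + V(y) > R_0$ for some $R_0 < R$ with $R_0 (1 - \gamma') > 2B$; this makes $\lambda_1 = \gamma' + \mathcal{O}(\beta^{-1})$, which is strictly less than $1$ for $\beta$ large, while in the small regime $\lambda_2 = 1 - \alpha + \mathcal{O}(\beta)$ is strictly less than $1$ for $\beta$ small. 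Choosing $\beta$ at the crossover point where $\lambda_1 = \lambda_2$ completes the argument. Once contraction in $d_{\beta}$ is established, the uniqueness of $\mu_0$, its existence as a Banach fixed point, and the total-variation convergence $\lim_{m \to \infty} \| \mathsf{K}^m \mu - \mu_0 \|_{\TV} = 0$ all follow from standard Banach-space arguments applied to the complete metric space $(\mathscr{P}_V, d_{\beta})$.
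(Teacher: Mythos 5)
The paper gives no proof of this lemma at all: it is quoted verbatim from Hairer--Mattingly \cite[Theorem 1.2]{ET}, and your weighted total-variation contraction argument (the norm $\|\cdot\|_{\beta}$, the two-regime coupling with the Lyapunov bound off the level set and the Doeblin minorization on it, and the calibration of $\beta$ at the crossover) is precisely the proof given in that reference, so your proposal is correct and matches the cited source's approach. The one small slip is the parenthetical claim that $\mathsf{K}\mu$ has finite $V$-moment for an arbitrary starting measure $\mu$ (false when $\int V \, d\mu = \infty$); the standard fix is to note that $V$ is real-valued, so each $\delta_x$ has finite $V$-moment and the contraction gives $\| \mathsf{K}^m \delta_x - \mu_0 \|_{\TV} \rightarrow 0$ pointwise, after which the general case follows by integrating over $\mu (dx)$ and dominated convergence.
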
 

We next apply \Cref{convergeprocess} to the Glauber dynamics $\mathcal{P}$ of \Cref{dynamic2}. Throughout the remainder of this section, we adopt the notation of that definition. These include the functions $f, g : \llbracket 0, T \rrbracket \rightarrow \mathbb{R}$; the family $\bm{y}$ of $n$ non-intersecting $T$-step walks $\bm{y} (t) = \big( y_1 (t), y_2 (t), \ldots , y_n (t) \big) \in \mathbb{W}_n$ (over $t \in \llbracket 0, T \rrbracket$), and the associated Markov operator $\mathcal{P}$ (which we also interpret as a kernel) for the Glauber dynamics. We also set $\bm{u} = \bm{y}(0)$ and $\bm{v} = \bm{y}(T)$, which are fixed throughout the dynamics. Then the state space for the Glauber dynamics $\mathcal{P}$ can be viewed as 
\begin{flalign*}
	\Omega_0 = \Big\{ \big( \bm{y} (t) \big)_{t \in \llbracket 1, T - 1 \rrbracket} \in (\mathbb{W}_n)^{T-1} : \displaystyle\min_{t \in \llbracket 1, T-1 \rrbracket} \big( y_n (t) - f(t) \big) > 0, \displaystyle\min_{t \in \llbracket 1, T-1 \rrbracket} \big( g(t) - y_1 (t) \big) > 0 \Big\}.
\end{flalign*}

\noindent The associated potential function $V_0$ is defined by
\begin{flalign*}
	V_0 (\bm{y}) = \displaystyle\max_{j \in \llbracket 1, n \rrbracket } \displaystyle\max_{t \in \llbracket 1, T-1 \rrbracket } \Big( \big| y_j (t) \big| + 1 \Big).
\end{flalign*} 

\noindent We then have the following two lemmas verifying \Cref{vx} for the Glauber dynamics.

\begin{lem}
	
	\label{vy1} 
	
	There exist constants $\gamma \in (0, 1)$ and $B = B(\bm{u}, \bm{v}, f, g) \ge 0$ such that, for any family $\bm{y}$ of $n$ non-intersecting $T$-step walks with $\bm{y} (0) = \bm{u}$ and $\bm{y} (T) = \bm{v}$, we have $\mathcal{P} V_0 (\bm{y}) \le \gamma V_0 (\bm{y}) + B$.
	
\end{lem}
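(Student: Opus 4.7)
The plan rests on the observation that the potential $V_0$ is uniformly bounded on the state space $\Omega_0$, reducing the Lyapunov drift to a trivial consequence of the boundary assumption on $f$ and $g$. By the definition of $\Omega_0$, any configuration $\bm{y}$ satisfies $f(t) < y_n(t) < \cdots < y_1(t) < g(t)$ for every $t \in \llbracket 1, T-1 \rrbracket$, so $|y_j(t)| < \max(|f(t)|, |g(t)|)$ for all $(j,t) \in \llbracket 1, n \rrbracket \times \llbracket 1, T-1 \rrbracket$. Combined with the hypothesis $K := \max_{s \in \llbracket 0, T \rrbracket}(|f(s)| + |g(s)|) < \infty$ of \Cref{converge0}, this yields the uniform a priori bound $V_0(\bm{y}) \le K + 1$ for every $\bm{y} \in \Omega_0$.

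Next I would verify that $\mathcal{P}$ preserves $\Omega_0$. The update at coordinate $(a+1, b)$ resamples $y_b(a+1)$ from a Gaussian conditioned to lie strictly between $y_{b+1}(a+1)$ and $y_{b-1}(a+1)$ (using the conventions $y_0(\cdot) = g$ and $y_{n+1}(\cdot) = f$), so both non-intersection at time $a+1$ and the strict containment $f(a+1) < y_b(a+1) < g(a+1)$ (inherited by nesting) are maintained. All other coordinates are unchanged, so $\mathcal{P}\bm{y} \in \Omega_0$ almost surely, and therefore
\[
V_0(\mathcal{P}\bm{y}) \;\le\; K + 1 \quad\text{a.s.,} \qquad \text{whence} \qquad \mathcal{P} V_0(\bm{y}) \;=\; \mathbb{E}\bigl[V_0(\mathcal{P}\bm{y})\bigr] \;\le\; K + 1.
\]

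The stated drift inequality is then immediate: choosing, for concreteness, $\gamma = 1/2$ and $B := K + 1$ (which may be regarded as a function of $\bm{u}, \bm{v}, f, g$, even though it depends only on $f$ and $g$), we have for every $\bm{y} \in \Omega_0$
\[
\mathcal{P} V_0(\bm{y}) \;\le\; K + 1 \;=\; B \;\le\; \gamma V_0(\bm{y}) + B,
\]
using $V_0(\bm{y}) \ge 0$. There is no genuine obstacle here: because $V_0$ is bounded uniformly on $\Omega_0$, the Foster--Lyapunov drift condition with contraction factor $\gamma < 1$ is vacuous, and the only point requiring verification is the invariance of $\Omega_0$ under $\mathcal{P}$, which is built into the definition of the Glauber update.
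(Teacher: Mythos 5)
Your proposal is correct and follows essentially the same route as the paper: the paths are confined between $f$ and $g$, so $V_0$ is uniformly bounded by $B = \max_{t}(|f(t)|+|g(t)|)+1$, and with $\gamma = \tfrac12$ the drift bound $\mathcal{P}V_0(\bm{y}) \le B \le \gamma V_0(\bm{y}) + B$ holds deterministically. The only difference is that you spell out the invariance of $\Omega_0$ under the Glauber update, which the paper leaves implicit.
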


\begin{proof}
	
	This follows from taking $\gamma = \frac{1}{2}$ and $B = \max_{t \in \llbracket 0, T \rrbracket} \big( | f(t)| + |g(t)| \big) + 1$, which applies since $f(t) \le y_j (t) \le g(t)$ holds for any $(t, j) \in \llbracket 0, T\rrbracket \times \llbracket 1, n\rrbracket$. In particular, $\mathcal{P} V_0 (\bm{y}) \le B \le \gamma V_0 (\bm{y}) + B$ holds deterministically. 
\end{proof}

\begin{lem}
	
	\label{vy2} 
	
	There exists a constant $\alpha = \alpha (\bm{u}, \bm{v}, f, g) > 0$ such that the following holds. Letting $\nu_0$ denote the Lebesgue measure on the set $\Omega_0$, we have  $\mathcal{P}^{2n(T-1)} (\bm{y}, A) \ge \alpha \nu_0 (A)$, for each $\bm{y} \in \Omega_0$ and any measurable subset $A \subseteq (\mathbb{W}_n)^{T-1}$. 
	
\end{lem}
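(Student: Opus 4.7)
The plan is to show that the $2n(T-1)$-step transition density of $\mathcal{P}$ with respect to Lebesgue measure on $\Omega_0$ is bounded below by a positive constant uniformly in $\bm{y}, \bm{z} \in \Omega_0$. Since $f$ and $g$ are bounded on $\llbracket 0, T \rrbracket$, every element of $\Omega_0$ satisfies $|y_j(t)| \le M$ for some $M = M(\bm{u}, \bm{v}, f, g) > 0$. Consequently, each single step of the Glauber dynamics resamples one coordinate to a truncated Gaussian of variance $\tfrac{1}{2}$ whose mean lies in $[-M, M]$ and whose truncation interval $(l, r)$ is contained in $[-M, M]$; the conditional density at any point of $(l, r)$ is therefore at least $c_0 := e^{-4M^2}/(2M) > 0$.

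Next I would express $\mathcal{P}^{2n(T-1)}(\bm{y}, \cdot)$ as an iterated integral over the first-round intermediate values $\bm{\alpha} = (\alpha_{t,j})_{(t,j) \in \llbracket 1, T-1 \rrbracket \times \llbracket 1, n \rrbracket}$. Each of the $n(T-1)$ positions is visited twice under the lexicographic update order: the first visit at $(t, j)$ introduces $\alpha_{t,j}$ as an integration variable, and the second visit pins the coordinate to the target value $\bm{z}(t, j)$. The resulting transition density $p(\bm{y}, \bm{z})$ is then the integral over $\bm{\alpha}$ of a product of $2n(T-1)$ truncated Gaussian densities. On the \emph{admissible set} $\Sigma(\bm{y}, \bm{z})$ of intermediate configurations for which every truncation indicator evaluates to one, the integrand is bounded below by $c_0^{2n(T-1)}$, so that
\begin{equation*}
p(\bm{y}, \bm{z}) \ge c_0^{2n(T-1)} \cdot \mathrm{Leb}\bigl(\Sigma(\bm{y}, \bm{z})\bigr).
\end{equation*}
The proof is then completed by establishing a uniform lower bound $\mathrm{Leb}(\Sigma(\bm{y}, \bm{z})) \ge c_1 > 0$ depending only on $\bm{u}, \bm{v}, f, g$, yielding $\alpha := c_0^{2n(T-1)} c_1$.

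The admissible set is cut out by two families of inequalities arising from the two rounds of updates: the first-round update at $(t, j)$ imposes $\bm{y}(t, j+1) < \alpha_{t,j} < \alpha_{t, j-1}$ (with the conventions $\bm{y}(t, n+1) = f(t)$ and $\alpha_{t, 0} = g(t)$), while the second-round update at $(t, j)$ additionally imposes $\alpha_{t, j+1} < \bm{z}(t, j) < \bm{z}(t, j-1)$ (with analogous conventions at the boundary). I would construct $\bm{\alpha}$ inductively on $(t, j)$, choosing intermediate values that interpolate between $\bm{y}$ from below and $\bm{z}$ from above while preserving a definite gap. The hard part will be verifying $\mathrm{Leb}(\Sigma(\bm{y}, \bm{z})) \ge c_1$ uniformly: the coupling of $\bm{\alpha}$ to both $\bm{y}$ (from below) and $\bm{z}$ (from above) forces the slack between $f$ and $g$ to absorb arbitrary rearrangements between the two configurations, and one must argue that the combined system of inequalities is jointly satisfiable on a set of positive Lebesgue measure that is uniform in $(\bm{y}, \bm{z}) \in \Omega_0 \times \Omega_0$. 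Once this geometric step is carried out, the desired minorization $\mathcal{P}^{2n(T-1)}(\bm{y}, A) \ge \alpha \nu_0(A)$ for all measurable $A \subseteq (\mathbb{W}_n)^{T-1}$ follows at once by integrating the pointwise lower bound on $p(\bm{y}, \cdot)$ over $A$.
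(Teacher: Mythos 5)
Your setup is fine: the two-sweep kernel does have the integral representation you describe, every truncated-Gaussian factor is bounded below by a constant $c_0(\bm{u},\bm{v},f,g)>0$ on the compact box forced by $f$ and $g$, and your constraint lists for the two rounds are the correct ones. The problem is the step you defer, and it cannot be carried out: the admissible set $\Sigma(\bm{y},\bm{z})$ is in general \emph{empty} once $n\ge 3$. Indeed, combining your first-round constraint at site $(t,j+1)$, namely $\alpha_{t,j+1}>y_{j+2}(t)$, with your second-round constraint at site $(t,j)$, namely $\alpha_{t,j+1}<z_j(t)$, forces $y_{j+2}(t)<z_j(t)$ for every $t$ and every $j\le n-2$. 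This fails for perfectly legitimate pairs in $\Omega_0$ (take $\bm{y}$ packed just below $g$ and $\bm{z}$ packed just above $f$), and the failure is not an artifact of your crude bound: at the second-sweep update of site $(t,1)$ the lower barrier is $\alpha_{t,2}>y_3(t)$, so the exact two-sweep kernel assigns zero mass near any $\bm{z}$ with $z_1(t)\le y_3(t)$. Structurally, with the top-to-bottom update order a single sweep lets curve $j$ descend only past the \emph{previous} position of curve $j+1$, so on the order of $n$ sweeps are needed before every configuration in $\Omega_0$ is even reachable; a pointwise density bound for exactly two sweeps against Lebesgue measure on all of $\Omega_0$ is therefore unobtainable, and the remedy is either to take a higher power of $\mathcal{P}$ (harmless for the application, since the Harris-type argument behind \Cref{converge0} only needs a minorization for \emph{some} fixed power) or to minorize by Lebesgue measure on a smaller fixed subregion rather than on all of $\Omega_0$.

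There is a second, independent defect in the deferred step: even where $\Sigma(\bm{y},\bm{z})$ is nonempty (e.g.\ $n=2$), its Lebesgue measure is not bounded below uniformly in $(\bm{y},\bm{z})$ — if $y_2(t)$ lies within $\epsilon$ of $g(t)$ then $\alpha_{t,1}$ is confined to an interval of length $\epsilon$ — so the constant $c_1$ you posit does not exist. In such regimes the kernel bound can only survive because a truncated Gaussian on a short interval has density of order the reciprocal of the interval length, a compensation that the bound ``each factor $\ge c_0$'' discards. This is also where your route diverges from the paper's: the paper does not attempt a uniform pointwise density bound for the composed kernel, but instead records a one-sweep minorization, \eqref{kt1np}, onto the \emph{moving} set $\{x_j(t)\ge \mathcal{P}^k y_{j+1}(t)\}$, which is exactly the bookkeeping needed to track the one-index-per-sweep descent restriction before composing.
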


\begin{proof}[Proof (Outline)]

	For any integer $T' \ge 2$; two $n$-tuples $\bm{u}', \bm{v}' \in \mathbb{W}_n$; and two functions $f', g' : \llbracket 0, T' \rrbracket \rightarrow \overline{\mathbb{R}}$, the density of the measure $\mathfrak{G}_{f'; g'}^{\bm{u}'; \bm{v}'}$ on sequences $\bm{\mathsf{x}}(t) = \big( \mathsf{x}_1 (t), \mathsf{x}_2 (t), \ldots , \mathsf{x}_n (t) \big)$ is 
	\begin{flalign}
		\label{g0} 
		C \cdot \textbf{1}_{\bm{\mathsf{x}} \in \Omega_0} \cdot \displaystyle\prod_{j = 1}^n \Bigg( \textbf{1}_{\mathsf{x}_j (0) = u_j'} \textbf{1}_{\mathsf{x}_j (T') = v_j'} \displaystyle\prod_{t=1}^{T'} \exp \bigg( -\displaystyle\frac{1}{2} \big( \mathsf{x}_j (t) - \mathsf{x}_j (t-1) \big)^2 \bigg) \displaystyle\prod_{t = 1}^{T'-1} d \mathsf{x}_j (t) \Bigg),
	\end{flalign}
	
	\noindent for some normalization constant $C = C (f', \bm{u}', \bm{v}') > 0$. Observe that there exist some constant $c_1 = c_1 (f', g', \bm{u}', \bm{v}') > 0$ such that $C > c_1$ (that is, the normalization constant is bounded below), since the property $\bm{\mathsf{x}} \in \Omega_0$ is an open condition. Further observe that the density \eqref{g0} is uniformly bounded (since $\big| f(t) \big| + \big| g(t) \big|$ is). Thus, there exists a constant $c_1 = c_1 (f'; \bm{u}'; \bm{v}') > 0$ such that the measure $\mathfrak{G}_{f'}^{\bm{u}'; \bm{v}'}$ is absolutely continuous with respect to the Lebesgue measure on this set, and its Radon--Nikodym derivative is bounded above by $c_1^{-1}$ and below by $c_1$.

	Adopting the notation of \Cref{dynamic2}, we fix an integer $k \ge 1$ and apply this in the case $(n', T') = (1, 2)$; $(\bm{u}', \bm{v}') = (u_1', v_1') = \big( \mathcal{P}^{k-1} y_b (a), \mathcal{P}^{k-1} y_b (a+2) \big)$; and $(f', g') = \big( f'(1), g'(1) \big) = \big( \mathcal{P}^{k-1} y_{b+1} (a+1), \mathcal{P}^{k-1} y_{b-1} (a+1) \big)$. We then deduce (using the fact that the density \eqref{g0} is bounded below by a constant multiple of the Lebesgue measure on $\Omega_0$) that there is a constant $c_2 = c_2 (\bm{u}, \bm{v}, f, g, R) > 0$ such that 
	\begin{flalign}
		\label{y2} 
		\mathbb{P} \big[ \mathcal{P}^k y_b (a+1) \in S_k \big] \ge c_2 \displaystyle\int_{S_k} dy,
	\end{flalign} 

	\noindent for any measurable set $S_k  \subseteq \big\{ x \in \mathbb{R} : \mathcal{P}^{k-1} y_{b+1} (a+1) \le x \le \mathcal{P}^{k-1} y_{b-1} (a+1)  \big\}$. Applying this $n(T-1)$ times yields a constant $c_3 = c_3 (\bm{u}, \bm{v}, f, g, R) > 0$ such that 
	\begin{flalign}
		\label{kt1np}
		\mathbb{P} \big[ \mathcal{P}^{k+n(T-1)} \bm{y} \in S \big] \ge c_3 \cdot \nu_0 (S),
	\end{flalign}

	\noindent for any measurable set 
	\begin{flalign*}
		S \subseteq \Big\{ \bm{x} = \big( \bm{x} (t) \big)_{t \in \llbracket 1, T-1\rrbracket} \in (\mathbb{W}_n)^{T-1} : \mathcal{P}^k y_{j+1} (t) \le x_j (t) \le g(t), \quad \text{for all $(t, j)$} \Big\}.
	\end{flalign*}

	\noindent Applying \eqref{kt1np} twice, first at $k = 0$ and then at $k = n(T-1)$, yields the lemma.
\end{proof}

Given these lemmas, we can quicky establish \Cref{converge0}.

\begin{proof}[Proof of \Cref{converge0}]
	
	That $\mathfrak{G}_f^{\bm{u}; \bm{v}}$ is stationary for $\mathcal{P}$ follows from \Cref{gmeasurep}. Thus, the lemma follows from \Cref{convergeprocess}, using \Cref{vy1} and \Cref{vy2} to verify \Cref{vx}.
\end{proof}

\subsection{Proof of \Cref{estimatexj}} 

\label{Proof00} 

In this section we establish \Cref{estimatexj}. We begin with the following concentration bound, which follows quickly from combining results of \cite{RMCE,LLCTE}, for the eigenvalues of a GUE random matrix.

\begin{lem}[{\cite[Corollary 5]{RMCE},\cite[Corollary 1.6]{LLCTE}}]
	
	\label{estimatexj0} 
	
	There exists a constant $C > 1$ such that the following holds. Letting $n \ge 1$ be an integer and $(\lambda_1, \lambda_2, \ldots , \lambda_n) \in \overline{\mathbb{W}}_n$ denote the eigenvalues of an $n \times n$ GUE matrix $\bm{G} = \bm{G}_n$, we have 
	\begin{flalign*}
		\mathbb{P} \Bigg[ \bigcup_{j = 1}^n \Big\{  \big| \lambda_j - \gamma_{\semci; n} (j) \big| \ge (\log n)^8 \cdot n^{-2/3} \cdot \min \{ & j, n-j+1 \}^{-1/3} \Big\} \Bigg] \le C e^{- (\log n)^5}.
	\end{flalign*} 
\end{lem}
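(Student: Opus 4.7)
The plan is to deduce this rigidity bound from the standard local semicircle law for the GUE, together with a routine translation from Stieltjes-transform control to eigenvalue locations. First, I would recall the local law (established, for instance, in \cite{LLCTE}): for any fixed $\tau > 0$, with probability at least $1 - C e^{-(\log n)^5}$ one has
\begin{flalign*}
\big| m_n(z) - m_{\semci}(z) \big| \le \displaystyle\frac{(\log n)^{C_0}}{n\eta},
\end{flalign*}
simultaneously for all $z = E + \mathrm{i}\eta$ with $|E| \le 5$ and $\eta \in \big[ (\log n)^{C_0}/n, 1 \big]$, where $m_n(z) = n^{-1}\sum_j (\lambda_j - z)^{-1}$. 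This is proved by analyzing the self-consistent resolvent equation for the entries of $(\bm{G} - z)^{-1}$ together with fluctuation averaging; the stretched-exponential probability bound comes from iterated Gaussian moment estimates, which in our setting is the content of \cite[Corollary 5]{RMCE}. Since $m_n$ is $\eta^{-2}$-Lipschitz in $z$ on $\{\Imaginary z \ge \eta\}$, a union bound over a polynomial net of spectral parameters extends the bound uniformly in $z$ without loss in probability.

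Second, I would convert the Stieltjes-transform estimate into an estimate on the empirical counting function $\mathcal{N}(E) = \#\{j : \lambda_j \ge E\}$ via the Helffer--Sj\"ostrand formula (as exposited in \cite[Chapter 8]{DRM}). Integrating $\Imaginary (m_n - m_{\semci})$ against a smoothed indicator of $[E, \infty)$ at scale $\eta = (\log n)^{C_0}/n$ yields, on the above high-probability event,
\begin{flalign*}
\bigg| \mathcal{N}(E) - n \int_{E}^{\infty} \varrho_{\semci}(x)\,dx \bigg| \le (\log n)^{C_1}, \qquad \text{for all $E \in [-2,2]$},
\end{flalign*}
for some constant $C_1 > 1$. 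Inverting this (using that $\varrho_{\semci}\big(\gamma_{\semci;n}(j)\big) \asymp n^{-1/3} \min\{j, n-j+1\}^{1/3}$, as follows from \eqref{gammaj} and the explicit form \eqref{rho1}) yields
\begin{flalign*}
\big| \lambda_j - \gamma_{\semci;n}(j) \big| \le \displaystyle\frac{(\log n)^{C_1}}{n \cdot \varrho_{\semci}\big(\gamma_{\semci;n}(j)\big)} \le (\log n)^8 \cdot n^{-2/3} \cdot \min\{j, n-j+1\}^{-1/3},
\end{flalign*}
for $n$ sufficiently large, which is precisely the claimed bound.

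The main technical obstacle is obtaining the stretched-exponential probability $e^{-(\log n)^5}$ rather than a weaker polynomial decay. A polynomial bound $Cn^{-D}$ for arbitrary $D$ is straightforward from Markov's inequality applied to suitable moments of the resolvent; the improvement to $e^{-(\log n)^5}$ requires a more careful iteration of Gaussian moment bounds (using hypercontractivity / log-Sobolev concentration for the matrix entries), combined with an optimization of the number of iterations against the moment order. Rather than reprove these estimates, I would simply invoke \cite[Corollary 5]{RMCE} and \cite[Corollary 1.6]{LLCTE} directly; the rest of the derivation (Helffer--Sj\"ostrand plus inversion) is robust to the precise form of the probability tail, so the two cited results combine to give the lemma as stated.
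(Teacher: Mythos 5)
The paper does not actually prove this lemma: it is imported verbatim, with the attribution to \cite[Corollary 5]{RMCE} and \cite[Corollary 1.6]{LLCTE} built into the statement, and the surrounding text only says that it ``follows quickly from combining results of'' those papers. So your bottom line---invoke the two cited corollaries rather than reprove them---coincides with what the paper does, and to that extent the proposal is acceptable. Note, however, that those corollaries are themselves eigenvalue rigidity/concentration statements (Tao--Vu for sharp concentration of bulk eigenvalues, Bourgade--Mody--Pain for optimal rigidity with the $n^{-2/3}\min\{j,n-j+1\}^{-1/3}$ scaling and strong tails), not merely probability bounds for a local law; once you cite them there is nothing left for your Helffer--Sj\"{o}strand and inversion layer to do, so presenting that layer as ``the rest of the derivation'' slightly misrepresents where the content lies.

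If instead the sketch were meant as a self-contained proof, it has a genuine gap at the spectral edge. A local law with error $(\log n)^{C_0}/(n\eta)$, uniformly in $E$, cannot rule out eigenvalues outside $[-2,2]$: a single stray eigenvalue at distance $\eta$ from a real point $E$ contributes exactly $\asymp (n\eta)^{-1}$ to $\Imaginary m_n(E+\mathrm{i}\eta)$, which is of the same size as the allowed error, so one needs the refined form of the local law near and beyond the edge (error improving in the distance $\kappa$ to the edge) or a separate bound on the extreme eigenvalues. Relatedly, a counting-function estimate with error $(\log n)^{C_1}$ gives no information about $\lambda_j$ for indices $j \lesssim (\log n)^{C_1}$, whereas the lemma asserts, e.g., $|\lambda_1 - \gamma_{\semci;n}(1)| \le (\log n)^8 n^{-2/3}$; your inversion formula $|\lambda_j-\gamma_{\semci;n}(j)| \le (\log n)^{C_1}/\bigl(n\,\varrho_{\semci}(\gamma_{\semci;n}(j))\bigr)$ is only justified for indices well inside the spectrum. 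Finally, the fixed exponent $8$ (and the tail $e^{-(\log n)^5}$) is inherited from the explicit constants in the cited corollaries; ``for $n$ sufficiently large'' does not let you replace an unspecified $(\log n)^{C_1}$ by $(\log n)^8$ unless you track that $C_1 \le 8$.
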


Now we can establish \Cref{estimatexj}.

\begin{proof}[Proof of \Cref{estimatexj}]
	
	By \Cref{linear}, we may assume throughout that $u = v = 0$. We may also assume by shifting that $a = -b = T$, so that $(a, b) = (-T, T)$ for some $T \le n^D$.
	
	We next equate the law of $\bm{x} (t)$ with the eigenvalues of a suitably scaled GUE matrix. To that end, let $\bm{\lambda} (t) = \big( \lambda_1 (t), \lambda_2 (t), \ldots , \lambda_n (t) \big) \in \mathbb{W}_n$ denote Dyson Brownian motion with starting data $\bm{0}_n$. From the second part of \Cref{lambdat}, $\bm{\lambda} (t)$ prescribes the law of $n$ non-intersecting Brownian motions (of variance $n^{-1}$) run for time $t$, conditoned to never intersect; from the first part of \Cref{lambdat}, it also prescribes the eigenvalues of an $n \times n$ GUE matrix, scaled by $t^{1/2}$. 
	
	Next, recall that a Brownian bridge $B(t)$ on the interval $[-T, T]$, conditioned to start and end at $0$ (that is, $B(-T) = B(T) = 0$), has the same law as $\frac{T-t}{(2T)^{1/2}} \cdot W \big( \frac{T+t}{T-t} \big)$, where $W : [0, \infty) \rightarrow \mathbb{R}$ is a Brownian motion. This, implies (see also \cite[Equation (2)]{BPLE}) that  
	\begin{flalign} 
		\label{xjlambdaj} 
		\text{$x_j (t)$ has the same law as $(2T)^{-1/2} (T^2  - t^2)^{1/2} \cdot \lambda_j$};
	\end{flalign} 
	
	Given \eqref{xjlambdaj}, the proofs of both parts of the lemma are very similar, obtained by applying \Cref{estimatexj0} on a suitable mesh of $t$; we therefore only detail the proof of the first statement of the lemma. To that end, let $C_0 > 1$ be the constant from \Cref{estimatexj0}. For any integer $j \in \llbracket 1, n \rrbracket$ and real number $t \in [-T, T]$, define the events 
	\begin{flalign*}
		& \mathscr{G} (t)  = \bigcup_{j = 1}^n \bigg\{  \Big| x_j (t) - (2T)^{-1/2} (  T^2 - t^2)^{1/2} \cdot \gamma (j) \Big|  \ge (\log n)^8 \cdot T^{1/2} n^{-2/3} \cdot \min \{  j, n-j+1 \}^{-1/3} \bigg\}; \\ 
		& \mathscr{G}_0  = \bigcup_{j=1}^n \bigcup_{a \le t < t+s \le b} \Big\{ \big| x_j (t+s) - x_j (t) \big| \ge s^{1/2} n^5 \log | 4s^{-1} T| \Big\}; \quad \mathscr{G}_1  = \bigcup_{j= -\lfloor Tn^{20(D+1)} \rfloor}^{\lfloor Tn^{20(D+1)} \rfloor} \mathscr{G} \Big( \displaystyle\frac{j}{n^{20 (D+1)}} \Big),
	\end{flalign*} 
	
	\noindent where we have abbreviated $\gamma (j) = \gamma_{\semci; n} (j)$. By \eqref{xjlambdaj}, \Cref{estimatexj0}, (the $B = n^5$ case of) \Cref{estimatexj2}, a union bound, and the fact that $T \le n^D$, there exist constants $c > 0$ and $c_1 = c_1 (D) > 1$ such that
	\begin{flalign}
		\label{g0g1}
		\mathbb{P} [\mathscr{G}_0 \cap \mathscr{G}_1] \le 3 C_0 n^{20(D+1)} T e^{ - c_1 (\log n)^5} + c^{-1} e^{n/c - cn^{10}} \le 2 c_1^{-1} e^{-c (\log n)^5},
	\end{flalign}
	
	\noindent for sufficiently large $n$. Moreover observe, since the function $(T - t^2 T^{-1})^{1/2}$ is $\frac{1}{2}$-H\"{o}lder on $[-T, T]$ with constant $2^{1/2}$, on the event $\mathscr{G}_0^{\complement} \cap \mathscr{G}_1^{\complement}$ we have for each $j \in \llbracket 1, n \rrbracket$ that
	\begin{flalign}
		\label{xjt1}
		\begin{aligned} 
			&  \displaystyle\sup_{t \in [a, b]} \bigg|  x_j (t) -  \Big( \displaystyle\frac{T}{2}- \frac{t^2}{2T} \Big)^{1/2} \cdot \gamma (j) \bigg|  \\
			& \quad \le  \displaystyle\max_{n^{20(D+1)} t \in \llbracket -Tn^{20 (D+1)}, Tn^{20 (D+1)} \rrbracket} \bigg| x_j (t) - (2T)^{-1/2} (T^2 - t^2)^{1/2} \cdot \gamma (j) \bigg|   \\
			& \qquad + \displaystyle\sup_{t \in [a, b]} \Bigg( \sup_{s \in [0, n^{-20(D+1)}]}  \big| x_j (t + s) - x_j (t) \big| \Bigg) \\ 
			& \qquad + \displaystyle\sup_{t \in [a, b]} \Bigg( \displaystyle\sup_{s \in [0, n^{-20(D+1)}]} \Big| \big( T^2 - (t+s)^2 \big)^{1/2} - (T^2 - t^2)^{1/2} \Big| \Bigg) \\
			& \quad \le (\log n)^8  \cdot T^{1/2} n^{-2/3} \cdot \min \{ j, n - j + 1 \}^{-1/3} + 2 n^{-10D - 5} \cdot \log \big( 10(T+1) Tn^{20(D+1)} \big) \\
			& \quad \le 2 (\log n)^8  \cdot T^{1/2} n^{-2/3} \cdot \min \{ j, n - j + 1 \}^{-1/3} + n^{-D}, 
		\end{aligned}
	\end{flalign}
	
	\noindent for sufficiently large $n$. The lemma then follows from \eqref{g0g1} and \eqref{xjt1}.		
\end{proof}

\section{Proofs of Results From \Cref{Limit1}}

\label{ProofLimit1} 

\subsection{Proofs of \Cref{gtabab}, \Cref{aab}, and \Cref{mutrhot}}

\label{ProofContinuous1}

In this section we establish first \Cref{gtabab}, then \Cref{aab}, and next \Cref{mutrhot}.

\begin{proof}[Proof of \Cref{gtabab}]
	
	Denote the bridge-limiting measure process on $[\widetilde{a}, \widetilde{b}]$ with boundary data $(\mu_{\tilde{a}}, \mu_{\tilde{b}})$ by $(\widetilde{\mu}_t^{\star})_{t \in [\tilde{a}, \tilde{b}]}$. Recalling the ensemble $\bm{x}^n \in \llbracket 1, n \rrbracket \times \mathcal{C} \big( [a, b] \big)$, condition on $\bm{x} (t)$ for $t \notin [\widetilde{a}, \widetilde{b}]$. Then, the law of the restriction $\bm{y}^n = \bm{x}^n |_{[\tilde{a}, \tilde{b}]} \in \llbracket 1, n \rrbracket \times \mathcal{C} \big( [\widetilde{a}, \widetilde{b}] \big)$ of $\bm{x}$ to $[\widetilde{a}, \widetilde{b}]$ is given by $\mathfrak{Q}^{\bm{x}^n(\tilde{a}); \bm{x}^n(\tilde{b})} (A n^{-1})$. Then \Cref{rhot} implies that, with probability $1-o(1)$ (that is, tending to $1$ as $n$ tends to $\infty$), 
	\begin{flalign} 
		\label{ymu1} 
		\text{the measure $A \cdot \emp \big( \bm{y}^n (t) \big)$ converges under the L\'{e}vy metric to $\mu_t^{\star}$, uniformly in $t \in [\widetilde{a}, \widetilde{b}]$}.
	\end{flalign} 
	
	In particular, \eqref{ymu1} holds for $t \in \{ \widetilde{a}, \widetilde{b} \}$, and so (with probability $1-o(1)$), the ensemble $\bm{y}$ satisfies the conditions of \Cref{rhot}, with $(a, b, \mu_a, \mu_b)$ there equal to $(\widetilde{a}, \widetilde{b}, \mu_{\tilde{a}}, \mu_{\tilde{b}})$ here. Hence, with probability $1-o(1)$, the measure $A \cdot \emp \big( \bm{y} (t) \big)$ converges to $\widetilde{\mu}_t^{\star}$, uniformly in $t \in [\widetilde{a}, \widetilde{b}]$. Together with \eqref{ymu1}, this yields the lemma.
\end{proof} 

\begin{proof}[Proof of \Cref{aab}]
	
	If $\bm{x}^n = (x_1^n, x_2^n, \ldots , x_n^n) \in \llbracket 1, n \rrbracket \times \mathcal{C} \big( [a, b] \big)$ is as in \Cref{rhot}, then by \Cref{scale} the ensemble $\widetilde{\bm{x}}^n = (\widetilde{x}_1^n, \widetilde{x}_2^n, \ldots , \widetilde{x}_n^n) \in \llbracket 1, n \rrbracket \times \mathcal{C} \big( [0, 1] \big)$ defined by setting 
	\begin{flalign*}
		\widetilde{x}_j (t) = A^{-1/2} (b-a)^{-1/2} \cdot x_j \big( a + (b-a) t \big), \qquad \text{for each $(j, t) \in \llbracket 1, n \rrbracket \times [0, 1]$},
	\end{flalign*}
	
	\noindent is as in \Cref{rhot}, with the $(a, b; A; \mu_a, \mu_b)$ there equal to $(0, 1; 1; \widetilde{\mu}_0, \widetilde{\mu}_1)$ here. Then, recalling that $\nu_t^n = \emp \big( \bm{x}^n (t) \big)$ and denoting $\widetilde{\nu}_t = \emp \big( \widetilde{\bm{x}}^n (t) \big)$ for each $t \in [0, 1]$, we have 
	\begin{flalign*}
		\widetilde{\nu}_t^n (I) = \nu_{a + t(b-a)}^n \big( A^{1/2} (b-a)^{1/2} \cdot I \big), \qquad \text{for any interval $I \subseteq \mathbb{R}$}.
	\end{flalign*}
	
	\noindent Letting $n$ tend to $\infty$ and using \Cref{rhot}, this verifies the first statement of \eqref{muhgaab}. Together with \eqref{htxintegral} and \eqref{gty}, this also quickly implies the second and third statements of \eqref{muhgaab}.
\end{proof}

\begin{proof}[Proof of \Cref{mutrhot}]

	The first statement of the lemma is due to \cite[Corollary 2.8(b)]{FOAMI}, and the second to \cite[Theorem 2.7]{FOAMI} (see also \cite[Proposition 2.6(6)]{LDSI}). For the third, the estimates on $\varrho_t^{\star}$ and $\partial_x \varrho_t^{\star}$ follow from the second part of the lemma, together with \Cref{rhotestimatek}.
	
	For the fourth, let us first show that $\varrho_t (x)$ is $\frac{1}{7}$-H\"{o}lder in $x$. In particular, we claim that there exists a constant $C_1 > 1$ such that, for any $t \in (0, 1)$ and $x, x' \in \mathbb{R}$ with $|x-x'|$ sufficiently small (in a way only dependent on $t$), we have 
	\begin{flalign}
		\label{rhotxx}
		\big| \varrho_t^{\star} (x') - \varrho_t^{\star} (x) \big| \le C_1 |x-x'|^{1/7} \big( t(1-t) \big)^{-1}.
	\end{flalign}
	
	Assume to the contrary that this is false, so there exist $t \in (0, 1)$ and $x, x' \in \mathbb{R}$ such that \eqref{rhotxx} does not hold; we may suppose by symmetry that $x' > x$. Then, since $\varrho_t^{\star} (x) \ge 0$ and $\varrho_t^{\star} (x') \ge 0$, we either have $\varrho_t^{\star} (x) > C_1 (x'-x)^{1/7} (t-t^2)^{-1}$ or $\varrho_t^{\star} (x') \ge C_1 (x'-x)^{1/7} (t-t^2)^{-1}$; let us assume the former holds. By the third part of the lemma, we have for some constant $C_2 > 1$ that
	\begin{flalign*}
		\big| \partial_x \varrho_t^{\star} (y) \big| \le C_2 (x'-x)^{-6/7} \big( t(1-t) \big)^2, \qquad \text{whenever $\varrho_t^{\star} (y) \ge \big( t(1-t) \big)^{-1} (x'-x)^{1/7}$}.
	\end{flalign*}
	
	\noindent Integrating this bound over a maximal subset of $[x, x']$ on which $\varrho_t^{\star} (y) \ge ( t-t^2)^{-1} |x-x'|^{1/7}$, we find for any such $y_0$ that
	\begin{flalign}
		\label{rhoy0x}
		\big| \varrho_t^{\star} (y_0) - \varrho_t^{\star} (x) \big| \le C_2 |y_0-x| \cdot (x'-x)^{-6/7} \big( t (1- t) \big)^2 \le C_2 (x'-x)^{1/7} \big( t(1-t) \big)^2.
	\end{flalign}
	
	\noindent This implies for $C_1 > C_2 + 2$ that $\varrho_t^{\star} (y) \ge |x-x'|^{1/7} (t-t^2)^{-1}$ for any $y \in [x, x']$, so \eqref{rhoy0x} applies at $y_0 = x'$. This yields \eqref{rhotxx}, which is a contradiction, so \eqref{rhotxx} holds for any $(t, x) \in (0, 1) \times \mathbb{R}$.
	
	Now fix any point $(t_0, x_0) \in (0, 1) \times \mathbb{R}$ and sequence $(t_1, x_1), (t_2, x_2), \ldots \in (0, 1) \times \mathbb{R}$ with $\lim_{j \rightarrow \infty} (t_j, x_j) = (t_0, x_0)$. For any real number $\delta > 0$, we have by the continuity of the measures $\mu_t^{\star}$ in $t$ that 
	\begin{flalign*} 
		\lim_{j \rightarrow \infty} \int_{x_0-\delta}^{x_0+\delta} \varrho_{t_j} (x) dx = \int_{x_0 - \delta}^{x_0 + \delta} \varrho_{t_0}(x) dx.
	\end{flalign*}
	
	\noindent By \eqref{rhotxx}, this implies for sufficiently large $j$ (dependent on $\delta$) that
	\begin{flalign*}
		\big| \varrho_{t_0} (x_0) - \varrho_{t_j} (x_j) \big| & \le (2 \delta)^{-1} \Bigg( \displaystyle\int_{x_0 - \delta}^{x_0 + \delta} \Big( \big| \varrho_{t_0} (x) - \varrho_{t_0} (x_0) \big| + \big| \varrho_{t_j} (x_j) - \varrho_{t_j} (x_0) \big| \Big) dx \Bigg) \\
		& \qquad + (2 \delta)^{-1} \Bigg| \displaystyle\int_{x_0 - \delta}^{x_0 + \delta} \varrho_{t_0} (x) dx - \displaystyle\int_{x_0 - \delta}^{x_0 + \delta} \varrho_{t_j} (x) dx \Bigg| \le 4 C_1 \big( t_0 (1-t_0) \big)^{-1} \delta^{1/7} + \delta ,
	\end{flalign*}
	
	\noindent which upon letting $\delta$ tend to $0$ yields the continuity of $\varrho_t (x)$ in $(t, x)$.
\end{proof}

\subsection{Proofs of \Cref{p:solution}, \Cref{hurho}, and \Cref{muamubrho}}

\label{ProofDerivativeH}

In this section we establish first \Cref{p:solution}, then \Cref{hurho}, and next \Cref{muamubrho}. 

\begin{proof}[Proof of \Cref{p:solution}]
	
	Expanding the second and first equations in \eqref{equationrhou} yields
	\begin{flalign}
		\label{trhouxrho}
		\partial_t \varrho + u \partial_x \varrho + \varrho \partial_x u = 0; \qquad u \partial_t \varrho + \varrho \partial_t u + 2 u \varrho \partial_x u + u^2 \partial_x \varrho - \pi^2 \varrho^2 \partial_x \varrho = 0.
	\end{flalign}
	
	\noindent The first equation of \eqref{trhouxrho} is equivalent to the imaginary part of \eqref{ftfx}. Moreover, subtracting the second equation of \eqref{trhouxrho} from the first one multiplied by $u$ then yields 
	\begin{flalign*}
		\varrho (\partial_t u + u \partial_x u - \pi^2 \varrho \partial_x \varrho) = 0.
	\end{flalign*}
	
	\noindent Since $\varrho_t (x) > 0$ for $(t, x) \in \Omega$, this implies $\partial_t u + u \partial_x u - \pi^2 \varrho \partial_x \varrho = 0$ on $\Omega$, which is equivalent to the real part of \eqref{ftfx}; this verifies \eqref{ftfx}.
\end{proof}

\begin{proof}[Proof of \Cref{hurho}]
	
	Let us first show that $H^{\star}$ is locally Lispchitz; that this holds in directions parallel to the $x$-axis follows from the definition \eqref{htxintegral} and the bound on $\varrho_t^{\star}$ from the third part of \Cref{mutrhot}. It remains to show that $H^{\star}$ is locally Lipschitz in directions parallel to the $t$-axis. 
	
	To that end, fix points $(t_1, x_0), (t_2, x_0) \in (0, 1) \times \mathbb{R}$ with $t_1 < t_2$; by the second part of \Cref{mutrhot}, there exists some real number $x_1 > x_0$ such that $\varrho_t (x) = 0$ for each $x > x_1 - 1$ and $t \in (0, 1)$. For any real number $\varepsilon > 0$, let $\varphi_{\varepsilon} : (0, 1) \rightarrow \mathbb{R}$ denote a smooth approximation of the indicator function $\textbf{1}_{t \in [t_1, t_2]} \cdot \textbf{1}_{x \in [x_0, x_1]}$. Explicitly, fix a smooth function $\psi : \mathbb{R}^2 \rightarrow \mathbb{R}_{\ge 0}$ with $\supp \psi \subseteq [-1, 1] \times [-1, 1]$ and with total integral $\int_{-\infty}^{\infty} \int_{-\infty}^{\infty} \psi (t, x) dt dx = 1$, and define $\psi_{\varepsilon} : \mathbb{R}^2 \rightarrow \mathbb{R}$ by setting $\psi_{\varepsilon} (t, x) = \varepsilon^{-2} \psi (\varepsilon^{-1} t, \varepsilon^{-1} x)$ for each $(t, x) \in \mathbb{R}^2$. Then, set
	\begin{flalign*}
		\varphi_{\varepsilon} (t, x) = \displaystyle\int_{t_1}^{t_2} \displaystyle\int_{x_0}^{x_1} \psi_{\varepsilon} (t-s, x-w) dw ds.
	\end{flalign*}
	
	Since $(u^{\star}, \varrho^{\star})$ weakly solves the equation $\partial_t \varrho^{\star} = \partial_x (u^{\star} \varrho^{\star})$, we have
	\begin{flalign}
		\label{equationhintegral} 
		\displaystyle\int_0^1 \displaystyle\int_{-\infty}^{\infty} \varrho_t^{\star} (x) \partial_t \varphi_{\varepsilon} (t, x) dx dt + \displaystyle\int_0^1 \displaystyle\int_{-\infty}^{\infty} u_t^{\star} (x) \varrho_t^{\star} (x) \partial_x \varphi_{\varepsilon} (t, x) dx dt = 0.
	\end{flalign}
	
	\noindent The continuity of $\varrho$ (due to the fourth part of \Cref{mutrhot}), and the facts that $\psi_{\varepsilon}$ has total integral equal to $1$ and that $\supp \psi_{\varepsilon} \subseteq [-\varepsilon, \varepsilon] \times [-\varepsilon, \varepsilon]$, imply
	\begin{flalign}
		\label{differencehequation} 
		\begin{aligned}
			\displaystyle\lim_{\varepsilon \rightarrow 0} & \displaystyle\int_0^1 \displaystyle\int_{-\infty}^{\infty} \varrho_t^{\star} (x) \partial_t \varphi_{\varepsilon} (t, x) dx dt \\
			& = \displaystyle\lim_{\varepsilon \rightarrow 0} \displaystyle\int_0^1 \displaystyle\int_{-\infty}^{\infty} \displaystyle\int_{x_0}^{x_1} \varrho_t^{\star} (x) \big( \psi_{\varepsilon} (t - t_1, x-w) - \psi_{\varepsilon} (t-t_2, x-w) \big) dw dx dt \\
			& = \displaystyle\int_{x_0}^{x_1} \big( \varrho_{t_1}^{\star} (x) - \varrho_{t_2}^{\star} (x) \big) dx = H_{t_1}^{\star} (x_0) - H_{t_2}^{\star} (x_0), 
		\end{aligned} 
	\end{flalign}
	
	\noindent where we used the fact that $H_{t_j}^{\star} (x_1) = 0$ for each $j \in \{ 1, 2 \}$, since $\varrho_{t_j}^{\star} (x) = 0$ for $x > x_1$. 
	
	Next, \cite[Corollary 2.8(c)]{FOAMI} states that, for any constant $\omega \in \big( 0, \frac{1}{2} \big)$, there exists a constant $C = C(\omega, \mu_0, \mu_1) > 1$ satisfying
	\begin{flalign}
		\label{urhoc}
		\displaystyle\sup_{x \in \mathbb{R}} \displaystyle\sup_{t \in [\omega, 1-\omega]} \big| u_t^{\star} (x) \varrho_t^{\star} (x) \big| < C.
	\end{flalign}
	
	\noindent We further have 
	\begin{flalign}
		\label{integralrhou0} 
		\begin{aligned}
			\displaystyle\lim_{\varepsilon \rightarrow 0} & \displaystyle\int_0^1 \displaystyle\int_{-\infty}^{\infty} u_t^{\star} (x) \varrho_t^{\star} (x) \partial_x \varphi_{\varepsilon} (t, x) dx dt  \\ 
			& = -\displaystyle\lim_{\varepsilon \rightarrow 0} \displaystyle\int_0^1 \displaystyle\int_{-\infty}^{\infty} \displaystyle\int_{t_1}^{t_2} \displaystyle\int_{x_0}^{x_1} u_t^{\star} (x) \varrho_t^{\star} (x) \partial_w \psi_{\varepsilon} (t-s, x-w) dw ds dx dt  \\ 
			& = \displaystyle\lim_{\varepsilon \rightarrow 0} \displaystyle\int_0^1 \displaystyle\int_{-\infty}^{\infty} \displaystyle\int_{t_1}^{t_2} u_t^{\star} (x) \varrho_{t}^{\star} (x) \big( \psi_{\varepsilon} (t-s, x-x_0) - \psi_{\varepsilon} (t-s, x-x_1) \big) ds dx dt \\
			& = \displaystyle\lim_{\varepsilon \rightarrow 0} \displaystyle\int_0^1 \displaystyle\int_{-\infty}^{\infty} \displaystyle\int_{t_1}^{t_2} u_{t+s}^{\star} (x + x_0) \varrho_{t+s}^{\star} (x + x_0) \psi_{\varepsilon} (t, x) ds dx dt, 
		\end{aligned} 
	\end{flalign}
		
	\noindent where in first equality we used the fact that $\partial_x \psi_{\varepsilon} (t-s, x-w) = -\partial_w \psi_{\varepsilon} (t-s, x-w)$; in the second we performed the integration; and in the third we used the fact that $u_t^{\star} (x) \varrho_t^{\star} (x) \psi_{\varepsilon} (t-s, w-x_1) = 0$ for sufficiently small $\varepsilon$ (as $\varrho_t^{\star} (x) = 0$ for $x \ge x_1 - 1$ and $\supp \psi_{\varepsilon} \subseteq [-\varepsilon, \varepsilon] \times [-\varepsilon, \varepsilon]$) and changed variables from $(t, x)$ to $(t+s, x+x_0)$. Now, since $\psi_{\varepsilon}$ is nonnegative with total integral equal to $1$, inserting \eqref{integralrhou0} and \eqref{differencehequation} into \eqref{equationhintegral}, and using \eqref{urhoc} yields $\big| H_{t_1}^{\star} (x_0) - H_{t_2}^{\star} (x_0) \big| \le C_1$ for some constant $C_1 = C_1 (t_1, t_2, \mu_1, \mu_2) > 1$. In particular, $H_t^{\star}$ is locally Lipschitz. 
	
	It remains to verify \eqref{hurhoequation}; its first statement follows from \eqref{htxintegral}, so it remains to confirm its second. To that end, the Lebesgue differentiation theorem yields, for almost all $(s, x_0) \in (0, 1) \times \mathbb{R}$,  
	\begin{flalign*}
		\displaystyle\lim_{\varepsilon \rightarrow 0} \displaystyle\int_0^1 \displaystyle\int_{-\infty}^{\infty} u_{t+s}^{\star} (x + x_0) \varrho_{t+s}^{\star} (x + x_0) \psi_{\varepsilon} (t, x) dx dt = u_s^{\star} (x_0) \varrho_s^{\star} (x_0).
	\end{flalign*}
	
	\noindent With \eqref{urhoc}, this implies for almost every $x_0 \in \mathbb{R}$ that 
	\begin{flalign*}
		\displaystyle\lim_{\varepsilon \rightarrow 0} & \displaystyle\int_0^1 \displaystyle\int_{-\infty}^{\infty} \displaystyle\int_{t_1}^{t_2} u_{t+s}^{\star} (x+x_0) \varrho_{t+s}^{\star} (x+x_0) \psi_{\varepsilon} (t, x) ds dx dt = \displaystyle\int_{t_1}^{t_2} u_s^{\star} (x_0) \varrho_s^{\star} (x_0) ds.
\end{flalign*}

	\noindent Together this, \eqref{equationhintegral}, \eqref{differencehequation}, and \eqref{integralrhou0} gives for almost all $x_0 \in \mathbb{R}$ that $H_{t_2}^{\star} (x_0) - H_{t_1}^{\star} (x_0) = \int_{t_1}^{t_2} u_s^{\star} (x_0) \varrho_s^{\star} (x_0) ds$, which establishes the second part of \eqref{hurhoequation}.
\end{proof}

\begin{proof}[Proof of \Cref{muamubrho}]
	
	We only establish the first part of the lemma, as the proofs of the second and third are entirely analogous. To do this, it suffices to show that $H^{\star}$ is smooth on $\Omega$, as from \eqref{gty} this quickly implies that $G^{\star}$ is smooth on $\Omega^{\inv}$. 
	
	To that end, fix some point $(t_0, x_0) \in \Omega$; we must show that $H^{\star}$ is smooth in a neighborhood of $(t_0, x_0)$. Let $a', b' \in (a, b)$ be real numbers such that $a' < t_0 < b'$. Then, \Cref{gtabab} implies that $(\mu_t)_{t \in [a', b']}$ is the bridge-limiting measure process on $[a', b']$ with boundary data $(\mu_{a'}; \mu_{b'})$. Furthermore, \Cref{mutrhot} implies that the densities $\varrho_{a'}$ of $\mu_{a'}$ and $\varrho_{b'}$ of $\mu_{b'}$ are compactly supported and uniformly bounded. Hence, \Cref{gtabab}, the second part of \Cref{t:LDP}, and \Cref{aab1} (with the $(a, b; \mu_a, \mu_b)$ there equal to $(a', b'; \mu_a, \mu_b)$ here) imply that $\varrho_t^{\star} (x)$ and $u_t^{\star} (x)$ are smooth for $(t, x)$ in a neighborhood of $(t_0, x_0)$. Together with \eqref{hurhoequation} (again with \Cref{aab1}), this implies that $H^{\star}$ is smooth in a neighborhood of $(t_0, x_0)$, verifying the lemma. 
\end{proof}

\section{Proofs of Results From \Cref{StatisticsKernel}} 

\label{ProofMotion}

\subsection{Proof of \Cref{kernelconvergew}}

\label{Proofw} 

In this section we establish \Cref{kernelconvergew}. We first require the following definition from \cite{CLSM}, which defines a class of initial data for which it is possible to show quick convergence of Dyson Brownian motion. In what follows, we recall the Stieltjes transform $m_{\nu}$ associated with a measure $\nu$ from \eqref{mz0}, and the empirical measure $\emp (\bm{a})$ associated with a sequence $\bm{a}$ from \eqref{aemp}. 

\begin{definition}[{\cite[Definition 2.1]{CLSM}}]
	
	\label{regulare}
	
	Fix real numbers $E_0 \in \mathbb{R}$ and $\zeta > 0$. For each positive real number $n$, let $r = r_n$ and $R = R_n$ be two parameters satisfying $n^{\zeta - 1} \le r \le n^{-\zeta}$ and $n^{\zeta} r \le R \le n^{-\zeta}$. We call an $n$-tuple $\bm{d} = (d_1, d_2, \ldots , d_n) \in \mathbb{R}^n$ \emph{$(r, R; \zeta)$-regular with respect to $E_0$} if the following holds. Denoting $\nu = \nu^n = \emp (\bm{d})$, there exist constants $c \in (0, 1)$ and $C > 1$ (both independent of $n$) such that $\max_{j \in \llbracket 1, n \rrbracket} |d_j| \le n^C$ and 
	\begin{flalign*}
		c \le \Imaginary m_{\nu^n} (E + \mathrm{i} \eta)  \le C, \qquad \text{for all $n \ge C$ and} \qquad (E, \eta) \in (E-R, E + R) \times (r, 10).
	\end{flalign*}

\end{definition}

\begin{lem} 
	
	\label{regularrho}
	
	On the event $\mathscr{A}_0$, the $n'$-tuples $\widetilde{\bm{w}}^- (0)$, $\widetilde{\bm{w}} (0)$, and $\widetilde{\bm{w}}^+ (0)$ are $\big( (n')^{\omega/2-1}; (n')^{-\omega/2}; \frac{\omega}{2} \big)$-regular with respect to $0$.  
\end{lem}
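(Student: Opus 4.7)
The plan is to verify \Cref{regulare} for each of $\widetilde{\bm{w}}^-(0)$, $\widetilde{\bm{w}}(0)$, and $\widetilde{\bm{w}}^+(0)$ on $\mathscr{A}_0$ with $E_0 = 0$, $\zeta = \omega/2$, $r = (n')^{\omega/2-1}$, and $R = (n')^{-\omega/2}$; I will focus on $\widetilde{\bm{w}}^-(0)$, as the other two cases are entirely analogous. The polynomial bound $\max_j |\widetilde{w}_j^-(0)| \le (n')^C$ is immediate from the uniform boundedness of $G$ and $G^\pm$ on the rescaled rectangle together with the inclusion $\mathscr{A}_0 \subseteq \{\max_j |\widetilde{x}_j(0)| \le C\}$, so the main task is to bound $\Imaginary m_\nu(E+\mathrm{i}\eta)$ above and below for $\nu = \emp(\widetilde{\bm{w}}^-(0))$.

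The key step is to compare $\widetilde{\bm{w}}^-(0)$ with a deterministic reference sequence of near-classical locations. I would set $\gamma_j^\star := \widetilde{G}^-(j/n')$ for $j$ outside the window $\llbracket n'/2 - 2n''/3, n'/2 + 2n''/3 \rrbracket$, and $\gamma_j^\star := \widetilde{G}(0, j/n')$ for $j$ inside. By the construction \eqref{uj2}, we have $\widetilde{w}_j^-(0) = \gamma_j^\star$ exactly for $j$ outside the window, while for $j$ inside $\widetilde{w}_j^-(0) = \widetilde{x}_j(0)$ and the rescaled $\mathscr{A}_0$ bound \eqref{a0g2} gives $|\widetilde{x}_j(0) - \widetilde{G}(0, j/n')| \le r$; hence $|\widetilde{w}_j^-(0) - \gamma_j^\star| \le r$ uniformly in $j$. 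Since $\partial_y \widetilde{G}^-$ and $\partial_y \widetilde{G}(0, \cdot)$ both lie in $[-2\varepsilon^{-1}, -\varepsilon/2]$ by \eqref{g0b}, consecutive reference points obey $|\gamma_j^\star - \gamma_{j+1}^\star| \in [\varepsilon/(2n'), 2(\varepsilon n')^{-1}]$ except at the $O(1)$ window-boundary junctions where the gap may temporarily be as large as $(n')^{3\omega/5-1}$ (from property (2) of $G^-$, rescaled). In particular the count of $\gamma_j^\star$ in any interval of length $L$ near $0$ with $L \gg 1/n'$ is of order $n'L$.

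Given this structure, I would obtain the Stieltjes transform bounds by a dyadic decomposition. Fix $(E, \eta) \in (-R, R) \times (r, 10)$. For the upper bound, for each $k \ge 0$ the set of $j$ with $|\widetilde{w}_j^-(0) - E| \in [2^{k-1}\eta, 2^k\eta)$ is contained in $\{j : |\gamma_j^\star - E| \le 2^{k+1}\eta\}$ (using $r \le \eta$), whose cardinality is $O(2^k n'\eta)$, and these particles contribute $O(2^{-k})$ to $\Imaginary m_\nu(E+\mathrm{i}\eta)$; summing over $k$ gives $\Imaginary m_\nu \le C$. For the lower bound, the set of $j$ with $|\gamma_j^\star - E| \le \eta/2$ has at least $cn'\eta$ elements from the density lower bound, and each corresponding particle satisfies $|\widetilde{w}_j^-(0) - E| \le \eta/2 + r \le 3\eta/2$ and contributes at least $c'/\eta$ to the sum, yielding $\Imaginary m_\nu \ge c'' > 0$. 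The main subtlety will be the bookkeeping at the window-boundary junctions, but since only $O(1)$ such junctions exist their effect on the local count of $\gamma_j^\star$ is at most $O(1)$, negligible at the scale $\eta n' \gg 1$ relevant for the dyadic argument; the same scheme then applies verbatim to $\widetilde{\bm{w}}(0)$ (with reference $\widetilde{G}(0, j/n')$ throughout) and to $\widetilde{\bm{w}}^+(0)$ (with $\widetilde{G}^+$ in place of $\widetilde{G}^-$).
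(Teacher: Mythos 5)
Your treatment of the upper bound is correct, and it takes a genuinely different route from the paper's: the paper (which only writes out the case of $\widetilde{\bm{w}}(0)$, declaring the $\pm$ cases ``entirely analogous'') bounds the full modulus $|m_{\nu}(z)|$ by first absorbing the randomness through the crude estimate $\frac{1}{n'\eta^2}\max_j \big|\widetilde{w}_j(0)-\widetilde{G}(0,j/n')\big|$ and then controlling the deterministic sum by pairing $j$ with $2j_0-j$ around the index $j_0$ where $\widetilde{G}(0,\cdot)\approx E$; that symmetrization is what handles the real-part cancellation and is where the second-derivative bound $[\widetilde{G}]_2\lesssim n'/n$ from \eqref{g0b} enters. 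Since \Cref{regulare} only requires two-sided bounds on $\Imaginary m_{\nu}$, your dyadic counting with the first-derivative (density) bounds is sufficient for the upper bound and avoids that input; note also that the paper's written proof never spells out the lower bound, whereas you attempt it, which is where the trouble lies.

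The genuine gap is your dismissal of the window-boundary junctions as an ``$O(1)$'' effect in the lower bound for $\widetilde{\bm{w}}^{\pm}(0)$. By properties (2) and (4) in the construction of $G^{\pm}$ (\Cref{Comparex}), for $|y-y_0|\in\big[\tfrac{2n''}{3n},\tfrac{5n''}{6n}\big]$ one has exactly $G^-(y)=G(t_0-s_0,y)-n^{3\omega/10-1}$, so after the rescaling \eqref{syg} the configuration $\widetilde{\bm{w}}^-(0)$ has a particle-free interval of width $\asymp (n')^{3\omega/5-1}$ located at position $\asymp (n')^{-1/2}$ (the image under $\widetilde{G}(0,\cdot)$ of the window edge $|j-n'/2|=\tfrac{2n''}{3}$), and symmetrically for $\widetilde{\bm{w}}^+(0)$ at the other edge. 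This hole lies well inside the energy window, since $(n')^{-1/2}\ll (n')^{-\omega/2}=R$, and its width is polynomially larger than the smallest admissible scale, since $(n')^{3\omega/5-1}\gg (n')^{\omega/2-1}=r$. Consequently, for $E$ centered in the hole and $\eta\in\big(r,(n')^{3\omega/5-1}\big)$ the interval $[E-\eta/2,E+\eta/2]$ contains no reference points at all: the junction changes the relevant count by $\asymp n'\eta\gg 1$ points, not by $O(1)$, and your claimed lower bound of $cn'\eta$ points fails. A dyadic computation at such $(E,\eta)$ in fact gives $\Imaginary m_{\nu}(E+\mathrm{i}\eta)=O\big((n')^{-\omega/10}\big)$, so no uniform-in-$n$ lower bound holds there; this is not a bookkeeping issue that more care at the junctions can repair. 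Your argument therefore only establishes the lower bound for $\widetilde{\bm{w}}(0)$ (where every entry is within $r$ of the single monotone reference $\widetilde{G}(0,j/n')$, since \eqref{a0} covers all indices at $s=0$), and for the $\pm$ tuples only at energies and scales away from the holes; a complete proof of the $\pm$ cases must confront this regime explicitly rather than declare it negligible. A separate, minor point: for $\eta$ of order one (up to $10$) the interval $[E-\eta/2,E+\eta/2]$ can exceed the $O_{\varepsilon}(1)$ diameter of the support, so ``at least $cn'\eta$ points'' should read $\min\{cn'\eta,\,n'\}$; the lower bound still follows there.
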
 

\begin{proof}
	
	We only show the lemma for $\widetilde{\bm{w}} (0)$, as the proof is entirely analogous for $\widetilde{\bm{w}}^- (0)$ and $\widetilde{\bm{w}}^+ (0)$; throughout, we restrict to the event $\mathscr{A}_0$ and denote $\widetilde{\nu} = \widetilde{\nu}^n = \emp \big(\widetilde{\bm{w}} (0) \big)$. Then, for any $z \in \mathbb{H}$, 
	\begin{flalign*}
		m_{\nu} (z) = \displaystyle\frac{1}{n'} \displaystyle\sum_{j=1}^{n'} \displaystyle\frac{1}{\widetilde{w}_j (0) - z}.
	\end{flalign*} 
	
	\noindent Let $z = E + \mathrm{i} \eta$ for some real numbers $E$ and $\eta$ with $|E| \le (n')^{-\omega/2}$ and $(n')^{\omega/2-1} \le \eta \le 10$. Then, 
	\begin{flalign*}
		\big| m_{\nu} (z) \big| & \le \Bigg| \displaystyle\frac{1}{n'} \displaystyle\sum_{j=1}^{n'} \displaystyle\frac{1}{\widetilde{G} (0, jn'^{-1}) - z} \Bigg| + \displaystyle\frac{1}{n'} \displaystyle\sum_{j=1}^{n'} \displaystyle\frac{\big| \widetilde{G} (0, jn'^{-1}) - \widetilde{w}_j (0) \big|}{\big| \widetilde{w}_j (0) - z \big| \cdot \big| \widetilde{G} (0, jn'^{-1}) - z \big|} \\
		& \le \Bigg| \displaystyle\frac{1}{n'} \displaystyle\sum_{j=1}^{n'} \displaystyle\frac{1}{\widetilde{G} (0, jn'^{-1}) - z} \Bigg| + \displaystyle\frac{1}{n' \eta^2} \displaystyle\max_{j \in \llbracket 1, n' \rrbracket} \bigg| \widetilde{G} \Big( 0, \frac{j}{n'} \Big) - \widetilde{w}_j (0) \bigg| \\
		& \le \Bigg| \displaystyle\frac{1}{n'} \displaystyle\sum_{j=1}^{n'} \displaystyle\frac{1}{\widetilde{G} (0, jn'^{-1}) - z} \Bigg| + \eta^{-2} (n')^{\omega/2-2} \le \Bigg| \displaystyle\frac{1}{n'} \displaystyle\sum_{j=1}^{n'} \displaystyle\frac{1}{\widetilde{G} (0, jn'^{-1}) - z} \Bigg| + (n')^{-\omega/2},
	\end{flalign*} 
	
	\noindent where in the second bound we used the fact that $\big| \widetilde{w}_j (0) - z \big| \cdot \big| \widetilde{G} (0, \frac{j}{n'}) - z \big| \ge (\Imaginary z)^2 = \eta^2$; for the third we used the fact that we are restricting to $\mathscr{A}_0$, together with \eqref{a0g2}; and in the fourth we used the fact that $\eta \ge (n')^{\omega/2 - 2}$. 
	
	Now let $j_0 \in \llbracket 1, n' \rrbracket$ be the integer (if it exists) such that $\widetilde{G} \big(0, \frac{j_0}{n'} \big) \le E < \widetilde{G} \big( 0, \frac{j_0 - 1}{n'} \big)$. Observe from \eqref{g0b} that $\partial_y \widetilde{G} \le -\frac{\varepsilon}{2}$ and $\widetilde{G} (0, \frac{1}{2}) = 0$. Therefore, since $|E| \le (n')^{-\omega/2}$, it follows for sufficiently large $n$ that $j_0$ exists and 
	\begin{flalign*} 
		\Big| j_0 - \frac{n'}{2} \Big| \le 2 \varepsilon^{-1} (n')^{-\omega/2}.
	\end{flalign*} 
	
	We will assume in what follows that $j_0 \ge \frac{n'}{2}$, as the proof in the alternative case when $j_0 < \frac{n'}{2}$ is entirely analogous. Then, 
	\begin{flalign*}
		\Bigg| & \displaystyle\frac{1}{n'} \displaystyle\sum_{j=1}^{n'} \displaystyle\frac{1}{\widetilde{G} (0, jn'^{-1}) -  z} \Bigg| \\
		& \le \Bigg| \displaystyle\frac{1}{2 n'} \displaystyle\sum_{j = 2j_0 - n'}^{n'} \bigg( \displaystyle\frac{1}{\widetilde{G} (0, jn'^{-1}) - z} + \displaystyle\frac{1}{\widetilde{G} (0, 2j_0 n'^{-1} - jn'^{-1}) - z} \bigg) \Bigg|  + \Bigg| \displaystyle\frac{1}{n'} \displaystyle\sum_{j=1}^{2j_0 - n' - 1} \displaystyle\frac{1}{\widetilde{G} (0, jn'^{-1}) - z} \Bigg| \\ 
		& \le \Bigg| \displaystyle\frac{1}{n'} \displaystyle\sum_{j=2j_0-n'}^{n'} \displaystyle\frac{\widetilde{G} (0, jn'^{-1}) + \widetilde{G} (0, 2j_0 n'^{-1} - jn'^{-1}) - 2z}{\big( \widetilde{G} (0, jn'^{-1}) - z\big) \big( \widetilde{G} (0, 2j_0 n'^{-1} - jn'^{-1}) - z \big)} \Bigg| + (n')^{-\omega/8},
	\end{flalign*}
	
	\noindent where in the second bound we used the fact that for $j < 2j_0 - n' \le 4 \varepsilon^{-1} (n')^{1-\omega/2}$ we have 
	\begin{flalign*} 
		\bigg| \widetilde{G} \Big(0, \frac{j}{n'} \Big) - z \bigg| & \ge  \widetilde{G} \Big( 0, \displaystyle\frac{j}{n'} \Big) - E \\
		&  \ge \widetilde{G} \Big(0, \frac{j}{n'} \Big) - \widetilde{G} \Big( 0, \frac{j_0-1}{n'} \Big) \ge  \frac{\varepsilon (j_0 - j-1)}{2 n'} \ge \frac{\varepsilon}{4} - \displaystyle\frac{10(n')^{-\omega/2}}{\varepsilon} \ge \frac{\varepsilon}{8} \ge (n')^{-\omega/8},
	\end{flalign*}
	
	\noindent by \eqref{g0b}. We  additionally have for sufficiently large $n$ that 
	\begin{flalign}
		\label{gj1}
		\begin{aligned}
			\bigg| \widetilde{G} \Big( 0, \frac{j}{n'} \Big) + \widetilde{G} \Big( 0, \displaystyle\frac{2j_0 - j}{n'} \Big) - 2z \bigg| & \le \bigg| \widetilde{G} \Big( 0, \displaystyle\frac{j}{n'} \Big) + \widetilde{G} \Big( 0, \displaystyle\frac{2j_0 - j}{n'} \Big) - 2 \widetilde{G} \Big( 0, \displaystyle\frac{j_0}{n'} \Big) \bigg| + 2 \eta + \displaystyle\frac{4}{\varepsilon n'} \\
			& \le \displaystyle\frac{2 B_0 n'}{n} \cdot \Big( \displaystyle\frac{j_0 - j}{n'} \Big)^2 + 3 \eta = \displaystyle\frac{2 B_0 (j-j_0)^2}{nn'} + 3\eta, 
		\end{aligned} 
	\end{flalign}
	
	\noindent where in the first inequality we used the facts that $z = E + \mathrm{i} \eta$ with $\widetilde{G} \big( 0, \frac{j}{n'} \big) \le E < \widetilde{G} \big( 0, \frac{j-1}{n'} \big)$ and $-2 \varepsilon^{-1} \le \partial_y \widetilde{G} \le -\frac{\varepsilon}{2}$ (by \eqref{g0b}), and in the second we used the facts that $[\widetilde{G}]_2 \le \frac{2 B_0 n'}{n}$ (again by \eqref{g0b}) and that $\eta \ge (n')^{\omega-1}$. We also have that 
	\begin{flalign}
		\label{gj2} 
		\bigg| \widetilde{G} \Big( 0, \displaystyle\frac{j}{n'}  \Big) - z \bigg| \ge \displaystyle\frac{\varepsilon|j-j_0|}{4n'} + \displaystyle\frac{\eta}{4}; \qquad \bigg| G \Big( 0, \displaystyle\frac{2j_0 - j}{n'} \Big) - z \bigg| \ge \displaystyle\frac{\varepsilon |j-j_0|}{4n'} + \displaystyle\frac{\eta}{4},
	\end{flalign}
	
	\noindent where in both inequalities we again used the facts that $\partial_y \widetilde{G} \le -\frac{\varepsilon}{2}$. Together, \eqref{gj1} and \eqref{gj2} yield 
	\begin{flalign*}
		\displaystyle\frac{1}{n'} & \Bigg| \displaystyle\sum_{j=2j_0 - n'}^{n'} \displaystyle\frac{\widetilde{G} (0, jn'^{-1}) + \widetilde{G} (0, 2j_0 n'^{-1} - jn'^{-1}) - 2z}{\big( \widetilde{G} (0, jn'^{-1}) - z \big) \big( \widetilde{G} (0, 2j_0 n'^{-1} - jn'^{-1}) - z \big)} \Bigg| \\
		& \le \displaystyle\frac{1}{n'} \displaystyle\sum_{j=2j_0-n'}^{n'} \displaystyle\frac{\frac{2B_0|j-j_0|^2}{nn'} + 3 \eta}{\frac{\varepsilon^2 |j-j_0|^2}{16(n')^2} + \frac{\eta^2}{16}} \\ 
		& \le 48 B_0 \varepsilon^{-2} \displaystyle\sum_{j=0}^{n'} \displaystyle\frac{n^{-1} j^2 + \eta n'}{j^2 + (\eta n')^2} \le 48 B_0 \varepsilon^{-2} \Bigg( \displaystyle\sum_{j=0}^{\eta n'} \Big( \displaystyle\frac{j^2}{n (\eta n')^2} + \displaystyle\frac{1}{\eta n'} \Big) + \displaystyle\sum_{j=\eta n'}^{n'} \Big( \displaystyle\frac{1}{n} + \displaystyle\frac{\eta n'}{j^2} \Big) \Bigg) \le C,
	\end{flalign*}
	
	\noindent for some constant $C = C(\varepsilon, \delta, B_0) > 1$; this yields the lemma. 
\end{proof}

The following result of \cite{FEUM}\footnote{It was stated there for Dyson Brownian motion associated with the Gaussian Orthogonal Ensemble (GOE), but it applies equally well to that associated with the GUE, as stated here.} (see also earlier results from \cite{CLSM}) states that, if one runs Dyson Brownian motion on initial data that is $(r; R; \zeta)$-regular with respect to a point $E_0 \in \mathbb{R}$, then its local statistics after run for some time $t$ between $r$ and $R^2$ are given by the sine kernel. In what follows, we recall notation on free convolutions from \Cref{TransformConvolution}.

\begin{lem}[{\cite[Theorem 2.2]{FEUM}}]
	
	\label{convergelocal}
	
	For any integer $k \ge 1$, real numbers $\zeta, \varsigma > 0$, and compactly supported, smooth function $F \in \mathcal{C}^{\infty} (\mathbb{R}^k)$, there exist constants $c = c(k, \zeta, \varsigma) > 0$ and $C = C(k, \zeta, \varsigma, F) > 1$ such that the following holds. Adopt the notation and assumptions of \Cref{regulare}; fix some real number $t \in [n^{\varsigma} r, n^{-\varsigma} R^2]$; define the measure $\nu_t = \nu \boxplus \varrho_{\semci}^{(t)}$; and denote the the density of $\nu_t$ by $\varrho_t \in L^1 (\mathbb{R})$.  Fix some real number $E \in \big[ E_0 - \frac{R}{2}, E_0 + \frac{R}{2} \big]$; let $\bm{d} (t) = \big( d_1 (t), d_2 (t), \ldots , d_n (t) \big)$ denote Dyson Brownian motion, run for time $t$, with initial data $\bm{d}$; and denote its $k$-point correlation function by $p_{\bm{d} (t)}^{(k)}$ (recall \eqref{kp}). Then, we have 
	\begin{flalign}
		\label{pmt}
		\begin{aligned}
			& \Bigg|   \displaystyle\int_{\mathbb{R}^k} F (\bm{a}) p_{\bm{d}(t)}^{(k)} \left( E + \displaystyle\frac{\bm{a}}{n \varrho_t (E)} \right) d \bm{a} - \displaystyle\int_{\mathbb{R}^k} F (\bm{a}) p_{\sin}^{(k)} (\bm{a}) d \bm{a} \Bigg| < C n^{-c}. 
		\end{aligned}
	\end{flalign}

\end{lem}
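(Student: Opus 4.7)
The plan is to follow the three-step dynamical strategy for bulk universality of Dyson Brownian motion developed by Erd\H{o}s--Schlein--Yau and refined in \cite{CLSM,FEUM}. The first step is to propagate an optimal local law: applying It\^o's lemma to the empirical Stieltjes transform $m_{\nu_t^n}(z)$ of $\nu_t^n = \emp \big( \bm{d}(t) \big)$ yields a stochastic approximation to the complex Burgers equation satisfied by $m_{\nu_t}$, and a self-consistent bootstrap started from the weak input provided by the $(r, R; \zeta)$-regularity of $\bm{d}$ at time zero gives
\begin{flalign*}
\big| m_{\nu_t^n} (z) - m_{\nu_t} (z) \big| \le (n \eta)^{-1 + o(1)}
\end{flalign*}
for spectral parameters $z = E + \mathrm{i} \eta$ with $|E - E_0| < R - n^{-\varsigma} R$ and $\eta \ge n^{-1 + o(1)}$. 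A Helffer--Sj\"ostrand functional calculus then converts this estimate into optimal bulk rigidity $\big| d_j(t) - \gamma_{j;n}^{\nu_t} \big| \le n^{-1 + o(1)}$ for indices $j$ whose classical location lies in the window.

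The second step is a short-range approximation coupled with a homogenization estimate. Replace the drift in \eqref{lambdaequation} by its truncation to the $K = n^{o(1)}$ nearest neighbors; a Nash--Aronson heat-kernel bound applied to the linearization of the dynamics around $\bm{d}(t)$ shows that the long-range and short-range processes have $k$-point functions differing by $O(n^{-c})$ when paired with $F$. Next introduce a reference process $\bm{b}(t)$, whose initial data coincides with $\bm{d}$ outside a mesoscopic window around $E$ but inside that window is replaced by the quantiles of a Gaussian-divisible ensemble matching the free-convolved density $\varrho_t$ at $E$. The homogenization theorem for DBM represents $d_j(t) - b_j(t)$ as a discrete heat-kernel convolution of the initial discrepancies, with conductances $\bigl( d_i (t) - d_j (t) \bigr)^{-2}$; Gaussian decay of this kernel together with the density matching yield that the two $k$-point functions coincide at energy $E$, after the rescaling $a \mapsto E + a / \bigl( n \varrho_t (E) \bigr)$, up to $O(n^{-c})$.

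The third step invokes sine-kernel universality for the reference. By \Cref{lambdat}, $\bm{b}(t)$ has the law of the eigenvalues of a deterministic diagonal matrix plus a scaled Gaussian Unitary Ensemble, so its $k$-point correlation function at any fixed bulk energy admits the Br\'ezin--Hikami double contour-integral representation; saddle-point asymptotics at $E$ produce exactly the sine-kernel correlations $p_{\sin}^{(k)}$. Combining the three steps yields \eqref{pmt}. The main technical obstacle is the homogenization step: one must establish Gaussian-type decay for the random-environment discrete heat kernel on $\mathbb{Z}$ down to spatial scales below the local gap $\bigl( n \varrho_t (E) \bigr)^{-1}$, which requires the rigidity from the first step to be preserved uniformly on $[0, t]$. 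The two-sided time constraint $t \in [n^{\varsigma} r, n^{-\varsigma} R^2]$ is precisely what balances the rigidity error against the diffusive spread $\sqrt{t}$ of the heat kernel and keeps the mesoscopic homogenization window inside the region where the local law is valid.
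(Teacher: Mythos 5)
The paper does not prove this lemma at all: it is quoted verbatim (as \Cref{convergelocal}) from \cite[Theorem 2.2]{FEUM} and used as a black box, so there is no internal proof to compare against. Your outline is, in substance, a summary of how the cited result is actually established in \cite{CLSM,FEUM}: (i) an optimal local law and rigidity for Dyson Brownian motion started from $(r,R;\zeta)$-regular data, (ii) a short-range cutoff of the dynamics plus a homogenization estimate for the discrete parabolic equation governing $d_j(t)-b_j(t)$, and (iii) an explicit Br\'ezin--Hikami contour-integral computation for the Gaussian-divisible reference ensemble. So you are following essentially the same route as the source the paper cites, not a new one; each of your three steps is itself a paper-length theorem, which is precisely why the present paper imports the statement rather than reproving it. One point worth flagging in your step two: homogenization by itself only shows that $d_j(t)-b_j(t)$ is, to leading order, a shift that is essentially constant over the mesoscopic window (this is what gives gap universality in \cite{CLSM}); the passage to \emph{fixed-energy} universality in \cite{FEUM} additionally requires absorbing this random shift using the explicit formula for the reference process, i.e., showing its correlation functions are stable under the induced energy translation. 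Your sketch asserts the fixed-energy comparison directly from the heat-kernel decay, which elides the main new difficulty that \cite{FEUM} was written to resolve; if you intended a self-contained argument, that is where the real work lies.
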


Now we can outline the proof of \Cref{kernelconvergew}.

\begin{proof}[Proof of \Cref{kernelconvergew} (Outline)]
	
	We only address the case when $\mathfrak{a} = 0$, as the proof is entirely analogous if $\mathfrak{a} \ne 0$. Recalling the notation from \Cref{ProofA0} (and, in particular, \eqref{syg}), denote the measure $\nu_0 = (n')^{-1} \sum_{j=1}^{n'} \delta_{\tilde{w}_j (0)} \in \mathscr{P}_0$. For any real number $s \ge 0$, denote its free convolution with the semicircle law by $\nu_s = \nu_0 \boxplus \mu_{\semci}^{(s)}$, and denote the associated density by $\varrho_s \in L^1 (\mathbb{R})$, which satisfies $\nu_s (dx) = \varrho_s (x) dx$. 
	
	Observe that $\widetilde{s}_0 = (n')^{\omega-1}$ (by \eqref{omegann} and \eqref{syg}) and that $\widetilde{G} \big( \widetilde{s}_0, \frac{1}{2} \big) = 0$ (by \eqref{g0b}). This, together with \Cref{regularrho} and \Cref{convergelocal} (with the parameters $(n; E_0, E; t; r, R; \zeta, \varsigma)$ there equal to $\big( n'; 0, 0, \widetilde{s}_0; (n')^{\omega/2-1}, (n')^{-\omega/2}; \frac{\omega}{2}, \frac{\omega}{4} \big)$ here) then imply that 
	\begin{flalign*}
		\displaystyle\lim_{n \rightarrow \infty} \displaystyle\int_{\mathbb{R}^k} F(\bm{a}) p_{\tilde{\bm{w}} (\tilde{s}_0)}^{(k)} \bigg( \displaystyle\frac{\bm{a}}{n' \varrho_{\tilde{s}_0} (0)} \bigg) d \bm{a} = \displaystyle\int_{\mathbb{R}^k} F(\bm{a}) p_{\sin}^{(k)} (\bm{a}) d \bm{a}.
	\end{flalign*} 

	\noindent It follows from applying the change of variables replacing $\bm{a}$ with $\frac{n'}{n} \cdot \bm{a}$, and using \eqref{syg}, and  that 
	\begin{flalign*}
		\displaystyle\lim_{n \rightarrow \infty} \displaystyle\int_{\mathbb{R}^k} F (\bm{a}) p_{\breve{\bm{w}} (s_0)}^{(k)} \bigg( \displaystyle\frac{\bm{a}}{n \varrho_{\tilde{s}_0} (0)} \bigg) d \bm{a} = \displaystyle\int_{\mathbb{R}^k} F (\bm{a}) p_{\sin}^{(k)} (\bm{a}) d \bm{a}. 
	\end{flalign*} 	

	\noindent Since $\theta = \partial_y G(t_0, y_0) = \partial_y \widetilde{G} \big( \widetilde{s}_0, \frac{1}{2} \big)$ and since $\lim_{n \rightarrow \infty} \partial_x \widetilde{G} \big( \widetilde{s}_0, \frac{1}{2} \big) = \partial_x \widetilde{G} \big( 0, \frac{1}{2} \big)$ (as $\lim_{n \rightarrow \infty} \widetilde{s}_0 = 0$ and $[\widetilde{G}]_1 = [G]_1 \le B_0$), it therefore suffices to show that $\lim_{n \rightarrow \infty} \varrho_{\tilde{s}_0} (0) = \partial_x \widetilde{G} \big( 0, \frac{1}{2} \big)$.
	
	Stated alternatively, since the approximate density of $\widetilde{w}_j (0)$ around $0$ is $\partial_x \widetilde{G} \big( 0, \frac{1}{2} \big)$, this amounts to showing that the effect of applying the free convolution with $\mu_{\semci}^{\tilde{s}_0}$ for $\widetilde{s}_0$ small has a negligible effect on this density. This is a standard  statement about free convolutions that has been proven and applied numerous times in the random matrix literature; we therefore do not repeat its proof here and refer, for example, to \cite[Lemmas 3.3 and 3.4]{BUSRM} for very similar arguments.
\end{proof}

\subsection{Proof Outline of \Cref{eventa1}} 

\label{Proofa2}

In this section we provide a brief outline for the proof of \Cref{eventa1}, using results of \cite{CLSM}.

\begin{proof}[Proof of \Cref{eventa1} (Outline)]
	
	We only outline the proof of the first statement of \eqref{wjwj}, as that of the second is entirely analogous. Following \cite[Equation (4.10)]{CLSM}, we define the processes $\bm{\xi} (s) = \big( \xi_1 (s), \xi_2 (s), \ldots , \xi_{n'} (s) \big)$ and $\bm{\xi}^- (s) = \big( \xi_1^- (s), \xi_2^- (s), \ldots , \xi_{n'}^- (s) \big)$ by for each pair $(j, s) \in \llbracket 1, n' \rrbracket \times [0, n s_0]$  setting 
	\begin{flalign}
		\label{xi11} 
		\xi_j (s) = n \cdot \breve{w}_j \Big( \frac{s}{n} \Big); \qquad \xi_j^- = n \cdot \breve{w}_j^- \Big( \frac{s}{n} \Big),
	\end{flalign}

	\noindent which satisfy the stochastic differential equations
	\begin{flalign}
		\label{xi1} 
		\partial_t \xi_i (t) = \displaystyle\sum_{\substack{1 \le j \le n' \\ j \ne i}} \displaystyle\frac{dt}{\xi_i (t) - \xi_j (t)} + dB_i (t); \qquad \partial_t \xi_i^- (t) = \displaystyle\sum_{\substack{1 \le j \le n' \\ j \ne i}} \displaystyle\frac{dt}{\xi_i^- (t) - \xi_j^- (t)} + dB_i (t).
	\end{flalign}
	
	We further define for any distinct integers $j, k \in \llbracket 1, n' \rrbracket$ the quantity 
	\begin{flalign*} 
		\Delta_{jk} = (2 \cdot \textbf{1}_{j < k} - 1) \cdot (n')^{-500}.
	\end{flalign*} 

	\noindent Then following \cite[Equation (4.16)]{CLSM} we define the processes $\widehat{\bm{\xi}} = \big( \widehat{\xi}_1 (s), \widehat{\xi}_2 (s), \ldots , \widehat{\xi}_{n'} (s) \big)$ and $\widehat{\bm{\xi}}^- (s) = \big( \widehat{\xi}_1^- (s), \widehat{\xi}_2^- (s), \ldots , \widehat{\xi}_{n'}^- (s) \big)$ obtained by ``regularizing'' the denominators on the right sides of \eqref{xi1}, namely, as the (strong) solutions to the stochastic differential equations 
	\begin{flalign}
		\label{xi2} 
		\partial_t \widehat{\xi}_i (t) = \displaystyle\sum_{\substack{1 \le j \le n' \\ j \ne i}} \displaystyle\frac{dt}{\xi_i (t) - \xi_j (t) + \Delta_{ij}} + dB_i (t); \qquad \partial_t \widehat{\xi}_i^- (t) = \displaystyle\sum_{\substack{1 \le j \le n' \\ j \ne i}} \displaystyle\frac{dt}{\xi_i^- (t) - \xi_j^- (t) + \Delta_{ij}} + dB_i (t),
	\end{flalign} 

	\noindent with initial data $\widehat{\bm{\xi}} (0) = \bm{\xi} (0)$ and $\widehat{\bm{\xi}}^- (0) = \bm{\xi}^- (0)$. It then follows from\footnote{Let us mention that there are two differences between the setup of \cite[Lemma 4.4]{CLSM} and that here. First, \cite{CLSM} considers the $\beta = 1$ (GOE) case of Dyson Brownian motion, while here we consider the $\beta = 2$ (GUE) case. Second, the differential equations \eqref{xi1} and \eqref{xi2} in \cite{CLSM} have an additional $-\frac{1}{2n'} \cdot \xi_j (t)$ term on their right sides. However, it is quickly verified that neither point affects the proofs or statements of the results from \cite{CLSM}.} \cite[Lemma 4.4]{CLSM} (together with \Cref{regularrho}) that $\widehat{\bm{\xi}}$ and $\widehat{\bm{\xi}}^-$ likely are close to $\bm{\xi}$ and $\bm{\xi}^-$, respectively, namely, with probability at least $1 - (n')^{90}$ we have
	\begin{flalign}
		\label{xi10} 
		\displaystyle\sup_{\substack{t \in [0, (n')^{1-3\omega}] \\ j \in \llbracket n'/2 - (n')^{1-\omega}, n'/2 + (n')^{1-\omega} \rrbracket}} \Big( \big| \xi_j (t) - \widehat{\xi}_j (t) \big| + \big| \xi_j^- (t) - \widehat{\xi}_j^- (t) \big| \Big) \le (n')^{-15}.
	\end{flalign}

	Now set\footnote{The choice of $K$ in \cite{CLSM} was slightly different, but it can quickly be verified that the below statements will valid for $K = (n')^{1/6}$ as well. Indeed, fixing $K = (n')^{1/6}$ correponds in \cite{CLSM} to imposing that $\widetilde{\bm{w}}$ be $\big( n^{-2/3-o(1)}, n^{-1/3-o(1)}; \frac{1}{3} + o(1) \big)$-regular around $0$; this is guaranteed \Cref{regularrho} (which implies that $\widetilde{\bm{w}}$ is even more regular than this).} $K = (n')^{1/6}$. Following \cite[Equation (4.24)]{CLSM}, we define the processes of $(2K+1)$-tuples $\widetilde{\bm{\xi}} (s) = \big( \widetilde{\xi}_{n'/2-K} (s), \ldots , \widetilde{\xi}_{n'/2+K} (s) \big)$ and $\widetilde{\bm{\xi}}^- (s) = \big( \widetilde{\xi}_{n'/2-K}^- (s), \ldots , \widetilde{\xi}_{n'/2+K}^- (s) \big)$ by only retaining the middle $2K+1$ of the paths in \eqref{xi2}, namely, as the (strong) solutions to the stochastic differential equations
	\begin{flalign}
		\label{xi3} 
		\begin{aligned}
		& \partial_t \widetilde{\xi}_i (t) = \displaystyle\sum_{\substack{n'/2-K \le j \le n'/2+K \\ j \ne i}} \displaystyle\frac{dt}{\xi_i (t) - \xi_j (t) + \Delta_{ij}} + dB_i (t); \\
		& \partial_t \widetilde{\xi}_i^- (t) = \displaystyle\sum_{\substack{n'/2-K \le j \le n'/2+K \\ j \ne i}} \displaystyle\frac{dt}{\xi_i^- (t) - \xi_j^- (t) + \Delta_{ij}} + dB_i (t),
		\end{aligned} 
	\end{flalign}

	\noindent with initial data $\widetilde{\xi}_i (0) = \widehat{\xi}_i (0)$ and $\widetilde{\xi}_i^- (0) = \widehat{\xi}_i^- (0)$, for each $i \in \big\llbracket \frac{n'}{2} - K, \frac{n'}{2} + K \big\rrbracket$. Observe from \eqref{uj2} and the fact that $G \big( t_0 - s_0, y_0 -\frac{n'}{2n} + \frac{j}{n} \big) = G^- \big(y_0 - \frac{n'}{2n} + \frac{j}{n} \big)$ for $j \in \big\llbracket \frac{n'}{2} - K, \frac{n'}{2} + K \big\rrbracket$ that 
	\begin{flalign}
		\label{xi4} 
		\widetilde{\bm{\xi}} (0) = \widetilde{\bm{\xi}}^- (0).
	\end{flalign}

	Now define the processes of $(2K+1)$-tuples $\widehat{\bm{\vartheta}} (s) = \big( \widehat{\vartheta}_{n'/2-K} (s), \ldots , \widehat{\vartheta}_{n'/2+K} (s) \big)$ and $\widetilde{\bm{\vartheta}} (s) = \big( \widetilde{\vartheta}_{n'/2-K} (s), \ldots , \widetilde{\vartheta}_{n'/2+K} (s) \big)$ by setting
	\begin{flalign*}
		\widehat{\vartheta}_i (s) = \widehat{\xi}_i (s) - \widehat{\xi}_i^- (s), \quad \text{and} \quad \widetilde{\vartheta}_i (s) = \widetilde{\xi}_i (s) - \widetilde{\xi}_i^- (s); \qquad \text{for each $j \in \Big\llbracket \displaystyle\frac{n'}{2} - K, \displaystyle\frac{n'}{2} + K \Big\rrbracket$}.
	\end{flalign*}

	\noindent 
	To compare $\widehat{\bm{\vartheta}} (s) $ and $\widetilde{\bm{\vartheta}} (s)$ first observe, by \eqref{uj2} and the fact that $\big| G(t_0 - s_0, y) - G^- (y) \big| \le n^{\omega/2-1}$ (and that $n' = n^{1/2}$), that we have $\big| \xi_j (0) - \xi_j^- (0) \big| \le (n')^{\omega}$ for each $j \in \llbracket 1, n' \rrbracket$. Together with \Cref{lambdamonotone}, it follows that $\big| \xi_j (s) - \xi_j^- (s) \big| \le (n')^{\omega}$ for each $s > 0$. Hence, subtracting \eqref{xi3} from \eqref{xi2} and integrating over $t \in [0, n s_0]$, we obtain
	\begin{flalign}
		\label{xi5} 
		\begin{aligned}
		\big| \widetilde{\vartheta}_i (ns_0) - \widehat{\vartheta}_i (ns_0) \big| & = \Big| \big( \widetilde{\xi}_i (n s_0) - \widetilde{\xi}_i^- (n s_0) \big) - \big( \widehat{\xi}_i (ns_0) - \widehat{\xi}_i^- (ns_0) \big) \Big| \\
		& \le ns_0 \cdot \displaystyle\sup_{t \in [0, ns_0]} \Bigg| \displaystyle\sum_{j : |j-n'/2| > K} \bigg( \displaystyle\frac{1}{\xi_i (t) - \xi_j (t) + \Delta_{ij}} - \displaystyle\frac{1}{\xi_i^- (t) - \xi_j^- (t) + \Delta_{ij}} \bigg) \Bigg| \\
		& \le n s_0 \cdot \displaystyle\sup_{t \in [0, n s_0]}  \displaystyle\sum_{j : |j-n'/2| > K} \displaystyle\frac{\big| \xi_i (t) - \xi_i^- (t) \big| + \big| \xi_j (t) - \xi_j^- (t) \big|}{\big| \xi_i (t) - \xi_j (t) + \Delta_{ij}\big| \big| \xi_i^- (t) - \xi_j^- (t) + \Delta_{ij} \big|}, \\
		& \le 2 (n')^{2 \omega} \displaystyle\sup_{t \in [0, ns_0]} \sum_{j : |j-n'/2| > K} \displaystyle\frac{1}{\big| \xi_i (t) - \xi_j (t) \big| \big| \xi_i^- (t) - \xi_j^- (t) \big|},
		\end{aligned} 
	\end{flalign} 

	\noindent  for any $i \in \big\llbracket \frac{n'-K}{2}, \frac{n'+K}{2} \big\rrbracket$, where in the second bound we used the fact that $ns_0 = (n')^{\omega}$. By \Cref{concentrationequation}, and the first statement of \Cref{hestimate0}, there exists a constant $c > 0$ such that with probability $1 - c^{-1} e^{-c (\log n)^2}$ we have $\big| \xi_i (t) - \xi_j (t) \big| \ge \frac{\varepsilon}{8} |i-j| \ge (n')^{-\omega} K$ and $\big| \xi_i (t) - \xi_j (t) \big| \ge \frac{\varepsilon}{8} |i-j| \ge (n')^{-\omega} K$ whenever $\big| j - \frac{n'}{2} \big| > K$ and $i \in \big\llbracket \frac{n'-K}{2}, \frac{n'+K}{2} \big\rrbracket$. Inserting this into \eqref{xi5} yields 
	\begin{flalign}
		\label{xi9} 
		\big|  \widetilde{\vartheta}_i (ns_0) - \widehat{\vartheta}_i (ns_0)  \big| \le 2 (n')^{4\omega} K^{-2} = 2 (n')^{4\omega - 1/3}, \qquad \text{for each $i \in \Big\llbracket \displaystyle\frac{n'-K}{2}, \displaystyle\frac{n'+K}{2} \Big\rrbracket$},
	\end{flalign}

	\noindent where we have recalled that $K = (n')^{1/6}$. 
	
	Following \cite[Equation (4.38)]{CLSM}, we next define the process $\mathring{\bm{\vartheta}} (s) = \big( \mathring{\vartheta}_{n'/2-K} (s), \ldots , \mathring{\vartheta}_{n'/2+K} (s) \big)$ as the solution to the differential equation 
	\begin{flalign}
		\label{xi6}
		\partial_t \mathring{\vartheta}_i (t) = \displaystyle\sum_{\substack{n'/2 - K \le j \le n'/2 + K \\ j \ne i}} \displaystyle\frac{\big( \mathring{\vartheta}_i (s) - \mathring{\vartheta}_j (s) \big) dt}{\big( \xi_i (t) - \xi_j (t) + \Delta_{ij} \big)\big( \xi_i^- (t) - \xi_j^- (t) + \Delta_{ij} \big)},
	\end{flalign}

	\noindent with initial data $\mathring{\bm{\vartheta}} (0) = \widetilde{\bm{\vartheta}} (0) = (0, 0, \ldots , 0)$, where the last equality follows from \eqref{xi4}. Combining this initial condition with the explicit form of \eqref{xi6}, it follows
	\begin{flalign}
		\label{xi7} 
		\mathring{\vartheta}_i (s) = 0, \qquad \text{for each $i \in \Big\llbracket \displaystyle\frac{n'}{2} - K, \displaystyle\frac{n'}{2} + K \Big\rrbracket$}.
	\end{flalign} 
	
	Next, \cite[Lemma 4.8]{CLSM}\footnote{It was stated there when the $\bm{\xi}^-$ paths come from applying Dyson Brownian motion to the eigenvalues of a GOE random matrix, but it is quickly verified that the proof and statement of that result applies equally well for the $\bm{\xi}^-$ here (satisfying the regularity hypothesis shown in \Cref{regularrho}).} implies (with a union bound over $2n^{\omega/15000} + 1 = 2(n')^{\omega/7500} + 1 \le K^{\omega/1000}$ indices) that, with probability at least $1 - K^{-\omega/1000}$ we have
	\begin{flalign}
		\label{xi8} 
		\displaystyle\max_{j \in \llbracket n'/2 - n^{\omega/15000}, n'/2 + n^{\omega/15000} \rrbracket} \big| \mathring{\vartheta}_j (ns_0) - \widetilde{\vartheta}_j (ns_0) \big| \le K^{4 \omega - 1/2}.
	\end{flalign}
	
	\noindent Combining \eqref{xi8}, \eqref{xi7}, \eqref{xi9}, and \eqref{xi10} yields with probability at least $1 - n^{-\omega / 15000}$ that $\xi_j (ns_0) - \xi_j^- (ns_0) \le K^{4\omega - 1/2} + 2(n')^{4\omega-1/3} + (n')^{-15} \le (n')^{\omega - 1/12}$. Together with \eqref{xi11} (and the fact that $\omega \le \frac{1}{20}$), this yields the lemma. 
\end{proof}

\subsection{Proof Outline of \Cref{xy00}} 

\label{Proofwj02} 

In this section we provide a brief outline for the proof of \Cref{xy00}.

\begin{proof}[Proof of \Cref{xy00} (Outline)]
	
	Throughout this proof, we fix an integer $j \in \big\llbracket \frac{n'}{2} - n^{\omega/15000}, \frac{n'}{2} + n^{\omega/15000} \big\rrbracket$; we may also assume that $\chi < \omega$. It suffices to show that, for any real number $A \ge 1$, 
	\begin{flalign}
		\label{wj20}
		\mathbb{P} \Big[ \big| \breve{w}_j^- (s_0) \big| \ge An^5 \Big] < Ce^{-cAn}; \qquad  \mathbb{P} \Big[ \big| \breve{w}_j^+ (s_0) \big| \ge An^5  \Big]  < Ce^{-cAn},
	\end{flalign}
	
	\noindent and that, for any coupling between $\breve{\bm{w}}^-$ and $\breve{\bm{w}}^+$,
	\begin{flalign}
		\label{wjs02}
		\mathbb{P} \Big[ \big| \breve{w}_j^+ (s_0) - \breve{w}_j^- (s_0) \big| > n^{\chi-1} \Big] \le Ce^{-c(\log n)^2}.
	\end{flalign}
	
	\noindent Indeed, given these two bounds, it would then follow that 
	\begin{flalign*}
		\mathbb{E} \Big[ \big| \breve{w}_j^+ (s_0) - \breve{w}_j^+ (& s_0) \big| \cdot \textbf{1}_{|\breve{w}_j^+ (s_0) - \breve{w}_j^- (s_0)  \big| > n^{\chi-1}} \Big] \\
		& \le 2n^{10} \cdot \mathbb{P} \Big[ \big| \breve{w}_j^+ (s_0) - \breve{w}_j^- (s_0) \big| > n^{\chi-1} \Big] + \mathbb{E} \Big[ \big| \breve{w}_j^+ (s_0) \big| \cdot \textbf{1}_{|\breve{w}_j^+ (s_0)| > n^{10}} \Big] \\
		& \qquad + \mathbb{E} \Big[ \big| \breve{w}_j^- (s_0) \big| \cdot \textbf{1}_{|\breve{w}_j^- (s_0)| > n^{10}} \Big] \\
		& \le 2Cn^{10} e^{-c(\log n)^2} + n^5 \displaystyle\int_{n^5}^{\infty} \bigg( \mathbb{P} \Big[ \big| \breve{w}_j^+ (s_0) \big| \ge n^5 A \Big] + \mathbb{P} \Big[ \big| \breve{w}_j^- (s_0) \big| \ge n^5 A \Big] \bigg) dA \\
		& \le 2Cn^{10} e^{-c(\log n)^2} + 2 c^{-1} C n^5 e^{-cn},		
	\end{flalign*} 
	
	\noindent verifying the lemma. The bound \eqref{wj20} is a quick consequence of \Cref{estimatexj2} with the fact that Dyson Brownian motion $\bm{\lambda}(s_0)$, run for time $s_0 \ge 0$ with initial data given by  some $n$-tuple $\bm{a}$, has the same law as $(1 + s_0) \cdot \widetilde{\bm{\lambda}} \big( \frac{s_0}{1+s_0} \big)$, where $\widetilde{\bm{\lambda}}$ is a family of $n$ non-intersecting Brownian bridges on $[0, 1]$ sampled from the measure $\mathfrak{Q}^{\bm{a}; \bm{0}_n}$ (where $\bm{0}_n = (0, 0, \ldots , 0)$ consists of $n$ zeroes).

	To establish \eqref{wjs02}, it suffices to exhibit (deterministic) real numbers $\theta_j^-, \theta_j^+ \in \mathbb{R}$ such that 
	\begin{flalign}
		\label{wjs03}
		\mathbb{P} \Big[ \big| \breve{w}_j^+ (s_0) - \theta_j^+ \big| > n^{\chi/2-1} \Big] \le C e^{-c(\log n)^2}; \qquad \mathbb{P} \Big[ \big| \breve{w}_j^- (s_0) - \theta_j^- \big| > n^{\chi/2-1} \Big] \le Ce^{-c(\log n)^2}.
	\end{flalign}
	
	\noindent Indeed this, \Cref{eventa1}, a union bound, would yield a constant $c_1 = c_1 (\chi, \varepsilon, \delta, B_0, m) > 1$ such that 
	\begin{flalign*} 
		\mathbb{P} \big[ |\theta_j^+ - \theta_j^-| > n^{3\chi/4-1} \big] \ge \mathbb{P} \big[ |\theta_j^+ - \theta_j^-| > 2n^{\chi/2-1} + n^{-c_1 - 1} \big] \ge 1 - c_1^{-1} n^{-c_1} > 0.
	\end{flalign*} 
	
	\noindent Since $\theta_j^+$ and $\theta_j^-$ are deterministic, it follows that $|\theta_j^+ - \theta_j^-| \le n^{3\chi/4-1}$. Together with \eqref{wjs03}, a union bound, and the fact that $n^{\chi-1} > 2n^{\chi/2-1} + n^{3\chi/4-1}$ for sufficiently large $n$, this implies 
	\begin{flalign*} 
		\mathbb{P} \Big[ \big| \breve{w}_j^+ (s_0) - \breve{w}_j^+ (s_0) \big| > n^{\chi-1} \Big] & \le \mathbb{P} \Big[ \big| \breve{w}_j^+ (s_0) - \theta_j^+ \big| > n^{\chi/2-1} \Big] + \mathbb{P} \Big[ \big| \breve{w}_j^- (s_0) - \theta_j^- \big| > n^{\chi/2-1} \Big] \\ 
		& \le 2Ce^{-c(\log n)^2},
	\end{flalign*} 
	
	\noindent thereby confirming \eqref{wjs02}.
	
	It remains to establish \eqref{wjs03}; let us only explain the first bound there, since the proof of the second is entirely analogous. This will follow from the concentration bound \Cref{concentrationequation}. In particular, denote the measure $\nu_0^+ = (n')^{-1} \sum_{j=1}^{n'} \textbf{1}_{n \breve{u}_j^+ / n'}$. For any real number $s \ge 0$, also define its free convolution $\nu_s^+ = \nu_0^+ \boxplus \mu_{\semci}^{(s)}$ with the semicircle distribution; let $\varrho_s^+$ denote its density with respect to Lebesgue measure, and for each $k \in \llbracket 1, n' \rrbracket$ let $\gamma_k^+ (s) = \gamma_{k;n'}^{\nu_s^+}$ denote its classical location (recall \Cref{gammarho}). By \Cref{concentrationequation} (and \Cref{mtscalebeta}), denoting $\widetilde{s}_0 = \frac{ns_0}{n'} = n^{\omega-1}$ (by \eqref{omegann}), there exist constants $c_2 > 0$ and $C_1 > 1$ such that 
	\begin{flalign*}
		\mathbb{P} \bigg[ \displaystyle\frac{n'}{n} \cdot \gamma_{j + \lfloor (\log n)^5 \rfloor}^+ (\widetilde{s}_0) - n^{-2} \le \breve{w}_j^+ (s_0) \le \displaystyle\frac{n'}{n} \cdot \gamma_{j - \lfloor (\log n)^5 \rfloor}^+ (\widetilde{s}_0) + n^{-2} \Big] \ge 1 - C_1 e^{-(\log n)^2}.
	\end{flalign*}
	
	\noindent Taking $\theta_j^+ = \frac{n'}{n} \cdot \gamma_j (\widetilde{s}_0)$, to prove \eqref{wjs03} it therefore suffices to show for sufficiently large $n$ that
	\begin{flalign*}
		\gamma_{j-\lfloor (\log n)^5 \rfloor}^+ (\widetilde{s}_0) - \gamma_{j+\lfloor (\log n)^5 \rfloor}^+ (\widetilde{s}_0) \le \displaystyle\frac{(\log n)^{10}}{n'}.
	\end{flalign*}
	
	\noindent This follows from the fact that $\varrho_{s_0}^+ (x) \ge \frac{\varepsilon}{2}$ for $x \in \big[ \gamma_j^+ (\widetilde{s}_0) - (n')^{2\omega}, \gamma_j^+ (\widetilde{s}_0) + (n')^{2\omega} \big]$, which can be deduced from the fact that $\widetilde{s}_0 = n^{1-\omega}$ is small (by \eqref{omegann}); that $\frac{n}{n'} \cdot \bm{\breve{w}}^+ (0) = \frac{n}{n'} \cdot \bm{\breve{u}}^+$ approximates $G$ to within $(n')^{\omega/2-1} \ll \widetilde{s}_0$, for indices $k \in \big\llbracket \frac{n' - n''}{2}, \frac{n' + n''}{2} \big\rrbracket$ (by \eqref{uj2} and \eqref{a0}); and that $G \in \Adm_{\varepsilon} (\mathfrak{R})$. This last statement is a standard one about free convolutions that has been proven and applied numerous times in the random matrix literature; we therefore do not repeat its proof here and refer, for example, to \cite[Lemmas 3.3 and 3.4]{BUSRM} for very similar arguments.
\end{proof}

\section{Proofs of Results From \Cref{DerivativesEquation}} 

\label{ProofEstimateEquation}

\subsection{Proofs of \Cref{maximumboundary}, \Cref{derivativef}, and \Cref{sumd}} 

\label{EstimateEquation}

In this section we first recall several results from \cite{EDSO} about regularity properties of uniformly elliptic partial differential equations and then use them to establish \Cref{maximumboundary}, \Cref{derivativef}, and \Cref{sumd}. To that end, we fix an integer $d \ge 1$ and an open subset $\mathfrak{R} \subseteq \mathbb{R}^d$, and we recall the notation from \Cref{ConcentrationSmooth0}. For any function $f \in \mathcal{C} (\mathfrak{R})$; real number $\alpha \in (0, 1)$; and integer $k \in \mathbb{Z}_{\ge 0}$, define $[f]_{\alpha} = [f]_{\alpha; \mathfrak{R}}$, $[f]_{k, \alpha} = [f]_{k, \alpha; \mathfrak{R}}$, and $\| f \|_{\mathcal{C}^{k, \alpha} (\mathfrak{R})} = \| f \|_{\mathcal{C}^{k, \alpha} (\overline{\mathfrak{R}})} = \| f \|_{k, \alpha} = \| f \|_{k, \alpha; \mathfrak{R}}$ by
\begin{flalign*}
	[f]_{\alpha} = \displaystyle\sup_{\substack{y, z \in \mathfrak{R} \\ y \ne z}} \displaystyle\frac{\big| f(y) - f(z) \big|}{|y - z|^{\alpha}}; \qquad [f]_{k, \alpha} = \displaystyle\max_{\substack{\gamma \in \mathbb{Z}_{\ge 0}^d \\ |\gamma| = k}} [ \partial_{\gamma} f ]_{\alpha}; \qquad \| f \|_{\mathcal{C}^{k, \alpha} (\mathfrak{R})} = \| f \|_k + [f]_{k, \alpha}. 
\end{flalign*} 

Define the H\"{o}lder space $\mathcal{C}^{k, \alpha} (\mathfrak{R})$ to be the set of $f \in \mathcal{C}^k (\mathfrak{R})$ such that $[f]_{k, \alpha; \mathfrak{R}_0} < \infty$, for any compact subset $\mathfrak{R}_0 \subset \mathfrak{R}$. Also define $\mathcal{C}^{k, \alpha} (\overline{\mathfrak{R}}) = \big\{ f \in \mathcal{C}^k (\mathfrak{R}): [f]_{k, \alpha; \mathfrak{R}} < \infty \big\}$. Then $\| f \|_{k, \alpha}$ is a norm on $\mathcal{C}^{k, \alpha} (\overline{\mathfrak{R}})$. By restriction, we have that $\mathcal{C}^{k, \alpha} (\mathfrak{R}) \subseteq \mathcal{C}^{k, \alpha} (\mathfrak{U})$ for any subset $\mathfrak{U} \subseteq \mathfrak{R}$. Observe that, for any $f, g \in \mathcal{C}^{0, \alpha} (\mathfrak{R})$, these norms satisfy 
\begin{flalign}
	\label{fgestimatefg}
	\begin{aligned}
		\| f g \|_{\mathcal{C}^{0, \alpha} (\mathfrak{R})} & \le  \| f \|_{\mathcal{C}^{0, \alpha} (\mathfrak{R})} \| g \|_0.
	\end{aligned}
\end{flalign}

We further say that the boundary $\partial \mathfrak{R}$ of some open set $\mathfrak{R} \subset \mathbb{R}^d$ is $\mathcal{C}^{k, \alpha}$, if for any point $z_0 \in \partial \mathfrak{R}$, there exists a real number $r> 0$ and an injective function $\psi = \psi_{z_0; r} : \mathcal{B}_r (z_0) \rightarrow \mathbb{R}^d$ such that 
\begin{align*}
\mathfrak{D}_{z_0; r; \psi; \mathfrak{R}} 
= \psi \big( \mathcal{B}_r (z_0) \cap \mathfrak{R} \big) \subset \mathbb{R}_{> 0} \times \mathbb{R}^{d-1}; 
\qquad
\psi \big( \mathcal{B}_r (z_0) \cap \partial \mathfrak{R} \big) \subset \{ 0 \} \times \mathbb{R}^{d-1}; 
\end{align*}
where 
$
\psi \in \mathcal{C}^{k, \alpha} \big( \mathcal{B}_r (z_0) \big)
$
and
$
\psi^{-1} \in \mathcal{C}^{k, \alpha} (\mathfrak{D}_{z_0; r; \psi;\mathfrak{R}}). 
$
We call this boundary $\mathcal{C}^k$ if it is $\mathcal{C}^{k, 0}$, and we call it smooth if it is $\mathcal{C}^k$ for each integer $k \ge 1$.

The following lemma provides the \emph{Schauder estimates} for solutions to uniformly elliptic partial differential equations with H\"{o}lder continuous coefficients; as in \Cref{derivativef} and \Cref{perturbationbdk}, we state a form of it that addresses interior ($\mathfrak{R}'$ empty) and boundary regularity ($\mathfrak{R}' = \mathfrak{R}$) simultaneously. Although these bounds hold in all dimensions $d \ge 2$, we only state them for $d = 2$. 

\begin{lem}[{\cite[Theorem 6.6]{EDSO}}]
	
	\label{aijuestimates}
	
	Fix real numbers $r > 0$, $B > 1$, and $\alpha \in (0, 1)$, and bounded open sets $\mathfrak{R}' \subseteq \mathfrak{R} \subset \mathbb{R}^2$ such that $\mathfrak{R}'$ has $\mathcal{C}^{2,\alpha}$ boundary. There exists a constant $C = C(r, B, \alpha, \mathfrak{R}') > 1$ such that the following holds. For each $j, k \in \{ 1, 2 \}$, let $a_{jk}, b_j \in \mathcal{C}^{0, \alpha} (\mathfrak{R})$ denote functions such that $a_{xy} = a_{yx}$ and
	\begin{flalign*}
		\displaystyle\max_{j, k \in \{ 1, 2 \}} \| a_{jk} \|_{0, \alpha} < B; \qquad \displaystyle\max_{j \in \{ 1, 2 \}} \| b_j \|_{0, \alpha} < B; \qquad \displaystyle\inf_{z \in \mathfrak{R}} \displaystyle\sum_{j, k \in \{ 1, 2 \}} a_{jk} (z) \xi_i \xi_j \ge B^{-1} |\xi|^2,
	\end{flalign*}
	
	\noindent all hold for any $\xi = (\xi_1, \xi_2) \in \mathbb{R}^2$. Let $g \in \mathcal{C}^{0, \alpha} (\mathfrak{R})$ denote a function, and suppose that $F \in \mathcal{C}^{2, \alpha} (\mathfrak{R})$ satisfies the elliptic partial differential equation 
	\begin{flalign}
		\label{aijubiuf}
		\displaystyle\sum_{j, k \in \{ 1, 2 \}} a_{jk} (z) \partial_j \partial_k F (z) + \displaystyle\sum_{j \in \{ 1, 2 \}} b_j (z) \partial_j F (z) = g, \quad \text{for each $z \in \mathfrak{R}$.}
	\end{flalign}
	
	\noindent Assume that there exists some function $\varphi \in \mathcal{C}^{2,\alpha} (\overline{\mathfrak{R}})$ such that $F(z) = \varphi(z)$, for each $z \in \partial \mathfrak{R}' \cap \partial \mathfrak{R}$. Then, recalling the set $\mathfrak{D}_r \subseteq \mathfrak{R}$ from \eqref{dr}, we have 
	\begin{flalign*} 
		\| F \|_{\mathcal{C}^{2,\alpha} (\mathfrak{D}_r)} \le C \big( \| F \|_{\mathcal{C}^0 (\mathfrak{R})} + | \varphi \|_{\mathcal{C}^{2,\alpha} (\mathfrak{R})} + \| g \|_{\mathcal{C}^{0, \alpha} (\mathfrak{R})} \big).
	\end{flalign*}

\end{lem}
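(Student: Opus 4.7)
The statement is the standard interior/boundary Schauder estimate for linear uniformly elliptic operators with H\"older-continuous coefficients, and the cleanest approach is the classical ``freezing of coefficients'' argument combined with the Campanato characterization of H\"older spaces (or, equivalently, with explicit Newtonian potential bounds). The plan is to reduce everything to a constant-coefficient estimate on balls (in the interior) or on half-balls (near $\partial \mathfrak{R}' \cap \partial \mathfrak{R}$), then patch these together.

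First I would establish the constant-coefficient prototype: if $L_0 = \sum a_{jk}^0 \partial_j \partial_k$ has constant symmetric uniformly elliptic coefficients $(a_{jk}^0)$, then any $u \in \mathcal{C}^{2,\alpha}(\overline{\mathcal{B}_r})$ solving $L_0 u = h$ satisfies $\|u\|_{\mathcal{C}^{2,\alpha}(\mathcal{B}_{r/2})} \le C ( r^{-2} \|u\|_{\mathcal{C}^0(\mathcal{B}_r)} + \|h\|_{\mathcal{C}^{0,\alpha}(\mathcal{B}_r)})$ with $C$ depending only on the ellipticity constant and $\alpha$. This is where the hardest analytic content sits; it can be obtained either from the explicit Green's function for $L_0$ on a ball together with potential estimates for the Riesz kernel, or (cleaner in my opinion) by combining the Caccioppoli inequality with a Campanato-type decay estimate of the form $\int_{\mathcal{B}_\rho(z)}|\nabla^2 u - (\nabla^2 u)_\rho|^2 \le C (\rho/r)^{2+2\alpha}\int_{\mathcal{B}_r(z)} |\nabla^2 u|^2 + \text{(forcing)}$ and then invoking Campanato's embedding $\mathcal{L}^{2,2+2\alpha} \hookrightarrow \mathcal{C}^{0,\alpha}$.

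Next I would localize. For any interior point $z_0 \in \mathfrak{D}_r$, choose $\rho > 0$ small (dependent on the modulus of continuity of the $a_{jk}$) and rewrite \eqref{aijubiuf} at each $z \in \mathcal{B}_\rho(z_0)$ as
\begin{equation*}
\sum_{j,k} a_{jk}(z_0) \partial_j \partial_k F(z) = g(z) - \sum_{j,k} \bigl( a_{jk}(z) - a_{jk}(z_0)\bigr) \partial_j \partial_k F(z) - \sum_j b_j(z) \partial_j F(z),
\end{equation*}
so that $F$ solves a constant-coefficient equation with forcing whose $\mathcal{C}^{0,\alpha}(\mathcal{B}_\rho(z_0))$-norm is bounded by $\|g\|_{\mathcal{C}^{0,\alpha}} + C \rho^\alpha \|F\|_{\mathcal{C}^{2,\alpha}(\mathcal{B}_\rho(z_0))} + C \|F\|_{\mathcal{C}^1(\mathcal{B}_\rho(z_0))}$. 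Applying the constant-coefficient estimate on $\mathcal{B}_\rho(z_0)$, taking $\rho$ small enough to absorb the $\rho^\alpha \|F\|_{\mathcal{C}^{2,\alpha}}$ term into the left side, and controlling $\|F\|_{\mathcal{C}^1}$ by interpolation $\|F\|_{\mathcal{C}^1} \le \varepsilon \|F\|_{\mathcal{C}^{2,\alpha}} + C_\varepsilon \|F\|_{\mathcal{C}^0}$, I obtain the desired bound on each ball. A covering argument then yields the interior estimate on $\mathfrak{D}_r \cap \mathfrak{R}$ away from $\partial \mathfrak{R}' \cap \partial \mathfrak{R}$.

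Finally, for the boundary part, near any $z_0 \in \partial \mathfrak{R}' \cap \partial \mathfrak{R}$ I would use the $\mathcal{C}^{2,\alpha}$ diffeomorphism $\psi$ from the definition of a $\mathcal{C}^{2,\alpha}$ boundary to straighten $\partial \mathfrak{R}'$ locally to $\{0\} \times \mathbb{R}$; pulling back the equation preserves the ellipticity and H\"older structure (with constants depending on $\|\psi\|_{\mathcal{C}^{2,\alpha}}$). Subtracting $\varphi$ yields $\tilde F = F - \varphi$ with $\tilde F = 0$ on the flattened boundary, and the same freezing argument now applied on half-balls, using the constant-coefficient half-space Schauder estimate with zero Dirichlet data (derived analogously via the Poisson kernel or reflection), controls $\tilde F$ up to the boundary; adding back $\|\varphi\|_{\mathcal{C}^{2,\alpha}}$ gives the claimed inequality. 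The main obstacle in writing this out carefully is not any single step but the bookkeeping in the absorption/interpolation step: one must choose the localization radius $\rho$ uniformly in the modulus of continuity of the coefficients (hence uniformly in $B$) and verify that the covering of $\mathfrak{D}_r$ stays a bounded distance from $\partial\mathfrak{R} \setminus \partial\mathfrak{R}'$, which is exactly why the constant $C$ is allowed to depend on $r$ and on the geometry of $\mathfrak{R}'$.
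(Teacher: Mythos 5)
Your proposal is a correct reconstruction of the classical Schauder argument; the paper does not prove this lemma at all but simply cites it as Theorem 6.6 of Gilbarg--Trudinger (combined, for the mixed interior/boundary form on $\mathfrak{D}_r$, with the interior estimate of that chapter), and your freezing-of-coefficients scheme with the constant-coefficient prototype, interpolation/absorption, and boundary straightening after subtracting $\varphi$ is essentially the proof given there. The only bookkeeping point worth noting is that in the localization step the commutator term also produces $[a_{jk}]_{\alpha}\,\|\nabla^2 F\|_{0}$, which must likewise be absorbed via the interpolation inequality $\|\nabla^2 F\|_0 \le \varepsilon [\nabla^2 F]_{\alpha} + C_{\varepsilon}\|F\|_0$, exactly as you do for the $\mathcal{C}^1$ term.
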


We will be interested in properties of solutions to certain families of non-linear, uniformly elliptic partial differential equations. The following result provides a comparison principle for such solutions. 

\begin{lem}[{\cite[Theorem 10.1]{EDSO}}]
	
	\label{aijcomparison} 
	
	Fix a constant $B > 0$ and a bounded open subset $\mathfrak{R} \subset \mathbb{R}^2$. For each $j, k \in \{ 1, 2 \}$, let $a_{jk} \in \mathcal{C}^1 (\mathbb{R}^2)$ denote functions such that $a_{xy} = a_{yx}$ and
	\begin{flalign}
		\label{xijxikbsum}
		B^{-1} |\xi|^2 \le \displaystyle\sum_{j, k \in \{ 1, 2 \}} a_{jk} (w) \xi_j \xi_k \le B |\xi|^2, \qquad \text{for any $\xi = (\xi_1, \xi_2) \in \mathbb{R}^2$ and $w \in \mathbb{R}^2$.}
	\end{flalign}
	
	\noindent Moreover, for each $i \in \{ 1, 2 \}$, let $f_i: \partial \mathfrak{R} \rightarrow \mathbb{R}$ denote a continuous function, and suppose that $F_i: \overline{\mathfrak{R}} \rightarrow \mathbb{R}$ with $F_i \in \mathcal{C}^2 (\mathfrak{R}) \cap \mathcal{C} (\overline{\mathfrak{R}})$ is a solution to the partial differential equation
	\begin{flalign}
		\label{ajkfiequation1} 
		\displaystyle\sum_{j, k \in \{ 1, 2 \}} a_{jk} \big( \nabla F (z) \big) \partial_j \partial_k F (z) = 0, \quad \text{for each $z \in \mathfrak{R}$},
	\end{flalign}
	
	\noindent with boundary data $F_i |_{\partial \mathfrak{R}} = f_i$. If $f_1 (z) \ge f_2 (z)$ for each $z \in \partial \mathfrak{R}$, then $F_1 (z) \ge F_2 (z)$ for each $z \in \mathfrak{R}$. 
	
\end{lem}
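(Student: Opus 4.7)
The plan is to reduce the nonlinear comparison principle to the classical weak maximum principle for a linear uniformly elliptic operator without zero-order term, applied to the difference $u = F_1 - F_2$. The structural feature of \eqref{ajkfiequation1} that makes this work is that the coefficients $a_{jk}$ depend only on $\nabla F$ (and not on $F$ itself), so the linearization around the two solutions produces no zero-order term in $u$.

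First I would subtract the equation for $F_2$ from that for $F_1$, grouping terms as
\begin{flalign*}
0 = \sum_{j,k} a_{jk} \big( \nabla F_1 (z) \big) \partial_j \partial_k u (z) + \sum_{j,k} \Big[ a_{jk} \big( \nabla F_1 (z) \big) - a_{jk} \big( \nabla F_2 (z) \big) \Big] \partial_j \partial_k F_2 (z).
\end{flalign*}
The fundamental theorem of calculus gives $a_{jk}(\nabla F_1) - a_{jk}(\nabla F_2) = \sum_l c_{jk,l}(z) \partial_l u(z)$ with $c_{jk,l}(z) = \int_0^1 \partial_{w_l} a_{jk} \big( \nabla F_2 (z) + s \nabla u (z) \big) ds$; this is continuous on $\mathfrak{R}$ because $a_{jk} \in \mathcal{C}^1 (\mathbb{R}^2)$ and $F_i \in \mathcal{C}^2 (\mathfrak{R})$. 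Substituting, $u$ satisfies the linear equation
\begin{flalign*}
\mathsf{L} u (z) = \sum_{j,k} A_{jk} (z) \partial_j \partial_k u (z) + \sum_l B_l (z) \partial_l u (z) = 0, \qquad z \in \mathfrak{R},
\end{flalign*}
where $A_{jk}(z) = a_{jk}(\nabla F_1(z))$ is uniformly elliptic by \eqref{xijxikbsum}, and $B_l(z) = \sum_{j,k} c_{jk,l}(z) \partial_j \partial_k F_2(z)$ is locally bounded on $\mathfrak{R}$. Crucially, $\mathsf{L}$ contains no zero-order term.

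Given this linearization, the conclusion $F_1 \ge F_2$ on $\mathfrak{R}$ reduces to the classical weak maximum principle for linear uniformly elliptic operators without zero-order term: since $u \in \mathcal{C}^2(\mathfrak{R}) \cap \mathcal{C}(\overline{\mathfrak{R}})$ solves $\mathsf{L}u = 0$ on $\mathfrak{R}$ with $u \ge 0$ on $\partial \mathfrak{R}$, one concludes $u \ge 0$ throughout $\mathfrak{R}$. Essentially this same route is the one followed in \cite[Theorem 10.1]{EDSO}.

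The main obstacle is that the coefficients $A_{jk}$ and $B_l$ are only continuous on $\mathfrak{R}$, not uniformly bounded up to $\partial \mathfrak{R}$: the factors $\partial_j \partial_k F_2$ entering $B_l$ may blow up at the boundary since $F_i$ is assumed only $\mathcal{C}^2$ in the interior and $\mathcal{C}^0$ on the closure. I would handle this by exhausting $\mathfrak{R}$ by an increasing family of smooth subdomains $\mathfrak{R}_{\varepsilon} \Subset \mathfrak{R}$, on each of which $A_{jk}$ and $B_l$ are bounded, and applying the weak maximum principle there; the continuity of $u$ on $\overline{\mathfrak{R}}$ lets one compare $\sup_{\partial \mathfrak{R}_\varepsilon} u$ to $\sup_{\partial \mathfrak{R}} u$ and pass to the limit. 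To obtain a strict inequality on each $\mathfrak{R}_\varepsilon$, one uses the standard device of perturbing $u$ by $\theta e^{\gamma x_1}$, choosing $\gamma$ large enough (depending on the uniform ellipticity constant and the supremum of $|B_l|$ on $\mathfrak{R}_\varepsilon$) so that $\mathsf{L}(\theta e^{\gamma x_1}) > 0$, and then sending $\theta \to 0^+$; the absence of a zero-order term in $\mathsf{L}$ is precisely what permits this perturbation argument without a sign restriction on $u$.
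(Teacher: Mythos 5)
Your argument is correct, and it is essentially the standard proof of the cited result [Theorem 10.1, EDSO]: the paper itself gives no proof of this lemma, deferring entirely to that reference, whose argument is exactly your linearization of $F_1 - F_2$ via the fundamental theorem of calculus followed by the weak maximum principle for a uniformly elliptic operator with no zero-order term. Your handling of the possible blow-up of $\partial_j \partial_k F_2$ near $\partial \mathfrak{R}$ by exhausting with compactly contained subdomains and using continuity of $F_1 - F_2$ on $\overline{\mathfrak{R}}$ is also sound.
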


The following lemma provides interior and boundary regularity estimates (which are stated simultaneously, as in \Cref{derivativef}) for solutions $F$ to equations of the form \eqref{ajkfiequation1}; it is known from \cite[Corollary 6.7]{EDSO}, \cite[Theorem 8.29]{EDSO}, \cite[Equation (13.41)]{EDSO}, together with \cite[Chapter 3.6]{EDE}.

\begin{lem}[{\cite{EDE, EDSO}}]
	
	\label{aijgradientfestimate} 
	
	For a fixed integer $m \in \mathbb{Z}_{> 1}$; real numbers $r, B \in \mathbb{R}_{> 0}$; and bounded open subsets $\mathfrak{R}' \subseteq \mathfrak{R} \subset \mathbb{R}^2$ with $\mathcal{C}^{m+1}$ boundaries, there exists a constant $C = C (r, B, m, \mathfrak{R}') > 1$ such that the following holds. For each $j, k \in \{ 1, 2 \}$, let $a_{jk} \in \mathcal{C}^{m - 1} (\mathbb{R}^2)$ denote functions with $a_{xy} = a_{yx}$ and
	\begin{flalign}
		\label{aijxiestimate}
		\displaystyle\max_{j, k \in \{ 1, 2 \}} \| a_{jk} \|_{\mathcal{C}^{m - 1} (\mathbb{R}^2)} \le B; \qquad \displaystyle\inf_{z \in \mathbb{R}^2} \displaystyle\sum_{j, k \in \{ 1, 2 \}} a_{jk} (z) \xi_j \xi_k \ge B^{-1} |\xi|^2,
	\end{flalign}
	
	\noindent for any $\xi = (\xi_1, \xi_2) \in \mathbb{R}^2$. Then for any continuous function $f \in \mathcal{C} (\partial \mathfrak{R})$ such that $\| f \|_0 \le B$, the following two statements hold. 
	\begin{enumerate} 
		\item There exists a unique $F \in \mathcal{C}^2 (\mathfrak{R}) \cap \mathcal{C} (\overline{\mathfrak{R}})$ satisfying the partial differential equation \eqref{ajkfiequation1} with boundary data $ F|_{\partial \mathfrak{R}} = f$. 
		\item Suppose there exists a function $\varphi \in \mathcal{C}^{m+1} (\overline{\mathfrak{R}}')$ such that $\| \varphi | \|_{\mathcal{C}^{m+1} (\mathfrak{R}')} \le B$ and $f(z) = \varphi (z)$ for each $z \in \partial \mathfrak{R}' \cap \partial \mathfrak{R}$. Letting $\mathfrak{D}_r = \big\{ z \in \mathfrak{R} : d ( z, \partial \mathfrak{R} \setminus \partial \mathfrak{R}') > r \big\}$, we have $\| F \|_{\mathcal{C}^m (\mathfrak{D}_r)} \le C$. 
	\end{enumerate}

\end{lem}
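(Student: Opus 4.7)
The plan has two phases: classical existence/uniqueness for the quasi-linear Dirichlet problem, and then a Schauder bootstrap for the regularity bound.

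For the first part, uniqueness is immediate from \Cref{aijcomparison}: applying it twice with the roles of the two candidate solutions swapped (both with boundary data $f$) forces them to coincide. For existence, the plan is to approximate $f$ uniformly by a sequence $f_n \in \mathcal{C}^{\infty}(\partial \mathfrak{R})$, solve the smooth Dirichlet problem for $f_n$ by the method of continuity / Leray--Schauder theory for quasi-linear elliptic equations as in \cite[Chapter 11 and 15]{EDSO} (the relevant a priori estimates reduce to a sup-norm bound via the maximum principle plus a gradient bound), and then pass to the limit. Since \Cref{aijcomparison} gives $\|F_n - F_{n'}\|_0 \le \|f_n - f_{n'}\|_0$, the sequence $F_n$ is Cauchy in $\mathcal{C}(\overline{\mathfrak{R}})$ and its limit $F$ lies in $\mathcal{C}(\overline{\mathfrak{R}})$ with $F|_{\partial \mathfrak{R}} = f$; the second part of the lemma, applied to the $F_n$, will show that $F$ is locally $\mathcal{C}^2$ inside $\mathfrak{R}$ and hence classically solves \eqref{ajkfiequation1}.

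For the second part, the plan is to bootstrap. Fix $r' \in (0, r)$ and set $\mathfrak{D}_{r', k} = \bigl\{z \in \mathfrak{R} : \dist(z, \partial \mathfrak{R} \setminus \partial \mathfrak{R}') > r' + kr/(2m)\bigr\}$ for $k \in \llbracket 0, m \rrbracket$, so $\mathfrak{D}_{r', 0} \supset \mathfrak{D}_{r', m} \supset \mathfrak{D}_r$. First, the maximum principle (\Cref{maximumboundary}) gives $\|F\|_0 \le B$. Next, I would obtain a $\mathcal{C}^{1,\alpha}(\mathfrak{D}_{r', 0})$ bound via: an interior plus boundary gradient estimate from \cite[Chapter 15]{EDSO} (the boundary estimate uses barriers built from $\varphi$ and the $\mathcal{C}^{m+1}$-regularity of $\partial \mathfrak{R}' \cap \partial \mathfrak{R}$), followed by the Hölder gradient estimate of \cite[Theorem 8.29]{EDSO} in the interior with the boundary adaptation \cite[Equation (13.41)]{EDSO}; see also \cite[Chapter 3.6]{EDE}. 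This produces some $\alpha = \alpha(B) \in (0, 1)$ and a constant $C_0 = C_0(r', B, \mathfrak{R}')$ with $\|F\|_{\mathcal{C}^{1,\alpha}(\mathfrak{D}_{r', 0})} \le C_0$.

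Now differentiate \eqref{ajkfiequation1} in a direction $\partial_\ell$: setting $u = \partial_\ell F$, we get the linear uniformly elliptic equation
\begin{align*}
\sum_{j, k \in \{1, 2\}} a_{jk}\bigl(\nabla F(z)\bigr) \partial_j \partial_k u(z) + \sum_{j \in \{1, 2\}} b_j(z) \partial_j u(z) = 0,
\end{align*}
with $b_j(z) = \sum_{j', k} (\partial_{\xi_{j'}} a_{jk})(\nabla F(z)) \partial_{j'} \partial_k F(z)$. Once $F \in \mathcal{C}^{k,\alpha}$ with $k \ge 1$, the coefficients $a_{jk}(\nabla F)$ and $b_j$ lie in $\mathcal{C}^{k-1,\alpha}$ on the same domain (by \eqref{fgestimatefg} and the chain rule, using $\|a_{jk}\|_{\mathcal{C}^{m-1}(\mathbb{R}^2)} \le B$). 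Applying \Cref{aijuestimates} to $u$ on the pair $(\mathfrak{D}_{r', k-1}, \mathfrak{D}_{r', k})$ upgrades $u \in \mathcal{C}^{2,\alpha}(\mathfrak{D}_{r', k})$, whence $F \in \mathcal{C}^{k+1,\alpha}(\mathfrak{D}_{r', k})$, with constants depending only on $r'$, $B$, $k$, $\mathfrak{R}'$. Iterating for $k = 1, 2, \ldots, m - 1$ yields $F \in \mathcal{C}^{m,\alpha}(\mathfrak{D}_{r', m-1}) \subseteq \mathcal{C}^m(\mathfrak{D}_r)$ with the desired bound, after choosing $r' < r - r(m-1)/(2m)$.

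The main obstacle is the initial $\mathcal{C}^{1,\alpha}$ bound up to the smooth portion $\partial \mathfrak{R}' \cap \partial \mathfrak{R}$ of the boundary — the remaining boundary $\partial \mathfrak{R} \setminus \partial \mathfrak{R}'$ is merely continuous, so the boundary gradient and Hölder gradient estimates must be localized using barriers supported near the smooth portion. This is precisely the content of the boundary Hölder gradient theory in \cite[Chapter 13]{EDSO} and \cite[Equation (13.41)]{EDSO}; once it is in place, the Schauder climb is routine and the constants can be tracked explicitly in terms of $r$, $B$, $m$, and the $\mathcal{C}^{m+1}$ character of $\partial \mathfrak{R}'$.
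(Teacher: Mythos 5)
The paper does not actually prove this lemma: it is quoted directly from \cite{EDSO} and \cite{EDE} (the text points to \cite[Corollary 6.7]{EDSO}, \cite[Theorem 8.29]{EDSO}, \cite[Equation (13.41)]{EDSO} and \cite[Chapter 3.6]{EDE}), so there is no in-paper argument to compare against. Your reconstruction follows exactly the standard route behind those citations: uniqueness from the comparison principle (\Cref{aijcomparison}), existence by smoothing the boundary data, solving via Leray--Schauder, and passing to the limit using the $\mathcal{C}^0$-contraction coming from comparison, and regularity by a quasilinear $\mathcal{C}^{1,\alpha}$ estimate followed by a Schauder climb on nested domains $\mathfrak{D}_{r',k}$ using \Cref{aijuestimates}. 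The localization to the smooth portion $\partial \mathfrak{R}' \cap \partial \mathfrak{R}$ via the nested domains is the right device and matches how the lemma is stated (interior and boundary cases simultaneously).

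One step of the bootstrap needs repair as written. If $F \in \mathcal{C}^{k,\alpha}$ then $b_j = \sum_{j',k'} (\partial_{\xi_{j'}} a_{jk'})(\nabla F)\, \partial_{j'}\partial_{k'} F$ lies only in $\mathcal{C}^{k-2,\alpha}$, not $\mathcal{C}^{k-1,\alpha}$; in particular at the stage $k=1$ the $b_j$ are not even defined, so you cannot start by differentiating. The first step should instead apply \Cref{aijuestimates} to $F$ itself, viewing \eqref{ajkfiequation1} as a linear equation with coefficients $a_{jk}(\nabla F) \in \mathcal{C}^{0,\alpha}$, which yields $F \in \mathcal{C}^{2,\alpha}$. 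Moreover, applying \Cref{aijuestimates} to $u = \partial_\ell F$ only ever outputs a $\mathcal{C}^{2,\alpha}$ bound for $u$, so the iteration as described stalls at $\mathcal{C}^{3,\alpha}$ for $F$; to climb to $\mathcal{C}^m$ you should either apply the estimate to $\partial^\beta F$ for $|\beta| = k-1$, moving the commutator and lower-order terms into the inhomogeneity $g$ (which \Cref{aijuestimates} permits, and which are $\mathcal{C}^{0,\alpha}$ by the inductive hypothesis and \eqref{fgestimatefg}), or invoke the higher-order linear Schauder estimates directly, as the paper's citation to \cite[Corollary 6.7]{EDSO} does. With this adjustment (and noting that $a_{jk} \in \mathcal{C}^{m-1}$ is exactly enough to carry the climb to order $m$), your argument is the standard proof and the constants depend only on $r$, $B$, $m$, $\mathfrak{R}'$ as claimed.
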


From the above statements, we can quickly establish \Cref{maximumboundary}, \Cref{derivativef}, and \Cref{sumd}.

\begin{proof}[Proof of \Cref{maximumboundary}]

	Observe that the second statement of the lemma follows from the first. Indeed, set $G^- (z) = F_2 (z) - \sup_{w \in \partial \mathfrak{R}} \big| F_1 (z) - F_2 (z)\big|$ and $G^+ (z) = F_2 (z) + \sup_{z \in \partial \mathfrak{R}} \big| F_1 (z) - F_2 (z) \big|$, for each $z \in \overline{\mathfrak{R}}$. Then, $G^-$ and $G^+$ solve \eqref{equationxtd} on $\mathfrak{R}$ (since $F_2$ does), and $G^- \le F_1 \le G^+$ on $\partial \mathfrak{R}$; hence, $G^- \le F_1 \le G^+$ on $\mathfrak{R}$, implying $\sup_{z \in \mathfrak{R}} \big| F_1 (z) - F_2 (z) \big| \le \sup_{z \in \partial \mathfrak{R}} \big| F_1 (z) - F_2 (z) \big|$. Taking $(F_1, F_2) = (F, 0)$ yields $\sup_{z \in \mathfrak{R}} \big| F(z) \big| \le \sup_{z \in \partial \mathfrak{R}} \big| F(z) \big|$. 
	
	Now we establish the first statement of the lemma, which would follow from \Cref{aijcomparison}, except that \eqref{xijxikbsum} might not be valid for a uniform constant $B > 1$. To circumvent this, we restrict to a subset of $\mathfrak{R}$. More precisely, assume to the contrary that $F_1 (z_0) > F_2 (z_0)$ for some $z_0 \in \mathfrak{R}$. Then, since $F_1, F_2 \in \mathcal{C} (\overline{\mathfrak{R}})$ and $F_1 |_{\partial \mathfrak{R}} \le F_2 |_{\partial \mathfrak{R}}$, there exists some real number $\varepsilon > 0$ and open subset $\mathfrak{R}'$ with $\overline{\mathfrak{R}'} \subset \mathfrak{R}$, such that $F_1 (z) - \varepsilon \le F_2 (z)$ for each $z \in \partial \mathfrak{R}'$, and $F_1 (z) - \varepsilon > F_2 (z)$ for each $z\in \mathfrak R'$. 
	
	Observe that $F_1 - \varepsilon$ solves \eqref{equationxtd}. Furthermore, since $F_1, F_2 \in \mathcal{C}^2 (\mathfrak{R}) \cap \Adm (\mathfrak{R})$, we have by compactness that $\big| \partial_y F_j (z) \big| + \big| \partial_y F_j (z) \big|^{-1} \le B$ is uniformly bounded by some constant $B > 1$, over $z \in \overline{\mathfrak{R}'}$. It follows from the explicit form \eqref{uvd} of the coefficients $\mathfrak{d}_{ij}$ that \eqref{xijxikbsum} holds (for a larger value of $B$, given by $\pi^2 B^4$). Applying \Cref{aijcomparison} then indicates that $F_1 (z) - \varepsilon \le F_2 (z)$ for all $z \in \mathfrak{R}'$. This is a contradiction for $z = z_0$, confirming the lemma.
\end{proof}		

\begin{proof}[Proof of \Cref{derivativef}]
	
	For each $i, j \in \{ t, y \}$, let $\widetilde{\mathfrak{d}}_{ij} : \mathbb{R}^2 \rightarrow \mathbb{R}$ be smooth functions such that $\widetilde{\mathfrak{d}}_{ij} (u, v) = \mathfrak{d}_{ij} (u, v)$ whenever $v \in [-\varepsilon^{-1}, -\varepsilon]$, and such that \eqref{aijxiestimate} holds for some constant $\widetilde{B} = \widetilde{B}(\varepsilon) > 1$. Then, \Cref{aijgradientfestimate} yields a constant $C = C(\varepsilon, r, B, m, \mathfrak{R}') > 1$ such that there is a unique solution $\widetilde{F}$ to the equation 
	\begin{flalign}
		\label{dij2} 
		\displaystyle\sum_{i, j \in \{ t, y \}} \widetilde{\mathfrak{d}}_{ij} \big( \nabla \widetilde{F} (z) \big) \partial_i \partial_j \widetilde{F} (z) = 0, \qquad \text{for $z \in \mathfrak{R}$, with $\widetilde{F} |_{\partial \mathfrak{R}} = f$},
	\end{flalign}
	
	\noindent and that this $\widetilde{F}$ satisfies $\| \widetilde{F} \|_{\mathcal{C}^m (\mathfrak{D}_r)} \le C$. Observe since $\partial_x F (z) \in [-\varepsilon^{-1}, \varepsilon]$ for each $z \in \mathfrak{R}$ (as $F \in \Adm_{\varepsilon} (\mathfrak{R})$); since $F$ satisfies \eqref{equationxtd}; and since $\mathfrak{d}_{ij} (u, v) = \widetilde{\mathfrak{d}}_{ij} (u, v)$ for $v \in [-\varepsilon^{-1}, \varepsilon]$, that $F$ also satisfies \eqref{dij2}. Hence, $F = \widetilde{F}$, and so $\| F \|_{\mathcal{C}^m (\mathfrak{D}_r)} = \| \widetilde{F} \|_{\mathcal{C}^m (\mathfrak{D}_r)} \le C$.		
\end{proof}

We can also establish \Cref{sumd} as a quick consequence of results from \cite{RSNEPS}.

\begin{proof}[Proof of \Cref{sumd}]

	This is a special case of \cite[Theorem 1]{RSNEPS}; let us explain how to match our setup to the one there. First some point $z_0 \in \mathfrak{R}$; it suffices to show that $F$ is real analytic in some neighborhood of $z_0$. We may assume by translation in what follows that $z_0 = 0$; let $r > 0$ be a real number such that $\mathcal{B}_{4r} \subseteq \mathfrak{R}$. By \Cref{derivativef}, there exists a constant $C > 1$ such that $\| F \|_{\mathcal{C}^2 (\mathcal{B}_{2r})} \le C$ and $F \in \mathcal{C}^3 (\mathcal{B}_{2r})$. 
	
	Now, define the function $\Phi : \mathbb{R}^3 \rightarrow \mathbb{R}$ by setting $\Phi(v_1, v_2, v_3)=v_2+\pi^2 v_1^{-4}  v_3$; also define the set $\mathfrak{E} = \big\{ (v_1,v_2,v_3)\in \mathbb{R}^3: \varepsilon\leq-v_1\leq \varepsilon^{-1}, |v_2| \le C, |v_3|\leq C \big\}$. Then we can reformulate the equation \eqref{equationxtd} as $\Phi(\partial_y F, \partial_{tt}F, \partial_{yy}F)=0$; since $F \in \Adm_{\varepsilon} (\mathfrak{R})$ and $\| F \|_{\mathcal{C}^2 (\mathcal{B}_{2r})} \le C$, we also have $\big (\partial_y F(z), \partial_{tt}F(z), \partial_{yy}F(z) \big)\in \mathfrak E$ for any $z\in \mathcal{B}_{2r}$. On $\mathfrak E$, $\Phi$ is real analytic; in particular, for any point $(v_1, v_2, v_3)\in \mathfrak E$ and integers $k_1, k_2, k_3\geq 0$, we have
	\begin{align*}
		\left|\partial_{v_1}^{k_1}\partial_{v_2}^{k_2}\partial_{v_3}^{k_3}\Phi(v_1, v_2, v_3)\right|\leq \pi^2 C \varepsilon^{-k-4}(k+3)!, 
	\end{align*}

	\noindent where we have denoted $k = k_1 + k_2 + k_3 \ge 0$. Together with the fact that $F \in \mathcal{C}^3 (\mathcal{B}_{2r})$, this verifies the assumptions in \cite[Theorem 1]{RSNEPS} (in particular, see \cite[Equation (5)]{RSNEPS}), with the $M_k$ there equal to $k!$ here. It follows that there exists some constant $A > 1$ such that, for any point $(t, y) \in \mathcal{B}_r$ and integers $k_1, k_2\geq 0$, we have
	\begin{align}
		\big| \partial_t^{k_1}\partial_y^{k_2}F(t, y) \big|\leq A^{k_1+k+2} (k_1+k_2)! \le A^2 \cdot (2A)^{k_1 + k_2} k_1! k_2!,
	\end{align}

	\noindent  and hence $F$ is real analytic on $\mathcal{B}_r$.
\end{proof}

\subsection{Proof of \Cref{perturbationbdk}}

\label{BoundaryPerturbationVariational}

In this section we establish \Cref{perturbationbdk}. To that end, we show the following proposition (which quickly implies \Cref{perturbationbdk}) bounding the derivatives of $F_1 - F_2$ assuming $F_1$ and $F_2$ satisfy the same nonlinear, elliptic partial differential equation with close boundary data. As in \Cref{perturbationbdk}, we state its interior and boundary forms simultaneously; its proof will be similar to that of \cite[Proposition A.5]{ULTS}, which provides its interior variant (at $m=1$).          

\begin{lem}
	
	\label{ellipticperturbation} 
	
	For any integer $m \ge 1$; real numbers $r > 0$ and $B > 1$; and bounded open subsets $\mathfrak{R}' \subseteq \mathfrak{R} \subset \mathbb{R}^2$ such that $\mathfrak{R}'$ has $\mathcal{C}^{m+5}$ boundary, there exists a constants $C = (B, m, r, \mathfrak{R}') > 1$ such that the following holds. For each $j, k \in \{ 1, 2 \}$, let $a_{jk} \in \mathcal{C}^{m+4} (\mathbb{R}^2)$ denote functions satisfying $a_{xy} = a_{yx}$ and 
	\begin{flalign}
		\label{aijxiestimate2}
		\displaystyle\max_{j, k \in \{ 1, 2 \}} \| a_{jk} \|_{\mathcal{C}^{m+4} (\mathbb{R}^2)} \le B; \qquad \displaystyle\inf_{z \in \mathbb{R}^2} \displaystyle\sum_{j, k \in \{ 1, 2 \}} a_{jk} (z) \xi_j \xi_k \ge B^{-1} |\xi|^2,
	\end{flalign}
	
	\noindent for any $\xi = (\xi_x, \xi_y) \in \mathbb{R}^2$. For each $i \in \{ 1, 2 \}$, let $f_i \in \mathcal{C} (\partial \mathfrak{R})$ denote a continuous function such that $\sup_{z \in \partial \mathfrak{R}} \big| f_i (z) \big| \le B$, and let $F_i \in \mathcal{C}^{m+4} (\mathfrak{R}) \cap \mathcal{C} (\overline{\mathfrak{R}})$ denote the solution to 
	\begin{flalign}
		\label{ajkfiequation}
		\displaystyle\sum_{j, k \in \{ 1, 2 \}} a_{jk} \big( \nabla F_i (x) \big) \partial_j \partial_k F_i (z) = 0, \quad \text{for each $z \in \mathfrak{R}$},
	\end{flalign} 
	
	\noindent with boundary data $F_i |_{\partial \mathfrak{R}} = f_i$. Assume that there exist functions $\varphi_i \in \mathcal{C}^{m+5} (\overline{\mathfrak{R}})$ such that $\| \varphi_i \|_{\mathcal{C}^{m+5} (\mathfrak{R})} < B$ and $F_i (z) = \varphi_i (z)$ for each $z \in \partial \mathfrak{R}' \cap \partial \mathfrak{R}$. Then, letting $\mathfrak{D}_r \subseteq \mathfrak{R}$ be as in \eqref{dr}, we have 
		\begin{flalign}
			\label{estimatef1f2elliptic}
			\| F_1 - F_2 \|_{\mathcal{C}^m (\mathfrak{D}_r)}  \le C \big( \| F_1 - F_2 \|_{\mathcal{C}^0 (\mathfrak{R})} + \| \varphi_1 - \varphi_2 \|_{\mathcal{C}^{m+2} (\mathfrak{R})} \big). 
		\end{flalign}

\end{lem}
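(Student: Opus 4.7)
The plan is to linearize. Set $W = F_1 - F_2$, subtract the two equations \eqref{ajkfiequation}, and apply the fundamental theorem of calculus to $a_{jk}\bigl(\nabla F_1(z)\bigr) - a_{jk}\bigl(\nabla F_2(z)\bigr) = \sum_l \int_0^1 (\partial_l a_{jk})\bigl(\nabla F_2(z) + s \nabla W(z)\bigr)\,ds \cdot \partial_l W(z)$. This produces a linear, homogeneous elliptic equation for $W$,
\begin{flalign*}
\sum_{j,k \in \{1,2\}} \widetilde{a}_{jk}(z) \, \partial_j \partial_k W(z) + \sum_{l \in \{1,2\}} \widetilde{b}_l(z) \, \partial_l W(z) = 0,
\end{flalign*}
with $\widetilde{a}_{jk}(z) = a_{jk}\bigl(\nabla F_1(z)\bigr)$ and $\widetilde{b}_l(z) = \sum_{j,k} \partial_j \partial_k F_2(z) \int_0^1 (\partial_l a_{jk})\bigl(\nabla F_2(z) + s \nabla W(z)\bigr)\,ds$. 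The boundary data of $W$ on $\partial \mathfrak{R}' \cap \partial \mathfrak{R}$ is $\varphi_1 - \varphi_2$, while its total oscillation on $\partial \mathfrak{R}$ is bounded by $\|F_1 - F_2\|_{\mathcal{C}^0(\mathfrak{R})}$.

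The next step is to control the coefficients. Applying \Cref{aijgradientfestimate} (with the $m$ there replaced by $m+4$, and on a slightly enlarged version $\mathfrak{D}_{r/2} \supset \mathfrak{D}_r$) yields $\|F_1\|_{\mathcal{C}^{m+4}(\mathfrak{D}_{r/2})} + \|F_2\|_{\mathcal{C}^{m+4}(\mathfrak{D}_{r/2})} \le C_1$ for some $C_1 = C_1(\varepsilon, r, B, m, \mathfrak{R}') > 1$, using that $\|\varphi_i\|_{\mathcal{C}^{m+5}} \le B$. Together with $\|a_{jk}\|_{\mathcal{C}^{m+4}(\mathbb{R}^2)} \le B$ and the chain/product rule, this gives $\|\widetilde{a}_{jk}\|_{\mathcal{C}^{m+3}(\mathfrak{D}_{r/2})} \le C_2$ and $\|\widetilde{b}_l\|_{\mathcal{C}^{m+2}(\mathfrak{D}_{r/2})} \le C_2$. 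The ellipticity bound in \eqref{aijxiestimate2} is inherited directly by $\widetilde{a}_{jk}$.

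The final step is to iterate the Schauder estimate. Fix $\alpha \in (0,1)$ and choose a nested family of subdomains $\mathfrak{D}_r = \mathfrak{D}^{(m-1)} \subset \mathfrak{D}^{(m-2)} \subset \cdots \subset \mathfrak{D}^{(0)} \subset \mathfrak{D}_{r/2}$, each with smooth boundary matching $\partial \mathfrak{R}' \cap \partial \mathfrak{R}$ on the relevant portion. Applying \Cref{aijuestimates} to $W$ on $\mathfrak{D}^{(0)}$ (viewing $\varphi_1 - \varphi_2$ as the admissible boundary extension and $g=0$) yields
\begin{flalign*}
\|W\|_{\mathcal{C}^{2,\alpha}(\mathfrak{D}^{(1)})} \le C_3 \bigl( \|W\|_{\mathcal{C}^0(\mathfrak{R})} + \|\varphi_1 - \varphi_2\|_{\mathcal{C}^{2,\alpha}(\mathfrak{R})} \bigr).
\end{flalign*}
To pass from $\mathcal{C}^{k,\alpha}$ to $\mathcal{C}^{k+1,\alpha}$ bounds on $W$, differentiate the linear equation in any coordinate direction $\partial_i$: the function $\partial_i W$ solves the same linear elliptic operator on the left, now with a source term $-\sum_{j,k} (\partial_i \widetilde{a}_{jk}) \partial_j \partial_k W - \sum_l (\partial_i \widetilde{b}_l) \partial_l W$ whose $\mathcal{C}^{k-1,\alpha}$ norm is controlled by the previous iterate, and with boundary data that on $\partial \mathfrak{R}' \cap \partial \mathfrak{R}$ can be read off from $\partial_i(\varphi_1 - \varphi_2)$ in the tangential directions (the normal derivative being recovered algebraically from the equation for $W$ itself, using uniform ellipticity). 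Applying \Cref{aijuestimates} again on $\mathfrak{D}^{(k+1)} \subset \mathfrak{D}^{(k)}$ boosts the regularity by one derivative. After $m-2$ such iterations we obtain
\begin{flalign*}
\|W\|_{\mathcal{C}^{m,\alpha}(\mathfrak{D}_r)} \le C_4 \bigl( \|W\|_{\mathcal{C}^0(\mathfrak{R})} + \|\varphi_1 - \varphi_2\|_{\mathcal{C}^{m,\alpha}(\mathfrak{R})} \bigr),
\end{flalign*}
and since $\|\varphi_1 - \varphi_2\|_{\mathcal{C}^{m,\alpha}} \le 2 \|\varphi_1 - \varphi_2\|_{\mathcal{C}^{m+1}} \le 2 \|\varphi_1 - \varphi_2\|_{\mathcal{C}^{m+2}}$, the desired estimate \eqref{estimatef1f2elliptic} follows.

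The main obstacle I expect is in the iteration step: carefully tracking the boundary behavior of derivatives of $W$ on $\partial \mathfrak{R}' \cap \partial \mathfrak{R}$, in particular the normal derivative that is not directly prescribed by $\varphi_1 - \varphi_2$. The standard device is to flatten the boundary locally, differentiate only in tangential coordinates, and then use the equation to eliminate normal second derivatives. The regularity buffer ($\varphi_i$ taken in $\mathcal{C}^{m+5}$ and $a_{jk}$ in $\mathcal{C}^{m+4}$) is precisely what is needed so that, after these manipulations, the coefficients remain in $\mathcal{C}^{m-2,\alpha}$ and the source stays in $\mathcal{C}^{m-2,\alpha}$ at each step.
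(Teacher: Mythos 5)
Your argument is essentially correct, but it follows a genuinely different route from the paper's. You subtract the two equations and use the fundamental theorem of calculus to exhibit $W=F_1-F_2$ as a solution of an exactly linear, homogeneous elliptic equation whose coefficients $\widetilde a_{jk}=a_{jk}(\nabla F_1)$ and $\widetilde b_l$ are controlled in $\mathcal{C}^{m+3}$ and $\mathcal{C}^{m+2}$ on $\mathfrak{D}_{r/2}$ via \Cref{aijgradientfestimate} (applied with $m$ replaced by $m+4$, which is exactly what the hypotheses $a_{jk}\in\mathcal{C}^{m+4}$, $\varphi_i\in\mathcal{C}^{m+5}$, $\mathcal{C}^{m+5}$ boundary permit); linear Schauder theory then does the rest, with no smallness needed. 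The paper instead sets $\varsigma=\max\{\|F_1-F_2\|_{\mathcal{C}^0},\|\varphi_1-\varphi_2\|_{\mathcal{C}^3}\}$, uses the interpolation estimate \Cref{fjderivativesestimate} to reduce to the case $\varsigma<\delta$ small, writes the quasilinear equation satisfied by $g=\varsigma^{-1/2}(F_2-F_1)$ — which carries genuinely quadratic terms in $g$, each weighted by an explicit factor $\varsigma^{1/2}$ — and absorbs those terms into a single application of the $\mathcal{C}^{2,1/2}$ Schauder estimate \Cref{aijuestimates}; higher derivatives are then obtained, as in your scheme, by differentiating the equation. What your approach buys is the elimination of the smallness/absorption step and of the interpolation reduction; what it costs is that \Cref{aijuestimates} only gives $\mathcal{C}^{2,\alpha}$ control, so for $m\ge 3$ you must either iterate as you sketch or simply invoke the standard higher-order global Schauder estimate (e.g.\ Gilbarg--Trudinger, Theorem 6.19), which applies verbatim to your linear equation and would shortcut the iteration. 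The one delicate point in your iteration is the one you flag yourself: $\partial_i W$ has prescribed boundary data only in tangential directions, and the bound available on its trace at stage $k$ is one derivative short of what a naive reapplication of \Cref{aijuestimates} requires; the standard fix you name (flatten the boundary, differentiate tangentially, recover normal derivatives algebraically from the equation using uniform ellipticity) is exactly right, and the regularity headroom in the hypotheses accommodates it. Two cosmetic slips: the constant should not depend on an $\varepsilon$ (there is none in this lemma, the ellipticity constant is $B^{-1}$), and the inequality $\|\varphi_1-\varphi_2\|_{\mathcal{C}^{m,\alpha}}\le 2\|\varphi_1-\varphi_2\|_{\mathcal{C}^{m+1}}$ needs a domain-dependent constant rather than the factor $2$, which is harmless since the target norm in \eqref{estimatef1f2elliptic} is $\mathcal{C}^{m+2}$.
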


\begin{proof}[Proof of \Cref{perturbationbdk} (Outline)]
	
	This follows from \Cref{ellipticperturbation}, very similarly to how \Cref{derivativef} followed from \Cref{aijgradientfestimate}; we omit further details.
\end{proof} 

To establish \Cref{ellipticperturbation}, we will use the following lemma, which is an interpolation estimate that bounds derivatives of a function in terms of its higher (and lower) derivatives; its proof is similar to that of \cite[Lemma 6.35]{EDSO}. 

\begin{lem} 
	
	\label{fjderivativesestimate} 
	
	For any integers $k > j > 0$ and bounded open subset $\mathfrak{R} \subset \mathbb{R}^2$ with $\mathcal{C}^{k+1}$ boundary, there exists a constant $C = C(k, \mathfrak{R}) > 1$ such that the following holds. Let $F \in \mathcal{C}^{k} (\mathfrak{R})$ denote a function, and suppose that $0 < A \le B$ are real numbers such that $A \ge  \| F \|_0$ and $B \ge [F]_{k; 0; \mathfrak{R}}$. Then, 
	\begin{flalign}
		\label{gradientf1f2estimate1}
		[F]_{j, 0; \mathfrak{R}} \le C A^{1 - j / k} B^{j / k}. 
	\end{flalign}
	
\end{lem}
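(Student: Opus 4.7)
The plan is to reduce to the standard interpolation inequality on $\mathbb{R}^2$ via an extension operator, and then prove the $\mathbb{R}^2$ (or ball) version by the classical balancing trick of Gilbarg--Trudinger.

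First I would reduce to the case $\mathfrak{R} = \mathbb{R}^2$. Since $\partial \mathfrak{R}$ is $\mathcal{C}^{k+1}$, there exists a continuous linear extension operator $\mathcal{E} : \mathcal{C}^k(\overline{\mathfrak{R}}) \to \mathcal{C}^k(\mathbb{R}^2)$ (of Stein or Whitney type) that is supported in a bounded neighborhood $\mathfrak{U}$ of $\overline{\mathfrak{R}}$, with $\| \mathcal{E} F \|_{\mathcal{C}^\ell(\mathbb{R}^2)} \le C_0 \| F \|_{\mathcal{C}^\ell(\mathfrak{R})}$ for each $\ell \in \llbracket 0, k \rrbracket$, where $C_0 = C_0(k, \mathfrak{R}) > 1$. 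Applied to $F$, this produces $\widetilde{F} = \mathcal{E} F \in \mathcal{C}^k(\mathbb{R}^2)$ with $\| \widetilde{F} \|_0 \le C_0 A$ and $[\widetilde{F}]_\ell \le C_0 \| F \|_{\mathcal{C}^\ell(\mathfrak{R})}$ for each $\ell \in \llbracket 0, k \rrbracket$; in particular $[\widetilde{F}]_k \le C_0 \| F \|_{\mathcal{C}^k(\mathfrak{R})}$. Since $[F]_{j;0;\mathfrak{R}} \le [\widetilde{F}]_{j;0;\mathbb{R}^2}$, it suffices to prove \eqref{gradientf1f2estimate1} when $\mathfrak{R} = \mathbb{R}^2$.

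Next, I would establish the base case $(j,k) = (1,2)$ on $\mathbb{R}^2$ by a Taylor/balancing argument. Fix $z_0 \in \mathbb{R}^2$ and $i \in \{ 1, 2 \}$, set $M = \big| \partial_i F(z_0) \big|$, and suppose (after possibly flipping sign) that $\partial_i F(z_0) = M$. For any $h > 0$, Taylor's theorem gives
\begin{flalign*}
\big| F(z_0 + h e_i) - F(z_0) - h M \big| \le \tfrac{1}{2} h^2 B,
\end{flalign*}
so $F(z_0 + h e_i) - F(z_0) \ge h M - \tfrac{1}{2} h^2 B$, which combined with $\| F \|_0 \le A$ yields $h M - \tfrac{1}{2} h^2 B \le 2 A$ for all $h > 0$. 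Optimizing in $h$ (take $h = M/B$ if $M \le B$; otherwise the bound is easier) gives $M^2 \le 4 A B$, so $[F]_1 \le 2 (AB)^{1/2}$. This is exactly \eqref{gradientf1f2estimate1} at $(j,k) = (1,2)$.

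Finally, I would deduce the general case by induction on $k$. Fix $k \ge 2$ and suppose the inequality holds for all pairs with smaller top index. Given $j \in \llbracket 1, k-1 \rrbracket$, apply the base case inequality with $k$ there equal to $2$ to the function $\partial_\gamma F$ for a multi-index $\gamma$ of size $j-1$, using $\| \partial_\gamma F \|_0 \le [F]_{j-1;0;\mathbb{R}^2}$ and $[\partial_\gamma F]_{2;0;\mathbb{R}^2} \le [F]_{j+1;0;\mathbb{R}^2}$, which yields
\begin{flalign*}
[F]_{j;0;\mathbb{R}^2} \le 2 \big( [F]_{j-1;0;\mathbb{R}^2} \cdot [F]_{j+1;0;\mathbb{R}^2} \big)^{1/2}.
\end{flalign*}
This log-convexity of the seminorms $[F]_j$ in $j$, combined with $[F]_0 \le A$ and $[F]_k \le B$, yields by a standard iteration (equivalently, three-line-lemma / Hadamard convexity) the bound $[F]_j \le C A^{1-j/k} B^{j/k}$ for each $j \in \llbracket 0, k \rrbracket$, with $C = C(k) > 1$. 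Absorbing the constant from the extension step gives the claimed bound on $\mathfrak{R}$.

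The main obstacle is really organizational: the extension operator requires the hypothesis $\partial \mathfrak{R} \in \mathcal{C}^{k+1}$, and one must verify that the log-convexity $[F]_j^2 \le 4 [F]_{j-1} [F]_{j+1}$ iterates cleanly to the asserted interpolation exponents. Both steps are classical, so I expect no serious analytic difficulty beyond citing/recalling the extension theorem.
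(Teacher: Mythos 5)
Your Taylor balancing argument for $(j,k)=(1,2)$ and the log-convexity iteration $[F]_j^2 \le 4\,[F]_{j-1}[F]_{j+1}$ are both sound, and in spirit they match the paper's argument (which is modeled on the Gilbarg--Trudinger interpolation lemma). The genuine gap is in your reduction to $\mathbb{R}^2$. A Stein/Whitney extension operator is bounded with respect to the \emph{full} norms, so what you actually obtain is $[\widetilde{F}]_k \le C_0 \| F \|_{\mathcal{C}^k (\mathfrak{R})} = C_0 \sum_{\ell = 0}^k [F]_{\ell}$, as you acknowledge; but the intermediate seminorms $[F]_1, \ldots , [F]_{k-1}$ are exactly the quantities being estimated, and they are not a priori controlled by $A$ and $B$ alone (such control is the content of the lemma, and it can fail on domains with irregular boundary, which is why the $\mathcal{C}^{k+1}$ hypothesis appears). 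Consequently, applying the $\mathbb{R}^2$ interpolation to $\widetilde{F}$ only yields $[F]_j \le C A^{1-j/k} \| F \|_{\mathcal{C}^k (\mathfrak{R})}^{j/k}$, not \eqref{gradientf1f2estimate1}; the step ``it suffices to prove the case $\mathfrak{R} = \mathbb{R}^2$'' is circular as written. The standard repair is to first prove the $\epsilon$-form $[F]_j \le \epsilon [\widetilde{F}]_k + C \epsilon^{-j/(k-j)} \| F \|_0$, sum over $j \in \llbracket 1, k-1 \rrbracket$, choose $\epsilon$ small (depending on $k$ and $C_0$) to absorb $\sum_{\ell} [F]_{\ell}$ into the left-hand side, and only then optimize, taking $\epsilon$ comparable to $(A/B)^{1-j/k}$, which is admissible since $A \le B$. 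Without an absorption step of this kind, the extension route does not close.

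For comparison, the paper avoids extension entirely: it runs your balancing argument directly inside $\mathfrak{R}$, using the regularity of $\partial \mathfrak{R}$ only to guarantee that from any $z_0 \in \mathfrak{R}$ one can travel a fixed distance $t_0 (\mathfrak{R}) > 0$ into the domain along one of finitely many directions, and then takes step length $t = (A/B)^{1/2} t_0 \le t_0$; the constraint $t \le t_0$ is precisely where the domain-dependent constant $C (k, \mathfrak{R})$ enters. If you either add the absorption argument above or replace the extension step by this interior-segment argument (noting that your choice $h = M/B$ must then respect the same length restriction), your proof becomes complete.
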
 

\begin{proof} 
	
	Since the proof of this lemma is similar to that of \cite[Lemma 6.35]{EDSO}, we only establish it in the case $(j, k) = (1, 2)$; the proof for general $k > j \ge 1$ is entirely analogous. 
	
	Fix some constant $C_0 = C_0 (\mathfrak{R}) > 1$ to be determined later, and assume to the contrary that there exists some $i \in \{ 1, 2 \}$ and $z_0 \in \mathfrak{R}$ such that $\big| \partial_i F (z_0) \big| > C_0 (AB)^{1 / 2}$. Assume that $i = x$ and $\partial_i F (z_0) > C_0 (AB)^{1 / 2}$, as the proofs in the alternative cases are entirely analogous. Further set $w = (1, 0) \in \mathbb{R}^2$ and $w' = (2^{-1/2}, 2^{-1/2}) \in \mathbb{R}^2$. Since the boundary of $\mathfrak{R}$ is $\mathcal{C}^{k+1}$, there exists a constant $t_0 = t_0 (\mathfrak{R}) > 0$ such that either $z_0 + tw \in \mathfrak{R}$ for each $t \in [0, t_0]$; $z_0 - tw \in \mathfrak{R}$ for each $t \in [0, t_0]$; $z_0 + t w' \in \mathfrak{R}$ for each $t \in [0, t_0]$; or $z_0 - tw' \in \mathfrak{R}$ for each $t \in [0, t_0]$. Let us assume that the third case holds, as the proofs in the remaining ones are very similar. Then, since $[F]_{2, 0; \mathfrak{R}} \le B$, we have $\big| \partial_x F (z_0 + t w') - \partial_x F (z_0) \big| \le Bt$ for each $t \in [0, t_0]$. 
	
	Thus, $\partial_x F (z_0 + t w') > 2^{-1/2} C_0 (AB)^{1/2} - Bt$ whenever $t \in [0, t_0]$, and so integration over $t$ yields $\big| F(z_0) - F(z_0 + t w') \big| > 2^{-1/2} C_0 (AB)^{1/2} t - \frac{Bt^2}{2}$, for any $t \in [0, t_0]$. Selecting $t = \big( \frac{A}{B} \big)^{1/2} t_0 \le t_0$ yields $\| F (z_0) - F(z_0 + t_0 w') \big| > \big( 2^{-1/2} C_0 t_0 - \frac{t_0^2}{2} \big) A$. In particular, selecting $C_0$ sufficiently large so that $2^{-1/2} C_0 t_0 - t_0^2 > 3$, this implies that $\| F \|_{0; \mathfrak{R}} > A$, which is a contradiction, verifying \eqref{gradientf1f2estimate1}.
\end{proof} 

Now we can establish \Cref{ellipticperturbation}.

\begin{proof}[Proof of \Cref{ellipticperturbation}]

	The proof of this proposition is similar to that of \cite[Proposition A.5]{ULTS}, obtained by replacing the interior regularity, Schauder, and interpolation estimates (given by \cite[Lemma A.3]{ULTS}, \cite[Lemma A.1]{ULTS}, and \cite[Lemma A.6]{ULTS}, respectively), by the boundary ones (given by \Cref{aijgradientfestimate}, \Cref{aijuestimates}, and \Cref{fjderivativesestimate}, respectively). We only address the case $m = 1$ here (so that it suffices to bound $|\nabla F_1 - \nabla F_2|$), as bounding the difference of the higher derivatives follows similarly by differentiating the equations below. Throughout, we assume that $\mathfrak{R}' = \mathfrak{R}$ for notational simplicity; the proof in the case of more general $\mathfrak{R}'$ is entirely analogous.  
	
	Denoting
	\begin{flalign}
		\label{f1bf2}
		\varsigma = \max \Big\{ \| F_1 - F_2 \|_{\mathcal{C}^0 (\mathfrak{R})}, \| \varphi_1 - \varphi_2 \|_{\mathcal{C}^3 (\mathfrak{R})} \Big\}
	\end{flalign}
	
	\noindent we have 
	\begin{flalign}
		\label{f1f2estimate}
		\| F_1 - F_2 \|_{\mathcal{C}^0 (\mathfrak{R})} \le \varsigma; \qquad \displaystyle\max_{i \in \{ 1, 2 \}} \displaystyle\sup_{z \in \mathfrak{R}} \big| F_i (z) \big| \le B,
	\end{flalign}
	
	\noindent where the second bound follows from \Cref{aijcomparison} and the fact that $\big| f_i (z) \big| \le B$, for each $i \in \{1, 2 \}$ and $z \in \partial \mathfrak{R}$. 
	Moreover, the $m = 5$ case of \Cref{aijgradientfestimate} yields a constant $C_1 = C_1 (B, \mathfrak{R}) > 1$ such that 
	\begin{flalign}
		\label{fi3bestimate}
		\displaystyle\max_{i \in \{ 1, 2 \}} \| F_i \|_{\mathcal{C}^5 (\mathfrak{R})} < C_1.
	\end{flalign}
	
	\noindent Thus \eqref{gradientf1f2estimate1} (with the $F$ there equal to $F_1 - F_2$ here; the $A$ there equal to $\varsigma$ here; the $B$ there equal to $C_1$ here; and the $(j, k)$ there equal to either $(1, 5)$ or $(2, 5)$), together with \eqref{f1f2estimate} and \eqref{fi3bestimate}, yields a constant $C_2 = C_2 (B, \mathfrak{R}) > 1$ such that
	\begin{flalign}
		\label{gradientf1f2estimate2}
		\begin{aligned}
			\displaystyle\sup_{z \in \mathfrak{R}} \big| \nabla F_1 (z) - \nabla F_2 (z) \big| & \le C_2 \varsigma^{4 / 5}; \qquad \| F_1 - F_2 \|_{\mathcal{C}^2 (\mathfrak{R})} \le C_2 \varsigma^{3 / 5}.
		\end{aligned} 
	\end{flalign}
	
	Thus, we may assume in what follows that $\varsigma < \delta$, for some sufficiently small constant $\delta = \delta (\varepsilon, B) > 0$ (to be specified later), for otherwise the result follows from the fact that  $\sup_{z \in \mathfrak{R}} \big| \nabla F_1 (z) -\nabla F_2 (z) \big|\leq C_2\varsigma^{4/5}$ from \eqref{gradientf1f2estimate2}.  To improve the first bound in \eqref{gradientf1f2estimate2} to \eqref{estimatef1f2elliptic}, define $g: \mathfrak{R} \rightarrow \mathbb{R}$ by setting
	\begin{flalign*} 
		g(z) = \varsigma^{-1 / 2} \big( F_2 (z) - F_1 (z) \big), \qquad \text{for each $z \in \overline{\mathfrak{R}}$.}
	\end{flalign*} 
	
	\noindent Then the first bound in \eqref{f1f2estimate} and \eqref{gradientf1f2estimate2} together yield
	\begin{flalign}
		\label{estimateg1}
		\| g \|_{\mathcal{C}^0 (\mathfrak{R})} \le \varsigma^{1 / 2}; \qquad \| g \|_{\mathcal{C}^1 (\mathfrak{R})} \le 2 C_2 \varsigma^{3 / 10}; \qquad \| g \|_{\mathcal{C}^2 (\mathfrak{R})} \le C_2 \varsigma^{1 / 10}. \qquad .
	\end{flalign}
	
	\noindent Next, setting $i = 2$ and using the fact that $F_2 = F_1 + \varsigma^{1 / 2} g$ in \eqref{ajkfiequation}, we obtain 
	\begin{flalign}
		\label{2ajkfiequation}
		\displaystyle\sum_{1 \le j, k \le 2} a_{jk} \big( \nabla F_1 (z) + \varsigma^{1 / 2} \nabla g(z) \big) \big( \partial_j \partial_k F_1 (z) + \varsigma^{1 / 2} \partial_j \partial_k g (z) \big) = 0.
	\end{flalign}
	
	\noindent Then, subtracting the $i = 1$ case of \eqref{ajkfiequation} from \eqref{2ajkfiequation}, we deduce that
	\begin{flalign}
		\label{ghequationmunukappa}
		\begin{aligned} 
			& \displaystyle\sum_{j, k \in \{ 1, 2 \} } \Big( \mu_{jk} (z) \partial_j \partial_k g (z) + \nu_{jk} (z) \kappa_{jk} (z) \cdot \nabla g(z) \Big) \\
			& \qquad \quad = - \varsigma^{1 / 2} \displaystyle\sum_{j, k \in \{ 1, 2 \} 	} \Big(  \big( \nabla g(z) \cdot \kappa_{jk} (z) \big) \partial_j \partial_k g (z) + \nu_{jk} (z) h_{jk} (z) + \varsigma^{1 / 2} h_{jk} (z) \partial_j \partial_k g (z) \Big),
		\end{aligned} 
	\end{flalign} 
	
	\noindent where for each $j, k \in \{ 1, 2 \}$, we have defined $\mu_{jk}, \nu_{jk}, h_{jk} \in \mathcal{C} (\mathfrak{R})$ and $\kappa_{jk}: \mathfrak{R} \rightarrow \mathbb{R}^2$ by setting
	\begin{flalign}
		\label{hdefinition}
		\begin{aligned}
			& \mu_{jk} (z) =  a_{jk} \big( \nabla F_1 (z) \big); \qquad \kappa_{jk} (z) = \nabla a_{jk} \big( \nabla F_1 (z) \big); \qquad \nu_{jk} (z) = \partial_j \partial_k F_1 (z);  \\
			& h_{jk} (z) = \varsigma^{-1} \Big( a_{jk} \big( \nabla F_1 (z) + \varsigma^{1 / 2} \nabla g (z) \big) - a_{jk} \big( \nabla F_1 (z) \big) - \varsigma^{1 / 2} \nabla g (z) \cdot \nabla a_{jk} \big( \nabla F_1 (z) \big) \Big), \\
		\end{aligned} 
	\end{flalign} 
	
	\noindent for each $z \in \mathfrak{R}$. Now, observe that \eqref{fgestimatefg}, \eqref{fi3bestimate}, and the fact that $\| a_{jk} \|_{\mathcal{C}^4 (\mathbb{R}^2)} \le B$ together imply the existence of a constant $C_3 = C_3 (B, \mathfrak{R}) > 1$ such that 
	\begin{flalign}
		\label{munukappa} 
		\| \mu_{jk} \|_{\mathcal{C}^{0, 1 / 2} (\mathfrak{R})} \le C_3; \qquad \| \kappa _{jk} (z) \|_{\mathcal{C}^{0, 1 / 2} (\mathfrak{R})} \le C_3; \qquad \| \nu_{jk} (z) \|_{\mathcal{C}^{0, 1 / 2} (\mathfrak{R})} \le C_3.
	\end{flalign} 
	
	Since the $a_{jk}$ satisfy \eqref{aijxiestimate2}, it follows from \eqref{ghequationmunukappa}, \eqref{f1bf2}, and the $\alpha = \frac{1}{2}$ case of the Schauder estimate \Cref{aijuestimates} that there exists a constant $C_4 = C_4 (B, \mathfrak{R}) > 1$ such that
	\begin{flalign*}
		\| g \|_{\mathcal{C}^{2, 1/2} (\mathfrak{R})} & \le C_4 \| g \|_{\mathcal{C}^0 (\mathfrak{R})} + C_4 \varsigma^{1 / 2} \displaystyle\max_{j, k \in \{ 1, 2 \} } \Big\|  \big( \nabla g(z) \cdot \kappa_{jk} (z) \big) \partial_j \partial_k g (z) \Big\|_{\mathcal{C}^{0,1/2} (\mathfrak{R})} \\
		& \qquad  + C_4 \varsigma^{1 / 2} \displaystyle\max_{j, k \in \{ 1, 2 \} }  \big\| \nu_{jk} (z) h_{jk} (z) \big\|_{\mathcal{C}^{0,1/2} (\mathfrak{R})} + C_4 \varsigma^{1/2} \\
		& \qquad + C_4 \varsigma  \displaystyle\max_{j, k \in \{ 1, 2 \} } \big\| h_{jk} (z) \partial_j \partial_k g (z) \big\|_{\mathcal{C}^{0,1/2} (\mathfrak{R})}.
	\end{flalign*} 
	
	\noindent Thus, by \eqref{fgestimatefg} and \eqref{munukappa}, there exists a constant $C_5 = C_5 (B, \mathfrak{R}) > 1$ such that
	\begin{flalign}
		\label{gestimategh}
		\begin{aligned}
			\| g \|_{\mathcal{C}^{2, 1/2} (\mathfrak{R})} & \le C_5 \varsigma^{1 / 2} \Big( \| g \|_{\mathcal{C}^{2, 1/2} (\mathfrak{R})} \| g \|_{\mathcal{C}^1 (\mathfrak{R})} + \| g \|_{1, 1 / 2; \mathfrak{R}} \| g \|_{\mathcal{C}^2 (\mathfrak{R})}   + \| h \|_{\mathcal{C}^{0,1/2} (\mathfrak{R})} + 1 \Big) \\
			& \quad + C_5 \varsigma \Big( \| h \|_{\mathcal{C}^0 (\mathfrak{R})} \| g \|_{\mathcal{C}^{2, 1/2} (\mathfrak{R})} + \| h \|_{\mathcal{C}^{0,1/2} (\mathfrak{R})} \| g \|_{\mathcal{C}^2 (\mathfrak{R})} \Big)+ C_5 \| g \|_{\mathcal{C}^0 (\mathfrak{R})}.
		\end{aligned} 
	\end{flalign} 
	
	\noindent Now, \eqref{estimateg1} yields a constant $C_6 = C_6 (B, \mathfrak{R}) > 1$ such that 
	\begin{flalign}
		\label{ghestimates}
		\begin{aligned}
			\max \big\{ \| g \|_{\mathcal{C}^1 (\mathfrak{R})}, \| g \|_{1, 1 / 2; \mathfrak{R}}, \| g & \|_{\mathcal{C}^2 (\mathfrak{R})}, \| h \|_{\mathcal{C}^0 (\mathfrak{R})}, \| h \|_{\mathcal{C}^{0,1/2} (\mathfrak{R})} \big\} \\
			& \quad \le \max \big\{ \| g \|_{\mathcal{C}^2 (\mathfrak{R})}, \| h \|_{\mathcal{C}^1 (\mathfrak{R})} \big\} <  C_6,
		\end{aligned}
	\end{flalign}
	
	\noindent where the bound on $\| h \|_{\mathcal{C}^1 (\mathfrak{R})}$ follows from differentiating the definition \eqref{hdefinition} of $h$; \eqref{fi3bestimate}; \eqref{estimateg1}; the fact that $\| a_{jk} \|_{\mathcal{C}^4 (\mathbb{R}^2)} \le B$ for each $j, k \in \{ 1, 2 \}$; and a Taylor expansion.
	
	Inserting the first bound of \eqref{estimateg1}; \eqref{ghestimates}; and the fact that $\| g \|_{\mathcal{C}^{2, 1/2} (\mathfrak{R})} \ge \| g \|_{\mathcal{C}^2 (\mathfrak{R})}$ into \eqref{gestimategh} then yields a constant $C_7 = C_7 (B, \mathfrak{R}) > 1$ such that 
	\begin{flalign*}
		(1 - C_7 \varsigma^{1 / 2}) \| g \|_{\mathcal{C}^{2, 1/2} (\mathfrak{R})} \le C_7 \varsigma^{1 / 2}.
	\end{flalign*}
	
	\noindent  Thus, if $\delta < \min \big\{ \frac{1}{4 C_7^2}, 1 \big\}$ and $\varsigma < \delta$, we obtain that
	\begin{flalign}
		\label{f1f2estimate2}
		\| F_1 - F_2 \|_{\mathcal{C}^1 (\mathfrak{R})} \le \| F_1 - F_2 \|_{\mathcal{C}^{2, 1/2} (\mathfrak{R})} = \varsigma^{1 / 2} \| g \|_{\mathcal{C}^{2, 1/2} (\mathfrak{R})} \le 2 C_7 \varsigma,
	\end{flalign}
	
	\noindent from which the proposition follows.
\end{proof}

\subsection{Proof of \Cref{f1f2b}}

\label{ProofF1F2Small} 

In this section we establish \Cref{f1f2b}.

\begin{proof}[Proof of \Cref{f1f2b}]
	
	We first perform a rescaling, to which end we define the open rectangles 
	\begin{flalign*} 
		\widehat{\mathfrak{R}} = L \cdot \mathfrak{R} = (0, 1) \times (& -L,L); \qquad \widehat{\mathfrak{S}}_r = L \cdot \mathfrak{S}_r = (r,1-r) \times (r+1-L, L-r-1); \\
		& \quad \widehat{\mathfrak{S}} = L \cdot \mathfrak{S} = (0, 1) \times (1-L, L-1),
	\end{flalign*}
	
	\noindent as well as the functions $\widehat{F} \in \mathcal{C}^m (\widehat{\mathfrak{R}})$ and $\widehat{f}_i, \widehat{g}_i : [-L, L] \rightarrow \mathbb{R}$ for each $i \in \{ 0, 1 \}$ by, for any $w \in [0, 1] \times [-L, L]$ and $x \in [-L, L]$, setting
	\begin{flalign}
		\label{fgi2} 
		\widehat{F} (w) = L \cdot F (L^{-1} w); \quad \widehat{f}_i (x) = L \cdot f_i (L^{-1} x); \quad \widehat{g}_i (x) = L \cdot g_i (L^{-1} x), \quad \text{so $\nabla \widehat{F}(w) = \nabla F(L^{-1} w)$},
	\end{flalign}
	
	\noindent which in particular implies that $\widehat{F} \in \Adm_{\varepsilon} (\widehat{\mathfrak{R}})$. Further define the functions $\widehat{H} \in \mathcal{C}^m (\widehat{\mathfrak{R}})$ and $\widehat{h}_i : [-L, L] \rightarrow \mathbb{R}$ for each $i \in \{ 0, 1 \}$ by, for any $(t, x) \in [0, 1] \times [-L, L]$, setting 
	\begin{flalign}
		\label{hh}
		\widehat{h}_i (x) = \widehat{g}_i (x) - \widehat{F} (i, x) = \widehat{g}_i (x) - \widehat{f}_i (x); \qquad \widehat{H} (t, x) = \widehat{F} (t, x) + (1-t) \widehat{h}_0 (x) + t \widehat{h}_1 (x),
	\end{flalign}
	
	\noindent so that $\widehat{H} (i, x) = \widehat{g}_i (x)$ for each $i \in \{ 0, 1 \}$ and $x \in [-L, L]$. The scalings \eqref{fgi2} (together with the assumptions of the lemma) then imply for any index $i \in \{ 0, 1 \}$ and integer $j \in \llbracket 0, m \rrbracket$ that
	\begin{flalign}
		\label{fghfh}  
		[\widehat{F}]_j \le BL^{1-j}; \quad [\widehat{g}_i]_j \le BL^{1-j}; \quad [\widehat{H}]_j \le 3B L^{1-j}; \quad \| \widehat{F} - \widehat{H} \|_{\mathcal{C}^0 (\widehat{\mathfrak{R}})} \le \| \widehat{h}_0 \|_{\mathcal{C}^0} + \| \widehat{h}_1 \|_{\mathcal{C}^0} \le 2 \delta.
	\end{flalign}
	
	It suffices to show the existence of a solution $\widehat{G} \in \Adm_{\varepsilon/2} (\widehat{\mathfrak{S}})$ to the equation \eqref{equationxtd} such that 
	\begin{flalign}
		\label{g2} 
		\begin{aligned}
			& \widehat{G} (i, x) = \widehat{g}_i (x), \quad \text{for each $i \in \{ 1, 2 \}$ and $x \in [1-L, L-1]$}; \\
			& \| \widehat{F} - \widehat{G} \|_{\mathcal{C}^k (\widehat{\mathfrak{S}}_r)} \le C_1 \cdot \big( \| \widehat{h}_0 \|_{\mathcal{C}^0} + \| \widehat{h}_1 \|_{\mathcal{C}^0} \big); \\ 
			& \| \widehat{F} - \widehat{G} \|_{\mathcal{C}^{m-5} (\widehat{\mathfrak{S}})} \le C_2 \cdot \big( \| \widehat{h}_0 \|_{\mathcal{C}^{m-3}} + \| \widehat{h}_1 \|_{\mathcal{C}^{m-3}} \big),
		\end{aligned} 
	\end{flalign}
	
	\noindent for some constants $C_1 = C_1 (\varepsilon, r, B, k) > 1$ and $C_2 = C_2 (\varepsilon, B, m) > 1$. Then, the function $G \in \Adm_{\varepsilon/2} (\mathfrak{S})$ defined by $G(w) = L^{-1} \cdot \widehat{G} (Lw)$ for each $w \in [0, L^{-1}] \times [ L^{-1} -1, 1- L^{-1}]$ would satisfy the conditions of the lemma (where here, we again used the fact that $\nabla G (w) = \nabla \widehat{G} (Lw)$, as in \eqref{fgi2}; to deduce the last bound in the third part of the lemma, we also used $\|\widehat h_i\|_{\mathcal C^{m-3}}\leq L\|h_i\|_{\mathcal C^{m-3}}$ for each $i\in \{0,1\}$, $\|F-G\|_{\mathcal C^{m-5}}\leq L^{m-6} \|\widehat F-\widehat G\|_{\mathcal C^{m-5}}$, and the interpolation estimate \Cref{fjderivativesestimate} to deduce that $\| f_i - g_i \|_{\mathcal{C}^{m-3}} \le C_2' \| f_i - g_i \|_{\mathcal{C}^0}^{3/m}$ for each $i \in \{ 0, 1 \}$ and some constant $C_2' = C_2' (\varepsilon, B, m) > 1$, which holds since $\| f_i - g_i \|_{\mathcal{C}^m} \le \| f_i \|_{\mathcal{C}^m} + \| g_i \|_{\mathcal{C}^m} \le 2B$).

		\begin{figure}
	\center
\includegraphics[scale = .45]{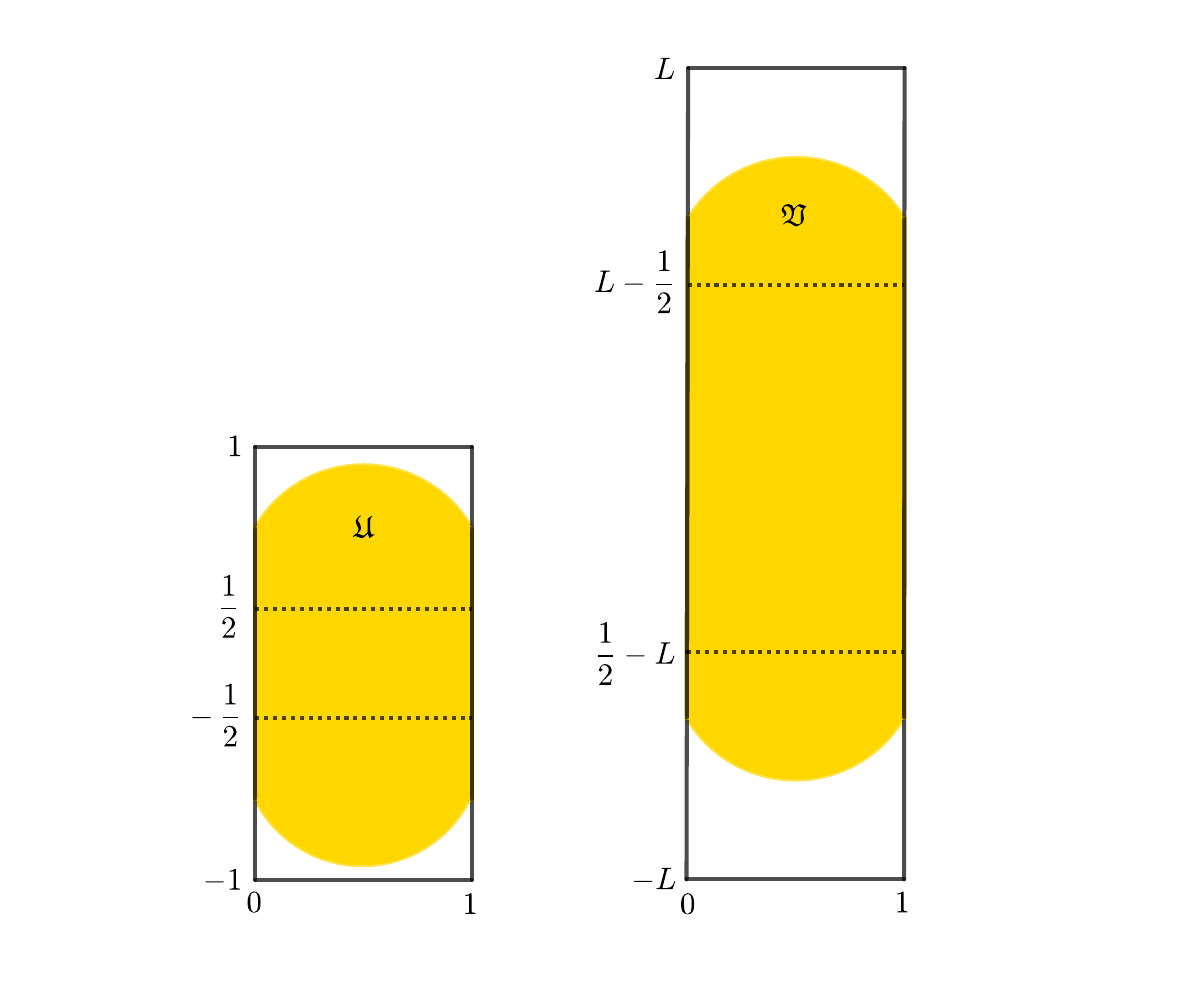}

\caption{Shown on the left is the open set $(0,1)\times \big[-\frac{1}{2}, \frac{1}{2} \big] \subset\mathfrak U\subset (0,1)\times (-1,1)$. Shown on the right is the open set $\mathfrak V$ as in \eqref{v}. }
\label{f:PDE}
	\end{figure}

	To that end, let us fix an open set $\mathfrak{U} \subset (0, 1) \times (-1, 1)$ with smooth boundary, such that $(0, 1) \times \big[ -\frac{1}{2}, \frac{1}{2} \big] \subset \mathfrak{U}$; see the left side of \Cref{f:PDE}. For any real number $a \in \mathbb{R}$, we also denote its vertical translate $\mathfrak{U}_a = \mathfrak{U} + (0, a)$, and we define the domain $[0, 1] \times [1-L,L-1] \subset \mathfrak{V} \subset [0, 1] \times [-L, L]$ by
	\begin{flalign}
		\label{v}
		\mathfrak{V} = \big( [0, 1] \times [1-L, L-1] \big) \cup \mathfrak{U}_{1-L} \cup \mathfrak{U}_{L-1}.
	\end{flalign}
	
	\noindent See the right side of \Cref{f:PDE}. As in the proof of \Cref{derivativef}, for any indices $i, j \in \{ t, x \}$ we also fix fuctions $\widetilde{\mathfrak{d}}_{ij} \in \mathcal{C}^{m+3} (\mathbb{R}^2)$ so that $\widetilde{\mathfrak{d}}_{ij} (u, v) = \mathfrak{d}_{ij} (u, v)$ whenever $v \in \big[-\frac{2}{\varepsilon}, -\frac{\varepsilon}{2} \big]$, and such that \eqref{aijxiestimate} holds for some constant $\widetilde{B} = \widetilde{B} (\varepsilon) > 1$. 
	
	Then, \Cref{aijgradientfestimate} yields a solution $\widehat{G} \in \mathcal{C}^{m-1} (\overline{\mathfrak{V}})$ to the equation
	\begin{flalign}
		\label{ghequation}
		\displaystyle\sum_{i,j \in \{t, x\}} \widetilde{\mathfrak{d}}_{ij} \big( \nabla \widehat{G} (z) \big) \partial_i \partial_j \widehat{G} (z) = 0, \qquad \text{for $z \in \mathfrak{V}$, with $\widehat{G}|_{\partial \mathfrak{V}} = \widehat{H} |_{\partial \mathfrak{V}}$}. 
	\end{flalign}
	
	\noindent We first verify that $\widehat{G}$ satisfies the three properties in \eqref{g2}, and then we confirm that $\widehat{G} \in \Adm_{\varepsilon/2} (\mathfrak{V}) \subseteq \Adm_{\varepsilon/2} (\widehat{\mathfrak{S}})$ (where the last inclusion holds by restricting $\widehat{G}$, as $\widehat{\mathfrak{S}} \subset \mathfrak{V}$) and that $\widehat{G}$ solves the original equation \eqref{equationxtd}. To confirm the first condition in \eqref{g2}, observe from the definition \eqref{v} of $\mathfrak{V}$ that $\{ 0, 1 \} \times [1-L,L-1] \subset \partial \mathfrak{V}$, so for any $(i, x) \in \{ 0, 1 \} \times [1-L,L-1]$ we have $\widehat{G} (i, x) = \widehat{H} (i, x) = \widehat{g}_i (x)$.
	
	The second and third statements will follow from a suitable application of \Cref{ellipticperturbation}. To implement this, first observe that 
	\begin{flalign}
		\label{fgdelta}
		\displaystyle\sup_{w \in \mathfrak{V}} \big| \widehat{F} (w) - \widehat{G} (w) \big| \le \displaystyle\sup_{w \in \partial \mathfrak{V}} \big| \widehat{F} (w) - \widehat{G} (w) \big| = \displaystyle\sup_{w \in \partial \mathfrak{V}} \big| \widehat{F} (w) - \widehat{H} (w) \big| \le \| \widehat{h}_0 \|_{\mathcal{C}^0} + \| \widehat{h}_1 \|_{\mathcal{C}^0} \le 2 \delta.
	\end{flalign}
	
	\noindent Here, the first inequality follows from the comparison principle \Cref{aijcomparison} (applied twice, as in the proof of \Cref{maximumboundary}, first with $(F_1, F_2)$ there equal to $(\widehat{F}, \widehat{G} - \sup_{w \in \partial \mathfrak{V}} \big| \widehat{F} (w) - \widehat{G} (w) \big|)$ and next with $(F_1, F_2)$ there equal to $(\widehat{F}, \widehat{G} + \sup_{w \in \partial \mathfrak{V}} \big| \widehat{F} (w) - \widehat{G} (w) \big|)$ here); the second follows from the boundary data \eqref{ghequation} for $\widehat{G}$; and the third and fourth follow from the last bounds in \eqref{fghfh}. 
	
	Now, fix some $w_0 = (t_0, x_0) \in \widehat{\mathfrak{S}}_r$ so that $\mathcal{B}_r (t_0, x_0) \subseteq \mathfrak{V}$, and apply \Cref{ellipticperturbation} with the $(F_1, F_2)$ there equal to $\big( \widehat{F} - \widehat{F} (0, x_0); \widehat{G} - \widehat{F} (0, x_0) \big)$ here; $(B, m, r; \mathfrak{R})$ there equal to $\big( 8B+\widetilde{B}, k; \frac{r}{2}; \mathfrak{U}_{x_0} \big)$ here; and $\mathfrak{R}'$ there empty here (so we may take the $(\varphi_1, \varphi_2)$ there to be $0$ here). This yields a constant $C_1 = C_1 (\varepsilon,r, B, k) > 1$ (recalling that $\widetilde{B}$ only depends on $\varepsilon$) such that
	\begin{flalign*}
		\big\| \widehat{F} - \widehat{G} \|_{\mathcal{C}^k (\mathcal{B}_{r/2} (w_0))} \le C_1 \cdot \displaystyle\sup_{w \in \partial \mathfrak{U}_{x_0}} \big| \widehat{F} (w) - \widehat{G}(w) \big| \le C_1 \Big( \| \widehat{h}_0 \|_{\mathcal{C}^0} + \| \widehat{h}_1 \|_{\mathcal{C}^0} \Big), 
	\end{flalign*}
	
	\noindent where the last bound follows from \eqref{fgdelta}. Ranging over $w_0 \in \widehat{\mathfrak{S}}_r$ gives the second part of \eqref{g2}.
	
	To establish the third part of \eqref{g2}, fix some $w_0 \in (t_0, x_0) \in \mathfrak{V}$, and let $s \in [1-L, L-1]$ be such that $\mathcal{B}_{r/4} (w_0) \subset \mathfrak{U}_s$ (which is guaranteed to exist by \eqref{v}). Then, apply \Cref{ellipticperturbation} with the $(F_1, F_2)$ equal to $\big( \widehat{F} - \widehat{F} (0, x_0), \widehat{G} - \widehat{F} (0, x_0) \big)$ here; $(B, m, r; \mathfrak{R}, \mathfrak{R}')$ there equal to $\big( 8B + \widetilde{B}, m-5, \frac{r}{8}; \mathfrak{U}_s, \mathfrak{U}_s \big)$ here; and $\varphi_1 (t, x)$ and $\varphi_2 (t, x)$ there equal to $\widehat{F} (t, x) - \widehat{F} (0, x_0)$ and $\widehat{H}(t, x) - \widehat{F} (0, x_0)$ here, respectively. This yields a constant $C_2 = C_2 (\varepsilon, r, B, m) > 1$ (which also depends on the open set $\mathfrak{U}$, but we omit this dependence since $\mathfrak{U}$ is fixed) such that 
	\begin{flalign*}
		\big\| \widehat{F} - \widehat{G} \big\|_{\mathcal{C}^{m-5} (\mathfrak{U}_s)} \le C_2 \bigg( \displaystyle\sup_{w \in \partial \mathfrak{U}_s} \big| \widehat{F} (w) - \widehat{G} (w) \big| + \| \widehat{F} - \widehat{H} \|_{\mathcal{C}^{m-3}} \bigg) \le 2 C_2 \Big( \| \widehat{h}_0 \|_{\mathcal{C}^{m-3}} + \| \widehat{h}_1 \|_{\mathcal{C}^{m-3}} \Big),
	\end{flalign*}
	
	\noindent where in the last bound we used \eqref{hh} and \eqref{fgdelta}. Ranging over all $w \in \mathfrak{V}$ yields
	\begin{flalign}
		\label{fgh0h1m}
		\big\| \widehat{F} - \widehat{G} \big\|_{\mathcal{C}^{m-5} (\mathfrak{V})} \le 2 C_2 \Big( \| \widehat{h}_0 \|_{\mathcal{C}^{m-3}} + \| \widehat{h}_1 \|_{\mathcal{C}^{m-3}} \Big),
	\end{flalign}
	
	\noindent which since $\mathfrak{S} \subset \mathfrak{V}$ implies the third part of \eqref{g2}.
	
	It remains to verify that $\widehat{G} \in \Adm_{\varepsilon/2} (\mathfrak{V}) \subseteq \Adm_{\varepsilon/2} (\widehat{\mathfrak{S}})$ and that $\widehat{G}$ satisfies \eqref{equationxtd}. Observe that it suffices to prove the former, as it implies the latter since $\widehat{G}$ satisfies \eqref{ghequation}, and $\widetilde{\mathfrak{d}}_{ij} (\nabla \widehat{G}) = \mathfrak{d}_{ij} (\nabla \widehat{G})$ on $\mathfrak{V}$ if $\widehat{G} \in \Adm_{\varepsilon/2} (\mathfrak{V})$. To show the former, observe from \eqref{fgh0h1m}, \eqref{fgdelta}, and the ($(F; j, k; \mathfrak{R}) = (\widehat{F} - \widehat{G}; 1, 2; \mathfrak{U}_s)$ case of the) interpolation estimate \Cref{fjderivativesestimate} that there exists a constant $C_3 = C_3 (\varepsilon, B, m) > 1$ (also implicitly dependent on $\mathfrak{U}$) such that the following holds. For sufficiently small $\delta = \delta (\varepsilon, B, m) > 0$ we have
	\begin{flalign*}
		\displaystyle\sup_{w \in \mathfrak{V}} \big| \nabla \widehat{F} (w) - \nabla \widehat{G} (w) \big| \le C_3 \delta^{1/2}.
	\end{flalign*}
	
	\noindent Together with the fact that $\widehat{F} \in \Adm_{\varepsilon} (\widehat{\mathfrak{R}})$, it follows (after decreasing $\delta$ again if necessary) that $\widehat{G} \in \Adm_{\varepsilon/2} (\mathfrak{V})$, thereby establishing the lemma.
\end{proof}

\end{document}